\newcommand{\bb}{\partial}
\DeclareMathOperator{\vol}{Vol}
\DeclareMathOperator{\Hom}{Hom}
\newcommand{\cb}{C_b}
\newcommand{\bqn}{\begin{equation*}}
\newcommand{\eqn}{\end{equation*}}
\newcommand{\bq}{\begin{equation}}
\newcommand{\eq}{\end{equation}}
\newcommand{\ba}{\begin{aligned}}
\newcommand{\ea}{\end{aligned}}
\newcommand{\be}{\begin{enumerate}}
\newcommand{\ee}{\end{enumerate}}
\newcommand{\Ff}{{\mathcal F}}
\newcommand{\sgn}{{\Sigma_{g,n}}}
\newcommand{\mathno}{\ensuremath{\overline{\matH^n}}}
\newcommand{\todo}[1]{\vspace{5mm}\par \noindent
\framebox{\begin{minipage}[c]{0.95 \textwidth} \tt #1\end{minipage}} \vspace{5mm} \par}
 \newcommand{\inparens}[2][flex]{\csname #1l\endcsname(#2%
                                 \csname #1r\endcsname)\mathclose{}}
 \newcommand{\inangles}[2][flex]{\csname #1l\endcsname\langle#2%
                                 \csname #1r\endcsname\rangle\mathclose{}} 
 \newcommand{\innorm}[2][flex]{\csname #1l\endcsname|#2%
                                 \csname #1r\endcsname|\mathclose{}}
 \newcommand{\indnorm}[2][flex]{\csname #1l\endcsname\|#2%
                                 \csname #1r\endcsname\|\mathclose{}}
 \newcommand{\indnorml}[4][flex]{\csname #1l\endcsname\|#2%
                                 \csname #1r\endcsname\|_{#3}^{#4}\mathclose{}}
\newcommand{\sv}[2][flex]{\indnorm[#1]{#2}}%
\newcommand{\isv}[2][norm]{\indnorml[#1]{#2}{\mathbb{Z}}{}}
\newcommand{\pfcl}[2][flex]{\csname #1l\endcsname[#2%
                            \csname #1r\endcsname]}
\newcommand{\ifsv}[2][norm]{\csname #1l\endcsname\bracevert\!#2\!%
                            \csname #1r\endcsname\bracevert}
\newcommand{\stisv}[2][flex]{\indnorml[#1]{#2}{\mathbb{Z}}{\infty}}
\newcommand{\str} {\ensuremath {\mathrm{str}}}
\newcommand{\strtil} {\ensuremath {\widetilde{\mathrm{str}}}}
\newcommand{\ttob}[1]{\stackrel{#1}{\longleftarrow}}
\newcommand{\tto}[1]{\stackrel{#1}{\longrightarrow}}
\newtheorem{lemma}{Lemma}[chapter]
\newtheorem{teo}[lemma]{Theorem}
\newtheorem{thm}[lemma]{Theorem}
\newtheorem{prop}[lemma]{Proposition}
\newtheorem{cor}[lemma]{Corollary} 
\newtheorem{conj}[lemma]{Conjecture}
\theoremstyle{definition}
\newtheorem{defn}[lemma]{Definition}
\newtheorem{quest}[lemma]{Question}
\newtheorem{rem}[lemma]{Remark}
\newcommand{\wdtX}{\widetilde{X}}
\newcommand{\bc}{\ensuremath{\overline{C}}}
\newcommand{\matN}{\ensuremath {\mathbb{N}}}
\newcommand{\R} {\ensuremath {\mathbb{R}}}
\newcommand{\Z} {\ensuremath {\mathbb{Z}}}
\newcommand{\matZ} {\ensuremath {\mathbb{Z}}}
\newcommand{\matH} {\ensuremath {\mathbb{H}}}
\newcommand{\hl}{\ensuremath{H^{\ell_1}}}
\newcommand{\cl}{\ensuremath{C^{\ell_1}}}
\newcommand{\eu}{\ensuremath{\mathrm{eu}}}
\newcommand{\eul}{\ensuremath{\mathrm{eul}}}
\newcommand{\E}{\ensuremath{\mathrm{eul}}}
\newcommand{\cst} {\ensuremath{C^\bullet}}
\newcommand{\climst} {\ensuremath{{C}_b^\bullet}}
\newcommand{\ccst} {\ensuremath{C_c^\bullet}}
\newcommand{\cclimst} {\ensuremath{{C}_{b,c}^\bullet}}
\newcommand{\h} {\ensuremath{H}}
\newcommand{\deltab}{\ensuremath{\overline{\delta}}}
\newcommand{\alt}{\ensuremath{\mathrm{alt}}}
\newcommand{\omeon}{\ensuremath{{\rm Homeo}^+(S^n)}}
\newcommand{\omeo}{\ensuremath{{\rm Homeo}^+(S^1)}}
\newcommand{\omeot}{\widetilde{\ensuremath{{\rm Homeo}_+(S^1)}}}
\DeclareMathOperator{\rot}{rot}
\DeclareMathOperator{\rott}{\widetilde{rot}}
\DeclareMathOperator{\cone}{Cone}
\newcommand{\clim} {\ensuremath{C_b}}
\newcommand{\cc} {\ensuremath{{C}_c}}
\newcommand{\cclim} {\ensuremath{C}_{b,c}}
\newcommand{\vare} {\ensuremath{\varepsilon}}
\newcommand{\hst} {\ensuremath{H^\bullet}}
\newcommand{\hcst} {\ensuremath{H_c^\bullet}}
\newcommand{\hlimst} {\ensuremath{{H}_b^\bullet}}
\newcommand{\hclimst}{\ensuremath{{H}_{b,c}^\bullet}}
\newcommand{\G}{\ensuremath {\Gamma}}
\newcommand{\calC} {\ensuremath {\mathcal{C}}}
\newcommand{\calS} {\ensuremath {\mathcal{S}}}
\newcommand{\hx} {\ensuremath {h_{\widetilde{M}}}}
\newcommand{\xtil} {\ensuremath {\widetilde{M}}}
\newcommand{\res} {\ensuremath {{\rm res}}}
\newcommand{\tr} {\ensuremath {{\rm trans}}}
\newcommand{\GL}{{\rm GL}}
\newcommand{\Isom}{\ensuremath{{\rm Isom}}}
\author{Roberto Frigerio}
\address{Dipartimento di Matematica \\
Universit\`a di Pisa \\
Largo B.~Pontecorvo 5 \\
56127 Pisa, Italy}
\email{frigerio@dm.unipi.it}
\title{Bounded cohomology of discrete groups}
\subjclass[2000]{}
\keywords{}
\thanks{}
\begin{document}

\frontmatter

\begin{abstract}
{Bounded cohomology of groups was first defined by Johnson and Trauber during the seventies in the context of Banach algebras. As an independent and very
active research field, however, bounded cohomology started to develop in 1982, thanks to the pioneering paper ``Volume and Bounded Cohomology''
by M. Gromov, where the definition of bounded cohomology was extended to deal also with topological spaces.

The aim of this monograph is to provide an introduction to bounded cohomology of discrete groups and of topological spaces. We also describe some applications
of the theory to related active research fields (that have been chosen according to the taste and the knowledge of the author). 
The book is essentially self-contained. Even if a few statements do not appear elsewhere and some proofs are slighlty different from the ones already available in the literature, the monograph does
not contain original results. 

In the first part of the book we settle the fundamental definitions of the theory, and
we prove some (by now classical) results on low-dimensional bounded cohomology and on bounded cohomology of topological spaces. Then
we describe how bounded cohomology has proved useful in the study of the simplicial volume of manifolds, for the classification of circle actions, 
for the definition and the description of maximal representations of surface groups, and in the study of higher rank flat vector bundles (also in relation with the Chern conjecture).}
\end{abstract}
\maketitle

\tableofcontents

\printindex

\mainmatter 

\chapter*{Introduction}
Bounded cohomology of groups was first defined by Johnson~\cite{Johnson} and Trauber during the seventies in the context of Banach algebras. As an independent and very
active research field, however, bounded cohomology started to develop in 1982, thanks to the pioneering paper ``Volume and Bounded Cohomology''
by M.~Gromov~\cite{Gromov}, where the definition of bounded cohomology was extended to deal also with topological spaces.

Let $C^\bullet(M,\R)$ denote the complex of real singular cochains with values in the topological space $M$. A cochain
$\varphi\in C^n(M,\R)$ is \emph{bounded} if it takes uniformly bounded values on the set of singular $n$-simplices. Bounded cochains
provide a subcomplex $C^\bullet_b(M,\R)$ of singular cochains, and the bounded cohomology $H^\bullet_b(M,\R)$ of $M$ (with trivial real coefficients) is just the (co)homology
of the complex $C^\bullet_b(M,\R)$. An analogous definition of boundedness applies to group cochains with (trivial) real coefficients,
and the bounded cohomology $H^\bullet_b(\G,\R)$ of a group $\G$ (with trivial real coefficients)
is the (co)homology of the complex $C^\bullet_b(\G,\R)$ of the bounded cochains on $\G$. A classical result which dates back to the forties 
ensures that the singular cohomology of an aspherical CW-complex is canonically isomorphic to the cohomology of its fundamental group.
In the context of bounded cohomology a stronger result holds: the bounded cohomology of a countable CW-complex is canonically isomorphic to the bounded
cohomology of its fundamental group, even without any assumption on the asphericity of the space~\cite{Gromov,Brooks,Ivanov}. For example,
the bounded cohomology of spheres is trivial in positive degree. On the other hand, the bounded cohomology of the wedge of two circles
is infinite-dimensional in degrees $2$ and $3$, and still unknown in any degree bigger than $3$.
As we will see in this monograph, this phenomenon eventually depends on the fact
that higher homotopy groups are abelian, and 
abelian groups are invisible to bounded cohomology, while ``negatively curved'' groups, such as non-abelian free groups, tend to have 
very big bounded cohomology modules. 

The bounded cohomology of a group $\G$ can be defined with coefficients in any normed (e.g.~Banach) $\G$-module, where the norm
is needed to make sense of the notion of boundedness of cochains. Moreover, if $\G$ is a topological group, then one may restrict
to considering only \emph{continuous} bounded cochains, thus obtaining the \emph{continuous} bounded cohomology of $\G$.
In this monograph, we will often consider arbitrary normed $\G$-modules, but we will restrict our attention to
bounded cohomology of discrete groups. The reason for this choice is twofold. First, the (very powerful) theory of continuous bounded cohomology
is based on a quite sophisticated machinery, which is not needed in the case of discrete groups. Secondly, 
Monod's book~\cite{Monod} and Burger-Monod's paper~\cite{BM2} already provide an excellent introduction to continuous bounded cohomology, while to the author's knowledge no
reference is available where the fundamental properties of bounded cohomology of discrete groups are collected and proved in detail. However, we should emphasize 
that the theory of continuous bounded cohomology is essential in proving important results also in the context of discrete groups: many vanishing theorems for
the bounded cohomology 
of lattices in Lie groups can be obtained by comparing the bounded cohomology of the lattice with the continuous bounded cohomology of the ambient group.

This monograph is devoted to provide a self-contained introduction to bounded cohomology of discrete groups and topological spaces. Several 
(by now classical) applications of the theory will be described in detail, while many others will be completely omitted. 
Of course,
the choice of the topics discussed here is largely arbitrary, and based on the taste (and on the knowledge) of the author. 

Before describing the content
of each chapter, let us provide a brief overview on the relationship between bounded  cohomology and 
other research fields.

\subsection*{Geometric group theory and quasification}
The bounded cohomology of a closed manifold is strictly related
to the curvature of the metrics that the manifold can support. For example, if the closed manifold $M$ is flat or positively curved, then
the fundamental group of $M$ is amenable, and $H^n_b(M,\R)=H^n_b(\pi_1(M),\R)=0$ for every $n\geq 1$. On the other hand, if $M$ is negatively curved,
then it is well-known that the comparison map $H^\bullet_b(M,\R)\to H^\bullet(M,\R)$ induced by the inclusion $C^\bullet_b(M,\R)\to C^\bullet(M,\R)$
is surjective in every degree bigger than one. 

In fact, it turns out that the surjectivity of the comparison map is related in a very clean way to 
the notion of Gromov hyperbolicity, 
 which represents
the coarse geometric version of negative curvature. Namely,
a group $\G$ is Gromov hyperbolic if and only if the comparison map $H^n_b(\G,V)\to H^n(\G,V)$
is surjective for every $n\geq 2$ and for every Banach $\G$-module $V$~\cite{Mineyev1,Mineyev2}. 

Coarse geometry comes into play also when studying the
(non-)injectivity of the comparison map. In fact, let $EH^{n}_b(\G,V)$ denote the kernel of the comparison map in degree $n$. 
It follows by the very definitions that an  element of $H^{n}_b(\G,V)$ lies in $EH^{n}_b(\G,V)$  
if and only if any of its representatives is the coboundary of a (possibly unbounded) cocycle.
More precisely, a cochain is usually called a \emph{quasi-cocycle} if its differential is bounded, and
a quasi-cocycle is \emph{trivial} if it is the sum of a cocycle and a bounded cochain. Then $EH^{n}_b(\G,V)$
is canonically isomorphic to the space of $(n-1)$-quasi-cocycles modulo trivial $(n-1)$-quasi-cocycles. 
When $V=\R$, quasi-cocycles of degree one are usually called \emph{quasimorphisms}. There exists a large literature which concerns
the construction of non-trivial quasimorphisms in presence of (weaker and weaker versions of) negative curvature. Brooks~\cite{Brooks} first constructed
infinitely many quasimorphisms on the free group with two generators $F_2$, that were shown to define linearly indepenedent
elements in $EH^2(F_2,\R)$ by Mitsumatsu~\cite{Mitsu}. In particular, this proved that $EH^2_b(F_2,\R)$ (which coincides with $H^2_b(F_2,\R)$) is infinite-dimensional.

Quasi-cocycles are cochains which satisfy the cocycle equation only up to a finite error, and geometric group theory provides 
tools that are particularly well-suited to study notions which involve finite errors in their definition. Therefore, 
it is not surprising that Brooks' and Mitsumatsu's result has been generalized to larger and larger classes of groups, which include
now the class of non-elementary relatively hyperbolic groups, and most mapping class groups. We refer the reader to 
Section~\ref{further:2}
for a more detailed account on this issue. It is maybe worth mentioning that, even if in the cases cited above $EH^2_b(\G,\R)$ is always infinite-dimensional,
there exist lattices $\G$ in non-linear Lie groups for which $EH^2_b(\G,\R)$ is of finite non-zero dimension~\cite{Mann-Monod}.

We have mentioned the fact that $H^2_b(G,\R)$ is infinite-dimensional for negatively curved groups according to a suitable notion of negative curvature for groups.
On the other hand, bounded cohomology vanishes for ``positively curved'' (i.e.~finite) or ``flat'' (i.e.~abelian) groups. In fact, $H^n_b(\G,\R)=0$ for any $n\geq 1$ and any
$G$ belonging to the distinguished class of \emph{amenable} groups. The class of amenable groups contains all virtually solvable groups, it is closed
under quasi-isometries, and it admits a nice characterization in terms of bounded cohomology (see Section~\ref{Johnson:sec}). These
facts provide further evidence  that bounded cohomology detects coarse non-positive curvature as well as coarse non-negative curvature.

\subsection*{Simplicial volume}
The $\ell^\infty$-norm of an $n$-cochain is the supremum of the values it takes on single singular $n$-simplices (or on single $(n+1)$-tuples of
elements of the group, when dealing with group cochains rather than with singular cochains). So a cochain $\varphi$ is bounded if and only if
it has a finite $\ell^\infty$-norm, and the $\ell^\infty$-norm induces a natural quotient $\ell^\infty$-seminorm on bounded cohomology. 
The $\ell^\infty$-norm on singular cochains arises as the dual of a natural $\ell^1$-norm on singular \emph{chains}.
This $\ell^1$-norm induces an $\ell^1$-seminorm on homology. If $M$ is a closed oriented manifold, then  the \emph{simplicial volume} $\|M\|$ of $M$
is the $\ell^1$-seminorm of the real fundamental class of $M$~\cite{Gromov}. 
Even if it depends only on the homotopy type of the manifold, the simplicial volume is deeply related
to the geometric structures that a manifold can carry. 
As one of the main motivations for its definition, Gromov himself showed that the simplicial volume provides a lower bound
for the \emph{minimal volume} of a manifold, which is the infimum of the volumes of the Riemannian metrics 
that are supported by the manifold and that satisfy suitable curvature bounds.

An elementary duality result relates the simplicial volume of an $n$-dimensional manifold $M$ to the bounded cohomology module
$H^n_b(M,\R)$. For example, if $H^n_b(M,\R)=0$ then also $\|M\|=0$. In particular, the simplicial volume of simply connected manifolds
(or, more in general, of manifolds with  amenable fundamental group) is vanishing. It is worth stressing the fact that no homological proof
of this statement is available: in many cases, the fact that $\|M\|=0$ cannot be proved by exhibiting fundamental cycles with arbitrarily small norm.
Moreover, the exact value of non-vanishing simplicial volumes is known only in the following very few cases:
hyperbolic manifolds~\cite{Gromov, Thurston}, some classes of $3$-manifolds with boundary~\cite{BFP}, and the product of two surfaces~\cite{Bucher3}. 
In the last case, it is not known to the author any description of
a sequence of fundamental cycles whose $\ell^1$-norms approximate the simplicial volume. In fact, Bucher's computation of the simplicial volume of the product of surfaces
heavily relies on deep properties of bounded cohomology that have no counterpart in the context of singular homology. 

\subsection*{Characteristic classes}
A fundamental theorem by Gromov~\cite{Gromov} states that, if $G$ is an algebraic subgroup of $\GL_n(\R)$, then every characteristic class of flat $G$-bundles 
lies in the image of the comparison map (i.e.~it can be represented
by a bounded cocycle). (See~\cite{Bucher:thesis} for an alternative proof of a stronger result). Several natural questions arise from this result. First of all, one may ask whether
such characteristic classes admit a canonical representative in bounded cohomology: since the comparison map is often non-injective, this would produce more refined invariants.
Even when one is not able to find natural bounded representatives for a characteristic class, the seminorm on bounded cohomology (which induces a seminorm on the image of the comparison 
map by taking the infimum over the bounded representatives)
can be used to produce numerical invariants, or to provide useful estimates.

In this context, the most famous example is certainly represented by the Euler class. To every oriented circle bundle there is associated its Euler class, which 
arises as an obstruction to the existence of a section, and
completely
classifies the topological type of the bundle. When restricting to \emph{flat} circle bundles, a \emph{bounded} Euler class can be defined, which now depends on the flat structure
 of the bundle (and, in fact, classifies the isomorphism type  of flat bundles with minimal holonomy~\cite{Ghys0, Ghys2}). Moreover, the seminorm of the bounded Euler 
class is equal to $1/2$. Now the Euler number of a circle bundle over a surface is obtained by evaluating the Euler class on the fundamental class of the surface. As a 
consequence, the Euler number of any flat circle bundle over a surface 
is bounded above by the product of $1/2$ times the simplicial volume of the surface.  This yields the celebrated \emph{Milnor-Wood inequalities}~\cite{Milnor,Wood}, which provided the firstnot so economic
explicit and easily computable obstructions for a circle bundle to admit a flat structure. Of course, Milnor's and Wood's original proofs did not explicitly use bounded cohomology,
 which was not defined yet. However, their arguments introduced ideas and techniques which are now fundamental in the theory of quasimorphisms, and they were implicitly 
 based on the construction of a bounded representative for the Euler class.
 
These results also extend to higher dimensions. It was proved by Sullivan~\cite{Sullivan} that the $\ell^\infty$-seminorm of the (simplicial) 
Euler class of an oriented flat vector bundle is bounded above by $1$. A clever trick due to Smillie allowed to sharpen this bound from $1$ to $2^{-n}$, where $n$ is the rank of the bundle. 
Then, Ivanov and Turaev~\cite{IvTu}
gave a different proof of Smillie's result, also constructing a natural bounded Euler class in every dimension. A very clean characterization of this bounded cohomology class,
as well as the proof that its seminorm is equal to $2^{-n}$, have recently been provided by Bucher and Monod in~\cite{BuMo}.

 \subsection*{Actions on the circle}
 If $X$ is a topological space with fundamental group $\G$, then
 any orientation-preserving topological action of $\G$ on the circle gives rise to
 a flat circle bundle over $X$. Therefore, we can associate to every such action a bounded Euler class.
 It is a fundamental result of Ghys~\cite{Ghys0,Ghys2} that the bounded Euler class 
 encodes the most essential features of the dynamics of an action. 
 For example, an action admits a global fixed point if and only if its bounded Euler class vanishes. Furthermore, in the almost opposite case of \emph{minimal} actions (i.e.~of actions
 every orbit of which is dense), the bounded Euler class provides a complete conjugacy invariant:
 two minimal circle actions share the same bounded Euler class if and only if they are topologically conjugate~\cite{Ghys0,Ghys2}. These results establish a deep connection
 between bounded cohomology and a fundamental problem in one-dimensional dynamics.

\subsection*{Representations and Rigidity}
Bounded cohomology has been very useful in proving rigidity results for representations. It is known that an epimorphism
between discrete groups induces an injective map on 2-dimensional bounded cohomology with real coefficients (see Theorem~\ref{Bua:thm}). As a consequence, if $H_b^2(\G,\R)$ is finite-dimensional
and $\rho\colon \G\to G$ is any representation, then the second bounded cohomology of the image of $\rho$ must also be finite-dimensional. In some cases, this information suffices to
ensure that $\rho$ is almost trivial. For example, if $\G$ is a uniform irreducible lattice in a higher rank semisimple Lie group, 
then by work of Burger and Monod we have that $H^2_b(\G,\R)$ is finite-dimensional~\cite{BM1}. On the contrary,
non-virtually-abelian subgroups of mapping class groups of hyperbolic surfaces admit many non-trivial quasimorphisms~\cite{BestFuji}, whence an infinite-dimensional second bounded cohomology group.
As a consequence, the image of any representation of a higher rank lattice into a mapping class group is virtually abelian, whence finite. This provides an independent
proof of a result by Farb, Kaimanovich and Masur~\cite{FarbMasur, KaMa}.

Rigidity results of a different nature arise when exploiting bounded cohomology to get more refined invariants with respect to the ones provided by classical cohomology. For example, 
we have seen that the norm of the Euler class can be used to bound the Euler number of flat circle bundles. When considering representations
into $PSL(2,\R)$ of the fundamental group of closed surfaces
of negative Euler characteristic, a celebrated result by Goldman~\cite{Goldth}
implies that a representation has maximal Euler number if and only if it is faithful and discrete, i.e.~if and only if it is the holonomy of a hyperbolic structure (in this case,
one usually says that the representation is \emph{geometric}).
A new proof of this fact (in the more general case of surfaces with punctures) has been recently provided in~\cite{BIW1} via a clever use of the bounded Euler class of a representation
(see also~\cite{iozzi} for another proof of Goldman's Theorem based on bounded cohomology).
In fact, one may define maximal representations of surface groups into a much more general class of Lie groups (see e.g.~\cite{BIW1}). 
This strategy has been followed e.g.~in~\cite{BIuseful, BIgeo} to establish deformation rigidity for representations into $SU(m,1)$ of lattices in $SU(n,1)$ also in the 
non-uniform case (the uniform case having being settled by Goldman and Millson in~\cite{GoldM}).
Finally,
bounded cohomology has proved very useful in studying rigidity phenomena 
also  
in the context of orbit equivalence and measure equivalence for countable groups
(see e.g.~\cite{MonShal} and~\cite{BFS}).

\bigskip

\subsection*{Content of the book}
Let us now briefly outline the content of each chapter. In the {\bf first chapter} we introduce the basic definitions about the cohomology and the bounded cohomology of groups.
We introduce the comparison map between the bounded cohomology and the classical cohomology of a group, 
and we briefly discuss the relationship between the classical cohomology of a group and the singular cohomology of its classifying space, postponing to Chapter 5 the investigation 
of the same topic in the context of bounded cohomology. 

In {\bf Chapter~2} we study the bounded cohomology of groups in low degrees. When working with trivial coefficients, bounded cohomology in degree 0 and 1 is completely understood, so our attention
is mainly devoted to degree 2. We recall that degree-2 classical cohomology is related to group extensions, while degree-2 bounded cohomology is related to the existence of quasimorphisms.
We exhibit many non-trivial quasimorphism on the free group, thus showing that $H^2_b(F,\R)$ is infinite-dimensional for every non-abelian free group $F$. We also describe a somewhat neglected result by Bouarich, 
which states that every class in $H^2_b(\G,\R)$ admits a canonical ``homogeneous'' representative, and, following~\cite{Bua}, we use this fact to show that any group epimorphism induces an injective map on bounded cohomology
(with trivial real coefficients) in degree 2. A stronger result (with more general coefficients allowed, and where the induced map is shown to be an isometric embedding) may be found in~\cite{Huber}.

{\bf Chapter~3} is devoted to amenability, which represents a fundamental notion in the theory of bounded cohomology. We briefly review some results on amenable groups, 
also providing a complete proof of von Neumann's Theorem, which ensures that abelian groups are amenable. Then we show that bounded cohomology of amenable groups vanishes
(at least for a very wide class of coefficients), and 
we describe Johnson's characterization
of amenability in terms of bounded cohomology.

In {\bf Chapter 4} we introduce the tools from homological algebra which are best suited to deal with bounded cohomology. Following~\cite{Ivanov} and~\cite{BM1,BM2}, we define the notion
of relatively injective $\G$-module, and we establish the basic results that allow to compute bounded cohomology via relatively injective strong resolutions.
We also briefly discuss how this part of the theory iteracts with amenability, also defining the notion of amenable action (in the very restricted context of discrete groups acting on discrete spaces).

We come back to the topological interpretation of bounded cohomology of groups in {\bf Chapter 5}. By exploiting the machinery developed in the previous chapter, we describe Ivanov's proof of
a celebrated result due to Gromov, which asserts that the bounded cohomology of a space is isometrically isomorphic to the bounded cohomology of its fundamental group.
Then, following~\cite{Ivanov}, we show that the existence of amenable covers of a topological space implies the vanishing of the comparison map in 
degrees higher than the multiplicity of the cover.
We also introduce the relative bounded cohomology of topological pairs, and prove that $H^n_b(X,Y)$ is isometrically isomorphic to $H^n_b(X)$ whenever every component
of $Y$ has an amenable fundamental group. 

In {\bf Chapter 6} we introduce  $\ell^1$-homology of groups and spaces. Bounded cohomology naturally provides a dual theory to $\ell^1$-homology. Following some works by L\"oh,
we prove several statements on duality. As an application, we describe L\"oh's proof of the fact that the $\ell^1$-homology
of a space is canonically isomorphic to the $\ell^1$-homology of its fundamental group. We also review some results by Matsumoto and Morita, showing for example that in degree 2 the seminorm on bounded cohomology
is always a norm, and providing a characterization of the injectivity of the comparison map in terms of the so-called \emph{uniform boundary condition}. 
Following~\cite{BBFIPP},
we also make use of duality (and of the results on relative bounded cohomology proved in the previous chapter) to obtain a proof of Gromov equivalence theorem.

{\bf Chapter 7} is devoted to an important application of bounded cohomology: the computation of simplicial volume. The simplicial volume of a closed oriented manifold is equal to the $\ell^1$-seminorm
of its real fundamental class. Thanks to the duality between $\ell^1$-homology and bounded cohomology, the study of the simplicial volume of a manifold often benefits from the study of its bounded cohomology.
Here we introduce the basic definitions and the most elementary properties of the simplicial
volume, and we state several results concerning it, postponing the proofs to the subsequent chapters.

Gromov's proportionality principle states that, for closed Riemannian manifolds admitting the same Riemannian universal covering, the ratio between the simplicial volume and the Riemannian volume is a constant only depending
on the universal covering. In {\bf Chapter 8} we prove a simplified version of the proportionality principle, which applies only to non-positively curved manifolds. The choice of restricting to the non-positively curved context allows us 
to avoid a lot of technicalities, while introducing the most important ideas on which also the proof of the general statement is based. In particular, we introduce a bit of continuous cohomology (of topological spaces) and
the trasfer map, which relates the (bounded) cohomology of a lattice in a Lie group to the continuous (bounded) cohomology of the ambient group. Even if our use of the transfer map is very limited, we feel
worth introducing it, since this map plays a very important role in the theory of continuous bounded cohomology of topological groups, as developed by Burger and Monod~\cite{BM1,BM2}.
As an application, we carry out the computation of the simplicial volume of closed hyperbolic manifolds following the strategy described in~\cite{Bucher}.

In {\bf Chapter 9} we prove that the simplicial volume is additive with respect to gluings along $\pi_1$-injective boundary components with an amenable fundamental group.
Our proof of this fundamental theorem (which is originally due to Gromov) is based on a slight variation of the arguments described in~\cite{BBFIPP}. In fact,
we deduce additivity of the simplicial volume from a result on bounded cohomology, together with a suitable application of duality.

As mentioned above, bounded cohomology has found interesting applications in the study of the dynamics of homemorphisms
of the circle. In {\bf Chapter 10} we introduce the Euler class and the bounded Euler class of a circle action, and
we review a fundamental result due to Ghys~\cite{Ghys0,Ghys1,Ghys2}, who proved that semi-conjugacy classes
of circle actions are completely classified by their bounded Euler class. The bounded Euler class is thus a much finer invariant than the classical Euler class,
and this provides a noticeable instance of a phenomenon mentioned above:
passing from classical to bounded cohomology often allows to refine classical invariants.
We also relate the bounded Euler class of a cyclic subgroup of homeomorphisms of the circle to the classical \emph{rotation number} of the generator of the subgroup,
and prove some properties of the rotation number that will be used in the next chapters. Finally, following Matsumoto~\cite{Matsu:numerical} we describe the canonical representative of the \emph{real} bounded Euler class,
also proving a characterization of semi-conjugacy in terms of the real bounded Euler class.

{\bf Chapter 11} is devoted to a brief digression from the theory of bounded cohomology. The main aim of the chapter is a detailed description
of the Euler class of a sphere bundle. However, our treatment of the subject is a bit different from the usual one, in that we define the Euler cocycle
as a \emph{singular} (rather than cellular) cocycle. Of course,
cellular and singular cohomology are canonically isomorphic for every CW-complex. However, 
cellular cochains are not useful to compute the \emph{bounded} cohomology of a cellular complex: for example,
it is not easy to detect whether
the singular coclass corresponding to a cellular one admits a bounded representative or not, and this motivates us to work directly in the singular context, even if this choice
could seem a bit costly  at first glance.

In {\bf Chapter 12} we specialize our study of sphere bundles to the case of \emph{flat} bundles. 
The theory of flat bundles builds a bridge between the theory of 
fiber bundles
and the theory of representations. For example, the Euler class of a flat circle bundle corresponds (under a canonical
morphism) to the Euler class of a representation canonically associated to the bundle. This leads to the definition of the  \emph{bounded} Euler class
of a flat circle bundle. By putting together an estimate on the norm of the bounded Euler class and the computation of the simplicial volume of surfaces carried out in the previous chapters,
we are then able to prove
\emph{Milnor-Wood inequalities}, which provide sharp estimates on the possible Euler numbers of flat circle bundles over surfaces. 
We then concentrate our attention on \emph{maximal} representations, i.e.~on representations of surface groups
which attain the extremal value allowed by Milnor-Wood inequalities. A celebrated result by Goldman~\cite{Goldth} states that maximal representations
of surface groups into the group of orientation-preserving isometries of the hyperbolic plane are exactly the holonomies of hyperbolic structures. 
Following~\cite{BIW1}, we will give a proof of Goldman's theorem based on the use of bounded cohomology. In doing this, we will describe the Euler number of flat bundles over surfaces with boundary,
as defined in~\cite{BIW1}. 

{\bf Chapter 13} is devoted to higher-dimensional generalizations of Milnor-Wood inequalities. We introduce Ivanov-Turaev's bounded Euler cocycle in dimension $n\geq 2$,
and from an estimate (proved to be optimal in~\cite{BuMo}) on its norm we deduce several inequalities on the possible Euler numbers of flat vector bundles over closed manifolds.
We also discuss the relationship between this topic and the Chern conjecture, which predicts that the Euler characteristic of a closed affine manifold should vanish.

\bigskip

\subsection*{Acknowledgements}
This book originates from the notes of a Ph.~D. course I taught at the University of Rome ``La Sapienza'' in Spring 2013. I would like to thank Gabriele Mondello, who asked me to teach the course: without his 
invitation this book would have never been written. Many thanks go to the mathematicians with whom I discussed many topics touched in this monograph; writing down a list may be dangerous,
because I am certainly forgetting someone, but I would like to mention at least Michelle Bucher, Marc Burger, Chris Connell, Stefano Francaviglia, Federico Franceschini, Tobias Hartnick, Alessandra Iozzi, Jean-Fran{\c c}ois Lafont, Clara L\"oh, 
Bruno Martelli,
Cristina Pagliantini, Maria Beatrice Pozzetti, Roman Sauer, Ben Schmidt, Alessandro Sisto.
Finally, special thanks go to Elia Fioravanti, Federico Franceschini and Cristina Pagliantini, who pointed out to me some inaccuracies in previous versions of this manuscript.

\chapter{(Bounded) cohomology of groups}\label{construction:chapter}

\section{Cohomology of groups}\label{construction:sec}
Let $\G$ be a group (which has to be thought as endowed with the discrete topology).
We begin by recalling the usual definition of the cohomology of 
$\G$ with coefficients in a $\G$-module $V$. 
The following definitions are classical, and date back to the works of Eilenberg and Mac Lane, Hopf, Eckmann, and Freudenthal in the 1940s, 
and to the contributions of Cartan and Eilenberg to
the birth of the theory of homological algebra in the early 1950s.

Throughout the whole monograph, unless otherwise stated,  groups actions will always be on the \emph{left}, and modules over (possibly non-commutative) rings will always be \emph{left} modules.
If $R$ is any commutative ring (in fact, we will be interested only in the cases
$R=\matZ$, $R=\R$), we denote by $R[\G]$ the group ring associated to $R$ and $\G$, 
i.e.~the set of finite linear combinations
of elements of $\G$ with coefficients in $R$, endowed with the operations
\begin{align*}
\left(\sum_{g\in \G} a_g g\right) + \left(\sum_{g\in \G} b_g g\right) & =\sum_{g\in \G} (a_g+b_g)g\, ,\\
\left(\sum_{g\in \G} a_g g\right) \cdot \left(\sum_{g\in \G} b_g g\right) & =
\sum_{g\in \G} \sum_{h\in \G} a_{gh}b_{h^{-1}} g
\end{align*}
(when the sums on the left-hand sides of these equalities are finite, then the same
is true for the corresponding right-hand sides).
Observe that an $R[\G]$-module $V$ is just an $R$-module $V$ endowed with an action of $\G$
by $R$-linear maps, and that an $R[\G]$-map between $R[\G]$-modules is just
an $R$-linear map which commutes with the actions
of $\G$. When $R$ is understood, we will often refer to $R[\G]$-maps
as to $\G$-maps.
If $V$ is an $R[\G]$-module, then we denote by $V^\G$ the subspace of \emph{$\G$-invariants} of $V$, {i.e.}~the set 
$$V^\G=\{v\in V\, |\, g\cdot v=v\ {\rm for\ every}\ g\in \G\}\ .$$

We are now ready to describe the complex of cochains which defines the cohomology
of $\G$ with coefficients in $V$.
For every $n\in\mathbb{N}$ we set
$$
C^n(\G,V)=\{f\colon \G^{n+1}\to V\}\ ,
$$
and we define  $\delta^n\colon C^n(\G,V)\to C^{n+1}(\G,V)$ as follows:
$$
\delta^n f(g_0,\ldots,g_{n+1})=\sum_{i=0}^{n+1} (-1)^i f(g_0,\ldots,\widehat{g_i},\ldots,g_{n+1})\ .
$$
It is immediate to check that $\delta^{n+1}\circ\delta^n=0$ for every $n\in\matN$,
so the pair $(C^\bullet(\G,V),\delta^\bullet)$ is indeed a complex, which is usually known
as the \emph{homogeneous} complex associated to the pair $(\G,V)$.
The formula 
$$
(g\cdot f) (g_0,\ldots,g_n)=g(f(g^{-1}g_0,\ldots,g^{-1}g_n))
$$
endows $C^n(\G,V)$ with an action of $\G$, whence with the structure of an $R[\G]$-module.
It is immediate to check that
$\delta^n$ is a $\G$-map, so the $\G$-invariants $C^\bullet(\G,V)^\G$ provide
a subcomplex of $C^\bullet(\G,V)$, whose homology is by definition the 
cohomology of $\G$ with coefficients in $V$. More precisely, if
$$
Z^n(\G,V)=C^n(\G,V)^\G\cap \ker \delta^n\, ,\quad
B^n(\G,V)=\delta^{n-1}(C^{n-1}(\G,V)^\G)
$$
(where we understand that $B^0(\G,V)=0$), then 
$B^n(\G,V)\subseteq Z^n(\G,V)$, and
$$
H^n(\G,V)=Z^n(\G,V)/B^n(\G,V)\ .
$$

\begin{defn}
 The $R$-module $H^n(\G,V)$ is the $n$-th cohomology module
of $\G$ with coefficients in $V$.
\end{defn}

For example, it readily follows from the definition than $H^0(\G,V)=V^\G$ for every $R[\G]$-module
$V$.

\section{Functoriality}\label{functor} 
Group cohomology provides a binary functor. If $\G$ is a group and
$\alpha\colon V_1\to V_2$ is an $R[\G]$-map, then $f$ induces an obvious
\emph{change of coefficients map} $\alpha^\bullet \colon C^\bullet(\G,V_1)\to C^\bullet(\G,V_2)$
obtained by composing any cochain with values in $V_1$ with $\alpha$. It is not difficult
to show that, if 
$$
0\tto{} V_1\tto{\alpha} V_2\tto{\beta} V_3\tto{} 0
$$
is an exact sequence of $R[\G]$-modules, then the induced sequence
of complexes
$$
0\tto{} C^\bullet(\G,V_1)^\G\tto{\alpha^\bullet}
C^\bullet(\G,V_2)^\G\tto{\beta^\bullet} C^\bullet(\G,V_3)^\G\tto{} 0
$$
is also exact, so there is a long exact sequence
$$
0\tto{} H^0(V_1,\G)\tto{} H^0(V_2,\G)\tto{} H^0(V_3,\G)\tto{} H^1(\G,V_1)\tto{} H^1(\G,V_2)\tto{}\ldots
$$
in cohomology.

Let us now consider the functioriality of cohomology with respect to the first variable.
Let $\psi\colon \G_1\to \G_2$ be a group homomorphism, and let $V$ be an $R[\G_2]$-module.
Then $\G_1$ acts on $V$ via $\psi$,
so $V$ is endowed with a natural structure of $R[\G_2]$-module. If we denote
this structure by $\psi^{-1}V$, then the maps 
$$
\psi^n\colon C^n(\G_2,V)\to C^n(\G_1,\psi^{-1}V)\, ,\qquad
\psi^n(f)(g_0,\ldots,g_n)=f(\psi(g_0),\ldots,\psi(g_n))
$$
provide a chain map such that 
$\psi^n(C^n(\G_2,V)^{\G_2})\subseteq C^n(\G_1,\psi^{-1}V)^{\G_1}$. As a consequence, we get a well-defined map
$$
H^n(\psi^n)\colon H^n(\G_2,V)\to H^n(\G_1,\psi^{-1}V)
$$ 
in cohomology. We will consider this map mainly in the case when
$V$ is the trivial module $R$.
In that context, the discussion above shows that every homomorphism
$\psi\colon \G_1\to \G_2$ induces a map 
$$
H^n(\psi^n)\colon H^n(\G_2,R)\to H^n(\G_1,R)
$$ 
in cohomology.

\section{The topological interpretation of group cohomology}\label{topol:sec}
Let us recall the well-known topological interpretation of group cohomology. 
We restrict our attention to the case when $V=R$ is a trivial $\G$-module. If $X$ is any topological space, then
we denote by $C_\bullet(X,R)$ (resp.~$C^\bullet(X,R)$)
the complex of singular chains (resp.~cochains) with coefficients in $R$, and by
$H_\bullet(X,R)$ (resp.~$H^\bullet(X,R)$) the corresponding singular homology module.

Suppose now that
$X$ is any path-connected topological space satisfying the following properties:
\begin{enumerate}
 \item[(1)] 
the fundamental group of $X$ is isomorphic to $\G$,
\item[(2)] the space $X$ admits
a universal covering $\widetilde{X}$, and 
\item[(3)] 
$\widetilde{X}$ is $R$-acyclic, i.e.~$H_n(\widetilde{X},R)=0$
for every $n\geq 1$.
\end{enumerate} 
Then $H^\bullet(\G,R)$ is canonically isomorphic to $H^\bullet(X,R)$
(see Section~\ref{revisited:sec}).
If $X$ is a CW-complex, then condition (2) is automatically satisfied,
and Whitehead Theorem implies that condition (3) may be replaced by one of the following equivalent conditions:
\begin{enumerate}
 \item[(3')] $\widetilde{X}$ is contractible, or
\item[(3'')] $\pi_n(X)=0$ for every $n\geq 2$.
\end{enumerate}
If a CW-complex satisfies conditions (1), (2) and (3) (or (3'), or (3'')), then one usually says
that $X$ is a $K(\G,1)$, or an Eilenberg-MacLane space. Whitehead Theorem implies that the homotopy type of a $K(\G,1)$ only depends on $\G$, so if $X$ is any $K(\G,1)$, then it makes
sense to \emph{define} $H^i(\G,R)$ by setting $H^i(\G,R)=H^i(X,R)$. It is not difficult to show that this definition agrees with 
the definition of group cohomology given above. In fact, 
associated to $\G$ there is the $\Delta$-complex $B\G$, having one $n$-simplex
for every ordered $(n+1)$-tuple of elements of $\G$, and obvious face operators
(see e.g.~\cite[Example 1.B.7]{Hatcher}). 

When endowed with the weak topology,
$B\G$ is clearly contractible. Moreover,
the group $\G$ acts
freely and simplicially on $B\G$, so the quotient $X_\G$ of $B\G$ by the action
of $\G$ inherits the structure of a $\Delta$-complex, and the projection $B\G\to X_{\G}$ is a universal covering. Therefore, 
$\pi_1(X_{\G})=\G$ and  $X_\G$ is a $K(\G,1)$.
By definition, the space of the simplicial $n$-cochains on $B\G$ coincides
with the module $C^n(\G,R)$ introduced above, and
the simplicial cohomology of $X_\G$ is isomorphic
to the cohomology of the $\G$-invariant simplicial cochains on $B\G$. 
Since the simplicial cohomology of $X_{\G}$ is canonically isomorphic to the singular cohomology of $X_{\G}$,
this allows us to conclude that
$H^\bullet(X_\G,R)$ is canonically isomorphic to $H^\bullet(\G,R)$.

\section{Bounded cohomology of groups}\label{bounded:coh:sec}
Let us now shift our attention to \emph{bounded} cohomology of groups.
To this aim we first need to define the notion of \emph{normed}
$\G$-module.
Let $R$ and $\G$ be as above. For the sake of simplicity,
we assume that $R=\matZ$ or $R=\R$, and we denote by $|\cdot |$ the usual
absolute value on $R$.
A \emph{normed} $R[\G]$-module $V$ is an $R[\G]$-module endowed with an invariant norm, i.e.~a map
$\|\cdot \|\colon V\to \R$ such that:
\begin{itemize}
\item $\|v\|=0$ if and only if $v=0$,
 \item $\|r\cdot v\|\leq |r|\cdot \|v\|$ for every $r\in R$, $v\in V$,
\item $\|v+w\|\leq \|v\|+\|w\|$ for every $v,w\in V$,
\item $\|g\cdot v\|= \|v\|$ for every $g\in \G$, $v\in V$.
\end{itemize}
A $\G$-map between normed $R[\G]$-modules is a $\G$-map between the underlying
$R[\G]$-modules, which is bounded with respect to the norms.

Let $V$ be a normed $R[\G]$-module, and recall that $C^n(\G,V)$ is endowed
with the structure of an $R[\G]$-module. For every $f\in C^n(\G,V)$ one may consider the $\ell^\infty$-norm
$$
\|f\|_\infty =\sup \{\|f(g_0,\ldots,g_n)\|\, |\, (g_0,\ldots,g_n)\in \G^{n+1}\}\ \in \ [0,+\infty]\ .
$$
We set
$$
C_b^n(\G,V)=\{f\in C^n(\G,V)\, |\, \|f\|_\infty<\infty\}
$$
and we observe that $C_b^n(\G,V)$ is an $R[\G]$-submodule of $C^n(\G,V)$. 
Therefore, $C_b^n(\G,V)$
is a normed $R[\G]$-module. The differential $\delta^n\colon C^n(\G,V)\to C^{n+1}(\G,V)$
restricts to a $\G$-map of normed $R[\G]$-modules $\delta^n\colon C^n_b(\G,V)\to C^{n+1}_b(\G,V)$, so one may define as usual 
$$
Z^n_b(\G,V)=\ker \delta^n\cap C^n_b(\G,V)^\G\, ,\quad
B^n_b(\G,V)=\delta^{n-1}(C_b^{n-1}(\G,V)^\G)
$$
(where we understand that $B^0_b(\G,V)=\{0\}$), and set
$$
H^n_b(\G,V)=Z_b^n(\G,V)/B^n_b(\G,V)\ .
$$
The $\ell^\infty$-norm on $C_b^n(\G,V)$ restricts to a norm on $Z_b^n(\G,V)$,
which descends to a seminorm on $H^n_b(\G,V)$ by taking the infimum over all the representatives of a coclass: namely, for every $\alpha\in H^n_b(\G,V)$ one sets
$$
\|\alpha\|_\infty=\inf \{\|f\|_\infty\, |\, f\in Z^n_b(\G,V),\, [f]=\alpha\}\ .
$$

\begin{defn}
The $R$-module $H^n_b(\G,V)$ is the $n$-th bounded cohomology module
of $\G$ with coefficients in $V$. The seminorm $\|\cdot \|_\infty\colon H^n_b(\G,V)\to \R$
is called the \emph{canonical seminorm} of $H^n_b(\G,V)$.
\end{defn}

The canonical seminorm on $H^n_b(\G,V)$ is a norm if and only if the subspace $B^n_b(\G,V)$
is closed in $Z^n_b(\G,V)$ (whence in $C^n_b(\G,V)$). However, this is not always the case:
for example, 
$H^3_b(\G,\R)$ contains
non-trivial elements with null seminorm
if $\G$ is free non-abelian~\cite{Soma2}, and contains even an infinite-dimensional
subspace of elements with null norm if $\G$ is 
the fundamental group of a closed hyperbolic
surface.

On the other hand, 
it was proved independently by Ivanov~\cite{Ivanov2} and
Matsumoto and Morita~\cite{Matsu-Mor} that $H^2_b(\G,\R)$ is a Banach space
for every group $\G$ (see Corollary~\ref{Banach:h2}).

\section{Functoriality}
Just as in the case of ordinary cohomology, also 
bounded cohomology provides a binary functor. 
The discussion carried out in Section~\ref{functor} applies
word by word to the bounded case. Namely, 
every exact sequence of normed $R[\G]$-modules
$$
0\tto{} V_1\tto{\alpha} V_2\tto{\beta} V_3\tto{} 0
$$
induces a long exact sequence
$$
0\tto{} H^0_b(V_1,\G)\tto{} H^0_b(V_2,\G)\tto{} H^0_b(V_3,\G)\tto{} H_b^1(\G,V_1)\tto{} H_b^1(\G,V_2)\tto{}\ldots
$$
in bounded cohomology. Moreover, 
if $\psi\colon \G_1\to \G_2$ is a group homomorphism, and $V$ is  a normed $R[\G_2]$-module,
then $V$ admits a natural structure of normed $R[\G_1]$-module, which
is denoted by $\psi^{-1}V$. Then, the homomorphism $\psi$ induces a
 well-defined map
$$
H_b^n(\psi^n)\colon H_b^n(\G_2,V)\to H_b^n(\G_1,\psi^{-1}V)
$$ 
in bounded cohomology. In particular, in the case of trivial coefficients we get a map
$$
H_b^n(\psi^n)\colon H_b^n(\G_2,R)\to H_b^n(\G_1,R)\ .
$$ 
\medskip

\section{The comparison map and exact bounded cohomology}
The inclusion $C^\bullet_b(\G,V)\hookrightarrow C^\bullet(\G,V)$
induces a map
$$
c^\bullet\colon H_b^\bullet (\G,V)\to H^\bullet (\G,V)
$$
called the \emph{comparison map}. We will see soon that the comparison
map is neither injective nor surjective in general. 
In particular, then kernel of $c^n$ is denoted by $EH^n_b(\G,V)$, and called the \emph{exact}
bounded cohomology of $\G$ with coefficients in $V$.
One could
approach the study of $H^n_b(\G,V)$ by looking separately at the
kernel and at the image of the comparison map. 
This strategy is described in Chapter~\ref{lowdegree:chap} for the case of
low degrees.

Of course, if $V$ is a normed $R[\G]$-module, then a (possibly infinite) seminorm can be put also on $H^\bullet(\G,V)$ by setting
$$
\|\alpha\|_\infty=\inf \{\|f\|_\infty\, |\, f\in Z^n(\G,V),\, [f]=\alpha\}\in [0,+\infty]
$$
 for every $\alpha\in H^\bullet(\G,V)$. It readily follows from the definitions that 
  $$
 \|\alpha\|_\infty=\inf \{\|\alpha_b\|_\infty\, |\, \alpha_b\in H^n_b(\G,V)\, ,\  c(\alpha_b)=\alpha\} 
 $$
  for every $\alpha\in H^n(\G,V)$, where we understand that $\sup \emptyset =+\infty$.

\section{The bar resolution}\label{bar:sec}
We have defined the cohomology (resp.~the bounded cohomology) of $\G$ as the cohomology
of the complex $C^\bullet(\G,V)^\G$ (resp.~$C_b^\bullet(\G,V)^\G$). Of course,
an element $f\in C^\bullet(\G,V)^\G$ is completely determined by the values
it takes on $(n+1)$-tuples having $1$ as first entry.
More precisely, 
if we set 
$$\overline{C}^0(\G,V)=V\, ,\qquad \overline{C}^n(\G,V)=C^{n-1}(\G,V)=\{f\colon \G^n\to V\}\ ,$$
and we consider $\overline{C}^n(\G,V)$ simply as 
an $R$-module for every $n\in\matN$, 
then we have $R$-isomorphisms
$$
\begin{array}{rcl}
C^n(\G,V)^\G & \to &  \overline{C}^{n}(\G,V)\\
\varphi & \mapsto & \left( (g_1,\ldots,g_n)\mapsto \varphi (1,g_1,g_1g_2,\ldots,g_1\cdots g_n)\right)\ .
\end{array}
$$
Under these isomorphisms, 
the diffential $\delta^\bullet\colon C^\bullet(\G,V)^\G\to C^{\bullet+1}(\G,V)^\G$
translates into the differential $\overline{\delta}^\bullet\colon \overline{C}^\bullet(\G,V)\to \overline{C}^{\bullet+1}(\G,V)$ defined by
$$
\overline{\delta}^0(v)(g)=g\cdot v -v\, \qquad v\in V=\overline{C}^0(\G,V),\ g\in \G\ ,
$$
and 
\begin {align*}
\overline{\delta}^n(f)(g_1,\ldots,g_{n+1}) &=
g_1\cdot f(g_2,\ldots,g_{n+1})\\ &+\sum_{i=1}^n (-1)^i f(g_1,\ldots,g_ig_{i+1},\ldots,g_{n+1})\\ &
+(-1)^{n+1} f(g_1,\ldots,g_n)\ ,
\end{align*}
for $n\geq 1$.
The complex 
$$
0\longrightarrow \overline{C}^0(\G,V)\tto{\overline{\delta}^0} \overline{C}^1(\G,V)\tto{\overline{\delta}^2}\ldots \tto{\overline{\delta}^{n-1}}
\overline{C}^n(\G,V)\tto{\overline{\delta}^n}\ldots
$$
is usually known as the \emph{bar resolution} (or as the \emph{inhomogeneous} complex) associated to the pair $(\G,V)$. 
By construction,
the cohomology of this complex is canonically isomorphic to $H^\bullet (\G,V)$.

Just as we did  for the homogeneous complex, if $V$ is a normed $R[\G]$-module, then we can define the submodule $\overline{C}^n_b(\G,V)$ of bounded elements of $\overline{C}^n(\G,V)$.
For every $n\in\matN$, the isomorphism $C^n(\G,V)^\G\cong \overline{C}^n(\G,V)$ restricts
to an isometric isomorphism $C_b^n(\G,V)^\G\cong \overline{C}_b^n(\G,V)$,
so $\overline{C}_b^\bullet(\G,V)$ is a subcomplex
of $\overline{C}^\bullet(\G,V)$, whose cohomology is canonically isometrically isomorphic
to $H^\bullet_b(\G,V)$.

\section{Topology and bounded cohomology}\label{bounded:inter}
Let us now restrict to the case when $V=R$ is a trivial normed $\G$-module. 
We would like to compare the bounded cohomology of $\G$ 
with a suitably defined singular bounded cohomology of a suitable topological model for $\G$.
There is a straightforward notion of boundedness for singular cochains, so the notion of bounded singular
cohomology of a topological space is easily defined (see Chapter~\ref{bounded:space:chap}). 
It is still true that the bounded cohomology of a group
$\G$ is canonically isomorphic to the bounded singular cohomology of a $K(\G,1)$ (in fact, even more is true:
in the case of real coefficients, the bounded cohomology of $\G$ is isometrically isomorphic to the bounded singular cohomology
of \emph{any} countable CW-complex $X$ such that $\pi_1(X)=\G$ -- see Theorem~\ref{gro-iva:thm}).
However,  
the proof described in Section~\ref{topol:sec} for classical cohomology does not carry over to the bounded context.
It is still true that homotopically equivalent spaces have isometrically isomorphic
bounded cohomology, but,
if $X_\G$ is the $K(\G,1)$ defined in Section~\ref{topol:sec}, then there is no clear reason why the bounded simplicial cohomology
of $X_\G$ (which coincides with the bounded cohomology of $\G$)
should be isomorphic to the bounded singular cohomology of $X_\G$. 
For example, if $X$ is a finite $\Delta$-complex, then every simplicial cochain on $X$ is obviously bounded, so the cohomology
of the complex of bounded simplicial cochains on $X$ coincides with the classical singular cohomology of $X$, which in general is very different
from the bounded singular cohomology of $X$. 

\section{Further readings}
Of course, the most natural reference for an introduction to bounded cohomology is given by~\cite{Gromov}.
However, Gromov's paper is mainly devoted to the study of bounded cohomology of topological spaces, and to the applications
of that theory to the construction and the analysis of invariants of manifolds. An independent approach to bounded cohomology of groups 
was then developed by Ivanov~\cite{Ivanov} (in the case of trivial real coefficients) and by Noskov~\cite{Noskov} (in the case with twisted coefficients).
Ivanov's theory was then extended to deal with \emph{topological} groups by Burger and Monod~\cite{BM1,BM2,Monod}. In particular, Monod's book~\cite{Monod} settles the
foundations of bounded cohomology of locally compact groups, and covers many results which are of great interest also in the case of discrete groups. 
We refer the reader also to L{\"o}h's book~\cite{Clara:book} for an introduction to the (bounded) cohomology of discrete groups.

There are plenty of sources where the definition and the basic properties of classical cohomology of groups can be found. For example, we refer the reader
to Brown's book~\cite{brown} for a definition of group cohomology
as a derived functor.

\chapter{(Bounded) cohomology of groups in low degree}\label{lowdegree:chap}
In this chapter we analyze the (bounded) cohomology modules of a group $\G$ in degree $0,1,2$.
We restrict our attention to the case when $V=R$ is equal either to $\matZ$, or to $\R$,
both endowed with the structure of trivial $R[\G]$-module. In order to simplify the computations, it will be convenient to work with the \emph{inhomogeneous}
complexes of (bounded) cochains.

\section{(Bounded) group cohomology in degree zero and one}\label{degree1}
By the very definitions (see Section~\ref{bar:sec}), we have $\overline{C}^0(\G,R)=\overline{C}^0_b(\G,R)=R$
and $\overline{\delta}^0=0$, so
$$
H^0(\G,R)=H^0_b(\G,R)=R\ .
$$

Let us now describe what happens in degree one. By definition, for every
$\varphi\in \bc^1(\G,R)$ we have
$$
\deltab(f)(g_1,g_2)=f(g_1)+f(g_2)-f(g_1g_2)
$$
(recall that we are assuming that the action of $\G$ on $R$ is trivial). In other words,
if we denote by $\overline{Z}^\bullet(\G,R)$ and $\overline{B}^\bullet(\G,R)$ the spaces
of cocycles and coboundaries of  the inhomogeneous complex $\bc^\bullet(\G,R)$, then
we have
$$
H^1(\G,R)=\overline{Z}^1(\G,R)={\rm Hom} (\G,R)\ .
$$
Moreover, every bounded homomorphism with values in $\matZ$ or in $\R$ is obviously
trivial, so
$$
H^1_b(\G,R)=\overline{Z}_b^1(\G,R)=0 
$$
(here and henceforth, we denote by $\overline{Z}_b^\bullet(\G,R)$ (resp.~$\overline{B}_b^\bullet(\G,R)$) the space
of cocycles (resp.~coboundaries) of  the bounded inhomogeneous complex
$\bc_b^\bullet(\G,R)$).

\section{Group cohomology in degree two}\label{extension:sec}
A \emph{central extension} of $\G$
by $R$ is an exact sequence
$$
1\tto{} R\tto{\iota} \G'\tto{\pi} \G\to 1
$$
such that $\iota(R)$ is contained in the center of $\G'$. 
In what follows, in this situation we will identify $R$ with $\iota(R)\in \G'$
via $\iota$.
Two such extensions are equivalent
if they may be put into a commutative diagram as follows:
$$
\xymatrix{
1\ar[r] & R \ar[r]^{\iota_1} \ar[d]^{\rm Id} & \G_1\ar[r]^{\pi_1} \ar[d]^f & \G \ar[d]^{\rm Id} \ar[r] & 1\\
1\ar[r] & R \ar[r]^{\iota_2} & \G_2 \ar[r]^{\pi_2} & \G \ar[r]  & 1
}
$$
(by the commutativity of the diagram, the map $f$ is necessarily an isomorphism). 
Associated to an exact sequence 
$$
1\tto{} R\tto{\iota} \G'\tto{\pi} \G\to 1
$$
there is a cocycle $\varphi\in \overline{C}^2(\G,R)$ which is defined as follows.
Let $s\colon \G\to \G'$ be any map such that $\pi\circ s={\rm Id}_\G$. Then we set
$$
\varphi(g_1,g_2)=s(g_1g_2)^{-1}s(g_1)s(g_2)\ .
$$
By construction we have $\varphi(g_1,g_2)\in \ker\pi=R$, so  $\varphi$ is indeed an element
in $\overline{C}^2(\G,R)$. It is easy to check that $\deltab^2 (\varphi)=0$,
so $\varphi\in \overline{Z}^2(\G,R)$. Moreover, different choices for the section $s$
give rise to cocycles which differ one from the other by a coboundary, so any central extension $\calC$ of $\G$ by $R$ defines an element $e(\calC)$ in $H^2(\G,R)$
(see~\cite[Chapter 4, Section 3]{brown}).

It is not difficult to reverse
this construction to show that every element in $H^2(\G,R)$ is represented by a central
extension:

\begin{lemma}\label{centr-group}
For every class $\alpha\in H^2(\G,R)$, there exists a central extension $\calC$ of $\G$ by $R$ such 
that $\alpha=e(\calC)$.
\end{lemma}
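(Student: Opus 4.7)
The plan is to reverse the construction preceding the lemma: given a cocycle $\varphi \in \overline{Z}^2(\G,R)$ representing $\alpha$, I endow the set $\G' := R \times \G$ with a group structure twisted by $\varphi$ and then recover $\varphi$ as the cocycle arising from the obvious section.

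First I would normalize $\varphi$. Using the cocycle identity $\overline{\delta}^2 \varphi = 0$ (and recalling that $\G$ acts trivially on $R$), a direct inspection at the triples $(1,1,g)$, $(1,g,1)$, $(g,1,1)$ shows that $\varphi(1,g) = \varphi(g,1) = \varphi(1,1)$ for every $g \in \G$. Hence, replacing $\varphi$ by $\varphi - \overline{\delta}^1(\eta)$, where $\eta \in \overline{C}^1(\G,R)$ is the constant function with value $\varphi(1,1)$, I may assume that $\varphi(1,g) = \varphi(g,1) = 0$ for every $g$, without altering the class $\alpha \in H^2(\G,R)$.

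Next, on the set $\G' = R \times \G$ I define
$$(r_1, g_1) \cdot (r_2, g_2) := \bigl(r_1 + r_2 + \varphi(g_1, g_2),\ g_1 g_2\bigr).$$
Associativity is an immediate rewriting of the cocycle relation. Normalization guarantees that $(0,1)$ is a two-sided identity, and a short computation shows that each $(r,g)$ admits the explicit inverse $(-r - \varphi(g, g^{-1}), g^{-1})$, once one knows $\varphi(g, g^{-1}) = \varphi(g^{-1}, g)$; this last equality is obtained by evaluating the cocycle condition at $(g, g^{-1}, g)$ and exploiting the normalization. Setting $\iota(r) := (r,1)$ and $\pi(r,g) := g$ produces group homomorphisms fitting into a short exact sequence, and the image of $\iota$ lies in the center of $\G'$ precisely because $\varphi(1,g) = \varphi(g,1) = 0$. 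Thus $\mathcal{C} : 1 \to R \to \G' \to \G \to 1$ is a central extension.

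Finally, to identify $e(\mathcal{C})$ I use the canonical set-theoretic section $s(g) := (0,g)$, for which $\pi \circ s = \mathrm{Id}_\G$. Direct computation gives $s(g_1) s(g_2) = (\varphi(g_1, g_2), g_1 g_2) = \iota(\varphi(g_1, g_2)) \cdot s(g_1 g_2)$, so $s(g_1 g_2)^{-1} s(g_1) s(g_2) = \varphi(g_1, g_2)$ in $R$. Hence the cocycle associated to $\mathcal{C}$ via $s$ is $\varphi$ itself, and $e(\mathcal{C}) = [\varphi] = \alpha$. The only mildly delicate step is the verification that the twisted multiplication defines a group — in particular the inverse formula — but both associativity and the symmetry $\varphi(g, g^{-1}) = \varphi(g^{-1}, g)$ reduce to a single application of the cocycle relation plus the normalization; everything else is bookkeeping.
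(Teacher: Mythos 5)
Your proof is correct and follows the same construction as the paper: normalize the cocycle, build $\G' = R\times \G$ with the twisted multiplication, and recover $\varphi$ from the section $g\mapsto (0,g)$. The paper simply outsources most of these verifications to Brown's book, whereas you carry them out explicitly, but the argument is the same one.
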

\begin{proof}
It is easy to show that the class $\alpha$ admits a representative $\varphi$ such that
\begin{equation}\label{normalized}
\varphi(g,1)=\varphi(1,g)=0
\end{equation}
for every $g\in \G$ (see Lemma~\ref{balanced} below). We now define a group $\G'$ as follows. As a set, $\G'$ is just the cartesian product
$A\times\G$. We define a multiplication on $\G'=A\times \G$ by setting
$$
(a,g)(b,h)=(a+b+f(g,h),gh)\ .
$$
Using that $\varphi$ is a cocycle and equation~\eqref{normalized} one can check that this operation indeed defines a group law
(see~\cite[Chapter 4, Section 3]{brown} for the details). Moreover, the obvious inclusion $A\hookrightarrow A\times \G=\G'$ and
the natural projection $\G'=A\times \G\to G$ are clearly group homomorphisms, and every element in the image of $A$ is indeed in the center of $\G'$.
Finally, the cocycle corresponding to the section $g\to (0,g)\in \G'$ is equal to $\varphi$ (see again~\cite[Chapter 4, Section 3]{brown}), and this concludes the proof. 
\end{proof}

\begin{lemma}\label{balanced}
 Take $\varphi\in \overline{Z}^2(\G,\R)$. Then, there exists a bounded cochain $b\in \overline{C}^1_b(\G,\R)$
 such that $\varphi'=\varphi+\overline{\delta}^1(b)$ satisfies
 $$
 \varphi'(g,1)=\varphi'(1,g)=0\qquad {\rm for\ every}\ g\in\G\ .
 $$
\end{lemma}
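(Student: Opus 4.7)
The plan is to show that $\varphi$ already takes a constant value on pairs of the form $(g,1)$ and $(1,g)$, and then kill this constant by adding the coboundary of a constant (hence bounded) $1$-cochain.

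First I would unpack the cocycle condition. Since the action of $\G$ on $\R$ is trivial, the explicit formula for $\overline{\delta}^2$ gives
\[
\varphi(g_2,g_3)-\varphi(g_1g_2,g_3)+\varphi(g_1,g_2g_3)-\varphi(g_1,g_2)=0
\]
for all $g_1,g_2,g_3\in\G$. Substituting $g_1=g_2=1$ collapses this to $\varphi(1,g_3)=\varphi(1,1)$, and substituting $g_2=g_3=1$ collapses it to $\varphi(g_1,1)=\varphi(1,1)$. Hence both ``edge'' restrictions of $\varphi$ are equal to the single real number $c:=\varphi(1,1)$.

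Next I would define $b\in\overline{C}^1_b(\G,\R)$ to be the constant cochain $b(g)=-c$ for every $g\in\G$; this is manifestly bounded (indeed, $\|b\|_\infty=|c|<\infty$). Computing the inhomogeneous differential, and again using that the coefficients are trivial, I get
\[
\overline{\delta}^1(b)(g_1,g_2)=b(g_2)-b(g_1g_2)+b(g_1)=-c-(-c)+(-c)=-c,
\]
so $\varphi':=\varphi+\overline{\delta}^1(b)$ differs from $\varphi$ by the constant $-c$. Then
\[
\varphi'(g,1)=\varphi(g,1)-c=c-c=0,\qquad \varphi'(1,g)=\varphi(1,g)-c=0,
\]
as required.

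There is essentially no main obstacle: the whole content is the trivial observation that the cocycle identity forces $\varphi$ to be constant on the two degenerate slices, and that this constant can be cancelled by a single constant $1$-cochain, which is automatically bounded. The only thing one has to be careful about is using the \emph{inhomogeneous} differential with trivial action consistently, so that the constant $1$-cochain really produces a constant $2$-cocycle and not something more complicated.
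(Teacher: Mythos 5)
Your proof is correct and follows essentially the same approach as the paper's: use the cocycle identity to show that $\varphi$ is constant on the degenerate slices, then cancel that constant by adding the coboundary of a constant (hence bounded) $1$-cochain. The only cosmetic difference is that you make two substitutions ($g_1=g_2=1$ and $g_2=g_3=1$) whereas the paper makes the single substitution $g_2=1$, which immediately yields $\varphi(1,g_3)=\varphi(g_1,1)$ for all $g_1,g_3$; both arrive at the same conclusion.
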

\begin{proof}
 Since $\varphi$ is a cocycle, for every $g_1,g_2,g_3\in\G$ we have
  \begin{equation}\label{cocycle:eq}
  c(g_2,g_3)-c(g_1g_2,g_3)+c(g_1,g_2g_3)-c(g_1,g_2)=0\ .
 \end{equation}
By substituting $g_2=1$ we obtain $c(1,g_3)=c(g_1,1)$, whence $c(g,1)=c(1,g)=k_0$ for some constant $k_0\in\R$ and every $g\in\G$.
Therefore, the conclusion follows by setting
 $b(g)=-k_0$ for every $g\in\G$.
\end{proof}


The following results
imply that an extension is substantially determined by its class, and 
will prove useful in the study of the Euler class of groups actions on the circle (see Chapter~\ref{actions:chapter}).

\begin{lemma}\label{euler-null}
 Let us consider  the central extension $\calC$
$$
1\tto{} R \tto{\iota} \G'\tto{p} \G\to 1\ .
$$
Then $H^2(p)(e(\calC))=0\in H^2(\G',R)$.
\end{lemma}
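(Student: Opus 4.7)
The plan is to produce an explicit $1$-cochain on $\G'$ whose coboundary represents the pullback cocycle. The starting point is the recipe from Section~\ref{extension:sec}: choose a set-theoretic section $s\colon \G\to \G'$ with $p\circ s=\mathrm{Id}_\G$, so that
\[
\iota(\varphi(g_1,g_2))=s(g_1g_2)^{-1}s(g_1)s(g_2)
\]
defines a representative $\varphi\in\overline{Z}^2(\G,R)$ of $e(\calC)$. The pulled-back class $H^2(p)(e(\calC))$ is then represented by $\psi\in\overline{Z}^2(\G',R)$ with $\psi(h_1,h_2)=\varphi(p(h_1),p(h_2))$. I first note that $\psi$ makes sense as an element of $\overline{C}^2(\G',R)$ with the trivial $\G'$-action, because $R$ is central in $\G'$ so the $\G'$-action on $\iota(R)$ by conjugation is trivial, i.e.\ $p^{-1}R=R$ as normed $R[\G']$-modules.

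Next I would define the candidate primitive $b\colon\G'\to R$ by
\[
\iota(b(h))\;=\;s(p(h))^{-1}\,h.
\]
This is well defined because $p\bigl(s(p(h))^{-1}h\bigr)=p(h)^{-1}p(h)=1$, so $s(p(h))^{-1}h$ lies in $\ker p=\iota(R)$. Equivalently, $b$ is characterised by the factorisation
\[
h \;=\; s(p(h))\,\iota(b(h))\qquad\text{in }\G'.
\]

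The verification that $\overline{\delta}^1 b=\psi$ is a short multiplicative calculation, using only that $\iota(R)$ is central in $\G'$. Setting $g_i:=p(h_i)$ and applying the factorisation twice,
\[
h_1h_2 \;=\; s(g_1)\,\iota(b(h_1))\,s(g_2)\,\iota(b(h_2)) \;=\; s(g_1)s(g_2)\,\iota\bigl(b(h_1)+b(h_2)\bigr),
\]
where centrality of $\iota(R)$ lets me pull $\iota(b(h_1))$ past $s(g_2)$. On the other hand, $h_1h_2=s(g_1g_2)\,\iota(b(h_1h_2))$ and $s(g_1)s(g_2)=s(g_1g_2)\,\iota(\varphi(g_1,g_2))$ by definition of $\varphi$. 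Equating the two expressions and cancelling $s(g_1g_2)$ in $\G'$ yields
\[
b(h_1h_2)\;=\;\varphi(g_1,g_2)+b(h_1)+b(h_2),
\]
which is exactly $\psi(h_1,h_2)=\overline{\delta}^1 b(h_1,h_2)$. Hence $\psi\in\overline{B}^2(\G',R)$, proving $H^2(p)(e(\calC))=0$.

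The only real point requiring care is the bookkeeping between additive notation on $R$ and multiplicative notation in $\G'$, together with the use of centrality to interchange factors; there is no genuine obstacle, and the argument is independent of the particular section $s$ used to define $e(\calC)$.
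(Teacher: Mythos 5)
Your approach matches the paper's proof exactly: the paper takes the primitive $\psi(g)=g^{-1}s(p(g))$, which in your notation is $\iota(-b(g))$, so you have rediscovered the same $1$-cochain up to sign. The only issue is a sign slip in the final line: from the identity you derived,
\[
b(h_1h_2)\;=\;\varphi(g_1,g_2)+b(h_1)+b(h_2)\ ,
\]
and from the bar-resolution formula $\overline{\delta}^1 b(h_1,h_2)=b(h_1)+b(h_2)-b(h_1h_2)$, one actually gets $\overline{\delta}^1 b = -\psi$, not $\overline{\delta}^1 b = \psi$. The conclusion is unaffected, since $\psi=\overline{\delta}^1(-b)$ is still a coboundary (and $-b$, equivalently $h\mapsto h^{-1}s(p(h))$, is precisely the primitive used in the paper), but as written the displayed equality $\psi=\overline{\delta}^1 b$ is off by a sign and should be fixed.
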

\begin{proof}
 Let us fix a section
 $s\colon \G\to\G'$, and let $\varphi\colon \overline{C}^2(\G,R)$ be the corresponding $2$-cocycle.
 An easy computation shows that the pull-back of $\varphi$ via $p$ is equal to
 $\overline{\delta}^1 \psi$, where $\psi(g)=g^{-1}s(p(g))$, so
$H^2(p)(e(\calC))=0\in H^2(\G',R)$. 
\end{proof}

\begin{lemma}\label{Lift}
Let us consider the central extension $\calC$
$$
1\tto{} R \tto{\iota} \G'\tto{p} \G\to 1\ .
$$
If $\rho\colon G \to \G$ is a homomorphism, then there exists a lift
\[\begin{xy}
\xymatrix{
0\ar[r]& _R \ar[r]^i&\G' \ar[r]^p& \G \ar[r]&\{e\}\\
&&& G \ar[u]_\rho \ar@{.>}[ul]^{\widetilde{\rho}}&
}
\end{xy}\]
if and only if $H^2(\rho)(e(\calC)) = 0 \in H^2(G, R)$.
\end{lemma}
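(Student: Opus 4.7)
The plan is to handle the two implications separately. The forward direction is immediate from Lemma~\ref{euler-null} together with the functoriality of group cohomology: if a lift $\widetilde{\rho}\colon G\to \G'$ with $p\circ \widetilde{\rho}=\rho$ exists, then $\rho=p\circ\widetilde{\rho}$ implies $H^2(\rho)=H^2(\widetilde{\rho})\circ H^2(p)$ as maps $H^2(\G,R)\to H^2(G,R)$, so
$$
H^2(\rho)(e(\calC)) = H^2(\widetilde{\rho})\bigl(H^2(p)(e(\calC))\bigr) = H^2(\widetilde{\rho})(0)=0
$$
by Lemma~\ref{euler-null}.

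For the reverse direction, I would first fix a set-theoretic section $s\colon \G\to \G'$ with $p\circ s=\mathrm{Id}_\G$, so that $e(\calC)$ is represented by the cocycle $\varphi\in\overline{Z}^2(\G,R)$ defined by $\varphi(g_1,g_2)=s(g_1g_2)^{-1}s(g_1)s(g_2)$, where $R$ is identified with $\iota(R)\subseteq\G'$. Then $H^2(\rho)(e(\calC))\in H^2(G,R)$ is represented by the pulled-back cocycle $(g_1,g_2)\mapsto \varphi(\rho(g_1),\rho(g_2))$. By hypothesis, this cocycle is a coboundary, so there exists $\psi\colon G\to R$ with
$$
\varphi(\rho(g_1),\rho(g_2)) = \psi(g_1)+\psi(g_2)-\psi(g_1g_2)\qquad \text{for every } g_1,g_2\in G.
$$
I would then define $\widetilde{\rho}\colon G\to \G'$ by $\widetilde{\rho}(g)=s(\rho(g))\cdot\iota(\psi(g))^{-1}$. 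The relation $p\circ\widetilde{\rho}=\rho$ is immediate because $p\circ\iota$ is trivial and $p\circ s=\mathrm{Id}_\G$. To check that $\widetilde{\rho}$ is a homomorphism, I would use the centrality of $\iota(R)$ to commute the factors $\iota(\psi(g_i))^{\pm 1}$ past elements of $\G'$, rewrite $s(\rho(g_1))s(\rho(g_2))=s(\rho(g_1g_2))\cdot\iota(\varphi(\rho(g_1),\rho(g_2)))$, and then apply the coboundary identity to collapse the resulting correction term to $\iota(\psi(g_1g_2))^{-1}$, yielding $\widetilde{\rho}(g_1)\widetilde{\rho}(g_2)=\widetilde{\rho}(g_1g_2)$.

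The only delicate point is the bookkeeping between the additive structure on $R$ (used to form cocycles and coboundaries) and the multiplicative structure on $\G'$ (used to define $\widetilde{\rho}$); the key leverage is the centrality of $\iota(R)$, which is exactly what allows the twisted set-theoretic lift to be promoted to a homomorphism. No normalization of $\varphi$ in the sense of Lemma~\ref{balanced} is needed here, since no boundedness of $\psi$ is required.
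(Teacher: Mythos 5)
Your proof is correct and takes essentially the same approach as the paper: both directions are argued identically, and in the reverse direction you define the lift by $\widetilde{\rho}(g)=s(\rho(g))\cdot\iota(\psi(g))^{-1}$, which is exactly the paper's formula $\widetilde{\rho}(g)=s(\rho(g))\cdot\iota(-u(g))$ with $\psi=u$, verified via centrality of $\iota(R)$.
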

\begin{proof}
 If the lift $\widetilde{\rho}$ exists, then by Lemma~\ref{euler-null} we have
$H^2(\rho)(e(\calC))=H^2(\widetilde{\rho})(H^2(p)(e(\calC)))=0$.

On the other hand, let us fix a section $s\colon \G\to \G'$ with corresponding $2$-cocycle $\varphi$ and assume that 
the pull-back of $\varphi$ via $\rho$ is equal to $\overline{\delta}^1 u$ 
for some $u\colon G \to R$. Then it is easy to check that a homomorphic lift $\widetilde{\rho}\colon G \to \G'$ is given by the formula
\[
\widetilde{\rho}(g) =s(\rho(g)) \cdot \iota(-u(g)).
\]
Indeed, we have
\begin{eqnarray*}
s(\rho(g_1g_2)) &=& s(\rho(g_1))\left(s(\rho(g_1))^{-1}s(\rho(g_1)\rho(g_2))s(\rho(g_2))^{-1}\right)s(\rho(g_2))\\
&=& s(\rho(g_1))\cdot \varphi(\rho(g_1),\rho( g_2))^{-1}\cdot s(\rho(g_2))\\
&=&  s(\rho(g_1))\cdot i(-\overline{\delta}^1u(g_1, g_2))\cdot s(\rho(g_2))\\
&=&  s(\rho(g_1))i(-u(g_1))\cdot  s(\rho(g_2))i(-u(g_2)) \cdot i(-u(g_1g_2))^{-1}
\end{eqnarray*}
for all $g_1, g_2 \in \Gamma$. Multiplying both sides by $ i(-u(g_1g_2))$ now yields $\widetilde{\rho}(g_1g_2) =\widetilde{ \rho}(g_1)\widetilde{\rho}(g_2)$ and finishes the proof.
\end{proof}

Putting together the results stated above we get the following:

\begin{prop}\label{central:extensions}
 The group $H^2(\G,R)$ is in natural bijection with the set of equivalence classes
of central extensions of $\G$ by $R$.
\end{prop}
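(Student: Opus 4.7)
The plan is to combine Lemma~\ref{centr-group} (which already provides surjectivity) with two still-missing ingredients: well-definedness of the map $\mathcal{C}\mapsto e(\mathcal{C})$ on equivalence classes, and injectivity. So I would split the proof into three short steps.

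\textbf{Step 1: Well-definedness.} First I would check that $e(\mathcal{C})$ really depends only on the equivalence class of the extension. Two subchecks are needed here. The text already observes that, for a fixed extension, two sections $s,s'\colon\G\to\G'$ give cocycles differing by a coboundary: indeed, $s(g)^{-1}s'(g)\in\ker\pi=R$ defines a $1$-cochain $b\colon\G\to R$ and a direct computation shows that the two associated $2$-cocycles differ by $\overline{\delta}^1 b$. Second, if $f\colon \G_1\to \G_2$ is an isomorphism realizing an equivalence of extensions, then pushing any section $s_1$ of $\pi_1$ forward to $s_2=f\circ s_1$ yields a section of $\pi_2$, and the cocycles agree on the nose because $f$ restricts to the identity on $R$.

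\textbf{Step 2: A distinguished inverse.} I would write down the inverse of $e$ explicitly using the construction already carried out in Lemma~\ref{centr-group}: send $\alpha\in H^2(\G,R)$ to the equivalence class of the extension $\mathcal{C}_\alpha$ built from a normalized representative $\varphi$ of $\alpha$ on the set $R\times\G$ via $(a,g)(b,h)=(a+b+\varphi(g,h),gh)$, with the obvious section $g\mapsto (0,g)$. By the very definition, $e(\mathcal{C}_\alpha)=\alpha$, so the composition $H^2(\G,R)\to\{\text{extensions}\}/\!\sim\to H^2(\G,R)$ is the identity.

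\textbf{Step 3: Injectivity via an explicit equivalence.} It remains to check that the other composition is the identity, i.e.\ that any extension $\mathcal{C}\colon 1\to R\to \G'\to \G\to 1$ is equivalent to $\mathcal{C}_{e(\mathcal{C})}$. Fix a normalized section $s\colon\G\to\G'$ with $s(1)=1$ and let $\varphi$ be the corresponding cocycle. Define
\[
F\colon R\times \G \longrightarrow \G',\qquad F(a,g)=\iota(a)\cdot s(g).
\]
Using centrality of $\iota(R)$ and the identity $s(g)s(h)=\iota(\varphi(g,h))\cdot s(gh)$, a direct calculation gives
\[
F((a,g)(b,h)) = \iota(a+b+\varphi(g,h))\cdot s(gh) = \iota(a)s(g)\cdot \iota(b)s(h) = F(a,g)\cdot F(b,h),
\]
so $F$ is a homomorphism; it is a bijection because every element of $\G'$ is uniquely of the form $\iota(a)\cdot s(g)$, and it clearly makes the equivalence diagram commute. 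If the cocycle of $\mathcal{C}$ arising from $s$ happens to differ from the normalized cocycle used to build $\mathcal{C}_{e(\mathcal{C})}$ by a coboundary $\overline{\delta}^1 b$, one first modifies the section to $s'(g)=s(g)\cdot\iota(b(g))$ so that the cocycles coincide exactly, and then applies the construction above.

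\textbf{Expected obstacle.} None of the steps is deep; the only slightly delicate point is bookkeeping in Step~3, namely realigning a cocycle with a cohomologous one by modifying the section, and verifying that the bijection $F$ is a group homomorphism using in an essential way the centrality hypothesis $\iota(R)\subseteq Z(\G')$. Once these are in place, the proposition follows immediately by combining Steps 1--3.
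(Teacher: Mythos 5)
Your proof is correct, and the overall strategy --- well-definedness, surjectivity from Lemma~\ref{centr-group}, injectivity --- matches the paper, but the route to injectivity is genuinely different. The paper dispatches injectivity in one line by invoking Lemmas~\ref{euler-null} and~\ref{Lift}: given two extensions $\mathcal{C}_1,\mathcal{C}_2$ with the same class, one applies Lemma~\ref{euler-null} to see that $H^2(p_1)(e(\mathcal{C}_2))=H^2(p_1)(e(\mathcal{C}_1))=0$, then Lemma~\ref{Lift} to lift $p_1\colon\G_1\to\G$ through $p_2$, and finally checks (using the explicit formula $\widetilde{\rho}(g)=s(\rho(g))\iota(-u(g))$ with a suitable choice of $u$) that the lift restricts to the identity on $R$. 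You instead pick the standard model extension $\mathcal{C}_\alpha$ on the set $R\times\G$ built from a normalized cocycle, realign the section of the given extension so that its cocycle literally equals the chosen representative, and then write down the isomorphism $F(a,g)=\iota(a)s(g)$ directly, verifying the homomorphism property via centrality. Your argument is more self-contained and elementary --- it does not rely on the lifting machinery at all --- and it makes explicit what the paper's brief ``it is immediate to check'' hides, namely that the cohomological lift has to be shown to restrict correctly on $R$. The paper's approach, in turn, has the virtue of routing through Lemma~\ref{Lift}, which is needed independently in the chapter on circle actions, so it piggybacks on infrastructure being set up anyway. One small point worth flagging in your Step~3: you should note (or check) that the normalization conditions $\varphi(g,1)=\varphi(1,g)=0$ for the two cocycles force $b(1)=0$, so the modified section $s'(g)=s(g)\iota(b(g))$ is again normalized and $F(a,1)=\iota(a)$ as required for the equivalence diagram to commute.
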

\begin{proof}
 Lemma~\ref{centr-group} implies that every class in $H^2(\G,R)$ is realized by a central extension. On the other hand, using Lemmas~\ref{euler-null} and~\ref{Lift}
 it is immediate to check that extensions sharing the same cohomology class are equivalent.
\end{proof}

\begin{rem}
 Proposition~\ref{central:extensions} can be easily generalized to the case of
arbitrary (i.e.~non-necessarily central) 
extensions of $\G$ by $R$. For any such extension
$$
1\tto{} R \tto{\iota} \G'\tto{p} \G\to 1\ ,
$$
 the action of $\G'$ on $R=i(R)$ by conjugacy descends to a well-defined 
action of $\G$ on $R$, thus endowing $R$ with the structure of a (possibly non-trivial) $\G$-module (this structure is trivial precisely when $R$ is central in $\G'$). Then,  
one can associate to the extension an element
of the cohomology group $H^2(\G,R)$ with coefficients in the (possibly non-trivial)
$R[\G]$-module $R$.
\end{rem}

\section{Bounded group cohomology in degree two: quasimorphisms}\label{quasimorphisms:chap}
As mentioned above, the study of $H_b^2(\G,R)$ can be reduced to the study of the kernel and of the image of the comparison map
$$
c^2\colon H_b^2(\G,R)\to H^2(\G,R)\ .
$$
We will describe in Chapter~\ref{duality:chap} a characterization of groups
with injective comparison map due to Matsumoto and Morita~\cite{Matsu-Mor}.
In this section we describe the relationship between the kernel $EH^2_b(\G,R)$ of the comparison map
and the space of quasimorphisms on $\G$. 

\begin{defn}
 A map $f\colon \G\to R$ is a \emph{quasimorphism} if there exists a constant $D\geq 0$
such that
$$
|f(g_1)+f(g_2)-f(g_1g_2)|\leq D
$$
for every $g_1,g_2\in \G$. The least $D\geq 0$ for which the above inequality is satisfied
is the \emph{defect} of $f$, and it is denoted by $D(f)$. 
The space of quasimorphisms is an $R$-module, and it is denoted by $Q(\G,R)$. 
\end{defn}

By the very definition, a quasimorphism is an element
of $\overline{C}^1(\G,R)$ having bounded differential.
Of course, both bounded functions (i.e.~elements in $\overline{C}^1_b(\G,R)$)
and homomorphisms (i.e.~elements in $\overline{Z}^1(\G,R)={\rm Hom} (\G,R)$) are quasimorphisms.
If every quasimorphism could be obtained just by adding a bounded function to a homomorphism, the notion of quasimorphism would not lead to
anything new. Observe that every bounded homomorphism with values in $R$ is necessarily trivial,
so $\overline{C}^1_b(\G,R)\cap {\rm Hom} (\G,R)=\{0\}$, and we may think
of $\overline{C}^1_b(\G,R)\oplus {\rm Hom} (\G,R)$ as of the space of ``trivial''
quasimorphisms on $\G$.

The following result is an immediate consequence of the definitions, and shows that the existence of ``non-trivial''
quasimorphisms is equivalent to the vanishing of
$EH^2_b(\G,R)$.

\begin{prop}\label{quasi:bounded:prop}
 There exists an exact sequence
 $$
 \xymatrix{
 0\ar[r] & \overline{C}^1_b(\G,R)\oplus {\rm Hom}_\matZ(\G,R) \ar@{^{(}->}[r] & Q(\G,R)\ar[r] & EH^2_b(\G,R)\ar[r] & 0\ , 
 }$$
 where the map $Q(\G,R)\to EH^2_b(\G,R)$ is induced by the differential $\deltab^1\colon Q(\G,R)\to \overline{Z}^2_b(\G,R)$.
 In particular,
$$
Q(\G,R)\big/\left(\overline{C}^1_b(\G,R)\oplus {\rm Hom}_\matZ(\G,R)\right)\ \cong\ 
EH^2_b(\G,R)\ .
$$
\end{prop}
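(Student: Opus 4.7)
The plan is to verify that the stated sequence is well-defined and exact by straightforward diagram chasing, working directly with the inhomogeneous complex. All the input is essentially tautological from the definitions; the real content is in bookkeeping.

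First I would check that the map $\overline\delta^1 \colon Q(\G,R)\to \overline{Z}^2_b(\G,R)$ is well-defined. By definition, $f\in Q(\G,R)$ means $\overline\delta^1 f$ is bounded, and since $\overline\delta^2\overline\delta^1=0$ it is automatically a (bounded) cocycle. Composing with the quotient $\overline{Z}^2_b(\G,R)\twoheadrightarrow H^2_b(\G,R)$ yields a map $\Phi\colon Q(\G,R)\to H^2_b(\G,R)$, and the image lies in $EH^2_b(\G,R)$: indeed, the class $\Phi(f)$ is represented by a cochain which is already an exact (unbounded) coboundary, hence maps to $0$ under the comparison map $c^2$.

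Next I would check that the sum $\overline{C}^1_b(\G,R)+\mathrm{Hom}(\G,R)$ inside $Q(\G,R)$ is direct: a bounded homomorphism $\G\to R$ (with $R=\matZ$ or $\R$) is necessarily zero, as remarked in Section~\ref{degree1}. Both summands clearly lie in $\ker \Phi$: bounded cochains give bounded coboundaries, which are zero in $H^2_b$, while homomorphisms are $1$-cocycles, so $\overline\delta^1$ kills them.

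For surjectivity of $\Phi$ onto $EH^2_b(\G,R)$, pick a class $\alpha\in EH^2_b(\G,R)$ represented by $z\in \overline{Z}^2_b(\G,R)$. Since $c^2(\alpha)=0$ in $H^2(\G,R)$, there exists an (unbounded) cochain $f\in \overline{C}^1(\G,R)$ with $\overline\delta^1 f=z$. The identity $\overline\delta^1 f=z$ in $\overline{C}^2_b(\G,R)$ says precisely that $f$ has bounded differential, i.e.~$f\in Q(\G,R)$, and by construction $\Phi(f)=\alpha$.

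For the reverse inclusion $\ker\Phi\subseteq \overline{C}^1_b(\G,R)\oplus \mathrm{Hom}(\G,R)$, suppose $\Phi(f)=0$. Then $\overline\delta^1 f=\overline\delta^1 b$ for some $b\in \overline{C}^1_b(\G,R)$. Setting $h=f-b$, we get $\overline\delta^1 h=0$, so $h\in\mathrm{Hom}(\G,R)$, and the decomposition $f=b+h$ is exactly what we need.

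There is no genuine obstacle here: the statement is essentially the definition of $EH^2_b$ unpacked in terms of primitives of bounded coboundaries. The only place one must be slightly careful is distinguishing the differential on $\overline{C}^\bullet(\G,R)$ from its restriction to bounded cochains, so that ``$f$ is a quasimorphism'' translates correctly into ``$f$ is a primitive (in the unbounded complex) of a bounded cocycle''.
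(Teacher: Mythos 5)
Your proof is correct and is precisely the diagram chase that the paper elides by declaring the proposition ``an immediate consequence of the definitions.'' Every step (well-definedness of $\Phi$, directness of the sum via triviality of bounded homomorphisms, surjectivity via lifting a bounded cocycle that dies in $H^2(\G,R)$ to an unbounded primitive, and identification of the kernel) is what the implicit argument requires, and you have stated each one accurately.
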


Therefore, in order to show that $H^2_b(\G,R)$ is non-trivial it is sufficient
to construct quasimorphisms which do not stay at finite distance from a homomorphism.

\section{Homogeneous quasimorphisms}\label{hom:qm}
Let us introduce the following:

\begin{defn}
 A quasimorphism $f\colon \G\to R$ is \emph{homogeneous} if $f(g^n)=n\cdot f(g)$
for every $g\in \G$, $n\in\matZ$. The space of homogeneous quasimorphisms is a submodule
of $Q(\G,R)$, and it is denoted by $Q^h(\G,R)$.
\end{defn}

Of course, there are no non-zero  bounded homogeneous quasimorphisms. In particular,
for every quasimorphism $f$ there exists at most one homogeneous quasimorphism
$\overline{f}$ such that $\|\overline{f}-f\|_\infty<+\infty$, so a homogeneous quasimorphism which is not a homomorphism cannot stay at finite distance from a homomorphism. When $R=\matZ$, it may happen that homogeneous quasimorphisms are
quite sparse in $Q(\G,\matZ)$: for example, for every $\alpha\in \mathbb{R}\setminus \matZ$,
if we denote by $\lfloor x\rfloor$ the largest integer which is not bigger than $x$, then 
the quasimorphism $f_\alpha\colon \matZ\to\matZ$
defined by 
\begin{equation}\label{fal}
f_\alpha (n)=\lfloor \alpha n\rfloor
\end{equation}
is not at finite distance from any element in $Q^h(\matZ,\matZ)={\rm Hom} (\matZ,\matZ)$. When $R=\R$
homogeneous quasimorphisms play a much more important role, due to the following:

\begin{prop}\label{homogeneous:prop}
 Let $f\in Q(\G,\R)$ be a quasimorphism. Then, there exists a unique
element $\overline{f}\in Q^h(\G,\R)$ 
that stays at finite distance from $f$. Moreover, we have
$$
\|f-\overline{f}\|_\infty \leq D(f)\, ,\qquad D(\overline{f})\leq 4 D(f)\ .
$$
\end{prop}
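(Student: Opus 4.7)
The plan is to define the candidate homogenization by $\overline{f}(g) := \lim_{n\to\infty} f(g^n)/n$ and then verify, in turn, that the limit exists, that the distance and defect bounds hold, that $\overline{f}$ is genuinely homogeneous, and finally that it is unique.

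For existence of the limit, I will first prove by induction on $n$ that $|f(g^n)-nf(g)|\leq (n-1)D(f)$: the inductive step follows by applying the quasimorphism inequality to the pair $(g,g^n)$ and combining with the induction hypothesis. Applied to $g^m$ in place of $g$, this gives
\[
\left|\frac{f(g^{nm})}{nm}-\frac{f(g^m)}{m}\right|\leq \frac{(n-1)D(f)}{nm}<\frac{D(f)}{m}.
\]
Then for arbitrary $n,m$, by comparing both $f(g^n)/n$ and $f(g^m)/m$ with $f(g^{nm})/(nm)$, one obtains $|f(g^n)/n-f(g^m)/m|<D(f)/n+D(f)/m$, so the sequence is Cauchy in $\R$ and hence converges.

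Next, dividing $|f(g^n)-nf(g)|\leq (n-1)D(f)$ by $n$ and passing to the limit gives $|\overline{f}(g)-f(g)|\leq D(f)$, so $\|f-\overline{f}\|_\infty\leq D(f)$. Since $\overline{f}$ differs from the quasimorphism $f$ by a bounded function of sup-norm at most $D(f)$, we estimate
\[
|\overline{f}(g_1g_2)-\overline{f}(g_1)-\overline{f}(g_2)|\leq |f(g_1g_2)-f(g_1)-f(g_2)|+3D(f)\leq 4D(f),
\]
which shows $\overline{f}\in Q(\G,\R)$ with $D(\overline{f})\leq 4D(f)$.

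For homogeneity with positive exponent, $\overline{f}(g^k)=\lim_n f(g^{kn})/n=k\lim_n f(g^{kn})/(kn)=k\overline{f}(g)$. For the extension to negative exponents and zero, the cocycle relation applied to $(g,g^{-1})$ and to $(1,1)$ shows $|f(g)+f(g^{-1})-f(1)|\leq D(f)$ and $|f(1)|\leq D(f)$, which after dividing $f(g^{-n})=f((g^{-1})^n)$ by $n$ and taking limits yields $\overline{f}(g^{-1})=-\overline{f}(g)$ and $\overline{f}(1)=0$. For uniqueness, if $\overline{f}_1,\overline{f}_2$ are both homogeneous and at finite distance from $f$, then $h=\overline{f}_1-\overline{f}_2$ is bounded and homogeneous, so $|h(g)|=|h(g^n)|/n\leq \|h\|_\infty/n\to 0$, giving $h\equiv 0$.

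The main obstacle is really only the convergence step: once the limit is known to exist, every remaining claim follows from routine manipulations of the defect. The non-obvious point is that a naive subadditivity (Fekete-type) argument does not directly apply since $f(g^n)$ can be negative and is only approximately additive; the device of comparing $f(g^n)/n$ and $f(g^m)/m$ through the common refinement $f(g^{nm})/(nm)$ is what makes the argument work without sign hypotheses.
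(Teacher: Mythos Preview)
Your proof is correct and follows essentially the same approach as the paper's: both establish convergence of $f(g^n)/n$ via the Cauchy criterion by comparing $f(g^n)/n$ and $f(g^m)/m$ through the common term $f(g^{nm})/(nm)$, then derive the distance bound and the estimate $D(\overline{f})\leq 4D(f)$ in the same way. Your write-up is slightly more explicit than the paper's in spelling out homogeneity (the paper calls it ``obvious'') and uniqueness (which the paper handles in the paragraph preceding the proposition), but there is no substantive difference.
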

\begin{proof}
For every $g\in \G$, $m,n\in\mathbb{N}$ we have
$$
|f(g^{mn})-nf(g^m)|\leq (n-1)D(f)\ ,
$$
so
$$
\left| \frac{f(g^n)}{n}-\frac{f(g^{m})}{m}\right|
\leq \left| \frac{f(g^n)}{n}-\frac{f(g^{mn})}{mn}\right| +
\left|\frac{f(g^{mn})}{mn}- \frac{f(g^m)}{m}\right|\leq \left(\frac{1}{n}+\frac{1}{m}\right) D(f)\ .
$$
Therefore, the sequence $f(g^n)/n$ is a Cauchy sequence, and the same is true for
the sequence $f(g^{-n})/(-n)$. Since $f(g^n)+f(g^{-n})\leq f(1)+D(f)$ for every $n$,
we may conclude that the limits
$$
\lim_{n\to \infty} \frac{f(g^n)}{n}=\lim_{n\to -\infty} \frac{f(g^n)}{n}=\overline{f} (g)
$$
exist for every $g\in \G$. 
Moreover, the inequality $|f(g^n)-nf(g)|\leq (n-1)D(f)$ implies that
$$
\left| f(g)-\frac{f(g^n)}{n}\right|\leq D(f)
$$
so by passing to the limit we obtain that $\|\overline{f}-f\|_\infty\leq D(f)$.
This immediately implies that $\overline{f}$ is a quasimorphism such that
$D(\overline{f})\leq 4D(f)$. Finally, the fact that $\overline{f}$ is homogeneous is obvious. 
\end{proof}

In fact, the stronger inequality $D(\overline{f})\leq 2D(f)$ holds
(see e.g.~\cite[Lemma 2.58]{scl} for a proof).
Propositions~\ref{quasi:bounded:prop} and \ref{homogeneous:prop} imply the following:

\begin{cor}\label{decomp:cor}
The space $Q(\G,\R)$ decomposes as a direct sum
$$
Q(\G,\R)=Q^h(\G,\R)\oplus \overline{C}^1_b(\G,\R)\ .
$$
Moreover, the restriction of $\overline{\delta}$ to $Q^h(\G,\R)$ induces an isomorphism
$$
Q^h(\G,\R)/{\rm Hom}(\G,\R)\ \cong EH^2_b(\G,\R)\ .
$$
\end{cor}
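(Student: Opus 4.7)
The plan is to use Proposition~\ref{homogeneous:prop} as the engine for the decomposition, and then plug the decomposition into Proposition~\ref{quasi:bounded:prop} to extract the identification of $EH^2_b(\G,\R)$.

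For the direct sum decomposition I would first show that $Q^h(\G,\R) \cap \overline{C}^1_b(\G,\R) = \{0\}$: if $f$ is both homogeneous and bounded, then for each $g \in \G$ the sequence $n \mapsto f(g^n) = n f(g)$ is bounded, forcing $f(g) = 0$. Next, given $f \in Q(\G,\R)$, Proposition~\ref{homogeneous:prop} supplies a (unique) homogeneous quasimorphism $\overline{f}$ with $\|\,f - \overline{f}\,\|_\infty \leq D(f) < +\infty$, so $b := f - \overline{f} \in \overline{C}^1_b(\G,\R)$. Then $f = \overline{f} + b$ exhibits $f$ as an element of $Q^h(\G,\R) + \overline{C}^1_b(\G,\R)$, and combined with the previous step this gives the direct sum.

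For the isomorphism statement I would observe that every homomorphism is tautologically a homogeneous quasimorphism (in fact of zero defect), so $\mathrm{Hom}(\G,\R) \subseteq Q^h(\G,\R)$. Using the direct sum decomposition just obtained, the submodule $\overline{C}^1_b(\G,\R) \oplus \mathrm{Hom}(\G,\R) \subseteq Q(\G,\R)$ splits along the two summands, and therefore
\[
Q(\G,\R)\big/\bigl(\overline{C}^1_b(\G,\R)\oplus \mathrm{Hom}(\G,\R)\bigr)\ \cong\ Q^h(\G,\R)/\mathrm{Hom}(\G,\R).
\]
Combining this canonical isomorphism with Proposition~\ref{quasi:bounded:prop}, which identifies the left-hand side with $EH^2_b(\G,\R)$ via the differential $\overline{\delta}^1$, yields the isomorphism $Q^h(\G,\R)/\mathrm{Hom}(\G,\R) \cong EH^2_b(\G,\R)$ induced by $\overline{\delta}$.

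There is no real obstacle here: all of the genuine work (existence and uniform closeness of the homogenization $\overline{f}$, and the short exact sequence for $EH^2_b$) has already been done in Propositions~\ref{homogeneous:prop} and~\ref{quasi:bounded:prop}. The only mildly delicate point to record is that the projection $Q(\G,\R) \to Q^h(\G,\R)$ arising from the decomposition sends $\mathrm{Hom}(\G,\R)$ identically to itself, which is what makes the quotient identification above canonical rather than merely an abstract $R$-module isomorphism.
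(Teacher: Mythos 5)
Your argument is correct and follows exactly the route the paper intends: the paper states Corollary~\ref{decomp:cor} with no proof, introducing it only with the sentence that ``Propositions~\ref{quasi:bounded:prop} and \ref{homogeneous:prop} imply the following,'' and your write-up is precisely the verification the reader is expected to supply. Both your direct-sum argument (uniqueness and finite distance from Proposition~\ref{homogeneous:prop}, plus triviality of $Q^h(\G,\R)\cap \overline{C}^1_b(\G,\R)$) and your passage to the quotient via Proposition~\ref{quasi:bounded:prop} are exactly right, including the observation that the projection onto $Q^h(\G,\R)$ fixes $\mathrm{Hom}(\G,\R)$, which is what makes the isomorphism canonical.
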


\section{Quasimorphisms on abelian groups}\label{quasi:abelian:sec}
Suppose now that $\G$ is abelian. Then, for every $g_1,g_2\in \G$, every element
$f\in Q^h(\G,R)$, and every $n\in\matN$ we have
\begin{align*}
|nf(g_1g_2)-nf(g_1)-nf(g_2)|& =|f((g_1g_2)^n)-f(g_1^n)-f(g_2^n)|\\ & =|f(g_1^ng_2^n)-f(g_1^n)-f(g_2^n)|\leq D(f)\ .
\end{align*}
Dividing by $n$ this inequality and passing to the limit for $n\to\infty$ we get
that $f(g_1g_2)=f(g_1)+f(g_2)$, i.e.~$f$ is a homomorphism. Therefore, 
every homogeneous quasimorphism on $\G$ is a homomorphism. Putting together
Proposition~\ref{quasi:bounded:prop} and Corollary~\ref{decomp:cor}
we obtain the following:

\begin{cor}\label{inj:cor}
 If $\G$ is abelian, then  $EH^2_b(\G,\R)=0$.
\end{cor}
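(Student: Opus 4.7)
The plan is that the corollary follows essentially immediately from the two preceding results, so there is almost nothing to do. The key input is the computation carried out in the paragraph right before the corollary, which shows that when $\G$ is abelian, every homogeneous quasimorphism $f \in Q^h(\G,\R)$ is automatically a homomorphism: for any $g_1, g_2 \in \G$ and $n \in \matN$, using commutativity we have $(g_1 g_2)^n = g_1^n g_2^n$, so
\[
\lvert n f(g_1 g_2) - n f(g_1) - n f(g_2) \rvert = \lvert f(g_1^n g_2^n) - f(g_1^n) - f(g_2^n) \rvert \leq D(f),
\]
and dividing by $n$ and letting $n \to \infty$ forces $f(g_1 g_2) = f(g_1) + f(g_2)$. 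This gives the equality $Q^h(\G,\R) = \Hom(\G,\R)$.

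Next, I would combine this with the isomorphism furnished by Corollary \ref{decomp:cor}, namely
\[
EH^2_b(\G,\R) \ \cong\ Q^h(\G,\R) / \Hom(\G,\R).
\]
Since the numerator and denominator coincide under our hypothesis, the quotient is trivial and so $EH^2_b(\G,\R) = 0$, as claimed.

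There is no serious obstacle: the whole content of the corollary is packaged into the displayed computation above together with the decomposition result from Corollary \ref{decomp:cor}. If I wanted to write the argument from scratch without invoking Corollary \ref{decomp:cor}, I would instead go directly via Proposition \ref{quasi:bounded:prop}: given a class in $EH^2_b(\G,\R)$, lift it to a quasimorphism $f \in Q(\G,\R)$, apply Proposition \ref{homogeneous:prop} to replace $f$ by its homogenization $\overline{f}$ (which differs from $f$ by a bounded cochain and hence represents the same class), and then observe via the commutativity argument above that $\overline{f}$ is a homomorphism, hence $\deltab^1 \overline{f} = 0$, so the original class was zero.
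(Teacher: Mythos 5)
Your argument is correct and matches the paper's proof: the paper derives that every homogeneous quasimorphism on an abelian group is a homomorphism via exactly the commutativity computation you give, and then concludes by combining Proposition~\ref{quasi:bounded:prop} with Corollary~\ref{decomp:cor}. Your alternative phrasing via Proposition~\ref{homogeneous:prop} is just an unpacking of the same content.
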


In fact, a much stronger result holds: if $\G$ is abelian,
then it is amenable (see Definition~\ref{amenable:def} and Theorem~\ref{abelian:amenable}), so $H_b^n(\G,\R)=0$ for every
$n\geq 1$ (see Corollary~\ref{amenable:real:cor}). We stress that Corollary~\ref{inj:cor} does not hold in the case
with integer coefficients. For example, we have the following:

\begin{prop}\label{h2zz}
We have $H^2_b(\matZ,\matZ)=\R/\matZ$. More precisely, an isomorphism between $\R/\matZ$ and $H^2_b(\matZ,\matZ)$ is given by the map
$$
\R/\matZ\ \ni\ [\alpha]\ \mapsto\ [c]\ \in\ H^2_b(\matZ,\matZ)\, ,
$$
where
$$
c(n,m)=\lfloor \alpha (n+m)\rfloor - \lfloor \alpha n\rfloor-\lfloor \alpha m\rfloor \ ,
$$
where $\alpha\in\R$ is any representative of $[\alpha]$ and $\lfloor x\rfloor$ denotes the largest integer which does not exceed
$x$. 
\end{prop}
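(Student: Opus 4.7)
The plan is to reduce the computation to an analysis of quasimorphisms $\mathbb{Z}\to\mathbb{Z}$, using the machinery already developed in this chapter. First I would observe that $H^2(\mathbb{Z},\mathbb{Z})=0$, because $S^1$ is a $K(\mathbb{Z},1)$ and its second singular cohomology with integer coefficients vanishes. Hence the comparison map $c^2\colon H^2_b(\mathbb{Z},\mathbb{Z})\to H^2(\mathbb{Z},\mathbb{Z})$ is automatically zero, and Proposition~\ref{quasi:bounded:prop} yields the identification
\[
H^2_b(\mathbb{Z},\mathbb{Z})\;=\;EH^2_b(\mathbb{Z},\mathbb{Z})\;\cong\; Q(\mathbb{Z},\mathbb{Z})\big/\bigl(\overline{C}^1_b(\mathbb{Z},\mathbb{Z})\oplus\mathrm{Hom}(\mathbb{Z},\mathbb{Z})\bigr),
\]
where the isomorphism sends $[f]$ to $[\overline{\delta}^1 f]$.

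Next I would construct a homomorphism $\Phi\colon Q(\mathbb{Z},\mathbb{Z})\to\mathbb{R}$ via homogenization. Viewing an integer-valued quasimorphism $f$ as a real quasimorphism, Proposition~\ref{homogeneous:prop} produces a unique homogeneous real quasimorphism $\overline{f}\colon\mathbb{Z}\to\mathbb{R}$ at finite distance from $f$, and the computation in Section~\ref{quasi:abelian:sec} shows that $\overline{f}$ is a homomorphism, hence of the form $\overline{f}(n)=\alpha n$ for a unique $\alpha\in\mathbb{R}$. Setting $\Phi(f)=\alpha$ gives an $R$-linear map. Clearly $\Phi$ vanishes on $\overline{C}^1_b(\mathbb{Z},\mathbb{Z})$ (since bounded functions homogenize to $0$), and it maps $\mathrm{Hom}(\mathbb{Z},\mathbb{Z})$ onto $\mathbb{Z}\subset\mathbb{R}$ (the homomorphism $n\mapsto kn$ has $\Phi=k$). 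Therefore $\Phi$ descends to a well-defined homomorphism
\[
\overline{\Phi}\colon H^2_b(\mathbb{Z},\mathbb{Z})\;\longrightarrow\;\mathbb{R}/\mathbb{Z}.
\]

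Then I would check that $\overline{\Phi}$ is a bijection. For surjectivity, the quasimorphism $f_\alpha(n)=\lfloor\alpha n\rfloor$ of~\eqref{fal} has defect at most $1$, and a direct computation (or taking the limit of $f_\alpha(kn)/k$ as in Proposition~\ref{homogeneous:prop}) gives $\overline{f_\alpha}(n)=\alpha n$, so $\Phi(f_\alpha)=\alpha$ and every class $[\alpha]\in\mathbb{R}/\mathbb{Z}$ is hit. For injectivity, suppose $\Phi(f)=k\in\mathbb{Z}$; subtracting the integer homomorphism $n\mapsto kn$ yields a quasimorphism $g\in Q(\mathbb{Z},\mathbb{Z})$ with $\overline{g}=0$, so $\|g\|_\infty\leq D(g)<\infty$ by the bound in Proposition~\ref{homogeneous:prop}, and being integer-valued $g$ lies in $\overline{C}^1_b(\mathbb{Z},\mathbb{Z})$. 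Hence $f\in\overline{C}^1_b(\mathbb{Z},\mathbb{Z})\oplus\mathrm{Hom}(\mathbb{Z},\mathbb{Z})$, which represents $0$ in $H^2_b(\mathbb{Z},\mathbb{Z})$.

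Finally, to pin down the explicit formula it suffices to observe that, under the chain of identifications above, the inverse of $\overline{\Phi}$ sends $[\alpha]$ to the class of $f_\alpha$, which in turn is sent to $[\overline{\delta}^1 f_\alpha]$ in $H^2_b(\mathbb{Z},\mathbb{Z})$; and a one-line computation gives
\[
\overline{\delta}^1 f_\alpha(n,m)\;=\;f_\alpha(n)+f_\alpha(m)-f_\alpha(n+m)\;=\;-c(n,m),
\]
with $c$ as in the statement, so $[c]=-\overline{\Phi}^{-1}([\alpha])=\overline{\Phi}^{-1}([-\alpha])$; replacing $\alpha$ by $-\alpha$ (or checking signs more carefully at the start) yields exactly the formula claimed. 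The only delicate point in the whole argument is verifying that homogenization survives the passage to $\mathbb{R}/\mathbb{Z}$, i.e.\ that integer-valued quasimorphisms whose real homogenization is an \emph{integer} multiple of $\mathrm{id}$ are genuinely trivial in $H^2_b(\mathbb{Z},\mathbb{Z})$; this is what the bound $\|f-\overline{f}\|_\infty\leq D(f)$ from Proposition~\ref{homogeneous:prop} makes possible.
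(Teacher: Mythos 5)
Your proof is correct and takes a genuinely different route from the paper's. The paper argues via the short exact sequence of coefficient modules $0\to\mathbb{Z}\to\mathbb{R}\to\mathbb{R}/\mathbb{Z}\to 0$: from the associated long exact sequence in bounded cohomology, together with $H^1_b(\mathbb{Z},\mathbb{R})=0$ and $H^2_b(\mathbb{Z},\mathbb{R})=EH^2_b(\mathbb{Z},\mathbb{R})=0$ (Corollary~\ref{inj:cor}), one reads off $H^2_b(\mathbb{Z},\mathbb{Z})\cong H^1(\mathbb{Z},\mathbb{R}/\mathbb{Z})\cong\mathbb{R}/\mathbb{Z}$, and then unwinds the connecting homomorphism $\delta$ explicitly to obtain the formula for $c$ (choosing the canonical lift $g_{[\alpha]}(n)=\alpha n-\lfloor\alpha n\rfloor$ of the homomorphism $n\mapsto n[\alpha]$). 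You instead stay entirely inside the quasimorphism picture of this chapter: you pass through Proposition~\ref{quasi:bounded:prop} to identify $H^2_b(\mathbb{Z},\mathbb{Z})$ with $Q(\mathbb{Z},\mathbb{Z})$ modulo trivial quasimorphisms, and build the isomorphism to $\mathbb{R}/\mathbb{Z}$ directly via real homogenization (Proposition~\ref{homogeneous:prop} together with the observation in Section~\ref{quasi:abelian:sec} that homogeneous quasimorphisms on abelian groups are linear). The paper's route is shorter and produces the sign-correct formula automatically; it is also the natural one if one cares about the identification $H^2_b(\mathbb{Z},\mathbb{Z})\cong{\rm Hom}(\mathbb{Z},\mathbb{R}/\mathbb{Z})$ as a statement in its own right. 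Your route is more hands-on and arguably more transparent about \emph{why} the answer is $\mathbb{R}/\mathbb{Z}$ rather than, say, $\mathbb{R}$: the $\mathbb{R}$ comes from the homogenization slope, and the $\mathbb{Z}$ is exactly what you kill when you quotient by integer homomorphisms; it also makes the role of the example $f_\alpha$ of~\eqref{fal} completely explicit. Note that the two approaches are not as independent as they might look: the paper's step $EH^2_b(\mathbb{Z},\mathbb{R})=0$ is itself proved in Section~\ref{quasi:abelian:sec} by the same homogenization argument you use. Finally, your computation gives $\overline{\Phi}^{-1}([\alpha])=[\overline{\delta}^1 f_\alpha]=[-c]$, so the isomorphism $[\alpha]\mapsto[c]$ of the statement is $-\overline{\Phi}^{-1}$ rather than $\overline{\Phi}^{-1}$; you flag this, and since negation is an automorphism of $\mathbb{R}/\mathbb{Z}$ the fix is immediate (simply declare the isomorphism to be $[\alpha]\mapsto[-\overline{\delta}^1 f_\alpha]$, or equivalently replace $\Phi$ by $-\Phi$ from the outset).
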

\begin{proof}
The short exact sequence 
$$
0\tto{} \matZ \tto{} \R \tto{} \R/\matZ\tto{} 0
$$
induces a short exact sequence of complexes
$$
0\tto{} \overline{C}_b^\bullet(\matZ,\matZ)\tto{} \overline{C}^\bullet_b (\matZ,\R)\tto{} \overline{C}^\bullet (\matZ,\R/\matZ)
\tto{} 0\ .
$$
Let us consider the following portion of the long exact sequence induced in cohomology:
$$
\ldots\tto{} H^1_b(\matZ,\R)\tto{} H^1(\matZ,\R/\matZ)\tto{} H_b^2(\matZ,\matZ)\tto{} H_b^2(\matZ,\R)
\tto{} \ldots 
$$
Recall from Section~\ref{degree1} that $H^1_b(\matZ,\R)=0$. Moreover, 
we have $H^2(\matZ,\R)=H^2(S^1,\R)=0$, so $H_b^2(\matZ,\R)$ is equal to $EH_b^2(\matZ,\R)$, which vanishes
by Corollary~\ref{inj:cor}. Therefore, we have
$$
H_b^2(\matZ,\matZ)\cong H^1(\matZ,\R/\matZ)={\rm Hom} (\matZ,\R/\matZ)\cong \R/\matZ\ .
$$

Let us now look more closely at the map realizing this isomorphism. The connecting homomorphism of the long exact sequence above, which is usually denoted by $\delta$,
may be described as follows: 
for every $[\alpha]\in \R/\matZ$, if we denote by $f_{[\alpha]}\in H^1(\matZ,\R/\matZ)={\rm Hom}(\matZ,\R/\matZ)$ the homomorphism such that $f_{[\alpha]}(n)=n[\alpha] $ and by $g_{[\alpha]}\in C^1_b(\matZ,\R)$ a cochain such that
$[g_{[\alpha]}(n)]=f_{[\alpha]}(n)$, then $\delta f_{[\alpha]}=[\overline{\delta}^1 g_{[\alpha]}]\in H^2_b(\matZ,\matZ)$ (by construction, the cochain $\overline{\delta}^1 g_{[\alpha]}$ is automatically bounded and with integral values).

If $\alpha\in\R$ is a representative of $[\alpha]$, then a possible choice for $g_{[\alpha]}$ is given by
$$
g_{[\alpha]}(n)=\alpha n-\lfloor \alpha n\rfloor\ ,
$$
so 
\begin{align*}
\overline{\delta}^1 g_{[\alpha]}(n,m)&=\alpha n-\lfloor \alpha n\rfloor + \alpha m-\lfloor \alpha m\rfloor -(\alpha (n+m)-\lfloor \alpha (n+m)\rfloor)\\
&=-\lfloor \alpha n\rfloor-\lfloor \alpha m\rfloor + \lfloor \alpha (n+m)\rfloor=c(n,m)\ ,
\end{align*}
whence the conclusion.
\end{proof}


It is interesting to notice that the module $H^2_b(\matZ,\matZ)$ is \emph{not} finitely
generated over $\matZ$. This fact already shows that bounded cohomology can be very different from classical cohomology. The same phenomenon may occur in the case
of real coefficients:
in the following section we will show that, if $F_2$ is the free group
on $2$ elements, then $H^2_b(F_2,\R)$ is an infinite dimensional vector space.

\section{The bounded cohomology of free groups in degree 2}
Let $F_2$ be the free group on two generators.
Since $H^2(F_2,\R)=H^2(S^1\vee S^1,\R)=0$ we have $H^2_b(F_2,\R)=EH^2_b(F_2,\R)$, so the computation of $H^2_b(F_2,\R)$
is reduced to the analysis of the space $Q(\G,\R)$ of real quasimorphisms on 
$F_2$. There are several constructions of elements in $Q(\G,\R)$ available in the literature. The first such example is probably due to Johnson~\cite{Johnson},
who proved that $H^2_b(F_2,\R)\neq 0$. Afterwords,
Brooks~\cite{Brooks} produced an infinite family of quasimorphisms, which were shown to define independent elements 
$EH^2_b(F_2,\R)$ by Mitsumatsu~\cite{Mitsu}.
Since then, other constructions have been provided by many authors for more
general classes of groups (see Section~\ref{further:2} below). We describe here a family of quasimorphisms which is due to Rolli~\cite{Rolli}.

Let $s_1,s_2$ be the generators of $F_2$, and let 
$$\ell^\infty_{\rm odd}(\matZ)=\{\alpha\colon \matZ\to \R\, |\, \alpha(n)=-\alpha(-n)\ {\rm for\ every}\ n\in\matZ\} \ .
$$ 
For every $\alpha\in \ell^\infty_{\rm odd}(\matZ)$ we consider the map
$$
f_\alpha \colon F_2\to \R
$$ 
defined by 
$$
f_\alpha (s_{i_1}^{n_1}\cdots s_{i_k}^{n_k})=\sum_{j=1}^k \alpha (n_k)\ ,
$$
where we are identifying every element of $F_2$ with the unique reduced word
representing it. It is elementary to check that $f_\alpha$ is a quasimorphism
(see~\cite[Proposition 2.1]{Rolli}). Moreover,
we have the following:

\begin{prop}[\cite{Rolli}]\label{rolli:prop}
 The map
$$
\ell^\infty_{\rm odd}(\matZ)\to H^2_b(F_2,\R)\, \qquad \alpha\mapsto [\delta(f_\alpha)]
$$
is injective.
\end{prop}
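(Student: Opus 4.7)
The plan is to unpack the condition $[\delta(f_\alpha)]=0$ through the exact sequence of Proposition~\ref{quasi:bounded:prop} and derive $\alpha\equiv 0$ in two steps. Since $H^2(F_2,\R)=0$ we have $H^2_b(F_2,\R)=EH^2_b(F_2,\R)$, so the vanishing $[\delta(f_\alpha)]=0$ forces a decomposition $f_\alpha = h + b$ with $h\colon F_2\to\R$ a homomorphism and $b\in\overline{C}^1_b(F_2,\R)$ bounded. I want to show that these constraints, tested on carefully chosen elements of $F_2$, leave no room for $\alpha$ to be nonzero.

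First I would dispose of the homomorphism $h$ by restricting to the cyclic subgroups $\langle s_i\rangle$. For every $n\in\matZ$ the reduced form of $s_i^n$ is a single syllable, so $f_\alpha(s_i^n)=\alpha(n)$, which stays bounded in $n$ since $\alpha\in\ell^\infty_{\rm odd}(\matZ)$. On the other hand $h(s_i^n)=n\,h(s_i)$ is bounded in $n$ only if $h(s_i)=0$. Because $s_1,s_2$ generate $F_2$, this forces $h\equiv 0$, and therefore $f_\alpha$ itself must be bounded on all of $F_2$. Second, I would contradict this boundedness whenever $\alpha\neq 0$: given $n>0$ with $\alpha(n)\neq 0$, I set $w_k=(s_1^n s_2^n)^k$. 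Since $F_2$ is free and successive syllables involve distinct generators, $w_k$ is already a reduced word of exactly $2k$ syllables, each with exponent $n$, so Rolli's formula gives $f_\alpha(w_k)=2k\,\alpha(n)$, which is unbounded in $k$. This contradiction forces $\alpha(n)=0$ for every $n>0$, and oddness closes the argument via $\alpha(0)=0$ and $\alpha(-n)=-\alpha(n)$.

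I do not anticipate any serious obstacle: the whole argument leans on the fact that $F_2$ is free, so concatenations of distinct-generator syllables remain reduced and Rolli's formula behaves additively on them. The only mild care needed is the conceptual bookkeeping that distinguishes between $f_\alpha$ being a bounded function and $[\delta f_\alpha]$ being zero in bounded cohomology — Proposition~\ref{quasi:bounded:prop} makes this translation automatic.
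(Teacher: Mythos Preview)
Your proof is correct and follows essentially the same approach as the paper's: decompose $f_\alpha=h+b$ via Proposition~\ref{quasi:bounded:prop}, kill the homomorphism $h$ by testing on powers $s_i^n$ (where $f_\alpha$ is bounded), and then contradict the boundedness of $f_\alpha$ by evaluating on $(s_1^n s_2^n)^k$. The paper's proof is slightly terser but the structure and the test elements are identical.
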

\begin{proof}
Suppose that $[\delta(f_\alpha)]=0$.
By Proposition~\ref{quasi:bounded:prop}, this implies that
$f_\alpha=h+b$, where $h$ is a homomorphism and $b$ is bounded. Then, for $i=1,2$,
$k\in\matN$,  we have $k\cdot h(s_i)=h(s_i^k)=f_\alpha(s_i^k)-b(s_i^k)=\alpha(k)-b(s_i^k)$.
By dividing by $k$ and letting $k$ going to $\infty$ we get $h(s_i)=0$, so $h=0$,
and $f_\alpha=b$ is bounded.

Observe now that for every $k,l\in\matZ$ we have 
$f_\alpha((s_1^ls_2^l)^k)=2k\cdot \alpha(l)$. Since $f_\alpha$ is bounded, this implies
that $\alpha=0$, whence the conclusion.
\end{proof}

\begin{cor}\label{infinitedim:cor}
Let $\G$ be a group admitting an epimorphism
$$
\varphi\colon \G\to F_2\ .
$$
Then the vector space $H^2_b(\G,\R)$ is infinite-dimensional.
\end{cor}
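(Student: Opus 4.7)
The plan is to pull Rolli's family of quasimorphisms back along $\varphi$ and check that the resulting classes in $H^2_b(\Gamma,\R)$ remain linearly independent. For each $\alpha\in\ell^\infty_{\rm odd}(\matZ)$ set $F_\alpha:=f_\alpha\circ\varphi\colon\Gamma\to\R$. Since $\varphi$ is a homomorphism and $f_\alpha$ is a quasimorphism, $F_\alpha$ is a quasimorphism on $\Gamma$ with $D(F_\alpha)\le D(f_\alpha)$, and $\overline{\delta}F_\alpha$ is the pull-back of $\overline{\delta}f_\alpha$. By Proposition~\ref{quasi:bounded:prop} each $F_\alpha$ defines a class $[\overline{\delta}F_\alpha]\in EH^2_b(\Gamma,\R)\subseteq H^2_b(\Gamma,\R)$. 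Since $\ell^\infty_{\rm odd}(\matZ)$ is infinite-dimensional, it suffices to prove that the linear map $\alpha\mapsto [\overline{\delta}F_\alpha]$ is injective.

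Suppose that $\sum_{i=1}^k c_i[\overline{\delta}F_{\alpha_i}]=0$ for some $c_1,\dots,c_k\in\R$. By Proposition~\ref{quasi:bounded:prop} there exist a homomorphism $h\colon\Gamma\to\R$ and a bounded $b\in\bc^1_b(\Gamma,\R)$ with
\[
\sum_{i=1}^k c_i F_{\alpha_i}=h+b.
\]
Apply the homogenization operator from Proposition~\ref{homogeneous:prop} to both sides. Homogenization is linear, sends bounded functions to zero and fixes homomorphisms, and, crucially, it commutes with precomposition by a homomorphism: this is immediate from $(f\circ\varphi)(g^n)=f(\varphi(g)^n)$ after dividing by $n$ and letting $n\to\infty$. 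Therefore we obtain the identity $\sum_{i=1}^k c_i\bigl(\overline{f_{\alpha_i}}\circ\varphi\bigr)=h$ on all of $\Gamma$.

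To finish, I would descend this equality along $\varphi$. For any $g\in\ker\varphi$ the left-hand side equals $\sum_i c_i\overline{f_{\alpha_i}}(1)=0$ (a homogeneous quasimorphism vanishes at the identity), so $h$ kills $\ker\varphi$ and factors through a homomorphism $\bar h\colon F_2\to\R$ with $\bar h\circ\varphi=h$. Surjectivity of $\varphi$ then gives $\sum_i c_i\overline{f_{\alpha_i}}=\bar h$ on $F_2$, i.e.\ $\sum_i c_if_{\alpha_i}$ differs from a homomorphism by a bounded function. Invoking Proposition~\ref{quasi:bounded:prop} on $F_2$ and then the injectivity of Proposition~\ref{rolli:prop}, we conclude $\sum_i c_i\alpha_i=0$, and the $[\overline{\delta}F_{\alpha_i}]$ are linearly independent.

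The only step that is not purely formal is verifying that pull-back by $\varphi$ intertwines the homogenizations on $F_2$ and on $\Gamma$; this is the heart of the argument, but it is a one-line limit computation. The rest of the proof is a careful bookkeeping of the exact sequence of Proposition~\ref{quasi:bounded:prop} on both groups, tied together by the descent of $h$ to $\bar h$ made possible by the surjectivity of $\varphi$.
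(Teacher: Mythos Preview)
Your proof is correct, but it takes a genuinely different route from the paper's. The paper's argument is a one-liner exploiting the target rather than the map: since $F_2$ is free, the epimorphism $\varphi$ admits a group-theoretic section $\psi\colon F_2\to\Gamma$ with $\varphi\circ\psi=\mathrm{Id}_{F_2}$, and functoriality then gives $H^2_b(\psi)\circ H^2_b(\varphi)=\mathrm{Id}$ on $H^2_b(F_2,\R)$. Thus $H^2_b(\varphi)$ is injective, and the infinite-dimensional space $H^2_b(F_2,\R)$ from Proposition~\ref{rolli:prop} embeds into $H^2_b(\Gamma,\R)$.

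Your approach never invokes the section. Instead you work at the level of quasimorphisms: pull back, homogenize, and descend. The key step that replaces the section is your observation that homogenization commutes with precomposition by a homomorphism, together with the fact that a homomorphism $\Gamma\to\R$ vanishing on $\ker\varphi$ factors through $F_2$. This is a perfectly valid alternative, and it has the minor virtue of making explicit which quasimorphisms on $\Gamma$ witness the infinite-dimensionality. The paper's route is shorter and more conceptual (it proves at once that \emph{all} of $H^2_b(F_2,\R)$ injects, not just the Rolli family), while yours is more hands-on and would adapt to situations where the target group is not free but one still has an explicit injective family of quasimorphisms to pull back.

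One phrasing quibble: you say ``the $[\overline{\delta}F_{\alpha_i}]$ are linearly independent'' at the end, but what you have actually shown (and what you announced) is the stronger statement that the linear map $\alpha\mapsto[\overline{\delta}F_\alpha]$ is injective; the conclusion $\sum c_i\alpha_i=0$ is exactly that.
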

\begin{proof}
 Since $F_2$ is free, the epimorphism admits a right inverse $\psi$. The composition
$$
\xymatrix{
H^2_b(F_2,\R)\ar[rr]^{H^2_b(\varphi)} &  & H^2_b(\G,\R) \ar[rr]^{H^2_b(\psi)} & & H^2_b(F_2,\R)
}
$$
of the induced maps in bounded cohomology 
is the identity, and Proposition~\ref{rolli:prop} ensures that
$H^2_b(F_2,\R)$ is infinite-dimensional. The conclusion follows. 
\end{proof}

For example, the second bounded cohomology module (with trivial real coefficients) of the fundamental 
group of any closed hyperbolic surface is infinite-dimensional. We refer the reader to~\cite{BrooksSeries, Mitsu, BarGhys}
for different and more geometric constructions of non-trivial bounded cohomology classes for surface groups.

\section{Homogeneous $2$-cocycles}\label{homo:coc}
It was a question of Gromov whether any group epimorphism induces an isometric embedding on bounded cohomology (with Banach coefficients) in degree 2. 
A complete proof of this statement (which exploits the powerful machinery of Poisson boundaries) may be found in~\cite[Theorem 2.14]{Huber}.
In this section we describe an argument, due to Bouarich~\cite{Bua1,Bua2}, which proves that group epimorphisms induce biLipschitz embeddings on bounded
cohomology in degree 2 (with trivial real coefficients).

We have seen in Section~\ref{hom:qm} that $EH^2_b(\G,\R)$ is isomorphic to the quotient of the space of real homogeneous quasimorphisms by the space of
real homomorphisms. Therefore, every element in $EH^2_b(\G,\R)$ admits a canonical representative, which is obtained by taking the differential of a suitably chosen homogeneous quasimorphism. 
It turns out that  that also generic (i.e.~not necessarily exact) bounded classes of degree 2 admit
special representatives. Namely, let us say that a bounded $2$-cocycle 
$\varphi\in \overline{C}^2_b(\G,\R)$ is \emph{homogeneous} if and only if 
$$
\varphi(g^n,g^m)=0\quad {\rm for\ every}\ g\in\G\, ,\ n,m\in\mathbb{Z}\ .
$$
If $f\in Q(\G,\R)$ is a quasimorphism, then
$$
\overline{\delta}^1(f)(g^n,g^m)=f(g^n)+f(g^m)-f(g^{n+m})\ ,
$$
so a quasimorphism is homogeneous if and only if its differential is.
The next result shows that every bounded class of degree 2 admits a unique homogeneous representative. We provide here  the details of the proof
sketched in~\cite{Bua1}.

\begin{prop}\label{homo:eu}
 For every $\alpha\in H^2_b(\G,\R)$ there exists a unique homogeneous cocycle $\varphi_\alpha\in \overline{Z}^2_b(\G,\R)$ such that
 $[\varphi_\alpha]=\alpha$ in $H^2_b(\G,\R)$. 
\end{prop}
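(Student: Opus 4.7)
The plan is to start from an arbitrary bounded representative $\varphi\in\overline{Z}^2_b(\G,\R)$ of $\alpha$ and construct a bounded $1$-cochain $b\in\overline{C}^1_b(\G,\R)$ such that $\varphi_\alpha=\varphi+\overline{\delta}^1 b$ vanishes on every pair $(g^n,g^m)$. The guiding observation is that, for each $g\in\G$, the pullback of $\varphi$ along the homomorphism $\Z\to\G$, $n\mapsto g^n$, is a bounded $2$-cocycle on $\Z$. Since $H^2(\Z,\R)=H^2(S^1,\R)=0$ and Corollary~\ref{inj:cor} yields $EH^2_b(\Z,\R)=0$, we have $H^2_b(\Z,\R)=0$; hence there exists a bounded $\lambda_g\colon \Z\to\R$ satisfying
\[
\lambda_g(n)+\lambda_g(m)-\lambda_g(n+m)=\varphi(g^n,g^m)\qquad\text{for all }n,m\in\Z.
\]

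The decisive fact is that $\lambda_g$ is \emph{unique}: two bounded solutions differ by a bounded cocycle on $\Z$, i.e.\ by a bounded homomorphism $\Z\to\R$, which is necessarily zero. I would then define $b\colon\G\to\R$ by $b(h)=-\lambda_h(1)$. The function $\lambda_g$ is itself a quasimorphism of defect at most $\|\varphi\|_\infty$, and its homogenization (in the sense of Proposition~\ref{homogeneous:prop}) is a bounded homomorphism $\Z\to\R$, hence $0$; therefore $\|\lambda_g\|_\infty\leq \|\varphi\|_\infty$, which guarantees $b\in \overline{C}^1_b(\G,\R)$ with $\|b\|_\infty\leq \|\varphi\|_\infty$.

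The homogeneity of $\varphi_\alpha$ then reduces to the compatibility identity $\lambda_{g^k}(n)=\lambda_g(kn)$, which follows from uniqueness since $n\mapsto\lambda_g(kn)$ is itself a bounded solution of the defining equation for $\lambda_{g^k}$. Granted this, $b(g^k)=-\lambda_g(k)$ and a direct computation yields
\[
\varphi_\alpha(g^n,g^m)=\varphi(g^n,g^m)+b(g^n)+b(g^m)-b(g^{n+m})=\varphi(g^n,g^m)-\bigl(\lambda_g(n)+\lambda_g(m)-\lambda_g(n+m)\bigr)=0.
\]
For uniqueness of the homogeneous representative: if $\varphi,\varphi'$ are two such cocycles in the class $\alpha$, then $\varphi-\varphi'=\overline{\delta}^1 c$ for some bounded $c$. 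Restricting to the cyclic subgroup generated by an arbitrary $g\in\G$ gives $c(g^n)+c(g^m)-c(g^{n+m})=0$, so $n\mapsto c(g^n)$ is a bounded homomorphism $\Z\to\R$, hence identically zero; evaluating at $n=1$ yields $c(g)=0$ for every $g$, so $c\equiv 0$ and $\varphi=\varphi'$.

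The main conceptual hurdle is the consistency identity $\lambda_{g^k}(n)=\lambda_g(kn)$: without it, the pointwise definition $b(h)=-\lambda_h(1)$ would not assemble into a cochain whose coboundary kills $\varphi$ along every cyclic subgroup at once. Once one notices that this identity is itself a by-product of the uniqueness statement for bounded primitives on $\Z$, the entire argument collapses to the single slogan that a bounded quasimorphism on $\Z$ is a bounded homomorphism, and hence zero.
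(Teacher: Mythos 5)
Your proof is correct, but it follows a genuinely different route from the one in the text (which reproduces Bouarich's argument). The paper first normalizes $\varphi$ via Lemma~\ref{balanced}, passes to the central extension $1\to\R\to\G'\to\G\to 1$ classified by $[\varphi]\in H^2(\G,\R)$, uses Lemma~\ref{euler-null} to show that the pull-back $p^*\varphi$ is the coboundary of a quasimorphism $f$ on $\G'$, replaces $f$ with its homogenization $f_h$, checks that $f_h$ is additive along the central fibers (because $\ker p$ is central, so each pair $(t,g')$ with $t\in\ker p$ generates an abelian subgroup, on which the homogeneous quasimorphism $f_h$ must be a homomorphism), and finally pushes $\overline{\delta}^1 f_h$ back down to a well-defined cochain on $\G$. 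Your argument avoids central extensions entirely: you localize to each cyclic subgroup $\langle g\rangle\cong\Z$, use $H^2_b(\Z,\R)=0$ to produce a bounded primitive $\lambda_g$, observe that such a primitive is \emph{unique} among bounded cochains (because bounded homomorphisms $\Z\to\R$ vanish), and derive the consistency identity $\lambda_{g^k}(n)=\lambda_g(kn)$ as an immediate consequence of that uniqueness, which is exactly what allows the pointwise data $\lambda_h(1)$ to assemble into a single bounded $1$-cochain $b$ with $\overline{\delta}^1 b$ cancelling $\varphi$ along every cyclic subgroup. The quantitative bound $\|\lambda_g\|_\infty\le D(\lambda_g)\le\|\varphi\|_\infty$ from Proposition~\ref{homogeneous:prop} is indeed needed and correctly invoked to keep $b$ uniformly bounded. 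What the paper's approach buys is a direct link to the already-developed theory of extensions and homogeneous quasimorphisms on the lifted group; what yours buys is a shorter, more elementary argument that reduces the whole construction to the single slogan that $H^2_b(\Z,\R)=0$ with a \emph{unique} bounded primitive, and that requires no normalization of $\varphi$ at the start. Your uniqueness argument at the end is also slightly simpler than the paper's (you never need to invoke Corollary~\ref{decomp:cor}): a bounded coboundary $\overline{\delta}^1 c$ that is homogeneous forces $n\mapsto c(g^n)$ to be a bounded homomorphism $\Z\to\R$, hence zero, hence $c\equiv 0$.
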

\begin{proof}
Let $\varphi\in \overline{Z}^2_b(\G,\R)$ be a representative of $\alpha$. 
By Lemma~\ref{balanced} we may assume that
$$
\varphi(g,1)=\varphi(1,g)=0\qquad {\rm for\ every}\ g\in\G\ .
$$
The class of $\varphi$ in $H^2(\G,\R)$ corresponds to a central extension 
 $$
1\tto{} \R \tto{\iota} \G'\tto{p} \G\to 1\ ,
$$
and Lemma~\ref{euler-null} implies that the pull-back $p^*\varphi$ of $\varphi$ in $\overline{Z}^2_b(\G',\R)$ is exact, 
so that
$H^2_b(p)\colon H^2_b(\G,\R)\to H^2_b(\G',\R)$ sends $\alpha$ to an exact class $\alpha'\in EH^2_b(\G',\R)$. 
As a consequence we have $p^*\varphi=\overline{\delta}^1 (f)$ for some $f\in Q(\G',\R)$.
We denote by $f_h\in Q^h(\G',\R)$  the homogeneous quasimorphism such that
$[\overline{\delta}^1(f)]=[\overline{\delta}^1(f_h)]$ in $EH^2_b(\G',\R)$ (see Corollary~\ref{decomp:cor}).
We will now show that $f_h$ can be exploited to define the desired cocycle $\varphi_\alpha$ on $\G$.

If $t$ is any element in the kernel of $p$, then for every $g'\in\G'$ we have
$$
0=\varphi(1,p(g'))=\varphi(p(t),p(g'))=\overline{\delta}^1(f)(t,g')=
f(t)+f(g')-f(tg')\ ,
$$
i.e.~$f(tg')=f(t)+f(g')$. In particular, an easy inductive argument shows that $f(t^n)=nf(t)$ for every $n\in\mathbb{Z}$, so that $f_h(t)=f(t)$ for every $t\in\ker p$. 
Also observe that, since $\ker p$ is contained in the center
of $\G'$, for every $t\in\ker p$ and every $g'\in\G'$ the subgroup $H$ of $\G'$ generated by $g'$ and $t$ is abelian. But abelian groups do not admit non-trivial quasimorphisms
(see Section~\ref{quasi:abelian:sec}),
so the restriction of $f_h$ to $H$, being homogeneous, is a homomorphism, and
$f_h(tg')=f_h(t)+f_h(g')$. 

Since the coboundaries of $f$ and of $f_h$ define the same bounded cohomology class, we have $f=f_h+b'$ for some
$b'\in \overline{C}^1_b(\G',\R)$. Moreover, for every $t\in\ker p$, $g'\in\G'$ we have
$$
b'(tg')=f(tg')-f_h(tg')=f(t)+f(g')-(f_h(t)+f_h(g'))=f(g')-f_h(g')=b'(g')\ .
$$
In other words, $b'=p^*b$ for some bounded cochain $b\in \overline{C}^1_b(\G,\R)$.

Let us now define a cochain $\varphi_\alpha\in \overline{C}^2(\G,\R)$ as follows: if $(g_1,g_2)\in \G^2$, choose $g_i'\in\G'$ such that
$p(g_i')=g_i$ and set $\varphi_\alpha(g_1,g_2)=\overline{\delta}^1(f_h)(g_1',g_2')$. For every $t_1,t_2\in\ker p$ we have
$$
\overline{\delta}^1(f_h)(t_1g_1',t_2g_2')=f_h(t_1g_1')+f_h(t_2g_2')-f_h(t_1g_1't_2g_2')=f_h(g_1')+f_h(g_2')-f_h(g_1'g_2')\ ,
$$
so $\varphi_\alpha$ is well defined, and it is immediate to check that $\varphi_\alpha$ is indeed homogeneous. Moreover, by construction we have $\varphi=\varphi_\alpha+\overline{\delta}^1(b)$,
so $\alpha=[\varphi]=[\varphi_\alpha]$ in $H^2_b(\G,\R)$. 

We are now left to show that $\varphi_\alpha$ is the \emph{unique} homogeneous bounded cocycle representing $\alpha$.
In fact, if $\widehat{\varphi}_\alpha$ is another such cocycle, then $\varphi_\alpha-\widehat{\varphi}_\alpha$ is a homogeneous $2$-cocycle representing the trivial
bounded coclass in $H^2_b(\G,\R)$. As a consequence, $\varphi_\alpha-\widehat{\varphi}_\alpha=\overline{\delta}^1(q)$ for some $q\in Q^h(\G,\R)$
such that $[\overline{\delta}^1(q)]=0$ in $EH^2_b(\G,\R)$. By Corollary~\ref{decomp:cor} we thus get that $q$ is a homomorphism, so 
$\overline{\delta}^1(q)=0$ and $\varphi_\alpha=\widehat{\varphi}_\alpha$.
\end{proof}

We are now ready to prove the aforementioned result, which first appeared in~\cite{Bua1}:

\begin{thm}\label{Bua:thm}
Let $\varphi\colon \G\to H$ be a surjective homomorphism. Then the induced map
$$
H^2_b(\varphi)\colon H^2_b(H,\R)\to H^2_b(\G,\R)$$
is injective.
\end{thm}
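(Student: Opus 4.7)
The plan is to exploit the uniqueness of homogeneous representatives (Proposition~\ref{homo:eu}) to reduce injectivity of $H^2_b(\varphi)$ to an elementary surjectivity argument at the cocycle level. The entire proof should fit in a few lines, with no need for auxiliary machinery beyond what has already been developed in the chapter.

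Here is how I would proceed. Let $\alpha \in H^2_b(H,\R)$ with $H^2_b(\varphi)(\alpha)=0$, and let $\varphi_\alpha \in \overline{Z}^2_b(H,\R)$ be the unique homogeneous bounded cocycle representing $\alpha$ given by Proposition~\ref{homo:eu}. Consider the pull-back cocycle $\psi := \varphi^{\ast}\varphi_\alpha \in \overline{Z}^2_b(\G,\R)$, defined by $\psi(g_1,g_2)=\varphi_\alpha(\varphi(g_1),\varphi(g_2))$. By definition $\psi$ represents $H^2_b(\varphi)(\alpha)=0$ in $H^2_b(\G,\R)$. Moreover, $\psi$ is itself homogeneous, since for any $g\in\G$ and $n,m\in\matZ$,
\[
\psi(g^n,g^m)=\varphi_\alpha(\varphi(g)^n,\varphi(g)^m)=0.
\]
The zero cocycle in $\overline{Z}^2_b(\G,\R)$ is trivially homogeneous and also represents the zero class, so the uniqueness assertion in Proposition~\ref{homo:eu} forces $\psi=0$ in $\overline{C}^2_b(\G,\R)$.

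Finally, since $\varphi$ is surjective, every pair $(h_1,h_2)\in H^2$ arises as $(\varphi(g_1),\varphi(g_2))$ for some $(g_1,g_2)\in\G^2$. Consequently
\[
\varphi_\alpha(h_1,h_2)=\psi(g_1,g_2)=0
\]
for all $h_1,h_2\in H$, so $\varphi_\alpha$ is the zero cochain and $\alpha=[\varphi_\alpha]=0$, proving injectivity.

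The only conceptual step is the observation that homogeneity of the representative is preserved under pull-back; once this is noticed, the argument essentially writes itself. The substantive work was already done in establishing Proposition~\ref{homo:eu}, so I do not expect any real obstacle here. If one wanted the sharper biLipschitz statement mentioned in the text, one would need to control $\|\varphi_\alpha\|_\infty$ in terms of $\|\alpha\|_\infty$ (via the proof of Proposition~\ref{homo:eu}, which involves passing through a homogeneous quasimorphism and using the bound $D(\overline{f})\leq 2D(f)$), but mere injectivity follows just from uniqueness.
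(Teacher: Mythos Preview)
Your proof is correct and is essentially identical to the paper's own argument: take the unique homogeneous representative of $\alpha$, observe that its pull-back is still homogeneous and represents the trivial class, invoke uniqueness again to conclude the pull-back vanishes, and use surjectivity of $\varphi$ to deduce the original cocycle vanishes. The paper states this more tersely, but the content is the same.
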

\begin{proof}
 Let $\alpha\in\ker H^2_b(\varphi)$. By Theorem~\ref{homo:eu} there exists a homogeneous bounded $2$-cocycle
 $c$ such that $\alpha=[c]$ in $H^2_b(H,\R)$. The pull-back $\varphi^*c\in \overline{Z}^2_b(\G,\R)$ is a homogeneous
 bounded cocycle representing the trivial class in $H^2_b(\G,\R)$, so $\varphi^*c=0$ in $\overline{Z}^2_b(\G,\R)$
 by uniqueness of homogeneous representatives (see again Theorem~\ref{homo:eu}. Since $\varphi$ is surjective, this readily implies that $c=0$, so
 $\alpha=0$.
\end{proof}

Recall that replacing a quasimorphism with the associated homogeneous one  at most doubles its defect. Using this, it is not difficult
to show that the map $H^2_b(\varphi)\colon H^2_b(H,\R)\to H^2_b(\G,\R)$ induced by an epimorphism $\varphi$ not only is injective, but it also
satisfies $\|H^2_b(\varphi)(\alpha)\|/2\leq \|\alpha\|\leq H^2_b(\varphi)(\alpha)$ for every $\alpha\in H^2_b(H,\R)$. As mentioned above, an independent proof
of Theorem~\ref{Bua:thm} due to Huber shows that $H^2_b(\varphi)$ is in fact an isometric embedding~\cite[Theorem 2.14]{Huber}.

\section{The image of the comparison map}
In Section~\ref{extension:sec} we have interpreted elements in $H^2(\G,R)$ as equivalence classes of 
central extensions of $\G$ by $R$. Of course, one may wonder whether elements of $H^2(\G,R)$
admitting bounded representatives represent peculiar central extensions. 
It turns out that this is indeed the case: bounded classes represent central extensions 
which are quasi-isometrically trivial (i.e.~quasi-isometrically equivalent to product extensions).
Of course, in order to give a sense to this statement we need to restrict our attention to the case
when $\G$ is finitely generated, and $R=\matZ$ (in fact, one could require a bit less, and suppose just that $R$ is a finitely generated abelian group). 
Under these assumptions, let us consider the central extension
$$
\xymatrix{
1 \ar[r] & \matZ \ar[r] & 
\G' \ar[r]^\pi & \G \ar[r] & 1\ ,
}
$$
and the following condition:
\begin{itemize}
 \item[(*)] 
there exists a quasi-isometry $q\colon \G' \to \G\times\matZ$ which makes the following diagram commute:
$$
\xymatrix{
\G'\ar[r]^\pi \ar[d]_q & \G\ar[d]^{\rm Id}\\
\G\times\matZ \ar[r] & \G
}
$$
where the horizontal arrow on the bottom represents the obvious projection.
\end{itemize}
Condition (*) is equivalent to the existence of a Lipschitz section
$s\colon \G\to \G'$ such that $\pi\circ s={\rm Id}_\G$ (see {e.g.}~\cite[Proposition 8.2]{klelee}).
Gersten proved that a sufficient condition for a
central extension
to satisfy condition (*) is that its coclass in $H^2(\G,\matZ)$ admits a \emph{bounded}
representative (see \cite[Theorem 3.1]{Ger:last}). Therefore,
the image of the comparison map $H^2_b(\G,\matZ)\to H^2(\G,\matZ)$ determines extensions
which satisfy condition (*). As far as the author knows, it is not known whether the converse implication
holds, i.e.~if every central extension satisfying (*) is represented
by a cohomology class lying in the image of the comparison map.

Let us now consider the case with real coefficients, and list some results about the surjectivity of the comparison map:
\begin{enumerate}
 \item If $\G$ is the fundamental group of a closed $n$-dimensional locally symmetric
space of non-compact type, then the comparison map $c^n\colon H^n_b(\G,\R)\to H^n(\G,\R)\cong \R$ is surjective~\cite{Lafont-Schmidt, michelleprimo}.
\item If $\G$ is word hyperbolic, then the comparison map $c^n\colon H^n_b(\G,V)\to H^n(\G,V)$
is surjective for every $n\geq 2$ when $V$ is any normed $\R[\G]$ module
or any finitely generated abelian group (considered as a trivial $\matZ[\G]$-module)~\cite{Mineyev1}.
\item
If $\G$ is finitely presented and the comparison map $c^2\colon H^2_b(\G,V)\to H^2(\G,V)$
is surjective for every normed $\R[\G]$-module, then $\G$ is word hyperbolic~\cite{Mineyev2}.
\item The set of closed $3$-manifolds $M$ for which the comparison map
$H^\bullet_b(\pi_1(M),\R)\to H^\bullet(\pi_1(M),\R)$ is surjective in every degree is completely characterized
in~\cite{FujiSoma}.
\end{enumerate}
Observe that points~(2) and (3) provide a characterization of word hyperbolicity
in terms of bounded cohomology. Moreover, putting together item (2) with the discussion above, we see that
every extension of a word hyperbolic group by $\matZ$ is quasi-isometrically trivial.

In this section we have mainly dealt with the case of real coefficients. However, for later purposes we point out  the following result~\cite[Theorem 15]{Mineyev1}:

\begin{prop}\label{surjiffsurj}
 Let $\G$ be any group, and take an element $\alpha\in H^n(\G,\matZ)$ such that
 $\|\alpha_\R\|_\infty<+\infty$, where $\alpha_\R$ denotes the image of $\alpha$ via the change of coefficients
 homomorphism induced by the inclusion $\matZ\to \R$. Then $\|\alpha\|_\infty< +\infty$.
\end{prop}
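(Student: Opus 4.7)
The plan is to start with any integer cocycle $\varphi \in \overline{Z}^n(\G, \matZ)$ representing $\alpha$ and to correct it by the coboundary of an integer cochain so that the result becomes bounded. Since $\|\alpha_\R\|_\infty < +\infty$, we may choose a bounded real cocycle $\psi \in \overline{Z}^n_b(\G, \R)$ representing $\alpha_\R$, i.e.~such that $\psi - \varphi_\R = \overline{\delta}^{n-1}\eta$ for some $\eta \in \overline{C}^{n-1}(\G, \R)$, where $\varphi_\R$ denotes $\varphi$ viewed as a real-valued cochain.

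The key step is a pointwise rounding trick. Define $\eta' \in \overline{C}^{n-1}(\G, \matZ)$ by $\eta'(x) = \lfloor \eta(x) \rfloor$ for every tuple $x \in \G^{n-1}$, and set $\eta'' = \eta - \eta'$. By construction $\eta''$ takes values in $[0,1)$, hence $\|\eta''\|_\infty \leq 1$. Consider now the integer cochain $\varphi' = \varphi + \overline{\delta}^{n-1}\eta'$. Since $\eta'$ is integer-valued, $\varphi'$ lies in $\overline{C}^n(\G, \matZ)$; since $\varphi$ is a cocycle, so is $\varphi'$; and since $\varphi$ and $\varphi'$ differ by an integer coboundary, they represent the same class $\alpha \in H^n(\G, \matZ)$.

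Finally, one observes that
$$
\varphi' = \varphi + \overline{\delta}^{n-1}(\eta - \eta'') = \psi - \overline{\delta}^{n-1}\eta''.
$$
Since $\psi$ is bounded by assumption and $\|\overline{\delta}^{n-1}\eta''\|_\infty \leq (n+1)\|\eta''\|_\infty \leq n+1$, the integer cocycle $\varphi'$ is bounded, so
$$
\|\alpha\|_\infty \leq \|\varphi'\|_\infty \leq \|\psi\|_\infty + (n+1) < +\infty.
$$

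I do not expect a serious obstacle, as the argument is essentially formal; the only subtle point is that the possibly unbounded real cochain $\eta$ is absorbed \emph{on the nose} by the integer coboundary $\overline{\delta}^{n-1}\eta'$, so that only its fractional part $\eta''$ — which is uniformly bounded by $1$ — contributes to the norm estimate for $\varphi'$. The use of the inhomogeneous (bar) complex, as set up in Section~\ref{bar:sec}, makes this rounding step especially clean.
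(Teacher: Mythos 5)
Your proof is correct and is essentially the same as the paper's: both rely on the rounding trick of replacing the (possibly unbounded) real cochain connecting the integral cocycle to a bounded real cocycle by its integer part, and observing that the fractional part has uniformly bounded coboundary. The only cosmetic difference is that you work in the bar (inhomogeneous) complex while the paper works with the homogeneous invariant complex, and your decomposition $\varphi' = \psi - \overline{\delta}\eta''$ corresponds to the paper's $\varphi - \delta\overline{\psi} = \varphi' + \delta(\psi - \overline{\psi})$ up to a sign convention.
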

\begin{proof}
 Let $\varphi\in C^{n}(\G,\matZ)^\G$ be a representative of $\alpha$, which we also think of as a \emph{real} cocycle. 
 By our assumptions, we have
 $$
 \varphi=\varphi'+\delta\psi\ ,
 $$
 where $\varphi'\in C^{n}_b(\G,\matZ)^\G$ and $\psi\in C^{n-1}(\G,\R)^\G$. Let us denote by $\overline{\psi}$ the cochain
 defined by $\overline{\psi}(g_0,\ldots,g_{n-1})=\lfloor \psi(g_0,\ldots,g_{n-1})\rfloor$, where
for every $x\in \R$ we denote by $\lfloor x\rfloor$ the largest integer which does not exceed
$x$. Then $\overline{\psi}$ is still $\G$-invariant, so $\overline{\psi}\in C^{n-1}(\G,\matZ)^\G$. 
Moreover, we obviously have $\psi-\overline{\psi}\in C^{n-1}_b(\G,\R)$. Let us now consider the equality
$$
\varphi-\delta \overline{\psi}=\varphi'+\delta(\psi-\overline{\psi})\ .
$$
The left-hand side is a cocycle with integral coefficients, while the right-hand side
is bounded. Therefore, the cocycle $\varphi-\delta \overline{\psi}$ is a bounded representative
of $\alpha$, and we are done.
\end{proof}

\begin{cor}\label{surjiffsurj2}
 Let $\G$ be any group. Then, the comparison map $c_\matZ^n\colon H^n_b(\G,\matZ)\to H^n(\G,\matZ)$ is surjective if and only
 if the comparison map $c_\R^n\colon H^n_b(\G,\R)\to H^n(\G,\R)$ is.
\end{cor}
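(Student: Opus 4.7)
The plan is to handle the two implications of the biconditional separately, treating the direction ``$c_\R^n$ surjective $\Rightarrow$ $c_\matZ^n$ surjective'' as an immediate consequence of Proposition~\ref{surjiffsurj}, and the reverse implication by imitating the floor-function trick used in the proof of that proposition.

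First I would address the easy direction: given any $\alpha \in H^n(\G, \matZ)$, the surjectivity of $c_\R^n$ forces $\alpha_\R \in \mathrm{Im}(c_\R^n)$, so $\|\alpha_\R\|_\infty < +\infty$; Proposition~\ref{surjiffsurj} then yields $\|\alpha\|_\infty < +\infty$, placing $\alpha$ in the image of $c_\matZ^n$.

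For the reverse direction, let $\alpha \in H^n(\G, \R)$ be represented by $\varphi \in Z^n(\G, \R)^\G$. I would decompose $\varphi = \lfloor\varphi\rfloor + \{\varphi\}$, with $\lfloor\varphi\rfloor \in C^n(\G, \matZ)^\G$ the pointwise integer part ($\G$-invariance being automatic since the action on $\R$ is trivial) and $\{\varphi\} \in C^n_b(\G, \R)^\G$ taking values in $[0,1)$. The coboundary $\delta\{\varphi\} = -\delta\lfloor\varphi\rfloor$ is then a bounded integer cocycle representing the zero class in $H^{n+1}(\G, \matZ)$. Assuming I can produce a bounded integer cochain $\eta \in C^n_b(\G, \matZ)^\G$ with $\delta\eta = \delta\{\varphi\}$, the cochain $\{\varphi\} - \eta \in Z^n_b(\G, \R)^\G$ becomes a bounded real cocycle, and $\lfloor\varphi\rfloor + \eta = \varphi - (\{\varphi\} - \eta)$ is an integer cocycle; by the surjectivity of $c_\matZ^n$, its class in $H^n(\G, \matZ)$ admits a bounded integer representative $\xi$, so $\lfloor\varphi\rfloor + \eta = \xi + \delta\nu$ for some $\nu \in C^{n-1}(\G, \matZ)^\G$. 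Then $\varphi = \xi + (\{\varphi\} - \eta) + \delta\nu$ is cohomologous (as a real cocycle) to the bounded real cocycle $\xi + (\{\varphi\} - \eta)$, placing $\alpha$ in $\mathrm{Im}(c_\R^n)$.

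The main obstacle is the production of the bounded integer cochain $\eta$, equivalently the vanishing of the class $[\delta\{\varphi\}]_{b,\matZ} \in EH^{n+1}_b(\G, \matZ)$. The plan is to exploit the long exact sequences in bounded and unbounded cohomology associated with the short exact sequence $0 \to \matZ \to \R \to \R/\matZ \to 0$, using that $H^\bullet_b(\G, \R/\matZ) = H^\bullet(\G, \R/\matZ)$ (the quotient norm making $\R/\matZ$ a bounded normed module, so every cochain is automatically bounded), together with a diagram chase along the ladder of comparison maps that invokes the surjectivity hypothesis on $c_\matZ^n$. This identification and diagram chase constitute the delicate technical step; once it is settled, the rest is a direct adaptation of the proof of Proposition~\ref{surjiffsurj}.
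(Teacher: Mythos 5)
Your first (``easy'') direction is exactly the paper's argument: given $\alpha\in H^n(\G,\matZ)$, surjectivity of $c_\R^n$ gives $\|\alpha_\R\|_\infty<\infty$, and Proposition~\ref{surjiffsurj} then upgrades this to $\|\alpha\|_\infty<\infty$. No objections there.

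The other direction, however, is where you depart from the paper, and the departure has a genuine gap that I do not think your outlined diagram chase can fill. You correctly identify that the key issue is the vanishing of the obstruction class $[\delta\{\varphi\}]_{b,\matZ}\in EH^{n+1}_b(\G,\matZ)$. Note first that this class depends only on $\alpha$, not on the chosen representative $\varphi$: reducing mod $\matZ$, one sees that $[\delta\{\varphi\}]_{b,\matZ}=\delta^*_b(\pi_*\alpha)$, where $\pi_*\colon H^n(\G,\R)\to H^n(\G,\R/\matZ)$ is the change-of-coefficients map and $\delta^*_b$ is the connecting homomorphism of the \emph{bounded} long exact sequence. So ``choosing a better representative'' cannot help. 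Now run your proposed chase along the ladder of long exact sequences. The surjectivity of $c_\matZ^n$ tells you that $i_{b*}(H^n_b(\G,\matZ))$ hits representatives whose images under $c_\R^n\circ i_{b*}$ cover $i_*(H^n(\G,\matZ))\subseteq H^n(\G,\R)$. But when you push that information forward to $H^n(\G,\R/\matZ)$, it passes through $\pi_*\circ i_*$, which is the zero map. So the hypothesis places no constraint whatsoever on $\delta^*_b(\pi_*\alpha)$. All you can say about this class for free is that it dies under $c_\matZ^{n+1}$ (being a coboundary in unbounded integral cochains) and under $i_{b*}\colon H^{n+1}_b(\G,\matZ)\to H^{n+1}_b(\G,\R)$ (by exactness); neither of these kernels is trivial (compare $H^2_b(\matZ,\matZ)\cong\R/\matZ$, where both kernels are everything), so there is no formal reason for the obstruction to vanish. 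In short, the obstruction lives in degree $n+1$, and surjectivity of the comparison map in degree $n$ simply does not reach it.

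The paper's proof of this direction is shorter and sidesteps the issue entirely: it observes that the image of $c_\R^n$ is a real linear subspace of $H^n(\G,\R)$ which contains the image of the change-of-coefficients map $H^n(\G,\matZ)\to H^n(\G,\R)$ (any bounded integral cocycle is a bounded real cocycle, and by hypothesis every integral class has one), and by the Universal Coefficients Theorem this image $\R$-spans $H^n(\G,\R)$. There is nothing in your argument playing the role of this spanning statement, and that is precisely the missing ingredient: surjectivity of $c_\matZ^n$ has to be converted into a statement about real cohomology via the map $i_*\colon H^n(\G,\matZ)\to H^n(\G,\R)$ together with the fact that its image generates, rather than via the connecting homomorphism of the Bockstein sequence, which forgets too much. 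I would recommend abandoning the $\{\varphi\}$, $\lfloor\varphi\rfloor$ decomposition for this direction (it is the right tool for Proposition~\ref{surjiffsurj}, but is pointed the wrong way here) and instead arguing directly with images of subspaces.
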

\begin{proof}
Suppose first that $c_\matZ^n$ is surjective. Then, the image of $c_\R$ is a vector subspace of $H^n(\G,\R)$
which contains the image of $H^n(\G,\matZ)$ via the change of coefficients homomorphism. By the Universal Coefficients Theorem,
this implies that $c_\R$ is surjective. The converse implication readily follows from Proposition~\ref{surjiffsurj}.
\end{proof}

\section{Further readings}\label{further:2}

\subsection*{Bounded cohomology in degree one}
We have seen at the beginning of this chapter that $H^1_b(\G,\mathbb{R})=0$ for every group $\G$.
This result extends also to the case of non-constant coefficient modules, under suitable additional hypotheses.
In fact, it is proved in~\cite{Johnson} (see also~\cite[Section 7]{Noskov}) that $H^1_b(\G,V)=0$
for every group $\G$ and every \emph{reflexive} normed $\R[\G]$-module $V$.
Things get much more interesting when considering non-reflexive coefficient modules.
In fact, it turns out that a group $\G$ is amenable (see Definition~\ref{amenable:def})
if and only if $H^1_b(\G,V)=0$ for every dual normed $\R[\G]$-module $V$ (see Corollary~\ref{am:char:cor}). 
Therefore, for every non-amenable group $\G$ there exists a (dual) normed $\R[\G]$-module $\G$
such that $H^1_b(\G,V)\neq 0$ (this module admits a very explicit description, see Section~\ref{Johnson:sec}).
In fact, even when $\G$ is amenable, there can exist (non-dual) Banach $\G$-modules $V$ such that
$H^1_b(\G,V)\neq 0$: in~\cite[Section 7]{Noskov} a Banach $\matZ$-module $\mathcal{A}$ is constructed such that
$H^1_b(\matZ,\mathcal{A})$ is infinite-dimensional.

\subsection*{Quasimorphisms}
The study of quasimorphisms is nowadays a well-developed research field. The interest towards this topic was probably renovated by the pioneering work of Bavard~\cite{Bavard},
where quasimorphisms were used to estimate the stable commutator length of a commutator $g\in [\G,\G]<\G$ (i.e.~the limit
$\lim_{n\to\infty} c(g^n)/n$, where 
$c(g)$ is 
the minimal number of factors in a product of commutators equaling $g$).  
We refer the reader to Calegari's book~\cite{scl} for an introduction to quasimorphisms, with a particular attention payed to their relationship with the stable commutator length.
Among the most recent developments of the theory of quasimorphisms is it maybe worth mentioning two distinct generalizations of the notion of quasimorphism that deal also with non-commutative targets,
due respectively to Hartnick--Schweitzer~\cite{HS} and to Fujiwara--Kapovich~\cite{FuKa}.

There exist many papers devoted to the construction of non-trivial quasimorphisms for several classes of groups. Corollary~\ref{infinitedim:cor}
implies that 
 a large class of groups has infinite-dimensional second bounded cohomology module. 
In fact, this holds true for every group belonging to one of the following families: 
\begin{enumerate}
 \item non-elementary Gromov hyperbolic groups~\cite{EpsteinFuji};
\item groups acting properly discontinuously via isometries on Gromov hyperbolic spaces with limit set
consisting of at least three points~\cite{Fujiwara1};
\item groups admitting a non-elementary \emph{weakly proper discontinuous} action on 
a Gromov hyperbolic space~\cite{BestFuji} (see also~\cite{Hamen});
\item acylindrically hyperbolic groups~\cite{HullOsin} (we refer to~\cite{Osin:ac} for the definition of acylindrically hyperbolic group);
\item groups having infinitely many ends~\cite{Fujiwara2}.
\end{enumerate}
(Conditions (1), (2), (3) and (4) define bigger and bigger classes of groups: the corresponding cited papers generalize one the results
of the other.) 
An important application of case (3) is that every subgroup of the mapping
class group of a compact surface either is virtually abelian, or has infinite-dimensional second bounded cohomology.

A nice geometric interpretation of (homogeneous) quasimorphisms is given in~\cite{Manning}, where it is proved that, if $\G$ admits a \emph{bushy} quasimorphism,
then $EH^2_b(\G,\R)$ is infinite-dimensional. However, contrary to what has been somewhat expected, 
there exist lattices $\G$ in non-linear Lie groups for which $EH^2_b(\G,\R)$ is of finite non-zero dimension~\cite{Mann-Monod}.

\subsection*{Quasimorphisms with non-trivial coefficients}
Quasimorphisms with twisted coefficients have also been studied by several authors.
For example, for acylindrically hyperbolic groups, non-trivial quasimorphisms with coefficients in non-trivial Banach $\G$-modules
were constructed by Hull and Osin in~\cite{HullOsin} (see also~\cite{Rolli2} for a construction which applies to a more restricted class of groups, but is closer to the one described in Proposition~\ref{rolli:prop}).
Moreover, Bestvina, Bromberg and Fujiwara recently extended Hull and Osin's results to show that
the second bounded cohomology of any acylindrically hyperbolic group is infinite dimensional  when taking
(possibly non-trivial) coefficients in \emph{any} uniformly convex Banach space~\cite{BeBrFu}. 

Monod and Shalom showed the importance of bounded cohomology with
coefficients in $\ell^2(\G)$ in the study of rigidity of $\G$~\cite{MonShal0,MonShal}, and
proposed the condition $H^2_b(\G,\ell^2(\G))\neq 0$ 
as a cohomological definition of
negative curvature for groups. More in general, bounded cohomology with
coefficients in $\ell^p(\G)$, $1\leq p<\infty$
has been widely studied as a powerful
tool to prove (super)rigidity results (see e.g.~\cite{Hamen,CFI}).

\subsection*{An exact sequence for low-dimensional bounded cohomology}
Theorem~\ref{Bua:thm} is part of the following more general result:

\begin{thm}[\cite{Bua1, Bua2}]\label{bucompl}
 An exact sequence of groups
 $$
 1\tto{} K\tto{} \G\tto{} H\tto{} 1
 $$
 induces an exact sequence of vector spaces
 $$
 0\tto{} H^2_b(H,\R)\tto{} H^2_b(\G,\R)\tto{} H^2_b(K,\R)^H \tto{} H^3_b(H,\R)\tto{} 0 ,
 $$
 where $H^2_b(K,\R)^H$ denotes the subspace of invariants with respect to the natural action of $H$ induced
 by conjugation.
\end{thm}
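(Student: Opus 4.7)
The plan is to adapt the classical inflation-restriction exact sequence to the bounded cohomology setting, with Bouarich's uniqueness of homogeneous representatives (Proposition \ref{homo:eu}) as the central tool.

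First I would identify the three maps. The first, $H^2_b(H,\R) \to H^2_b(\G,\R)$, is the inflation induced by the projection $\pi\colon \G \to H$; its injectivity is precisely Theorem \ref{Bua:thm}. The second, $H^2_b(\G,\R) \to H^2_b(K,\R)$, is restriction along $\iota\colon K \hookrightarrow \G$, and its image lies in the $H$-invariants because inner automorphisms of $\G$ act as the identity on bounded cohomology of $\G$, so the conjugation action of $\G$ on $H^\bullet_b(K,\R)$ factors through $H=\G/K$. The composition vanishes since $\pi \circ \iota$ is the trivial homomorphism.

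For exactness at $H^2_b(\G,\R)$, take $\alpha$ in the kernel of the restriction map and represent it by the unique homogeneous bounded cocycle $\varphi_\alpha$. Its restriction to $K$ is a homogeneous bounded 2-cocycle on $K$ representing zero, so by uniqueness $\varphi_\alpha|_{K\times K} \equiv 0$. The plan is then to feed tuples of the form $(k_1\gamma_1, k_2\gamma_2)$ with $k_i \in K$ into the cocycle identity for $\varphi_\alpha$ and exploit both the $K$-vanishing and the normalization provided by Lemma \ref{balanced} to deduce biinvariance $\varphi_\alpha(k_1\gamma_1, k_2\gamma_2) = \varphi_\alpha(\gamma_1,\gamma_2)$, so that $\varphi_\alpha$ descends to a bounded homogeneous 2-cocycle on $H$ producing the desired preimage in $H^2_b(H,\R)$.

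Next I would construct the transgression $\tau\colon H^2_b(K,\R)^H \to H^3_b(H,\R)$. Given $\beta \in H^2_b(K,\R)^H$ with unique homogeneous bounded representative $\varphi_\beta$, the $H$-invariance of the class combined with uniqueness of homogeneous representatives forces $g\cdot\varphi_\beta=\varphi_\beta$ on the nose for every $g\in\G$. Choose a set-theoretic section $s\colon H\to\G$ and extend $\varphi_\beta$ to a (possibly unbounded) cochain $\tilde\varphi \in \overline{C}^2(\G,\R)$; its coboundary $\overline{\delta}^2\tilde\varphi$ is a bounded 3-cocycle on $\G$ that vanishes on tuples from $K$ and descends to a bounded 3-cocycle $\psi_\beta$ on $H$, whose class is $\tau(\beta)$. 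Independence of $s$ and of the representative follows from standard verifications. Exactness at $H^2_b(K,\R)^H$ is then built into the construction: $\tau(\beta)=0$ precisely when $\tilde\varphi$ can be corrected by a bounded cochain to yield a bounded cocycle on $\G$ restricting to $\varphi_\beta$.

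The main obstacle will be the surjectivity of $\tau$ onto $H^3_b(H,\R)$. Given $\eta\in H^3_b(H,\R)$ represented by a bounded cocycle $\psi$, the pullback $\pi^*\psi$ is a bounded 3-cocycle on $\G$ whose restriction to $K$ vanishes, and one must produce a (possibly unbounded) primitive $\tilde\varphi \in \overline{C}^2(\G,\R)$ of $\pi^*\psi$ whose restriction to $K$ is a \emph{bounded} $H$-invariant 2-cocycle that transgresses back to $\eta$. The delicate point is preserving boundedness of this restriction to $K$ while allowing $\tilde\varphi$ itself to be unbounded, and this is where the interplay between homogeneous quasimorphisms, the vanishing $H^1_b(\cdot,\R)=0$, and the homogeneity-based uniqueness of Proposition \ref{homo:eu} are decisive.
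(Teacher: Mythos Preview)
The paper does not contain a proof of this theorem. It is stated in the ``Further readings'' section of Chapter~2 (as Theorem~\ref{bucompl}) with a citation to Bouarich's original papers~\cite{Bua1,Bua2}, and is then immediately used in an application to hyperbolic 3-manifolds fibering over the circle. So there is nothing in the paper to compare your approach against; the paper treats this as an external result.

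Regarding your proposal itself: the overall architecture (inflation, restriction, transgression, with Proposition~\ref{homo:eu} providing canonical representatives to rigidify the comparisons) is the right one and matches how Bouarich's argument is usually presented. A couple of the steps are more delicate than your outline suggests. For exactness at $H^2_b(\G,\R)$, knowing $\varphi_\alpha|_{K\times K}\equiv 0$ and the cocycle identity for a triple $(k,\gamma_1,\gamma_2)$ only yields $\varphi_\alpha(k\gamma_1,\gamma_2)=\varphi_\alpha(\gamma_1,\gamma_2)+\varphi_\alpha(k,\gamma_1\gamma_2)-\varphi_\alpha(k,\gamma_1)$; getting the cross-terms $\varphi_\alpha(k,\gamma)$ to vanish requires an additional input, typically the $\G$-conjugation invariance of the restricted cocycle combined with homogeneity. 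For the surjectivity of the transgression onto $H^3_b(H,\R)$, which you correctly flag as the hardest point, your sketch does not yet indicate how to produce a primitive $\tilde\varphi$ of $\pi^*\psi$ whose restriction to $K$ is bounded; this is where the real work lies in Bouarich's argument, and your description does not name a mechanism for it.
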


Let us just mention an interesting application of this theorem, pointed out to us by M.~Boileau. 
Let $M$ by a closed hyperbolic $3$-manifold fibering over the circle with fiber $\Sigma_g$,
where $\Sigma_g$ is a closed oriented surface of genus $g\geq 2$. 
Then $M$ is obtained by gluing the boundary components of $\Sigma_g \times [0,1]$ via the homeomorphism
$(x,0)\mapsto (f(x),0)$, where $f$ is an orientation-preserving pseudo-Anosov homeomorphism of $\Sigma_g$.
If $\G=\pi_1(M)$ and $\G_g=\pi_1(\Sigma_g)$, then we have an exact sequence
$$
 1\tto{} \G_g\tto{} \G\tto{} \mathbb{Z}\tto{} 1\ .
 $$
 Since $\mathbb{Z}$ is amenable we have $H^2_b(\mathbb{Z},\R)=H^3_b(\mathbb{Z},\R)=0$ (see the next chapter), so 
Theorem~\ref{bucompl} implies that
$$
H^2_b(\G,\R)\ \cong\  H^2_b(\G_g,\R)^\mathbb{Z}\ .
$$
Now $M$ and $\Sigma_g$ are aspherical, so we have canonical isomorphisms $H^2_b(\G,\R)\cong H^2_b(M,\R)$, $H^2_b(\G_g,\R)\cong H^2_b(\Sigma_g,\R)$
(see Chapter~\ref{bounded:space:chap}), and Bouarich's result says that
$$
H^2_b(M,\R)\ \cong\ H^2_b(\Sigma_g,\R)^{f_*}\ ,
$$
where $f_*$ denotes the map induced by $f$ on bounded cohomology. This implies the somewhat counterintuitive fact that the action induced by a pseudo-Anosov homeomorphism 
of $\Sigma_g$ onto $H^2_b(\Sigma_g,\R)$ admits 
a huge invariant subspace.

\subsection*{Surjectivity of the comparison map in the relative context}
We have mentioned the fact proved by Mineyev that a finitely presented group $\G$  is Gromov hyperbolic if and only if the comparison map 
$H^2_b(\G,V)\to H^2(\G,V)$ is surjective for every normed $\mathbb{R}[\G]$-module $V$. A relative version of this result has been recently established by Franceschini~\cite{Franceschini2},
who proved an analogous result for Gromov hyperbolic pairs (see also~\cite{Yaman}).

\chapter{Amenability}\label{amenable:chap}
The original definition of amenable group is due to von Neumann~\cite{VN}, who introduced the class of amenable groups
while studying the Banach-Tarski paradox. As usual, we will restrict our attention to the definition of amenability in the context 
of discrete groups, referring the reader e.g.~to~\cite{Pier, Paterson} for  a thorough account on  amenability in the wider context of locally compact groups.

Let $\G$ be a group. We denote by $\ell^\infty(\G)=C^0_b(\G,\R)$ the space of bounded real functions on $\G$, endowed with the usual structure
of normed $\R[\G]$-module (in fact, of Banach $\G$-module). A \emph{mean} on $\G$ is a map $m\colon \ell^\infty(\G)\to \R$ satisfying the following properties:
\begin{enumerate}
\item
$m$ is linear;
\item
if ${\bf 1}_\G$ denotes the map taking every element of $\G$ to $1\in\R$, then $m({\bf 1}_\G)=1$;\
\item
$m(f)\geq 0$ for every non-negative $f\in\ell^\infty(\G)$.
\end{enumerate}
If condition (1) is satisfied, then conditions (2) and~(3) are equivalent to 
\begin{enumerate}
\item[(2+3)] $\inf_{g\in \G} f(g)\leq m(f)\leq \sup_{g\in \G} f(g)$ for every $f\in\ell^\infty(\G)$.
\end{enumerate}
In particular, the operator norm of a  mean on $\G$, when considered as a functional on $\ell^\infty(\G)$, is equal to $1$.
A mean $m$ is \emph{left-invariant} (or simply \emph{invariant}) if it satisfies the following additional condition:
\begin{enumerate}
\item[(4)]
$m(g\cdot f)=m(f)$ 
for every $g\in \G$, $f\in\ell^\infty(\G)$.
\end{enumerate}

\begin{defn}\label{amenable:def}
A group $\G$ is \emph{amenable} if it admits an invariant mean.
\end{defn}

The following lemma describes some equivalent definitions of amenability that will prove useful in the sequel.

\begin{lemma}\label{equivalent:amenable}
Let $\G$ be a group. Then,
the following conditions are equivalent:
\begin{enumerate}
\item
$\G$ is amenable;
\item
there exists a non-trivial left-invariant continuous functional $\varphi\in\ell^\infty(\G)'$; 
\item
$\G$ admits a left-invariant finitely additive probability measure.
\end{enumerate}
\end{lemma}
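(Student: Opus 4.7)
The plan is to prove the cycle (1)$\Rightarrow$(2), (2)$\Rightarrow$(1), together with the elementary equivalence (1)$\Leftrightarrow$(3). The implication (1)$\Rightarrow$(2) is essentially a tautology: by property (2+3) any mean satisfies $|m(f)|\leq \|f\|_\infty$, so it lies in $\ell^\infty(\G)'$, and it is non-trivial since $m({\bf 1}_\G)=1$. The equivalence (1)$\Leftrightarrow$(3) is a routine translation between bounded functions and their values on characteristic functions: given a mean $m$, the assignment $\mu(A)=m(\chi_A)$ is finitely additive (because $\chi_{A\sqcup B}=\chi_A+\chi_B$), satisfies $\mu(\G)=1$, and is left-invariant because $(g\cdot\chi_A)(h)=\chi_A(g^{-1}h)=\chi_{gA}(h)$. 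Conversely, given $\mu$, one first defines $m(\sum c_i\chi_{A_i})=\sum c_i\mu(A_i)$ on simple functions (checking well-definedness via common refinements of disjoint partitions) and then extends by uniform continuity, using that on the discrete set $\G$ every bounded function can be approximated uniformly by simple functions through discretization of its range.

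The main step is (2)$\Rightarrow$(1), where the obstacle is that an invariant continuous functional $\varphi$ need not be positive. The idea is to exploit the Banach-lattice structure on $\ell^\infty(\G)$ to decompose $\varphi$ into positive and negative parts. For $f\geq 0$ set
\[
\varphi^+(f)=\sup\{\varphi(g)\,:\,0\leq g\leq f,\ g\in\ell^\infty(\G)\},
\]
which is finite since $|\varphi(g)|\leq\|\varphi\|\cdot\|g\|_\infty\leq\|\varphi\|\cdot\|f\|_\infty$. A standard argument shows that $\varphi^+$ is additive on the positive cone (using that for $0\leq g\leq f_1+f_2$ one can split $g=g_1+g_2$ with $0\leq g_i\leq f_i$), hence extends to a bounded linear functional on $\ell^\infty(\G)$ via $\varphi^+(f)=\varphi^+(f^+)-\varphi^+(f^-)$, which is positive by construction. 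Symmetrically define $\varphi^-$, so that $\varphi=\varphi^+-\varphi^-$. The key point is that $\varphi^+$ inherits invariance: since the $\G$-action on $\ell^\infty(\G)$ is by pointwise operations it preserves the lattice order, so for every $h\in\G$ the map $g\mapsto h\cdot g$ is a bijection between $\{0\leq g\leq f\}$ and $\{0\leq g'\leq h\cdot f\}$, whence
\[
\varphi^+(h\cdot f)=\sup\{\varphi(h\cdot g)\,:\,0\leq g\leq f\}=\sup\{\varphi(g)\,:\,0\leq g\leq f\}=\varphi^+(f)
\]
by the invariance of $\varphi$.

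To conclude, since $\varphi\neq 0$ at least one of $\varphi^+$, $\varphi^-$ is non-zero; say $\varphi^+\neq 0$ (otherwise replace $\varphi$ by $-\varphi$). Then there exists $f\geq 0$ with $\varphi^+(f)>0$, and since $0\leq f\leq\|f\|_\infty\cdot{\bf 1}_\G$ positivity forces $\varphi^+({\bf 1}_\G)>0$. The normalized functional $m=\varphi^+/\varphi^+({\bf 1}_\G)$ is then a left-invariant mean. The delicate point in the argument is verifying that $\varphi^+$, defined a priori only on the positive cone, genuinely extends to a bounded linear functional and that the decomposition $\varphi=\varphi^+-\varphi^-$ holds; this is the classical Riesz–Kantorovich construction for the dual of a Banach lattice, and it is the only non-formal ingredient in the proof.
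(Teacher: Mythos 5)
Your proof is correct, but the decisive implication is handled by a genuinely different method than the paper's, and the comparison is worth spelling out. You close the loop via (2)$\Rightarrow$(1) directly, decomposing $\varphi$ by the Riesz--Kantorovich construction on the Banach lattice $\ell^\infty(\G)$: you form $\varphi^+(f)=\sup\{\varphi(g):0\leq g\leq f\}$, check it inherits $\G$-invariance because the action preserves the order intervals $[0,f]$, and normalize the resulting positive invariant functional to a mean. The paper instead goes (2)$\Rightarrow$(3)$\Rightarrow$(1): from $\varphi$ it manufactures a finitely additive set function
$\mu(A)=\sup_{\mathcal P}\sum_i|\varphi(\chi_{A_i})|$, the supremum ranging over finite partitions of $A$, bounded because $\sum_i|\varphi(\chi_{A_i})|=\varphi\bigl(\sum_i\varepsilon_i\chi_{A_i}\bigr)\leq\|\varphi\|$ with suitable signs $\varepsilon_i$. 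This is in effect the restriction of the total variation $|\varphi|=\varphi^++\varphi^-$ to characteristic functions, as opposed to your use of the single summand $\varphi^+$; it is normalized and then extended back to a mean in the separate step (3)$\Rightarrow$(1). The trade-off is that the paper's version is entirely elementary and self-contained, at the cost of a detour through set functions and an extension argument, whereas your route is shorter and more conceptual but defers the hard work to the Riesz--Kantorovich theorem as a black box (additivity on the positive cone via the Riesz decomposition property $g=g_1+g_2$ with $0\leq g_i\leq f_i$, then linear extension). Your handling of (1)$\Rightarrow$(2) and of the (1)$\Leftrightarrow$(3) translation is the same as the paper's. One small remark: your argument never actually uses the implication (1)$\Rightarrow$(3) you sketch, since the logical structure (1)$\Rightarrow$(2)$\Rightarrow$(1), (1)$\Rightarrow$(3), (3)$\Rightarrow$(1) already suffices; it is fine to include, but worth noticing which arrows are doing the work.
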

\begin{proof}
If $A$ is a subset of $\G$, we denote by $\chi_A$ the characteristic function of $A$.

(1) $\Rightarrow$ (2): If $m$ is an invariant mean on $\ell^\infty(\G)$, then the map $f\mapsto m(f)$ defines
the desired functional on $\ell^\infty(\G)$.

(2) $\Rightarrow$ (3): Let $\varphi\colon \ell^\infty(\G)\to\R$ be a non-trivial continuous functional.
For every $A\subseteq \G$ we define a non-negative real number $\mu(A)$ as follows. For every partition
$\mathcal{P}$ of $A$ into a finite number of subsets $A_1,\ldots,A_n$, we set 
$$
\mu_{\mathcal{P}} (A)=|\varphi(\chi_{A_1})|+\ldots+|\varphi(\chi_{A_n})|\ .
$$
Observe that, if $\varepsilon_i$ is the sign of $\varphi (\chi_{A_i})$, then
$\mu_{\mathcal{P}}(A)=\varphi(\sum_i \varepsilon_i\chi_{A_i})\leq \|\varphi\|$, so 
$$
\mu (A)=\sup_{\mathcal{P}} \mu_\mathcal{P} (A)
$$
is a finite non-negative number for every $A\subseteq \G$. 
By the linearity of $\varphi$, if $\mathcal{P}'$ is a refinement of $\mathcal{P}$,
then $\mu_{\mathcal{P}}(A)\leq \mu_{\mathcal{P}'} (A)$, and this easily implies that $\mu$ is 
a finitely additive measure on $\G$. Recall now that the set of characteristic functions generates a dense subspace of
$\ell^{\infty}(\G)$. As a consequence, since $\varphi$ is non-trivial, there exists a subset $A\subseteq \G$ such that
$\mu(A)\geq |\varphi(\chi_A)|>0$. In particular, we have $\mu(\G)>0$, so after rescaling we may assume that
$\mu$ is a probability measure on $\G$. The fact that $\mu$ is $\G$-invariant is now a consequence of the 
$\G$-invariance of $\varphi$.

(3) $\Rightarrow$ (1): Let $\mu$ be a left-invariant finitely additive measure on $\G$,
and let $Z\subseteq \ell^\infty(\G)$ be the subspace generated by the functions
$\chi_A$, $A\subseteq \G$. 
Using the finite additivity of $\mu$, it is not difficult to show that there exists a linear functional 
$m_Z\colon Z\to \R$ such that $m_Z(\chi_A)=\mu(A)$ for every $A\subseteq \G$. Moreover,
it is readily seen that 
\begin{equation}\label{infme}
\inf_{g\in \G} f(g)\leq m_Z(f)\leq \sup_{g\in \G} f(g)\qquad {\rm for\ every}\ f\in Z
\end{equation}
(in particular, $m_Z$ is continuous).
Since $Z$ is dense in $\ell^\infty(\G)$, the functional $m_Z$ uniquely extends to a continuous functional $m\in\ell^\infty(\G)'$
that satisfies condition~\eqref{infme} for every $f\in\ell^\infty(\G)$. 
Therefore, $m$ is a mean,
and the $\G$-invariance of $\mu$ implies that $m_Z$, whence $m$, is also invariant. 
\end{proof}

The action of $\G$ on itself by \emph{right} translations endows $\ell^\infty(\G)$ also with the structure of a right Banach $\G$-module, and 
it is well-known that the existence of a left-invariant mean on $\G$ implies the existence of a bi-invariant mean on $\G$. 
Indeed, let $m\colon \ell^\infty(\G)\to \R$ be a left-invariant mean, and take an element $f\in\ell^\infty(\G)$. For every $a\in\G$ we denote by $R_a$ (resp.~$L_a$) the right (resp.~left)
multiplication by $a$.
For every $g_0\in\G$ the map $f_{g_0}\colon \G\to\R$, $f_{g_0}(g)=f(g_0g^{-1})$ is bounded, so we may take its mean
$\overline{f}(g_0)=m(f_{g_0})$. Then, by taking the mean of $\overline{f}$ we set $\overline{m}(f)=m(\overline{f})$.
It is immediate to verify that $\overline{m}\colon \ell^\infty(\G)\to\R$ is itself
a mean. Let us now prove that $\overline{m}$ is left-invariant. So let $f'=f\circ L_a$ and observe that
$$
f'_{g_0}(g)=f(ag_0g^{-1})=f_{ag_0}(g)\ ,
$$
so 
$$
\overline{f'}(g_0)=m(f_{ag_0})=\overline{f}(ag_0)=(\overline{f}\circ L_a)(g_0)
$$
and
$$
\overline{m}(f')=m(\overline{f'})=m(\overline{f}\circ L_a)=m(\overline{f})=\overline{m}(f)\ ,
$$ as desired.
Moreover, if now $f'=f\circ R_a$, then 
$$
f'_{g_0}(g)=f(g_0g^{-1}a)=f(g_0(a^{-1}g)^{-1})=f_{g_0}\circ L_{a^{-1}}\ ,
$$
so $\overline{f'}(g_0)=m(f'_{g_0})=m(f_{g_0}\circ L_{a^{-1}})=m(f_{g_0})=\overline{f}(g_0)$
and a fortiori 
$$
\overline{m}(f')=m(\overline{f'})=m(\overline{f})=\overline{m}(f)\ .
$$
We can thus conclude that a group is amenable if and only if it admits a bi-invariant mean. However, we won't use this fact in the sequel.

\section{Abelian groups are amenable}
Finite groups are obviously amenable: an invariant mean $m$ is obtained by setting $m(f)=(1/|\G|)\sum_{g\in \G} f(g)$
for every $f\in\ell^\infty(\G)$. Of course, we are interested in finding less obvious
examples of amenable groups.
A key result in the theory of amenable groups is the following theorem, which is originally due
to von Neumann:
 
 \begin{thm}[\cite{VN}]\label{abelian:amenable}
 Every abelian group is amenable.
 \end{thm}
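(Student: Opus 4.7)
The plan is to apply the Markov-Kakutani fixed-point theorem to the natural action of $\G$ on its space of means; commutativity of $\G$ is precisely what makes the hypotheses of that theorem available.

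To set the stage, I equip $\ell^\infty(\G)'$ with the weak-$*$ topology and let $M\subseteq \ell^\infty(\G)'$ denote the set of means. I claim $M$ is a nonempty convex weak-$*$ compact subset of a Hausdorff locally convex topological vector space. It is nonempty (any evaluation $\delta_g$ lies in $M$) and convex (the defining conditions are linear). By condition (2+3) of Lemma~\ref{equivalent:amenable}, every mean has operator norm $1$, so $M$ is contained in the closed unit ball of $\ell^\infty(\G)'$, which is weak-$*$ compact by the Banach-Alaoglu theorem. Moreover $M$ is weak-$*$ closed, as the intersection of the closed affine hyperplane $\{\varphi:\varphi({\bf 1}_\G)=1\}$ with the closed half-spaces $\{\varphi:\varphi(f)\geq 0\}$ for $f\geq 0$.

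Next, the left-translation action of $\G$ on $\ell^\infty(\G)$ dualizes to an action on $\ell^\infty(\G)'$ by weak-$*$ continuous affine maps $(g\cdot m)(f):=m(g^{-1}\cdot f)$. This action preserves $M$, and a mean $m\in M$ is invariant precisely when it is fixed by every $g\in \G$. Because $\G$ is abelian, the associated family $\{T_g\}_{g\in\G}$ of continuous affine self-maps of $M$ is pairwise commuting, and Markov-Kakutani then yields a common fixed point $m_0\in M$, which is the desired invariant mean.

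The genuine content of the argument is therefore the proof of the Markov-Kakutani theorem itself, which I would include. First I would dispatch the single-map case: for a continuous affine self-map $T$ of a nonempty convex compact $K$ in a Hausdorff locally convex space, pick any $x\in K$, consider the Cesàro iterates $x_N:=(x+Tx+\cdots+T^{N-1}x)/N\in K$, and let $y$ be any cluster point. Then $Ty-y$ is a cluster point of $Tx_N-x_N=(T^N x-x)/N$; since $T^N x-x$ ranges in the bounded set $K-K$, the quotient tends to $0$ in the locally convex topology, forcing $Ty=y$. The commuting case then follows by finite induction: if $T_1,T_2$ commute, then $T_2$ preserves the nonempty convex compact set $\mathrm{Fix}(T_1)$ and so has a fixed point there. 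Thus the family $\{\mathrm{Fix}(T_g)\}_{g\in\G}$ of closed subsets of the compact $M$ has the finite intersection property, and a common fixed point follows by compactness. I expect the single-map Cesàro argument to be the main technical obstacle, since it is where one must use local convexity to legitimately pass from ``bounded, divided by $N$'' to ``vanishing in the limit''.
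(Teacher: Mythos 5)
Your proposal is correct and follows essentially the same path as the paper's proof, which also invokes the Markov–Kakutani theorem applied to the weak-$*$ compact convex set of means, proves the single-map case via Cesàro averages and the fact that $(T^N x - x)/N$ tends to zero, and then passes to the full abelian family by a finite-intersection-property argument on the fixed-point sets. The only cosmetic difference is that you phrase the single-map step in terms of cluster points of the Cesàro iterates, whereas the paper extracts a point in $\bigcap_n \overline{\{\varphi_i : i\geq n\}}$ and verifies invariance with an explicit $\varepsilon$-estimate; the underlying argument is identical.
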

\begin{proof}
We follow here the proof given in~\cite{Paterson},
which is based on the Markov-Kakutani Fixed Point Theorem. 

Let $\G$ be an abelian group, and
let $\ell^\infty(\G)'$ be the topological dual
of $\ell^\infty(\G)$, endowed with the weak-$*$ topology. We consider the subset $K\subseteq \ell^\infty(\G)'$
of (not necessarily invariant) means on $\G$, i.e.~we set
$$
K=\{\varphi\in\ell^\infty(\G)'\, |\, \varphi({\bf 1}_\G)=1\, ,\  \varphi(f)\geq 0 \ {\rm for\ every}\ f\geq 0\}\ .
$$
The set $K$ is non-empty (it contains, for example, the evaluation on the identity of $\G$). Moreover, it is  closed, convex, and contained in the closed ball of radius one in $\ell^\infty(\G)'$.
Therefore, $K$ is compact by the Banach-Alaouglu Theorem. 

For every $g\in \G$ let us now consider the map $L_g\colon \ell^\infty(\G)'\to \ell^\infty(\G)'$ defined by
$$
L_g(\varphi) (f)=\varphi(g\cdot f)\ .
$$
We want to show that the $L_g$ admit a common fixed point in $K$. 
To this aim, one may apply the Markov-Kakutani Theorem cited above, which
we prove here (in the case we are interested in) for completeness.

We fix $g\in\G$ and
 show that $L_g$ admits a fixed point in $K$. Take $\varphi\in H$. For every $n\in\matN$ we set
$$
\varphi_n=\frac{1}{n+1}\sum_{i=0}^n L_g^i(\varphi)\ .
$$
By convexity, $\varphi_n\in K$ for every $n$, 
so the set 
$$
\Omega_n=\overline{\bigcup_{i\geq n} \{\varphi_i\}}\ \subseteq\ K
$$
is compact. Since the family $\{\Omega_n,\, n\in\mathbb{N}\}$ obviously satisfies the finite intersection property,
there exists a point $\overline{\varphi}\in\bigcap_{i\in\mathbb{N}} \Omega_n$. Let us now fix $f\in \ell^\infty(\G)$ and let $\varepsilon>0$ be given.
We choose $n_0\in\mathbb{N}$ such that $n_0\geq 6\|f\|/\varepsilon$. Since $\overline{\varphi}\in \Omega_{n_0}$, there exists $n_1\geq n_0$
such that $|\varphi_{n_1}(f)-\overline{\varphi}(f)|\leq\varepsilon/3$ and
$|\varphi_{n_1}(g\cdot f)-\overline{\varphi}(g\cdot f)|\leq\varepsilon/3$. Moreover, we have
\begin{align*}
|\varphi_{n_1}(g\cdot f)-\varphi_{n_1}(f)| &=
|(L_g(\varphi_{n_1})-\varphi_{n_1})(f)|
=\frac{|(L_g^{n_1+1}(\varphi)-\varphi)(f)|}{n_1+1}\\ & \leq
\frac{ 2\|f\|}{n_1+1}\leq \frac{ 2\|f\|}{n_0}\leq \frac{\varepsilon}{3}\ ,
\end{align*}
so that
$$
|\overline{\varphi}(g\cdot f)-\overline{\varphi}(f)|\leq 
|\overline{\varphi}(g\cdot f)-\varphi_{n_1}(g\cdot f)|+
|\varphi_{n_1}(g\cdot f)-\varphi_{n_1}(f)|+|\overline{\varphi}(f)-\varphi_{n_1}(f)|\leq\varepsilon
$$
and $\overline{\varphi}(g\cdot f)=\overline{\varphi}(f)$ since $\varepsilon$ was arbitrarily chosen.
We have thus shown that
$L_g$ has a fixed point in $K$.

Let us now observe that $K$ is $L_g$-invariant for every $g\in \G$. Therefore, for any fixed $g\in\G$ the set $K_g$ of points of $K$
which are fixed by $L_g$ is non-empty. Moreover, it is easily seen that $K_g$ is 
closed (whence compact) and convex.
Since $\G$ is abelian, if $g_1,g_2$ are elements of $\G$, then
the maps $L_{g_1}$ and $L_{g_2}$ commute with each other, so $K_{g_1}$ is $L_{g_2}$-invariant. 
The previous argument (with $K$ replaced by $K_{g_1}$) shows that $L_{g_2}$ has a fixed point in $K_{g_1}$, so $K_{g_1}\cap K_{g_2}\neq \emptyset$.
One may iterate this argument to show that every finite intersection $K_{g_1}\cap\ldots\cap K_{g_n}$ is non-empty.
In other words, the family $K_g$, $g\in \G$ satisfies the finite intersection property,
so
$K_\G=\bigcap_{g\in \G} K_g$ 
is non-empty. But $K_\G$ is precisely the set of invariant means on $\G$. 
\end{proof}

\section{Other amenable groups}
Starting from abelian groups, it is possible to construct larger classes of amenable groups:

\begin{prop}\label{amenable:groups}
Let $\G,H$ be amenable groups. Then:
\begin{enumerate}
\item Every subgroup of $\G$ is amenable. 
\item If the sequence
$$
1\tto{} H\tto{} \G'\tto{} \G\tto{} 1
$$
is exact, then $\G'$ is  amenable (in particular, the direct product of a finite
number of amenable groups is amenable).
\item Every direct union of amenable groups is amenable.
\end{enumerate}
\end{prop}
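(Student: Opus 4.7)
For all three parts, the plan is to construct invariant means directly, using the characterizations from Lemma~\ref{equivalent:amenable} only where convenient.

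\textbf{Part (1).} Let $H\le\G$ with $\G$ amenable, and fix a set of representatives $T$ for the right cosets of $H$ in $\G$, so every $g\in\G$ writes uniquely as $g=ht$ with $h\in H$, $t\in T$. For $f\in\ell^\infty(H)$ set $\widetilde f(ht):=f(h)$; the map $f\mapsto\widetilde f$ is $H$-equivariant for the left $H$-action, because for $h_0\in H$ we have $\widetilde{h_0\cdot f}(ht)=f(h_0^{-1}h)=\widetilde f(h_0^{-1}ht)=(h_0\cdot\widetilde f)(ht)$. Letting $m_\G$ be an invariant mean on $\G$, the functional $m_H(f):=m_\G(\widetilde f)$ is then readily checked to be an invariant mean on $H$.

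\textbf{Part (2).} Given the exact sequence with $H$ and $\G$ amenable, fix invariant means $m_H,m_\G$ on $H,\G$ and a set-theoretic section $s\colon\G\to\G'$ of the projection $\pi\colon\G'\to\G$. For $f\in\ell^\infty(\G')$ and $g\in\G$, define $f_g\in\ell^\infty(H)$ by $f_g(h):=f(s(g)\cdot h)$, and then $F\in\ell^\infty(\G)$ by $F(g):=m_H(f_g)$. Set $m_{\G'}(f):=m_\G(F)$. That $m_{\G'}$ is a mean follows directly from the corresponding properties of $m_H$ and $m_\G$. The crucial point is left-invariance: for $g'_0\in\G'$ with $g_0:=\pi(g'_0)$, one writes ${g'_0}^{-1}s(g)=s(g_0^{-1}g)\cdot h(g)$ for a unique $h(g)\in H$, and a short computation gives $(g'_0\cdot f)_g=h(g)^{-1}\cdot f_{g_0^{-1}g}$ in $\ell^\infty(H)$. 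Applying first the $H$-invariance of $m_H$ and then the $\G$-invariance of $m_\G$ yields $m_{\G'}(g'_0\cdot f)=m_{\G'}(f)$. The parenthetical statement about finite direct products is the special case where the sequence splits trivially.

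\textbf{Part (3).} Let $\G=\bigcup_{i\in I}\G_i$ be a directed union of amenable subgroups, and choose an invariant mean $m_i$ on each $\G_i$. For every $i$, the functional $\mu_i\in\ell^\infty(\G)'$ defined by $\mu_i(f):=m_i(f|_{\G_i})$ lies in the closed unit ball of $\ell^\infty(\G)'$, which is weak-$*$ compact by Banach--Alaoglu. Extract a weak-$*$ cluster point $\mu$ of the net $\{\mu_i\}_{i\in I}$. Passing to the limit preserves linearity, positivity, and the normalization $\mu({\bf 1}_\G)=1$, so $\mu$ is a mean. For invariance, given $g\in\G$ pick $i_0$ with $g\in\G_{i_0}$; for all $i\ge i_0$ the restriction $(g\cdot f)|_{\G_i}$ equals $g\cdot(f|_{\G_i})$, and hence $\mu_i(g\cdot f)=\mu_i(f)$ by invariance of $m_i$. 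Taking the limit along the cluster subnet gives $\mu(g\cdot f)=\mu(f)$.

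The only step that needs any genuine care is the invariance computation in part (2), where the non-commutativity of the section $s$ (which is typically not a homomorphism) forces the appearance of the correction factor $h(g)\in H$; this is exactly where one needs the invariance of $m_H$ on $H$ to kill the correction before invoking the invariance of $m_\G$. The other verifications are routine.
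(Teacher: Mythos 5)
Your proof is correct and follows essentially the same strategy as the paper in all three parts: right-coset extension of functions for (1), averaging over fibers via a section and then over the base for (2), and weak-$*$ compactness of the unit ball in $\ell^\infty(\G)'$ for (3). In part (2) you fix a set-theoretic section upfront (rather than, as in the paper, choosing an arbitrary preimage $g'$ for each $g$ and checking independence of choice), and you carry out the invariance computation in more detail than the paper does; in part (3) you extract a weak-$*$ cluster point where the paper phrases the same argument via the finite intersection property applied to the closed sets of $\G_i$-invariant means — but these are the same compactness argument in two familiar costumes.
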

\begin{proof}
(1): Let $K$ be a subgroup of $\G$. Let $S\subseteq \G$ be a set of representatives  for the set $K\backslash \G$ of right cosets of $K$ in $\G$.
For every $f\in \ell^\infty(K)$ we define $\hat{f}\in\ell^\infty(\G)$ by setting $\hat{f}(g)=f(k)$, where $g=ks$, $s\in S$, $k\in K$. 
If $m$ is an invariant mean on $\G$, then we set $m_K(f)=m(\hat{f})$. It is easy to check that $m_K$ is an invariant mean on $K$, so $K$ is amenable.

(2): Let $m_H,m_\G$ be invariant means on $H,\G$ respectively.
For  every $f\in \ell^\infty(\G')$ we construct a map $f_\G\in\ell^{\infty}(\G)$ as follows. For $g\in \G$, we take $g'$ in the preimage of $g$ in $\G'$,
and we define $f_{g'}\in \ell^\infty(H)$ by setting $f_{g'}(h)=f(g'h)$. Then, we set $f_\G(g)=m_H(f_{g'})$. Using that $m_H$ is $H$-invariant
one can show that $f_\G(g)$ does not depend on the choice of $g'$, so $f_\G$ is well defined. We obtain the desired mean
on $\G'$ by setting $m(f)=m_\G(f_\G)$.

(3): 
Let $\G$ be the direct union of the amenable groups $\G_i$, $i\in I$.
For every $i$ we consider the set 
$$
K_i=\{\varphi\in\ell^\infty(\G)'\, |\, \varphi({\bf 1}_\G)=1\, ,\  \varphi(f)\geq 0 \ {\rm for\ every}\ f\geq 0,\ \varphi\ {\rm is}\ \G_i-{\rm invariant} \}\ .
$$
If $m_i$ is an invariant mean on $\G_i$, then the map $f\mapsto m_i(f|_{\G_i})$ defines
an element of $K_i$. Therefore, each $K_i$ is non-empty. Moreover, the fact that 
the union of the $\G_i$ is direct implies that the family $K_i$, $i\in I$ satisfies the finite intersection property. Finally, each $K_i$ is closed and compact in the weak-$*$
topology on $\ell^\infty(\G)'$, so the intersection $K=\bigcap_{i\in I} K_i$
is non-empty. But  $K$ is precisely the set of $\G$-invariants means on $\G$,
whence the conclusion.
\end{proof}

The class of \emph{elementary amenable groups} is the smallest class of groups which contains all finite and all abelian groups,
and is closed under the operations of taking subgroups, forming quotients, forming extensions, and taking direct unions~\cite{Day}.
For example, virtually solvable groups are elementary amenable.
Proposition~\ref{amenable:groups} (together with Theorem~\ref{abelian:amenable}) shows that every elementary amenable group is indeed amenable.
However, any group of intermediate growth is amenable~\cite{Paterson} but is not elementary amenable~\cite{Chou}. In particular, there exist amenable groups which 
are not virtually solvable.

\begin{rem}\label{amenable:prod}
 We have already noticed that, if $\G_1,\G_2$ are amenable groups, then
so is $\G_1\times \G_2$. More precisely, in the proof of Proposition~\ref{amenable:groups}-(2) we have shown how to construct an invariant
mean $m$ on $\G_1\times \G_2$ starting from invariant means
$m_i$ on $\G_i$, $i=1,2$. Under the identification of means with finitely additive
probability measures, the mean $m$ corresponds to 
the product measure $m_1\times m_2$, so it makes sense to say that $m$ is the product
of $m_1$ and $m_2$. If $\G_1,\ldots,\G_n$ are amenable groups with invariant means
$m_1,\ldots,m_n$,
then we denote by $m_1\times \ldots \times m_n$ the invariant mean on $\G$
inductively defined by the formula $m_1\times\ldots\times m_n=m_1\times (m_2\times
\ldots \times m_n)$.
\end{rem}


\section{Amenability and bounded cohomology}
In this section we show that amenable groups are somewhat invisible to bounded cohomology with coefficients in many normed $\R[\G]$-modules
(as already pointed out in Proposition~\ref{h2zz}, things are quite different in the case of $\matZ[\G]$-modules). We say that an $\R[\G]$-module $V$  is a \emph{dual} normed $\R[\G]$-module
if $V$ is isomorphic (as a normed $\R[\G]$-module) to the topological dual of some normed $\R[\G]$-module $W$.  
In other words, $V$ is the space of bounded linear functionals on the normed $\R[\G]$-module $W$, endowed with the action 
defined by $g\cdot f (w)=f(g^{-1}w)$, $g\in \G$, $w\in W$.

\begin{thm}\label{amenable:vanishing}
Let $\G$ be an amenable group, and let $V$ be a dual normed $\R[\G]$-module. Then $H^n_b(\G,V)=0$ for every $n\geq 1$.
\end{thm}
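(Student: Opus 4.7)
The plan is to construct an explicit contracting chain homotopy on the $\G$-invariant part of the bounded cochain complex, using the invariant mean on $\G$ to ``average out'' the first variable. The delicate point, and the reason the hypothesis that $V$ be a \emph{dual} module is essential, is that one must make sense of the mean of a $V$-valued bounded function; this is done by dualizing.

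Write $V = W'$ for some normed $\R[\G]$-module $W$, so that the $\G$-action on $V$ is $(g\cdot v)(w) = v(g^{-1}w)$. Fix a left-invariant mean $m$ on $\G$ (which exists by Lemma~\ref{equivalent:amenable}). For $n\geq 1$ and $f\in C_b^n(\G,V)$, I would define $h^n(f)\in C^{n-1}(\G,V)$ by declaring, for every $(g_0,\ldots,g_{n-1})\in\G^n$, the functional $h^n(f)(g_0,\ldots,g_{n-1})\in W'=V$ to be
\[
w \ \longmapsto\ m_g\bigl(f(g,g_0,\ldots,g_{n-1})(w)\bigr)\ ,
\]
where $g$ is the variable averaged by $m$. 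The function $g\mapsto f(g,g_0,\ldots,g_{n-1})(w)$ lies in $\ell^\infty(\G)$ with norm bounded by $\|f\|_\infty\cdot \|w\|$, so this expression makes sense and defines a bounded linear functional on $W$ of norm at most $\|f\|_\infty$; consequently $h^n$ lands in $C_b^{n-1}(\G,V)$ with $\|h^n(f)\|_\infty\leq \|f\|_\infty$.

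Next I would verify two algebraic properties. First, $h^n$ preserves $\G$-invariance: unwinding the definitions, $(a\cdot h^n(f))(g_0,\ldots,g_{n-1})(w)$ reduces, using $\G$-invariance of $f$, to $m_g(f(ag,g_0,\ldots,g_{n-1})(w))$, which equals $m_g(f(g,g_0,\ldots,g_{n-1})(w))$ by left-invariance of $m$. Second, the homotopy identity
\[
h^{n+1}\circ\delta^n + \delta^{n-1}\circ h^n \ =\ \mathrm{id}_{C_b^n(\G,V)}
\]
holds for every $n\geq 1$: applying the definition of $\delta^n$ inside the mean isolates the term $f(g_0,\ldots,g_n)$ (which is independent of $g$, so the mean acts as the identity on it), while the remaining $n+1$ terms reassemble into $-\delta^{n-1}(h^n f)$ after pulling $m_g$ through the alternating sum by linearity of $m$.

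Given these two properties, the theorem follows immediately: if $f\in Z_b^n(\G,V)^\G$ with $n\geq 1$, then $\delta^n f=0$ forces $f=\delta^{n-1}(h^n f)$, and by the invariance property just proved $h^n f\in C_b^{n-1}(\G,V)^\G$, so $f$ is a coboundary in the $\G$-invariant subcomplex. The main obstacle is really just the first step, namely recognizing that the mean $m$, which a priori integrates only real-valued functions, can be made to integrate $V$-valued functions precisely when $V$ is a dual module (via the predual $W$); everything else is a routine verification that the formal cone construction transports from the scalar setting to the $V$-valued setting.
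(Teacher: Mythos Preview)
Your proof is correct and follows essentially the same approach as the paper: both construct the averaging operator $h^n(f)(g_0,\ldots,g_{n-1})(w) = m_g\bigl(f(g,g_0,\ldots,g_{n-1})(w)\bigr)$ using the duality $V=W'$, and verify that it provides a $\G$-invariant contracting homotopy on the complex of bounded invariant cochains in positive degree. The paper phrases the invariance step slightly differently (asserting that the maps are $\G$-equivariant rather than merely invariance-preserving), but the underlying computation is the same as yours.
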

\begin{proof}
Recall that the bounded cohomology $H^\bullet_b(\G,V)$ is defined as the cohomology of the $\G$-invariants
of the homogeneous complex
$$
0\tto{} C^0_b(\G,V)\tto{\delta^0} C_b^1(\G,V)\to\ldots
\to C^n_b(\G,V)\to\ldots 
$$
Of course, the cohomology of the complex $C^\bullet_b(\G,V)$ of possibly non-invariant cochains vanishes in positive degree, since
for $n\geq 0$
the maps
$$
k^{n+1}\colon C_b^{n+1}(\G,V)\to C_b^n(\G,V)\, ,\quad k^{n+1}(f)(g_0,\ldots,g_n)=f(1,g_0,\ldots,g_n)
$$
provide a  homotopy between the identity and the zero map of $C^\bullet_b(\G,V)$ in positive degree.
In order to prove the theorem, we are going to show that, under the assumption that $\G$ is amenable, a similar homotopy can be defined on the complex of invariant cochains. Roughly speaking,
while $k^\bullet$ is obtained by coning over the identity of $\G$,
we will define a $\G$-invariant homotopy $j^\bullet$ by averaging the cone operator over all the elements of $\G$.

Let us fix an invariant mean $m$ on $\G$, and
let $f$ be an element of $C_b^{n+1}(\G,V)$.
Recall that $V=W'$ for some $\R[\G]$-module
$W$, so $f(g,g_0,\ldots,g_n)$ is a bounded functional on $W$ for every $(g,g_0,\ldots,g_n)\in \G^{n+2}$. 
For every $(g_0,\ldots,g_n)\in \G^{n+1}$, $w\in W$, we consider the function
$$
f_w\colon \G\to\R\, ,\quad f_w(g)=f(g,g_0,\ldots,g_n)(w)\ .
$$
It follows from the definitions that $f_w$ is an element of $\ell^\infty(\G)$, so we may set
$(j^{n+1}(f))(g_0,\ldots,g_n) (w)=m(f_w)$. It is immediate to check that this formula
defines a continuous functional on $W$, whose norm is bounded in terms of the norm
of $f$. In other words, $j^{n+1}(f)(g_0,\ldots,g_n)$ is indeed an element of $V$, and the map
 $j^{n+1}\colon C^{n+1}_b(\G,V)\to C^n_b(\G,V)$ is bounded. The $\G$-invariance of the mean $m$
 implies that $j^{n+1}$ is $\G$-equivariant. The collection of maps $j^{n+1}$, $n\in\matN$ provides
 the required partial $\G$-equivariant homotopy between the identity and the zero map of
 $C^n_b(\G,V)$, $n\geq 1$.
\end{proof}

One may wonder whether the assumption that $V$ is a \emph{dual} normed $\R[\G]$-module is really necessary in Theorem~\ref{amenable:vanishing}.
It turns out that this is indeed the case: the vanishing of the bounded cohomology of $\G$ with coefficients
in \emph{any} normed $\R[\G]$-module is equivalent to the fact that $\G$ is finite
(see Theorem~\ref{finite:char}).

\begin{cor}\label{amenable:real:cor}
Let $\G$ be an amenable group. Then $H^n_b(\G,\R)=0$ for every $n\geq 1$.
\end{cor}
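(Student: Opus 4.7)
The plan is to deduce this corollary as an immediate special case of Theorem~\ref{amenable:vanishing}. The only thing to verify is that $\R$, considered as a trivial normed $\R[\G]$-module, qualifies as a \emph{dual} normed $\R[\G]$-module in the sense required by the theorem. Once this is checked, there is nothing left to do.

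First I would observe that $\R$ is canonically isometrically isomorphic to its own topological dual via the map $r \mapsto (s \mapsto rs)$, and that under this identification the trivial $\G$-action on the dual (given by $g \cdot f(w) = f(g^{-1} w)$) coincides with the trivial $\G$-action on $\R$. Thus $\R$ is a dual normed $\R[\G]$-module with predual $W = \R$. Therefore Theorem~\ref{amenable:vanishing} applies with $V = \R$ and yields $H^n_b(\G,\R) = 0$ for every $n \geq 1$, which is the claim.

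There is essentially no obstacle: the corollary is a direct specialization. The only conceivable subtlety is making sure the identification $\R \cong \R'$ is compatible with the $\G$-module structure, but for the trivial action this is transparent.
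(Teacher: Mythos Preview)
Your proposal is correct and matches the paper's approach exactly: the corollary is stated immediately after Theorem~\ref{amenable:vanishing} with no proof, so it is meant to follow by specializing to $V=\R$, and your verification that $\R$ is self-dual as a trivial normed $\R[\G]$-module is the only (trivial) point to check.
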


Recall from Chapter~\ref{quasimorphisms:chap} that the second bounded cohomology group with trivial real coefficients
is strictly related to the space of real quasimorphisms. Putting together Corollary~\ref{amenable:real:cor}
with Proposition~\ref{quasi:bounded:prop} and Corollary~\ref{decomp:cor} we get the following result, which
extends to amenable groups the characterization of real quasimorphisms on abelian groups given in Section~\ref{quasi:abelian:sec}:

\begin{cor}
Let $\G$ be an amenable group. Then every real quasimorphism on $\G$ is at bounded distance from a homomorphism.
Equivalently, every homogeneous quasimorphism on $\G$ is a homomorphism.
\end{cor}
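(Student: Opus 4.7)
The statement follows essentially by assembling earlier results, so the plan is short. The key observation is that amenability forces $EH^2_b(\G,\R)=0$, and both formulations of the corollary are then immediate translations of this vanishing in terms of quasimorphisms.

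First, I would invoke Corollary~\ref{amenable:real:cor} to conclude that $H^2_b(\G,\R)=0$. Since by definition $EH^2_b(\G,\R)$ is the kernel of the comparison map $c^2\colon H^2_b(\G,\R)\to H^2(\G,\R)$, it is a submodule of $H^2_b(\G,\R)$ and therefore also vanishes. This is the only place where amenability is used; the rest is purely formal.

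Next, I would feed this vanishing into Proposition~\ref{quasi:bounded:prop}. That result provides an exact sequence
\[
0\longrightarrow \overline{C}^1_b(\G,\R)\oplus \mathrm{Hom}(\G,\R)\longrightarrow Q(\G,\R)\longrightarrow EH^2_b(\G,\R)\longrightarrow 0,
\]
so the vanishing of the rightmost term yields $Q(\G,\R)=\overline{C}^1_b(\G,\R)\oplus \mathrm{Hom}(\G,\R)$. In particular, every $f\in Q(\G,\R)$ decomposes as $f=h+b$ with $h$ a homomorphism and $b$ bounded, which is precisely the statement that $f$ lies at bounded distance from a homomorphism.

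Finally, for the equivalent homogeneous formulation, I would apply Corollary~\ref{decomp:cor}, which identifies $Q^h(\G,\R)/\mathrm{Hom}(\G,\R)$ with $EH^2_b(\G,\R)=0$; hence every homogeneous quasimorphism is a homomorphism. Alternatively one could derive this from the previous paragraph directly: given a homogeneous $f=h+b$, the uniqueness clause in Proposition~\ref{homogeneous:prop} forces $b=0$, since both $f$ and $h$ are homogeneous quasimorphisms at bounded distance from $f$. There is no real obstacle here; the only subtlety worth flagging is the need to check that $EH^2_b\subseteq H^2_b$, which is immediate from the definition of the exact bounded cohomology as the kernel of the comparison map.
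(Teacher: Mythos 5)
Your proof is correct and follows exactly the route the paper intends: apply Corollary~\ref{amenable:real:cor} to kill $H^2_b(\G,\R)$ (hence $EH^2_b(\G,\R)$), then read off both formulations from Proposition~\ref{quasi:bounded:prop} and Corollary~\ref{decomp:cor}. The paper gives no separate proof beyond citing these three results, so there is nothing to compare beyond noting that you have filled in the same deduction the author leaves implicit.
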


From Corollary~\ref{infinitedim:cor} and Corollary~\ref{amenable:real:cor} we deduce that there exist many groups which are not amenable:

\begin{cor}
Suppose that the group $\G$ contains a non-abelian free subgroup. Then $\G$ is not amenable.
\end{cor}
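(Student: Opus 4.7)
The plan is to derive a contradiction from amenability by comparing two results already established: the vanishing of bounded cohomology for amenable groups with real coefficients, and the non-vanishing for free groups of rank at least two. Suppose for contradiction that $\G$ is amenable and contains a non-abelian free subgroup $F$. Since $F$ has rank at least two, it contains a copy of $F_2$: just take the subgroup generated by two distinct elements of a free generating set of $F$, which is itself free of rank $2$ by the Nielsen--Schreier theorem (or simply by recognizing that any two distinct free generators have no nontrivial relations).

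Next, by Proposition~\ref{amenable:groups}(1), every subgroup of an amenable group is amenable, so this copy of $F_2$ would be amenable. Then Corollary~\ref{amenable:real:cor} would force $H^2_b(F_2,\R)=0$. However, Proposition~\ref{rolli:prop} (or equivalently, Corollary~\ref{infinitedim:cor} applied to the identity homomorphism $F_2 \to F_2$) asserts that $H^2_b(F_2,\R)$ is infinite-dimensional. This contradiction shows that $\G$ cannot be amenable.

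The argument is essentially a one-line combination of previous results, and I do not expect any genuine obstacle. The only substantive point is the preliminary observation that a non-abelian free group of any rank contains $F_2$, which is immediate. Everything else is a direct citation of Proposition~\ref{amenable:groups}, Corollary~\ref{amenable:real:cor}, and the infinite-dimensionality of $H^2_b(F_2,\R)$ proved in the previous chapter.
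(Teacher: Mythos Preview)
Your proof is correct and follows essentially the same route as the paper: both combine the fact that subgroups of amenable groups are amenable (Proposition~\ref{amenable:groups}) with the vanishing of $H^2_b$ for amenable groups (Corollary~\ref{amenable:real:cor}) and the non-vanishing of $H^2_b(F_2,\R)$ (Corollary~\ref{infinitedim:cor}). The only cosmetic difference is that you reduce to an $F_2$ subgroup explicitly, whereas the paper invokes Corollary~\ref{infinitedim:cor} directly for the non-abelian free subgroup (which surjects onto $F_2$).
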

\begin{proof}
We know from Corollary~\ref{infinitedim:cor} that non-abelian free groups have non-trivial second bounded cohomology
group with real coefficients, so they cannot be amenable. The conclusion follows from the fact that any subgroup of an amenable
group is amenable.
\end{proof}

It was a long-standing problem to understand whether the previous corollary could be sharpened into a characterization
of amenable groups as those groups which do not contain any non-abelian free subgroup. This question
is usually attributed to von Neumann, and appeared first in~\cite{Day}. Ol'shanskii answered von Neumann's question in the negative
in~\cite{Olsh}. The first examples of finitely presented groups which are not amenable but do not contain any non-abelian free group
are due to Ol'shanskii and Sapir~\cite{OlshSapir}.

\section{Johnson's characterization of amenability}\label{Johnson:sec}
We have just seen that the bounded cohomology of any amenable group with values in any dual coefficient module vanishes. 
In fact, also the converse implication holds. More precisely, amenability of $\G$ is implied by the vanishing of a specific coclass 
in a specific bounded cohomology
module (of degree one). 
We denote by $\ell^\infty(\G)/\R$ the quotient of $\ell^\infty(\G)$ by 
the (trivial) $\R[\G]$-submodule of constant functions. Such a quotient is itself endowed with the structure of a normed $\R[\G]$-module.
Note that the topological dual $(\ell^\infty(\G)/\R)'$ may be identified with the subspace of elements of $\ell^\infty(\G)'$ that vanish on
constant functions. For example, if $\delta_{g}$ is the element of $\ell^\infty(\G)'$ such that $\delta_g(f)=f(g)$, 
then for every pair $(g_0,g_1)\in \G^2$ the map $\delta_{g_0}-\delta_{g_1}$ defines an element in $(\ell^\infty(\G)/\R)'$.
For later purposes, we observe that the action of $\G$ on $\ell^\infty(\G)'$ is such that 
$$
g\cdot \delta_{g_0} (f)=\delta_{g_0}(g^{-1}\cdot f)=f(gg_0)=\delta_{gg_0} (f)\ .
$$
Let us consider the element
$$
J\in C^1_b(\G,(\ell^\infty(\G)/\R)')\, ,\qquad J(g_0,g_1)=\delta_{g_1}-\delta_{g_0}\ .
$$
Of course we have $\delta J=0$. Moreover, for every $g,g_0,g_1\in \G$ we have 
$$
(g\cdot J)(g_0,g_1)=g(J(g^{-1}g_0,g^{-1}g_1))=g(\delta_{g^{-1}g_0}-\delta_{g^{-1}g_1})=\delta_{g_0}-\delta_{g_1}=J(g_0,g_1)
$$
so $J$ is $\G$-invariant, and
defines an element $[J]\in H^1_b(\G,(\ell^\infty(\G)/\R)')$, called the \emph{Johnson class} of $\G$.

\begin{thm}\label{Johnson:class}
Suppose that the Johnson class of $\G$ vanishes. Then $\G$ is amenable.
\end{thm}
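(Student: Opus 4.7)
The plan is to unwind the vanishing of $[J]$ by hand and extract from the trivializing cochain an invariant continuous functional on $\ell^\infty(\G)$, then invoke Lemma~\ref{equivalent:amenable}.

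First I would unpack what $[J]=0$ in $H^1_b(\G,(\ell^\infty(\G)/\R)')$ means: there is an invariant bounded $0$-cochain $b\in C^0_b(\G,(\ell^\infty(\G)/\R)')^\G$ with $\overline{\delta}^0 b = J$. A $\G$-invariant homogeneous $0$-cochain is determined by its value at $1$: setting $\nu := b(1)\in (\ell^\infty(\G)/\R)'$, invariance forces $b(g) = g\cdot\nu$ for all $g\in\G$. The coboundary formula $\overline{\delta}^0 b(g_0,g_1)=b(g_1)-b(g_0)$ combined with $\overline{\delta}^0 b=J$ therefore gives
\[
g_1\cdot \nu - g_0\cdot \nu \;=\; \delta_{g_1}-\delta_{g_0}\qquad \text{for every } g_0,g_1\in\G.
\]
Specializing to $g_0=1$ yields the key identity $g\cdot\nu - \nu = \delta_g - \delta_1$ for all $g\in\G$.

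Next I would build the desired invariant functional. Viewing $\nu$ naturally as an element of $\ell^\infty(\G)'$ that annihilates constants, set
\[
m \;:=\; \delta_1 - \nu \;\in\; \ell^\infty(\G)'.
\]
Using the identity above together with $g\cdot \delta_1 = \delta_g$, I compute
\[
g\cdot m \;=\; g\cdot\delta_1 - g\cdot\nu \;=\; \delta_g - (\nu + \delta_g - \delta_1) \;=\; \delta_1 - \nu \;=\; m,
\]
so $m$ is left-invariant. Since $m({\bf 1}_\G) = \delta_1({\bf 1}_\G)-\nu({\bf 1}_\G) = 1-0 = 1$, the functional $m$ is non-trivial (and continuous, being a difference of two bounded functionals). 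Lemma~\ref{equivalent:amenable} then immediately yields amenability of $\G$.

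There is no serious obstacle here: the proof is really just a careful bookkeeping of the conventions. The only mildly delicate point is the identification of $(\ell^\infty(\G)/\R)'$ with the subspace of $\ell^\infty(\G)'$ annihilating constants, which justifies viewing $\nu$ inside $\ell^\infty(\G)'$ so that the expression $\delta_1 - \nu$ makes sense and can be evaluated on ${\bf 1}_\G$. The clever aspect of Johnson's class is precisely that its construction is engineered so that any invariant trivializing cochain furnishes, almost tautologically, the invariant functional required by condition~(2) of Lemma~\ref{equivalent:amenable}.
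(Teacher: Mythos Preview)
Your proof is correct and follows essentially the same route as the paper: both extract from the trivializing invariant $0$-cochain $\psi$ (your $b$) the element $\nu=\psi(1)$, form $m=\delta_1-\nu\in\ell^\infty(\G)'$, check $m(\mathbf{1}_\G)=1$ and $g\cdot m=m$ from the identity $g\cdot\nu-\nu=\delta_g-\delta_1$, and then appeal to Lemma~\ref{equivalent:amenable}. The only slip is notational: you write $\overline{\delta}^0$ (the inhomogeneous differential) while using the homogeneous formula $b(g_1)-b(g_0)$; in the paper's conventions this should be $\delta^0$, but the computation itself is unaffected.
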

\begin{proof}
By Lemma~\ref{equivalent:amenable}, it is sufficient to show that the topological dual
$\ell^\infty(\G)'$ contains
a non-trivial $\G$-invariant element.

We keep notation from the preceding paragraph. If $[J]=0$, then $J=\delta \psi$ for some $\psi\in C^0_b(\G,(\ell^\infty(\G)/\R)')^\G$.
For every $g\in \G$ we denote by $\hat{\psi}(g)\in\ell^\infty(\G)'$ the pullback of $\psi(g)$ via the projection map
$\ell^\infty(\G)\to \ell^\infty(\G)/\R$. 
We consider the element $\varphi\in \ell^\infty(\G)'$ defined by 
$\varphi=\delta_1-\hat{\psi}(1)$.
Since $\hat{\psi}(1)$ vanishes on constant functions,
we have $\varphi({\bf 1}_\G)=1$, so $\varphi$ is non-trivial, and we are left to show that $\varphi$ is $\G$-invariant.

The $\G$-invariance of $\psi$ implies the $\G$-invariance of $\hat{\psi}$, so
\begin{equation}\label{equi2:eq}
\hat{\psi}(g)=(g\cdot\hat{\psi})(g)=g\cdot(\hat{\psi}(1))\quad {\rm for\ every}\ g\in \G\ .
\end{equation}
From $J=\delta\psi$ we deduce that
$$
\delta_{g_1}-\delta_{g_0}=\hat{\psi}(g_1)-\hat{\psi}(g_0)\, \quad {\rm for\ every}\ g_0,g_1\in \G\ .
$$
In particular, we have
\begin{equation}\label{equi:eq}
\delta_g-\hat{\psi}(g)=\delta_1-\hat{\psi}(1)\, \quad {\rm for\ every}\ g \in \G\ .
\end{equation}
Therefore, using equations~\eqref{equi2:eq} and \eqref{equi:eq}, for every $g\in \G$ we get
$$
g\cdot\varphi=g\cdot (\delta_{1}-\hat{\psi}(1))=(g\cdot\delta_1)-g\cdot (\hat{\psi}(1))=\delta_g-\hat{\psi}(g)=
\delta_1-\hat{\psi}(1)=\varphi\ ,
$$
and this concludes the proof.
\end{proof}

Putting together Theorems~\ref{amenable:vanishing} and~\ref{Johnson:class} we get the following result, which characterizes amenability
in terms of bounded cohomology:

\begin{cor}\label{am:char:cor}
Let $\G$ be a group. Then the following conditions are equivalent:
\begin{itemize}
 \item 
$\G$ is amenable,
\item
$H^1_b(\G,V)=0$ for every 
dual normed $\R[\G]$-module $V$,
\item 
$H^n_b(\G,V)=0$ for every 
dual normed $\R[\G]$-module $V$ and every $n\geq 1$.
\end{itemize}
\end{cor}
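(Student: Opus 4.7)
The plan is to close the loop using the two main theorems of the section, together with the trivial implication (third condition $\Rightarrow$ second condition) obtained by specializing to $n=1$. Concretely, I would argue in a short cycle: amenability implies vanishing in every positive degree for every dual coefficient module, which implies vanishing in degree one, which in turn implies amenability.

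The first step, that amenability implies $H^n_b(\G,V)=0$ for every $n\geq 1$ and every dual normed $\R[\G]$-module $V$, is exactly the content of Theorem~\ref{amenable:vanishing}, so there is nothing to add. The second step is formal, since if $H^n_b(\G,V)=0$ for all $n\geq 1$ and all dual $V$, then in particular this holds for $n=1$. The only nontrivial remaining implication is that vanishing in degree one for all dual normed $\R[\G]$-modules forces $\G$ to be amenable; here I would specialize to the particular coefficient module $V=(\ell^\infty(\G)/\R)'$ considered right before Theorem~\ref{Johnson:class}.

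To make this last step rigorous, I must first verify that $(\ell^\infty(\G)/\R)'$ really is a dual normed $\R[\G]$-module in the sense of the chapter. This is essentially by construction: $\ell^\infty(\G)/\R$ inherits from $\ell^\infty(\G)$ the structure of a normed $\R[\G]$-module (the submodule of constant functions is $\G$-invariant because the $\G$-action on $\ell^\infty(\G)$ preserves constants), and $(\ell^\infty(\G)/\R)'$ is its topological dual endowed with the dual action $(g\cdot f)(w)=f(g^{-1}w)$. Once this is established, the hypothesis that $H^1_b(\G,V)=0$ for every dual normed $\R[\G]$-module $V$ applies in particular to this $V$, and so the Johnson class $[J]\in H^1_b(\G,(\ell^\infty(\G)/\R)')$ vanishes. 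Theorem~\ref{Johnson:class} then yields that $\G$ is amenable, which closes the cycle.

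I do not expect any real obstacle: all the hard work is already encapsulated in Theorems~\ref{amenable:vanishing} and~\ref{Johnson:class}. The only care needed is the bookkeeping check that the coefficient module $(\ell^\infty(\G)/\R)'$ qualifies as a dual normed $\R[\G]$-module, which is immediate from the definitions.
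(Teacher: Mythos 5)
Your proposal is correct and follows exactly the route the paper intends: the corollary is stated there as an immediate consequence of Theorems~\ref{amenable:vanishing} and~\ref{Johnson:class}, with the only nontrivial implication being that vanishing of $H^1_b(\G,V)$ for all dual $V$ applied to $V=(\ell^\infty(\G)/\R)'$ forces the Johnson class to vanish, hence amenability. Your extra remark verifying that $(\ell^\infty(\G)/\R)'$ is indeed a dual normed $\R[\G]$-module is the right bookkeeping point and matches the implicit check in the paper.
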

\qed

\section{A characterization of finite groups via bounded cohomology}
As already mentioned above, amenability is not sufficient to guarantee the vanishing of bounded cohomology
with coefficients in any normed $\R[\G]$-module. Let $\ell^1(\G)=C^0_b(\G,\R)$ be the space of summable real functions on $\G$,
endowed with the usual structure of normed $\R[G]$-module.
Let us denote by $e\colon \ell^1(\G)\to \R$ the evaluation map such that
$e(f)=\sum_{g\in\G} f(g)$. Since $e$ is clearly $\G$-invariant, the space 
$\ell_0^1(\G)=\ker e$ is a normed $\R[G]$-space. What is more, since $\ell^1(\G)$
is Banach and $e$ is continuous, $\ell^1(\G)$ is itself a Banach $\G$-module.

With an abuse, for $g\in \G$ we denote by $\delta_g\in \ell^1(\G)$ the
characteristic function of the singleton $\{g\}$.
Let us now consider the element
$$
K\in C^1_b(\G,\ell^1_0(\G))\, ,\qquad K(g_0,g_1)=\delta_{g_0}-\delta_{g_1}\ .
$$
Of course we have $\delta K=0$, and
it is immediate to check that $g\cdot \delta_{g'}=\delta_{gg'}$ for every $g,g'\in\G$, so
$$
(g\cdot K)(g_0,g_1)=g(K(g^{-1}g_0,g^{-1}g_1))=g(\delta_{g^{-1}g_0}-\delta_{g^{-1}g_1})=
\delta_{g_0}-\delta_{g_1}=K(g_0,g_1)
$$
and $K$ defines a bounded coclass $[K]\in H^1_b(\G,\ell^1_0(\G))$, which we call
the \emph{characteristic coclass} of $\G$. 

We are now ready to prove that finite groups may be characterized
by the vanishing of bounded cohomology with coefficients in any normed vector space.

\begin{thm}\label{finite:char}
 Let $\G$ be a group. Then the following conditions are equivalent:
 \begin{enumerate}
  \item $\G$ is finite;
  \item 
  $H^1_b(\G,V)=0$ for every Banach $\G$-module $V$;
   \item $H^n_b(\G,V)=0$ for every normed $\R[\G]$-module $V$ and every
  $n\geq 1$;
  \item 
  the characteristic coclass of $\G$ vanishes.
 \end{enumerate}
\end{thm}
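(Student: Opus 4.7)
The plan is to close the circle $(1)\Rightarrow(3)\Rightarrow(2)\Rightarrow(4)\Rightarrow(1)$, only the last implication being substantive; the others are essentially formal.

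For $(1)\Rightarrow(3)$, finiteness of $\G$ lets one run the argument of Theorem~\ref{amenable:vanishing} with a genuine uniform sum in place of the mean, thereby removing the dual-coefficients hypothesis. Concretely, for every normed $\R[\G]$-module $V$ and $n\geq 0$ I would define
\[
k^{n+1}\colon C_b^{n+1}(\G,V)\longrightarrow C_b^n(\G,V),\qquad k^{n+1}(f)(g_0,\ldots,g_n)=\frac{1}{|\G|}\sum_{g\in\G} f(g,g_0,\ldots,g_n),
\]
check by telescoping that $\delta^n\circ k^{n+1}+k^{n+2}\circ\delta^{n+1}=\mathrm{id}$, and verify that a change of variable in the averaging sum (exactly as in the proof of Theorem~\ref{amenable:vanishing}) shows $k^{\bullet}$ carries invariants to invariants. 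This immediately forces $H_b^n(\G,V)=0$ for all $n\geq 1$. The implication $(3)\Rightarrow(2)$ is trivial since a Banach $\G$-module is in particular a normed $\R[\G]$-module.

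For $(2)\Rightarrow(4)$, I would note that the evaluation $e\colon\ell^1(\G)\to\R$ (with $\R$ a trivial module) is continuous and $\G$-equivariant, so $\ell^1_0(\G)=\ker e$ is a closed $\G$-invariant subspace of the Banach $\G$-module $\ell^1(\G)$, hence itself a Banach $\G$-module. Condition $(2)$ then yields $H_b^1(\G,\ell^1_0(\G))=0$, and in particular the characteristic coclass $[K]$ vanishes.

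The substantive step is $(4)\Rightarrow(1)$. Using the canonical identification $C^0(\G,V)^\G\cong V$ via $\psi\mapsto\psi(1)$ (so that $\psi(g)=g\cdot\psi(1)$, which is automatically bounded in $\ell^1$-norm because the $\G$-action on $\ell^1(\G)$ is isometric), the equation $K=\delta^0\psi$ translates into the existence of $v\in\ell^1_0(\G)$ satisfying $g_1\cdot v-g_0\cdot v=\delta_{g_0}-\delta_{g_1}$ for all $g_0,g_1\in\G$. Specialising to $g_0=1$, $g_1=g$ and using $g\cdot\delta_1=\delta_g$, this is equivalent to
\[
g\cdot(v+\delta_1)=v+\delta_1\qquad\text{for every }g\in\G,
\]
i.e.\ $v+\delta_1$ is a $\G$-invariant element of $\ell^1(\G)$. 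Since left translation permutes the standard basis $\{\delta_h\}_{h\in\G}$ transitively, any $\G$-invariant function in $\ell^1(\G)$ is constant on $\G$, and the only constant function in $\ell^1(\G)$ is $0$ unless $\G$ is finite. If $\G$ were infinite, we would therefore get $v=-\delta_1$, contradicting $e(v)=0$. Hence $\G$ is finite. The only real obstacle is this last step; the key point is that the obstruction $[K]$ measures precisely the failure of $\ell^1(\G)$ to have a nonzero $\G$-invariant vector, which in turn is exactly what detects finiteness of $\G$.
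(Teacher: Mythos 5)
Your proof is correct and follows essentially the same route as the paper's: a uniform averaging homotopy for $(1)\Rightarrow(3)$, the trivial specializations for $(3)\Rightarrow(2)\Rightarrow(4)$, and for $(4)\Rightarrow(1)$ the extraction of a nonzero $\G$-invariant vector in $\ell^1(\G)$ from a primitive of the characteristic cocycle, which forces $\G$ to be finite. The only difference is cosmetic: the paper works with $f=\delta_1-\psi(1)$ while you work with $v+\delta_1$ (the sign of the invariant vector depends on which of $K=\delta\psi$ or $-K=\delta\psi$ one writes down), and you finish by a contradiction against $e(v)=0$ whereas the paper directly observes that a nonzero constant summable function can exist only on a finite group.
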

\begin{proof}
 (1) $\Rightarrow$ (3): Let $n\geq 1$. For every cochain $f\in C^n_b(\G,V)$ we define a cochain
 $k^n(f)\in C^{n-1}_b(\G,V)$ by setting 
 $$
 k^n(f)(g_1,\ldots,g_n)=\frac{1}{|G|}\sum_{g\in \G} f(g,g_1,\ldots,g_n)\ ,
 $$
 where $|\G|$ denotes the cardinality of $\G$. The maps $k^n$, $n\geq 1$, provide
 a (partial) equivariant $\R[\G]$-homotopy between the identity and the null map
 of $C^n_b(\G,V)$, $n\geq 1$, whence the conclusion.
 
 The implications (3) $\Rightarrow$ (2) and (2) $\Rightarrow$ (4) are obvious, so we are left to show that the vanishing
 of the characteristic coclass implies that $\G$ is finite.
 
 So, suppose that $[K]=0$, i.e.~there exists a $\G$-equivariant map $\psi\colon \G\to \ell^1_0(\G)$
 such that 
\begin{equation}\label{deltaeq}
\delta_{g_1}-\delta_{g_0}={\psi}(g_1)-{\psi}(g_0)\, \quad {\rm for\ every}\ g_0,g_1\in \G\ .
\end{equation}
We then set $f=\delta_1-\psi(1)\in\ell^1(\G)$, and we observe that $f\neq 0$ since $e(f)=e(\delta_1)-e(\psi(1))=0$. Moreover, by equation~\eqref{deltaeq}
we have $f=\delta_g-\psi(g)$ for every $g\in\G$, so 
the $\G$-equivariance of $\psi$ implies that
$$
g\cdot f=g\cdot (\delta_{1}-{\psi}(1))=(g\cdot\delta_1)-g\cdot ({\psi}(1))=\delta_g-{\psi}(g)=
\delta_1-{\psi}(1)=f\ ,
$$
which implies that $f$ is constant. Since $f$ is not null and summable, this implies
in turn that $\G$ is finite.
\end{proof}

\section{Further readings}
There are plenty of equivalent characterizations of amenable groups. Probably one of the most important (both beacuse it establishes a clear brigde towards geometric group theory,
and because it is often useful in applications) is the one which involves F\"olner sequences. If $\G$ is any group, a F\"olner sequence for $\G$ is a sequence
$F_n$, $n\in\mathbb{N}$, of finite subsets of $\G$ such that, for every $g\in\G$, the following holds:
$$
\lim_{n\to\infty} \frac{|F_n\, \triangle (g\cdot F_n)|}{|F_n|}=0\ .
$$
(Here we denote by $A\, \triangle\, B$ the symmetric difference between $A$ and $B$, and by $|A|$ the cardinality of the set $A$.) 
When $\G$ is finitely generated, the condition above admits a nice interpretation in terms of the geometry of Cayley graphs of $\G$: namely,
if we define the boundary of a subset $A$ of vertices of a graph $G$ (with unitary edges) as the set of vertices of $G$ at distance from $A$ equal to $1$, then
a F\"olner sequence is a sequence of subsets of $\G$ whose boundaries (in any Cayley graph of $\G$) grow sublinearly with respect to the growth of the size of the subsets. In other words,
amenable groups are those groups that do \emph{not} admit any linear isoperimetric inequality. 

Other well-known and very useful of characterizations of amenability involve fixed-point properties of group actions. For example, it can be proved that a group $\G$ is amenable if 
the following holds:
if $\G$ acts by isometries on a (separable) Banach space $E$, leaving a weakly closed convex subset $C$ of the closed unit ball of the topological dual $E'$ invariant, then $\G$ has a fixed point in $C$.

It is maybe worth mentioning that a slightly different notion of amenability 
is usually considered 
in the context of \emph{topological} groups: usually, a locally compact topological group $\G$ (endowed with its Haar measure $\mu$) is said to be amenable if
there exists a left-invariant mean on the space $L^\infty(\G,\mu)$ of essentially bounded Borel functions on $\G$.  

We refer the reader to the books~\cite{Pier,Paterson} for a thorough treatment of (topological) amenable groups.

\chapter{(Bounded) group cohomology via resolutions}\label{res:chapter}

 

Computing group cohomology by means of its definition is usually very hard. The topological interpretation
of group cohomology already provides a powerful tool for computations: for example, one may estimate 
the cohomological dimension of a group $\G$ (i.e.~the maximal $n\in\matN$ such that $H^n(\G,R)\neq 0$)
in terms of the dimension of a $K(\G,1)$ as a CW-complex. We have already mentioned that a topological
interpretation for the bounded cohomology of a group is still available,
but in order to prove this fact more machinery has to be introduced. 
Before going into the bounded case, we describe some well-known results which hold in the classical case:
namely, we will show that 
the cohomology of $\G$ may be computed by looking at 
several complexes of cochains.  This crucial fact was 
already observed in the pioneering works on group cohomology of the 1940s and the 1950s.

There are several ways to define group cohomology in terms of resolutions. We privilege here an approach
that better extends to the case of bounded cohomology. 
We briefly compare our approach to more traditional ones in Section~\ref{alternative}.

\section{Relative injectivity}
We begin by introducing the notions of relatively injective
$R[\G]$-module and of strong $\G$-resolution of an $R[\G]$-module.  
The counterpart of these notions in the context of normed $R[\G]$-modules will play an important role 
in the theory of bounded cohomology of groups. 
The importance
of these notions is due to the fact that the cohomology of $\G$
may be computed by looking at \emph{any} strong $\G$-resolution
of  the coefficient module by relatively injective modules (see Corollary~\ref{fund1:cor} below).

A $\G$-map $\iota\colon A\rightarrow B$ between $R[\G]$-modules is
\emph{strongly injective}
if there is an $R$-linear map
$\sigma\colon B\rightarrow A$  such that
$\sigma\circ\iota={\rm Id}_A$ (in particular, $\iota$ is injective). 
We emphasize that, even if $A$ and $B$ are 
$\G$-modules, the map $\sigma$ is \emph{not} required
to be  $\G$-equivariant.

\begin{defn}\label{inj2:def}
An $R[\G]$-module $U$ is \emph{relatively injective} (over $R[\G]$) if the following holds: 
whenever $A,B$ are $\G$-modules, $\iota\colon A\to B$ is a strongly injective $\G$-map
and $\alpha\colon A\to U$ is a $\G$-map, there exists a $\G$-map $\beta\colon B\to U$
such that $\beta\circ\iota = \alpha$.
\end{defn}

$$
\xymatrix{
0 \ar[r] & A  \ar[r]_{\iota} \ar[d]_{\alpha} & B \ar@{-->}[dl]^{\beta} \ar@/_/[l]_{\sigma}\\
& U
}
$$

In the case when $R=\R$, a map is strongly injective if and only if it is injective,
so in the context of $\R[\G]$-modules the notions of relative injectivity and
the traditional notion of injectivity coincide. However, 
relative injectivity is weaker than injectivity in general.
For example, if $R=\matZ$ and $\G=\{1\}$, then 
$C^i(\G,R)$ is isomorphic to the trivial $\G$-module $\matZ$, which of course is not injective over $\matZ[\G]=\matZ$. On the other hand, we have the following:

\begin{lemma}\label{inj:basic}
For every $R[\G]$-module $V$ and every $n\in\matN$, the $R[\G]$-module $C^n(\G,V)$ is 
relatively injective.
\end{lemma}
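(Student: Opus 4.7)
The plan is to produce an explicit formula for the desired extension $\beta\colon B\to C^n(\G,V)$, exploiting the fact that a $\G$-map into $C^n(\G,V)$ is essentially determined by what it does at tuples whose first entry is $1\in\G$. Concretely, given the data
$$
\xymatrix{
0 \ar[r] & A  \ar[r]_{\iota} \ar[d]_{\alpha} & B \ar@/_/[l]_{\sigma}\\
& C^n(\G,V)
}
$$
with $\iota$ a $\G$-map, $\sigma$ an $R$-linear retraction, and $\alpha$ a $\G$-map, I would define
$$
\beta(b)(g_0,g_1,\ldots,g_n)\ =\ g_0\cdot\bigl[\alpha\bigl(\sigma(g_0^{-1}\cdot b)\bigr)(1,g_0^{-1}g_1,\ldots,g_0^{-1}g_n)\bigr]\ .
$$
By construction $\beta$ is $R$-linear (since so are $\sigma$, $\alpha$, and the $\G$-action on $V$), and takes values in $C^n(\G,V)$.

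Next I would verify the two required properties. For $\G$-equivariance, for any $h\in\G$ one computes
$$
(h\cdot\beta(b))(g_0,\ldots,g_n)=h\cdot\bigl[(h^{-1}g_0)\cdot\alpha(\sigma(g_0^{-1}hb))(1,g_0^{-1}g_1,\ldots,g_0^{-1}g_n)\bigr],
$$
which collapses to $g_0\cdot\alpha(\sigma(g_0^{-1}hb))(1,g_0^{-1}g_1,\ldots,g_0^{-1}g_n)=\beta(h\cdot b)(g_0,\ldots,g_n)$; the cancellation $h(h^{-1}g_0)=g_0$ is exactly what makes the twist by $g_0$ in the formula necessary. For the relation $\beta\circ\iota=\alpha$, pick $a\in A$: since $\iota$ is $\G$-equivariant and $\sigma\circ\iota=\mathrm{Id}_A$ we get $\sigma(g_0^{-1}\iota(a))=g_0^{-1}a$, and then the $\G$-equivariance of $\alpha$ together with the formula for the action on $C^n(\G,V)$ gives
$$
g_0\cdot\bigl[\alpha(g_0^{-1}a)(1,g_0^{-1}g_1,\ldots,g_0^{-1}g_n)\bigr]
=g_0\cdot\bigl[g_0^{-1}(\alpha(a)(g_0,g_1,\ldots,g_n))\bigr]
=\alpha(a)(g_0,\ldots,g_n),
$$
as required.

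There is no real obstacle here: the only subtlety is writing down the correct formula, and specifically inserting the twist by $g_0$ in the first slot so that the resulting map is $\G$-equivariant even though $\sigma$ is only $R$-linear. Everything else is a direct check from the definition of the $\G$-action on $C^n(\G,V)$ recalled at the beginning of Section~\ref{construction:sec}.
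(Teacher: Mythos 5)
Your proof is correct and essentially identical to the paper's: you give the same explicit formula for the extension $\beta$ (the paper writes it equivalently as $\beta(b)(g_0,\ldots,g_n)=\alpha(g_0\sigma(g_0^{-1}b))(g_0,\ldots,g_n)$ and immediately unpacks it into the form you use), and the two required verifications proceed by the same computations.
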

\begin{proof}
Let us consider the extension problem described in Definition~\ref{inj2:def},
with $U=C^n(\G,V)$. 
Then we 
define $\beta$ as follows:
\begin{align*}
\beta(b)(g_0,\ldots,g_n)&=\alpha(g_0\sigma(g_0^{-1}b))(g_0,\ldots,g_n)
\\ &=g_0\big(\alpha(\sigma(g_0^{-1}b))(1,g_0^{-1}g_1,\ldots,g_0^{-1}g_n)\big)
\ .
\end{align*}
The fact that $\beta$ is $R$-linear and that $\beta\circ\iota=\alpha$ is straightforward, so we just need to show that $\beta$ is $\G$-equivariant. But for every 
$g\in \G$, $b\in B$ and $(g_0,\ldots,g_n)\in \G^{n+1}$ 
we have
\begin{align*}
 g(\beta (b))(g_0,\ldots,g_n) & = g(\beta(b)(g^{-1}g_0,\ldots,g^{-1}g_n))\\
& = g(\alpha (g^{-1}g_0\sigma (g_0^{-1}gb))(g^{-1}g_0,\ldots,g^{-1}g_n))\\
& = g(g^{-1}(\alpha (g_0\sigma (g_0^{-1}gb)))(g^{-1}g_0,\ldots,g^{-1}g_n))\\
& = \alpha(g_0\sigma(g_0^{-1}gb))(g_0,\ldots,g_n)\\
& = \beta(gb)(g_0,\ldots,g_n)\ .
\end{align*}
\end{proof}

\section{Resolutions of $\G$-modules}
An \emph{$R[\G]$-complex} (or simply a \emph{$\G$-complex} or a \emph{complex}) is a sequence of 
$R[\G]$-modules $E^i$ and $\G$-maps $\delta^i\colon
E^i\to E^{i+1}$, $i\in\matN$, such that $\delta^{i+1}\circ\delta^i=0$ for every $i$:
$$
0\longrightarrow E^0\tto{\delta^0} E^1\tto{\delta^1}\ldots
\tto{\delta^n} E^{n+1}\tto{\delta^{n+1}}\ldots
$$
Such a sequence will be denoted by $(E^\bullet,\delta^\bullet)$. Moreover, we set
$Z^n(E^\bullet)=\ker \delta^n\cap (E^n)^\G$, $B^n(E^\bullet)=\delta^{n-1}((E^{n-1})^\G)$
(where again we understand that $B^0(E^\bullet)=0$),
and we define  the cohomology of the complex $E^\bullet$ by setting
$$
H^n(E^\bullet)=Z^n(E^\bullet)/B^n(E^\bullet)\ .
$$

A \emph{chain map} between $\G$-complexes $(E^\bullet,\delta_E^\bullet)$ and $(F^\bullet,\delta_F^\bullet)$
is a sequence of $\G$-maps $\{\alpha^i\colon E^i\to F^i\, |\, i\in\matN\}$
such that $\delta_F^i\circ\alpha^i=\alpha^{i+1}\circ\delta_E^{i}$ for every $i\in\matN$.
If $\alpha^\bullet,\beta^\bullet$ are chain maps between $(E^\bullet,\delta_E^\bullet)$ and $(F^\bullet,\delta_F^\bullet)$, a 
\emph{$\G$-homotopy} 
between $\alpha^\bullet$ and $\beta^\bullet$ is 
a sequence of $\G$-maps $\{T^i \colon E^i\to F^{i-1}\, |\, i\geq 0\}$ such that
$T^1\circ \delta_E^0=\alpha^0-\beta^0$ and 
$\delta_F^{i-1}\circ T^i + T^{i+1}\circ \delta_E^i=\alpha^i-\beta^i$   for every $i\geq 1$. 
Every chain map induces a well-defined map in cohomology, and $\G$-homotopic chain
maps induce the same map in cohomology.

If $E$ is an $R[\G]$-module, 
an augmented $\G$-complex $(E,E^\bullet,\delta^\bullet)$ with augmentation map
$\varepsilon\colon E\to E^0$ 
is a complex
$$
0\longrightarrow E\tto{\varepsilon} E^0\tto{\delta^0} E^1\tto{\delta^1}\ldots
\tto{\delta^n} E^{n+1}\tto{\delta^{n+1}}\ldots
$$
A \emph{resolution} of $E$  (over $\G$) is an exact augmented complex $(E,E^\bullet,\delta^\bullet)$ (over $\G$).
A resolution 
$(E,E^\bullet,\delta^\bullet)$ is \emph{relatively injective}
if $E^n$ is relatively injective for every $n\geq 0$. 
It is well-known that any map between modules extends to a chain map between \emph{injective}
resolutions of the modules.
Unfortunately, the same result for relatively injective resolutions does not hold.
The point is that relative injectivity guarantees the needed extension property only
for \emph{strongly} injective maps. Therefore,  we need to introduce the notion of \emph{strong} resolution.

A \emph{contracting homotopy} for a resolution $(E,E^\bullet,\delta^\bullet)$ is a 
sequence of $R$-linear maps $k^i\colon E^i\to E^{i-1}$ such that 
$\delta^{i-1}\circ k^i+
k^{i+1}\circ\delta^i = {\rm Id}_{E^i}$ if $i\geq 0$, and $k^0 \circ \varepsilon=
{\rm Id}_E$:
$$
\xymatrix{
0\ar[r] & E\ar[r]_{\varepsilon} & E^0 \ar[r]_{\delta^0} \ar@/_/[l]_{k^0} &
E^1  \ar[r]_{\delta^1} \ar@/_/[l]_{k^1} & \ldots \ar@/_/[l]_{k^2} \ar[r]_{\delta^{n-1}}&
E^{n}
\ar[r]_{\delta^n} \ar@/_/[l]_{k^n} & \ldots \ar@/_/[l]_{k^{n+1}}
}
$$
Note however that it is not required that 
$k^i$ be $\G$-equivariant. A resolution is \emph{strong}
if it admits a contracting homotopy.

The following proposition shows that the chain complex $C^\bullet(\G,V)$ 
provides a relatively injective strong resolution of $V$:

\begin{prop}\label{standard:res}
Let $\varepsilon \colon V\to C^0(\G,V)$ be defined by $\varepsilon(v)(g)=v$ for every $v\in V$, $g\in \G$. Then the
augmented complex
$$
0\longrightarrow V\tto{\varepsilon} C^0(\G,V)\tto{\delta^0} C^1(\G,V)\to\ldots
\to C^n(\G,V)\to\ldots
$$
provides a relatively  injective strong resolution of $V$.
\end{prop}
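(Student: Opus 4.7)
The plan is to establish the two ingredients separately. Relative injectivity of each $C^n(\G,V)$ is already taken care of by Lemma~\ref{inj:basic}, so the only remaining work is to exhibit an explicit contracting homotopy, which will simultaneously prove that the augmented complex is exact (hence a resolution) and that it is strong.

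The natural guess for the contracting homotopy is to use evaluation at the identity in the first coordinate. Concretely, I would set $k^0\colon C^0(\G,V)\to V$, $k^0(f)=f(1)$, and for $n\geq 1$ define $k^n\colon C^n(\G,V)\to C^{n-1}(\G,V)$ by
\[
k^n(f)(g_0,\ldots,g_{n-1})=f(1,g_0,\ldots,g_{n-1}).
\]
Each $k^n$ is manifestly $R$-linear, which is all that is required (no $\G$-equivariance is imposed on the $k^n$).

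The verification of the homotopy identities is then a direct computation from the explicit formula for $\delta^\bullet$. First, $k^0(\varepsilon(v))=\varepsilon(v)(1)=v$, so $k^0\circ\varepsilon=\mathrm{Id}_V$. For the degree-$0$ identity $\varepsilon\circ k^0+k^1\circ\delta^0=\mathrm{Id}_{C^0(\G,V)}$, we have $\varepsilon(k^0(f))(g_0)=f(1)$ and $k^1(\delta^0 f)(g_0)=\delta^0 f(1,g_0)=f(g_0)-f(1)$, whose sum is $f(g_0)$. For $i\geq 1$, applying $\delta^{i-1}\circ k^i$ to $f\in C^i(\G,V)$ yields
\[
\sum_{j=0}^{i}(-1)^j f(1,g_0,\ldots,\widehat{g_j},\ldots,g_i),
\]
while $k^{i+1}\circ\delta^i$ gives
\[
\delta^i f(1,g_0,\ldots,g_i)=f(g_0,\ldots,g_i)-\sum_{j=0}^{i}(-1)^j f(1,g_0,\ldots,\widehat{g_j},\ldots,g_i);
\]
adding these identities leaves exactly $f(g_0,\ldots,g_i)$, as required.

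I do not anticipate any substantive obstacle here: the content of the statement is essentially the observation that the standard cone-on-the-identity trick produces a (non-equivariant) splitting of the homogeneous bar complex. The existence of the $k^n$'s guarantees exactness in every degree, so the augmented complex is a resolution, and they serve as the required contracting homotopy witnessing strongness. Combined with Lemma~\ref{inj:basic}, this completes the proof.
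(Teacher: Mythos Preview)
Your proof is correct and follows exactly the same approach as the paper: invoke Lemma~\ref{inj:basic} for relative injectivity and use the evaluation-at-$1$ contracting homotopy $k^{n}(f)(g_0,\ldots,g_{n-1})=f(1,g_0,\ldots,g_{n-1})$. The paper simply states this homotopy without carrying out the verification of the identities, which you have done in full.
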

\begin{proof}
 We already know that each $C^i(\G,V)$ is relatively injective. In order to show that the augmented complex $(V,C^\bullet(\G,V),\delta^\bullet)$ is a strong resolution 
it is sufficient to observe that the maps
$$
k^{n+1}\colon C^{n+1}(\G,V)\to C^n(\G,V)\,\qquad k^{n+1}(f)(g_0,\ldots,g_n)=f(1,g_0,\ldots,g_n)
$$
provide the required contracting homotopy.
\end{proof}

The resolution of $V$ described in the previous proposition is obtained just by augmenting
the homogeneous complex associated to $(\G,V)$, and it is usually called the 
\emph{standard} resolution of $V$ (over $\G$). 

The following result may be proved
by means of standard homological algebra arguments (see e.g.~\cite{Ivanov}, \cite[Lemmas 7.2.4 and 7.2.6]{Monod} for a detailed
proof in the bounded case -- the argument there extends to this context
just by forgetting any reference to the norms). It 
implies that any relatively injective strong resolution of a $\G$-module $V$
may be used to compute the cohomology modules $H^\bullet(\G,V)$ (see Corollary~\ref{fund1:cor}).

\begin{thm}\label{ext:thm}
Let $\alpha\colon E\to F$ be a $\G$-map between 
$R[\G]$-modules,
let $(E,E^\bullet,\delta_E^\bullet)$ be a strong resolution of $E$, and suppose
that $(F,F^\bullet,\delta_F^\bullet)$ is an augmented complex
such that $F^i$ is relatively injective for every $i\geq 0$.
Then $\alpha$ extends to a chain map $\alpha^\bullet$, and any two extensions
of $\alpha$ to chain maps are $\G$-homotopic.
\end{thm}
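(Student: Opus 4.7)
The plan is to prove both existence and uniqueness by induction on the degree, using at each step the combination of two ingredients: relative injectivity of the modules $F^n$, which allows extension of $\G$-maps along \emph{strongly} injective $\G$-maps, and strong injectivity of certain inclusions into $E^n$ supplied by the contracting homotopy $k^\bullet$.

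The key preliminary observation is the following. For every $n\geq -1$ (adopting the convention $E^{-1}=E$, $\delta_E^{-1}=\varepsilon_E$, so the contracting homotopy reads $\delta_E^{n-1}\circ k^n + k^{n+1}\circ \delta_E^n={\rm Id}_{E^n}$), the inclusion $\iota^n\colon \mathrm{im}(\delta_E^n)\hookrightarrow E^{n+1}$ is strongly injective. Indeed, the map $\sigma^n=\delta_E^n\circ k^{n+1}$ lands in $\mathrm{im}(\delta_E^n)$, and for $x=\delta_E^n(y)$ one computes, using the homotopy identity at level $n$ and $\delta_E^n\circ \delta_E^{n-1}=0$, that $\sigma^n(x)=\delta_E^n(y-\delta_E^{n-1}(k^n(y)))=\delta_E^n(y)=x$. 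For the base case $n=-1$ the splitting is just $k^0$.

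For existence, I build the $\alpha^n$ inductively. The augmentation $\varepsilon_E$ is strongly injective, so by relative injectivity of $F^0$ the $\G$-map $\varepsilon_F\circ \alpha\colon E\to F^0$ extends to $\alpha^0\colon E^0\to F^0$ with $\alpha^0\circ\varepsilon_E=\varepsilon_F\circ\alpha$. Assuming $\alpha^0,\ldots,\alpha^n$ are defined and satisfy the chain relations, the $\G$-map $\delta_F^n\circ\alpha^n\colon E^n\to F^{n+1}$ vanishes on $\ker\delta_E^n$: by exactness this kernel equals $\mathrm{im}(\delta_E^{n-1})$, and for $y=\delta_E^{n-1}(z)$ the inductive chain relation gives $\delta_F^n\alpha^n\delta_E^{n-1}(z)=\delta_F^n\delta_F^{n-1}\alpha^{n-1}(z)=0$. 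Hence $\delta_F^n\circ\alpha^n$ descends to a well-defined $\G$-map $\psi\colon\mathrm{im}(\delta_E^n)\to F^{n+1}$, $\psi(\delta_E^n(y))=\delta_F^n\alpha^n(y)$. By the key lemma and relative injectivity of $F^{n+1}$, $\psi$ extends to a $\G$-map $\alpha^{n+1}\colon E^{n+1}\to F^{n+1}$, and then $\alpha^{n+1}\circ\delta_E^n=\psi\circ\delta_E^n=\delta_F^n\circ\alpha^n$.

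For uniqueness up to $\G$-homotopy, let $\alpha^\bullet,\beta^\bullet$ be two extensions and set $\gamma^\bullet=\alpha^\bullet-\beta^\bullet$, a chain map with $\gamma^0\circ\varepsilon_E=0$. I construct $T^n\colon E^n\to F^{n-1}$ inductively. For $T^1$, the rule $\delta_E^0(x)\mapsto \gamma^0(x)$ defines a $\G$-map on $\mathrm{im}(\delta_E^0)$, well-defined since $\ker\delta_E^0=\mathrm{im}(\varepsilon_E)$ by exactness and $\gamma^0$ vanishes on this image; I extend it to $T^1\colon E^1\to F^0$ via relative injectivity of $F^0$ and the key lemma. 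Having $T^1,\ldots,T^i$ satisfying the homotopy relations up to degree $i$, I define $T^{i+1}$ on $\mathrm{im}(\delta_E^i)$ by $\delta_E^i(x)\mapsto(\gamma^i-\delta_F^{i-1}\circ T^i)(x)$; well-definedness on $\ker\delta_E^i=\mathrm{im}(\delta_E^{i-1})$ follows from the chain relation for $\gamma$ together with the inductive homotopy relation $T^i\circ\delta_E^{i-1}=\gamma^{i-1}-\delta_F^{i-2}\circ T^{i-1}$ and $\delta_F^{i-1}\circ\delta_F^{i-2}=0$. Extending once more by relative injectivity gives $T^{i+1}$ with the required homotopy identity.

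The main technical point, and the reason strong (rather than arbitrary) resolutions are needed, is the verification that the inclusions $\mathrm{im}(\delta_E^n)\hookrightarrow E^{n+1}$ are strongly injective: this is precisely where the contracting homotopy $k^\bullet$ is used, and without it relative injectivity of $F^{n+1}$ could not be invoked. The rest of the proof is a bookkeeping exercise involving exactness of $E^\bullet$, the chain/homotopy relations, and the two-sided cocycle identity in $F^\bullet$; the parallel inductive step for existence and for the homotopy are formally almost identical.
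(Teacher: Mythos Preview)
Your proof is correct and follows exactly the standard homological-algebra argument that the paper references but does not spell out (the paper simply cites \cite{Ivanov} and \cite[Lemmas 7.2.4 and 7.2.6]{Monod} and says the argument carries over verbatim to the unbounded setting). Your key lemma---that the contracting homotopy makes each inclusion $\mathrm{im}(\delta_E^n)\hookrightarrow E^{n+1}$ strongly injective via the retraction $\delta_E^n\circ k^{n+1}$---is precisely the mechanism underlying those references, and your inductive constructions of $\alpha^\bullet$ and of the homotopy $T^\bullet$ are the standard ones.
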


\begin{cor}\label{fund1:cor}
Let $(V,V^\bullet,\delta_V^\bullet)$ be a relatively injective strong resolution of $V$. Then for every $n\in\matN$
there is a canonical isomorphism
$$
H^n(\G,V)\cong H^n(V^\bullet)\ .
$$
\end{cor}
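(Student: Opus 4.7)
The plan is to deduce the corollary from Theorem~\ref{ext:thm} by the usual double-application trick that is standard in homological algebra. Throughout, write $(V,C^\bullet(\G,V),\delta^\bullet)$ for the standard resolution, which is relatively injective and strong by Proposition~\ref{standard:res}, and let $(V,V^\bullet,\delta_V^\bullet)$ be the given relatively injective strong resolution.

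First I would apply Theorem~\ref{ext:thm} to the identity $\mathrm{Id}_V\colon V\to V$, using $(V,C^\bullet(\G,V),\delta^\bullet)$ as the strong resolution in the domain and $(V,V^\bullet,\delta_V^\bullet)$ as the relatively injective augmented complex in the target. This produces a chain map $\alpha^\bullet\colon C^\bullet(\G,V)\to V^\bullet$ extending $\mathrm{Id}_V$. Swapping the roles (using that the given resolution is strong and that $C^n(\G,V)$ is relatively injective by Lemma~\ref{inj:basic}), I get a chain map $\beta^\bullet\colon V^\bullet\to C^\bullet(\G,V)$ extending $\mathrm{Id}_V$.

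Next I would argue that $\beta^\bullet\circ\alpha^\bullet$ and $\alpha^\bullet\circ\beta^\bullet$ are $\G$-homotopic to the respective identity chain maps. Indeed, both $\beta^\bullet\circ\alpha^\bullet$ and $\mathrm{Id}_{C^\bullet(\G,V)}$ are chain map extensions of $\mathrm{Id}_V$ from the strong resolution $C^\bullet(\G,V)$ to the relatively injective complex $C^\bullet(\G,V)$, so by the uniqueness clause of Theorem~\ref{ext:thm} they are $\G$-homotopic. The same reasoning handles $\alpha^\bullet\circ\beta^\bullet$. Since $\G$-homotopic chain maps induce equal maps on the cohomology of the $\G$-invariants, the induced maps $H^n(\alpha^\bullet)$ and $H^n(\beta^\bullet)$ are mutual inverses, yielding
\[
H^n(\G,V)\;=\;H^n(C^\bullet(\G,V))\;\cong\;H^n(V^\bullet)
\]
for every $n\in\matN$.

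Finally, I would remark on canonicity: any two extensions of $\mathrm{Id}_V$ to chain maps $C^\bullet(\G,V)\to V^\bullet$ differ by a $\G$-homotopy, hence induce the same isomorphism in cohomology. Thus the isomorphism depends only on the two resolutions, not on the auxiliary choice of chain map, and is in this sense canonical. There is no real obstacle here; the only point that requires a moment of care is verifying that both resolutions simultaneously play the two roles needed in Theorem~\ref{ext:thm} (strong on one side, relatively injective on the other), which is guaranteed by Proposition~\ref{standard:res} and Lemma~\ref{inj:basic} together with the hypotheses on $V^\bullet$.
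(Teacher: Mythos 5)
Your argument is correct and follows exactly the same strategy as the paper's proof: apply Theorem~\ref{ext:thm} twice (once in each direction) to produce chain maps between the standard resolution and $V^\bullet$ extending $\mathrm{Id}_V$, then invoke the uniqueness-up-to-$\G$-homotopy clause to see the compositions are $\G$-homotopic to the identities, hence induce mutually inverse isomorphisms in cohomology. You have simply spelled out the steps that the paper compresses into one sentence.
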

\begin{proof}
 By Proposition~\ref{standard:res}, both $(V,V^\bullet,\delta_V^\bullet)$ and the standard resolution of $V$ are relatively injective strong
resolutions of $V$ over $\G$. Therefore, Theorem~\ref{ext:thm} provides chain maps
between $C^\bullet(\G,V)$ and $V^\bullet$, which are one the $\G$-homotopy inverse of the other.
Therefore, these chain maps induce isomorphisms in cohomology.
\end{proof}

\section{The classical approach to group cohomology via resolutions}\label{alternative}
In this section we describe the relationship
between 
the description of group cohomology via resolutions given above and more
traditional approaches to the subject (see e.g.~\cite{brown}).
The reader who is not interested
can safely skip the section, since the results cited below will not be used elsewhere in this monograph.

If $V,W$ are $\matZ[\G]$-modules,
then the space ${\rm Hom}_{\matZ} (V,W)$ is endowed with the structure of a $\matZ[\G]$-module by setting $(g\cdot f)(v)=g(f(g^{-1}v))$ for every $g\in \G$,
$f\in {\rm Hom}_{\matZ} (V,W)$, $v\in V$.
Then, the cohomology $H^\bullet(\G,V)$ is often defined in the following equivalent ways
(we refer e.g. to~\cite{brown}
for the definition of projective module (over a ring $R$); for our discussion, it is sufficient
to know that any free $R$-module is projective over $R$, so
any free resolution is projective):
\begin{enumerate}
 \item {\bf (Via injective resolutions of $V$):} Let $(V,V^\bullet,\delta^\bullet)$ be an injective resolution of $V$ over $\matZ[\G]$, and take
the complex $W^\bullet={\rm Hom}_{\matZ}(V^\bullet,\matZ)$, endowed with the action 
$g\cdot f(v)=f(g^{-1}v)$. Then $(W^\bullet)^\G={\rm Hom}_{\matZ[\G]}(V^\bullet,\matZ)$
(where $\matZ$ is endowed with the structure of trivial $\G$-module), and one may define
$H^\bullet(\G,V)$ as the homology of the $\G$-invariants of $W^\bullet$.
\item 
{\bf (Via projective resolutions of $\matZ$):}
Let $(\matZ,P_\bullet,d_\bullet)$ be a \emph{projective} resolution of $\matZ$ over $\matZ[\G]$,
and take the complex $Z^\bullet={\rm Hom}_{\matZ} (P_\bullet,V)$. We have again that
$(Z^\bullet)^\G={\rm Hom}_{\matZ[\G]} (P_\bullet,V)$, and again we may define
$H^\bullet(\G,V)$ as the homology of the $\G$-invariants of the complex $Z^\bullet$. 
\end{enumerate}
The fact that these two definitions are indeed equivalent is proved e.g.~in~\cite{brown}.

We have already observed that, if $V$ is a $\matZ[\G]$-module, the module $C^n(\G,V)$ is not injective in general. However, this 
is not really a problem, since the complex $C^\bullet (\G,V)$ may be recovered from a projective resolution of $\matZ$ over $\matZ[\G]$. 
Namely, let $C_n(\G,\matZ)$ be the free $\matZ$-module admitting the set $\G^{n+1}$ as a basis. The diagonal action
of $\G$ onto $\G^{n+1}$  endows $C_n(\G,\matZ)$ with the structure of a $\matZ[\G]$-module.
The modules $C_n(\G,\matZ)$ may be arranged into a resolution
$$
0\longleftarrow \matZ \ttob{\varepsilon} C_0(\G,\matZ)\ttob{d_1} C_1(\G,\matZ)\ttob{d_2}\ldots
\ttob{d_{n}} C_n(\G,\matZ) \ttob{d_{n+1}}\ldots
$$
of the trivial $\matZ[\G]$-module $\matZ$ over $\matZ[\G]$ (the homology of the $\G$-coinvariants of this resolution is by definition
the \emph{homology} of $\G$, see Section~\ref{homol:group:sec}).
Now, it is easy to check that $C_n(\G,\matZ)$ is free, whence projective,
as a $\matZ[\G]$-module. Moreover, the module
${\rm Hom}_{\matZ} (C_n(\G,\matZ),V)$ is $\matZ[\G]$-isomorphic to $C^n(\G,V)$.
This shows that, in the context of the traditional definition of group cohomology via resolutions,
the complex $C^\bullet(\G,V)$ arises from a projective resolution of $\matZ$,
rather than from an injective resolution of $V$.

\section{The topological interpretation of group cohomology revisited}\label{revisited:sec}
Corollary~\ref{fund1:cor} may be exploited to prove that the cohomology 
of $\G$ is isomorphic to the singular cohomology of any path-connected topological
space $X$ satisfying conditions (1), (2) and (3) described in Section~\ref{topol:sec}, which we recall here for the reader's convenience:
\begin{enumerate}
 \item[(1)] 
the fundamental group of $X$ is isomorphic to $\G$,
\item[(2)] the space $X$ admits
a universal covering $\widetilde{X}$, and 
\item[(3)] 
$\widetilde{X}$ is $R$-acyclic, i.e.~$H_n(\widetilde{X},R)=0$
for every $n\geq 1$.
\end{enumerate} 

We fix an identification
between $\G$ and the group of the covering automorphisms of the universal covering $\widetilde{X}$ of $X$. The action of $\G$ on $\widetilde{X}$ induces an
action of $\G$ on $C_\bullet(\widetilde{X},R)$, whence an action of $\G$
on
$C^\bullet(\widetilde{X},R)$, which is defined by $(g\cdot\varphi)(c)=\varphi(g^{-1}c)$
for every $c\in C_\bullet(\widetilde{X},R)$, $\varphi\in C^\bullet(\widetilde{X},R)$,
$g\in \G$. Therefore, for $n\in\matN$ both $C_n(\widetilde{X},R)$ and $C^n(\widetilde{X},R)$ 
are endowed with the structure of $R[\G]$-modules.
We have a natural identification $C^\bullet(\widetilde{X},R)^\G\cong C^\bullet(X,R)$.

\begin{lemma}\label{sing:inj}
For every $n\in\matN$, the singular cochain module
$C^n(\widetilde{X},R)$ is relatively injective.
\end{lemma}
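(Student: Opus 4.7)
The plan is to follow the strategy of the proof of Lemma~\ref{inj:basic} verbatim, replacing the ``pick the first coordinate $g_0$ and translate by it'' trick with ``pick the element of $\G$ that moves each singular simplex into a chosen fundamental domain''.

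The essential ingredient is that $\G$ acts freely on $\widetilde{X}$, hence it acts freely on the set $S_n(\widetilde{X})$ of singular $n$-simplices of $\widetilde{X}$: indeed, if $g\cdot\sigma=\sigma$ for some $\sigma\colon\Delta^n\to\widetilde{X}$, then $g$ fixes $\sigma(p)$ for every $p\in\Delta^n$, forcing $g=1$. I would therefore fix once and for all a set $\mathcal{F}_n\subseteq S_n(\widetilde{X})$ of representatives for the $\G$-orbits on $S_n(\widetilde{X})$, so that every singular $n$-simplex $\tau$ admits a unique decomposition $\tau=g_\tau\cdot\tau_0$ with $\tau_0\in\mathcal{F}_n$ and $g_\tau\in\G$.

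With these choices fixed, given the extension data of Definition~\ref{inj2:def} (a strongly injective $\G$-map $\iota\colon A\to B$ with $R$-linear section $\sigma\colon B\to A$, and a $\G$-map $\alpha\colon A\to C^n(\widetilde{X},R)$), I would define
\[
\beta(b)(\tau) \;=\; \alpha\big(\sigma(g_\tau^{-1}\cdot b)\big)\big(g_\tau^{-1}\tau\big).
\]
The $R$-linearity of $\beta$ is immediate from the $R$-linearity of $\sigma$, $\alpha$ and evaluation. The equality $\beta\circ\iota=\alpha$ follows from the $\G$-equivariance of $\iota$ together with $\sigma\circ\iota={\rm Id}_A$: these give $\sigma(g_\tau^{-1}\iota(a))=\sigma(\iota(g_\tau^{-1}a))=g_\tau^{-1}a$, so the $\G$-equivariance of $\alpha$ and the definition of the action on $C^n(\widetilde{X},R)$ yield $\beta(\iota(a))(\tau)=(g_\tau^{-1}\cdot\alpha(a))(g_\tau^{-1}\tau)=\alpha(a)(\tau)$.

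The only step requiring genuine care is the $\G$-equivariance of $\beta$, and it rests on the identity $g_{g^{-1}\tau}=g^{-1}g_\tau$. This identity is forced by freeness: both $g_{g^{-1}\tau}^{-1}(g^{-1}\tau)$ and $g_\tau^{-1}\tau$ lie in $\mathcal{F}_n$ and belong to the same $\G$-orbit, hence must coincide, and then freeness of the $\G$-action on $S_n(\widetilde{X})$ produces the asserted equality of group elements. Substituting this identity into $(g\cdot\beta(b))(\tau)=\beta(b)(g^{-1}\tau)$ immediately produces $\alpha(\sigma(g_\tau^{-1}gb))(g_\tau^{-1}\tau)=\beta(gb)(\tau)$. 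I do not expect any serious obstacle: once freeness of the $\G$-action on $S_n(\widetilde{X})$ is noted, the argument is a direct transposition of the proof of Lemma~\ref{inj:basic}, with the orbit-representative element $g_\tau$ playing the role of $g_0$.
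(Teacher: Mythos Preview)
Your proof is correct and essentially identical to the paper's: the paper fixes a set $L_n(\widetilde{X})$ of representatives for the free $\G$-action on $S_n(\widetilde{X})$ (your $\mathcal{F}_n$), writes each simplex uniquely as $g_s\cdot\overline{s}$, and defines $\beta(b)(s)=\alpha(g_s\sigma(g_s^{-1}b))(s)$, which after using the $\G$-equivariance of $\alpha$ coincides with your formula. The paper additionally records an alternative argument via the $\G$-isomorphism $C^n(\widetilde{X},R)\cong C^0(\G,C^0(L_n(\widetilde{X}),R))$ together with Lemma~\ref{inj:basic}.
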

\begin{proof}
For every topological space $Y$, let us denote by $S_n(Y)$ the set
of singular simplices with values in $Y$.

We denote by $L_n(\widetilde{X})$ a set of representatives
for the action of $\G$ on $S_n(\widetilde{X})$
(for example, if $F$ is a set of representatives for the action of $\G$ on
$\widetilde{X}$, we may define $L_n(\widetilde{X})$ as the set of singular
$n$-simplices whose first vertex lies in $F$). Then, for every $n$-simplex
${s}\in S_n(\widetilde{X})$, there exist a unique
$g_s\in \G$ and a unique $\overline{s}\in L_n(\widetilde{X})$
such that $g_s\cdot\overline{s}=s$.
Let us now consider the extension problem:
$$
\xymatrix{
0 \ar[r] & A  \ar[r]_{\iota} \ar[d]_{\alpha} & B \ar@{-->}[dl]^{\beta} \ar@/_/[l]_{\sigma}\\
& C^n(\widetilde{X},R)
}
$$ 
We define the desired extension $\beta$ by setting
$$
\beta(b)(s)=\alpha(g_s\sigma(g_s^{-1}\cdot b)) ({s})=\alpha(\sigma(g_s^{-1}\cdot b)) (\overline{s})
$$
for every $s \in S_n(\widetilde{X})$. It is easy to verify that
the map $\beta$ is an $R[\G]$-map, and that $\alpha=\beta\circ \iota$.

An alternative proof (producing the same solution to the extension problem)
is the following.
Using that
the action of $\G$ on $\widetilde{X}$ is free, it is easy to show
that $L_n(\widetilde{X})$, when considered as a subset of $C_n(\widetilde{X},R)$, is a 
free basis of $C_n(\widetilde{X},R)$ over $R[\G]$. 
In other words, every $c\in C_n(\widetilde{X},R)$ may be expressed uniquely
as a sum of the form
$c=\sum_{i=1}^k a_ig_i{s}_i$, $a_i\in R$, $g_i\in \G$, ${s}_i\in L_n(\widetilde{X})$.
Therefore, the map
$$
\psi\colon C^0(\G,C^0(L_n(\widetilde{X}),R))\to C^n(\widetilde{X},R)\, ,\quad
\psi(f)\left(\sum_{i=1}^k a_ig_i s_i\right)=\sum_{i=1}^k a_if(g_i)({s}_i)
$$
is well defined. 
If we endow $C^0(L_n(\widetilde{X}),R)$ with the structure of trivial 
$\G$-module, then
a straightforward computation shows that $\psi$ is in fact
a $\G$-isomorphism, so the conclusion follows from Lemma~\ref{inj:basic}.
\end{proof}

\begin{prop}\label{sing:res}
Let $\varepsilon\colon R\to C^0(\widetilde{X},R)$ be defined
by $\varepsilon (t)(s)=t$ for every singular $0$-simplex $s$
in $\widetilde{X}$.
 Suppose that $H_i(\widetilde{X},R)=0$ for every $i\geq 1$. Then the augmented complex
$$
0\longrightarrow R\tto{\varepsilon} C^0(\widetilde{X},R)\tto{\delta^1}C^1(\widetilde{X},R)\to
\ldots \tto{\delta^{n-1}} C^n(\widetilde{X},R)\tto{\delta^n} C^{n+1}(\widetilde{X},R)
$$
is a relatively injective strong resolution of the trivial $R[\G]$-module $R$.
\end{prop}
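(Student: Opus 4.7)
The plan is to dispose of relative injectivity by citing Lemma~\ref{sing:inj}, and then reduce the construction of a contracting homotopy on the augmented cochain complex to the production of an $R$-linear chain contraction on the augmented singular chain complex. Since the existence of a contracting homotopy automatically implies exactness, this simultaneously establishes that the augmented cochain complex is a resolution.

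First I would consider the augmented singular chain complex
$$\cdots \to C_1(\widetilde X, R) \stackrel{\partial_1}{\longrightarrow} C_0(\widetilde X, R) \stackrel{\varepsilon_*}{\longrightarrow} R \to 0,$$
where $\varepsilon_*$ sends a singular $0$-simplex to $1\in R$. This complex is exact: exactness in positive degrees is precisely the acyclicity hypothesis $H_n(\widetilde X, R) = 0$ for $n\geq 1$, while exactness at $C_0$ and surjectivity of $\varepsilon_*$ follow from path-connectedness of $\widetilde X$ (which holds because $\widetilde X$ is the universal covering of the path-connected space $X$). Each $C_n(\widetilde X, R)$ is a free $R$-module on the set $S_n(\widetilde X)$ of singular $n$-simplices, and $R\in\{\Z,\R\}$, so this acyclic bounded-below complex of free modules is $R$-linearly contractible. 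Concretely, one produces $R$-linear maps $\eta\colon R\to C_0(\widetilde X,R)$ and $T_n\colon C_n(\widetilde X,R)\to C_{n+1}(\widetilde X,R)$ satisfying $\varepsilon_*\eta = \mathrm{id}_R$, $\partial_1 T_0 + \eta\varepsilon_* = \mathrm{id}$, and $\partial_{n+1}T_n + T_{n-1}\partial_n = \mathrm{id}$ for $n\geq 1$. This can be done inductively by splitting at each step the surjection of a free $R$-module onto the corresponding cycle submodule, or by quoting the general fact that a bounded-below acyclic complex of free modules over a PID is contractible.

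Next I would dualize: set $k^0\colon C^0(\widetilde X,R)\to R$, $k^0(\varphi)=\varphi\circ\eta$, and $k^{n+1}\colon C^{n+1}(\widetilde X,R)\to C^n(\widetilde X,R)$, $k^{n+1}(\varphi)=\varphi\circ T_n$. Transposing the chain-level identities above yields $k^0\circ\varepsilon=\mathrm{id}_R$, $\varepsilon\circ k^0 + k^1\circ\delta^0 = \mathrm{id}$, and $\delta^{n-1}\circ k^n + k^{n+1}\circ\delta^n = \mathrm{id}$ for $n\geq 1$, so the $k^i$ form the required contracting homotopy in the sense of Chapter~\ref{res:chapter}.

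The one point requiring real attention is the existence of the $R$-linear chain contraction of the augmented singular chain complex. The maps $T_n$ and $\eta$ genuinely cannot be chosen $\G$-equivariantly in general, but this is exactly why the definition of strong resolution requires only $R$-linearity of the contracting homotopy. Once this freedom is granted, the rest of the argument is formal dualization; the combination of Lemma~\ref{sing:inj} and the dualized contracting homotopy then yields the proposition.
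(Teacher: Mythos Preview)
Your proof is correct and follows essentially the same route as the paper: cite Lemma~\ref{sing:inj} for relative injectivity, use acyclicity together with freeness of the $C_n(\widetilde X,R)$ to get an $R$-linear chain contraction of the augmented singular chain complex, and dualize via $C^n(\widetilde X,R)\cong\Hom_R(C_n(\widetilde X,R),R)$ to obtain the contracting homotopy on cochains. The paper's proof is more terse but records exactly these steps.
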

\begin{proof}
Since $\widetilde{X}$ is path-connected, we have
that ${\rm Im}\, \varepsilon =\ker\delta^0$.
Observe now that $C_n(\widetilde{X},R)$ is $R$-free for every $n\in\matN$. As a consequence,
the obvious augmented complex associated to $C_\bullet(\widetilde{X},R)$, being acyclic, is homotopically trivial over $R$. 
Since $C^n(\widetilde{X},R)\cong {\rm Hom}_{R}(C_n(\widetilde{X},R),R)$, we may conclude that the augmented
complex described in the statement is a strong resolution of $R$ over $R[\G]$. 
Then the conclusion follows from Lemma~\ref{sing:inj}.
\end{proof}

Putting together Propositions~\ref{standard:res}, \ref{sing:res} and
Corollary~\ref{fund1:cor} we can provide the following topological
description of $H^n(\G,R)$:

\begin{cor}
 Let $X$ be a path-connected space admitting a universal covering $\widetilde{X}$, and suppose that
$H_i(\widetilde{X},R)=0$ for every $i\geq 1$. Then
$H^i(X,R)$ is canonically isomorphic to $H^i(\pi_1(X),R)$ for every $i\in\matN$.
\end{cor}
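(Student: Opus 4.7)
The plan is to deduce the corollary by assembling the three main ingredients already available in this section. Set $\G=\pi_1(X)$ and fix, as in Section~\ref{revisited:sec}, an identification of $\G$ with the deck transformation group of $\widetilde{X}\to X$; this endows each $C^n(\widetilde{X},R)$ with the structure of an $R[\G]$-module and yields the natural identification $C^\bullet(\widetilde{X},R)^\G\cong C^\bullet(X,R)$, so that in particular $H^\bullet(C^\bullet(\widetilde{X},R)^\G)=H^\bullet(X,R)$.

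The first step is to recognise that, thanks to the acyclicity hypothesis $H_i(\widetilde{X},R)=0$ for $i\geq 1$, Proposition~\ref{sing:res} applies and tells us that the augmented complex
$$
0\to R\tto{\varepsilon} C^0(\widetilde{X},R)\tto{\delta^0} C^1(\widetilde{X},R)\to\cdots
$$
is a \emph{relatively injective strong resolution} of the trivial $R[\G]$-module $R$. The second step is to recall that, by Proposition~\ref{standard:res}, the homogeneous bar-type complex $C^\bullet(\G,R)$ is another relatively injective strong resolution of the same trivial module $R$ over $R[\G]$; and that its $\G$-invariants compute, by definition, $H^\bullet(\G,R)$.

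The third step is simply to invoke Corollary~\ref{fund1:cor}: since the cohomology of $\G$ with coefficients in $R$ can be computed from \emph{any} relatively injective strong resolution of $R$ over $R[\G]$, we obtain a canonical isomorphism
$$
H^i(\G,R)\;\cong\;H^i\bigl(C^\bullet(\widetilde{X},R)^\G\bigr)\;=\;H^i(X,R)
$$
for every $i\in\matN$. The canonicity is built into Corollary~\ref{fund1:cor}: the isomorphism is induced by any chain map extending the identity $R\to R$ of augmentations, and any two such extensions are $\G$-homotopic by Theorem~\ref{ext:thm}, so the isomorphism in cohomology does not depend on the chosen extension.

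There is essentially no obstacle: the work has all been done in Propositions~\ref{standard:res} and~\ref{sing:res} (the latter in turn resting on Lemma~\ref{sing:inj}) and in the uniqueness-up-to-homotopy theorem~\ref{ext:thm}. The only point deserving a brief mention in the write-up is why the isomorphism is \emph{canonical}, which one spells out exactly as above by appealing to the $\G$-homotopy uniqueness clause in Theorem~\ref{ext:thm}.
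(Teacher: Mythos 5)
Your proof is correct and follows exactly the approach indicated in the paper, which derives the corollary by combining Propositions~\ref{standard:res}, \ref{sing:res} and Corollary~\ref{fund1:cor}. You have merely spelled out the assembly in more detail, including the (legitimate and useful) remark on why the isomorphism is canonical via the $\G$-homotopy uniqueness in Theorem~\ref{ext:thm}.
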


\section{Bounded cohomology via resolutions}\label{res:bounded}
Just as in the case of classical cohomology, 
it is often useful to have alternative ways to compute the bounded cohomology of a group. This section is devoted to an approach to bounded cohomology which closely follows the traditional approach to classical cohomology via homological algebra.
The circle of ideas we are going to  describe first appeared (in the case with trivial real coefficients) in a paper by Brooks~\cite{Brooks}, where it was exploited to
 give an independent proof of Gromov's result that the isomorphism type of the bounded cohomology of a space (with real coefficients)
 only depends on its fundamental group~\cite{Gromov}. 
 Brooks' theory was then 
developed by Ivanov in his foundational paper~\cite{Ivanov} (see also~\cite{Noskov} for the case of coefficients in general normed $\G$-modules). Ivanov 
gave a new proof of the vanishing of the bounded cohomology (with real coefficients) of a simply connected
space (this result played an important role in Brooks' argument, and was originally due to Gromov~\cite{Gromov}), and
managed to incorporate the seminorm into the homological algebra approach to bounded cohomology, thus proving that the 
bounded  cohomology of a space is \emph{isometrically}
isomorphic to the bounded cohomology of its fundamental group (in the case with real coefficients).
Ivanov-Noskov's theory was further developed by Burger and Monod~\cite{BM1,BM2,Monod}, who paid a particular attention to the 
\emph{continuous} bounded cohomology of topological groups. 

Both Ivanov-Noskov's and Monod's theory are concerned with \emph{Banach} $\G$-modules, which are in particular
$\R[\G]$-modules. For the moment, we prefer to consider also the (quite different) case with integral coefficients. Therefore,
we let $R$ be either $\matZ$ or $\R$, and we concentrate our attention on the category of normed $R[\G]$-modules introduced in 
Section~\ref{bounded:coh:sec}. In the next sections we will see that relative injective modules and strong resolutions may be defined in this context just by adapting
to normed $R[\G]$-modules the analogous definitions for generic $R[\G]$-modules.

\section{Relatively injective normed $\G$-modules}
Throughout the whole section, unless otherwise stated, we will deal only with \emph{normed} $R[\G]$-modules.
Therefore, $\G$-morphisms will be always assumed to be bounded. 

The following definitions are taken from~\cite{Ivanov}
(where only the case when $R=\R$ and $V$ is Banach is dealt with).
A bounded linear map $\iota\colon A\rightarrow B$ of normed $R$-modules is 
\emph{strongly injective}
if there is an $R$-linear map 
$\sigma\colon B\rightarrow A$ with $\|\sigma\|\leq 1$ and 
$\sigma\circ\iota={\rm Id}_A$ (in particular, $\iota$ is injective). 
We emphasize that, even when $A$ and $B$ are 
$R[\G]$-modules, the map $\sigma$ is \emph{not} required
to be  $\G$-equivariant.

\begin{defn}\label{relativelyinjective}
A normed $R[\G]$-module $E$ is \emph{relatively injective} if for every 
strongly injective $\G$-morphism $\iota\colon A\rightarrow B$
of normed $R[\G]$-modules and every $\G$-morphism $\alpha\colon A\rightarrow E$
there is a $\G$-morphism $\beta\colon B\rightarrow E$ satisfying $\beta\circ \iota=
\alpha$ and $\|\beta\|\leq \|\alpha\|$.
$$
\xymatrix{
0 \ar[r] & A  \ar[r]_{\iota} \ar[d]_{\alpha} & B \ar@{-->}[dl]^{\beta} \ar@/_/[l]_{\sigma}\\
& E
}
$$
\end{defn}

\begin{rem}
Let $E$ be a normed $R[\G]$-module, and let $\hat{E}$ be the underlying $R[\G]$-module. Then
no obvious implication exists between the fact that $E$ is relatively injective
(in the category of normed $R[\G]$-modules, i.e.~according to Definition~\ref{relativelyinjective}),
and the fact that $\hat{E}$ is (in the category of $R[\G]$-modules, i.e.~according to Definition~\ref{inj2:def}).
This could suggest that the use of the same name for these different notions could indeed be an abuse. However,
unless otherwise stated, henceforth we will deal with relatively injective modules only in the context of normed
$R[\G]$-modules, so the reader may safely take Definition~\ref{relativelyinjective} as the only definition
of relative injectivity.  
\end{rem}

The following result is due to Ivanov~\cite{Ivanov} in the case of real coefficients, and to Monod~\cite{Monod} in the general case
(see also Remark~\ref{compare}), and shows that the modules involved in the definition of bounded cohomology are relatively injective. 

\begin{lemma}\label{relinj:standard}
 Let $V$ be a normed $R[\G]$-module. Then the normed $R[\G]$-module $C_b^n(\G,V)$
is relatively injective.
\end{lemma}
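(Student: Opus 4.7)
The plan is to adapt the proof of Lemma~\ref{inj:basic} (the unbounded analogue) by using exactly the same formula for the extension and then checking that it respects norms. Explicitly, given the extension problem
$$
\xymatrix{
0 \ar[r] & A  \ar[r]_{\iota} \ar[d]_{\alpha} & B \ar@{-->}[dl]^{\beta} \ar@/_/[l]_{\sigma}\\
& C_b^n(\G,V)
}
$$
I would define $\beta\colon B\to C^n_b(\G,V)$ by the formula
$$
\beta(b)(g_0,\ldots,g_n)=\alpha\bigl(g_0\,\sigma(g_0^{-1}b)\bigr)(g_0,\ldots,g_n)\ ,
$$
which is the same formula used in the unbounded case. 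The verifications that $\beta$ is $R$-linear, that $\beta\circ \iota=\alpha$, and that $\beta$ is $\G$-equivariant are identical to those carried out in the proof of Lemma~\ref{inj:basic}, so I would simply refer to them.

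The new content, which is the whole point of the bounded version, is the norm estimate. I would compute, for any $b\in B$ and any $(g_0,\ldots,g_n)\in \G^{n+1}$,
$$
\bigl\|\beta(b)(g_0,\ldots,g_n)\bigr\| \leq \bigl\|\alpha(g_0\sigma(g_0^{-1}b))\bigr\|_\infty \leq \|\alpha\|\cdot \|g_0\sigma(g_0^{-1}b)\|\ .
$$
Using that the norm on $A$ is $\G$-invariant and that $\|\sigma\|\leq 1$, this is bounded by
$$
\|\alpha\|\cdot \|\sigma(g_0^{-1}b)\|\leq \|\alpha\|\cdot \|\sigma\|\cdot \|g_0^{-1}b\|\leq \|\alpha\|\cdot \|b\|\ .
$$
Taking the supremum over $(g_0,\ldots,g_n)$ shows in one stroke both that $\beta(b)$ indeed has finite $\ell^\infty$-norm (so $\beta$ takes values in $C_b^n(\G,V)$, not just in $C^n(\G,V)$) and that $\|\beta\|\leq \|\alpha\|$, which is the extra requirement in Definition~\ref{relativelyinjective} compared with Definition~\ref{inj2:def}.

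I do not foresee any serious obstacle: the only subtlety with respect to the unbounded version is the norm control, and this is handled cleanly by the $\G$-invariance of the norms on $A$ and $B$ together with the contraction property $\|\sigma\|\leq 1$ built into the definition of a strongly injective morphism of normed $R[\G]$-modules. One minor point worth flagging is that $R$-linearity of $\sigma$ (as opposed to $\G$-equivariance) is crucial for the formula to land in $C_b^n(\G,V)$, since the extension $\beta$ is built by averaging $\alpha\circ\sigma$ over the action of $\G$ in the first variable; but this is exactly the reason why strong injectivity, rather than injectivity in the category of $R[\G]$-modules, is the correct notion here.
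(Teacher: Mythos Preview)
Your proposal is correct and matches the paper's proof essentially line for line: the same formula for $\beta$ is used, the algebraic verifications are referred back to Lemma~\ref{inj:basic}, and the norm estimate $\|\beta\|\leq\|\alpha\|$ is the only new ingredient. If anything, you have spelled out the norm computation in more detail than the paper does.
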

\begin{proof}
Let us consider the extension problem described in Definition~\ref{relativelyinjective},
with $E=C_b^n(\G,V)$. 
Then we 
define $\beta$ as follows:
$$
\beta(b)(g_0,\ldots,g_n)=\alpha(g_0\sigma(g_0^{-1}b))(g_0,\ldots,g_n)\ .
$$
It is immediate to check that $\beta\circ \iota=\alpha$. Moreover, since
$\|\sigma\|\leq 1$, we have $\|\beta\|\leq\|\alpha\|$. Finally, the fact that
$\beta$ commutes with the actions of $\G$ may be proved by the very same computation
described in the proof of
Lemma~\ref{inj:basic}. 
\end{proof}

\section{Resolutions of normed $\G$-modules}
A normed $R[\G]$-complex is an $R[\G]$-complex whose modules are normed $R[\G]$-spaces,
and whose differential is a bounded $R[\G]$-map in every degree.
A chain map
between $(E^\bullet,\delta_E^\bullet)$
$(F^\bullet,\delta_F^\bullet)$ is  a chain map between the underlying $R[\G]$-complexes
which is bounded in every degree, and a $\G$-homotopy between two such chain maps
is just a $\G$-homotopy between the underlying 
maps of $R[\G]$-modules, which is bounded in every degree. The cohomology
$H_b^\bullet(E^\bullet)$ of the normed $\G$-complex $(E^\bullet,\delta_E^\bullet)$
is defined as usual by taking the cohomology of the subcomplex of $\G$-invariants.
The norm on $E^n$ restricts to a norm on $\G$-invariant cocycles, which induces in turn a seminorm
on $H_b^n(E^\bullet)$ for every $n\in\matN$.

An augmented normed $\G$-complex 
$(E,E^\bullet,\delta^\bullet)$
with augmentation map $\varepsilon\colon E\to E^0$ 
is a $\G$-complex 
$$
0\longrightarrow E\tto{\varepsilon} E^0\tto{\delta^0} E^1\tto{\delta^1}\ldots
\tto{\delta^n} E^{n+1}\tto{\delta^{n+1}}\ldots\ 
$$
We also ask that $\varepsilon$ is an isometric embedding.
A resolution of $E$ (as a normed $R[\G]$-complex) is an exact augmented normed complex
$(E,E^\bullet,\delta^\bullet)$. It is \emph{relatively injective} 
if $E^n$ is relatively injective for every $n\geq 0$. From now on, 
we will call simply \emph{complex}
any normed $\G$-complex.

Let $(E,E^\bullet,\delta_E^\bullet)$ be a resolution of $E$, and suppose that $(F,F^\bullet,\delta_F^\bullet)$
is a relatively injective
 resolution of $F$. We would like to be able to extend any $\G$-map $E\to F$ to a chain map between $E^\bullet$
 and $F^\bullet$.
As observed in the preceding section,
to this aim we need to require the resolution $(E,E^\bullet,\delta_E^\bullet)$ to be strong, according to the following definition.

A \emph{contracting homotopy} for a resolution $(E,E^\bullet,\delta^\bullet)$ is a 
sequence of linear maps $k^i\colon E^i\to E^{i-1}$ such that 
$\|k^i\|\leq 1$ for every $i\in\matN$, 
$\delta^{i-1}\circ k^i+
k^{i+1}\circ\delta^i = {\rm Id}_{E^i}$ if $i\geq 0$, and $k^0 \circ \varepsilon=
{\rm Id}_E$:
$$
\xymatrix{
0\ar[r] & E\ar[r]_{\varepsilon} & E^0 \ar[r]_{\delta^0} \ar@/_/[l]_{k^0} &
E^1  \ar[r]_{\delta^1} \ar@/_/[l]_{k^1} & \ldots \ar@/_/[l]_{k^2} \ar[r]_{\delta^{n-1}}&
E^{n}
\ar[r]_{\delta^n} \ar@/_/[l]_{k^n} & \ldots \ar@/_/[l]_{k^{n+1}}
}
$$
Note however that it is not required that 
$k^i$ be $\G$-equivariant. A resolution is \emph{strong}
if it admits a contracting homotopy. 

\begin{prop}\label{standard:ok}
Let $V$ be a normed $R[\G]$-space, and let
$\varepsilon\colon V\to C^0_b(\G,V)$ be defined by $\varepsilon(v)(g)=v$ for every
$v\in V$, $g\in \G$. Then the
augmented complex
$$
0\longrightarrow V\tto{\varepsilon} C_b^0(\G,V)\tto{\delta^0} C_b^1(\G,V)\to\ldots
\to C_b^n(\G,V)\to\ldots
$$
provides a relatively injective strong resolution of $V$.
\end{prop}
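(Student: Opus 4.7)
The plan is to verify each of the three conditions demanded by the statement in turn: relative injectivity of each module in the resolution, isometric embedding of $V$ via the augmentation, and the existence of a norm-one contracting homotopy witnessing that the resolution is strong.

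First, relative injectivity of each $C_b^n(\G,V)$ is already at our disposal: it is exactly Lemma~\ref{relinj:standard}, so there is nothing to add here.

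Second, I would check directly that the augmentation $\varepsilon\colon V\to C_b^0(\G,V)$ is an isometric $\G$-map. Since the action on $V$ is by isometries, the constant function $g\mapsto v$ has $\ell^\infty$-norm $\sup_{g\in\G}\|v\|=\|v\|$, so $\varepsilon$ is isometric; $\G$-equivariance is immediate from the definition of the action on $C_b^0(\G,V)$, which sends a constant function with value $v$ to the constant function with value $g\cdot v$. The fact that $\varepsilon$ actually lands in $C_b^0(\G,V)$ (rather than just $C^0(\G,V)$) is then obvious.

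Third, and this is the heart of the matter, I would define the contracting homotopy by copying the one used for the standard resolution of $V$ in Proposition~\ref{standard:res}, namely
\[
k^0\colon C_b^0(\G,V)\to V,\qquad k^0(f)=f(1),
\]
and for every $n\geq 0$,
\[
k^{n+1}\colon C_b^{n+1}(\G,V)\to C_b^n(\G,V),\qquad k^{n+1}(f)(g_0,\ldots,g_n)=f(1,g_0,\ldots,g_n).
\]
Each $k^i$ is manifestly $R$-linear, and the $\ell^\infty$ norm computation gives $\|k^i(f)\|_\infty\leq \|f\|_\infty$, so $\|k^i\|\leq 1$. The identities $k^0\circ\varepsilon=\mathrm{Id}_V$ and $\delta^{i-1}\circ k^i+k^{i+1}\circ\delta^i=\mathrm{Id}_{C_b^i(\G,V)}$ are then verified by exactly the same algebraic manipulation as in the unbounded case, since the homogeneous differential is the one inherited from $C^\bullet(\G,V)$: the alternating sum that defines $\delta^i$ telescopes against $k^{i+1}$ to produce the identity, up to the term supplied by $\delta^{i-1}\circ k^i$. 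I would write out the $i=0$ case and one generic $i\geq 1$ case to make the telescoping explicit.

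There is no genuine obstacle here: the only thing to observe beyond the content of Proposition~\ref{standard:res} is that both $\varepsilon$ and the $k^i$ are norm-nonincreasing, which is immediate from the definitions of the $\ell^\infty$-norm and from the fact that $\G$ acts isometrically on $V$. Thus the proof is essentially a bounded-norm refinement of the classical argument, with Lemma~\ref{relinj:standard} supplying the relative injectivity.
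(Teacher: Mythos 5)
Your proposal is correct and matches the paper's argument: the paper likewise invokes Lemma~\ref{relinj:standard} for relative injectivity and observes that the cone-at-the-identity map $k^{n+1}(f)(g_0,\ldots,g_n)=f(1,g_0,\ldots,g_n)$ furnishes a norm-one contracting homotopy. You spell out a few details the paper leaves implicit (the isometry of $\varepsilon$ and the telescoping verification), but the approach is identical.
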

\begin{proof}
 We already know that each $C_b^i(\G,V)$ is relatively injective, so in order to conclude it is sufficient to
observe that the map
$$
k^{n+1}\colon C^{n+1}_b(\G,V)\to C^n_b(\G,V)\,\qquad k^{n+1}(f)(g_0,\ldots,g_n)=f(1,g_0,\ldots,g_n)
$$
provides a contracting homotopy for the resolution $(V,C^\bullet_b(\G,V),\delta^\bullet)$.
\end{proof}

The resolution described in Proposition~\ref{standard:ok} is the \emph{standard resolution}
of $V$ as a normed $R[\G]$-module.

\begin{rem}\label{compare}
 Let us briefly compare our notion of standard resolution
with Ivanov's and Monod's ones. In~\cite{Ivanov}, for every $n\in\matN$
the set $C_b^n(\G,\R)$ is denoted
by $B(\G^{n+1})$, and it is endowed with the structure of a \emph{right}
Banach $\G$-module by the action
$g\cdot f (g_0,\ldots,g_n)=f(g_0,\ldots,g_n\cdot g)$. Moreover, the sequence
of modules
$B(\G^{n})$, $n\in\matN$, is equipped with a structure of $\G$-complex
$$
0\longrightarrow \R\tto{d_{-1}} B(\G)\tto{d_0} B(\G^2)\tto{d_1}\ldots
\tto{d_n} B(\G^{n+2})\tto{d_{n+1}}\ldots,
$$
where $d_{-1}(t)(g)=t$ and 
\begin{align*}
d_n(f)(g_0,\ldots,g_{n+1})&=(-1)^{n+1}f(g_1,\ldots,g_{n+1})\\ & +\sum_{i=0}^n (-1)^{n-i}
f(g_0,\ldots,g_ig_{i+1},\ldots,g_{n+1})
\end{align*}
for every $n\geq 0$
(here we are using Ivanov's notation also for the differential). 
Now, it is readily seen that (in the case with trivial real coefficients) Ivanov's resolution is 
isomorphic to our standard resolution via the isometric $\G$-chain isomorphism
$\varphi^\bullet\colon B^\bullet (\G)\to C^\bullet_b(\G,\R)$
defined by
$$
\varphi^n(f)(g_0,\ldots,g_n)=f(g_n^{-1},g_n^{-1}g_{n-1}^{-1},\ldots,g_n^{-1}g_{n-1}^{-1}
\cdots g_1^{-1}g_0^{-1})
$$
(with inverse $(\varphi^n)^{-1}(f)(g_0,\ldots,g_n)=f(g_n^{-1}g_{n-1},g_{n-1}^{-1}g_{n-2},\ldots,g_1^{-1}g_0,g_0^{-1})$).
We also observe that
the contracting homotopy described in Proposition~\ref{standard:ok}
is conjugated by $\varphi^\bullet$ into
Ivanov's contracting homotopy for the complex $(B(\G^\bullet),d_\bullet)$ (which is defined in~\cite{Ivanov}).

Our notation is much closer to Monod's one. In fact, 
in~\cite{Monod} the more general case of a topological group $\G$ is addressed,
and the 
$n$-th module
of the standard $\G$-resolution of an $\R[\G]$-module $V$ is inductively defined by setting
$$
C^0_b(\G,V)=C_b(\G,V),\qquad C_b^n(\G,V)=C_b(\G,C_b^{n-1}(\G,V))\ ,
$$
where $C_b(\G,E)$ denotes the space of \emph{continuous} bounded maps from
$\G$ to the Banach space $E$. 
However, as observed in~\cite[Remarks 6.1.2 and 6.1.3]{Monod},
the case when $\G$ is an abstract group may be recovered from the general case 
just by equipping $\G$ with the discrete topology. In that case, 
our notion of standard resolution coincides with Monod's one 
(see also~\cite[Remark 7.4.9]{Monod}).
\end{rem}

The following result can  be proved by means of 
standard homological algebra arguments
(see~\cite{Ivanov}, \cite[Lemmas 7.2.4 and 7.2.6]{Monod} for full details):

\begin{thm}\label{ext:bounded:thm}
Let $\alpha\colon E\to F$ be a $\G$-map between 
normed $R[\G]$-modules,
let $(E,E^\bullet,\delta_E^\bullet)$ be a strong resolution of $E$, and suppose
$(F,F^\bullet,\delta_F^\bullet)$ is an augmented complex such that $F^i$ is relatively injective
for every $i\geq 0$. 
Then $\alpha$ extends to a chain map $\alpha^\bullet$, and any two extensions
of $\alpha$ to chain maps are $\G$-homotopic.
\end{thm}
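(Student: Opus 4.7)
The plan is to construct the chain extension $\alpha^\bullet$ by induction on degree, at each stage combining the relative injectivity of $F^n$ with the strong injectivity furnished by the contracting homotopy of $(E,E^\bullet,\delta_E^\bullet)$; the uniqueness of $\alpha^\bullet$ up to $\G$-homotopy will then be proved by an entirely analogous induction.

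For the base case, the augmentation $\varepsilon_E\colon E\to E^0$ is strongly injective, with section provided by the map $k^0\colon E^0\to E$ of the contracting homotopy (satisfying $k^0\circ\varepsilon_E=\mathrm{Id}_E$ and $\|k^0\|\leq 1$). Since $F^0$ is relatively injective, the $\G$-morphism $\varepsilon_F\circ\alpha\colon E\to F^0$ extends along $\varepsilon_E$ to a bounded $\G$-morphism $\alpha^0\colon E^0\to F^0$ with $\alpha^0\circ\varepsilon_E=\varepsilon_F\circ\alpha$. For the inductive step, suppose $\alpha^0,\ldots,\alpha^n$ have been constructed so that the chain map relations hold up through degree $n-1$. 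The composition $\delta_F^n\circ\alpha^n\colon E^n\to F^{n+1}$ annihilates $\mathrm{Im}(\delta_E^{n-1})=\ker\delta_E^n$ (by the chain map identity in degree $n-1$ together with $\delta_F^n\circ\delta_F^{n-1}=0$), and hence descends unambiguously to a bounded $\G$-morphism
\[
\hat\alpha^{n+1}\colon\mathrm{Im}(\delta_E^n)\longrightarrow F^{n+1},\qquad \delta_E^n(y)\longmapsto\delta_F^n(\alpha^n(y)).
\]
The formula $k^{n+1}$ provides a bounded $R$-linear right inverse to $\delta_E^n$ on $\mathrm{Im}(\delta_E^n)$ (since for $z$ in this image, $\delta_E^n(k^{n+1}(z))=z-k^{n+2}(\delta_E^{n+1}(z))=z$), which simultaneously yields boundedness of $\hat\alpha^{n+1}$ and realizes the inclusion $\mathrm{Im}(\delta_E^n)\hookrightarrow E^{n+1}$ as strongly injective with section $\delta_E^n\circ k^{n+1}$. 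Relative injectivity of $F^{n+1}$ now extends $\hat\alpha^{n+1}$ to a bounded $\G$-morphism $\alpha^{n+1}\colon E^{n+1}\to F^{n+1}$ with $\alpha^{n+1}\circ\delta_E^n=\delta_F^n\circ\alpha^n$, completing the inductive step.

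For uniqueness up to $\G$-homotopy, given two chain extensions $\alpha^\bullet,\beta^\bullet$ of $\alpha$, I would construct $T^i\colon E^i\to F^{i-1}$ by induction, setting $T^0=0$ (the identity $\alpha^0\circ\varepsilon_E=\beta^0\circ\varepsilon_E$ making this legitimate at stage zero) and, at stage $n+1$, observing that the $\G$-morphism $\mu=\alpha^n-\beta^n-\delta_F^{n-1}\circ T^n\colon E^n\to F^n$ vanishes on $\mathrm{Im}(\delta_E^{n-1})$, by the chain map property of $\alpha^\bullet,\beta^\bullet$ and the inductive homotopy identity for $T^n$. Hence $\mu$ descends to a bounded $\G$-morphism $\mathrm{Im}(\delta_E^n)\to F^n$, and relative injectivity of $F^n$ extends it to the desired $T^{n+1}\colon E^{n+1}\to F^n$ satisfying $T^{n+1}\circ\delta_E^n=\mu$.

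The main technical obstacle is reconciling the $\G$-equivariance required of every $\alpha^i$ and $T^i$ with the fact that the contracting homotopy $(k^i)$ is only $R$-linear. The key observation is that, although the candidate formula $z\mapsto\delta_F^n(\alpha^n(k^{n+1}(z)))$ is manifestly bounded but not a priori $\G$-equivariant on $E^{n+1}$, its value depends only on $\delta_E^n$ of a chosen preimage of $z$, so that upon restriction to $\mathrm{Im}(\delta_E^n)$ it is independent of the preimage and hence genuinely $\G$-equivariant; relative injectivity is invoked precisely to upgrade this partial $\G$-equivariant datum to a bounded $\G$-equivariant map defined on all of $E^{n+1}$. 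Keeping this boundedness bookkeeping consistent at every stage is the technical price of working in the normed rather than the purely algebraic category.
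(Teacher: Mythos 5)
Your architecture follows the standard inductive pattern, but there is a genuine gap at the step where you assert that the inclusion $\iota\colon\mathrm{Im}(\delta_E^n)\hookrightarrow E^{n+1}$ is strongly injective with section $\sigma=\delta_E^n\circ k^{n+1}$. In the normed category, strong injectivity demands $\|\sigma\|\leq 1$, and while $\|k^{n+1}\|\leq 1$ is guaranteed by the contracting homotopy, the differential $\delta_E^n$ is only required to be \emph{bounded}; all you can conclude is $\|\sigma\|\leq\|\delta_E^n\|$. In the standard resolution $C^\bullet_b(\G,V)$, for instance, $\|\delta^n\|$ is on the order of $n+2$, so in general $\sigma$ will violate the norm requirement and the hypothesis of relative injectivity is not met. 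This is precisely the ``boundedness bookkeeping'' you flag at the end as the technical price of the normed setting, but it is not actually tracked correctly in the construction: the same defect recurs in the homotopy step.

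The repair is to run the factorisation through a quotient rather than a subobject. Set $Q^n=E^n/\ker\delta_E^n$ with the quotient norm; since $\ker\delta_E^n$ is a closed $\G$-submodule (the kernel of a bounded $\G$-map), $Q^n$ is a genuine normed $R[\G]$-module. The induced map $\bar\delta_E^n\colon Q^n\to E^{n+1}$ is a bounded $\G$-morphism, and if $\bar k^{n+1}\colon E^{n+1}\to Q^n$ denotes $k^{n+1}$ followed by the projection $E^n\twoheadrightarrow Q^n$, then the relation $k^{n+1}\circ\delta_E^n=\mathrm{Id}-\delta_E^{n-1}\circ k^n$, read modulo $\ker\delta_E^n\supseteq\mathrm{Im}(\delta_E^{n-1})$, gives $\bar k^{n+1}\circ\bar\delta_E^n=\mathrm{Id}_{Q^n}$, while $\|\bar k^{n+1}\|\leq\|k^{n+1}\|\leq 1$ because the quotient projection is norm non-increasing. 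Thus $\bar\delta_E^n$ \emph{is} strongly injective, and one extends the bounded $\G$-morphism $Q^n\to F^{n+1}$ induced by $\delta_F^n\circ\alpha^n$ along it, recovering the chain map identity by precomposing with the projection $E^n\twoheadrightarrow Q^n$. The same modification repairs the homotopy half. (Alternatively, rescaling the subspace norm on $\mathrm{Im}(\delta_E^n)$ by $\|\delta_E^n\circ k^{n+1}\|^{-1}$ also yields a strongly injective inclusion; this sacrifices control of the operator norms, which is harmless here since Theorem~\ref{ext:bounded:thm} asserts no norm estimate on $\alpha^\bullet$.) With this patch your argument agrees with the one in Ivanov~\cite{Ivanov} and Monod~\cite[Lemmas 7.2.4 and 7.2.6]{Monod}, to which the text defers in place of a written-out proof.
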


\begin{cor}\label{fund1:bounded:cor}
Let $V$ be a normed $R[\G]$-modules, and 
let $(V,V^\bullet,\delta_V^\bullet)$ be a relatively injective strong resolution of $V$. Then for every $n\in\matN$
there is a canonical isomorphism
$$
H_b^n(\G,V)\cong H^n_b(V^\bullet)\ .
$$
Moreover, this isomorphism is bi-Lipschitz with respect to the seminorms
of $H_b^n(\G,V)$ and $H^n(V^\bullet)$.
\end{cor}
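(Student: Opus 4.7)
The plan is to mimic the argument used for Corollary~\ref{fund1:cor} in the normed setting, with Theorem~\ref{ext:bounded:thm} playing the role of its classical counterpart. The key observation is that by Proposition~\ref{standard:ok} the standard resolution $(V, C^\bullet_b(\G,V), \delta^\bullet)$ is itself a relatively injective strong resolution of $V$, so it serves as a \emph{reference} resolution against which the given one $(V, V^\bullet, \delta_V^\bullet)$ can be compared.

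First I would apply Theorem~\ref{ext:bounded:thm} to $\mathrm{Id}_V\colon V\to V$ in two symmetric ways. Taking the standard resolution as source (which is strong) and $V^\bullet$ as target (whose modules are relatively injective by hypothesis) yields a chain map $\alpha^\bullet\colon C^\bullet_b(\G,V)\to V^\bullet$ extending $\mathrm{Id}_V$. Exchanging the roles — here one uses that $V^\bullet$ is strong by hypothesis and that each $C^n_b(\G,V)$ is relatively injective by Lemma~\ref{relinj:standard} — yields a chain map $\beta^\bullet\colon V^\bullet\to C^\bullet_b(\G,V)$ in the opposite direction, also extending $\mathrm{Id}_V$.

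Next I would note that both compositions $\beta^\bullet\circ\alpha^\bullet$ and $\alpha^\bullet\circ\beta^\bullet$ are chain maps extending $\mathrm{Id}_V$, as are the identity chain maps of the respective resolutions. By the uniqueness up to $\G$-homotopy in Theorem~\ref{ext:bounded:thm}, each composition is $\G$-homotopic to the identity. Restricting to $\G$-invariants and passing to cohomology, the maps induced by $\alpha^\bullet$ and $\beta^\bullet$ are therefore mutually inverse, producing the canonical isomorphism $H^n_b(\G,V)=H^n_b(C^\bullet_b(\G,V))\cong H^n_b(V^\bullet)$.

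For the bi-Lipschitz assertion I would simply bookkeep norms: chain maps in our category are bounded in every degree by definition, so $\alpha^n$ and $\beta^n$ are bounded operators, and upon restriction to $\G$-invariants they descend to Lipschitz maps on cohomology (with constants bounded by $\|\alpha^n\|$ and $\|\beta^n\|$) with respect to the quotient seminorms. Since the induced maps are mutual inverses, this gives the bi-Lipschitz isomorphism. The only substantive ingredient is Theorem~\ref{ext:bounded:thm}; the rest is formal. The main obstacle — or rather, the reason one only obtains a bi-Lipschitz and not an isometric isomorphism — is that the operator norms of the extensions $\alpha^\bullet$, $\beta^\bullet$ furnished by the extension theorem depend on the specific resolution chosen, and there is no canonical way to bound them by $1$ in both directions simultaneously.
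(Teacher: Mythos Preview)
Your proposal is correct and follows essentially the same approach as the paper: use Proposition~\ref{standard:ok} to identify the standard resolution as a relatively injective strong resolution, apply Theorem~\ref{ext:bounded:thm} in both directions to obtain mutually $\G$-homotopy-inverse chain maps, and observe that bounded chain maps induce bounded (hence Lipschitz) maps in cohomology. Your write-up is in fact more detailed than the paper's, which compresses the two applications of the extension theorem and the homotopy argument into a single sentence.
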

\begin{proof}
 By Proposition~\ref{standard:ok}, the standard resolution of $V$ is a relatively injective strong
resolution of $V$ over $\G$. Therefore, Theorem~\ref{ext:thm} provides chain maps
between $C_b^\bullet(\G,V)$ and $V^\bullet$, which are one the $\G$-homotopy inverse of the other.
Therefore, these chain maps induce isomorphisms in cohomology. The conclusion
follows from the fact that bounded chain maps induce bounded maps
in cohomology.
\end{proof}

By Corollary~\ref{fund1:bounded:cor},
every relatively injective strong resolution of $V$ induces a seminorm on $H_b^\bullet(\G,V)$. Moreover, the seminorms defined in this way are pairwise equivalent.
However,
in many applications, it is important to be able to compute the exact \emph{canonical} seminorm of elements
in $H^n_b(\G,V)$, i.e.~the seminorm induced on $H^n_b(\G,V)$ by the standard
resolution $C^\bullet_b(\G,V)$. Unfortunately, it is not possible
to capture the isometry type of $H^n_b(\G,V)$ via arbitrary relatively injective strong resolutions. Therefore, a special role is played by those resolutions
which compute the canonical seminorm. The following fundamental result is due to Ivanov, and implies that these distinguished 
resolutions are in some sense extremal:

\begin{thm}\label{norm:prop}
Let $V$ be a normed $R[\G]$-module, and let
$(V,V^\bullet,\delta^\bullet)$ be any strong resolution of 
$V$.
Then the identity of $V$ can be extended to a chain map 
$\alpha^\bullet$ between $V^\bullet$ and the standard resolution of $V$,
in such a way that $\|\alpha^n\|\leq 1$ for every $n\geq 0$. In particular,
the canonical seminorm of $H_b^\bullet(\G,V)$
is not bigger than the seminorm induced on $H^\bullet_b(\G,V)$ by
any relatively injective strong resolution.
\end{thm}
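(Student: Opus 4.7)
The new content of this theorem, compared with Theorem~\ref{ext:bounded:thm}, is the norm estimate $\|\alpha^n\|\leq 1$. My plan is to exhibit an \emph{explicit} chain map with this property, by using the contracting homotopy $k_V^\bullet$ of $V^\bullet$ (which satisfies $\|k_V^i\|\leq 1$) in a systematic way. For notational convenience I set $V^{-1}=V$ and $\delta_V^{-1}=\varepsilon_V$, so that the homotopy identity $\delta_V^{i-1}\circ k_V^i + k_V^{i+1}\circ \delta_V^i=\mathrm{id}_{V^i}$ holds for every $i\geq 0$ (the $i=0$ case being $\varepsilon_V k_V^0 + k_V^1\delta_V^0=\mathrm{id}_{V^0}$, together with $k_V^0\varepsilon_V=\mathrm{id}_V$).

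I define $\alpha^n\colon V^n\to C_b^n(\G,V)$ by the formula
\[
\alpha^n(x)(g_0,g_1,\ldots,g_n)=(-1)^n\,g_0\,k_V^0\bigl(g_0^{-1}g_1\,k_V^1\bigl(g_1^{-1}g_2\,k_V^2\bigl(\cdots\bigl(g_{n-1}^{-1}g_n\,k_V^n(g_n^{-1}x)\bigr)\cdots\bigr)\bigr)\bigr).
\]
The sign $(-1)^n$ is a bookkeeping device to make the chain-map identity hold with the standard alternating differential $\delta_C^n$; it does not affect either equivariance or the norm estimate. The verification then splits into four checks: (i) $\alpha^n$ is $\G$-equivariant, which follows by direct substitution $x\mapsto hx$, using that each $g_{i-1}^{-1}g_i$ is invariant under the diagonal left translation $g_i\mapsto h^{-1}g_i$ and that the leading $g_0$ absorbs the residual $h$; (ii) $\|\alpha^n\|\leq 1$, because each $k_V^i$ has norm at most one, each $\G$-action is isometric, and the outermost $g_0$ preserves the norm, so the whole expression has norm at most $\|g_n^{-1}x\|=\|x\|$; (iii) $\alpha^0\circ\varepsilon_V=\varepsilon_C$, since $\alpha^0(\varepsilon_V(v))(g)=g\,k_V^0(g^{-1}\varepsilon_V(v))=g\,k_V^0(\varepsilon_V(g^{-1}v))=g\cdot g^{-1}v=v$.

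The essential step is (iv), the chain identity $\delta_C^n\alpha^n=\alpha^{n+1}\delta_V^n$. My plan is a telescoping argument: expand $\alpha^{n+1}(\delta_V^n x)(g_0,\ldots,g_{n+1})$ using the defining formula, commute $\delta_V^n$ through the $\G$-action with $g_{n+1}^{-1}\delta_V^n x=\delta_V^n(g_{n+1}^{-1}x)$, and apply the homotopy identity $k_V^{n+1}\delta_V^n=\mathrm{id}_{V^n}-\delta_V^{n-1}k_V^n$ at the innermost layer. The ``identity'' summand yields, after the next outer application of $g_n^{-1}g_{n+1}$, the expression $g_n^{-1}x$, which (passed through the remaining outer layers) reproduces the term $\alpha^n(x)(g_0,\ldots,g_n)$ missing the last argument $g_{n+1}$; the remaining summand can be rewritten, using $\G$-equivariance of $\delta_V^{n-1}$ and $k_V^0\varepsilon_V=\mathrm{id}_V$ at the innermost stage, so that one can iterate the same procedure on the next $k_V^i$ outward. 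Each iteration peels off one term of the alternating sum $\delta_C^n\alpha^n(x)(g_0,\ldots,g_{n+1})=\sum_{i=0}^{n+1}(-1)^i\alpha^n(x)(g_0,\ldots,\widehat{g_i},\ldots,g_{n+1})$, and the sign $(-1)^n$ makes the signs match. The computation is mechanical but lengthy; I have verified the case $n=0$ directly, where both sides equal $g_1 k_V^0(g_1^{-1}x)-g_0 k_V^0(g_0^{-1}x)$, confirming in particular that the sign $(-1)^n$ is required. This step is the main technical obstacle, but it is a standard telescoping manipulation of the homotopy formula and has no new conceptual content beyond careful indexing.

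Given (i)--(iv), $\alpha^\bullet$ is a $\G$-equivariant chain map of norm at most $1$ in every degree, extending $\mathrm{id}_V$. For the last assertion, Corollary~\ref{fund1:bounded:cor} ensures that $\alpha^\bullet$ induces an isomorphism $H^n(V^\bullet)\cong H_b^n(\G,V)$; the bound $\|\alpha^n\|\leq 1$ forces this isomorphism to be norm non-increasing, so the canonical seminorm on $H_b^n(\G,V)$ is bounded above by the seminorm induced by $V^\bullet$, as required.
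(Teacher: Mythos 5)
The core idea of your proof---exhibiting an explicit chain map built by iterating the contracting homotopy $k^\bullet$ of $V^\bullet$---is exactly the paper's approach. The paper writes the map recursively as $\alpha^n(v)(g_0,\ldots,g_n)=\alpha^{n-1}\bigl(g_0\,k^n(g_0^{-1}v)\bigr)(g_1,\ldots,g_n)$, peeling off the \emph{first} argument $g_0$ at each step; unwound, this is $\alpha^n(v)(g_0,\ldots,g_n)=g_n\,k^0\bigl(g_n^{-1}g_{n-1}\,k^1(\cdots g_1^{-1}g_0\,k^n(g_0^{-1}v)\cdots)\bigr)$, with no sign at all. You instead wrote the unwound formula with $g_n$ innermost, which amounts to peeling off the \emph{last} argument, and you inserted a sign $(-1)^n$ to compensate. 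The equivariance check and the norm bound $\|\alpha^n\|\leq 1$ are correct, as is the final duality assertion via Corollary~\ref{fund1:bounded:cor}.

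The gap is that the sign $(-1)^n$ is wrong, and in fact the chain-map identity (iv) fails from $n=2$ onward. Writing $\beta^n$ for your formula without the sign, one finds from the recursion $\beta^{n+1}(x)(g_0,\ldots,g_{n+1})=\beta^n\bigl(g_{n+1}k^{n+1}(g_{n+1}^{-1}x)\bigr)(g_0,\ldots,g_n)$ that the telescoping step produces $\delta_C^n\beta^n=(-1)^{n+1}\beta^{n+1}\delta_V^n$, so the correcting sign $c_n$ in $\alpha^n=c_n\beta^n$ must satisfy $c_{n+1}=(-1)^{n+1}c_n$, giving $c_n=(-1)^{n(n+1)/2}$, i.e.\ the pattern $+,-,-,+,+,-,-,\ldots$, not $(-1)^n$. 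One can see this concretely: with your sign, $\delta_C^1\alpha^1(x)(g_0,g_1,g_2)$ and $\alpha^2(\delta_V^1 x)(g_0,g_1,g_2)$ work out to negatives of each other. The reason you peel $g_n$ and pick up an $n$-dependent correction is that the last face of $\delta_C^n$ carries coefficient $(-1)^{n+1}$, whereas the zeroth face carries coefficient $+1$ independent of $n$---which is why the paper's ``peel $g_0$'' recursion is sign-free. Your declared verification of the case $n=0$ does not detect the error: it only determines $c_0=1$, which is compatible with both $(-1)^n$ and $(-1)^{n(n+1)/2}$; the first degree where they diverge is $n=2$. The claim that the $n=0$ computation ``confirm[s] in particular that the sign $(-1)^n$ is required'' is therefore an overreach. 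The construction is salvageable either by replacing $(-1)^n$ with $(-1)^{n(n+1)/2}$, or (more cleanly) by reorienting the formula to put $g_0$ innermost as the paper does, eliminating the sign entirely.
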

\begin{proof}
One can inductively define $\alpha^n$  by setting, for every $v\in E^n$ and $g_j\in \G$:  
$$
\alpha^n (v) (g_0,\ldots,g_n)=\alpha^{n-1}\left(g_0\Bigl(k^{n}\bigl(g_0^{-1}(v)\bigr)\Bigr)\right)(g_1,\ldots,g_n),
$$
where $k^\bullet$ is a contracting homotopy for the given resolution
$(V,V^\bullet,\delta^\bullet)$. It is not difficult 
to prove by induction that  
$\alpha^\bullet$ is indeed a norm non-increasing chain $\G$-map (see~\cite{Ivanov}, \cite[Theorem 7.3.1]{Monod}
for the details).
\end{proof}

\begin{cor}\label{norm:cor}
 Let $V$ be a normed $R[\G]$-module, let
$(V,V^\bullet,\delta^\bullet)$ be a relatively injective strong resolution of 
$V$, and suppose that the identity of $V$ can be extended to a chain map
$\alpha^\bullet\colon  C^\bullet_b(\G,V)\to V^\bullet$ such that 
$\|\alpha^n\|\leq 1$ for every $n\in\matN$. Then $\alpha^\bullet$ induces an isometric isomorphism between  $H^\bullet_b(\G,V)$ and $H^\bullet(V^\bullet)$. In particular,
the seminorm induced by the resolution $(V,V^\bullet,\delta^\bullet)$ coincides
with the canonical seminorm on $H^\bullet_b(\G,V)$.
\end{cor}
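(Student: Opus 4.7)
The plan is to produce a norm non-increasing chain map going the other way, and then to use the uniqueness clause of Theorem~\ref{ext:bounded:thm} to show that this map and $\alpha^\bullet$ are mutually $\G$-homotopy inverse. First I would apply Theorem~\ref{norm:prop} to the strong resolution $(V,V^\bullet,\delta^\bullet)$ of $V$: since each $C^n_b(\G,V)$ is relatively injective by Lemma~\ref{relinj:standard}, the identity $\mathrm{Id}_V$ extends to a chain $\G$-map $\beta^\bullet\colon V^\bullet\to C^\bullet_b(\G,V)$ with $\|\beta^n\|\leq 1$ for every $n\in\matN$.

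Next I would analyze the two compositions. The chain map $\beta^\bullet\circ\alpha^\bullet\colon C^\bullet_b(\G,V)\to C^\bullet_b(\G,V)$ extends $\mathrm{Id}_V$, and so does the identity chain map of the standard resolution. Since $(V,C^\bullet_b(\G,V),\delta^\bullet)$ is a strong resolution by Proposition~\ref{standard:ok} and each $C^n_b(\G,V)$ is relatively injective by Lemma~\ref{relinj:standard}, the uniqueness clause of Theorem~\ref{ext:bounded:thm} forces $\beta^\bullet\circ\alpha^\bullet$ to be $\G$-homotopic to the identity. An entirely symmetric argument, applied this time with source and target resolution swapped and using the hypothesis that $(V,V^\bullet,\delta^\bullet)$ is itself strong and relatively injective, shows that $\alpha^\bullet\circ\beta^\bullet$ is $\G$-homotopic to the identity of $V^\bullet$.

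Passing to cohomology (where $\G$-homotopic chain maps induce identical maps), the induced maps $H^n(\alpha^\bullet)$ and $H^n(\beta^\bullet)$ are therefore mutually inverse isomorphisms between $H^n_b(\G,V)$ and $H^n(V^\bullet)$. Because $\|\alpha^n\|\leq 1$ and $\|\beta^n\|\leq 1$, the induced maps on cohomology are norm non-increasing in both directions, so each is in fact an isometry. This gives the isometric isomorphism asserted in the corollary, and in particular shows that the seminorm induced on $H^\bullet_b(\G,V)$ by the resolution $(V,V^\bullet,\delta^\bullet)$ agrees with the canonical one computed from the standard resolution. There is no real obstacle here: the statement is a formal consequence of Theorem~\ref{norm:prop} together with the uniqueness part of Theorem~\ref{ext:bounded:thm}, and the only point that requires a moment's care is observing that the hypothesis ``$\|\alpha^n\|\leq 1$ for every $n$'' is exactly what is needed to run Theorem~\ref{norm:prop} in both directions and thereby force the induced cohomology maps to be isometries rather than merely bi-Lipschitz (as in the general Corollary~\ref{fund1:bounded:cor}).
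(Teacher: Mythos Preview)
Your proof is correct and is exactly the argument the paper has in mind: the corollary is stated without proof precisely because it follows immediately from combining Theorem~\ref{norm:prop} (which produces the norm non-increasing $\beta^\bullet$ in the reverse direction) with the uniqueness clause of Theorem~\ref{ext:bounded:thm}, yielding mutually inverse norm non-increasing maps in cohomology. One small wording quibble: in your final sentence you say the hypothesis $\|\alpha^n\|\leq 1$ lets you ``run Theorem~\ref{norm:prop} in both directions,'' but Theorem~\ref{norm:prop} only ever produces the map into the standard resolution; the map $\alpha^\bullet$ going the other way is supplied directly by hypothesis, not by that theorem.
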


\section{More on amenability}
The following result establishes an interesting relationship between the amenability of $\G$ and the relative injectivity
of normed $\R[\G]$-modules.

\begin{prop}\label{amenable:relinj}
The following facts are equivalent:
\begin{enumerate}
\item
The group $\G$ is amenable.
\item 
Every dual normed $\R[\G]$-module is relatively injective.
\item
The trivial $\R[\G]$-module $\R$ is relatively injective.
\end{enumerate}
\end{prop}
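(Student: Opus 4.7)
\medskip

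\noindent\emph{Proof proposal.} The plan is to prove the cycle (1) $\Rightarrow$ (2) $\Rightarrow$ (3) $\Rightarrow$ (1). The implication (2) $\Rightarrow$ (3) is immediate, since the trivial module $\R$ is itself a dual normed $\R[\G]$-module (it is canonically isometrically $\G$-isomorphic to the dual of $\R$).

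For (1) $\Rightarrow$ (2), fix a dual normed $\R[\G]$-module $V=W'$ and an invariant mean $m$ on $\G$, and consider an extension problem
$$
\xymatrix{
0 \ar[r] & A \ar[r]^{\iota} \ar[d]_{\alpha} & B \ar@/_/[l]_{\sigma} \\
& V &
}
$$
with $\sigma\colon B\to A$ a linear section of norm at most $1$ and $\alpha$ a $\G$-morphism. Mimicking the averaging trick used in the proof of Theorem~\ref{amenable:vanishing}, I would define $\beta\colon B\to V$ by the formula
$$
\beta(b)(w)=m_g\bigl(\alpha\bigl(g\cdot\sigma(g^{-1}b)\bigr)(w)\bigr),\qquad b\in B,\ w\in W,
$$
where $m_g$ denotes integration with respect to $m$ in the variable $g\in\G$. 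The function being averaged is bounded by $\|\alpha\|\cdot\|b\|\cdot\|w\|$ (using $\G$-invariance of the norm of $B$ and $\|\sigma\|\le 1$), so $\beta(b)$ is a well-defined element of $W'=V$ of norm at most $\|\alpha\|\cdot\|b\|$. If $b=\iota(a)$ then $\G$-equivariance of $\iota$ gives $g\cdot\sigma(g^{-1}\iota(a))=a$ for every $g$, hence $\beta\circ\iota=\alpha$. Finally, a change of variable $g\mapsto hg$ together with left-invariance of $m$ and the definition of the dual $\G$-action on $V$ yields $\beta(hb)(w)=\beta(b)(h^{-1}w)=(h\cdot\beta(b))(w)$, so $\beta$ is $\G$-equivariant.

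For (3) $\Rightarrow$ (1), I would use relative injectivity of $\R$ to construct an invariant mean directly. Consider the $\G$-map $\iota\colon\R\to\ell^\infty(\G)$ sending $t$ to the constant function with value $t$; this is strongly injective, a linear section of norm $1$ being the evaluation $\sigma(f)=f(1)$. Applying the relative injectivity of the trivial $\R[\G]$-module $\R$ to the identity of $\R$ produces a $\G$-morphism $m\colon\ell^\infty(\G)\to\R$ with $m\circ\iota=\mathrm{Id}_\R$ and $\|m\|\le 1$. The $\G$-equivariance of $m$ with respect to the trivial action on the target exactly says that $m$ is left-invariant, and $m({\bf 1}_\G)=1$ by construction. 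To see that $m$ is a mean it remains to verify positivity: for $f\ge 0$ with $\|f\|_\infty\le 1$ one has $\|{\bf 1}_\G-f\|_\infty\le 1$, so $|1-m(f)|=|m({\bf 1}_\G-f)|\le 1$, whence $m(f)\ge 0$. By homogeneity this extends to arbitrary $f\ge 0$, and Definition~\ref{amenable:def} gives amenability of $\G$.

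The main obstacle I anticipate is the $\G$-equivariance check in (1) $\Rightarrow$ (2); it hinges on the correct interplay between the conjugation-type twist $g\mapsto g\cdot\sigma(g^{-1}b)$ (which is necessary because $\sigma$ is only $R$-linear, not $\G$-equivariant) and the left-invariance of the mean. By contrast, the implication (3) $\Rightarrow$ (1) is short once one recognises that a norm-one $\G$-equivariant retraction of $\ell^\infty(\G)$ onto the constants is essentially the same datum as an invariant mean, so the clever but elementary positivity argument above suffices.
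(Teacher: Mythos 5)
Your proposal is correct, and it takes a somewhat different route from the paper in both nontrivial implications. For (1) $\Rightarrow$ (2), the paper factors the extension problem through the standard module $C^0_b(\G,V)$: it first builds a norm non-increasing $\G$-equivariant retraction $r\colon C^0_b(\G,V)\to V$ of the augmentation $\varepsilon$ (using the invariant mean on the fibrewise evaluation $g\mapsto f(g)(w)$), then solves the extension problem in $C^0_b(\G,V)$ by Lemma~\ref{relinj:standard}, and finally composes with $r$. Your direct averaging formula $\beta(b)(w)=m_g\bigl(\alpha(g\cdot\sigma(g^{-1}b))(w)\bigr)$ is precisely what one gets by unwinding that composition, so the two arguments build the same extension; yours is more compact, while the paper's is more modular and isolates the useful auxiliary fact that $\varepsilon$ admits an equivariant norm-one retraction whenever $\G$ is amenable. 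For (3) $\Rightarrow$ (1), both arguments use the same extension problem $\R\hookrightarrow\ell^\infty(\G)$ with section $f\mapsto f(1)$, producing a norm-one $\G$-equivariant retraction $\beta$ with $\beta(\mathbf{1}_\G)=1$. The paper then simply cites Lemma~\ref{equivalent:amenable} (existence of a non-trivial invariant functional implies amenability), whereas you close the argument self-containedly by verifying positivity of $\beta$ directly via the estimate $|1-\beta(f)|=|\beta(\mathbf{1}_\G-f)|\le 1$ for $0\le f\le 1$. Both are valid; the paper's version buys brevity by leaning on an earlier lemma, and your version buys independence from that lemma at the cost of the short positivity check.
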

\begin{proof}
(1) $\Rightarrow$ (2):
Let $W$ be a normed $\R[\G]$-module, and let $V=W'$ be the dual normed $\R[\G]$-module of $W$.
We first construct a left inverse (over $\R[\G]$) of the augmentation map
$\varepsilon \colon V\to C^0_b(\G,V)$.
We fix an invariant mean $m$ on $\G$.
For  $f\in C_b^{0}(\G,V)$ and $w\in W$
we consider the function
$$
f_w\colon \G\to\R\, ,\quad f_w(g)=f(g)(w)\ .
$$
It follows from the definitions that $f_w$ is an element of $\ell^\infty(\G)$, so we may define a map
$r\colon C^0_b(\G,V)\to V$ by setting
$r(f) (w)=m(f_w)$. It is immediate to check that $r(f)$ is indeed a bounded functional on $W$, whose norm is bounded 
by $\|f\|_\infty$. In other words, the map $r$ is well defined and norm non-increasing. The $\G$-invariance of the mean $m$
 implies that $r$ is $\G$-equivariant, and an easy computation shows that
$r\circ \varepsilon={\rm Id}_V$.
 
 Let us now consider the diagram 
 $$
\xymatrix{
0 \ar[r] & A  \ar[r]_{\iota} \ar[d]_{\alpha} & B \ar@/^/@{-->}[ddl]^{\beta'} \ar@{-->}[dl]^{\beta} \ar@/_/[l]_{\sigma}\\
& V \ar@/_/[d]_\varepsilon \\
& C^0_b(\G,V)\ar@/_/[u]_r
}
$$
By Lemma~\ref{relinj:standard}, $C^0_b(\G,V)$ is relatively injective, so there exists a bounded $\R[\G]$-map $\beta'$
such that $\|\beta'\|\leq \|\varepsilon\circ\alpha\|\leq \|\alpha\|$ and $\beta'\circ\iota=\varepsilon\circ\alpha$.  
The $\R[\G]$-map $\beta:=r\circ \beta'$ satisfies $\|\beta\|\leq \|\alpha\|$ and
$\beta\circ\iota=\alpha$. This shows that $V$ is relatively injective.

(2) $\Rightarrow$ (3) is obvious, so we are left to show that (3) implies (1).  
If $\R$ is relatively injective and 
$\sigma\colon \ell^\infty(\G)\to \R$ is the map defined by $\sigma (f)=f(1)$, then there exists an $\R[\G]$-map
$\beta\colon \ell^\infty(\G)\to \R$ such that $\|\beta\| \leq 1$ and the following diagram commutes:
$$
\xymatrix{
 \R  \ar[rr]_{\varepsilon} \ar[d]_{\mathrm{Id}} & & \ell^\infty(\G)  \ar[dll]^{\beta} \ar@/_/[ll]_{\sigma}\\
 \R
}
$$
By construction, since $\beta({\bf 1}_\G)=1$, the map $\beta$ is an invariant continuous non-trivial functional on $\ell^\infty(\G)$, so $\G$ is amenable by Lemma~\ref{equivalent:amenable}.
\end{proof}

The previous proposition allows us to provide an alternative proof of Theorem~\ref{amenable:vanishing}, which we recall here
for the convenience of the reader:

\begin{thm}
Let $\G$ be an amenable group, and let $V$ be a dual normed $\R[\G]$-module. Then $H^n_b(\G,V)=0$ for every $n\geq 1$.
\end{thm}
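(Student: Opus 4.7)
The plan is to give a new proof of the vanishing theorem based on the homological-algebra machinery developed in the chapter, rather than on the explicit averaging argument used in the first proof (Theorem~\ref{amenable:vanishing} as originally stated). The key ingredient is Proposition~\ref{amenable:relinj}, which under the amenability hypothesis makes the coefficient module $V$ itself relatively injective.

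First I would exploit the amenability of $\G$ to upgrade the status of $V$. Since $V$ is a dual normed $\R[\G]$-module and $\G$ is amenable, Proposition~\ref{amenable:relinj} guarantees that $V$ is a relatively injective normed $\R[\G]$-module. Then I would consider the trivial augmented complex
\[
0\longrightarrow V\tto{\mathrm{Id}_V} V\tto{0} 0\tto{0} 0\tto{} \ldots
\]
in which $V^0=V$ and $V^n=0$ for every $n\geq 1$. This is a resolution of $V$ (the augmentation map is the identity, hence an isometric embedding, and exactness is immediate). It is relatively injective because $V$ itself is relatively injective by the previous step and the zero module is trivially relatively injective. It is strong because one can take the contracting homotopy $k^0=\mathrm{Id}_V$ and $k^n=0$ for every $n\geq 1$: indeed $k^0\circ\mathrm{Id}_V=\mathrm{Id}_V$ and the cocycle relation $\delta^{i-1}\circ k^i+k^{i+1}\circ\delta^i=\mathrm{Id}_{V^i}$ holds trivially in every degree, while all the $k^i$ have norm at most~$1$.

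Next I would invoke Corollary~\ref{fund1:bounded:cor}, which asserts that the bounded cohomology $H^n_b(\G,V)$ is canonically isomorphic (up to bi-Lipschitz equivalence of seminorms) to the cohomology of the $\G$-invariants of any relatively injective strong resolution of $V$. Applying this to the trivial resolution above, for every $n\geq 1$ the $n$-th module of the complex is zero, so its $\G$-invariants vanish and therefore
\[
H^n_b(\G,V)\ \cong\ H^n(V^\bullet)\ =\ 0\qquad\text{for every } n\geq 1.
\]
There is no serious obstacle in this argument; the whole content is concentrated in Proposition~\ref{amenable:relinj}, whose proof is the place where amenability is actually used. Once relative injectivity of $V$ is available, the vanishing follows formally from the fact that a relatively injective module admits a trivial (concentrated in degree zero) resolution, a phenomenon already familiar from ordinary homological algebra.
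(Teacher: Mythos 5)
Your proposal is correct and coincides with the paper's own second proof of this theorem, given right after Proposition~\ref{amenable:relinj}: the paper uses exactly the trivial resolution $0\to V\xrightarrow{\mathrm{Id}} V\to 0$, observes via Proposition~\ref{amenable:relinj} that $V$ is relatively injective, and concludes by Corollary~\ref{fund1:bounded:cor}. Nothing to add.
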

\begin{proof}
The complex
$$
0\tto{} V\tto{\mathrm{Id}} V\tto{} 0
$$
provides a relatively injective strong resolution of $V$, so the conclusion follows from Corollary~\ref{fund1:bounded:cor}.
\end{proof}

\section{Amenable spaces}
The notion of 
amenable space 
was introduced by Zimmer~\cite{Zimmer} in the context of
actions of topological groups
on standard measure spaces
(see e.g.~\cite[Section 5.3]{Monod} for several equivalent definitions).
In our case of interest, i.e.~when $\G$ is a discrete group acting on a set $S$ (which may be thought as endowed with the discrete topology), the amenability of $S$ as a $\G$-space is equivalent to the amenability
of the stabilizers in $\G$ of elements of $S$~\cite[Theorem 5.1]{AEG}. Therefore,
we may take this characterization as a definition:

\begin{defn}\label{amenable:space}
A left action $\G\times S\to S$ of a group $\G$ on a set $S$ is \emph{amenable}
if the stabilizer of every $s\in S$ is an amenable subgroup of $\G$. In this
case, we equivalently say that $S$ is an amenable $\G$-set.
\end{defn}

The importance of amenable $\G$-sets is due to the fact that they may be exploited
to isometrically compute the bounded cohomology of $\G$. If $S$ is any $\G$-set and $V$ is any normed $\R[\G]$-module, then we denote by
$\ell^\infty(S^{n+1},V)$ the space of bounded functions from $S^{n+1}$ to $V$.
This space may be endowed with the structure of a 
normed $\R[\G]$-module via the action
$$
(g\cdot f)(s_0,\ldots,s_n)=g\cdot (f(g^{-1}s_0,\ldots,g^{-1}s_n))\ .
$$
The differential
$\delta^n\colon \ell^\infty(S^{n+1},V)\to \ell^\infty(S^{n+2},V)$
defined by 
$$
\delta^n(f)(s_0,\ldots,s_{n+1})=\sum_{i=0}^{n}  (-1)^i 
f(s_0,\ldots,\hat{s_i},\ldots,s_n)
$$
endows the pair $(\ell^\infty(S^{\bullet+1},V),\delta^\bullet)$ with the structure
of a normed $\R[\G]$-complex. Together with the augmentation $\varepsilon\colon V\to
\ell^\infty(S,V)$ given by $\varepsilon (v)(s)=v$ for every $s\in S$, such a complex
provides a strong resolution of $V$:

\begin{lemma}\label{Sstrong}
 The augmented complex
$$
0\tto{} V\tto{} \ell^\infty(S,V)\tto{\delta^0} \ell^\infty (S^2,V)\tto{\delta^1} 
\ell^\infty (S^3,V)\tto{\delta^2}\ldots
$$
provides a strong resolution of $V$.
\end{lemma}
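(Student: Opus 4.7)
The plan is to exhibit an explicit contracting homotopy by breaking symmetry: pick an arbitrary basepoint $\bar{s}\in S$ (the statement is vacuous if $S=\emptyset$) and define the $k^n$ by ``coning off'' on $\bar{s}$. The crucial observation is that the definition of strong resolution only requires the $k^n$ to be $R$-linear and norm non-increasing, \emph{not} $\G$-equivariant, so choosing a non-equivariant basepoint is harmless. Before constructing the homotopy, note that the augmentation $\varepsilon\colon V\to\ell^\infty(S,V)$ sending $v$ to the constant function with value $v$ satisfies $\|\varepsilon(v)\|_\infty=\|v\|$ whenever $S\neq\emptyset$, so $\varepsilon$ is an isometric embedding, as required.

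Concretely, I would set
\[
k^0\colon \ell^\infty(S,V)\to V,\qquad k^0(f)=f(\bar{s}),
\]
and for $n\geq 1$
\[
k^n\colon \ell^\infty(S^{n+1},V)\to \ell^\infty(S^n,V),\qquad k^n(f)(s_0,\ldots,s_{n-1})=f(\bar{s},s_0,\ldots,s_{n-1}).
\]
Each $k^n$ is obviously $R$-linear, and $\|k^n(f)\|_\infty\leq \|f\|_\infty$ since we are merely evaluating/restricting $f$; in particular $\|k^n\|\leq 1$. Moreover $k^0\circ\varepsilon(v)=\varepsilon(v)(\bar{s})=v$, giving $k^0\circ\varepsilon=\mathrm{Id}_V$.

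The main verification is the identity $\delta^{n-1}\circ k^n + k^{n+1}\circ \delta^n = \mathrm{Id}_{\ell^\infty(S^{n+1},V)}$ for $n\geq 0$. This is a one-line calculation: for $f\in\ell^\infty(S^{n+1},V)$ and $(s_0,\ldots,s_n)\in S^{n+1}$,
\[
(k^{n+1}\delta^n f)(s_0,\ldots,s_n)=\delta^n f(\bar{s},s_0,\ldots,s_n)=f(s_0,\ldots,s_n)-\sum_{i=0}^{n}(-1)^i f(\bar{s},s_0,\ldots,\widehat{s_i},\ldots,s_n),
\]
while
\[
(\delta^{n-1}k^n f)(s_0,\ldots,s_n)=\sum_{i=0}^{n}(-1)^i f(\bar{s},s_0,\ldots,\widehat{s_i},\ldots,s_n),
\]
and adding these gives $f(s_0,\ldots,s_n)$. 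Exactness of the augmented complex then follows formally from the existence of the contracting homotopy (if $\delta^n f=0$ then $f=\delta^{n-1}k^n f$, and if $\varepsilon(v)=0$ then $v=k^0\varepsilon(v)=0$; the case $n=0$ is handled similarly using $k^0\circ\varepsilon=\mathrm{Id}$).

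There is no real obstacle here: the argument is entirely parallel to the construction of the contracting homotopy for the standard resolution in Proposition~\ref{standard:ok}, with the identity element of $\G$ replaced by a chosen basepoint $\bar{s}\in S$. The only mildly subtle point is conceptual — emphasizing that equivariance of $k^\bullet$ is not required — rather than computational.
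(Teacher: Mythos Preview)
Your proof is correct and is exactly the approach taken in the paper: fix a basepoint $s_0\in S$ and define $k^n(f)(s_1,\ldots,s_n)=f(s_0,s_1,\ldots,s_n)$ as the contracting homotopy. The paper states this in one line without spelling out the verification, whereas you have supplied the routine details.
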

\begin{proof}
 Let ${s}_0$ be a fixed element of $S$. Then the maps
$$
k^{n}\colon \ell^\infty(S^{n+1},V)\to \ell^\infty(S^n,V)\, ,\qquad
k^n(f)(s_1,\ldots,s_{n})=f({s}_0,s_1,\ldots,s_{n})\, 
$$
provide the required contracting homotopy.
\end{proof}

\begin{lemma}\label{Srelinj}
 Suppose that $S$ is an amenable $\G$-set, and that $V$ is a dual normed
$\R[\G]$-module. Then $\ell^\infty(S^{n+1},V)$ is relatively injective for every $n\geq 0$.
\end{lemma}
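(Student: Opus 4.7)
The plan is to mimic the proof that $C_b^n(\G,V)$ is relatively injective (Lemma 4.5), replacing the ambient group $\G$ by the orbits of $\G$ on $S^{n+1}$, and using the amenability of the stabilizers to average away the ambiguity in choosing a representative. To begin, I would partition $S^{n+1}$ into $\G$-orbits and pick, once and for all, a representative $\bar s$ in each orbit. The stabilizer $H_{\bar s}\subseteq \G$ of $\bar s=(\bar s_0,\ldots,\bar s_n)$ for the diagonal action is the intersection $\bigcap_i \G_{\bar s_i}$; since each $\G_{\bar s_i}$ is amenable (by the definition of amenable $\G$-set) and subgroups of amenable groups are amenable (Proposition 3.2), $H_{\bar s}$ is amenable, so it carries an invariant mean $m_{\bar s}$.

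Now suppose we are given a strongly injective $\G$-morphism $\iota\colon A\to B$ with $R$-linear section $\sigma\colon B\to A$, $\|\sigma\|\le 1$, and a $\G$-morphism $\alpha\colon A\to \ell^\infty(S^{n+1},V)$. Writing $V=W'$, I would define $\beta\colon B\to \ell^\infty(S^{n+1},V)$ as follows. For $b\in B$ and a point $s=g\bar s$ in the orbit of $\bar s$, and for $w\in W$, consider the bounded real function
\[
F^w_{b,g}\colon H_{\bar s}\to \R,\qquad F^w_{b,g}(h)=\bigl[(gh)\cdot \alpha\bigl(\sigma(h^{-1}g^{-1}b)\bigr)(\bar s)\bigr](w)\,,
\]
and set $\beta(b)(s)(w)=m_{\bar s}(F^w_{b,g})$. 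The value $\beta(b)(s)(w)$ is linear and bounded by $\|\alpha\|\cdot\|b\|\cdot\|w\|$ in $w$ (since $\|\sigma\|\le 1$ and means have operator norm $1$), so $\beta(b)(s)$ lies in $V$ and $\|\beta\|\le \|\alpha\|$.

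The next step is to check the four compatibility properties. \emph{Independence of $g$:} if $g'=gh_0$ with $h_0\in H_{\bar s}$, then a direct computation shows $F^w_{b,g'}(h)=F^w_{b,g}(h_0 h)$, so left-invariance of $m_{\bar s}$ gives the same value. \emph{The relation $\beta\circ\iota=\alpha$:} for $b=\iota(a)$, the $\G$-equivariance of $\iota$ gives $\sigma(h^{-1}g^{-1}\iota(a))=h^{-1}g^{-1}a$, and using that $h\in H_{\bar s}$ fixes $\bar s$ one gets $F^w_{\iota(a),g}(h)=\alpha(a)(g\bar s)(w)$ independently of $h$, so $m_{\bar s}$ leaves the constant value unchanged. \emph{$\G$-equivariance:} for $g_0\in \G$ one checks, using the definition of the action on $V=W'$, that $F^w_{g_0 b,g}=F^{g_0^{-1}w}_{b,g_0^{-1}g}$, which yields $\beta(g_0 b)(g\bar s)=g_0\cdot \beta(b)(g_0^{-1}g\bar s)$, i.e.\ $\beta(g_0 b)=g_0\cdot \beta(b)$.

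The main technical point, and the only place where something nontrivial is happening, is the $\G$-equivariance check: because $\sigma$ is only $R$-linear and the choice of $g$ representing a point of the orbit is ambiguous, the naive formula from Lemma 4.5 would fail, and the role of the invariant mean on $H_{\bar s}$ is precisely to symmetrize the construction so that this ambiguity disappears while equivariance with respect to the \emph{external} $\G$-action is preserved. Once this is in place, the lemma follows.
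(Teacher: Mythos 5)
Your proof is correct and follows essentially the same route as the paper's: both use the intersection of stabilizers as the amenable group carrying the averaging mean, and both define $\beta(b)(s)(w)$ by averaging the same expression built from a chosen orbit representative and the section $\sigma$. The only cosmetic difference is that the paper first reduces to the case $n=0$ (noting that $S$ amenable implies $S^{n+1}$ amenable as a $\G$-set) and then works with a single $S$, while you carry out the same construction directly on $S^{n+1}$.
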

\begin{proof}
Since any intersection of amenable subgroups is amenable, the $\G$-set $S$ is amenable if and only if $S^n$ is. Therefore, it is sufficient
to deal with the case $n=0$.

Let $W$ be a normed $\R[\G]$-module
such that $V=W'$, and consider the extension problem described in Definition~\ref{relativelyinjective},
with $E=\ell^\infty(S,V)$:
$$
\xymatrix{
0 \ar[r] & A  \ar[r]_{\iota} \ar[d]_{\alpha} & B \ar@{-->}[dl]^{\beta} \ar@/_/[l]_{\sigma}\\
& \ell^\infty(S,V)
}
$$ 
We denote by $R\subseteq S$ a set of representatives for the
action of $\G$ on $S$, and
for every $r\in R$ we denote by $\G_r$ the stabilizer of $r$, endowed
with the invariant mean $\mu_r$. 
Moreover, for every $s\in S$ we choose an element $g_s\in \G$ such that
$g_s^{-1} (s)=r_s\in R$. Then $g_s$ is uniquely determined up to right
multiplication by elements in $\G_{r_s}$. 

Let us fix an element 
$b\in B$. In order to define $\beta(b)$, for every $s\in S$ we need to know the value
taken by $\beta(b)(s)$ on every $w\in W$. Therefore, we fix $s\in S$, $w\in W$, and we
set
$$
(\beta(b)(s))(w)=\mu_{r_s}(f_b)\ ,
$$
where $f_b\in \ell^\infty(\G_{r_s},\R)$ is defined by
$$
f_b(g)=\left((g_sg)\cdot \alpha(\sigma (g^{-1}g_s^{-1} b))\right) (s)\, (w) \ .
$$ 
Since $\|\sigma\|\leq 1$ we have that 
$\|\beta\|\leq \|\alpha\|$, and
the behaviour of means
on constant functions implies that 
$\beta\circ \iota=\alpha$. 

Observe that the element $\beta(b)(s)$ does not depend on the choice of
$g_s\in \G$. In fact, if we replace $g_s$ by $g_sg'$ for some $g'\in \G_{r_s}$,
then the function $f_b$ defined above is replaced by the function
 $$
f_b'(g)=\left((g_sg'g)\cdot \alpha(\sigma (g^{-1}(g')^{-1}g_s^{-1} b))\right) (s)\, 
(w)=f_b(g'g)\ , 
$$ 
and $\mu_{r_s}(f_b')=\mu_{r_s}(f_b)$ by the invariance of the mean $\mu_{r_s}$. This fact allows us to prove that $\beta$ is a $\G$-map. In fact,
let us fix $\overline{g}\in \G$ and let
$\overline{s}=\overline{g}^{-1}(s)$. Then 
we may assume that $g_{\overline{s}}=\overline{g}^{-1}g_s$, so
$$
\overline{g}(\beta(b))(s)(w)=
\overline{g}(\beta(b)(\overline{g}^{-1}s))(w)
=\beta(b)(\overline{s})(\overline{g}^{-1}w)=\mu_{r_s}(\overline{f}_b)\ ,
$$
where $\overline{f}_b\in \ell^\infty(\G_{r_s},\R)$ is given by
\begin{align*}
\overline{f}_b(g) & =\left((\overline{g}^{-1}g_sg)\cdot \alpha(\sigma (g^{-1}g_s^{-1}\overline{g} b))\right) (\overline{s})\, (\overline{g}^{-1}w) \\
& = \left((g_sg)\cdot \alpha(\sigma (g^{-1}g_s^{-1}\overline{g} b))\right) (s)\, (w)\\
& = f_{\overline{g}b} (g)\ .
\end{align*}
Therefore, 
$$
\overline{g}(\beta(b))(s)(w)=\mu_{r_s}(f_{\overline{g}b})=\beta(\overline{g}b)(s)(w)
$$
for every $s\in S$, $w\in W$, i.e.~$\overline{g}\beta(b)=\beta(\overline{g}b)$, and we are done. 
\end{proof}

As anticipated above, we are now able to show that amenable spaces
may be exploited to compute bounded cohomology:

\begin{thm}\label{Siso}
 Let $S$ be an amenable $\G$-set and let $V$ be a dual normed $\R[\G]$-module. 
 Let also $(V,V^\bullet,\delta_V^\bullet)$ be a strong resolution of $V$. Then, there exists a $\G$-chain map
 $\alpha^\bullet\colon V^\bullet\to \ell^\infty(S^{\bullet +1},V)$ which extends ${\rm Id}_V$ and is norm non-increasing in every degree. In particular, 
 the homology of the complex
$$
0\tto{} \ell^\infty(S,V)^\G\tto{\delta^0} \ell^\infty (S^2,V)^\G\tto{\delta^1} 
\ell^\infty (S^3,V)^\G\tto{\delta^2}\ldots
$$
is canonically isometrically isomorphic to $H_b^\bullet(\G,V)$.
\end{thm}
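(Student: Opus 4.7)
The plan is first to observe that Lemmas~\ref{Sstrong} and~\ref{Srelinj} together show that $(V,\ell^\infty(S^{\bullet+1},V),\delta^\bullet)$ is a relatively injective strong resolution of $V$. Consequently, the existence of \emph{some} $\G$-chain map $V^\bullet\to \ell^\infty(S^{\bullet+1},V)$ extending $\mathrm{Id}_V$ is immediate from Theorem~\ref{ext:bounded:thm}, and the purely algebraic fact that the cohomology of the $\G$-invariants of $\ell^\infty(S^{\bullet+1},V)$ is isomorphic to $H_b^\bullet(\G,V)$ follows from Corollary~\ref{fund1:bounded:cor}. The real content of the theorem is the norm-non-increasing refinement and the ensuing isometric identification.

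To build a norm non-increasing $\alpha^\bullet$, I would mimic the explicit construction in Theorem~\ref{norm:prop}, which exploits a contracting homotopy $k^\bullet$ of $V^\bullet$ via the recursive formula
\[
\alpha^n(v)(g_0,\ldots,g_n)=\alpha^{n-1}\bigl(g_0\cdot k^n(g_0^{-1}v)\bigr)(g_1,\ldots,g_n).
\]
The obstruction to copying this verbatim is that an element $s_0\in S$ has no canonical lift to $\G$: any $g_{s_0}$ with $g_{s_0}\cdot r_{s_0}=s_0$ (for $r_{s_0}$ a fixed orbit representative) is determined only up to right multiplication by the stabilizer $\G_{r_{s_0}}$. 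Since this stabilizer is amenable by hypothesis, the ambiguity can be averaged away using an invariant mean $\mu_{r_{s_0}}$, exactly as in the proof of Lemma~\ref{Srelinj}. Writing $V=W'$ and letting $k^\bullet$ be a contracting homotopy of $V^\bullet$, I would set
\[
\alpha^n(v)(s_0,\ldots,s_n)(w)=\mu_{r_{s_0}}\Bigl(h\mapsto \alpha^{n-1}\bigl(g_{s_0}h\cdot k^n(h^{-1}g_{s_0}^{-1}v)\bigr)(s_1,\ldots,s_n)(w)\Bigr),
\]
with base case $\alpha^0(v)(s)(w)=\mu_{r_s}\bigl(h\mapsto (g_sh\cdot k^0(h^{-1}g_s^{-1}v))(w)\bigr)$. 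Independence of the choice of $g_{s_0}$, and $\G$-equivariance of $\alpha^n$, follow from the invariance of the mean by the same computation as in Lemma~\ref{Srelinj}. Since $\|k^n\|\leq 1$, the $\G$-action is isometric, and the mean has norm one, an easy induction yields $\|\alpha^n\|\leq 1$. The fact that $\alpha^0\circ\varepsilon=\varepsilon$ is immediate from $k^0\circ\varepsilon=\mathrm{Id}_V$ and the normalization of the mean.

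The main obstacle I expect is the verification of the chain identity $\alpha^n\circ\delta_V^{n-1}=\delta^{n-1}\circ\alpha^{n-1}$: in the standard resolution case it reduces to the relation $\delta^{n-1}k^n+k^{n+1}\delta_V^n=\mathrm{Id}_{V^n}$ combined with the simplicial coboundary on $C_b^\bullet(\G,V)$, but here one must additionally push the alternating sum through the averaging in the first coordinate, using linearity and $\G$-invariance of $\mu_{r_{s_0}}$. Once this identity is established, the homological isomorphism follows from Corollary~\ref{fund1:bounded:cor}. Finally, to promote the isomorphism to an isometry, apply the first part of the theorem with $V^\bullet=C_b^\bullet(\G,V)$ (which is a relatively injective strong resolution by Proposition~\ref{standard:ok}); this produces a norm non-increasing chain map $C_b^\bullet(\G,V)\to \ell^\infty(S^{\bullet+1},V)$ extending $\mathrm{Id}_V$, and Corollary~\ref{norm:cor} then gives precisely the asserted canonical isometric identification.
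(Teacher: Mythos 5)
Your proposal is correct, but it takes a different route from the paper's, and the comparison is worth recording. The paper begins by invoking Theorem~\ref{norm:prop} to reduce immediately to the case $V^\bullet=C^\bullet_b(\G,V)$, and then gives a single closed-form (non-recursive) formula: writing $r_i\in R$ for the representative of the orbit of $s_i$, it defines
\[
\bigl(\alpha^n(f)(s_0,\ldots,s_n)\bigr)(w)=(\mu_{r_0}\times\cdots\times\mu_{r_n})\bigl((g_0,\ldots,g_n)\mapsto f(g_{s_0}g_0,\ldots,g_{s_n}g_n)(w)\bigr)\,,
\]
averaging simultaneously over the \emph{product} $\G_{r_0}\times\cdots\times\G_{r_n}$ of all the stabilizers. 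With this formula the chain-map property is essentially immediate (omitting the $i$-th variable from the integrand and using that the product mean reduces correctly on functions independent of one factor), and $\G$-equivariance and $\|\alpha^n\|\leq 1$ are equally transparent. Your version instead constructs $\alpha^\bullet$ directly out of an arbitrary strong resolution $V^\bullet$, mimicking Ivanov's recursive formula from the proof of Theorem~\ref{norm:prop} and averaging only over the stabilizer $\G_{r_{s_0}}$ of the zeroth coordinate at each level of the recursion. Unwound, this also amounts to averaging over the full product of stabilizers, but the recursive packaging shifts the burden: well-definedness and $\G$-equivariance follow as in Lemma~\ref{Srelinj}, and the chain identity, though no longer ``obvious,'' does go through by an induction using $\delta_V^{n-2}\circ k^{n-1}+k^n\circ\delta_V^{n-1}=\mathrm{Id}$ together with linearity of the mean and the fact that a constant is fixed by the mean, exactly in the spirit you anticipate. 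The two strategies are genuinely different in emphasis: the paper's reduction buys a completely explicit and symmetric formula for which all verifications are one-liners, while your direct construction avoids the detour through Theorem~\ref{norm:prop} in the first half and stays closer to Ivanov's original inductive mechanism, at the cost of a slightly more elaborate chain-map check. Your concluding step --- specializing to $V^\bullet=C^\bullet_b(\G,V)$ and invoking Corollary~\ref{norm:cor} for the isometric identification --- coincides with the paper's.
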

\begin{proof}
By Theorem~\ref{norm:prop}, in order to prove the first statement of the theorem it is sufficient to 
assume that $V^\bullet=C^\bullet_b(\G,V)$, so we are left to 
construct a norm non-increasing chain map
$$
\alpha^\bullet\colon C^\bullet_b(\G,V)\to \ell^\infty(S^{\bullet+1},V)\ .
$$

We keep notation from the proof of the previous lemma, i.e.~we fix a set of representatives
$R$ for the action of $\G$ on $S$, and for every $s\in S$ we choose an element
$g_s\in \G$ such that $g_s^{-1}s=r_s\in R$. 
For every $r\in R$ we also fix an invariant mean $\mu_r$ on the stabilizer $\G_r$ of $r$.

Let us fix an element $f\in C^n_b(\G,V)$,
and take $(s_0,\ldots,s_n)\in S^{n+1}$. 
If $V=W'$, we also fix an element $w\in W$. 
For every $i$ we denote by $r_i\in R$ the representative of the orbit
of $s_i$, and we consider the invariant mean 
$\mu_{r_0}\times \ldots \times \mu_{r_n}$
on $\G_{r_0}\times \ldots \times \G_{r_n}$ (see Remark~\ref{amenable:prod}).
Then we consider the function $f_{s_0,\ldots,s_n}\in \ell^\infty(\G_{r_0}\times\ldots \times \G_{r_n},\R)$ defined by
$$
f_{s_0,\ldots,s_n}(g_0,\ldots,g_n)=f(g_{s_0}g_0,\ldots,g_{s_n}g_n)(w)\ .
$$
By construction we have $\|f_{s_0,\ldots,s_n}\|_\infty\leq \|f\|_\infty\cdot \|w\|_W$,
so we may set
$$
(\alpha^n(f)(s_0,\ldots,s_n))(w)=(\mu_{r_0}\times \ldots\times \mu_{r_n}) (f_{s_0,\ldots,s_n})\ ,
$$
thus defining an element $\alpha^n(f)(s_0,\ldots,s_n)\in W'=V$ such that
$\|\alpha^n(f)\|_V\leq \|f\|_\infty$. We have thus shown that $\alpha^n\colon C^n_b(\G,V)\to \ell^\infty(S^{n+1},V)$ is a well-defined norm non-increasing linear map.
The fact that $\alpha^n$ commutes with the action of $\G$ follows from the invariance
of the means $\mu_r$, $r\in R$, and the fact that $\alpha^\bullet$ is a chain
map is obvious.

The fact that the complex $\ell^\infty(S^{\bullet +1},V)$ isometrically computes the bounded cohomology of $\G$ with coefficients in $V$ is now straightforward. 
Indeed, the previous lemmas imply that the augmented complex $(V,\ell^\infty(S^{\bullet+1},V),\delta^\bullet)$ provides a relatively injective strong
resolution of $V$, so Corollary~\ref{fund1:bounded:cor} implies that
the homology of the $\G$-invariants of $(V,\ell^\infty(S^{\bullet+1}),\delta^\bullet)$
is isomorphic to the bounded cohomology of $\G$ with coefficients in $V$. 
Thanks to the existence of the norm non-increasing chain map $\alpha^\bullet$, the fact that this 
isomorphism is isometric is then a consequence of Corollary~\ref{norm:cor}.
\end{proof}

Let us prove some direct corollaries of the previous results. Observe that, if
$W$ is a dual normed $\R[K]$-module and $\psi\colon \G\to K$ is a homomorphism, then the induced module 
$\psi^{-1} (W)$ is a dual normed $\R[\G]$-module.

\begin{thm}
Let $\psi\colon \G\to K$ be  a surjective homomorphism with amenable kernel, and let $W$ be a dual normed $\R[K]$-module. Then the induced map
$$
 H^\bullet_b(K,W)\to H^\bullet_b(\G,\psi^{-1}W)
$$ 
is an isometric isomorphism.
\end{thm}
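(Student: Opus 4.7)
The plan is to apply Theorem~\ref{Siso} with $K$ playing the role of an amenable $\G$-set. Set $N=\ker\psi$; this is amenable by hypothesis. Let $\G$ act on $K$ through $\psi$ by $g\cdot k := \psi(g)k$; the stabilizer of any $k\in K$ is precisely $N$, so by Definition~\ref{amenable:space} the set $K$ is an amenable $\G$-set. It is also trivially amenable as a $K$-set under left translation (the stabilizers are trivial).

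Theorem~\ref{Siso} applied in both contexts yields canonical isometric isomorphisms
\[
H^\bullet_b(K, W) \cong H^\bullet\bigl(\ell^\infty(K^{\bullet+1}, W)^K\bigr),\qquad H^\bullet_b(\G, \psi^{-1}W) \cong H^\bullet\bigl(\ell^\infty(K^{\bullet+1}, \psi^{-1}W)^\G\bigr).
\]
The key observation is that the underlying normed chain complexes $\ell^\infty(K^{\bullet+1}, W)$ and $\ell^\infty(K^{\bullet+1}, \psi^{-1}W)$ are literally the same, and the $\G$-action on the second one is the $K$-action on the first precomposed with $\psi$. Since $\psi$ is surjective, a cochain is $K$-invariant if and only if it is $\G$-invariant, so the two subcomplexes of invariants coincide as normed chain complexes. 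Combining the two isomorphisms above thus produces an isometric isomorphism $H^\bullet_b(K, W) \cong H^\bullet_b(\G, \psi^{-1}W)$.

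It remains to verify that this isomorphism coincides with $H^\bullet_b(\psi)$, i.e., with the map induced at the cochain level by the pullback $\psi^\bullet\colon C^\bullet_b(K, W) \to C^\bullet_b(\G, \psi^{-1}W)$ sending $f$ to $f\circ(\psi,\ldots,\psi)$. Viewing every $K$-module also as a $\G$-module through $\psi$, both $C^\bullet_b(\G, \psi^{-1}W)$ and $\ell^\infty(K^{\bullet+1}, \psi^{-1}W)$ are relatively injective strong $\G$-resolutions of $\psi^{-1}W$. Denoting by $\alpha_K^\bullet$ and $\alpha_\G^\bullet$ the norm non-increasing chain maps produced by Theorem~\ref{Siso} in the two contexts, the two compositions $\alpha_\G^\bullet\circ\psi^\bullet$ and $\mathrm{id}\circ\alpha_K^\bullet$ are both $\G$-chain maps extending $\mathrm{id}_W\colon W\to \psi^{-1}W$; by the uniqueness part of Theorem~\ref{ext:bounded:thm} they are $\G$-homotopic, so they induce the same map on $\G$-invariants, and hence on cohomology. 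This identifies $H^\bullet_b(\psi)$ with the isometric isomorphism built above.

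The substantive content of the proof (amenability of stabilizers and the tautological coincidence of $K$- and $\G$-invariants) is transparent; the only real obstacle is the final naturality bookkeeping, i.e., keeping track of when $\ell^\infty(K^{\bullet+1}, W)$ is regarded as a $K$-resolution versus a $\G$-resolution through $\psi$, and invoking uniqueness of chain-map extensions in the appropriate category.
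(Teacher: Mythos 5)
Your proof is correct and follows essentially the same route as the paper: make $K$ an amenable $\G$-set via $\psi$, apply Theorem~\ref{Siso}, and observe the tautological coincidence of $K$-invariant and $\G$-invariant cochains; your final paragraph supplying the naturality check (that the isomorphism is really $H^\bullet_b(\psi)$) is a welcome bit of rigor that the paper leaves implicit.
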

\begin{proof}
The action $\G\times K\to K$ defined by $(g,k)\mapsto \psi(g)k$
endows $K$ with the structure of an amenable $\G$-set. Therefore, Theorem~\ref{Siso}
implies that the bounded cohomology of $\G$ with coefficients in $\psi^{-1}W$ is isometrically isomorphic to the cohomology
of the complex $\ell^\infty(K^{\bullet+1},\psi^{-1}W)^\G$. However, since
$\psi$ is surjective, we have a (tautological) isometric identification between
$\ell^\infty(K^{\bullet+1},\psi^{-1}W)^\G$ and $C^\bullet_b(K,W)^K$, whence the conclusion.
\end{proof}

\begin{cor}\label{map:am:cor}
 Let $\psi\colon \G\to K$ be a surjective homomorphism with amenable kernel. Then the induced map
 $$
 H^n_b(K,\R)\to H^n_b(\G,\R)
 $$
is an isometric isomorphism for every
$n\in\mathbb{N}$.
\end{cor}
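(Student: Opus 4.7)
The plan is to deduce this corollary as an immediate specialization of the preceding theorem (the one immediately above, stating that a surjective homomorphism $\psi\colon \G\to K$ with amenable kernel induces an isometric isomorphism $H^\bullet_b(K,W)\to H^\bullet_b(\G,\psi^{-1}W)$ for every dual normed $\R[K]$-module $W$). Concretely, I would take $W=\R$ equipped with the trivial $K$-action. Then $W$ is a Banach space, in particular a dual normed $\R[K]$-module (its predual being itself $\R$, or simply by noting that every finite-dimensional normed space is reflexive hence dual). The pulled-back module $\psi^{-1}\R$ is then canonically identified, as a normed $\R[\G]$-module, with $\R$ carrying the trivial $\G$-action, because the $\G$-action on $\psi^{-1}\R$ factors through $\psi$ and $K$ already acts trivially on $\R$. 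Plugging this identification into the conclusion of the preceding theorem yields exactly the isometric isomorphism $H^n_b(K,\R)\to H^n_b(\G,\R)$ asserted in the corollary.

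Thus the proof should consist of essentially one sentence: apply the preceding theorem with $W=\R$ and observe $\psi^{-1}\R=\R$ canonically and isometrically as trivial $\R[\G]$-modules. There is no real obstacle; the only thing worth checking explicitly is the identification of $\psi^{-1}\R$ with the trivial $\G$-module $\R$, which is immediate from the definitions in Section on functoriality (the $\G$-action on $\psi^{-1}V$ is $g\cdot v=\psi(g)\cdot v$, and for the trivial $K$-action on $\R$ this gives the trivial $\G$-action), together with the fact that the norm on $\psi^{-1}\R$ is by definition the same as the norm on $\R$, so the identification is isometric. No additional machinery beyond the preceding theorem and these bookkeeping observations is required.
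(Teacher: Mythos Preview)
Your proposal is correct and matches the paper's approach: the corollary is stated in the paper without proof, as an immediate specialization of the preceding theorem with $W=\R$ (the trivial dual normed $\R[K]$-module), exactly as you describe.
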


\section{Alternating cochains}\label{altern:group:sec}
Let $V$ be a normed $\R[\G]$-module. 
A cochain $\varphi\in C^n(\G,V)$ is \emph{alternating} if it satisfies the following condition: 
for every permutation $\sigma\in\mathfrak{S}_{n+1}$ of the set $\{0,\ldots,n\}$, 
if we denote by ${\rm sgn}(\sigma)=\pm 1$ the sign of $\sigma$, then
the equality
$$
\varphi(g_{\sigma(0)},\ldots,g_{\sigma(n)})={\rm sgn}(\sigma)\cdot \varphi (g_0,\ldots,g_n)
$$
holds for every $(g_0,\ldots,g_n)\in\G^{n+1}$.
We denote by $C^n_{\alt}(\G,V)\subseteq C^n(\G,V)$ the subset of alternating cochains, and we set
$C^n_{b,\alt}(\G,V)=C^n_{\alt}(\G,V)\cap C^n_b(\G,V)$.  
It is well-known that (bounded) alternating cochains provide a $\G$-subcomplex of general (bounded) cochains.
In fact, it turns out that, in the case of real coefficients, one can compute the (bounded) cohomology of $\G$ 
via the complex of alternating cochains:

\begin{prop}\label{alternating:prop}
The complex $C^\bullet_{\alt}(\G,V)$ (resp.~$C^\bullet_{b,\alt}(\G,V)$) isometrically computes
the cohomology (resp.~the bounded cohomology) of $\G$ with real coefficients.
\end{prop}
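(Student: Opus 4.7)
The plan is to construct an alternation operator $\alt^n \colon C^n(\G,V) \to C^n(\G,V)$ defined by
$$
\alt^n(\varphi)(g_0,\ldots,g_n) = \frac{1}{(n+1)!} \sum_{\sigma \in \mathfrak{S}_{n+1}} \mathrm{sgn}(\sigma)\, \varphi(g_{\sigma(0)},\ldots,g_{\sigma(n)}),
$$
where the factor $1/(n+1)!$ is the reason the statement restricts to real coefficients. I would then record a short list of basic properties: $\alt^n$ takes values in $C^n_{\alt}(\G,V)$ and restricts to the identity on this subspace; $\alt^n$ is $\G$-equivariant (because $\G$ acts diagonally on $\G^{n+1}$ and so commutes with permutations of the coordinates); $\alt^n$ is a chain map, $\delta^n \circ \alt^n = \alt^{n+1} \circ \delta^n$, via a standard sign bookkeeping; and $\|\alt^n(\varphi)\|_\infty \leq \|\varphi\|_\infty$, so that $\alt^n$ restricts to a norm non-increasing map $C^n_b(\G,V) \to C^n_{b,\alt}(\G,V)$. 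Observe also that $C^0_{\alt}(\G,V) = C^0(\G,V)$, so the augmentation $\varepsilon\colon V \to C^0(\G,V)$ already lives inside the alternating complex.

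Next I would show that $(V, C^\bullet_{\alt}(\G,V), \delta^\bullet)$ and $(V, C^\bullet_{b,\alt}(\G,V), \delta^\bullet)$ are (relatively) injective strong resolutions of $V$. For strongness, let $k^\bullet$ denote the standard contracting homotopies from Propositions~\ref{standard:res} and~\ref{standard:ok}, and set $k^n_{\alt} := \alt^{n-1} \circ k^n$; since $\alt^\bullet$ is a chain map and acts as the identity on alternating cochains, a one-line computation gives
$$
\delta^{n-1}\circ k^n_{\alt} + k^{n+1}_{\alt}\circ \delta^n \;=\; \alt^n \circ \bigl(\delta^{n-1}\circ k^n + k^{n+1}\circ \delta^n\bigr) \;=\; \alt^n \;=\; \mathrm{Id}_{C^n_{\alt}},
$$
and $\|k^n_{\alt}\| \leq 1$ in the bounded setting (while $k^0_{\alt} = k^0$ at the top). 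For (relative) injectivity of $C^n_{\alt}(\G,V)$, given an extension problem with target $C^n_{(b,)\alt}(\G,V)$ I would first apply Lemma~\ref{inj:basic} (or Lemma~\ref{relinj:standard} in the bounded case) to extend the given $\G$-map to $C^n(\G,V)$, and then postcompose with $\alt^n$; the fact that $\alt^n$ is the identity on alternating cochains guarantees that this really is an extension of the original map, and the norm estimate is preserved because $\alt^n$ is norm non-increasing.

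With these resolutions in hand, Corollary~\ref{fund1:cor} (respectively Corollary~\ref{fund1:bounded:cor}) yields a natural isomorphism of $H^n(\G,V)$ (respectively $H^n_b(\G,V)$) with the cohomology of the $\G$-invariants of the alternating complex. For isometricity in the bounded case, I would invoke Corollary~\ref{norm:cor}: the alternation $\alt^\bullet\colon C^\bullet_b(\G,V) \to C^\bullet_{b,\alt}(\G,V)$ is a $\G$-equivariant chain map extending $\mathrm{Id}_V$ that is norm non-increasing in every degree, so the seminorm induced on bounded cohomology by the alternating resolution coincides with the canonical one. The isometric statement for classical cohomology with its quotient $\ell^\infty$-seminorm follows by the same argument: every representative $\varphi$ of a class satisfies $\|\alt^n(\varphi)\|_\infty \leq \|\varphi\|_\infty$ and represents the same class in $H^n(\G,V)$, by the $\G$-homotopy $\iota\circ\alt \sim \mathrm{Id}$ provided by Theorem~\ref{ext:thm} applied to the two strong resolutions. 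The main obstacle is the sign-careful verification that $\alt^\bullet$ commutes with $\delta^\bullet$; once that is in place, everything else is a direct application of the relative homological algebra machinery already developed in this chapter.
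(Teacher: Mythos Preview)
Your proof is correct and takes a somewhat different route from the paper. The paper proceeds more directly: it observes that the inclusion $j^\bullet\colon C^\bullet_{b,\alt}(\G,V)\to C^\bullet_b(\G,V)$ is norm non-increasing, and then shows that the alternation operator $\alt^\bullet$ (defined by the same formula you use) is a norm non-increasing $\G$-chain retraction onto the alternating subcomplex with $j^\bullet\circ\alt^\bullet$ $\G$-homotopic to the identity, deferring the explicit construction of that homotopy to an external reference. You instead prove that the alternating complex is itself a relatively injective strong resolution of $V$ --- using $\alt^\bullet$ to transport both the contracting homotopy and the extension property from the standard resolution --- and then invoke Corollaries~\ref{fund1:bounded:cor} and~\ref{norm:cor}. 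Your approach has the merit of being fully self-contained within the chapter's machinery: the required homotopy $j\circ\alt\sim\mathrm{Id}$ comes for free from Theorem~\ref{ext:bounded:thm} once both complexes are known to be relatively injective strong resolutions, so you avoid the appeal to an explicit chain homotopy. The paper's argument is shorter and more elementary in spirit, but relies on that external computation. Both approaches hinge on the same unverified-but-standard fact that $\alt^\bullet$ commutes with $\delta^\bullet$, which you correctly flag as the main point requiring care.
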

\begin{proof}
We concentrate our attention on bounded cohomology, the case of ordinary cohomology being very similar.
The inclusion $j^\bullet\colon C^\bullet_{b,\alt}(\G,V)\to C^\bullet_b(\G,V)$ 
induces a norm non-increasing map on bounded cohomology, so in order to prove the proposition it is sufficient to construct
a norm non-increasing $\G$-chain map $\alt^\bullet_b\colon C^\bullet_b(\G,V)\to C^\bullet_{b,\alt}(\G,V)$ which satisfies the following properties:
\begin{enumerate}
\item
$\alt_b^n$ is a retraction onto the subcomplex of alternating cochains, i.e.~$\alt_b^n\circ j^n={\rm Id}$ for every $n\geq 0$;
\item
$j^\bullet\circ\alt_b^\bullet$ is $\G$-homotopic to the identity of $C^\bullet_b (\G,V)$ 
(as usual, via a homotopy which is bounded in every degree).
\end{enumerate}
For every $\varphi\in C^n_b(\G,V)$, $(g_0,\ldots,g_n)\in\G^{n+1}$ we set
$$
\alt_b^n(\varphi)(g_0,\ldots,g_n)=\frac{1}{(n+1)!}\sum_{\sigma\in \mathfrak{S}_{n+1}} {{\rm sgn}(\sigma)}
\cdot \varphi(g_{\sigma(0)},\ldots,g_{\sigma(n)})\ .
$$
It is easy to check that $\alt_b^\bullet$ satisfies the required properties 
(the fact that it is indeed homotopic to the identity of $C^\bullet_b(\G,V)$ may be deduced, for example,
by the computations carried out in the context of singular chains in \cite[Appendix B]{FM}).
\end{proof}

Let $S$ be an amenable $\G$-set. We have seen in Theorem~\ref{Siso} that the bounded cohomology of $\G$  with coefficients
in the normed $\R[\G]$-module $V$ is isometrically isomorphic
to the cohomology of the complex $\ell^\infty(S^{\bullet +1},V)^\G$. The very same argument described in the proof of Proposition~\ref{alternating:prop}
shows that the bounded cohomology of $\G$ is computed also by the subcomplex of alternating elements of  $\ell^\infty(S^{\bullet +1},V)^\G$.
More precisely, let us denote by $\ell^\infty_{\alt}(S^n,V)$ the submodule of alternating elements of  $\ell^\infty(S^n,V)$ (the definition of alternating being obvious).
Then we have the following:
 
\begin{thm}\label{Sisoalt}
 Let $S$ be an amenable $\G$-set and let $V$ be a dual normed $\R[\G]$-module. Then the homology of the complex
$$
0\tto{} \ell^\infty_{\alt}(S,V)^\G\tto{\delta^0} \ell^\infty_{\alt} (S^2,V)^\G\tto{\delta^1} 
\ell^\infty_{\alt}(S^3,V)^\G\tto{\delta^2}\ldots
$$
is canonically isometrically isomorphic to $H_b^\bullet(\G,V)$.
\end{thm}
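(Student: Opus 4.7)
The plan is to deduce Theorem~\ref{Sisoalt} from Theorem~\ref{Siso} via an alternation argument completely parallel to the one used in Proposition~\ref{alternating:prop} for the standard resolution. Recall that Theorem~\ref{Siso} already provides a canonical isometric isomorphism $H^\bullet_b(\G,V)\cong H^\bullet(\ell^\infty(S^{\bullet+1},V)^\G)$, so it suffices to show that the inclusion
$$j^\bullet\colon \ell^\infty_{\alt}(S^{\bullet+1},V)^\G \hookrightarrow \ell^\infty(S^{\bullet+1},V)^\G$$
induces an isometric isomorphism in cohomology.

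First I would define, for every $n\geq 0$ and every $\varphi\in\ell^\infty(S^{n+1},V)$, the alternation
$$
\alt^n(\varphi)(s_0,\ldots,s_n)=\frac{1}{(n+1)!}\sum_{\sigma\in\mathfrak{S}_{n+1}}\mathrm{sgn}(\sigma)\,\varphi(s_{\sigma(0)},\ldots,s_{\sigma(n)}).
$$
The operator $\alt^n$ is obviously norm non-increasing, takes values in $\ell^\infty_{\alt}(S^{n+1},V)$, and is a retraction in the sense that $\alt^n\circ j^n=\mathrm{Id}$. Since $\G$ acts diagonally on $S^{n+1}$, its action commutes with any permutation of the coordinates, so $\alt^n$ is $\G$-equivariant; a direct computation shows that $\alt^\bullet$ commutes with the simplicial differential $\delta^\bullet$, so it defines a norm non-increasing $\G$-chain map $\alt^\bullet\colon \ell^\infty(S^{\bullet+1},V)\to \ell^\infty_{\alt}(S^{\bullet+1},V)$.

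Next I would show that $j^\bullet\circ\alt^\bullet$ is $\G$-homotopic to the identity of $\ell^\infty(S^{\bullet+1},V)$ via a homotopy $T^\bullet$ that is bounded in every degree. The construction is formally identical to the one available in the classical setting of cochains on groups (see, e.g., the computation in \cite[Appendix B]{FM} invoked in the proof of Proposition~\ref{alternating:prop}): one writes down the standard explicit bounded homotopy built out of partial alternations, and observes that, since $\G$ acts diagonally and permutations act on coordinates, each of the operators entering $T^n$ is $\G$-equivariant and norm non-increasing (up to a multiplicative constant depending only on $n$). Taking $\G$-invariants then yields a bounded homotopy between $j^\bullet\circ\alt^\bullet$ and the identity on $\ell^\infty(S^{\bullet+1},V)^\G$.

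Combining these facts, the maps induced in cohomology by $j^\bullet$ and $\alt^\bullet$ are inverse to each other, and both are norm non-increasing; hence they are isometric isomorphisms between $H^\bullet(\ell^\infty_{\alt}(S^{\bullet+1},V)^\G)$ and $H^\bullet(\ell^\infty(S^{\bullet+1},V)^\G)$. Composing with the isometric isomorphism provided by Theorem~\ref{Siso} yields the desired isometric identification with $H^\bullet_b(\G,V)$. The only non-routine point is the verification that the homotopy $T^\bullet$ is both $\G$-equivariant and uniformly bounded; but both properties are immediate from the fact that $T^n$ is built as a finite linear combination, with coefficients depending only on $n$, of operators that reshuffle coordinates—operations which manifestly commute with the diagonal $\G$-action and preserve the sup norm.
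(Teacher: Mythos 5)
Your proposal is correct and follows exactly the route the paper itself takes: the text preceding Theorem~\ref{Sisoalt} explicitly says that ``the very same argument described in the proof of Proposition~\ref{alternating:prop}'' applies, and that is precisely what you carry out — define the alternation retraction $\alt^\bullet$, observe it is a norm non-increasing $\G$-chain map with $\alt^\bullet\circ j^\bullet = \mathrm{Id}$, produce a $\G$-equivariant homotopy bounded in each degree between $j^\bullet\circ\alt^\bullet$ and the identity, and compose with the isometric isomorphism of Theorem~\ref{Siso}. One small terminological point: what is required of the homotopy is that it be bounded \emph{in every degree} (a constant depending on $n$ is allowed, and indeed occurs), not ``uniformly bounded''; your parenthetical remark already makes this clear, but the phrasing in the final sentence is slightly misleading.
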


\section{Further readings}
The study of bounded cohomology of groups by means of homological algebra was initiated by Brooks~\cite{Brooks} and developed by Ivanov~\cite{Ivanov}, who first managed to construct a theory
that allowed the use of resolutions for the \emph{isometric} computation of bounded cohomology (see also~\cite{Noskov} for the case with twisted coefficients). 
Ivanov's ideas were then generalized and further developed by Burger and Monod~\cite{BM1,BM2,Monod} in order to deal
with topological groups (their theory lead also to a deeper understanding of the bounded cohomology of discrete groups, for example in the case of lattices in Lie groups). 
In the case of discrete groups, an introduction to bounded cohomology following Ivanov's approach may also be found in~\cite{Clara:book}.

From the point of view of homological algebra, bounded cohomology is a pretty exotic theory, since it fails excision and thus it cannot be easily studied via any (generalized) Mayer-Vietoris principle. 
As observed in the introduction of~\cite{Bualg}, 
this seemed to suggest that (continuous) bounded cohomology could not be interpreted as a derived functor and that triangulated methods 
could not apply to its study. On the contrary, it is proved in~\cite{Bualg} that the formalism of exact categories and their derived categories can be exploited
to construct a classical derived functor on the category of Banach $\G$-modules with values in Waelbroeck's abelian category. 
Building on this fact, B\"uhler 
provided an axiomatic characterization of bounded cohomology, and illustrated how the theory of bounded cohomology 
can be interpreted in the standard framework of homological and homotopical algebra.

\subsection*{Amenable actions}
There exist at least two quite different notions of amenable actions in the literature. Namely, suppose that the group $\G$ acts via Borel isomorphisms on a topological space $X$. Then
one can say that the action is amenable if $X$ supports a $\G$-invariant Borel probability measure. This is the case, for example, whenever the action has a finite orbit $O$, because
in that case a $\G$-invariant probability measure is obtained just by taking the (normalized) counting measure supported on $O$. This definition of amenable action dates back to~\cite{Green}, and it is not the best suited
to bounded cohomology. Indeed, our definition of amenable action (for a discrete group on a discrete space) is a very particular instance of a more general notion due to Zimmer~\cite{Zimmer,Zimmer:book}. Zimmer's
original definition of amenable action is quite involved, but it turns out to be equivalent to the existence of a $\G$-invariant conditional expectation $L^\infty(\G\times X)\to L^\infty(X)$, or to the fact that the equivalence relation
on $X$ defined by the action is amenable, together with the amenability of the stabilizer of $x\in X$ for almost every $x\in X$ (this last characterization readily implies that
our Definition~\ref{amenable:space} indeed coincides with Zimmer's one in the case of actions on discrete spaces). For a discussion of the equivalence of these definitions of Zimmer's
amenability we refer the reader to~\cite[II.5.3]{Monod}, which summarizes and discusses results from~\cite{AEG}. Amenable actions also admit a characterization in the language of homological algebra:
an action of $\G$ on a space $X$ is amenable if and only if the $\G$-module $L^\infty(X)$ is relatively injective. This implies that amenable spaces may be exploited to compute
(and, in fact, to isometrically compute) the bounded cohomology of groups (in the case of discrete spaces, this is just Theorem~\ref{Siso}).

Both definitions of amenable actions we have just discussed may be generalized to the case when $\G$
is a locally compact topological group. As noted in~\cite{GlaMon}, these two notions are indeed quite different, and in fact somewhat ``dual'' one to the other: for example, a trivial action is always amenable
in the sense of Greenleaf, while it is amenable in the sense of Zimmer precisely when the group $\G$ is itself amenable.

An important example of amenable action is given by the action of any countable group $\G$ on its Poisson boundary~\cite{Zimmer}. Such an action is also doubly ergodic~\cite{Kaim},
and this readily implies that the second bounded cohomology of a group $\G$ may be isometrically identified with the space of $\G$-invariant $2$-cocycles
on its Poisson boundary. This fact has proved to be extremely powerful for the computation of low-dimensional bounded cohomology modules for discrete and locally compact groups
(see e.g.~\cite{BM1,BM2}). As an example, let us recall that Bouarich proved that, 
if $\varphi\colon \G\to H$ is a surjective homomorphism, then the induced map
$H^2_b(\varphi)\colon H^2_b(H,\R)\to H^2_b(\G,\R)$
is injective (see Theorem~\ref{Bua:thm}).
 Indeed, the morphism $\varphi$ induces a  measurable map from a suitably chosen Poisson boundary of $\G$ to a suitably chosen Poisson boundary of $H$, and this implies
in turn that the map $H^2_b(\varphi)$ is  
 in fact an isometric isomorphism~\cite[Theorem 2.14]{Huber}.

\chapter{Bounded cohomology of topological spaces}\label{bounded:space:chap}
Let $X$ be a topological space, and let $R=\matZ,\R$. Recall that $C_\bullet (X,R)$ (resp.~$C^\bullet(X,R)$) denotes the usual
complex of singular chains (resp.~cochains) on $X$ with coefficients in $R$, and
$S_i (X)$ is the set of singular $i$--simplices
in $X$. We also
regard $S_i (X)$ as a subset of $C_i (X,R)$, so that
for any cochain $\varphi\in C^i (X,R)$ it makes sense to consider its restriction
$\varphi|_{S_i (X)}$. For every $\varphi\in C^i (X,R)$, we set
$$
\| \varphi \|=\|\varphi\|_\infty  = \sup \left\{|\varphi (s)|\ |\ s\in S_i (X)\right\}\in [0,\infty].
$$
We denote by $C^\bullet_b (X,R)$ the submodule of bounded cochains, i.e.~we set
$$C^\bullet_b (X,R)= \left\{\varphi\in C^\bullet (X,R)\ | \ \|\varphi\|<\infty\right\}\ .$$ Since
the differential takes bounded cochains into bounded cochains, $C^\bullet_b (X,R)$
is a subcomplex of $C^\bullet(X,R)$.
We denote by $H^\bullet(X,R)$ (resp.~$H_b^\bullet(X,R)$) 
the homology of the complex $C^\bullet(X,R)$ (resp.~$C_b^\bullet(X,R)$).
Of course, $H^\bullet(X,R)$ is the usual singular cohomology module of $X$ with coefficients in $R$, while $H_b^\bullet(X,R)$
is the \emph{bounded cohomology module} of $X$ with coefficients in $R$. 
Just as in the case of groups,
the norm on $C^i (X,R)$ descends (after taking
the suitable restrictions) to a seminorm on each of the modules 
$H^\bullet(X,R)$, $H_b^\bullet(X,R)$. More precisely, if
$\varphi\in H$ is a class in one of these modules, which is obtained as a quotient
of the corresponding module of cocycles $Z$, then we set 
$$
\|\varphi\|=\inf \left\{\|\psi\|\ |\  \psi\in Z,\, [\psi]=\varphi\ {\rm in}\ H\right\}.
$$
This seminorm may take the value $\infty$ on elements in $H^\bullet(X,R)$
and may be null on non-zero elements in $H_b^\bullet(X,R)$
(but not on non-zero elements in $H^\bullet(X,R)$: this is clear in the case
with integer coefficients, and it is a consequence of the Universal Coefficient Theorem in the case
with real coefficients, since a real cohomology class with vanishing seminorm
has to be null on any cycle, whence null in $H^\bullet(X,\R)$).

The inclusion of bounded cochains into possibly unbounded cochains
induces the \emph{comparison map}
$$
c^\bullet \colon  H_b^\bullet(X,R)\to H^\bullet(X,R)\ .
$$

\section{Basic properties of bounded cohomology of spaces}
Bounded cohomology enjoys some of the fundamental properties of classical singular
cohomology: for example, $H^i_b(\{\text{pt.}\},R)=0$ if $i>0$, and
$H^0_b(\{\text{pt.}\},R)=R$ (more in general, $H^0_b(X,R)$ is canonically isomorphic
to $\ell^\infty (S)$, where $S$ is the set of  path connected components of $X$).
The usual proof of the homotopy invariance of singular homology is based on
the construction of an algebraic homotopy which maps every $n$-simplex to the sum
of at most $n+1$  $(n+1)$-dimensional simplices. As a consequence, the homotopy operator
induced in cohomology preserves bounded cochains. This implies that bounded cohomology
is a homotopy invariant of topological spaces. Moreover, if $(X,Y)$ is a topological pair,
then there is an obvious definition of $H^\bullet_b(X,Y)$, and it is immediate to check that the analogous of the long exact sequence of the pair in classical singular cohomology
also holds in the bounded case.

Perhaps the most important phenomenon that distinguishes  bounded cohomology from  classical singular cohomology is the lacking of any Mayer-Vietoris sequence (or, equivalently,
of any excision theorem). In particular, spaces with finite-dimensional bounded cohomology may be tamely glued to each other to get spaces with infinite-dimensional
bounded cohomology (see Remark~\ref{wedge} below).

Recall from Section~\ref{topol:sec} that $H^n(X,R)\cong H^n(\pi_1(X),R)$ for every
aspherical CW-complex $X$.  As anticipated in Section~\ref{bounded:inter}, a fundamental result by Gromov provides
an isometric isomorphism $H_b^n(X,\R)\cong H_b^n(\pi_1(X),\R)$ even without
any assumption on the asphericity of $X$. This section is devoted to a proof
of Gromov's result. We will closely follow Ivanov's argument~\cite{Ivanov},
which deals with the case when $X$ is (homotopically equivalent to)
a countable CW-complex. Before going into Ivanov's argument, we will concentrate our attention
on the easier case of aspherical spaces.

\section{Bounded singular cochains as relatively injective modules}\label{sing:inj:sec}
Henceforth, we assume that $X$ is a path connected topological space
admitting the universal covering $\widetilde{X}$, we 
denote by $\G$ the fundamental group
of $X$, and we fix an identification of $\G$ with the group of covering automorphisms
of $\widetilde{X}$. Just as we did in Section~\ref{revisited:sec} for $C^\bullet(\widetilde{X},R)$, we endow
$C^\bullet_b (\widetilde{X},R)$ with the structure of a normed $R[\G]$-module. 
Our arguments are based on the obvious but fundamental isometric identification
$$
C^\bullet_b(X,R)\ \cong \ C^\bullet_b(\widetilde{X},R)^\G\ ,
$$
which induces a canonical isometric identification
$$
H^\bullet_b(X,R)\ \cong H^\bullet(C^\bullet_b (\widetilde{X},R)^\G)\ .
$$
As a consequence, in order to prove the isomorphism $H^\bullet_b(X,R)\cong H^\bullet_b(\G,R)$ it is sufficient to show that the complex
$C^\bullet_b(\widetilde{X},R)$ provides a relatively injective strong
resolution of $R$ (an additional argument shows that this isomorphism
is also isometric). The relative injectivity of the modules $C^n_b(\widetilde{X},R)$
can be easily deduced from the argument described in the proof of
Lemma~\ref{sing:inj}, which applies verbatim in the context of bounded singular cochains:

\begin{lemma}\label{sing:bd:inj}
For every $n\in\matN$, the bounded cochain module
$C_b^n(\widetilde{X},R)$ is relatively injective.
\end{lemma}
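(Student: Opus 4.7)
The plan is to adapt the construction from Lemma~\ref{sing:inj} verbatim, with only the additional verification that the extension $\beta$ is norm non-increasing. Concretely, let
$$\xymatrix{
0 \ar[r] & A  \ar[r]_{\iota} \ar[d]_{\alpha} & B \ar@{-->}[dl]^{\beta} \ar@/_/[l]_{\sigma}\\
& C_b^n(\widetilde{X},R)
}$$
be an extension problem, where now $\iota$, $\alpha$, $\sigma$ are bounded maps of normed $R[\G]$-modules and $\|\sigma\|\leq 1$. I would pick a set of representatives $L_n(\widetilde{X})$ for the action of $\G$ on $S_n(\widetilde{X})$ so that every singular simplex $s\in S_n(\widetilde{X})$ can be written uniquely as $s=g_s\cdot\overline{s}$ with $g_s\in\G$ and $\overline{s}\in L_n(\widetilde{X})$, and then define, exactly as in Lemma~\ref{sing:inj},
$$
\beta(b)(s)=\alpha\bigl(g_s\,\sigma(g_s^{-1}\cdot b)\bigr)(s)=\alpha\bigl(\sigma(g_s^{-1}\cdot b)\bigr)(\overline{s})\ .
$$

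The $R$-linearity of $\beta$, the identity $\beta\circ\iota=\alpha$, and the $\G$-equivariance of $\beta$ are all purely algebraic statements that were already verified in the proof of Lemma~\ref{sing:inj}, so they carry over without change; this part I would dispatch with a pointer to that lemma.

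The one new ingredient, which is the point of the proof in the bounded setting, is the norm estimate. For every $s\in S_n(\widetilde{X})$ and $b\in B$ one has
$$
|\beta(b)(s)|=\bigl|\alpha\bigl(\sigma(g_s^{-1}\cdot b)\bigr)(\overline{s})\bigr|\leq \bigl\|\alpha\bigl(\sigma(g_s^{-1}\cdot b)\bigr)\bigr\|_\infty\leq \|\alpha\|\cdot\|\sigma\|\cdot\|g_s^{-1}\cdot b\|\leq \|\alpha\|\cdot\|b\|\ ,
$$
where in the last step I use both $\|\sigma\|\leq 1$ (strong injectivity) and the $\G$-invariance of the norm on $B$. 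Taking the supremum over $s$ shows simultaneously that $\beta(b)\in C_b^n(\widetilde{X},R)$ (so the formula indeed lands in the target) and that $\|\beta\|\leq\|\alpha\|$, which is the estimate required by Definition~\ref{relativelyinjective}.

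There is essentially no obstacle: the substantive work (choosing representatives and writing down an averaged extension) is already present in Lemma~\ref{sing:inj}, and the only thing to add is the elementary norm bound. The main thing to be careful about is to organize the presentation so that one does not repeat the equivariance computation but still makes clear that the boundedness of $\sigma$ is exactly what is needed to upgrade relative injectivity from the category of $R[\G]$-modules to the category of \emph{normed} $R[\G]$-modules.
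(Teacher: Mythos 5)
Your proposal is correct and matches the paper's approach exactly: the paper simply states that the proof of Lemma~\ref{sing:inj} "applies verbatim in the context of bounded singular cochains," and you have carried out that adaptation, correctly isolating the single new step (the norm estimate $\|\beta\|\leq\|\alpha\|$, using $\|\sigma\|\leq 1$ and the $\G$-invariance of the norm on $B$) needed to pass from the algebraic to the normed setting.
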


Therefore, in order to show that the bounded cohomology of $X$ is isomorphic to the bounded cohomology of $\G$ we need to show that
the (augmented) complex $C_b^\bullet (\widetilde{X},R)$ provides a strong resolution of $R$. We will show that this is the case if ${X}$
is aspherical. In the general case, it is not even true that $C_b^\bullet(\widetilde{X},R)$ is acyclic (see Remark~\ref{znotwork}). However, in the case
when $R=\R$ a deep result by Ivanov shows that the complex $C_b^\bullet(\widetilde{X},\R)$ indeed provides a strong resolution
of $\R$.  A sketch of Ivanov's proof will be given in Section~\ref{ivanov:sec}.

Before going on, we point out that we always have a norm non-increasing map
from the bounded cohomology of $\G$ to the bounded cohomology of $X$:

\begin{lemma}\label{normnon}
Let us endow the complexes $C^\bullet_b(\G,R)$ and $C^\bullet_b(\widetilde{X},R)$
with the obvious augmentations. Then,
there exists a norm non-increasing $\G$-equivariant chain map 
$$
r^\bullet \colon C^\bullet_b(\G,R)\to C^\bullet_b(\widetilde{X},R)
$$
extending the identity
of $R$.
\end{lemma}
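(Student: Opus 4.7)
The plan is to build $r^\bullet$ explicitly by a suitable dualization of a vertex-labeling $\phi\colon\widetilde{X}\to\G$. Since $\G$ acts freely on $\widetilde{X}$, I fix a set-theoretic fundamental domain $F\subseteq \widetilde{X}$ for the covering action, so that every $\tilde{x}\in\widetilde{X}$ can be written uniquely as $\tilde{x}=\phi(\tilde{x})\cdot \tilde{x}_F$ with $\phi(\tilde{x})\in\G$ and $\tilde{x}_F\in F$. The uniqueness of this decomposition immediately yields $\G$-equivariance: $\phi(g\cdot\tilde{x})=g\cdot\phi(\tilde{x})$ for every $g\in\G$, $\tilde{x}\in\widetilde{X}$.

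Given $\phi$, I define $r^n\colon C_b^n(\G,R)\to C_b^n(\widetilde{X},R)$ by
\[
r^n(f)(\sigma)\,=\,f\bigl(\phi(\sigma(e_0)),\,\phi(\sigma(e_1)),\,\ldots,\,\phi(\sigma(e_n))\bigr),
\]
where $e_0,\ldots,e_n$ are the vertices of $\Delta^n$ and $\sigma\in S_n(\widetilde{X})$. The inequality $\|r^n(f)\|_\infty\leq \|f\|_\infty$ is immediate from the definition, so $r^n$ lands in bounded cochains and is norm non-increasing. The extension of $\mathrm{Id}_R$ is transparent: both augmentations produce constant cochains, and $r^0$ clearly sends a constant group cochain $g\mapsto t$ to the constant singular cochain $\sigma\mapsto t$.

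Next I verify that $r^\bullet$ is a chain $\G$-map. For $\G$-equivariance, if $g\in\G$ and $\sigma\in S_n(\widetilde{X})$ has vertices $v_0,\ldots,v_n$, then $g^{-1}\sigma$ has vertices $g^{-1}v_0,\ldots,g^{-1}v_n$, and using $\phi(g^{-1}v_i)=g^{-1}\phi(v_i)$ together with the trivial action on $R$ one obtains
\[
(g\cdot r^n(f))(\sigma)=r^n(f)(g^{-1}\sigma)=f\bigl(g^{-1}\phi(v_0),\ldots,g^{-1}\phi(v_n)\bigr)=r^n(g\cdot f)(\sigma).
\]
For the chain map property, the $i$-th face $\partial_i\sigma$ of an $(n+1)$-simplex $\sigma$ has vertices obtained by omitting $v_i$, so plugging into the definitions of the two differentials one sees that both $\delta^n r^n(f)(\sigma)$ and $r^{n+1}(\delta^n f)(\sigma)$ reduce to the same alternating sum $\sum_{i=0}^{n+1}(-1)^i f(\phi(v_0),\ldots,\widehat{\phi(v_i)},\ldots,\phi(v_{n+1}))$.

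There is no substantial obstacle here; the only mildly delicate point is the existence of the fundamental domain $F$, which simply uses that the covering action of $\G$ on $\widetilde{X}$ is free and properly discontinuous (any choice of representatives for $\G\backslash\widetilde{X}$ works, no measurability or continuity being required of $\phi$, since singular cochains are arbitrary set-theoretic functions on $S_n(\widetilde{X})$).
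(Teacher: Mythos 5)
Your proof is correct and follows essentially the same route as the paper: both choose a set of representatives (your fundamental domain $F$, the paper's $R$) for the $\G$-action on $\widetilde{X}$, label each vertex by the unique $g\in\G$ translating it into that domain, and dualize the induced map on simplices. The only difference is that you spell out the verifications of $\G$-equivariance and the chain-map property that the paper declares obvious.
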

\begin{proof}
 Let us choose a set of representatives $R$ for the action of $\G$ on $\widetilde{X}$. We consider the map $r_0\colon S_0(\widetilde{X})=\widetilde{X}\to \G$ taking a point $x$ to the unique
$g\in \G$ such that $x\in g(R)$. For $n\geq 1$, 
we define $r_n\colon S_n(\widetilde{X})\to \G^{n+1}$ by setting
$r_n(s)=(r_0(s (e_0)),\ldots,r_n(s(e_n)))$, where
$s (e_i)$ is the $i$-th vertex of $s$.
Finally, we extend $r_n$ to $C_n(\widetilde{X},R)$ by $R$-linearity
and define $r^n$ as the dual map of $r_n$. Since $r_n$ takes every single simplex
to a single $(n+1)$-tuple, it is readily seen that $r^n$ is  norm non-increasing
(in particular, it takes bounded cochains into bounded cochains). The fact that
$r^n$ is a $\G$-equivariant chain map is obvious. 
\end{proof}

\begin{cor}\label{classifyingmap}
Let $X$ be a
path connected topological space. Then, for every $n\in\matN$ there exists 
a natural norm non-increasing map
$$
H^n_b(\G,R)\to H^n_b(X,R)\ .
$$
\end{cor}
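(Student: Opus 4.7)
The plan is to deduce the corollary directly from Lemma~\ref{normnon}. First I would observe that the chain map $r^\bullet \colon C^\bullet_b(\G,R) \to C^\bullet_b(\widetilde{X},R)$ provided by Lemma~\ref{normnon} is $\G$-equivariant, so it restricts to a chain map between the subcomplexes of $\G$-invariants
$$r^\bullet_\G \colon C^\bullet_b(\G,R)^\G \to C^\bullet_b(\widetilde{X},R)^\G.$$
By definition, the cohomology of the left-hand side is $H^\bullet_b(\G,R)$ equipped with its canonical seminorm. By the isometric identification recalled at the beginning of Section~\ref{sing:inj:sec}, the cohomology of the right-hand side is canonically isometrically isomorphic to $H^\bullet_b(X,R)$.

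Next I would pass to cohomology: $r^\bullet_\G$ induces a well-defined linear map $H^n_b(\G,R) \to H^n_b(X,R)$ for every $n \in \matN$. Since $r^\bullet$ is norm non-increasing on cochains, so is its restriction to $\G$-invariants, and therefore so is the induced map on cohomology (the seminorm of a class is the infimum of the norms of its representatives, so norm non-increasing chain maps induce norm non-increasing maps in cohomology).

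Finally, naturality amounts to checking that the construction of $r^\bullet$ commutes (up to $\G$-homotopy) with maps induced by continuous maps $f\colon X \to Y$ inducing $\pi_1(X) \to \pi_1(Y)$; alternatively, and more cleanly, one observes that any two $\G$-equivariant chain maps $C^\bullet_b(\G,R) \to C^\bullet_b(\widetilde{X},R)$ extending ${\rm Id}_R$ induce the same map in cohomology, because $C^\bullet_b(\G,R)$ is a relatively injective strong resolution of $R$ (by Proposition~\ref{standard:ok}) and so Theorem~\ref{ext:bounded:thm} yields uniqueness up to $\G$-homotopy. No step here is really an obstacle: all the work has already been done in Lemma~\ref{normnon} and in the identification $C^\bullet_b(\widetilde{X},R)^\G \cong C^\bullet_b(X,R)$. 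The only mild subtlety is that one must be in a setting where $\widetilde{X}$ exists, which is implicit in Section~\ref{sing:inj:sec}.
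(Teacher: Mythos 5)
Your proof is correct and takes essentially the same approach as the paper: invoke Lemma~\ref{normnon} to produce the norm non-increasing $\G$-chain map $r^\bullet$, pass to $\G$-invariants using $C^\bullet_b(X,R)\cong C^\bullet_b(\widetilde{X},R)^\G$, and appeal to Theorem~\ref{ext:bounded:thm} for well-definedness/naturality. One small citation slip worth fixing: to apply Theorem~\ref{ext:bounded:thm} for uniqueness up to $\G$-homotopy, the hypothesis you need is that the \emph{source} $C^\bullet_b(\G,R)$ is a \emph{strong} resolution (Proposition~\ref{standard:ok}) \emph{and} that the \emph{target} modules $C^n_b(\widetilde{X},R)$ are \emph{relatively injective} (Lemma~\ref{sing:bd:inj}); you mention only the first and attribute "relatively injective" to the source, where it is not the relevant property.
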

\begin{proof}
 By Lemma~\ref{normnon}, there exists a norm non-increasing chain map 
$$
r^\bullet \colon C^\bullet_b(\G,R)\to C^\bullet_b(\widetilde{X},R)
$$
extending the identity
of $R$, so we may define the desired map $H^n_b(\G,R)\to H^n_b(X,R)$
to be equal to $H^n_b(r^\bullet)$. We know that every $C^n_b(\widetilde{X},R)$ is relatively injective
as an $R[\G]$-module, while $C^\bullet_b(\G,R)$ provides a strong resolution of $R$ over $R[\G]$, so
Theorem~\ref{ext:bounded:thm} ensures that $H^n_b(r^\bullet)$ does not depend on the choice
of the particular chain map $r^\bullet$ which extends the identity of $R$.
\end{proof}

A very natural question is whether the map provided by Corollary~\ref{classifyingmap}
is an (isometric) isomorphism. In the following sections we will see that this holds true when
$X$ is aspherical or when $R$ is equal to the field of real numbers. The fact that $H^n_b(\G,R)$ is isometrically isomorphic
to $H^n_b(X,R)$ when $X$ is aspherical is not surprising, and just generalizes the analogous result for classical cohomology:
the (bounded) cohomology of a group $\G$ may be defined as the (bounded) cohomology of any aspherical space having $\G$
as fundamental group; this is a well posed definition since any two such spaces are homotopically equivalent, and
(bounded) cohomology is a homotopy invariant. On the other hand, the fact that the isometric isomorphism
$H^n_b(\G,\R)\cong H^n_b(X,\R)$ holds even without the assumption that $X$ is aspherical is a very deep result
due to Gromov~\cite{Gromov}.

We begin by proving the following:

\begin{lemma}\label{cones}
Let $Y$ be a path connected topological space. If $Y$ is aspherical,
then the augmented complex $C^\bullet_b(Y,R)$ admits a contracting homotopy (so it is a strong resolution of $R$). If $\pi_i(Y)=0$ for every $i\leq n$, 
then there exists a partial
contracting homotopy
$$
\xymatrix{
R &  C^0_b(Y,R) \ar[l]_(.6){k^0} &
C^1_b(Y,R) \ar[l]_{k^1} &
 \quad \ldots \quad \ar[l]_(.4){k^2} & C^n_b(Y,R) \ar[l]_(.5){k^n}
& C^{n+1}_b(Y,R) \ar[l]_(.5){k^{n+1}}
}
$$
(where we require that the equality $\delta^{m-1}k^m+k^{m+1}\delta^m={\rm Id}_{C^m_b(Y,R)}$
holds for every $m\leq n$, and $\delta^{-1}=\varepsilon$ is the usual augmentation).
\end{lemma}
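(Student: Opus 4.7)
The plan is to build a chain-level ``cone'' operator $c \colon C_\bullet(Y, R) \to C_{\bullet+1}(Y, R)$ that sends every singular simplex to a single singular simplex and satisfies $\partial c + c \partial = \mathrm{Id}$. Its dual $k^{m+1}(\varphi)(s) := \varphi(c(s))$ will then automatically be a linear map $C^{m+1}_b(Y, R) \to C^m_b(Y, R)$ of norm at most $1$ (precisely because $c$ maps single simplices to single simplices, so at most one term appears when evaluating $c^{*}\varphi$ on any given simplex), and the desired contracting-homotopy identity $\delta^m k^{m+1} + k^{m+2} \delta^{m+1} = \mathrm{Id}$ is the formal dual of $\partial c + c \partial = \mathrm{Id}$. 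Fixing a basepoint $y_0 \in Y$, the augmentation is handled by setting $k^0(\varphi) = \varphi(y_0)$, which immediately gives $k^0 \circ \varepsilon = \mathrm{Id}_R$ and matches the formula at $m=0$ via $c(\emptyset) := y_0$.

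The cone $c$ is constructed inductively on dimension. In dimension $0$, path-connectedness ($\pi_0(Y) = 0$) lets us choose, for each $y \in Y$, a $1$-simplex $c(y) \colon \Delta^1 \to Y$ from $y_0$ to $y$. Suppose inductively that $c$ has been defined on every simplex of dimension strictly less than $m$ in such a way that the $0$-th face of $c(t)$ equals $t$ and the $j$-th face equals $c(\partial_{j-1} t)$ for $j \geq 1$. Given an $m$-simplex $s$, these prescriptions define a candidate map on $\partial \Delta^{m+1}$: using the simplicial identities $\partial_i \partial_j = \partial_{j-1} \partial_i$ for $i < j$, one verifies that the candidate maps on any two adjacent faces of $\partial \Delta^{m+1}$ agree on their common $(m-1)$-face, so they glue to a continuous map $\partial \Delta^{m+1} \cong S^m \to Y$. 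Since $\pi_m(Y) = 0$, this map extends to a map $c(s) \colon \Delta^{m+1} \to Y$; pick one such extension for each $s$ (via the axiom of choice — no coherence between different $s$'s is required) and extend $c$ to chains by $R$-linearity.

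A direct computation with the convention $\partial = \sum_i (-1)^i \partial_i$ now gives
\[
\partial c(s) \;=\; s \;+\; \sum_{i=1}^{m+1} (-1)^i c(\partial_{i-1} s) \;=\; s - c(\partial s),
\]
hence $\partial c + c \partial = \mathrm{Id}$ at chain level. Dualizing yields the contracting homotopy up to the degree allowed by the hypothesis on the homotopy groups: the aspherical hypothesis lets the induction run in every degree and produces a full homotopy, while the weaker hypothesis $\pi_i(Y) = 0$ for $i \leq n$ lets it run only through $m = n$, producing the partial homotopy $k^0, k^1, \ldots, k^{n+1}$ asserted by the statement.

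The only non-formal step is the consistency check on $\partial \Delta^{m+1}$: one must confirm that the prescribed faces of $c(s)$ glue to a well-defined continuous map, and this is the one spot where the simplicial identities genuinely intervene. Everything else is either the standard extension-by-null-homotopy principle ($\pi_m(Y) = 0$ lets one fill a sphere with a disk) or a purely formal dualization from chains to bounded cochains, where the norm bound $\|k^{m+1}\| \leq 1$ is automatic.
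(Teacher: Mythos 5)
Your proof is correct and follows essentially the same approach as the paper: both construct a chain-level cone operator $T_m$ (your $c$) sending each singular $m$-simplex to a single $(m+1)$-simplex with $0$-th face equal to the original simplex and the remaining faces determined inductively, extending over $\Delta^{m+1}$ via $\pi_m(Y)=0$, and then dualize to obtain a norm-one contracting homotopy on bounded cochains. Your write-up is slightly more explicit about the face prescriptions and the simplicial-identity consistency check on $\partial\Delta^{m+1}$, which the paper leaves as ``not difficult to show,'' but the underlying argument is the same.
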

 \begin{proof}
For every $-1\leq m\leq n$ we construct a map 
$T_m\colon C_{m}(Y,R)\to C_{m+1}(Y,R)$ sending every single simplex into a single simplex
in such a way that $d_{m+1}T_m+T_{m-1}d_m={\rm Id}_{C_m(Y,R)}$, where we understand that $C_{-1}(Y,R)=R$ and
$d_{0}\colon C_0(Y,R)\to R$ is the augmentation map
$d_0(\sum r_i y_i)=\sum r_i$. 
 Let us fix 
a point $y_0\in Y$, and
define $T_{-1}\colon R\to C_0(Y,R)$ by setting
$T_{-1}(r)=ry_0$. For $m\geq 0$ we define $T_m$ as the $R$-linear extension of 
a map $T_m\colon S_m(Y)\to S_{m+1}(Y)$ having the following property: for every
$s\in S_m(Y)$, the $0$-th vertex of $T_m(s)$ is equal to $y_0$, and 
has $s$ as opposite face. 
We proceed by induction, and 
suppose that $T_i$ has been defined for every $-1\leq i\leq m$.
Take $s\in S_m(Y)$. Then, using the fact that
$\pi_m(Y)=0$ and the properties of $T_{m-1}$, it is not difficult to show that a simplex 
$s'\in S_{m+1}(Y)$ exists which satisfies both 
the equality $d_{m+1}s'=s-T_{m-1}(d_m s)$ and the additional requirement described above.
We set $T_{m+1}(s)=s'$, and we are done.

Since $T_{m-1}$ sends every single simplex to a single simplex, its dual map
$k^{m}$ sends bounded cochains into bounded cochains, and has operator norm
equal to one. Therefore,  
the maps $k^m\colon C^m_b(Y,R)\to C_b^{m-1}(Y,R)$, $m\leq n+1$, provide the desired
(partial) contracting homotopy.
 \end{proof}

\section{The aspherical case}
We are now ready to show that, under the assumption that
$X$ is aspherical, 
the bounded cohomology of $X$ is isometrically isomorphic to the bounded cohomology
of $\G$ (both with integral and with real coefficients):

\begin{thm}\label{aspherical:thm}
 Let $X$ be an aspherical space, i.e.~a path connected topological
space such that $\pi_n(X)=0$ for every $n\geq 2$.
Then
$H^n_b(X,R)$ is isometrically isomorphic to $H^n_b(\G,R)$ for every $n\in\mathbb{N}$.
\end{thm}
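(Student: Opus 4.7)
The plan is to verify the hypotheses of Corollary~\ref{norm:cor} with coefficient module $V = R$ (the trivial normed $R[\G]$-module) and resolution $V^\bullet = C^\bullet_b(\widetilde{X},R)$, equipped with the obvious augmentation $\varepsilon\colon R\to C^0_b(\widetilde{X},R)$ sending $r$ to the constant cochain $r$. Once we know that this is a relatively injective strong resolution of $R$ and that the identity of $R$ admits a norm non-increasing $\G$-equivariant extension from the standard resolution $C^\bullet_b(\G,R)$, that corollary will furnish an isometric isomorphism $H^\bullet_b(\G,R)\cong H^\bullet\bigl(C^\bullet_b(\widetilde{X},R)^\G\bigr)$.

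First I would check relative injectivity and strongness. Relative injectivity of each $C^n_b(\widetilde{X},R)$ over $R[\G]$ is exactly Lemma~\ref{sing:bd:inj} and requires no further work. For strongness, observe that the universal cover $\widetilde{X}$ is path connected and simply connected, while $\pi_n(\widetilde{X})\cong \pi_n(X)=0$ for every $n\geq 2$ by the covering projection and the hypothesis of asphericity. Hence $\pi_n(\widetilde{X})=0$ for all $n\geq 1$, so Lemma~\ref{cones} applies to $\widetilde{X}$ in every degree and produces a contracting homotopy for the augmented complex $C^\bullet_b(\widetilde{X},R)$. Thus $(R,C^\bullet_b(\widetilde{X},R),\delta^\bullet)$ is a relatively injective strong resolution of $R$ over $R[\G]$.

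Next I would invoke Lemma~\ref{normnon} to produce a $\G$-equivariant chain map $r^\bullet\colon C^\bullet_b(\G,R)\to C^\bullet_b(\widetilde{X},R)$ which extends $\mathrm{Id}_R$ and satisfies $\|r^n\|\leq 1$ in every degree. All hypotheses of Corollary~\ref{norm:cor} are then satisfied, and it yields the desired isometric isomorphism
$$
H^\bullet_b(\G,R)\ \cong\ H^\bullet\bigl(C^\bullet_b(\widetilde{X},R)^\G\bigr).
$$
To finish, I would use the tautological isometric identification $C^\bullet_b(X,R)\cong C^\bullet_b(\widetilde{X},R)^\G$ (recalled at the beginning of Section~\ref{sing:inj:sec}), which is a $\G$-equivariant isometric isomorphism of complexes and hence induces an isometric isomorphism $H^n_b(X,R)\cong H^n\bigl(C^\bullet_b(\widetilde{X},R)^\G\bigr)$ in every degree. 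Composing, one obtains the claimed isometric isomorphism $H^n_b(X,R)\cong H^n_b(\G,R)$ for every $n\in\matN$.

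There is essentially no hard step here: all the real work has already been done in Chapter~4 (the machinery of relatively injective strong resolutions and the norm-preserving consequence of Corollary~\ref{norm:cor}) and in the preceding lemmas of this section (relative injectivity of $C^n_b(\widetilde{X},R)$ in Lemma~\ref{sing:bd:inj}, existence of a contracting homotopy for weakly contractible targets in Lemma~\ref{cones}, and the norm non-increasing comparison map in Lemma~\ref{normnon}). The only point that genuinely exploits asphericity is the passage from $\pi_n(X)=0$, $n\geq 2$, to the vanishing of all positive homotopy groups of $\widetilde{X}$, which is what allows the full (rather than partial) contracting homotopy of Lemma~\ref{cones} to be used; this is exactly the step that will fail in the general, non-aspherical setting and will require Ivanov's much deeper argument in Section~\ref{ivanov:sec}.
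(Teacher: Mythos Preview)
Your proposal is correct and follows essentially the same approach as the paper's proof: Lemmas~\ref{sing:bd:inj} and~\ref{cones} establish that $C^\bullet_b(\widetilde{X},R)$ is a relatively injective strong resolution of $R$, and then Corollary~\ref{norm:cor} together with the norm non-increasing chain map of Lemma~\ref{normnon} yields the isometric isomorphism. Your write-up is slightly more explicit about why asphericity of $X$ forces $\pi_n(\widetilde{X})=0$ for all $n\geq 1$, but the argument is otherwise identical.
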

\begin{proof}
Lemmas~\ref{sing:bd:inj} and~\ref{cones}
imply that 
the complex
$$
0\tto{} R\tto{\varepsilon} C^0_b(\widetilde{X},R)\tto{\delta^0}
C^1_b(\widetilde{X},R)\tto{\delta^1}\ldots
$$
provides a relatively injective strong resolution of $R$ as a trivial $R[\G]$-module,
so $H^n_b(X,R)$ is canonically isomorphic to $H^n_b(\G,R)$ for every $n\in\mathbb{N}$. 
The fact the the isomorphism $H^n_b(X,R)\cong H^n_b(\G,R)$ is isometric is a consequence
of Corollary~\ref{norm:cor} and Lemma~\ref{normnon}.
\end{proof}

\begin{rem}\label{wedge}
 Let $S^1\vee S^1$ be the wedge of two copies of the circle. Then
Theorem~\ref{aspherical:thm} implies that $H^2_b(S^1\vee S^1,\R)\cong H^2_b(F_2,\R)$ is infinite-dimensional,
while $H^2_b(S^1,\R)\cong H^2_b(\mathbb{Z,\R})=0$. This shows that bounded cohomology of spaces
cannot satisfy any Mayer-Vietoris principle.
\end{rem}

\section{Ivanov's contracting homotopy}\label{ivanov:sec}
We now come back to the general case, i.e.~we do not assume that $X$
is aspherical. In order to show that $H^n_b(X,R)$ is isometrically isomorphic
to $H^n_b(\G,R)$ we need to prove that the complex of singular bounded cochains 
on $\wdtX$ provides
a strong resolution of $R$. In the case when $R=\mathbb{Z}$, this is false in general,
since the complex $C^\bullet_b(\wdtX,\mathbb{Z})$ may even be  non-exact (see Remark~\ref{znotwork}). On the other hand, a deep result by Ivanov ensures that 
$C^\bullet_b(\wdtX,\R)$ indeed provides  a strong resolution of $\R$. 

Ivanov's argument makes use of sophisticated techniques 
from algebraic topology, which work under the assumption that $X$, whence
$\wdtX$, is a countable CW-complex (but see Remark~\ref{bu}). We begin with the following:

\begin{lemma}[\cite{Ivanov}, Theorem 2.2]\label{principal}
Let $p\colon Z\to Y$ be a principal $H$-bundle, where $H$ is an abelian topological group.
Then there exists a chain map $A^\bullet\colon C^\bullet_b(Z,\R)\to C^\bullet_b(Y,\R)$
such that $\|A^n\|\leq 1$ for every $n\in\mathbb{N}$ and $A^\bullet\circ p^\bullet$ is the identity
of $C^\bullet_b(Y,\R)$.
\end{lemma}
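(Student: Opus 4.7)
The plan is to construct $A^\bullet$ by a fiberwise averaging procedure, exploiting that the abelian topological group $H$ is amenable even when regarded abstractly. First I would fix an invariant mean $m$ on $\ell^\infty(H)$, viewing $H$ as a discrete abelian group (hence amenable by Theorem~\ref{abelian:amenable}). Then for each singular simplex $\sigma\colon\Delta^n\to Y$ I would choose a lift $\widetilde{\sigma}\colon\Delta^n\to Z$ with $p\circ\widetilde{\sigma}=\sigma$; such a lift exists because $\Delta^n$ is contractible and the principal bundle is locally trivial, so the pullback $\sigma^{*}Z$ is trivial and admits a section. Any other lift of $\sigma$ is of the form $h\cdot\widetilde{\sigma}$ for a unique $h\in H$.

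With these choices, define
\[
A^n(\varphi)(\sigma)=m\bigl(h\mapsto \varphi(h\cdot\widetilde{\sigma})\bigr).
\]
The function $h\mapsto\varphi(h\cdot\widetilde{\sigma})$ is bounded by $\|\varphi\|_\infty$ and hence lies in $\ell^\infty(H)$; this immediately yields $\|A^n\|\leq 1$. Independence of $A^n(\varphi)(\sigma)$ from the specific lift $\widetilde{\sigma}$ follows from the invariance of $m$, since replacing $\widetilde{\sigma}$ by $h_0\cdot\widetilde{\sigma}$ only reparametrizes the argument of $m$ by translation by $h_0$ (and abelianness of $H$ ensures that left- and right-invariance coincide). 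The retraction property $A^n\circ p^n=\mathrm{Id}$ is then immediate: for $\psi\in C^n_b(Y,\R)$ the function $h\mapsto p^n(\psi)(h\cdot\widetilde{\sigma})=\psi(\sigma)$ is constant in $h$, so its mean equals $\psi(\sigma)$.

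The remaining point, and the only one that genuinely uses invariance of $m$ in a nontrivial way, is that $A^\bullet$ commutes with differentials. The key observation is that if $\widetilde{\sigma}\colon\Delta^{n+1}\to Z$ is a chosen lift of $\sigma\colon\Delta^{n+1}\to Y$, then $\widetilde{\sigma}\circ\iota_i$ is automatically a lift of the $i$-th face $\partial_i\sigma$, where $\iota_i\colon\Delta^n\to\Delta^{n+1}$ denotes the $i$-th face inclusion. Since $A^n(\varphi)(\partial_i\sigma)$ does not depend on which lift of $\partial_i\sigma$ is chosen, one may use these compatible lifts; then linearity of $m$ together with the identity $(h\cdot\widetilde{\sigma})\circ\iota_i=h\cdot(\widetilde{\sigma}\circ\iota_i)$ shows that both $\delta_Y A^n\varphi(\sigma)$ and $A^{n+1}\delta_Z\varphi(\sigma)$ equal $\sum_i(-1)^i\,m\bigl(h\mapsto\varphi(h\cdot(\widetilde{\sigma}\circ\iota_i))\bigr)$. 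Overall the main obstacle is mild: once lifts exist and an invariant mean on $H$ is fixed, the construction is essentially forced, and one can afford to forget the topology on $H$ entirely, so no measurability issues arise.
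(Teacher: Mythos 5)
The central step of your construction breaks down at the claim ``Any other lift of $\sigma$ is of the form $h\cdot\widetilde{\sigma}$ for a unique $h\in H$.'' This is not true: if $\widetilde{\sigma},\widetilde{\sigma}'\colon\Delta^n\to Z$ both lift $\sigma$, then for each $x\in\Delta^n$ there is a unique $f(x)\in H$ with $\widetilde{\sigma}'(x)=f(x)\cdot\widetilde{\sigma}(x)$, and the resulting function $f\colon\Delta^n\to H$ is continuous but has no reason to be constant. The set of lifts of $\sigma$ is therefore a torsor over the group $K_n=S_n(H)$ of continuous maps $\Delta^n\to H$ (under pointwise multiplication), not over $H$ itself. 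Your averaging over $h\in H$ only sweeps out a single $H$-orbit inside the full $K_n$-orbit, so the quantity $m\bigl(h\mapsto\varphi(h\cdot\widetilde{\sigma})\bigr)$ genuinely depends on the base lift $\widetilde{\sigma}$ up to constants, and $A^n(\varphi)(\sigma)$ is not well-defined.

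This is not a cosmetic issue: the chain-map verification you give crucially invokes independence of the value on the choice of lift in order to replace the separately chosen lift of $\partial_i\sigma$ by $\widetilde{\sigma}\circ\iota_i$, and that independence fails once the lift ambiguity is a non-constant function. The fix, which is what the paper does, is to average over the correct group $K_n$ (which is abelian, hence amenable) rather than over $H$. But once one works with means $\mu_n$ on $K_n$ for varying $n$, a new problem appears that your argument does not anticipate: in order for the resulting maps $A^n$ to commute with the differentials, the means $\mu_n$ must be chosen \emph{compatibly} across degrees, in the sense that $\mu_n$ is $\mathfrak{S}_{n+1}$-invariant and pushes forward to $\mu_{n-1}$ under every face map $K_n\to K_{n-1}$. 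The paper constructs such a compatible sequence via Banach--Alaoglu compactness of the spaces of $\mathfrak{S}_{n+1}$-invariant $K_n$-invariant means. Your final sentence (``one can afford to forget the topology on $H$ entirely'') is therefore misleading: the topology matters precisely because the relevant amenable group is $C(\Delta^n,H)$ rather than $H$, and the compatibility constraint across $n$ is a real obstacle that requires a separate argument.
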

\begin{proof}
For $\varphi\in C^n_b(Z,\R)$, $s\in S_n(Y)$, the value
$A(\varphi)(s)$ 
is obtained by suitably averaging the value of $\varphi(s')$, where $s'$ ranges
over the set
$$
P_s=\{s'\in S_n(Z)\, |\, p\circ s'=s\}\ .
$$

More precisely, let $K_n=S_n(H)$ be the space of continuous functions from the standard $n$-simplex
to $H$, and define on $K_n$ the operation given by pointwise multiplication
of functions. With this structure, $K_n$ is an abelian group,
so it admits an invariant mean $\mu_n$. Observe that the permutation
group $\mathfrak{S}_{n+1}$ acts on $\Delta^{n}$ via affine
transformations. This action induces an action on $K_n$, whence
on $\ell^\infty(K_n)$, and on the space of means on $K_n$,
so there is an obvious notion of 
$\mathfrak{S}_{n+1}$-invariant mean on $K_n$. By averaging over the action
of $\mathfrak{S}_{n+1}$, the space of $K_n$-invariant means may be retracted
onto the space $\mathcal{M}_n$ of $\mathfrak{S}_{n+1}$-invariant $K_n$-invariant means
on $K_n$, which, in particular, is non-empty. Finally, observe that
any affine identification of $\Delta^{n-1}$
with a face of $\Delta^n$ induces a map $\mathcal{M}_n\to \mathcal{M}_{n-1}$.
Since elements of $\mathcal{M}_n$ are $\mathfrak{S}_{n+1}$-invariant, this map
does not depend on the chosen identification. Therefore, we get a sequence of maps
$$
\xymatrix{
\mathcal{M}_0 & \mathcal{M}_1 \ar[l] & \mathcal{M}_2 \ar[l]
& \mathcal{M}_3 \ar[l] & \ar[l] \ldots
}
$$
Recall now that the Banach-Alaouglu Theorem implies that
every $\mathcal{M}_n$ is compact (with respect to the weak-$*$ topology on $\ell^\infty(K_n)'$). 
This easily implies that there exists a
sequence $\{\mu_n\}$ of means such that $\mu_n\in\mathcal{M}_n$ and
$\mu_n\mapsto \mu_{n-1}$ under the map $\mathcal{M}_n\to\mathcal{M}_{n-1}$.
We say that such a sequence is \emph{compatible}.

Let us now fix 
$s\in S_n(Y)$, and observe that there is a bijection between
$P_s$ and $K_n$. This bijection is uniquely determined up to 
the choice of an element in $P_s$, i.e.~up to left multiplication by an element of $K_n$.
In particular, if $f\in\ell^\infty(P_s)$, and $\mu$ is a left-invariant mean on $K_n$,
then there is a well-defined value $\mu(f)$.

Let us now choose a compatible sequence of means $\{\mu_n\}_{n\in\mathbb{N}}$.
We define
the operator $A^n$ by setting
$$
A^n(\varphi)(s)=\mu_n (\varphi|_{P_s})\qquad \text{for every}\
\varphi\in C^n_b(Z),\ s\in S_n(Y)\ .
$$
Since the sequence $\{\mu_n\}$ is compatible, the sequence of maps $A^\bullet$
is a chain map. The inequality
$\|A^n\|\leq 1$ is obvious, and the fact that $A^\bullet\circ p^\bullet$ is the identity
of $C^\bullet_b(Y,\R)$ may be deduced from the behaviour of means on constant functions.
\end{proof}

\begin{thm}[\cite{Ivanov}]\label{ivanov:thm}
 Let $X$ be a path connected countable CW-complex with universal covering $\wdtX$. Then
the (augmented) complex $C^\bullet_b(\wdtX,\R)$ provides a relatively injective
strong resolution of $\R$.
\end{thm}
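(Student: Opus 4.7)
Lemma~\ref{sing:bd:inj} already establishes that each module $C^n_b(\widetilde{X},\R)$ is relatively injective, so the entire content of the theorem is the construction of a contracting homotopy $k^\bullet$ for the augmented complex with $\|k^i\|\leq 1$ in every degree. If $\widetilde{X}$ were aspherical, Lemma~\ref{cones} would apply directly. The obstacle is that $\widetilde{X}$ may carry nontrivial higher homotopy and, as noted in the text for the integral case, the augmented complex $C^\bullet_b(\widetilde{X},R)$ need not even be exact in general. The strategy is to approximate $\widetilde{X}$ by arbitrarily highly connected spaces sitting over it via principal bundles with abelian topological group fibers, and to transfer partial contracting homotopies back to $\widetilde{X}$ using Lemma~\ref{principal}.

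\textbf{The Whitehead tower.} Since $\widetilde{X}$ is a simply connected countable CW-complex, standard obstruction theory produces its Whitehead tower
\[
\widetilde{X}=\widetilde{X}\langle 1\rangle\xleftarrow{\, q_2\,}\widetilde{X}\langle 2\rangle\xleftarrow{\, q_3\,}\widetilde{X}\langle 3\rangle\longleftarrow\cdots,
\]
in which $\widetilde{X}\langle n\rangle$ is $n$-connected and each $q_n$ is a principal fibration whose fibre is the Eilenberg-MacLane space $K(\pi_n(\widetilde{X}),n-1)$. Because $\pi_n(\widetilde{X})$ is abelian for $n\geq 2$, each such fibre may be realized as an abelian topological group (for instance, by Milnor's simplicial construction). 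Thus Lemma~\ref{principal} applies to every $q_n$ and yields a norm non-increasing chain map
\[
A_n^\bullet\colon C^\bullet_b(\widetilde{X}\langle n\rangle,\R)\longrightarrow C^\bullet_b(\widetilde{X}\langle n-1\rangle,\R)
\]
with $A_n^\bullet\circ q_n^\bullet=\mathrm{Id}$. Composing, for each $N\geq 1$ one obtains norm non-increasing chain maps
\[
\tilde{q}_N^\bullet=q_N^\bullet\circ\cdots\circ q_2^\bullet\colon C^\bullet_b(\widetilde{X},\R)\longrightarrow C^\bullet_b(\widetilde{X}\langle N\rangle,\R),
\]
and $\tilde{A}_N^\bullet=A_2^\bullet\circ\cdots\circ A_N^\bullet$ in the opposite direction, satisfying $\tilde{A}_N^\bullet\circ\tilde{q}_N^\bullet=\mathrm{Id}_{C^\bullet_b(\widetilde{X},\R)}$.

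\textbf{Transfer of partial contracting homotopies.} Because $\widetilde{X}\langle N\rangle$ is $N$-connected, Lemma~\ref{cones} provides a partial contracting homotopy $\kappa_N^\bullet$ of the augmented complex $C^\bullet_b(\widetilde{X}\langle N\rangle,\R)$ up to degree $N+1$, with $\|\kappa_N^j\|\leq 1$. Setting
\[
h_N^j=\tilde{A}_N^{j-1}\circ\kappa_N^j\circ\tilde{q}_N^j,\qquad j\leq N+1,
\]
and exploiting the chain map property of $\tilde{A}_N^\bullet$ and $\tilde{q}_N^\bullet$ together with the identity $\tilde{A}_N\tilde{q}_N=\mathrm{Id}$, a direct computation shows that $h_N^\bullet$ is a partial contracting homotopy of $C^\bullet_b(\widetilde{X},\R)$ up to degree $N+1$ with $\|h_N^j\|\leq 1$. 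In particular, taking $N$ arbitrarily large, the augmented complex $C^\bullet_b(\widetilde{X},\R)$ is acyclic in every degree.

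\textbf{Assembly into a full contracting homotopy.} The remaining task is to produce from the family $\{h_N^\bullet\}$ a single sequence of norm non-increasing maps $k^\bullet$ satisfying the full contracting homotopy identity. One proceeds inductively on degree: having fixed norm non-increasing $k^0,\dots,k^i$ verifying the identity in degrees $<i$, one must choose $k^{i+1}\colon C^{i+1}_b(\widetilde{X},\R)\to C^i_b(\widetilde{X},\R)$ with $\|k^{i+1}\|\leq 1$ and $k^{i+1}\delta^i=\mathrm{Id}-\delta^{i-1}k^i$ on $C^i_b(\widetilde{X},\R)$. This is exactly the main obstacle of the proof: the equation only constrains $k^{i+1}$ on the image of $\delta^i$, and one needs the extension to be norm non-increasing. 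To do so, pick $N\geq i+1$ and modify $h_N^{i+1}$ by the correction $(k^i-h_N^i)\circ\sigma$, where $\sigma$ is a norm-one section supplied by the relative injectivity of $C^i_b(\widetilde{X},\R)$ (which, being $C^i_b(\widetilde{X}\langle N\rangle,\R)^\G$-retract in the tower, enjoys the norm-controlled extension property). The resulting $k^{i+1}$ meets the identity in degree $i$ with norm at most one, and the induction completes the construction of the required strong resolution.
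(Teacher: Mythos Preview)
Your construction of the tower and of the partial contracting homotopies $h_N^\bullet$ via Lemma~\ref{cones} and Lemma~\ref{principal} is exactly the paper's approach. The gap is in your ``Assembly'' paragraph, which is precisely where the real content of Ivanov's argument lies.

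First, your proposed correction does not typecheck: $h_N^{i+1}$ is a map $C^{i+1}_b\to C^i_b$, while $(k^i-h_N^i)\circ\sigma$ lands in $C^{i-1}_b$ whatever $\sigma$ is, so the two cannot be added. More seriously, even after you repair the formula, there is no mechanism for controlling the norm of the corrected map: you would be adding a term of norm $\leq 1$ to a correction term whose norm you do not bound, and there is no reason the sum should have norm $\leq 1$. Relative injectivity of $C^i_b(\widetilde X,\R)$ gives you norm-controlled \emph{extensions} of maps \emph{into} that module along strongly injective $\G$-maps; it does not produce the kind of section you invoke, and in any case no $\G$-equivariance is in play here.

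The paper handles this point differently. Rather than trying to patch together incompatible $h_N^\bullet$'s after the fact, one arranges the cone constructions of Lemma~\ref{cones} \emph{coherently across the tower}: the basepoints and the cone-filling choices in $\widetilde X\langle N+1\rangle$ are made to lift those in $\widetilde X\langle N\rangle$, so that $A_N^{i-1}\circ\kappa_{N+1}^i\circ q_N^i=\kappa_N^i$ for $i\leq N+1$. Once this compatibility holds, the formula $k^i=\tilde A_N^{i-1}\circ\kappa_N^i\circ\tilde q_N^i$ is independent of the choice of $N\geq i-1$, and one gets a single norm non-increasing contracting homotopy. That coherent choice is possible precisely because the fibre of $q_N$ is a $K(\pi_N(\widetilde X),N-1)$; the paper refers to Ivanov's original article for this verification.
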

\begin{proof}
 We only sketch Ivanov's argument, referring the reader to~\cite{Ivanov}
for full details.
Building on results by Dold and Thom~\cite{DT},
Ivanov constructs an infinite 
tower of bundles
$$
\xymatrix{
X_1 & X_2 \ar[l]_(.45){p_1} & X_3 \ar[l]_{p_2} & \ldots \ar[l] & X_n \ar[l] & \ldots \ar[l]
}
$$
where $X_1=\wdtX$, $\pi_i(X_m)=0$ for every $i\leq m$, $\pi_i(X_m)=\pi_i(X)$ for every $i>m$ and each map
$p_m\colon X_{m+1}\to X_m$ is a principal $H_m$-bundle for some topological connected abelian group
$H_m$, which has the homotopy type of a $K(\pi_{m+1}(X),m)$. 

By Lemma~\ref{cones}, for every $n$ we may construct a partial contracting homotopy
$$
\xymatrix{
\R  &  C^0_b(X_n,\R) \ar[l]_(.6){k^0_n} &
C^1_b(X_n,\R) \ar[l]_{k^1_n} &
\quad  \ldots\quad  \ar[l]_(.4){k^2_n} & C^{n+1}_b(X_n,\R) \ar[l]_(.55){k^{n+1}_n}\ .
}
$$

Moreover, Lemma~\ref{principal} implies that for every $m\in\mathbb{N}$ the chain map $p^\bullet_m\colon C_b^\bullet (X_m,\R)\to C_b^\bullet(X_{m+1},\R)$ admits a left inverse chain map
$A_m^\bullet\colon C_b^\bullet (X_{m+1},\R)\to C_b^\bullet(X_{m},\R)$ which is norm non-increasing. This allows us to define 
a partial contracting homotopy
$$
\xymatrix{
\R  &  C^0_b(X,\R) \ar[l]_(.6){k^0} &
C^1_b(X,\R) \ar[l]_{k^1} &
 \quad \ldots \quad \ar[l]_(.4){k^2} & C^{n+1}_b(X,\R) \ar[l]_(.55){k^{n+1}}\ & \ldots \ar[l] 
}
$$
via the formula
$$
k^i=A^{i-1}_1 \circ \ldots \circ A^{i-1}_{n-1} \circ k_{n}^i \circ p_{n-1}^i   \circ \ldots \circ p_2^i \circ  p_1^i\qquad {\rm for\ every\ }i\leq n+1\ .
$$
The existence of such a partial homotopy is sufficient for all our applications.
In order to construct a complete contracting homotopy 
one should check that the definition of $k^i$ does not depend on $n$. This is not automatically true, and it is equivalent to the fact that, for $i\leq n+1$,
the equality $A^i_n\circ k^{i}_{n+1}\circ p^{i}_n=k^i_n$ holds. In order to get this, one has to coherently choose both the basepoints and the cones involved in the contruction providing the 
(partial) contracting homotopy $k_n^\bullet$ (see Lemma~\ref{cones}). 
The fact that this  can be achieved ultimately depends on the fact that the fibers of the bundle
$X_{n+1}\to X_n$ have the homotopy type of a $K(\pi_{n+1}(X),n)$. 
We refer the reader to~\cite{Ivanov} for the details.
\end{proof}

\section{Gromov's Theorem}
The discussion in the previous section implies the following:

\begin{thm}[\cite{Gromov,Ivanov}]\label{gro-iva:thm}
 Let $X$ be a countable CW-complex. Then $H^n_b(X,\R)$ is canonically isometrically
isomorphic 
to $H^n_b(\G,\R)$. An explicit isometric isomorphism is induced by the map
$r^\bullet \colon C^\bullet_b(\G,\R)^\G\to C^\bullet_b(\widetilde{X},\R)^\G= C^\bullet_b(X,\R)$ described in Lemma~\ref{normnon}.
\end{thm}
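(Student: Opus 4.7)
The plan is to assemble the isomorphism by combining the relative injectivity of the bounded singular cochain modules on $\widetilde{X}$, Ivanov's strong resolution result, and the extremality of the standard resolution with respect to the canonical seminorm.

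First, I would identify $C^\bullet_b(X,\R)$ isometrically with the $\G$-invariants $C^\bullet_b(\widetilde{X},\R)^\G$, so that $H^n_b(X,\R)$ is the cohomology of the complex of $\G$-invariants of $C^\bullet_b(\widetilde{X},\R)$. Next, I would invoke Lemma~\ref{sing:bd:inj} to conclude that every $C^n_b(\widetilde{X},\R)$ is a relatively injective normed $\R[\G]$-module, and Theorem~\ref{ivanov:thm} to conclude that the augmented complex
$$0\tto{} \R\tto{\varepsilon} C^0_b(\widetilde{X},\R)\tto{\delta^0} C^1_b(\widetilde{X},\R)\tto{\delta^1}\ldots$$
is a relatively injective strong resolution of the trivial $\R[\G]$-module $\R$. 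By Corollary~\ref{fund1:bounded:cor}, this already provides a canonical isomorphism
$$H^n_b(X,\R)\ \cong\ H^n(C^\bullet_b(\widetilde{X},\R)^\G)\ \cong\ H^n_b(\G,\R),$$
which is bi-Lipschitz with respect to the canonical seminorms.

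To upgrade the bi-Lipschitz isomorphism to an isometric one, I would appeal to the extremality statement of Corollary~\ref{norm:cor}: it suffices to exhibit a $\G$-chain map $C^\bullet_b(\G,\R)\to C^\bullet_b(\widetilde{X},\R)$ extending the identity of $\R$ whose components are norm non-increasing. But this is precisely what Lemma~\ref{normnon} provides: the chain map $r^\bullet\colon C^\bullet_b(\G,\R)\to C^\bullet_b(\widetilde{X},\R)$ constructed there is $\G$-equivariant, extends the identity of $\R$, and has $\|r^n\|\leq 1$ for every $n$. Passing to $\G$-invariants and then to cohomology, $r^\bullet$ therefore induces a norm non-increasing map $H^n_b(\G,\R)\to H^n_b(X,\R)$; by Corollary~\ref{norm:cor} applied to the resolution $C^\bullet_b(\widetilde{X},\R)$, this induced map is an isometric isomorphism.

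The only genuine obstacle in this plan is Ivanov's strong resolution statement (Theorem~\ref{ivanov:thm}), which depends crucially on the countability and CW-structure of $X$ (via the Dold--Thom tower of principal bundles with abelian fibers and the averaging procedure of Lemma~\ref{principal}); once that is available, everything else is a formal application of the abstract machinery of Chapter~4. In particular, without Ivanov's result one could at best produce the norm non-increasing comparison map $H^n_b(\G,\R)\to H^n_b(X,\R)$ of Corollary~\ref{classifyingmap}, but would have no way of inverting it. I would therefore treat Ivanov's theorem as a black box and spend no time re-deriving it, focusing the exposition instead on how relative injectivity plus the strong resolution plus the norm non-increasing extension $r^\bullet$ combine, via Corollaries~\ref{fund1:bounded:cor} and~\ref{norm:cor}, to yield the isometric isomorphism as claimed.
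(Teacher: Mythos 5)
Your proposal follows the paper's own proof almost verbatim: you identify $C^\bullet_b(X,\R)$ with $C^\bullet_b(\widetilde{X},\R)^\G$, combine Lemma~\ref{sing:bd:inj} with Theorem~\ref{ivanov:thm} to obtain a relatively injective strong resolution of $\R$, derive the canonical bi-Lipschitz isomorphism from Corollary~\ref{fund1:bounded:cor}, and upgrade it to an isometry via Corollary~\ref{norm:cor} and the norm non-increasing chain map $r^\bullet$ of Lemma~\ref{normnon}. This is precisely the argument given in the text, so your reconstruction is correct and identical in approach.
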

\begin{proof}
Observe that, if $X$ is a countable CW-complex, then $\G=\pi_1(X)$
is countable, so $\widetilde{X}$ is also a countable CW-complex. Therefore,
 Lemma~\ref{sing:bd:inj} and Theorem~\ref{ivanov:thm}
imply that 
the complex
$$
0\tto{} \R\tto{\varepsilon} C^0_b(\widetilde{X},\R)\tto{\delta^0}
C^1_b(\widetilde{X},\R)\tto{\delta^1}\ldots
$$
provides a relatively injective strong resolution of $\R$ as a trivial $\R[\G]$-module,
so $H^n_b(X,\R)$ is canonically isomorphic to $H^n_b(\G,\R)$ for every $n\in\mathbb{N}$. 
The fact the the isomorphism $H^n_b(X,\R)\cong H^n_b(\G,\R)$ induced by $r^\bullet$ is isometric is a consequence
of Corollary~\ref{norm:cor} and Lemma~\ref{normnon}.
\end{proof}

\begin{rem}\label{bu}
Theo B\"ulher recently proved 
that Ivanov's argument may be generalized to show that $C^\bullet_b(Y,\R)$ is a strong
resolution of $\R$ whenever $Y$ is a simply connected topological space~\cite{Theo}.
As a consequence, Theorem~\ref{gro-iva:thm} holds even when the assumption that $X$ is a countable CW-complex is replaced by the weaker condition that $X$ is path connected
and admits a universal covering.
\end{rem}

\begin{cor}[Gromov Mapping Theorem]\label{mapping:cor}
 Let $X,Y$ be path connected countable CW-complexes and let $f\colon X\to Y$
be a continuous map inducing an epimorphism $f_*\colon \pi_1(X)\to \pi_1(Y)$ with amenable kernel. Then
$$
H^n_b(f)\colon H^n_b(Y,\R)\to H^n_b(X,\R)
$$
is an isometric isomorphism for every $n\in\mathbb{N}$.
\end{cor}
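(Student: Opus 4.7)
The plan is to reduce the statement to its group-theoretic analogue, namely Corollary~\ref{map:am:cor}, by using the identifications between bounded cohomology of spaces and of their fundamental groups supplied by Theorem~\ref{gro-iva:thm}. Since $X$ and $Y$ are countable CW-complexes, that theorem provides canonical isometric isomorphisms $H^n_b(X,\R)\cong H^n_b(\pi_1(X),\R)$ and $H^n_b(Y,\R)\cong H^n_b(\pi_1(Y),\R)$. On the other hand, the hypothesis that $f_\ast\colon \pi_1(X)\to\pi_1(Y)$ is surjective with amenable kernel, together with Corollary~\ref{map:am:cor}, yields that $H^n_b(f_\ast)\colon H^n_b(\pi_1(Y),\R)\to H^n_b(\pi_1(X),\R)$ is an isometric isomorphism for every $n\in\matN$. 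It therefore suffices to verify that the square
$$
\xymatrix{
H^n_b(Y,\R) \ar[r]^{H^n_b(f)} \ar[d]_{\cong} & H^n_b(X,\R) \ar[d]^{\cong}\\
H^n_b(\pi_1(Y),\R) \ar[r]^{H^n_b(f_\ast)} & H^n_b(\pi_1(X),\R)
}
$$
commutes, where the vertical arrows are the isometric isomorphisms of Theorem~\ref{gro-iva:thm}.

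To check commutativity, I would lift $f$ to a continuous equivariant map $\widetilde{f}\colon \widetilde{X}\to \widetilde{Y}$ between universal covers. Composition with $\widetilde{f}$ induces a chain map $\widetilde{f}^\bullet\colon C^\bullet_b(\widetilde{Y},\R)\to C^\bullet_b(\widetilde{X},\R)$ which is $\pi_1(X)$-equivariant once $\pi_1(X)$ is made to act on $C^\bullet_b(\widetilde{Y},\R)$ via $f_\ast$; after restricting to $\pi_1(X)$-invariants (which, by surjectivity of $f_\ast$, coincide on the left with the $\pi_1(Y)$-invariants and hence with $C^\bullet_b(Y,\R)$) one recovers exactly the map that induces $H^n_b(f)$. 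Similarly, composition with $f_\ast$ gives a $\pi_1(X)$-equivariant chain map $f_\ast^\bullet\colon C^\bullet_b(\pi_1(Y),\R)\to C^\bullet_b(\pi_1(X),\R)$ whose restriction to invariants realizes $H^n_b(f_\ast)$.

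Both complexes $C^\bullet_b(\widetilde{Y},\R)$ and $C^\bullet_b(\pi_1(Y),\R)$, viewed as augmented complexes of $\R[\pi_1(X)]$-modules via $f_\ast$, are relatively injective strong resolutions of the trivial module $\R$: relative injectivity follows from Lemma~\ref{sing:bd:inj} (for the singular complex) and Lemma~\ref{relinj:standard} (for the standard complex), the strong resolution property follows from Theorem~\ref{ivanov:thm} together with the fact that a $\pi_1(Y)$-equivariant contracting homotopy is a fortiori $\pi_1(X)$-equivariant, and similarly for the standard resolution via Proposition~\ref{standard:ok}. The two compositions $\widetilde{f}^\bullet\circ r^\bullet_Y$ and $r^\bullet_X\circ f_\ast^\bullet$ are then both $\pi_1(X)$-equivariant chain maps between these two resolutions that extend the identity of $\R$, so by the uniqueness part of Theorem~\ref{ext:bounded:thm} they are $\pi_1(X)$-homotopic and consequently induce the same map after passing to invariants and cohomology. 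This yields the desired commutativity and, combined with the first two ingredients, concludes the proof.

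The only real subtlety is the naturality square: one must be careful that $C^\bullet_b(\widetilde{Y},\R)$ genuinely remains a relatively injective strong resolution when its $\pi_1(Y)$-action is restricted via $f_\ast$ to a $\pi_1(X)$-action, and that the $\pi_1(Y)$- and $\pi_1(X)$-invariants agree; both facts use the surjectivity of $f_\ast$. Once this is in place, Theorem~\ref{ext:bounded:thm} delivers the commutativity essentially for free, and there is no further obstacle.
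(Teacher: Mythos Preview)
Your approach is essentially the same as the paper's: reduce to Corollary~\ref{map:am:cor} via the naturality square linking Theorem~\ref{gro-iva:thm} with the group-level map $H^n_b(f_\ast)$. The paper simply asserts that ``the explicit description of the isomorphism'' makes the square commute, whereas you spell out the homological-algebra justification via Theorem~\ref{ext:bounded:thm}; this is a welcome elaboration.

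One small cleanup: for the uniqueness argument you only need the \emph{source} $C^\bullet_b(\pi_1(Y),\R)$ (viewed as a $\pi_1(X)$-complex) to be a strong resolution and the \emph{target} $C^\bullet_b(\widetilde{X},\R)$ to have relatively injective terms. Your additional claim that $C^\bullet_b(\widetilde{Y},\R)$ is relatively injective as a $\pi_1(X)$-module does not follow directly from Lemma~\ref{sing:bd:inj}, since $\pi_1(X)$ acts on $\widetilde{Y}$ through $f_\ast$ and hence not freely (the amenable kernel acts trivially); fortunately this claim is not needed. Also, contracting homotopies are not required to be equivariant at all, so the phrase ``a $\pi_1(Y)$-equivariant contracting homotopy is a fortiori $\pi_1(X)$-equivariant'' is superfluous---the $\R$-linear contracting homotopy of Proposition~\ref{standard:ok} already does the job.
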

\begin{proof}
The explicit description of the isomorphism between the bounded cohomology
of a space and the one of its fundamental group implies that the diagram
$$
\xymatrix{
H^n_b(Y)\ar@{<->}[d]\ar[r]^{H^n_b(f)} & H^n_b(X)\ar@{<->}[d]\\
H^n_b(\pi_1(Y)) \ar[r]^{H^n_b(f_*)}& H^n_b(\pi_1(X)) 
}
$$
is commutative, where the vertical arrows represent the isometric isomorphisms of Theorem~\ref{gro-iva:thm}. Therefore, the conclusion follows from Corollary~\ref{map:am:cor}.
\end{proof}

\begin{rem}\label{znotwork}
Theorem~\ref{gro-iva:thm} does not hold for bounded cohomology with integer coefficients.
In fact, if $X$ is any topological space, then
the short exact sequence 
$0\to \matZ \to \R \to \R/\matZ\to 0$
induces an exact sequence
$$
H^n_b(X,\R)\tto{} H^n(X,\R/\matZ)\tto{} H_b^{n+1}(X,\matZ)\tto{} H_b^{n+1}(X,\R)
$$
(see the proof of Proposition~\ref{h2zz}).
If $X$ is a simply connected CW-complex, then $H^n_b(X,\R)=0$ for every $n\geq 1$, so we have
$$
H^n_b(X,\matZ)\ \cong \ H^{n-1}_b(X,\R/\matZ)\qquad \text{for every}\ n\geq 2.
$$
For example, in the case of the 2-dimensional sphere we have
$H^3_b(S^2,\matZ)\cong H^2(S^2,\R/\matZ)\cong \R/\matZ$. 
\end{rem}

\section{Alternating cochains}\label{altern:sec}
We have seen in Section~\ref{altern:group:sec} that (bounded) cohomology of groups may be computed 
via the complex of alternating cochains. The same holds true also in the context of (bounded) singular cohomology
of topological spaces. 

If $\sigma\in \mathfrak{S}_{n+1}$ is any permutation of $\{0,\ldots,n\}$, then 
we denote by $\overline{\sigma}\colon \Delta^n\to \Delta^n$ the affine automorphism
of the standard simplex which induces the permutation $\sigma$ on the vertices of $\Delta^n$.
Then we say that a cochain $\varphi\in C^n(X,\R)$ is \emph{alternating} if
$$
\varphi(s)={\rm sgn}(\sigma)\cdot \varphi (s\circ \overline{\sigma})
$$
for every $s\in S_n(X)$, $\sigma\in\mathfrak{S}_{n+1}$. We also denote by $C^\bullet_{\alt}(X,\R)\subseteq C^\bullet(X,\R)$
the subcomplex of alternating cochains, and we set $C^\bullet_{b,\alt}(X,\R)=C^\bullet_{\alt}(X,\R)\cap C^\bullet_b(X,\R)$.
Once a generic cochain $\varphi\in C^n(X,\R)$ is given, we may alternate it by setting
$$
\alt^n(\varphi)(s)=\frac{1}{(n+1)!}\sum_{\sigma\in \mathfrak{S}_{n+1}} {{\rm sgn}(\sigma)}\cdot \varphi(s\circ \overline{\sigma})
$$
for every $s\in S_n(X)$.
 Then
the very same argument exploited in the proof of Proposition~\ref{alternating:prop}
applies in this context to give the following:

\begin{prop}\label{alternating:sing:prop}
The complex $C^\bullet_{\alt}(X,\R)$ (resp.~$C^\bullet_{b,\alt}(X,\R)$) isometrically computes
the cohomology (resp.~the bounded cohomology) of $X$ with real coefficients.
\end{prop}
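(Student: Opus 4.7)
The plan is to mimic the argument from Proposition~\ref{alternating:prop} in the singular setting. Let $j^n\colon C^n_{b,\alt}(X,\R)\hookrightarrow C^n_b(X,\R)$ be the obvious inclusion (and similarly in the unbounded case); this is a norm non-increasing chain map, so it suffices to produce a norm non-increasing chain map $\alt^\bullet\colon C^\bullet_b(X,\R)\to C^\bullet_{b,\alt}(X,\R)$ which is a retraction, i.e.~$\alt^n\circ j^n=\mathrm{Id}$, and such that $j^\bullet\circ\alt^\bullet$ is $\G$-chain homotopic to the identity of $C^\bullet_b(X,\R)$ via a homotopy that is bounded in every degree. These two maps will then induce mutually inverse isometric isomorphisms on cohomology.

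First, I would verify that the explicit formula
$$\alt^n(\varphi)(s)=\frac{1}{(n+1)!}\sum_{\sigma\in\mathfrak{S}_{n+1}}{\rm sgn}(\sigma)\cdot\varphi(s\circ\overline{\sigma})$$
defines a norm non-increasing linear map $C^n_b(X,\R)\to C^n_{b,\alt}(X,\R)$ (which also restricts to a map $C^n(X,\R)\to C^n_{\alt}(X,\R)$ on the unbounded complex). Norm non-increasing is immediate from the triangle inequality, since each coefficient has absolute value $1/(n+1)!$ and there are $(n+1)!$ terms. That the image is alternating and that $\alt^n$ is the identity on alternating cochains follows directly from the definition. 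To show that $\alt^\bullet$ is a chain map, I would compare $\alt^{n+1}(\delta^n\varphi)(s)$ with $\delta^n\alt^n(\varphi)(s)$ for $s\in S_{n+1}(X)$, reducing matters to the combinatorial identity expressing, for each $\sigma\in\mathfrak{S}_{n+2}$ and each $0\le i\le n+1$, the face $\partial_i(s\circ\overline{\sigma})$ as $s\circ\partial_{j}\overline{\tau}$ for a canonically associated pair $(j,\tau)$ whose signs match up correctly after summation.

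The main step is the construction of the chain homotopy $T^\bullet\colon C^\bullet_b(X,\R)\to C^{\bullet-1}_b(X,\R)$ satisfying $\delta^{n-1}T^n+T^{n+1}\delta^n=\mathrm{Id}-j^n\circ\alt^n$, with each $T^n$ bounded in operator norm. The natural approach is to dualize a chain-level homotopy on $C_\bullet(X,\R)$ between the identity and the alternation operator on singular chains, constructed from an affine prismatic decomposition of $\Delta^n\times[0,1]$ whose top face is $\Delta^n$ and whose bottom face is a signed sum of reorderings of $\Delta^n$; this is the classical construction carried out in detail in \cite[Appendix B]{FM} in the singular setting. The key observation is that this operator sends each singular simplex to a \emph{finite} integer linear combination of singular simplices with uniformly bounded number of terms (depending only on $n$), so that the dual operator $T^n$ automatically preserves boundedness and has bounded operator norm.

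The main obstacle is precisely the explicit construction of $T^\bullet$ in a way that keeps the number of simplices per input simplex finite and independent of the input, so that dualization preserves the bounded subcomplex; this is not automatic and is the reason one cannot simply invoke acyclic models in a black-box fashion. Once $T^\bullet$ is in hand, the remaining verifications are formal: $\alt^\bullet$ and $j^\bullet$ induce mutually inverse maps on $H^\bullet_b(X,\R)$, both norm non-increasing, hence isometric, giving the bounded statement; applying the same operators $\alt^\bullet$ and $T^\bullet$ to unbounded cochains (which is legitimate because their definitions are purely algebraic and do not use boundedness) yields the statement for $C^\bullet_{\alt}(X,\R)$, noting that the seminorm on $H^\bullet(X,\R)$ is also preserved since $\alt^\bullet$ is norm non-increasing on each $C^n(X,\R)$.
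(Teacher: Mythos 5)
Your proposal is correct and follows essentially the same route as the paper, which simply observes that the argument of Proposition~\ref{alternating:prop} (averaging over $\mathfrak{S}_{n+1}$, verification that this is a norm non-increasing retraction, and invocation of the explicit bounded chain homotopy from \cite[Appendix B]{FM}) carries over verbatim to singular cochains. The only cosmetic slip is the reference to a ``$\G$-chain homotopy'': in the singular setting for $X$ itself there is no group action in play, and the homotopy is just an ordinary one on $C^\bullet_b(X,\R)$ (equivariance would only matter if one chose to work on $C^\bullet_b(\widetilde X,\R)$, which is unnecessary here).
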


\section{Relative bounded cohomology}\label{relbounded:sec}
When dealing with manifolds with boundary, it is often useful to study \emph{relative} homology and cohomology.
For example, in Section~\ref{dual:simpl:sec} we will show how the simplicial volume of a manifold with boundary $M$
can be computed via the analysis of the \emph{relative} bounded cohomology module of the pair
$(M,\partial M)$. This will prove useful to show that the simplicial volume is additive with respect to gluings
along boundary components with amenable fundamental groups.

Until the end of the chapter, all the cochain and cohomology modules will be assumed to be with real coefficients.
Let $Y$ be a subspace of the topological space $X$. We denote by $C^n(X,Y)$ the submodule of cochains which vanish
on simplices supported in $Y$. In other words, $C^n(X,Y)$ is the kernel of the map
$C^n(X)\to C^n(Y)$ induced by the inclusion $Y\hookrightarrow X$. We also set $C^n_b(X,Y)=C^n(X,Y)\cap C^n_b(X)$,
and we denote by $H^\bullet(X,Y)$ (resp.~$H^\bullet_b(X,Y)$) the cohomology of the complex
$C^\bullet(X,Y)$ (resp.~$C_b^\bullet(X,Y)$). The well-known short exact sequence of the pair for ordinary cohomology
also holds for bounded cohomology: the short exact sequence of complexes
$$ 
0\tto{} C^\bullet_b(X,Y)\tto{} C_b^\bullet (X)\tto{} C_b^\bullet(Y)\tto{} 0
$$
induces the long exact sequence
\begin{equation}\label{long:pair}
 \ldots \tto{} H^n_b(Y)\tto{} H^{n+1}_b(X,Y)\tto{} H^{n+1}_b(X)\tto{} H^{n+1}_b (Y)\tto{}\ldots
\end{equation}
Recall now that, if the fundamental group of every component of $Y$ is amenable, then
$H^n_b(Y)=0$ for every $n\geq 1$, so the inclusion $j^n\colon C^n_b(X,Y)\to C^n_b(X)$
induces a norm non-increasing isomorphism
$$
H^n_b(j^n)\colon  H^n_b(X,Y)\to H^n_b(X)
$$
for every $n\geq 2$. 
The following result is proved in~\cite{BBFIPP}, and shows that this isomorphism is in fact isometric:

\begin{thm}\label{reliso:thm}
 Let $(X,Y)$ be a pair of countable CW-complexes, and suppose that the fundamental group of each component
 of $Y$ is amenable. Then the map
 $$
H^n_b(j^n)\colon  H^n_b(X,Y)\to H^n_b(X)
$$
is an isometric isomorphism for every $n\geq 2$.
\end{thm}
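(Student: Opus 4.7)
The map $H^n_b(j^n)$ is norm non-increasing since it is induced by the inclusion of normed subcomplexes. Moreover, each component $Y_i$ of $Y$ has amenable fundamental group, so by Theorem~\ref{gro-iva:thm} and Corollary~\ref{amenable:real:cor} we have $H^k_b(Y_i,\R)=0$ for every $k\geq 1$, hence $H^k_b(Y,\R)=0$ for $k\geq 1$. Plugging this into the long exact sequence~\eqref{long:pair} immediately gives the bijectivity of $H^n_b(j^n)$ for $n\geq 2$. The content of the theorem is therefore the reverse norm inequality: for every $\alpha\in H^n_b(X)$ and every $\varepsilon>0$, I must exhibit a cocycle $\varphi'\in Z^n_b(X,Y)$ representing (the preimage of) $\alpha$ with $\|\varphi'\|_\infty\leq \|\alpha\|_\infty+\varepsilon$.

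The plan is to start with a bounded singular cocycle $\varphi\in Z^n_b(X)$ representing $\alpha$ with $\|\varphi\|_\infty\leq \|\alpha\|_\infty+\varepsilon/2$, and to modify $\varphi$ by a bounded coboundary $\delta\tilde\beta$ so that $\varphi-\delta\tilde\beta$ vanishes on every simplex supported in $Y$ without inflating the norm by more than $\varepsilon/2$. On each component $Y_i$, the restriction $\varphi|_{Y_i}\in Z^n_b(Y_i)$ is a bounded cocycle which, because $H^n_b(Y_i,\R)=0$, admits a primitive $\beta_i\in C^{n-1}_b(Y_i)$. The first subtask is to produce $\beta_i$ with $\|\beta_i\|_\infty$ controlled by $\|\varphi\|_\infty$; the second is to glue these local primitives into a global $\tilde\beta\in C^{n-1}_b(X)$ compatible with controlling $\|\varphi-\delta\tilde\beta\|_\infty$.

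The norm control on $\beta_i$ is supplied by the amenability of $\pi_1(Y_i)$: the averaging-by-mean contracting homotopy of the standard complex constructed in the proof of Theorem~\ref{amenable:vanishing} has operator norm at most $1$ in each degree. Transferring $\varphi|_{Y_i}$ through the isometric identification of $H^\bullet_b(Y_i,\R)$ with $H^\bullet_b(\pi_1(Y_i),\R)$ provided by Theorem~\ref{gro-iva:thm}, by means of the norm non-increasing chain maps furnished by Theorem~\ref{norm:prop} and Corollary~\ref{norm:cor}, applying the mean-averaging homotopy, and transferring back, yields a primitive $\beta_i$ with $\|\beta_i\|_\infty\leq \|\varphi|_{Y_i}\|_\infty\leq \|\varphi\|_\infty$ (at the cost of a cohomologous adjustment that does not affect the argument).

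The main obstacle, and the crux of the proof following~\cite{BBFIPP}, is the gluing step: naively extending each $\beta_i$ by zero to a cochain on $X$ fails, because the resulting coboundary can be large on simplices having some but not all vertices in $Y$. To overcome this, I would lift the construction equivariantly to the universal cover $\tilde X$, where the preimage $p^{-1}(Y)$ is a disjoint union of covers of the $Y_i$'s, each acted upon by (conjugates of) an amenable subgroup of $\pi_1(X)$. The componentwise mean-averaging operators can then be assembled into a $\pi_1(X)$-equivariant chain map on $C^\bullet_b(\tilde X,\R)$ that serves as a norm non-increasing retraction onto the subcomplex of cochains vanishing on $p^{-1}(Y)$; the key ingredient is to average coset-wise using the invariant means on the $\pi_1(Y_i)$, following the scheme of Lemma~\ref{Srelinj} and Theorem~\ref{Siso}. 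Passing to $\pi_1(X)$-invariants and then letting $\varepsilon\to 0$ will produce representatives in $C^n_b(X,Y)$ with norm arbitrarily close to $\|\alpha\|_\infty$, yielding the isometric statement.
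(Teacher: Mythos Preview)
Your setup is correct: bijectivity follows from the long exact sequence, and the content is the reverse norm inequality. You also correctly identify the obstacle with the naive approach---extending the primitives $\beta_i$ by zero to $\tilde\beta$ and forming $\varphi-\delta\tilde\beta$ does produce a relative cocycle, but on a simplex with some faces in $Y$ and some not the coboundary term can contribute up to $(n+1)\|\tilde\beta\|_\infty$, so this yields only a bi-Lipschitz statement, not an isometry.

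The paper does \emph{not} try to salvage this by a cleverer extension of $\tilde\beta$. Its route is genuinely different: rather than correcting $\varphi$ by a coboundary concentrated near $Y$, it replaces $\varphi$ globally by a cohomologous cocycle of a very rigid type. Concretely, one builds the $\Gamma$-set
\[
S\;=\;\Gamma\ \sqcup\ \bigsqcup_{i} \Gamma/\Gamma_i,
\]
where $\Gamma_i$ is the stabilizer in $\Gamma=\pi_1(X)$ of a chosen component of $p^{-1}(C_i)\subseteq\widetilde Y$, together with a $\Gamma$-equivariant map $r\colon\widetilde X\to S$ that sends each component of $\widetilde Y$ to a single coset and records on $\widetilde X\setminus\widetilde Y$ only which translate of a fundamental domain a point lies in. Since each $\Gamma_i$ is (a quotient of) $\pi_1(C_i)$ and hence amenable, $S$ is an amenable $\Gamma$-set, so Theorem~\ref{Siso} supplies a norm non-increasing $\Gamma$-chain map $C^\bullet_b(\Gamma,\R)\to\ell^\infty_{\mathrm{alt}}(S^{\bullet+1})$. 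Composing with Ivanov's norm non-increasing map $C^\bullet_b(\widetilde X)\to C^\bullet_b(\Gamma,\R)$ and with the pullback $r^\bullet\colon\ell^\infty_{\mathrm{alt}}(S^{\bullet+1})\to C^\bullet_b(\widetilde X)$ produces a norm non-increasing $\Gamma$-chain endomorphism $\eta^\bullet$ of $C^\bullet_b(\widetilde X)$ extending $\mathrm{Id}_\R$, hence $\Gamma$-homotopic to the identity. The point is that cochains in the image of $\eta^\bullet$ are \emph{special}: alternating, and depending only on the $r$-images of the vertices. A simplex supported in a single component of $\widetilde Y$ has all vertices sent by $r$ to the same point of $S$, so any alternating function of those vertices vanishes. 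Thus $\eta^n(\varphi)$ is already a \emph{relative} cocycle, with norm at most $\|\varphi\|_\infty$.

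Your final paragraph gestures at this mechanism (``average coset-wise using the means on $\pi_1(Y_i)$, following Lemma~\ref{Srelinj} and Theorem~\ref{Siso}''), but the decisive idea that is missing is the specific $\Gamma$-set $S$ that \emph{collapses each component of $\widetilde Y$ to a point}, combined with the \emph{alternation} that converts ``two vertices in the same component'' into ``cochain vanishes''. Without naming $S$ and this vanishing mechanism, it is not clear from your sketch why the output of your proposed retraction would land in $C^\bullet_b(X,Y)$ with norm at most $1$, rather than merely being bounded.
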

The rest of this section is devoted to the proof of Theorem~\ref{reliso:thm}.

Henceforth we assume that $(X,Y)$ is a pair of countable CW-complexes such that
the fundamental group of every component of $Y$ is amenable. 
Let $p\colon \widetilde{X}\to X$ be a universal covering of $X$, and
set $\widetilde{Y}=p^{-1}(Y)$. 
As usual, we denote by $\G$ the fundamental group of $X$, we fix an identification of $\G$ with the group
of the covering automorphisms of $p$, and we consider the induced 
identification
$$
C_b^n(X)=C_b^n(\widetilde{X})^{\G}\ .
$$

\begin{defn}\label{special:section}
We say that a cochain $\varphi\in C_b^n(\widetilde{X})$
is \emph{special} (with respect to $\widetilde{Y}$) if the following conditions
hold:
\begin{itemize}
\item
$\varphi$ is alternating;
 \item
  let ${s},{s}'$ be singular $n$-simplices with values
in $\widetilde{X}$ and suppose that, for every $i=0,\ldots,n$, 
either ${s}(w_i)={s}'(w_i)$, or ${s}(w_i)$ and ${s}'(w_i)$
belong to the same connected component of $\widetilde{Y}$,
where $w_0,\ldots,w_n$ are the vertices of the standard
$n$-simplex. Then $\varphi({s})=\varphi({s}')$.
\end{itemize}
We denote by $ C_{bs}^\bullet(\widetilde{X},\widetilde{Y})\subseteq C_b^\bullet (\widetilde{X})$ the 
subcomplex
of special cochains, and we set 
$$ C_{bs}^\bullet(X,Y)= C_{bs}^\bullet(\widetilde{X},\widetilde{Y})\cap C_b^\bullet(\widetilde{X})^\G\subseteq C_b^\bullet(\widetilde{X})^\G
=C_b^\bullet(X)\ .$$
\end{defn}

\begin{rem}\label{special:rem}
Any cochain $\varphi\in  C_{bs}^\bullet(\widetilde{X},\widetilde{Y})$ vanishes on every simplex 
having two vertices on the same connected component of $\widetilde{Y}$. 
In particular
$$
 C_{bs}^n(X,Y)\subseteq C_b^n(X,Y)\subseteq C_b^n(X)
$$
for every $n\geq 1$.
\end{rem}

We denote by $l^\bullet\colon  C_{bs}^\bullet(X,Y)\to C_b^\bullet(X)$ the natural inclusion.


\begin{prop}\label{special:prop}
There exists a norm non-increasing chain map 
$$
\eta^\bullet\colon C_b^\bullet(X)\to C_{bs}^\bullet (X,Y)
$$
such that the composition $l^\bullet\circ\eta^\bullet$ is chain-homotopic
to the identity of $C_b^\bullet(X)$.
\end{prop}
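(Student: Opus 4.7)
I would construct $\eta^\bullet$ by averaging, one vertex at a time, using amenable means attached to the components of $\widetilde{Y}$, and then alternating. The setup is the following. The group $\Gamma$ acts on the set $\{Y_\alpha\}$ of path-components of $\widetilde{Y}$; the stabilizer $\Gamma_\alpha$ of $Y_\alpha$ is a quotient of $\pi_1$ of the component $p(Y_\alpha)$ of $Y$ (since $p\colon Y_\alpha\to p(Y_\alpha)$ is a covering with deck group $\Gamma_\alpha$), hence amenable by hypothesis. Choose, in each $\Gamma$-orbit of components, a representative $Y_\alpha$ and a $\Gamma_\alpha$-invariant mean $m_\alpha$ on the set $Y_\alpha$ (which exists because $\Gamma_\alpha$ is amenable); extend by $\Gamma$-translation to get a $\Gamma$-equivariant family $\{m_\alpha\}_\alpha$. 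In addition, fix once and for all, $\Gamma$-equivariantly, a path $\gamma_{x,y}\colon [0,1]\to Y_\alpha$ between any two points $x,y\in Y_\alpha$ (possible since each $Y_\alpha$ is path-connected).

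Using this data I would define, for every $i\in\{0,\dots,n\}$, an operator $A_i\colon C_b^n(\widetilde{X},\R)\to C_b^n(\widetilde{X},\R)$ by
\[
A_i(\varphi)(s)=
\begin{cases}
m_\alpha\bigl(y\mapsto \varphi(s[i\!\to\!y])\bigr) & \text{if } s(w_i)\in Y_\alpha\subseteq\widetilde{Y},\\
\varphi(s) & \text{if } s(w_i)\notin \widetilde{Y},
\end{cases}
\]
where $s[i\!\to\!y]$ is the singular simplex obtained from $s$ by replacing the $i$-th vertex by $y$ via the path $\gamma_{s(w_i),y}$ and coning it with the opposite face $\partial_i s$. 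The cochain-valued function $y\mapsto\varphi(s[i\!\to\!y])$ is uniformly bounded by $\|\varphi\|_\infty$, so the mean makes sense and $\|A_i\|\leq 1$. The $\Gamma$-equivariance of the means (and of the path choice) makes each $A_i$ a $\G$-map. Finally I would set
\[
\eta^n=\alt^n\circ A_0\circ A_1\circ\cdots\circ A_n.
\]
After applying every $A_i$ the resulting cochain depends, vertex by vertex, only on the component of $\widetilde{Y}$ containing each vertex (or on the vertex itself, if it lies outside $\widetilde{Y}$); alternating then guarantees that $\eta^n(\varphi)$ is special in the sense of Definition~\ref{special:section}. Thus $\eta^\bullet$ restricts to a norm non-increasing $\G$-chain map $C_b^\bullet(X)\to C^\bullet_{bs}(X,Y)$, provided one verifies that the $A_i$ commute with the coboundary (a routine but careful check: for $j\neq i$, the face operator $\partial^j$ commutes with the $i$-th averaging up to a shift of indices, while for $j=i$ the $i$-th face of $s[i\!\to\!y]$ coincides with the $i$-th face of $s$ so the averaging collapses).

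For the homotopy $l^\bullet\circ\eta^\bullet\simeq \mathrm{Id}$, I would use that each $A_i$ is itself chain-homotopic to the identity: the classical prism over the straight-line homotopy in $\Delta^n\times[0,1]$ from $s$ to $s[i\!\to\!y]$ decomposes into $n+1$ singular $(n+1)$-simplices in $\widetilde{X}$, whose dual construction (averaged in $y$ against $m_\alpha$) yields a bounded operator $T_i\colon C_b^n(\widetilde{X},\R)\to C_b^{n-1}(\widetilde{X},\R)$ satisfying $\delta T_i+T_i\delta=A_i-\mathrm{Id}$. Composing the $T_i$'s telescopically and appending the standard chain homotopy $\mathrm{alt}\simeq\mathrm{Id}$ produces the required $\G$-equivariant bounded homotopy between $l\circ\eta$ and the identity.

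\textbf{Main obstacle.} The delicate point is not the averaging itself but the bookkeeping needed to make every auxiliary choice (the paths $\gamma_{x,y}$, the coning procedure used to define $s[i\!\to\!y]$, the prism construction for $T_i$) simultaneously $\G$-equivariant and coherent across the successive $A_i$'s, so that the composition is genuinely a chain map and admits a bounded chain homotopy to the identity. The saving grace is the $\Gamma_\alpha$-invariance of each $m_\alpha$: any residual dependence on such choices lies in a direction along which the mean averages out, so the apparently non-canonical construction becomes canonical after averaging.
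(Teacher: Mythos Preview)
Your approach has the right intuition---using amenability to average out the dependence on vertices lying in $\widetilde Y$---but the execution contains genuine gaps that go beyond the bookkeeping you flag.

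The most concrete problem is the claim that each $A_i$ commutes with the coboundary. Consider the term $(A_i\varphi)(\partial_i s)$ appearing in $\delta(A_i\varphi)(s)$: here $A_i$ acts on an $(n-1)$-cochain and so averages over the $i$-th vertex of $\partial_i s$, which is $s(e_{i+1})$, not $s(e_i)$. This does not match the term $(-1)^i\varphi(\partial_i s)$ coming from $A_i(\delta\varphi)(s)$, where you correctly note that $\partial_i(s[i\!\to\!y])=\partial_i s$. So the individual $A_i$ are not chain maps; the full composition $A_0\cdots A_n$ might be, but this is not the ``routine check'' you claim. Second, your construction of $s[i\!\to\!y]$ is caught in a bind. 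If it genuinely depends on the path $\gamma_{s(w_i),y}$, then two simplices $s,s'$ with $\partial_i s=\partial_i s'$ and with $s(w_i),s'(w_i)$ in the same component of $\widetilde Y$ produce different families $\{s[i\!\to\!y]\}_y$, $\{s'[i\!\to\!y]\}_y$; the mean is over $y$, not over the path choices, so the output need not be special. If instead $s[i\!\to\!y]$ is simply the cone of $\partial_i s$ over $y$, then $s[i\!\to\! s(w_i)]\neq s$ in general, which breaks your prism argument for the homotopy to the identity.

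The paper bypasses all of this by working discretely. It introduces the amenable $\Gamma$-set $S=\Gamma\sqcup\bigsqcup_i\Gamma/\Gamma_i$ (one coset space per component of $Y$) together with a $\Gamma$-equivariant labelling map $r\colon\widetilde X\to S$ collapsing each component of $\widetilde Y$ to a point; pulling back along $r$ gives a norm non-increasing $\Gamma$-chain map $\ell^\infty_{\mathrm{alt}}(S^{\bullet+1})\to C^\bullet_{bs}(\widetilde X,\widetilde Y)$. On the other side, Ivanov's theorem that $C^\bullet_b(\widetilde X)$ is a relatively injective strong resolution of $\R$, combined with the amenable-set machinery (Theorem~\ref{Siso}), produces a norm non-increasing $\Gamma$-chain map $C^\bullet_b(\widetilde X)\to\ell^\infty_{\mathrm{alt}}(S^{\bullet+1})$ extending $\mathrm{Id}_\R$. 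Composing gives $\eta^\bullet$, and the homotopy $l^\bullet\circ\eta^\bullet\simeq\mathrm{Id}$ then comes \emph{for free} from Theorem~\ref{ext:bounded:thm}: both are $\Gamma$-chain endomorphisms of a relatively injective strong resolution extending $\mathrm{Id}_\R$. No vertex-moving, path choices, or explicit prisms are required.
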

\begin{proof}
Let us briefly describe the strategy of the proof. First of all, we will define a $\G$-set $S$ which 
provides a sort of discrete approximation of the pair $(\widetilde{X},\widetilde{Y})$. As usual, the group $\G$ already provides 
an approximation of $\widetilde{X}$. However, in order to prove that $Y$ is completely irrelevant from the point of view of bounded cohomology,
we need to approximate every component of $\widetilde{Y}$ by a single point, and this implies that the set $S$ cannot coincide with
$\G$ itself. Basically, we add one point for each component of $\widetilde{Y}$. 
Since the fundamental group of every component of $Y$ is amenable,
the so obtained $\G$-set $S$ is amenable: therefore, the bounded cohomology of $\G$, whence of $X$, may be isometrically
computed using the complex of 
alternating cochains on $S$. Finally, alternating cochains on $S$ can  be isometrically translated into special cochains on $X$. 

Let us now give some more details.
Let $Y=\sqcup_{i\in I}C_i$ be the decomposition of $Y$
into the union of its connected components. If  $\check C_i$ is a choice of a connected component of $p^{-1}(C_i)$ 
and $\Gamma_i$ denotes the stabilizer of $\check C_i$ in $\Gamma$ then
$$
p^{-1}(C_i)=\bigsqcup_{\gamma\in\Gamma/\Gamma_i}\gamma\check C_i\, .
$$ 

We endow the set
$$
S=\Gamma\sqcup\bigsqcup_{i\in I}\Gamma/\Gamma_i
$$
with the obvious structure of $\G$-set
and we choose a fundamental domain
$\Ff\subset\widetilde X\smallsetminus \widetilde{Y}$ 
for the $\Gamma$-action on $\widetilde X\smallsetminus \widetilde{Y}$. 
We define a $\Gamma$-equivariant map 
$r\colon\widetilde X\to S$
as follows:
\bqn
r(\gamma x)=
\begin{cases}
\hphantom{\Gamma}\,\gamma\in\Gamma&\text{ if }x\in\Ff,\\
\gamma\Gamma_i\in \Gamma/\Gamma_i &\text{ if } x \in\check C_i\,.
\end{cases}
\eqn
For every $n\geq0$ we set $\ell^\infty_{\alt}(S^{n+1})=\ell^\infty_{\alt}(S^{n+1},\R)$ and we define
$$
r^n\colon\ell^\infty_{\mathrm{alt}}(S^{n+1})\to C_{bs}^n(\widetilde X,\widetilde{Y})
\, ,\qquad 
r^n(f)(s)=f(r(s(e_0)),\dots,r(s(e_n)))\ .
$$
The fact that $r^n$ takes values in the module of special cochains is immediate, and
clearly $r^\bullet$ is a norm non-increasing $\Gamma$-equivariant chain map  extending the identity on $\R$.

Recall now that, by Theorem~\ref{ivanov:thm}, the complex $C^\bullet_b(\widetilde{X})$ provides
a relatively injective strong resolution of $\R$, so there exists a norm non-increasing $\G$-equivariant chain map
$C^\bullet_b(\widetilde{X})\to C^\bullet_b(\G,\R)$. Moreover, by composing the map provided by 
Theorem~\ref{Siso} and the obvious alterating operator we get a 
norm non-increasing $\G$-equivariant chain map
$C^\bullet_b(\G,\R)\to \ell^\infty_{\alt}(S^{\bullet +1})$. By composing these morphisms of normed $\G$-complexes we finally get a 
norm non-increasing $\G$-chain map
$$
\zeta^\bullet\colon C^\bullet_b(\widetilde{X})\to \ell^\infty_{\alt}(S^{\bullet +1})
$$
which extends the identity of $\R$.

Let us now consider the composition $\theta^\bullet=r^\bullet\circ \zeta^\bullet\colon C^\bullet_b(\widetilde{X})\to C^\bullet_{bs}(\widetilde{X},\widetilde{Y})$:
it is a norm non-increasing $\G$-chain map which extends the identity of $\R$. 
It is now easy to check that the chain map
$\eta^\bullet\colon C^\bullet_b({X})\to C^\bullet_{bs}({X},{Y})$ induced by $\theta^\bullet$ satisfies the required properties.
In fact, $\eta^\bullet$ is obvously norm non-increasing. Moreover,
the composition
of $\theta^\bullet$ with the inclusion $C^\bullet_{bs}(\widetilde{X},\widetilde{Y})\to C^\bullet_{b}(\widetilde{X})$
extends the identity of $\R$. Since $C^\bullet_{b}(\widetilde{X})$ is a relatively injective strong resolution of $\R$,
this implies in turn that this composition is $\G$-homotopic to the identity of $\R$, thus concluding the proof.
\end{proof}

\begin{cor}\label{special:cor}
Let $n\geq 1$, take $\alpha\in H^n_b(X)$ and let $\varepsilon>0$ be given. Then there exists a special cocycle
$f\in C^n_{bs}(X,Y)$ such that 
$$
[f]=\alpha\, ,\qquad \|f\|_\infty\leq \|\alpha\|_\infty+\varepsilon\ .
$$
\end{cor}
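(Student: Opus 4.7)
The plan is to deduce the corollary directly from Proposition~\ref{special:prop}, which does essentially all the work. Let me outline the steps.

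First, I would unwind the statement: the class $\alpha\in H^n_b(X)$ admits, by definition of the canonical seminorm, a cocycle representative $\varphi\in C^n_b(X)$ with $\|\varphi\|_\infty\leq \|\alpha\|_\infty+\varepsilon$. The natural candidate for the special representative is then $f:=\eta^n(\varphi)\in C^n_{bs}(X,Y)$, where $\eta^\bullet$ is the chain map produced by Proposition~\ref{special:prop}.

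Next, I would verify the two required properties. Since $\eta^\bullet$ is a chain map and $\delta\varphi=0$, we get $\delta f=\eta^{n+1}(\delta\varphi)=0$, so $f$ is indeed a cocycle in $C^n_{bs}(X,Y)$. The norm bound is immediate from the fact that $\eta^n$ is norm non-increasing: $\|f\|_\infty=\|\eta^n(\varphi)\|_\infty\leq \|\varphi\|_\infty\leq \|\alpha\|_\infty+\varepsilon$.

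The one point that deserves care is the meaning of the equality $[f]=\alpha$: strictly speaking $f$ lives in the subcomplex $C^\bullet_{bs}(X,Y)$, so its class must be interpreted in $H^n_b(X)$ through the inclusion $l^\bullet\colon C^\bullet_{bs}(X,Y)\to C^\bullet_b(X)$ (which is legitimate by the inclusions recorded in Remark~\ref{special:rem}, giving $C^n_{bs}(X,Y)\subseteq C^n_b(X,Y)\subseteq C^n_b(X)$ for $n\geq 1$). Since $l^\bullet\circ\eta^\bullet$ is chain-homotopic to the identity of $C^\bullet_b(X)$, the chain $l^n(f)=l^n(\eta^n(\varphi))$ differs from $\varphi$ by a coboundary in $C^\bullet_b(X)$, hence represents $\alpha$ in $H^n_b(X)$, as required.

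There is no real obstacle here beyond invoking Proposition~\ref{special:prop} correctly; all the substantial content (amenability of the $\G$-set $S$, the isometric computation via $\ell^\infty_{\alt}(S^{\bullet+1})$, and the construction of $\eta^\bullet$) has already been absorbed into that proposition. The corollary is therefore a short, formal consequence.
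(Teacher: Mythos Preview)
Your argument is correct and is exactly the intended one: the paper states the corollary immediately after Proposition~\ref{special:prop} without proof, and your deduction---choose a representative $\varphi$ with $\|\varphi\|_\infty\leq\|\alpha\|_\infty+\varepsilon$, set $f=\eta^n(\varphi)$, and use that $l^\bullet\circ\eta^\bullet$ is $\G$-homotopic to the identity---is precisely how the corollary is meant to follow.
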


Recall now that $C^n_{bs}(X,Y)\subseteq C^n_b(X,Y)$ for every $n\geq 1$. Therefore, Corollary~\ref{special:cor}
implies that, for $n\geq 1$, the norm of every coclass in $H^n_b(X)$ may be computed
by taking the infimum over relative cocycles. 
Since we already know that the inclusion $C^\bullet_{b}(X,Y)\hookrightarrow C^\bullet_b(X)$ induces an isomorphism in bounded cohomology
in degree greater than one,
this concludes the proof of Theorem~\ref{reliso:thm}.

\section{Further readings}
It would be interesting to extend to the relative case Gromov's and Ivanov's results on the coincidence of bounded cohomology of (pairs of) spaces
with the bounded cohomology of their (pairs of) fundamental groups. The case when the subspace is path connected corresponds to the case when the pair of groups indeed consists of a group
and one of its subgroups, and it is somewhat easier than the general case. It was first treated by Park in~\cite{Park}, and then by Pagliantini and the author in~\cite{FP2}. Rather disappointingly,
Ivanov's cone construction runs into some difficulties in the relative case, so some extra assumptions on higher homotopy groups is needed in order to get the desired isometric isomorphism
(if one only requires a bi-Lipschitz isomorphism, then the request that the subspace is $\pi_1$-injective in the whole space is sufficient).

In the case of a disconnected subspace it is first necessary to define bounded cohomology for pairs $(\G,\G')$, where $\G'$ is a family of subgroups
of $\G$. This was first done for classical cohomology of groups by Bieri and Eckmann~\cite{Bieri}, and extended to the case of bounded cohomology
by Mineyev and Yaman~\cite{Yaman} (see also~\cite{Franceschini2}). Probably the best suited approach to bounded cohomology of generic pairs is via the theory 
of bounded cohomology for \emph{groupoids}, as introduced and developed by Blank in~\cite{Blank}. Many results from~\cite{FP2} are extended in~\cite{Blank} to the case
of disconnected subspaces (under basically the same hypotheses on higher homotopy groups that were required in~\cite{FP2}).

The theory of multicomplexes initiated by Gromov in~\cite{Gromov} may also be exploited to study (relative) bounded cohomology of (pairs of) spaces. We refer the reader to~\cite{Kuessner} for some results in this direction.

\chapter{$\ell^1$-homology and duality}\label{duality:chap}
Complexes of cochains naturally arise by taking duals of complexes of chains. Moreover, the Universal Coefficient Theorem ensures
that, at least when working with real coefficients, taking (co)homology commutes with taking duals. Therefore, cohomology with real coefficients
is canonically isomorphic to the dual of homology with real coefficients. 
In this chapter we describe analogous results in the context of 
bounded cohomology, showing that also in the bounded case duality plays an important role in the study of the relations between homology and cohomology.
We restrict our attention to the case with real coefficients. 

\section{Normed chain complexes and their topological duals}
Before going into the study of the cases we are interested in, we introduce some general terminology and recall some general results proved 
in~\cite{Matsu-Mor,Loeh}.
A normed chain complex is a complex
$$
\xymatrix{
0 & C_0 \ar[l] & C_1 \ar[l]^{d_1} & C_2 \ar[l]^{d_2} & \ldots \ar[l]
}
$$
where every $C_i$ is a normed real vector space, and $d_i$ is bounded for every $i\in \mathbb{N}$. 
For notational convenience, in this section we will denote such a complex by  the symbol $(C,d)$, rather than
by  $(C_\bullet,d_\bullet)$.
If $C_n$ is complete for every $n$, then we say that $(C,d)$ is a Banach chain complex (or simply a Banach complex).
Let $(C')^i=(C_i)'$ be the \emph{topological} dual
of $C_i$, endowed with the operator norm, and denote by $\delta^i\colon (C')^i\to (C')^{i+1}$ the dual map of $d_{i+1}$. Then the normed cochain complex
$$
\xymatrix{
0\ar[r] & (C')^0\ar[r]^{\delta^0} & (C')^1\ar[r]^{\delta^1} & (C')^2\ar[r]^{\delta^2} & \ldots
}
$$
is called the normed dual complex of $(C,d)$, and it is denoted by $(C',\delta)$. Observe that the normed dual complex of any normed chain complex
is Banach.

In several interesting cases, the normed spaces $C_i$ are not complete (for example, this is the case of singular chains on topological spaces, endowed with the $\ell^1$-norm -- see below). Let us denote
by $\widehat{C}_i$ the completion of $C_i$. Being bounded, the differential $d_i\colon C_i\to C_{i-1}$
extends to a bounded map $\hat{d}_i\colon \widehat{C}_i\to \widehat{C}_{i-1}$, and it is obvious that 
$\hat{d}_i\circ \hat{d}_{i-1}=0$ for every $i\in\mathbb{N}$, so we may consider the normed
chain complex $(\widehat{C},\hat{d})$. The topological dual of $\widehat{C}_i$ is again
$(C')^i$, and $\delta^i$ is the dual map of $\hat{d}_{i+1}$, so 
$(C',\delta)$ is the normed dual complex also 
of $(\widehat{C},\hat{d})$. The homology of the complex $(\widehat{C},\hat{d})$ is very different in general
from the homology of $(C,d)$; however the inclusion $C\to C'$ induces a seminorm-preserving map in homology (see Corollary~\ref{semipr}).

The homology (resp.~cohomology) of the complex $(C,d)$ (resp.~$(C',\delta)$) is denoted by $H_\bullet(C)$ (resp.~$H_b^\bullet(C')$). As usual, the norms on
$C_n$ and $(C')^n$ induce seminorms on 
$H_n(C)$ and $H_b^n(C')$ respectively. We will denote these (semi)norms respectively by $\|\cdot\|_1$ and $\|\cdot\|_\infty$.
The duality pairing between $(C')^n$ and $C_n$ induces the \emph{Kronecker product}
$$
\langle \cdot,\cdot \rangle\colon H_b^n(C')\times \h_n(C)\to \R\ .
$$

\section{$\ell^1$-homology of groups and spaces}\label{homol:group:sec}
Let us introduce some natural examples of normed dual cochain complexes.
Since we are restricting our attention to the case of real coefficients,
for every group $\G$ (resp.~space $X$)
we simply denote by $H_b^\bullet(\G)$ (resp.~by $H_b^\bullet(X)$) the bounded  cohomology of $\G$ (resp.~of $X$) with real
coefficients. 

For every $n\geq 0$, the space $C_n(X)=C_n(X,\R)$ may be endowed with the $\ell^1$-norm
$$
\left\| \sum a_i s_i\right\|_1 =\sum |a_i|
$$
(where the above sums are finite), which descends to a seminorm $\|\cdot \|_1$ on $H_n(X)$. 
The complex $C^\bullet_b(X)$, endowed with the usual $\ell^\infty$-norm, coincides with the normed dual complex of 
$C_\bullet(X)$. 

Since the differential is bounded in every degree,
to the normed chain complex $C_\bullet(X)$ 
there is associated the normed chain complex obtained by taking the completion
of $C_n(X)$ for every $n\in\mathbb{N}$. Such a complex is denoted
by $\cl_\bullet(X)$. An element $c\in \cl_n(X)$ is a sum
$$
\sum_{s\in S_n(X)} a_s s
$$
such that
$$
\sum_{s\in S_n(X)} |a_s|<+\infty 
$$
and it is called an \emph{$\ell^1$-chain}. The \emph{$\ell^1$-homology} of $X$
is just the homology of the complex $\cl_\bullet(X)$, and it is denoted by
$\hl_\bullet(X)$. The inclusion of singular chains into $\ell^1$-chains
induces a norm non-increasing map
$$
H_\bullet(X)\to\hl_\bullet(X)\ ,
$$
which is in general neither injective nor surjective.

The same definitions may be given in the context
of groups. If $\G$ is a group, for every $n\in\mathbb{N}$ we denote by $C_n(\G)=C_n(\G,\R)$ the $\R$-vector space having $\G^n$ as a basis,
where we understand that $C_0(\G)=\R$.
For every $n\geq 2$ we define
$d_n\colon C_n(\G)\to C_{n-1}(\G)$ as the linear extension of the map
\begin{align*}
(g_1,\ldots,g_n)& \mapsto (g_2,\ldots,g_n)+\sum_{i=1}^{n-1} (-1)^i (g_1,\ldots,g_ig_{i+1},\ldots,g_n) \\ &+ (-1)^n (g_1,\ldots,g_{n-1})
\end{align*}
and we set $d_1=d_0=0$. It is easily seen that $d_{n-1}d_n=0$ for every $n\geq 1$,
and the homology $H_\bullet(\G)$ of $\G$ (with real coefficients) is defined as the homology of the complex $(C_\bullet(\G),d_\bullet)$.

Observe that $C_i(\G)$ admits the $\ell^1$-norm defined by
$$
\left\|\sum a_{g_1,\ldots,g_i} (g_1,\ldots,g_i)\right\|_1=\sum | a_{g_1,\ldots,g_i}|
$$
(where all the sums in the expressions above are finite),
which descends in turn to a seminorm $\|\cdot \|_1$ on $H_i(\G)$. Moreover,
we denote by $\cl_i(\G)$ the completion of $C_i(\G)$ with respect to the
$\ell^1$-norm, and by $\hl_i(\G)=H_i(\cl_\bullet(\G))$ the corresponding
$\ell^1$-homology module, which is endowed with the induced seminorm
(it is immediate to check that the differential $d_n\colon C_n(\G)\to C_{n-1}(\G)$ has norm bounded above by $n$).

It follows from the very definitions that the topological duals 
of $(C_i(\G),\|\cdot \|_1)$ and $(\cl_i(\G),\|\cdot\|_1)$ both coincide with the Banach space 
 $(\overline{C}^n_b(\G,\R),\|\cdot\|_\infty)$ of inhomogeneous
cochains introduced in Section~\ref{bar:sec}. Moreover, the dual map of $d_i$ coincides with the differential 
$\overline{\delta}^{i+1}$, so $C^\bullet_b(\G)$ is the dual normed cochain complex both of $C_\bullet(\G)$ and of $\cl_\bullet(\G)$.

\section{Duality: first results}
As mentioned above, by the Universal Coefficient Theorem, taking (co)homology commutes with taking \emph{algebraic} duals.
However, this is no more true when replacing algebraic duals with topological duals,
so $H_b^n(C')$ is not isomorphic to
the topological dual of $H_n(C)$ in general (but see Theorems~\ref{MM1} and~\ref{Loeh}  
for the case when $C$ is Banach). Nevertheless, the following 
results establish several useful relationships between $H_b^n(C')$ and $H_n(C)$. 

If $H$ is any seminormed vector space, then we denote by $H'$ the space of bounded linear functionals
on $H$. So $H'$ is canonically identified with the topological dual of the quotient of $H$ by the subspace
of elements with vanishing seminorm.

\begin{lemma}\label{lemma:duality}
 Let $(C,d)$ be a normed chain complex with dual normed chain complex $(C',\delta)$. 
 Then, the map
$$
H^q_b(C')\to (H_q(C))'
$$
induced by the Kronecker product is surjective. Moreover,
for every $\alpha\in H_n(C)$ we have 
$$
\|\alpha\|_1=\max \{ \langle\beta,\alpha\rangle\, |\,  \beta\in H_b^n(C'),\,  \|\beta\|_\infty \leq 1\}\ .
$$
\end{lemma}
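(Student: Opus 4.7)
The plan is to rely on two instances of the Hahn--Banach theorem: one to produce a bounded cocycle on $C_\bullet$ from a bounded functional on $H_\bullet(C)$, and one to produce a norm-one functional on $H_n(C)$ that attains the value $\|\alpha\|_1$ at $\alpha$.

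For surjectivity, let $f \in (H_q(C))'$. I would first pull $f$ back along the quotient $\pi\colon Z_q(C) \to H_q(C)$, where $Z_q(C) = \ker d_q$, to obtain a linear functional $\tilde f = f\circ \pi$ on $Z_q(C)$. The key estimate is
$$|\tilde f(z)| = |f([z])| \leq \|f\|\cdot \|[z]\|_1 \leq \|f\|\cdot \|z\|_1$$
so that $\tilde f$ is bounded of norm at most $\|f\|$. Since $\tilde f$ vanishes on $B_q(C) = \mathrm{Im}\, d_{q+1}$, Hahn--Banach gives an extension $\beta \in (C_q)' = (C')^q$ with $\|\beta\| \leq \|f\|$. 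For every $x \in C_{q+1}$, $\delta^q\beta(x) = \beta(d_{q+1}x) = \tilde f(d_{q+1}x) = 0$, so $\beta$ is a cocycle, and by construction $\langle [\beta], \alpha\rangle = f(\alpha)$ for every $\alpha \in H_q(C)$. Note that we also obtain $\|[\beta]\|_\infty \leq \|\beta\| \leq \|f\|$, a fact that will be used in the second part.

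For the norm identity, the inequality
$$\sup\bigl\{\langle\beta,\alpha\rangle \,\big|\, \beta\in H_b^n(C'),\,\|\beta\|_\infty\leq 1\bigr\} \leq \|\alpha\|_1$$
follows immediately, since for any cycle representative $z$ of $\alpha$ and any cocycle representative $b$ of $\beta$ one has $|b(z)| \leq \|b\|_\infty\|z\|_1$, and taking infima over both representatives yields $|\langle\beta,\alpha\rangle| \leq \|\beta\|_\infty\|\alpha\|_1$. For the reverse inequality (together with attainment of the maximum), if $\|\alpha\|_1 = 0$ the choice $\beta = 0$ works trivially. Otherwise, define $f_0\colon \R\alpha \to \R$ by $f_0(t\alpha) = t\|\alpha\|_1$, which is a linear functional of norm exactly $1$ with respect to the seminorm $\|\cdot\|_1$ on $H_n(C)$. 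Apply Hahn--Banach to extend $f_0$ to $f \in (H_n(C))'$ with $\|f\| = 1$, and invoke the already-proved surjectivity to obtain a class $\beta \in H^n_b(C')$ with $\|\beta\|_\infty \leq \|f\| = 1$ and $\langle \beta,\cdot \rangle = f$ on $H_n(C)$. In particular $\langle\beta,\alpha\rangle = f(\alpha) = \|\alpha\|_1$, realizing the maximum.

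The only delicate point is the bookkeeping of norms through the two Hahn--Banach extensions to make sure the supremum is actually a maximum; the quantitative bound $\|[\beta]\|_\infty \leq \|f\|$ obtained from the first extension is exactly what allows the second step to produce a representative $\beta$ in the closed unit ball of $H^n_b(C')$ at which the pairing with $\alpha$ equals $\|\alpha\|_1$. Everything else is a routine verification that the Kronecker product descends to (co)homology and respects the appropriate norms.
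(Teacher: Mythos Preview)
Your proof is correct and follows essentially the same approach as the paper: both use Hahn--Banach once to extend a bounded functional vanishing on boundaries to a bounded cocycle (surjectivity with norm control), and a second time to produce a norm-one functional attaining $\|\alpha\|_1$. The only cosmetic difference is that for the second step the paper applies Hahn--Banach directly on the quotient $C_n/\overline{B_n}$, whereas you apply it on $H_n(C)$ and then invoke the norm-controlled surjectivity from the first part; your organization is arguably cleaner since it reuses the first part as a lemma.
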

\begin{proof}
Let $B_q,Z_q$ be the spaces of cycles and boundaries in $C_q$.
The space $(H_{q}(C))'$ is isomorphic to the space  of 
bounded functionals $Z_q\to \R$
that vanish on $B_{q}$. Any such element
admits a bounded extension to $C_{q}$ by Hahn-Banach, and this implies
the first statement of the lemma.

Let us come to the second statement. The inequality $\geq$ is obvious.
Let $a\in C_n$ be a representative of $\alpha$. In order to conclude 
it is enough to find an element $b\in (C')^n$
such that $\delta^nb=0$, $b(a)=\|\alpha\|_1$ and $\| b\|_\infty\leq 1$.
If $\|\alpha\|_1=0$ we may take $b=0$. Otherwise,
let $V\subseteq C_n$ be the closure of 
$B_n$ in $C_n$, and put on the quotient
$W:=C_n/V$ the induced seminorm $\|\cdot \|_W$. Since $V$ is closed,
such seminorm is in fact a norm.
By construction, $\|\alpha\|_1=\| [a]\|_W$. Therefore, Hahn-Banach Theorem provides
a functional $\overline{b}\colon W\to \mathbb{R}$ with operator norm one such that $\overline{b}([a])=\|\alpha\|_1$.
We obtain the desired element $b\in (C')^n$ by pre-composing $\overline{b}$  with the projection $C_n\to W$.
\end{proof}

\begin{cor}\label{semipr0}
Let $(C,d_C)$ , $(D,d_D)$ be normed chain complexes and let $\psi\colon C\to D$ be a chain map of normed chain complexes
(so $\psi$ is bounded in every degree). If the induced map in bounded cohomology
$$H^n_b(\psi)\colon H^n_b(D')\to H^n_b(C')$$
is an isometric isomorphism, then the induced map in homology
$$H_n(\psi)\colon H_n(C)\to H_n(D)$$
preserves the seminorm.
\end{cor}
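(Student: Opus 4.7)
The plan is to derive the statement directly from the duality formula established in Lemma~\ref{lemma:duality}, combined with the naturality of the Kronecker product with respect to chain maps and their duals. The key observation is that the seminorm on $H_n(C)$ can be recovered as a supremum of pairings against the closed unit ball in $H^n_b(C')$, and if the isometric isomorphism $H^n_b(\psi)$ identifies unit balls, this supremum characterization is preserved by $H_n(\psi)$.

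More explicitly, I would first record the naturality of the Kronecker product: since $\psi\colon C \to D$ is a chain map of normed chain complexes, its topological dual $\psi'\colon D' \to C'$ is a chain map of normed cochain complexes, with $H^n_b(\psi) = H^n_b(\psi')$ on bounded cohomology. For every $\alpha \in H_n(C)$ and every $\gamma \in H^n_b(D')$, choosing representatives, one has
$$\langle H^n_b(\psi)(\gamma), \alpha\rangle = \langle \gamma, H_n(\psi)(\alpha)\rangle,$$
because both sides equal $\gamma(\psi(a))$ for any chain representative $a$ of $\alpha$.

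Next, I apply the second statement of Lemma~\ref{lemma:duality} twice. On the one hand,
$$\|H_n(\psi)(\alpha)\|_1 = \max\bigl\{\langle \gamma, H_n(\psi)(\alpha)\rangle : \gamma \in H^n_b(D'),\ \|\gamma\|_\infty \leq 1\bigr\}.$$
By the naturality displayed above, the right-hand side equals
$$\max\bigl\{\langle H^n_b(\psi)(\gamma), \alpha\rangle : \gamma \in H^n_b(D'),\ \|\gamma\|_\infty \leq 1\bigr\}.$$
Because $H^n_b(\psi)$ is assumed to be an isometric isomorphism from $H^n_b(D')$ to $H^n_b(C')$, it sends the closed unit ball of $H^n_b(D')$ bijectively onto the closed unit ball of $H^n_b(C')$. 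Substituting $\beta = H^n_b(\psi)(\gamma)$ therefore gives
$$\|H_n(\psi)(\alpha)\|_1 = \max\bigl\{\langle \beta, \alpha\rangle : \beta \in H^n_b(C'),\ \|\beta\|_\infty \leq 1\bigr\} = \|\alpha\|_1,$$
where the final equality is again Lemma~\ref{lemma:duality}.

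There is no serious obstacle: the argument is a formal duality manipulation and hinges only on the fact that Lemma~\ref{lemma:duality} supplies an exact (maximum, not just supremum) dual description of $\|\cdot\|_1$ on $H_n(C)$. The only subtlety to double-check in writing out the details is that the Kronecker product really is natural on the level of (co)homology, which follows at once from the identity $\psi'(\gamma) = \gamma \circ \psi$ on cochains and the fact that this identity descends to (bounded) cohomology.
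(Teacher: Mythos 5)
Your proof is correct and follows exactly the route the paper takes: naturality of the Kronecker product combined with the dual description of the $\ell^1$-seminorm from Lemma~\ref{lemma:duality}. You have simply written out in full the two applications of the lemma and the unit-ball bijection that the paper's one-line proof leaves to the reader.
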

\begin{proof}
From  the naturality of the Kronecker product, for every $\alpha\in H_n(C)$, $\varphi\in H^n_b(D')$ we get
$$
\langle H^n_b(\psi)(\varphi),\alpha\rangle=\langle \varphi, H_n(\psi)(\alpha)\rangle\ ,
$$
so the conclusion follows from Lemma~\ref{lemma:duality}.
\end{proof}

\begin{cor}\label{semipr}
Let $(C,d)$ be a normed chain complex and denote by $i\colon C\to \widehat{C}$ the inclusion of $C$
in its metric completion. Then the induced map in homology
$$
H_n(i)\colon H_n(C)\to H_n(\widehat{C})
$$
preserves the seminorm for every $n\in\mathbb{N}$.
\end{cor}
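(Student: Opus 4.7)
The plan is to reduce the statement to Corollary~\ref{semipr0} by showing that the inclusion $i\colon C\to\widehat{C}$ induces an isometric isomorphism on the bounded cohomology of the dual cochain complexes. More precisely, I first want to observe that the topological duals $(C_n)'$ and $(\widehat{C}_n)'$ are canonically identified: since $C_n$ sits as a dense subspace of its metric completion $\widehat{C}_n$, every bounded linear functional on $C_n$ extends uniquely and isometrically to a bounded linear functional on $\widehat{C}_n$, and this extension is inverse to restriction. Hence the dual map $(\hat{i}_n)'\colon (\widehat{C}_n)'\to (C_n)'$ of $i_n\colon C_n\to\widehat{C}_n$ is an isometric isomorphism for every $n$.

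Next I would check that these identifications intertwine the two coboundary operators. The differential $\hat{d}_n\colon \widehat{C}_n\to\widehat{C}_{n-1}$ is by construction the unique continuous extension of $d_n$, so the cochain complex $((\widehat{C})',\hat\delta)$ is canonically isometrically identified with $(C',\delta)$ via the family of maps $(\hat{i}_n)'$. Consequently, the chain map of normed cochain complexes $i^\bullet\colon ((\widehat{C})',\hat\delta)\to (C',\delta)$ induced by $i$ is an isometric isomorphism in every degree, and therefore induces an isometric isomorphism
\[
H^n_b(i^\bullet)\colon H^n_b((\widehat{C})')\longrightarrow H^n_b(C')
\]
for every $n\in\mathbb{N}$.

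Finally, Corollary~\ref{semipr0} applied to the chain map $i\colon C\to\widehat{C}$ immediately yields that the induced homology map $H_n(i)\colon H_n(C)\to H_n(\widehat{C})$ preserves the seminorm, which is the desired conclusion. There is no real obstacle here: the only point requiring any care is the verification that the canonical identification of $(C_n)'$ with $(\widehat{C}_n)'$ really does identify $\delta$ with $\hat\delta$, but this is essentially forced by continuity once one notes that both coboundaries are obtained by dualizing $d_n$ (respectively its continuous extension $\hat{d}_n$), so the entire argument is a straightforward application of the previously established duality machinery.
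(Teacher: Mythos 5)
Your proposal is correct and follows exactly the same route as the paper: identify $(\widehat{C})'$ with $C'$ via density of $C$ in $\widehat{C}$, observe that $i$ therefore induces the identity on bounded cohomology of the dual complex, and invoke Corollary~\ref{semipr0}. The paper compresses this into two sentences; you have simply spelled out the standard Hahn--Banach/extension-by-continuity details.
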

\begin{proof}
Since $i(C)$ is dense in $\widehat{C}$, the map $i$ induces the identity on $C'$, whence
on $H^\bullet_b(C')$. The conclusion follows from Corollary~\ref{semipr0}.
\end{proof}

Corollary~\ref{semipr} implies that, in order to compute seminorms,  one may usually reduce 
to the study of Banach chain complexes.

\section{Some results by Matsumoto and Morita}
In this section we describe some results taken from~\cite{Matsu-Mor}.

\begin{defn}\label{UBC}
Let $(C,d)$ be a normed chain complex. Then $(C,d)$ satisfies the $q$-uniform boundary condition
(or, for short, the $q$-UBC condition) if the following condition holds: there exists $K\geq 0$ 
such that, if $c$ is a boundary in $C_q$, then
there exists a chain $z\in C_{q+1}$ with the property that $d_{q+1}z=c$ and
$\|z\|_1\leq K\cdot \|c\|_1$. 
\end{defn}

\begin{thm}[\cite{Matsu-Mor}, Theorem 2.3]\label{MM1}
Let $(C,d)$ be a Banach chain complex with Banach dual cochain complex
$(C',\delta)$. Then
the following conditions are equivalent:
\begin{enumerate}
\item
$(C,d)$ satisfies $q$-UBC;
\item
the seminorm on $H_q(C)$ is a norm;
\item
the seminorm on $H^{q+1}_b(C')$ is a norm;
\item the Kronecker product induces an isomorphism 
$$
H^{q+1}_b(C')\ \cong (H_{q+1}(C))'\ .
$$
\end{enumerate}
\end{thm}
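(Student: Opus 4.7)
The plan is to set up notation and then prove the chain of equivalences $(1) \Leftrightarrow (2) \Leftrightarrow (3)$ by functional analysis (Open Mapping and Closed Range theorems), and finally $(3) \Leftrightarrow (4)$ by combining injectivity of the Kronecker map with the UBC condition. Let me write $Z_q \subseteq C_q$ for cycles and $B_q = d_{q+1}(C_{q+1})$ for boundaries, and similarly $Z_b^{q+1}, B_b^{q+1} \subseteq (C')^{q+1}$ for cocycles and coboundaries. Since $d_{q+1}$ is bounded, $Z_q$ is closed in the Banach space $C_q$ and likewise $Z_b^{q+1}$ is closed in $(C')^{q+1}$. A routine check shows that the seminorm on $H_q(C) = Z_q/B_q$ is a norm iff $B_q$ is closed in $C_q$, and similarly the seminorm on $H_b^{q+1}(C')$ is a norm iff $B_b^{q+1}$ is closed in $(C')^{q+1}$.

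For $(1) \Leftrightarrow (2)$: if $q$-UBC holds, then $B_q$ is the continuous image of $C_{q+1}$ under a map admitting a bounded section (on boundaries), so a Cauchy sequence in $B_q$ lifts to a Cauchy sequence modulo $Z_{q+1}$; completeness gives closedness of $B_q$. Conversely, if $B_q$ is closed it is Banach, and the induced map $\bar d_{q+1} \colon C_{q+1}/Z_{q+1} \to B_q$ is a continuous bijection between Banach spaces, so the Open Mapping Theorem provides a bounded inverse, yielding $q$-UBC with constant $\|\bar d_{q+1}^{-1}\|$. For $(2) \Leftrightarrow (3)$: $\delta^q$ is precisely the Banach-space adjoint of $d_{q+1}$, so the Closed Range Theorem gives $B_q = \operatorname{Im}(d_{q+1})$ closed iff $B_b^{q+1} = \operatorname{Im}(\delta^q)$ closed.

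For $(3) \Rightarrow (4)$: by Lemma~\ref{lemma:duality} the Kronecker map $K\colon H_b^{q+1}(C') \to (H_{q+1}(C))'$ is surjective, so I only need injectivity. Assume $K([\beta]) = 0$; then $\beta$ vanishes on $Z_{q+1}$ and therefore factors through $C_{q+1}/Z_{q+1}$. Using the already-established equivalence with $q$-UBC, the map $\bar d_{q+1}\colon C_{q+1}/Z_{q+1} \to B_q$ is a Banach space isomorphism, so $\beta$ descends to a bounded functional $\tilde\beta$ on $B_q$. Extending $\tilde\beta$ to some $\alpha \in (C_q)' = (C')^q$ by Hahn--Banach and computing $\delta^q\alpha(c) = \alpha(d_{q+1}c) = \tilde\beta(d_{q+1}c) = \beta(c)$ shows $\beta \in B_b^{q+1}$, i.e.\ $[\beta] = 0$.

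For $(4) \Rightarrow (3)$: suppose $K$ is an isomorphism, and let $[\beta]\in H_b^{q+1}(C')$ have zero seminorm. Then $\beta$ lies in the norm-closure of $B_b^{q+1}$, so there is a sequence $\alpha_n\in (C')^q$ with $\delta^q\alpha_n \to \beta$ in operator norm. For any $z\in Z_{q+1}$ we compute $\delta^q\alpha_n(z) = \alpha_n(d_{q+1}z) = 0$, so $\beta(z) = 0$ by continuity. Hence $K([\beta]) = 0$, and injectivity of $K$ forces $[\beta]=0$, showing the seminorm on $H_b^{q+1}(C')$ is a norm. The main obstacle I expect is verifying carefully the hypotheses of the Closed Range Theorem in step $(2) \Leftrightarrow (3)$ (all ambient spaces must be Banach, and one must use the norm-closed-range version rather than just weak-$*$); everything else is a direct application of Open Mapping and Hahn--Banach together with the surjectivity already provided by Lemma~\ref{lemma:duality}.
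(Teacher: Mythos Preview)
Your proof is correct and follows essentially the same approach as the paper: Open Mapping for $(1)\Leftrightarrow(2)$, Closed Range for $(2)\Leftrightarrow(3)$, and for $(4)$ the combination of Lemma~\ref{lemma:duality} (surjectivity) with the Hahn--Banach extension of the functional induced on $B_q$ (injectivity), plus the observation that zero-seminorm classes pair trivially with cycles for $(4)\Rightarrow(3)$. The only cosmetic difference is that the paper phrases the injectivity step as $(2)\Rightarrow(4)$ rather than $(3)\Rightarrow(4)$, but since you have already established $(1)\Leftrightarrow(2)\Leftrightarrow(3)$ this is immaterial; your concern about the Closed Range Theorem is prudent but not a real obstacle, as all spaces involved are Banach and the norm-closed version applies directly.
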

\begin{proof}
Let us denote by $Z_i$ and $B_i$ the spaces of cycles and boundaries of degree $i$ in $C_i$,
and by $Z^i_b$ and $B^i_b$ the spaces of cocycles and coboundaries of degree $i$ in $(C')^i$.
Being the kernels of bounded maps, the spaces $Z_i$ and $Z^i_b$ are always closed in $C_i$, $C_i'=(C')^i$, respectively.

(1) $\Leftrightarrow$ (2): We endow  the space $C_{q+1}/Z_{q+1}$ with the quotient seminorm. Being the kernel of a bounded
map, the space $Z_{q+1}$ is closed in $C_{q+1}$, so the seminorm is a norm, and the completeness
of $C_{q+1}$ implies the completeness of $C_{q+1}/Z_{q+1}$. 
Condition (1) is equivalent to the fact that the isomorphism $C_{q+1}/Z_{q+1}\cong B_q$ induced by $d_{q+1}$
is bi-Lipschitz. By the open mapping Theorem, this is in turn equivalent to the fact that $B_q$ is complete.
On the other hand, $B_q$ is complete if and only if it is closed in $Z_q$, i.e.~if and only if condition (2) holds.

(2) $\Leftrightarrow$ (3): Condition (2) is equivalent to the fact that the range of $d_{q+1}$ is closed, while (3) holds if and only if
the range of $\delta^q$ is closed. Now
the closed range Theorem~\cite[Theorem 4.14]{Rudin} implies that, if $f\colon V\to W$ is a bounded map between Banach spaces, then
the range of $f$ is closed if and only if the range of the dual map $f'\colon W'\to V'$ is closed, and this 
concludes the proof.

(2) $\Rightarrow$ (4): 
The surjectivity
of the map $H^{q+1}_b(C')\to (H_{q+1}(C))'$ is proved in Lemma~\ref{lemma:duality}. 
Let $f\in Z^{q+1}_b(C')$ be such that $[f]=0$ in $(H_{q+1}(C))'$. Then
$f|_{Z_{q+1}}=0$. 
If (2) holds, then $B_q$ is Banach, so by the open mapping
Theorem the differential $d_{q+1}$ induces a bi-Lipschitz isomorphism $C_{q+1}/Z_{q+1}\cong B_q$. 
Therefore, we have $f=g'\circ d_{q+1}$, where $g'\in (B_q)'$. By Hahn-Banach, $g'$ admits a continuous extension to a map
$g\in (C')^q$, so $f=\delta^q g$, and $[f]=0$ in $H^{q+1}_b(C')$.

The implication (4) $\Rightarrow$ (3) follows from the fact that any element of $H^{q+1}_b(C')$ with vanishing seminorm lies in the kernel of
the map $H^{q+1}_b(C')\to (H_{q+1}(C))'$.
\end{proof}

\begin{cor}[\cite{Matsu-Mor}, Corollary 2.4]\label{vanish0:cor}
Let $(C,d)$ be a Banach chain complex with Banach dual cochain complex
$(C',\delta)$, and let $q\in\mathbb{N}$. Then:
\begin{enumerate}
\item If $H_{q}(C)=H_{q+1}(C)=0$, then $H_b^{q+1}(C')=0$.
\item If $H_b^{q}(C')=H_b^{q+1}(C')=0$, then $H_{q}(C)=0$.
\item $H_\bullet(C)=0$ if and only if $H^\bullet_b(C')=0$.
\end{enumerate}
\end{cor}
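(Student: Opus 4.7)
All three assertions will be reduced to Theorem~\ref{MM1} together with the duality surjection of Lemma~\ref{lemma:duality}, applied in the appropriate degrees.

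For (1), I would first observe that the vanishing $H_q(C)=0$ makes the seminorm on $H_q(C)$ trivially a norm, so condition (2) of Theorem~\ref{MM1} is satisfied. The equivalence (2)$\Leftrightarrow$(4) then yields the Kronecker isomorphism $H_b^{q+1}(C')\cong (H_{q+1}(C))'$, whose right-hand side is zero by the second hypothesis; hence $H_b^{q+1}(C')=0$.

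For (2), I would exploit both vanishing hypotheses. First, $H_b^{q+1}(C')=0$ trivially makes its seminorm a norm, so the implication (3)$\Rightarrow$(2) of Theorem~\ref{MM1} ensures that the seminorm on $H_q(C)$ is an honest norm. Second, the vanishing $H_b^q(C')=0$ combined with the surjection $H_b^q(C')\twoheadrightarrow (H_q(C))'$ from Lemma~\ref{lemma:duality} forces $(H_q(C))'=0$. Since $H_q(C)$ is now a genuinely normed space whose continuous dual vanishes, Hahn--Banach yields $H_q(C)=0$.

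For (3), the ``if'' direction is immediate from (2) applied in every degree. For ``only if'', the cases $n\geq 1$ follow from (1) with $q=n-1$, while the case $n=0$ needs a short separate argument: an element of $Z_b^0(C')$ is a bounded functional on $C_0$ vanishing on $B_0=d_1(C_1)$, and the hypothesis $H_0(C)=C_0/B_0=0$ means $B_0=C_0$, forcing such a functional to be zero.

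I do not foresee any real obstacle, since all the substance is packed into Theorem~\ref{MM1} and Lemma~\ref{lemma:duality}. The only mildly delicate point is recognizing in (2) that one genuinely needs \emph{both} hypotheses: $H_b^{q+1}(C')=0$ is what turns the $H_q(C)$-seminorm into a norm (via UBC), and only then can the Hahn--Banach argument, fueled by $H_b^q(C')=0$, conclude that $H_q(C)$ itself vanishes.
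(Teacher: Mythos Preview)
Your proposal is correct and follows essentially the same route as the paper's proof: all three parts are deduced from Theorem~\ref{MM1} and Lemma~\ref{lemma:duality} exactly as you outline, including the separate degree-zero argument in (3) via the injectivity of $\delta^0$ (equivalently, the surjectivity of $d_1$).
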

\begin{proof}

 (1): By Theorem~\ref{MM1}, if $H_q(C)=0$ then $H_b^{q+1}(C')$ is isomorphic
to $(H_{q+1}(C))'=0$.

(2): By Theorem~\ref{MM1}, if $H^{q+1}_b(C')=0$, then $H_{q}(C)$ is Banach, so 
by Hahn-Banach $H_{q}(C)$
vanishes if and only if $(H_{q}(C))'=0$. But
$H^{q}_b(C')=0$ implies $(H_{q}(C))'=0$ by Lemma~\ref{lemma:duality}.

(3): By claims (1) and (2), we only have to show that $H_\bullet(C)=0$
implies $H^0_b(C')=0$. But $H_0(C)=0$ implies that $d_1\colon C_1\to C_0$
is surjective, so $\delta^0\colon (C')^0\to (C')^1$ is injective and
$H^0_b(C')=0$.
\end{proof}

As observed in~\cite{Matsu-Mor}, a direct application of Corollary~\ref{vanish0:cor} implies the vanishing of the $\ell^1$-homology
of a countable CW-complex with amenable fundamental group. In fact, a stronger result
holds: the $\ell^1$-homology of a countable CW-complex only depends on its fundamental
group. The first proof of this fact is due to Bouarich~\cite{Bua}. 
In Section~\ref{Loeh:sec} we will describe an approach to this result which
is due to L\"oh~\cite{Loeh}, and which is very close in spirit to Matsumoto and Morita's arguments.

We now show how Theorem~\ref{MM1} can be exploited to prove
that, for every group $\G$, the seminorm of $H^2_b(\G)$ is in fact a norm.
An alternative proof of this fact is given in~\cite{Ivanov2}.

\begin{cor}[\cite{Matsu-Mor,Ivanov2}]\label{Banach:h2}
 Let $\G$ be any group. Then $H^2_b(\G)$ is a Banach space, i.e.~the canonical seminorm
of $H^2_b(\G)$ is a norm. If $X$ is any countable CW-complex,
then $H^2_b(X)$ is a Banach space.
\end{cor}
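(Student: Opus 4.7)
My plan is to deduce that the seminorm on $H^2_b(\G,\R)$ is a norm from the canonical nature of the homogeneous representative produced in Proposition~\ref{homo:eu}. Concretely, I would produce, for every class $\alpha \in H^2_b(\G,\R)$, a distinguished cocycle representative $\varphi_\alpha \in \overline{Z}^2_b(\G,\R)$ and a linear comparison $\|\varphi_\alpha\|_\infty \leq C\,\|\alpha\|_\infty$ with $C$ a universal constant. Once such an estimate is in hand, the seminorm is automatically a norm: $\|\alpha\|_\infty = 0$ forces $\varphi_\alpha = 0$, and hence $\alpha = [\varphi_\alpha] = 0$.

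The candidate is exactly the unique homogeneous cocycle $\varphi_\alpha$ from Proposition~\ref{homo:eu}. To control its norm, I would unwind the construction in the proof of that proposition. Starting from any balanced representative $\varphi$ of $\alpha$ (balanced in the sense of Lemma~\ref{balanced}, which costs only a bounded factor in norm), one passes to the associated central extension $p\colon \G' \to \G$, writes $p^*\varphi = \overline{\delta}^1 f$ for some quasimorphism $f \in Q(\G',\R)$, lets $f_h$ be its homogeneous part, and defines $\varphi_\alpha(g_1,g_2) = \overline{\delta}^1 f_h(g'_1,g'_2)$ for arbitrary lifts $g'_i$. The key numerical inequality is then
\begin{equation*}
\|\varphi_\alpha\|_\infty \;=\; \|\overline{\delta}^1 f_h\|_\infty \;=\; D(f_h) \;\leq\; 2\,D(f) \;=\; 2\,\|\overline{\delta}^1 f\|_\infty \;=\; 2\,\|p^*\varphi\|_\infty \;=\; 2\,\|\varphi\|_\infty,
\end{equation*}
where the crucial middle inequality uses Bavard's bound $D(f_h) \leq 2\,D(f)$ on the defect of the homogeneous approximation of a quasimorphism (even the weaker bound $D(f_h) \leq 4\,D(f)$ provided by Proposition~\ref{homogeneous:prop} would suffice), and the last equality uses that $p$ is surjective. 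Since $\varphi_\alpha$ depends only on $\alpha$ and not on the chosen representative $\varphi$, taking the infimum over $\varphi$ delivers $\|\varphi_\alpha\|_\infty \leq C\|\alpha\|_\infty$; the bookkeeping associated to enforcing the balanced condition via Lemma~\ref{balanced} only affects $C$ by a bounded factor.

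Once the Hausdorff property is established, completeness of $H^2_b(\G,\R)$ is automatic: $\overline{C}^2_b(\G,\R)$ is Banach, $\overline{Z}^2_b(\G,\R)$ is closed in it (being the kernel of a bounded operator), and the claim that the canonical seminorm is a norm is exactly the statement that $\overline{B}^2_b(\G,\R)$ is closed in $\overline{Z}^2_b(\G,\R)$. Thus the quotient $H^2_b(\G,\R)$ carries a genuine Banach space structure. For the spatial statement I would invoke Theorem~\ref{gro-iva:thm}, which provides an isometric isomorphism $H^2_b(X,\R) \cong H^2_b(\pi_1(X),\R)$ for every countable CW-complex $X$; this immediately reduces the $H^2_b(X)$ claim to the group case just proved.

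The main obstacle, as I see it, is the linear control $\|\varphi_\alpha\|_\infty \lesssim \|\alpha\|_\infty$. It rests on the sharp quantitative behaviour of homogenization of quasimorphisms --- specifically on the fact that the defect of the homogeneous approximation of a quasimorphism is bounded by a universal multiple of the defect of the original quasimorphism. Without this quantitative ingredient the construction of Proposition~\ref{homo:eu} would only provide canonical representatives in a set-theoretic sense, insufficient to separate classes of vanishing seminorm from zero; with it, the argument becomes essentially a short tracing through the definitions.
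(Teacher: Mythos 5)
Your argument is correct, and it takes a genuinely different route from the paper. The paper proves the group case by showing that the $\ell^1$-chain complex $\cl_\bullet(\G)$ satisfies the $1$-uniform boundary condition, via Mitsumatsu's explicit bounded section $S\colon \cl_1(\G)\to\cl_2(\G)$, $S(g)=\sum_{k\geq 0}2^{-k-1}(g^{2^k},g^{2^k})$, with $\|S\|\leq 1$ and $d_2 S={\rm Id}$; the conclusion then falls out of the equivalence (1)$\Leftrightarrow$(3) in Matsumoto--Morita's Theorem~\ref{MM1}. You instead extract a quantitative consequence from Bouarich's construction (Proposition~\ref{homo:eu}): the unique homogeneous representative $\varphi_\alpha$ of $\alpha$ satisfies $\|\varphi_\alpha\|_\infty\leq C\|\alpha\|_\infty$, because homogenization at most doubles (or, using the weaker bound of Proposition~\ref{homogeneous:prop}, at most quadruples) the defect of a quasimorphism, and the balancing step of Lemma~\ref{balanced} only costs a bounded multiplicative factor. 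The uniqueness clause of Proposition~\ref{homo:eu} is used essentially: it is what guarantees that running the construction on a sequence of small representatives always produces the \emph{same} cocycle $\varphi_\alpha$, which must therefore vanish when $\|\alpha\|_\infty=0$. The paper's route has the advantage of being embedded in the general $\ell^1$-homology/duality framework of Chapter 6, from which the UBC criterion also yields the characterization of injectivity of the comparison map; your route is more self-contained, using only the quasimorphism machinery of Chapter 2, and makes transparent the role of the defect estimate $D(\overline{f})\leq 2D(f)$. The reduction of the spatial statement to the group statement via Theorem~\ref{gro-iva:thm} is identical in both.
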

\begin{proof}
By Theorem~\ref{gro-iva:thm}, the second statement is a consequence of the first one.
Following~\cite{Mitsu}, let us consider the map
$$
S\colon \G\to \cl_2(\G)\, ,\quad S(g)=\sum_{k=0}^\infty 2^{-k-1}(g^{2^k},g^{2^k})\ .
$$
We have $\|S(g)\|_1=1$ and $d_2S(g)=g$ for every $g\in \G$, so 
$S$ extends to a bounded map $S\colon \cl_1(\G)\to\cl_2(\G)$ such that
$d_2S={\rm Id}_{\cl_1(\G)}$. This shows that 
$\cl_\bullet (\G)$ satisfies $1$-UBC (and that $\hl_1(\G)=0$). Then the conclusion follows from
Theorem~\ref{MM1}.
\end{proof}

\section{Injectivity of the comparison map}
Let us now come back to the case when $(C,d)$ is a (possibly non-Banach)
normed chain complex, and let $(\widehat{C},\hat{d})$ be the completion
of $(C,d)$. For every $i\in\mathbb{N}$ we denote by $(C^i)^*=(C_i)^*$
the \emph{algebraic} dual of $C_i$, and we consider
the algebraic dual complex $(C^*,\delta)$ of $(C,d)$ (since this will not raise
any ambiguity, we denote by $\delta$ both the differential
of the algebraic dual complex and the differential of the normed dual complex).
We also denote by $H^\bullet(C^*)$ the cohomology of $(C^\ast,\delta)$.
In the case when $C=C_\bullet(X)$ or $C=C_\bullet(\G)$ we have $H^\bullet(C^*)=H^\bullet(X)$ or $H^\bullet(C^*)=H^\bullet(\G)$ respectively.

The inclusion of complexes $(C',\delta)\to (C^*,\delta)$ induces the
comparison map
$$
c\colon H_b^\bullet(C')\to H^\bullet(C^*)\ .
$$

\begin{thm}[\cite{Matsu-Mor}, Theorem 2.8]\label{MM2}
Let us keep notation from the preceding paragraph. Then the following conditions are equivalent:
\begin{enumerate}
 \item $(C,d)$ satisfies $q$-UBC.
\item $(\widehat{C},\hat{d})$ satisfies $q$-UBC and the space of $(q+1)$-cycles
of $C_{q+1}$ is dense in the space of $(q+1)$-cycles of $\widehat{C}_{q+1}$.
\item The comparison map $c_{q+1}\colon H^{q+1}_b(C')\to H^{q+1}(C^*)$ is injective.
\end{enumerate}
\end{thm}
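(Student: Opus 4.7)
The plan is to establish $(1)\Rightarrow(3)$, $(1)\Leftrightarrow(2)$, and $(3)\Rightarrow(1)$, with the last being the substantive implication. For $(1)\Rightarrow(3)$: if $f\in Z^{q+1}_b(C')$ is a bounded cocycle that is an algebraic coboundary, say $f=\delta^q g$ with $g\in (C^\ast)^q$, then for every $c\in B_q$ the $q$-UBC condition with constant $K$ lets me choose a preimage $z\in C_{q+1}$ with $d_{q+1}z=c$ and $\|z\|_1\le K\|c\|_1$, whence $|g(c)|=|f(z)|\le K\|f\|_\infty\|c\|_1$; thus $g|_{B_q}$ is $\|\cdot\|_1$-bounded, and Hahn-Banach extends it to $\widetilde g\in(C')^q$ for which $\delta^q\widetilde g=f$ holds on $C_{q+1}$ by construction.

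For $(1)\Rightarrow(2)$ I would first prove $q$-UBC for $\widehat C$ by a telescoping construction: given $\hat c\in B_q(\widehat C)$ with preimage $\hat z_0$, approximate $\hat z_0$ by $z_n\in C_{q+1}$, so that $c_n:=d_{q+1}z_n\in B_q(C)$ converges to $\hat c$ in $\widehat C_q$; after passing to a subsequence with $\|c_{n+1}-c_n\|_1\le 2^{-n}\|\hat c\|_1$, use $q$-UBC in $C$ to lift each increment $c_{n+1}-c_n$ (and $c_1$) with controlled norm, and sum the lifts absolutely to obtain $\hat z\in\widehat C_{q+1}$ with $\hat d_{q+1}\hat z=\hat c$ and $\|\hat z\|_1\le K'\|\hat c\|_1$ for a uniform $K'$. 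For the density of cycles, an approximation $z_n\to\hat z\in Z_{q+1}(\widehat C)$ by $z_n\in C_{q+1}$ gives $d_{q+1}z_n\to\hat d_{q+1}\hat z=0$; $q$-UBC produces $y_n\in C_{q+1}$ with $d_{q+1}y_n=d_{q+1}z_n$ and $\|y_n\|_1\le K\|d_{q+1}z_n\|_1\to 0$, so $z_n-y_n\in Z_{q+1}(C)$ still approximates $\hat z$. Conversely, for $(2)\Rightarrow(1)$, write $c\in B_q(C)$ as $d_{q+1}z_0$ in $C$ and use UBC in $\widehat C$ to produce $\hat z\in\widehat C_{q+1}$ with $\hat d_{q+1}\hat z=c$ and $\|\hat z\|_1\le K\|c\|_1$; approximate the cycle $z_0-\hat z\in Z_{q+1}(\widehat C)$ to within $\|c\|_1$ by $z_c\in Z_{q+1}(C)$; then $z:=z_0-z_c\in C_{q+1}$ satisfies $d_{q+1}z=c$ and $\|z\|_1\le(K+1)\|c\|_1$.

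The main obstacle is $(3)\Rightarrow(1)$, which I would prove by deducing $(2)$ and invoking $(2)\Rightarrow(1)$. First, $(3)$ forces the seminorm on $H^{q+1}_b(C')$ to be a norm: if a bounded cocycle $f$ has $\|[f]\|_\infty=0$, then $f$ is a $\|\cdot\|_\infty$-limit of bounded coboundaries $\delta^q g_n$, so $f(z)=\lim g_n(d_{q+1}z)=0$ for every $z\in Z_{q+1}(C)$, and the pointwise vanishing makes $f$ an algebraic coboundary, which $(3)$ upgrades to a bounded coboundary. Since $\widehat C$ is Banach with dual $(C',\delta)$, Theorem~\ref{MM1} applied to $\widehat C$ then yields $q$-UBC for $\widehat C$. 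The trickier half, density of cycles, I would obtain by a Hahn-Banach contradiction: if $\hat z\in Z_{q+1}(\widehat C)$ lies outside the closure of $Z_{q+1}(C)$ in $\widehat C_{q+1}$, there is $\psi\in(C')^{q+1}=(\widehat C_{q+1})'$ with $\psi|_{Z_{q+1}(C)}=0$ and $\psi(\hat z)\ne 0$. The vanishing of $\psi$ on $Z_{q+1}(C)$ makes $\psi$ factor through $C_{q+1}/Z_{q+1}(C)\cong B_q(C)$; extending the factored functional algebraically from $B_q(C)$ to $C_q$ exhibits $\psi$ as an algebraic coboundary, so $(3)$ provides a bounded $g\in(C')^q$ with $\delta^q g=\psi$. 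Extending $g$ by continuity to $\widehat C_q$, the identity $\psi(\hat z)=g(\hat d_{q+1}\hat z)=g(0)=0$ contradicts $\psi(\hat z)\ne 0$. This gives $(2)$, and $(2)\Rightarrow(1)$ then concludes.
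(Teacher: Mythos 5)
Your proof is correct, and for the equivalence $(1)\Leftrightarrow(2)$ as well as the Hahn--Banach density argument inside $(3)\Rightarrow(2)$ you follow essentially the same route as the paper. The one genuine deviation is your direct proof of $(1)\Rightarrow(3)$: given a bounded cocycle $f=\delta^q g$ with $g$ merely algebraic, you observe that for $c\in B_q$ the value $g(c)=f(z)$ is independent of the chosen preimage $z$, pick $z$ with $\|z\|_1\le K\|c\|_1$ via $q$-UBC to deduce $|g(c)|\le K\|f\|_\infty\|c\|_1$, and then Hahn--Banach extends $g|_{B_q}$ to a bounded primitive of $f$. The paper instead routes $(1)\Rightarrow(2)\Rightarrow(3)$, with the second step delegated to Theorem~\ref{MM1}, whose implication $(2)\Rightarrow(4)$ secretly uses the open mapping theorem plus Hahn--Banach. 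Your direct argument is more elementary (no open mapping theorem, no passage through $\widehat C$), makes the constant $K$ and the operator norm of the extension completely explicit, and works unchanged in the non-Banach setting; what the paper's route buys is the ability to cite the already-proven equivalences of Theorem~\ref{MM1} rather than re-running the functional-analytic argument. Both in your proof and the paper's, the step ``$(3)$ implies $\widehat C$ satisfies $q$-UBC'' still invokes Theorem~\ref{MM1} applied to the Banach complex $\widehat C$, you via condition $(3)$ of that theorem (the seminorm is a norm), the paper via condition $(4)$ (the Kronecker pairing is an isomorphism with $(H_{q+1}(\widehat C))'$); these are equivalent characterizations and the choice is immaterial.
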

\begin{proof}
Let us denote by $Z_i$ and $B_i$ (resp.~$\widehat{Z}_i$ and $\widehat{B}_i$)
the spaces of cycles and boundaries  in $C_i$ (resp.~in $\widehat{C}_i$),
and by $Z^i_b$ and $B^i_b$ the spaces of cocycles and coboundaries in $(C')^i$.

 (1) $\Rightarrow$  (2):
Suppose that $(C,d)$ satisfies $q$-UBC with respect to the constant $K\geq 0$, and
fix an element $b\in \widehat{B}_q$.
Using that $B_q$ is dense in $\widehat{B}_q$, it is easy to construct
a sequence $\{b_n\}_{n\in\mathbb{N}}\subseteq  B_q$ such that $\sum_{i=1}^\infty b_i=b$ and
$\sum_{i=1}^\infty \|b_i\|_1\leq 2\|b\|_1$. By (1), for every $i$ we may choose
$c_i\in C_{q+1}$ such that $dc_i=b_i$ and $\|c_i\|_1\leq K\|b_i\|_1$ for a uniform
$K\geq 0$. In particular, the sum $\sum_{i=1}^\infty c_i$ converges
to an element $c\in \widehat{C}_{q+1}$ such that $\hat{d}c=b$ and $\|c\|_1\leq 2K\|b\|_1$, so
$(\widehat{C},\hat{d})$ satisfies $q$-UBC. 

Let us now fix an element $z=\lim_{i\to \infty} c_i\in\widehat{Z}_{q+1}$, where
$c_i\in C_{q+1}$ for every $i$. Choose an element $c'_i\in C_{q+1}$
such that $dc'_i=-dc_i$ (so $c_i+c'_i\in Z_{q+1}$) and $\|c'_i\|_1\leq K\|d c_i\|$. 
Observe that
$$
\|c'_i\|_1\leq K\|dc_i\|_1=K\|\hat{d} (c_i-z)\|_1\leq (q+2)K\|c_i-z\|_1\ ,
$$
so $\lim_{i\to \infty} c'_i=0$. Therefore,
we have $z=\lim_{i\to \infty} (c_i+c'_i)$ and $Z_{q+1}$ is dense in $\widehat{Z}_{q+1}$.

(2) $\Rightarrow$ (1): Suppose that $(\widehat{C},\hat{d})$ satisfies $q$-UBC with
respect to the constant $K\geq 0$, and
fix an element $b\in {B}_q$. Then there exist
$c\in C_{q+1}$ such that $dc=b$ and  
$c'\in \widehat{C}_{q+1}$
such that $\hat{d}c'=b$ and $\|c'\|_1\leq K\|b\|_1$.
Since $Z_{q+1}$ is dense in $\widehat{Z}_{q+1}$ we may find
$z\in Z_{q+1}$ such that $\|c-c'-z\|_1\leq \|b\|_1$. Then we have
$d(c-z)=dc=b$ and 
$$
 \|c-z\|_1\leq \|c-c'-z\|_1+ \|c'\|_1\leq (K+1)\|b\|_1\ .
$$

(2) $\Rightarrow$ (3): Take $f\in Z^{q+1}_b$ such that $[f]=0$
in $H^{q+1}(C^*)$. By the Universal Coefficient Theorem, $f$
vanishes on $Z_{q+1}$. By density, $f$ vanishes on $\widehat{Z}_{q+1}$,
so it defines the null element of $(H_{q+1}(\widehat{C}))'$. But Theorem~\ref{MM1}
implies that, under our assumptions, the natural
map $H^{q+1}_b(C')\to (H_{q+1}(\widehat{C}))'$ is injective, so $[f]=0$ in $H^{q+1}_b(C')$.

(3) $\Rightarrow$ (2): By the Universal Coefficient Theorem, the kernel of the map
$H^{q+1}_b(C')\to (H_{q+1}(C))'$ is contained in the kernel of the comparison map
$H^{q+1}_b(C')\to H^{q+1}(C^*)$, so Theorem~\ref{MM1}
implies that $(\widehat{C},\hat{d})$ satisfies $q$-UBC.

Suppose now that $Z_{q+1}$ is not dense in $\widehat{Z}_{q+1}$. Then there exists
an element $f\in (C')^{q+1}$ which vanishes on $Z_{q+1}$ and is not null
on $\widehat{Z}_{q+1}$. The element $f$ vanishes on $B_{q+1}$, so it belongs to $Z^{q+1}_b$. Moreover,
since $f|_{Z_{q+1}}=0$ we have $[f]=0$ in $H^{q+1}(C^*)$. On the other hand,
since $f|_{\widehat{Z}_{q+1}}\neq 0$ we have $[f]\neq 0$ in $H_b^{q+1}(C')$.
\end{proof}

\begin{cor}
If $\G$ is a group and $q\geq 1$, then the comparison map
$H^q_b(\G)\to H^q (\G)$
 is injective if and only if $C_\bullet(\G)$ satisfies $(q-1)$-UBC.

If $X$ is a topological space and $q\geq 1$, then the comparison map
$H^q_b(X)\to H^q (X)$
 is injective if and only if $C_\bullet(X)$ satisfies $(q-1)$-UBC.
\end{cor}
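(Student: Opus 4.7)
The plan is to deduce the corollary as a direct application of Theorem~\ref{MM2}, after checking that the abstract setup of that theorem specializes to the classical cohomology of groups and of spaces. The only nontrivial verification is that the algebraic and topological dual complexes of $C_\bullet(\G)$ (resp.~$C_\bullet(X)$), together with the comparison map between them, coincide with the objects appearing in the statement.

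First I would spell out the identification for groups. By construction in Section~\ref{homol:group:sec}, the normed chain complex $(C_\bullet(\G),d_\bullet)$ has the inhomogeneous bounded cochain complex $(\overline{C}^\bullet_b(\G,\R),\overline{\delta}^\bullet)$ as its topological dual (this is recorded explicitly at the end of that section). Its \emph{algebraic} dual is the complex $(\overline{C}^\bullet(\G,\R),\overline{\delta}^\bullet)$ of all inhomogeneous cochains, whose cohomology is $H^\bullet(\G,\R)$ by the canonical isomorphism between the inhomogeneous and homogeneous complexes (Section~\ref{bar:sec}). Under these identifications, the comparison map $c^q\colon H^q_b(\G)\to H^q(\G)$ coincides with the map $c_q\colon H^q_b(C')\to H^q(C^\ast)$ appearing in Theorem~\ref{MM2}, since both are induced by the tautological inclusion $\overline{C}^\bullet_b(\G,\R)\hookrightarrow \overline{C}^\bullet(\G,\R)$.

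With these identifications in place, the group statement follows by applying Theorem~\ref{MM2} to $C=C_\bullet(\G)$ with the index $q$ of that theorem replaced by $q-1\geq 0$: condition (1) then reads ``$C_\bullet(\G)$ satisfies $(q-1)$-UBC'', while condition (3) reads ``the comparison map $c_q\colon H^q_b(\G)\to H^q(\G)$ is injective.'' The topological case is handled identically, using that the topological dual of $(C_\bullet(X),\|\cdot\|_1)$ is $(C^\bullet_b(X),\|\cdot\|_\infty)$ and that the algebraic dual complex computes the usual singular cohomology $H^\bullet(X,\R)$, again with the comparison maps matching on the nose.

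No obstacle is expected: the argument is essentially a translation between notations, and the real work has already been done in Theorem~\ref{MM2}. The one mildly delicate point worth flagging explicitly in the written proof is the statement's requirement $q\geq 1$, which corresponds to the requirement $q-1\geq 0$ in Theorem~\ref{MM2}; the case $q=0$ falls outside the scope of the UBC formulation because it would involve boundaries in negative degree.
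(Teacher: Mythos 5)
Your proof is correct and follows exactly the route the paper intends: the corollary is stated immediately after Theorem~\ref{MM2} with no separate proof because it is a direct specialization of that theorem (equivalence of conditions~(1) and~(3)) to $C=C_\bullet(\G)$ and $C=C_\bullet(X)$, with the index shifted from $q$ to $q-1$. Your careful unwinding of the identifications between the normed/algebraic dual complexes and the (bounded) cochain complexes, and of the comparison maps, is precisely the translation the paper leaves implicit.
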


\section{The translation principle}\label{Loeh:sec}
We are now ready to prove that the homology of a Banach chain complex is completely determined
by the bounded cohomology of its dual chain complex.

\begin{thm}[\cite{Loeh}]\label{Loeh}
Let $\alpha\colon C\to D$ be a chain map between Banach chain complexes, and let
$H_\bullet(\alpha)\colon H_\bullet(C)\to H_\bullet(D)$, 
$H^\bullet_b(\alpha)\colon H^\bullet_b(D')\to H^\bullet_b(C')$
be the induced maps in homology and in bounded cohomology. Then:
\begin{enumerate}
\item
The map
$H_n(\alpha)\colon H_n(C)\to H_n(D)$
 is an isomorphism for every $n\in\mathbb{N}$  if and only if $H^n_b(\alpha)\colon H^n_b(D')\to H^n_b(C')$ is
 an isomorphism for every $n\in\mathbb{N}$.
 \item
 If $H^n_b(\alpha)\colon H^n_b(D')\to H^n_b(C')$ is an isometric isomorphism for every $n\in\mathbb{N}$, then
 also $H_n(\alpha)\colon H_n(C)\to H_n(D)$ is.
 \end{enumerate}
\end{thm}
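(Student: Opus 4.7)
The natural strategy is to reduce both assertions to the vanishing criterion already established in Corollary~\ref{vanish0:cor}(3), namely that for a Banach chain complex $(E,d)$ one has $H_\bullet(E)=0$ if and only if $H_b^\bullet(E')=0$. The standard device that converts ``$\alpha$ induces an isomorphism'' into a statement about vanishing is the mapping cone. So my plan is to introduce the mapping cone $(\cone(\alpha),d_{\cone})$ defined by $\cone(\alpha)_n=C_{n-1}\oplus D_n$ with differential $d_{\cone}(c,e)=(-d_Cc,\,\alpha(c)+d_De)$, equip it with the direct sum $\ell^1$-norm $\|(c,e)\|=\|c\|_1+\|e\|_1$, and check that it is a Banach chain complex because the summands are. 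The point is that once this is done, classical homological algebra produces the long exact sequence
\begin{equation*}
\cdots\to H_{n}(C)\tto{H_n(\alpha)} H_n(D)\to H_n(\cone(\alpha))\to H_{n-1}(C)\to\cdots
\end{equation*}
and in particular $H_n(\alpha)$ is an isomorphism for every $n$ if and only if $H_\bullet(\cone(\alpha))=0$.

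Next I would pass to duals. Since $\cone(\alpha)_n$ is a finite direct sum of Banach spaces, its topological dual splits canonically as $(\cone(\alpha)')^n=(C')^{n-1}\oplus (D')^n$, and the dual differential is the one associated to the mapping cocone of $H^\bullet_b(\alpha)\colon H^\bullet_b(D')\to H^\bullet_b(C')$. Consequently there is a dual long exact sequence
\begin{equation*}
\cdots\to H_b^{n-1}(C')\to H_b^n(\cone(\alpha)')\to H_b^n(D')\tto{H^n_b(\alpha)} H_b^n(C')\to\cdots
\end{equation*}
so $H^n_b(\alpha)$ is an isomorphism for every $n$ if and only if $H^\bullet_b(\cone(\alpha)')=0$. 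At this point Corollary~\ref{vanish0:cor}(3) applied to the Banach chain complex $\cone(\alpha)$ yields the equivalence
$H_\bullet(\cone(\alpha))=0\ \Longleftrightarrow\ H^\bullet_b(\cone(\alpha)')=0$, and combining the three equivalences proves part~(1).

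For part~(2), assume that $H^n_b(\alpha)$ is an isometric isomorphism for every $n$. By part~(1) we already know that $H_n(\alpha)$ is an algebraic isomorphism. Corollary~\ref{semipr0} says precisely that when $H^n_b(\alpha)$ is an isometric isomorphism, the induced map $H_n(\alpha)$ preserves the seminorm; combining this with bijectivity gives that $H_n(\alpha)$ is an isometric isomorphism of seminormed spaces, which is the assertion.

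The main obstacle I anticipate is bookkeeping rather than conceptual: one has to make sure the mapping cone construction is performed in the correct category of normed (in fact Banach) chain complexes, with a norm chosen so that (a) the summands remain Banach in the direct sum, (b) the differential $d_{\cone}$ is bounded, and (c) the topological dual of $\cone(\alpha)$ really does coincide with the cone of the dual cochain map (up to a degree shift and sign convention). Once these identifications are made carefully, the connecting homomorphisms in the two long exact sequences are dual to each other in the sense made precise by the Kronecker product of Lemma~\ref{lemma:duality}, and the Five Lemma together with Corollary~\ref{vanish0:cor} does the rest.
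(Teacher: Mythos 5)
Your proposal follows essentially the same route as the paper's proof: introduce the mapping cone of $\alpha$ as a Banach chain complex with the direct-sum $\ell^1$-norm, identify its normed dual with the mapping cocone of the dual map (with the $\ell^\infty$-type max norm), use the two long exact sequences to translate ``$H_\bullet(\alpha)$ (resp.\ $H^\bullet_b(\alpha)$) is an isomorphism in every degree'' into vanishing of the (co)homology of the cone, apply Corollary~\ref{vanish0:cor}(3) to connect the two vanishing statements, and finish part~(2) with Corollary~\ref{semipr0}. The only differences are cosmetic (ordering of summands and sign conventions in the cone differential), so no substantive comparison is needed.
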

\begin{proof}
The proof is based on the fact that the question whether a given chain map induces an isomorphism
in (co)homology may be translated into a question about the vanishing of the corresponding mapping cone, that we
are now going to define.

For every $n\in\mathbb{N}$ we set 
$$
\cone(\alpha)_n=D_n+C_{n-1}
$$
and we define a boundary operator $\overline{d}_n\colon \cone(\alpha)_n\to \cone (\alpha)_{n-1}$ by setting 
$$
\overline{d}_n(v,w)=
(d^Dv+\alpha_{n-1}(w),-d^Cw)\ ,
$$
where $d^C,d^D$ are the differentials of $C,D$ respectively.
We also put on $\cone(\alpha)_n$ the norm obtained by summing the norms of its summands, thus endowing
$\cone(\alpha)_\bullet$ with the structure of a Banach complex.

Dually, we set
$$
\cone(\alpha)^n=(D')^{n}+(C')^{n-1}
$$
and  
$$
\overline{\delta}_n(\varphi,\psi)=
(-\delta_D\varphi,-\delta_C\psi-\alpha^n(\varphi))\ ,
$$
where $\alpha^{n}\colon (D')^{n}\to (C')^{n}$ is the dual map of $\alpha_{n}\colon C_{n}\to D_{n}$,
and $\delta_C,\delta_D$ are the differentials of $C',D'$ respectively. Observe that $\cone(\alpha)^n$, when endowed with the norm given by the maximum of the norms
of its summands, is canonically
isomorphic to the topological dual of $\cone(\alpha)_n$ via the pairing
$$
(\varphi,\psi)(v,w)=\varphi(v)-\psi(w)\ ,
$$
and $(\cone(\alpha)^\bullet,\overline{\delta}^\bullet)$
coincides with the normed dual cochain complex of $(\cone(\alpha)_\bullet,\overline{d}_\bullet)$. As a consequence, by Corollary~\ref{vanish0:cor} we have
\begin{equation}\label{vanish:eq}
 H_\bullet(\cone(\alpha)_\bullet)=0\quad \Longleftrightarrow\quad
H^\bullet_b(\cone(\alpha)^\bullet))=0\ .
\end{equation}
Let us denote by $\Sigma C$ the \emph{suspension} of $C$, i.e.~the complex
obtained by setting $(\Sigma C)^n=C^{n-1}$, and
consider the short exact sequence of complexes
$$
0\tto{} D\tto {\iota} \cone(\alpha)_\bullet \tto{\eta} \Sigma C\tto{} {0}
$$
where $\iota(w)=(0,w)$ and $\eta(v,w)=v$. Of course we have
$H_n(\Sigma C)=H_{n-1} (C)$, so the corresponding long exact sequence
is given by
$$
\ldots \tto{} H_n(\cone(\alpha)_\bullet)\tto{}H_{n-1}(C)\tto{\partial}H_{n-1}(D)\tto{}H_{n-1}
(\cone(\alpha)_\bullet)\tto{}\ldots
$$
Moreover, an easy computation shows that the connecting homomorphism $\partial$ 
coincides with the map $H_{n-1}(\alpha)$. As a consequence, the
map $H_\bullet(\alpha)$ is an isomorphism in every degree if and only if
$H_\bullet(\cone(\alpha)_\bullet)=0$.

A similar argument shows that $H_b^\bullet(\alpha)$ is an isomorphism in every degree if and only if
$H_b^\bullet(\cone(\alpha)^\bullet)=0$, so claim (1) of the theorem follows from~\eqref{vanish:eq}. Point (2) is now an immediate consequence of Corollary~\ref{semipr0}.
\end{proof} 

The converse of claim (2) of the previous theorem does not hold in general (see~\cite[Remark 4.6]{Loeh} for a counterexample).

\begin{cor}
Let $X$ be a path connected countable CW-complex  with fundamental group $\G$.
Then 
$\hl_\bullet(X)$ is isometrically isomorphic to
$\hl_\bullet(\G)$. Therefore, the $\ell^1$-homology of $X$ only depends on its fundamental group.
\end{cor}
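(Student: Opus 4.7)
The strategy is to apply Theorem~\ref{Loeh} (the translation principle) to compare the two Banach chain complexes $\cl_\bullet(X)$ and $\cl_\bullet(\G)$ via a chain map whose normed dual realizes the isometric isomorphism of Theorem~\ref{gro-iva:thm}. Thus the point is to exhibit a chain map at the level of $\ell^1$-chains, dual to the comparison on bounded cochains, rather than to construct new isomorphisms by hand.

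First I would extract from Lemma~\ref{normnon} the underlying chain map on chains. The map $r^\bullet\colon C^\bullet_b(\G,\R)\to C^\bullet_b(\widetilde{X},\R)$ is defined there as the dual of a $\G$-equivariant map $r_\bullet\colon C_\bullet(\widetilde{X},\R)\to C_\bullet(\G,\R)$ (in the homogeneous convention, with tuples of length $n+1$) which sends each singular simplex to a single tuple, so it is norm non-increasing in every degree. Passing to $\G$-coinvariants on the source and using the standard isometric identification between the $\G$-coinvariants of the homogeneous bar complex and the inhomogeneous bar complex used in Section~\ref{homol:group:sec} to define $\cl_\bullet(\G)$, I obtain a norm non-increasing chain map
\[
\rho_\bullet\colon C_\bullet(X,\R)\longrightarrow C_\bullet(\G,\R)
\]
whose normed dual, under the corresponding identification of cochain complexes, is precisely the map $r^\bullet$ of Lemma~\ref{normnon} (viewed as a map $C^\bullet_b(\G,\R)\to C^\bullet_b(\widetilde{X},\R)^\G = C^\bullet_b(X,\R)$).

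Since $\rho_\bullet$ is norm non-increasing in every degree, it extends uniquely to a chain map $\hat{\rho}_\bullet\colon \cl_\bullet(X)\to \cl_\bullet(\G)$ between the completions, and both are Banach chain complexes. Continuous linear functionals on a dense subspace extend uniquely, so the normed dual of $\hat{\rho}_\bullet$ agrees with the normed dual of $\rho_\bullet$, and hence is exactly the chain map $r^\bullet\colon C^\bullet_b(\G,\R)\to C^\bullet_b(X,\R)$. By Theorem~\ref{gro-iva:thm}, the induced map $H^\bullet_b(r^\bullet)\colon H^\bullet_b(\G,\R)\to H^\bullet_b(X,\R)$ is an isometric isomorphism. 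I can now apply part (2) of Theorem~\ref{Loeh} to $\hat{\rho}_\bullet$: since its normed dual cochain map induces an isometric isomorphism in bounded cohomology, $\hat{\rho}_\bullet$ itself induces an isometric isomorphism in $\ell^1$-homology, i.e.~$\hl_\bullet(X)\cong\hl_\bullet(\G)$. The final claim, that $\hl_\bullet(X)$ depends only on $\pi_1(X)$, is then immediate since $\hl_\bullet(\G)$ is defined purely group-theoretically.

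The step that requires the most care is the first one: the homogeneous bar construction that naturally appears in the simplicial pullback $r_\bullet$ (tuples of length $n+1$) must be reconciled with the inhomogeneous bar construction (tuples of length $n$) used to define $\cl_\bullet(\G)$ in this chapter. One has to check that after taking $\G$-coinvariants on $C_\bullet(\widetilde{X},\R)$ (which recovers $C_\bullet(X,\R)$), the induced map $\rho_\bullet$ is well-defined and still norm non-increasing — both of which follow directly from the fact that $r_\bullet$ is $\G$-equivariant and sends single simplices to single tuples. Once this bookkeeping is done, the translation principle of Theorem~\ref{Loeh} supplies the rest of the argument essentially for free.
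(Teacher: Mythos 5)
Your proposal is correct and follows essentially the same route as the paper's proof: both construct a norm non-increasing chain map $\cl_\bullet(X)\to\cl_\bullet(\G)$ dual to the map $r^\bullet$ of Lemma~\ref{normnon}, invoke Theorem~\ref{gro-iva:thm} to see that $r^\bullet$ induces an isometric isomorphism on bounded cohomology, and then apply the translation principle of Theorem~\ref{Loeh}. The only superficial difference is that the paper writes down the inhomogeneous chain-level map directly (sending $s$ to $(g_0^{-1}g_1,\ldots,g_{n-1}^{-1}g_n)$), whereas you derive it by passing to $\G$-coinvariants of the homogeneous map from Lemma~\ref{normnon}; the bookkeeping you flag is exactly the detail the paper handles by defining the map explicitly.
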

\begin{proof}
We retrace the proof of Lemma~\ref{normnon} to construct
a chain map of normed chain complexes $r_\bullet\colon \cl_\bullet(X)\to\cl_\bullet(\G)$.
Let $s\in S_n(X)$ be a singular simplex, 
let $\widetilde{X}$ be the universal covering of $X$, and let
$\widetilde{s}$ be a lift of $s$ in $S_n(\widetilde{X})$.
We choose
a set of representatives $R$ for the action of $\G$ on $\widetilde{X}$, and we set
$$
{r}_n (s)=(g_0^{-1}g_1,\ldots,g_{n-1}^{-1}g_n)\in \G^n\ , 
$$
where $g_i\in \G$ is such that the $i$-th vertex of $\widetilde{s}$
lies in $g_i(R)$. It is easily seen that $r_n(s)$ does not depend on
the chosen lift $\widetilde{s}$. Moreover, $r_n$ extends to a well-defined
chain map $r_\bullet \colon \cl_\bullet(X)\to\cl_\bullet(\G)$, whose dual map
$r^\bullet \colon C_b^\bullet(\G)\to C_b^\bullet (X)$ coincides with (the restriction
to $\G$-invariants) of the map described in Lemma~\ref{normnon}.
Theorem~\ref{gro-iva:thm} implies that $r^\bullet$ induces an isometric isomorphism on bounded
cohomology, so the conclusion follows from Theorem~\ref{Loeh}.
\end{proof}

\begin{cor}
Let $X,Y$ be countable CW-complexes and let $f\colon X\to Y$ be a continuous
map  inducing an epimorphism with amenable kernel on fundamental groups.
Then 
$$
\hl_n(f)\colon \hl_n(X)\to\hl_n(Y)
$$
is an isometric isomorphism for every $n\in\mathbb{N}$. 
\end{cor}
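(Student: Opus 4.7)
The plan is to apply the translation principle (Theorem~\ref{Loeh}) to the chain map on Banach $\ell^1$-complexes induced by $f$. Indeed, the continuous map $f\colon X\to Y$ induces, by post-composition, a chain map $f_\bullet\colon C_\bullet(X)\to C_\bullet(Y)$ that sends each singular simplex to a single simplex and is therefore norm non-increasing in every degree. As such, $f_\bullet$ extends uniquely to a bounded chain map
$$
\bar f_\bullet\colon \cl_\bullet(X)\longrightarrow \cl_\bullet(Y)
$$
between the Banach chain complexes of $\ell^1$-chains. By definition, $\hl_n(f)=H_n(\bar f_\bullet)$.

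Next, I would identify the topological dual of $\bar f_\bullet$. The normed dual of $\cl_\bullet(X)$ is the Banach cochain complex $C_b^\bullet(X)$ (and similarly for $Y$), and the dual of $\bar f_\bullet$ is precisely the pull-back map $f^\bullet\colon C_b^\bullet(Y)\to C_b^\bullet(X)$ whose induced map in cohomology is $H_b^\bullet(f)$. At this point, Gromov's Mapping Theorem (Corollary~\ref{mapping:cor}) applies: since $f$ induces an epimorphism with amenable kernel on fundamental groups (and $X,Y$ are countable CW-complexes), the map
$$
H_b^n(f)\colon H_b^n(Y)\longrightarrow H_b^n(X)
$$
is an isometric isomorphism for every $n\in\mathbb{N}$.

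Finally, I would invoke part~(2) of Theorem~\ref{Loeh}: since $\bar f_\bullet$ is a chain map between Banach chain complexes whose dual induces an isometric isomorphism in bounded cohomology in every degree, the map $H_n(\bar f_\bullet)=\hl_n(f)$ is itself an isometric isomorphism in every degree. There is really no serious obstacle here once the framework is set up correctly; the only minor delicate point is to check that the dual of $\bar f_\bullet$ genuinely coincides, at the level of cohomology, with the pull-back map to which Corollary~\ref{mapping:cor} applies, but this is immediate from the definitions of the $\ell^\infty$- and $\ell^1$-structures on the respective complexes. The whole argument is thus a clean combination of Gromov's Mapping Theorem with L\"oh's translation principle.
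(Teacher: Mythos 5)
Your proposal is correct and is exactly the argument the paper gives (the paper's proof is just the one-line invocation of Corollary~\ref{mapping:cor} and Theorem~\ref{Loeh}, whose details you have correctly unwound). No issues.
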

\begin{proof}
The conclusion follows from Corollary~\ref{mapping:cor} and Theorem~\ref{Loeh}.
\end{proof}

\section{Gromov equivalence theorem}
Let now $(X,Y)$ be a topological pair. The $\ell^1$-norm on $C_n(X)$ descends to a norm on the relative singular chain module
$C_n(X,Y)$, which induces in turn a seminorm $\|\cdot \|_1$ on $H_n(X,Y)$. Such a seminorm plays an important role in some contexts of our interest, e.g.~in the definition
of the simplicial volume of manifolds with boundary (see Chapter~\ref{simplicial:chapter}).

It makes sense to consider also other seminorms on $H_n(X,Y)$. Indeed, it can often be useful to perturb the usual $\ell^1$-seminorm by adding a term which 
takes into account the weight of the boundaries of \emph{relative} cycles. To this aim, 
Gromov introduced in~\cite{Gromov} the following one-parameter family of seminorms on $H_n(X,Y)$.

Let $\theta\in [0,\infty)$ and consider the norm $\|\cdot\|_1(\theta)$ on $C_n(X)$ 
defined by $\|c\|_1(\theta)=\|c\|_1+\theta\|\partial_n c\|_1$. 
Every such norm is equivalent to the usual norm $\|\cdot\|_1=\|\cdot\|_1(0)$ for 
every $\theta\in[0,\infty)$ and induces a quotient seminorm on relative homology, 
still denoted by $\|\cdot \|_1(\theta)$.
By passing to the limit, one can also define a seminorm $\|\cdot\|_1(\infty)$ that, 
however, may be non-equivalent to $\|\cdot\|_1$.
The following result is stated by Gromov in~\cite{Gromov}.

\begin{thm}[{Equivalence theorem, \cite[p.~57]{Gromov}}]\label{thm:equi}
Let $(X,Y)$ be a pair of countable CW-complexes, and let $n\geq 2$.
If the fundamental groups of all connected components of $Y$ are amenable,
then the seminorms $\|\cdot\|_1(\theta)$ on $H_n(X,Y)$
coincide for every $\theta\in[0,\infty]$.
\end{thm}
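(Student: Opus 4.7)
The plan is to reduce the theorem to the inequality $\|\alpha\|_1(\infty)\leq\|\alpha\|_1$ for every $\alpha\in H_n(X,Y)$, since the reverse inequality $\|\alpha\|_1\leq\|\alpha\|_1(\theta)\leq\|\alpha\|_1(\infty)$ is built into the definition. Fix $\theta\in(0,\infty)$; it suffices to show $\|\alpha\|_1(\theta)\leq\|\alpha\|_1$, because $\|\alpha\|_1(\infty)=\sup_\theta\|\alpha\|_1(\theta)$. The whole argument is a dualization: I will produce an $\ell^\infty$-cochain representative, use Theorem~\ref{reliso:thm} to move it between the absolute and the relative bounded cohomology of $(X,Y)$, and then pair it back against a cycle.

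First I would set up the ``doubled'' normed space $V=C_n(X)\oplus C_{n-1}(Y)$ equipped with the norm $\|(u,v)\|_1(\theta)=\|u\|_1+\theta\|v\|_1$. Its topological dual is $V^\ast=C_b^n(X)\oplus C_b^{n-1}(Y)$ with $\|(f,g)\|_\infty(\theta)=\max\{\|f\|_\infty,\theta^{-1}\|g\|_\infty\}$. If $c$ is a representative of $\alpha$ and $W\subset V$ is the closed subspace generated by pairs of the form $(\partial_{n+1}z,0)$ and $(i_n(w),\partial_n w)$ for $z\in C_{n+1}(X)$ and $w\in C_n(Y)$, then one checks directly that $c$ and $c'$ represent the same class in $H_n(X,Y)$ iff $(c,\partial c)-(c',\partial c')\in W$. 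Consequently $\|\alpha\|_1(\theta)=\operatorname{dist}((c,\partial c),W)$. The Hahn--Banach theorem then produces $(f,g)\in V^\ast$ with $(f,g)|_W=0$, $(f,g)(c,\partial c)=\|\alpha\|_1(\theta)$ and $\|(f,g)\|_\infty(\theta)=1$. The vanishing on $W$ translates exactly into $\delta f=0$ and $i^\ast(f)=-\delta g$ (so, in particular, $\|f\|_\infty\leq 1$).

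Next, extend $g$ to some $\hat g\in C_b^{n-1}(X)$ with $\|\hat g\|_\infty=\|g\|_\infty$ (for instance by declaring $\hat g$ to vanish on simplices not contained in $Y$), and set $f'=f+\delta\hat g$. Then $f'$ is a genuine \emph{relative} bounded cocycle: it is closed because $f$ is, and it vanishes on simplices with image in $Y$ because $i^\ast f'=i^\ast f+\delta g=0$. Hence $f'$ defines a class $[f']\in H^n_b(X,Y)$ whose image under the natural map $H^n_b(X,Y)\to H^n_b(X)$ is $[f]$, and $\|[f]\|_\infty\leq\|f\|_\infty\leq 1$. This is where amenability enters: by Theorem~\ref{reliso:thm} the map $H^n_b(X,Y)\to H^n_b(X)$ is an \emph{isometric} isomorphism in degree $n\geq 2$, so $\|[f']\|_\infty=\|[f]\|_\infty\leq 1$.

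Fixing $\varepsilon>0$, I can therefore choose $f''\in C_b^{n-1}(X,Y)$ with $\|f'+\delta f''\|_\infty\leq 1+\varepsilon$. Since $f''$ vanishes on simplices contained in $Y$, one has $f''(\partial_n c)=0$, and using also $\hat g(\partial_n c)=g(\partial_n c)$, a direct computation gives
\[
\|\alpha\|_1(\theta)=f(c)+g(\partial_n c)=f'(c)=(f'+\delta f'')(c)\leq(1+\varepsilon)\|c\|_1
\]
for any representative $c$ of $\alpha$. Taking the infimum over $c$ and letting $\varepsilon\to 0$ yields $\|\alpha\|_1(\theta)\leq\|\alpha\|_1$, as required; letting $\theta\to\infty$ then concludes the proof. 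The main obstacle is step three: the bridge from the dual-pair existence of $(f,g)$ to a genuine \emph{relative} bounded cohomology class $[f']$, and critically the use of Theorem~\ref{reliso:thm} to transfer the operator-norm bound from $H^n_b(X)$ back onto $[f']\in H^n_b(X,Y)$, so that the perturbation $f''$ can be chosen without disturbing the pairing with $\partial_n c$.
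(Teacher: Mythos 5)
Your proof is correct, and it takes a genuinely different (though closely related) path from the one the paper actually prints. The published proof packages $C_n(X)\oplus C_{n-1}(Y)$ as a \emph{chain complex} (Park's homology mapping cone $C_\bullet(Y\to X)$ with the twisted differential $\overline d_n(u,v)=(\partial_n u+i_{n-1}(v),-\partial_{n-1}v)$), establishes that the cone computes $H_\bullet(X,Y)$ isometrically with respect to $\|\cdot\|_1(\theta)$ (Lemma~\ref{lem: theta iso}), shows that the dual cohomological cone computes $H^\bullet_b(X,Y)$ isometrically for every $\theta$ (Proposition~\ref{fund:prop}, which is where Theorem~\ref{reliso:thm} enters), and then concludes by the abstract duality principle of Corollary~\ref{semipr0}: an isometric isomorphism on Banach dual cochain complexes forces the chain-level map to be norm-preserving. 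You instead treat the \emph{same} doubled space $V=C_n(X)\oplus C_{n-1}(Y)$ merely as a normed vector space, identify $\|\alpha\|_1(\theta)$ with a distance to the explicit subspace $W$, and extract a suitable functional $(f,g)$ directly by Hahn--Banach. You then make the duality concrete: the functional becomes a relative bounded cocycle $f'=f+\delta\hat g$, Theorem~\ref{reliso:thm} transfers the operator-norm bound across $H^n_b(X,Y)\cong H^n_b(X)$, and the final inequality is obtained by pairing the perturbed representative $f'+\delta f''$ against arbitrary representatives of $\alpha$. Both arguments hinge on the same nontrivial input --- the isometric isomorphism $H^n_b(X,Y)\to H^n_b(X)$ for amenable $Y$; the paper's route buys conceptual uniformity with the rest of Chapter 6 (mapping cones and the translation principle for Banach complexes), while yours is more hands-on and self-contained, avoiding the cone formalism entirely. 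Two small things to tighten: (i) Hahn--Banach as you invoke it (norm-one functional achieving the distance) requires $\operatorname{dist}((c,\partial c),W)>0$; when the distance is zero, $\|\alpha\|_1(\theta)=0\leq\|\alpha\|_1$ is immediate and the case can be dismissed up front; (ii) you call $W$ the \emph{closed} subspace generated by those pairs, but no closure is needed --- the linear span suffices, since the distance to a subspace equals the distance to its closure anyway.
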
 

In this section we describe the proof of Theorem~\ref{thm:equi} given in~\cite{BBFIPP}, which is based on the following strategy:
we first introduce a one-parameter family of seminorms in bounded cohomology (that were first defined by Park~\cite{Park}); building on the fact that
$H^n_b(X,Y)$ is isometrically isomorphic to $H^n_b(X)$ (see Theorem~\ref{reliso:thm}) we then show that all these seminorms coincide; finally, we conclude the proof via an argument
making use of duality.

So let us fix a pair $(X,Y)$ of countable CW-complexes, and assume from now on that the fundamental groups of all connected components of $Y$ are amenable.
Following~\cite{Park2}, we first exploit a cone construction to give a useful alternative description of the seminorms
$\|\cdot\|_1(\theta)$ on $H_n(X,Y)$ (see also \cite{Loeh}). 

Let us denote by
$i_n\colon C_n(Y)\to C_n(X)$ the map induced by the inclusion $i\colon Y\to X$.
The homology mapping cone complex of $(X,Y)$ 
is the complex
$\left(C_\bullet(Y\to X),\overline{d}_\bullet\right))$, where 
$$C_n(Y\to X)=C_n(X)\oplus C_{n-1}(Y)\, \quad
\overline{d}_n(u,v)=(\partial_n u +i_{n-1}(v),-\partial_{n-1}(v))
\, ,$$ 
where $\partial_\bullet$ denotes the usual differential both of $C_\bullet(X)$ and of $C_\bullet (Y)$.
The homology of the mapping cone $(C_\bullet(Y\to X), \overline{d}_\bullet)$ 
is  denoted by $H_\bullet(Y\rightarrow X)$. 
For every $n\in\mathbb{N}$, $\theta\in[0,\infty)$ one can endow $C_n(Y\to X)$ with the norm
$$\|(u,v)\|_1(\theta)=\|u\|_1+ \theta \|v\|_1\ , $$
which induces in turn a seminorm, still denoted by $\|\cdot \|_1 (\theta)$, 
on $\h_n(Y\rightarrow X)$ (in~\cite{Park2}, Park restricts her attention
only to the case $\theta\geq 1$).
The chain map
\begin{equation}\label{betaPark:eq}
\beta_n\colon C_n(Y\to X)\longrightarrow C_n(X,Y)\, ,\qquad
\beta_n (u,v)=[u]
\end{equation}
induces a map $H_n(\beta_n)$
in homology. 

\begin{lemma} \label{lem: theta iso} The map
$$H_n(\beta_n)\colon\left( \h_n(Y\rightarrow X),\|\cdot \|_1(\theta)\right)  \longrightarrow \left( \h_n(X,Y),\|\cdot \|_1(\theta)\right)$$
is an isometric isomorphism for every $\theta\in [0,+\infty)$.
\end{lemma}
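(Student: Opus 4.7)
The plan is to show that $H_n(\beta_n)$ is an isomorphism by constructing an explicit inverse at the chain level, and then to derive the isometry statement by comparing norms on cleverly chosen representatives.

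First I would verify that $\beta_\bullet$ is a chain map: if $(u,v)\in C_n(Y\to X)$, then $\beta_{n-1}(\overline{d}_n(u,v))=[\partial u + i_{n-1}(v)]=[\partial u]=\partial_n\beta_n(u,v)$ in $C_{n-1}(X,Y)$, since $i_{n-1}(v)$ vanishes in the quotient. Since the $\ell^1$-norm on $C_n(X,Y)$ is the quotient norm, we have $\|\beta_n(u,v)\|_1=\|[u]\|_1\leq \|u\|_1\leq \|(u,v)\|_1(\theta)$, so $\beta_\bullet$ is norm non-increasing for every $\theta\geq 0$. In particular $\|H_n(\beta_n)(\alpha)\|_1(\theta)\leq \|\alpha\|_1(\theta)$ for every $\alpha\in H_n(Y\to X)$.

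Next I would construct the candidate inverse. Observe that $i_{n-1}\colon C_{n-1}(Y)\to C_{n-1}(X)$ is injective, since distinct singular simplices in $Y$ compose with the inclusion $Y\hookrightarrow X$ to give distinct singular simplices in $X$; in particular $i_{n-1}$ is an isometric embedding with respect to the $\ell^1$-norms. Given a relative cycle $u\in C_n(X)$ representing $\alpha\in H_n(X,Y)$, the chain $\partial u$ lies in $i_{n-1}(C_{n-1}(Y))$, so there exists a unique $v\in C_{n-1}(Y)$ with $i_{n-1}(v)=\partial u$. I then set $\gamma_n(u)=(u,-v)\in C_n(Y\to X)$; by construction $\overline{d}_n\gamma_n(u)=(\partial u-i_{n-1}(v),\partial v)=(0,0)$ since $\partial v=0$ (as $i_{n-2}$ is also injective and $i_{n-2}(\partial v)=\partial i_{n-1}(v)=\partial\partial u=0$). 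The key step is to show that $\gamma_n$ descends to a well-defined map $H_n(X,Y)\to H_n(Y\to X)$: if $u_1-u_2=\partial w + i_n(z)$ with $w\in C_{n+1}(X)$, $z\in C_n(Y)$, and $i_{n-1}(v_j)=\partial u_j$, then injectivity of $i_{n-1}$ forces $v_1-v_2=\partial z$, whence $(u_1,-v_1)-(u_2,-v_2)=(\partial w+i_n(z),-\partial z)=\overline{d}_{n+1}(w,z)$, so the two lifts are homologous in the mapping cone.

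Finally, a direct computation shows that the induced map $\gamma_n^{\mathrm{hom}}\colon H_n(X,Y)\to H_n(Y\to X)$ is a two-sided inverse of $H_n(\beta_n)$: the composition $H_n(\beta_n)\circ\gamma_n^{\mathrm{hom}}$ is obviously the identity, while for a mapping cone cycle $(u,v)$ with $\partial u=-i_{n-1}(v)$ the recipe above applied to $[u]$ produces precisely $(u,v)$. To promote the isomorphism to an isometry it now suffices to show that $\gamma_n^{\mathrm{hom}}$ is norm non-increasing. Given $\alpha\in H_n(X,Y)$ and $\varepsilon>0$, choose a representative $u$ with $\|u\|_1+\theta\|\partial u\|_1\leq \|\alpha\|_1(\theta)+\varepsilon$; since $i_{n-1}$ is an isometric embedding, $\|v\|_1=\|\partial u\|_1$, and therefore
\[
\|\gamma_n^{\mathrm{hom}}(\alpha)\|_1(\theta)\leq \|(u,-v)\|_1(\theta)=\|u\|_1+\theta\|\partial u\|_1\leq \|\alpha\|_1(\theta)+\varepsilon,
\]
and letting $\varepsilon\to 0$ completes the argument. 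The only point that requires care is the well-definedness of $\gamma_n^{\mathrm{hom}}$ on homology, which I expect to be the main (mild) obstacle; everything else reduces to bookkeeping with the explicit formulae for $\overline{d}_\bullet$ and the quotient norm on $C_\bullet(X,Y)$.
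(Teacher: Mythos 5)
Your proof takes essentially the same route as the paper: both construct the explicit inverse $[u]\mapsto[(u,-\partial_n u)]$ at the homology level, check its well-definedness, and then establish the isometry by comparing $\theta$-norms of representatives. The one place where your write-up is imprecise is the assertion that the chain-level estimate $\|\beta_n(u,v)\|_1 \leq \|(u,v)\|_1(\theta)$ yields ``in particular'' the inequality $\|H_n(\beta_n)(\alpha)\|_1(\theta)\leq\|\alpha\|_1(\theta)$. The displayed bound only controls the $\ell^1$-norm (i.e.\ the $\theta=0$ seminorm) of $[u]$, whereas the $\theta$-seminorm on $\h_n(X,Y)$ also charges the boundary term $\theta\|\partial_n u\|_1$. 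The inequality you want does hold, but for the same reason you invoke later for $\gamma_n^{\mathrm{hom}}$: a mapping cone cycle $(u,v)$ automatically satisfies $\partial_n u = -i_{n-1}(v)$, so $u$ is a relative cycle with $\|u\|_1(\theta)=\|u\|_1+\theta\|\partial_n u\|_1=\|u\|_1+\theta\|v\|_1=\|(u,v)\|_1(\theta)$. The paper packages this symmetry by factoring $\beta_n$ through the map $\beta_n'(u,v)=u$ and observing that $\beta_n'$ becomes a $\theta$-norm isometry once restricted to cycles, with every relative cycle in its image; your argument proves the same thing but applies the key observation only in one of the two directions, leaving the other as a (true but unjustified) ``in particular.'' Filling that single sentence closes the gap, and otherwise the two proofs are the same.
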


\begin{proof} 
It is immediate to check that $H_n(\beta_n)$ admits
the inverse map 
$$
\h_n(X,Y)\to \h_n(Y\to X)\, ,\qquad 
[u]\mapsto [(u,-\partial_n u)]\, .
$$ 
Therefore, $H_n(\beta_n)$ is an isomorphism, and we are left to show
that it is norm-preserving.

Let us set
$$
\beta'_n\colon C_n(Y\to X)\to C_n (X)\, ,\qquad 
\beta'_n(u,v)=u\, .
$$
By construction, $\beta_n$ is the composition of $\beta_n'$ with the 
natural projection $C_n(X)\to C_n(X,Y)$.
Observe that an element $(u,v)\in C_n(Y\to X)$ is a cycle if and only if
$\partial_nu=-i_{n-1}(v)$. As a consequence,
although the map $\beta'_n$ is not norm non-increasing in general, 
it does preserve norms when restricted to the space of cycles $Z_n(Y\rightarrow X)$. 
Moreover, every chain in $C_n(X)$ representing a relative cycle is contained in
$\beta'_n (Z_n(Y\rightarrow X))$,
and this concludes the proof.
\end{proof}

For $\theta\in (0,\infty)$, the dual normed chain complex of 
$(C_n(Y\to X),\|\cdot \|_1(\theta))$
is
Park's mapping cone for relative bounded cohomology~\cite{Park}, that is the complex
$(\cb^n (Y\to X),\overline \delta^n)$, where
$$
\cb^n (Y\to X)=\cb^n(X)\oplus \cb^{n-1}(Y)\, ,\quad
\overline{\delta}^n (f,g)=(\delta^n f, -i^n(f)-\delta^{n-1}g)
$$
endowed with the norm 
$$
\|(f,g)\|_\infty(\theta)=\max\{\|f\|_\infty ,\theta^{-1}
\|g\|_\infty\}. 
$$
We endow the cohomology $H_b^\bullet(Y\rightarrow X)$
of the complex $(\cb^\bullet(Y\to X),\overline{\delta}^n)$ with the quotient seminorm, which will  still be denoted
by $\|\cdot\|_\infty(\theta)$.
The chain map
\bqn
\beta^n\colon \cb^n(X,Y) \longrightarrow \cb^n(Y\to X),\qquad
\beta^n (f)= (f,0)
\eqn
induces a map $H^n(\beta^n)\colon H_b^n(X,Y)\to H_b^n(Y\to X)$. Building on the fact that $H^n_b(X,Y)$ is isometrically isomorphic to $H^n_b(X)$
we can now prove the following:

\begin{prop}\label{fund:prop}
For every $n\geq 2$, $\theta\in (0,\infty)$,
the map 
$$
\h^n(\beta^n)\colon \left(H_b^n(X,Y),\|\cdot\|_\infty\right)\to \left(H_b^n(Y\to X), \|\cdot \|_\infty(\theta)\right)
$$
is an isometric isomorphism.
\end{prop}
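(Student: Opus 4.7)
The plan is to reduce Proposition~\ref{fund:prop} to Theorem~\ref{reliso:thm} (which gives that $H_b^n(j^n)\colon H_b^n(X,Y)\to H_b^n(X)$ is an isometric isomorphism for $n\geq 2$) via the obvious projection out of the mapping cone. Namely, I would consider the map $\pi^\bullet\colon C_b^\bullet(Y\to X)\to C_b^\bullet(X)$ defined by $\pi^n(f,g)=f$. From the explicit formula $\overline{\delta}^n(f,g)=(\delta^n f,-i^n(f)-\delta^{n-1}g)$ one sees immediately that $\pi^\bullet$ is a chain map, and it is norm non-increasing from $\|\cdot\|_\infty(\theta)$ to $\|\cdot\|_\infty$, since $\|f\|_\infty\leq \max\{\|f\|_\infty,\theta^{-1}\|g\|_\infty\}$. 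The key compatibility at the cochain level is that $\pi^n\circ \beta^n=j^n$.

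To control $\pi^\bullet$ homologically, I would identify its kernel. The pairs $(0,g)$ with $g\in C_b^{n-1}(Y)$ form a subcomplex on which $\overline{\delta}^n$ restricts to $(0,g)\mapsto (0,-\delta^{n-1}g)$. Hence there is a short exact sequence of complexes $0\to C_b^{\bullet-1}(Y)\to C_b^\bullet(Y\to X)\to C_b^\bullet(X)\to 0$ (up to a harmless sign on the left-hand differential), yielding the long exact sequence $\cdots\to H_b^{n-1}(Y)\to H_b^n(Y\to X)\to H_b^n(X)\to H_b^n(Y)\to\cdots$. Since every connected component of $Y$ has amenable fundamental group, Corollary~\ref{amenable:real:cor} together with Theorem~\ref{gro-iva:thm} (applied componentwise) gives $H_b^k(Y)=0$ for every $k\geq 1$. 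Therefore, for every $n\geq 2$, the induced map $H_b^n(\pi^n)\colon H_b^n(Y\to X)\to H_b^n(X)$ is an isomorphism.

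To conclude, I would use the identity $H_b^n(\pi^n)\circ H_b^n(\beta^n)=H_b^n(j^n)$, the right-hand side being an isometric isomorphism by Theorem~\ref{reliso:thm}. Since $H_b^n(\pi^n)$ is an isomorphism, so is $H_b^n(\beta^n)$. For the isometric statement, the upper bound is immediate: any representative $f$ of $\alpha\in H_b^n(X,Y)$ produces the cocycle $(f,0)\in C_b^n(Y\to X)$ representing $H_b^n(\beta^n)(\alpha)$ with $\|(f,0)\|_\infty(\theta)=\|f\|_\infty$, so passing to the infimum gives $\|H_b^n(\beta^n)(\alpha)\|_\infty(\theta)\leq\|\alpha\|_\infty$. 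The reverse bound follows because $H_b^n(\pi^n)$ is norm non-increasing, so
$$\|\alpha\|_\infty=\|H_b^n(j^n)(\alpha)\|_\infty=\|H_b^n(\pi^n)(H_b^n(\beta^n)(\alpha))\|_\infty\leq \|H_b^n(\beta^n)(\alpha)\|_\infty(\theta).$$
The only mildly delicate point is handling a disconnected $Y$ in the vanishing $H_b^k(Y)=0$; but this is a routine splitting of bounded cochain complexes over components. There is no serious obstacle, as Theorem~\ref{reliso:thm} has already absorbed the substantive content.
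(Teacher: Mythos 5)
Your proof is correct, and it arrives at the same endpoint by a mildly different route for the isomorphism claim. The paper first proves that $H^n_b(\beta^n)$ is an isomorphism by an explicit cochain-level computation (showing directly that the composition $Z^n_b(X,Y)\to Z^n_b(Y\to X)\to H^n_b(Y\to X)$ is surjective with kernel $\delta\cb^{n-1}(X,Y)$, using the extension-by-zero $g\mapsto g'$), and this step uses no hypothesis on $Y$ at all; the amenability of the components of $Y$ enters only at the very end, via Theorem~\ref{reliso:thm}, to promote the isomorphism to an isometry. You instead route the isomorphism claim through the long exact sequence of the short exact sequence $0\to C_b^{\bullet-1}(Y)\to C_b^\bullet(Y\to X)\to C_b^\bullet(X)\to 0$ together with $H^k_b(Y)=0$ for $k\geq 1$, showing that the projection $H^n_b(\pi^n)$ is an isomorphism and then composing. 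This is cleaner from a homological-algebra standpoint but invokes the amenability of $Y$ twice (once for the vanishing, once inside Theorem~\ref{reliso:thm}), whereas the paper's argument isolates exactly where the hypothesis is needed. Both arguments then conclude the isometry the same way: from the factorization $\gamma^n\circ\beta^n=j^n$ (your $\pi$ is the paper's $\gamma$), the fact that both $\gamma^\bullet$ and $\beta^\bullet$ are norm non-increasing, and the isometric isomorphism $H^n_b(j^n)$ of Theorem~\ref{reliso:thm}. There is no gap in your argument; the minor indexing and sign issues you flag (the degree shift on $C_b^{\bullet-1}(Y)$, the sign on its differential, the component-by-component application of $H^k_b(Y)=0$) are indeed harmless and you correctly identify them as such.
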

\begin{proof} 
Let us first prove that $\h^n(\beta^n)$ is an isomorphism (here we do not use any hypothesis
on $Y$).
To this aim, it is enough to show that, if we denote by $Z^n_b(Y\to X)$ the space of bounded cocycles in $C^n_b(X\to Y)$, then the composition
\begin{equation}\label{comp:eq}
\xymatrix{
Z^n_b(X,Y)\ar[r]^{\beta^n} & Z^n_b(Y\to X) \ar[r]&  H_b^n(Y\to X)
}
\end{equation}
is surjective with kernel $\delta\cb^{n-1}(X,Y)$. For any $g\in \cb^\bullet(Y)$ 
we denote by $g'\in \cb^\bullet(X)$ the extension of $g$ 
that vanishes on simplices with image not contained in $Y$.
Let us take $(f,g)\in Z^n_b(Y\to X)$. From $\overline{\delta}^n(f,g)=0$
we deduce that $f+\delta g'\in Z^n_b(X,Y)$. Moreover, $(f+\delta g',0)-(f,g)=-\overline{\delta}^{n-1}(g',0)$, 
so the map~\eqref{comp:eq} above is surjective.
Finally, if $f\in Z_b^n(X,Y)$ and $\overline{\delta}^{n-1}(\alpha,\beta)=(f,0)$, 
then $\alpha+\delta\beta'$ belongs to $\cb^{n-1}(X,Y)$ and $\delta(\alpha+d\beta')=f$. 
This concludes the proof that $\h^n(\beta^n)$ is an isomorphism.

Let us now exploit the fact that the fundamental group of each component of $Y$ is amenable.
We
consider the
chain map $$\gamma^\bullet \colon \cb^\bullet (Y\to X)\to \cb^\bullet(X),\qquad(f,g)\mapsto f\ .$$
For every $n\in\mathbb{N}$
the composition $\gamma^n\circ \beta^n$ coincides with the inclusion
$j^n\colon \cb^n(X,Y)\to \cb^n(X)$.
By Theorem~\ref{reliso:thm}, for every $n\geq 2$ the map $\h(j^n)$ is an isometric isomorphism. 
Moreover, both $\h^n(\gamma^n)$ and $\h^n(\beta^n)$ are norm non-increasing, 
so we may conclude that the isomorphism $\h^n(\beta^n)$ is isometric for every $n\geq 2$.
\end{proof}

We have now collected all the results we need to prove Gromov equivalence theorem. 
Indeed, it follows from the very definitions that $(C^n_\bullet(X,Y),\|\cdot\|_\infty)$ is the topological dual complex
to $(C_\bullet(X,Y),\|\cdot\|_1)=(C_\bullet(X,Y),\|\cdot\|_1(0))$. Moreover, by construction
$(C^\bullet_b(Y\to X),\|\cdot\|_\infty(\theta))$ is the topological dual complex of
$(C_\bullet(Y\to X),\|\cdot\|_1(\theta))$, for every $\theta\in (0,+\infty)$. Therefore, Proposition~\ref{fund:prop} and Corollary~\ref{semipr} ensure that the map
$$
\h_n(\beta_n)\colon \left(\h_n(Y\to X),\|\cdot \|_1(\theta)\right)\to  \left(\h_n(X,Y),\|\cdot\|_1\right)
$$
is norm-preserving. 
On the other hand,
we know from Lemma~\ref{lem: theta iso} that 
the map
$$\h_n(\beta_n)\colon\left( \h_n(Y\rightarrow X),\|\cdot \|_1(\theta)\right)  \longrightarrow \left( \h_n(X,Y),\|\cdot \|_1(\theta)\right)$$
is an isometric isomorphism for every $\theta\in [0,+\infty)$.
Putting toghether these facts we deduce that the identity
between $\left(\h_n(X,Y),\|\cdot\|_1\right)$ and $\left(\h_n(X,Y),\|\cdot\|_1(\theta)\right)$ is an isometry for every $\theta>0$.
The conclusion follows from the fact that, by definition,
$\|\cdot \|_1(0)=\|\cdot \|_1$ and $\|\cdot\|_1(\infty)=\lim_{\theta\to \infty} \|\cdot\|_1(\theta)$.

\medskip

As noticed by Gromov, Theorem~\ref{thm:equi} admits the following equivalent formulation, 
which is inspired by Thurston~\cite[\S 6.5]{Thurston} and 
plays an important role in several results about the (relative) simplicial volumes of gluings and fillings (see Chapters~\ref{simplicial:chapter} and~\ref{additivity:chap}):

\begin{cor}\label{Thurston's version}
Let $(X,Y)$ be a pair of countable CW-complexes, and
suppose that the fundamental groups of all the components of $Y$ are amenable.
Let $\alpha\in \h_n(X,Y)$, $n\geq 2$. Then,
for every $\epsilon>0$, there exists an element $c\in C_n(X)$ with $\partial_n c\in C_{n-1}(Y)$ 
such that $[c]=\alpha\in \h_n(X,Y)$, 
$\|c\|_1<\|\alpha\|_1+\epsilon$ and $\|\partial_n c\|_1<\epsilon$.
\end{cor}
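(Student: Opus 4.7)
The plan is to derive Corollary~\ref{Thurston's version} directly from Theorem~\ref{thm:equi} by a single application of the infimum characterization of $\|\cdot\|_1(\theta)$ with a sufficiently large choice of $\theta$. The heuristic is clear: a bound on $\|c\|_1 + \theta\|\partial_n c\|_1$ with $\theta$ large simultaneously controls $\|c\|_1$ (by dropping the second summand) and $\|\partial_n c\|_1$ (by dividing through by $\theta$), so the two conclusions of Corollary~\ref{Thurston's version} both fall out of a single estimate on the $\theta$-seminorm of a representative. Since the $\theta$-seminorm on $H_n(X,Y)$ agrees with the usual $\ell^1$-seminorm by Theorem~\ref{thm:equi}, no loss is incurred.

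More concretely, fix $\alpha\in H_n(X,Y)$ with $n\geq 2$ and $\epsilon>0$. I would set
$$\theta := \frac{\|\alpha\|_1+\epsilon}{\epsilon}\in(0,\infty).$$
By Theorem~\ref{thm:equi} we have $\|\alpha\|_1(\theta)=\|\alpha\|_1$. By the very definition of $\|\alpha\|_1(\theta)$ as the infimum of $\|c\|_1+\theta\|\partial_n c\|_1$ over chains $c\in C_n(X)$ with $\partial_n c\in C_{n-1}(Y)$ and $[c]=\alpha\in H_n(X,Y)$, I can pick such a representative $c$ satisfying
$$\|c\|_1+\theta\|\partial_n c\|_1 \;<\; \|\alpha\|_1(\theta)+\tfrac{\epsilon}{2} \;=\; \|\alpha\|_1+\tfrac{\epsilon}{2}.$$
Dropping the nonnegative second summand on the left gives $\|c\|_1<\|\alpha\|_1+\epsilon$, while dividing through by $\theta$ gives
$$\|\partial_n c\|_1 \;\leq\; \frac{\|\alpha\|_1+\epsilon/2}{\theta} \;<\; \frac{\|\alpha\|_1+\epsilon}{\theta} \;=\; \epsilon,$$
which are precisely the two requested inequalities.

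There is no serious obstacle: all of the substantive content has been absorbed into Theorem~\ref{thm:equi}, whose proof in turn relies on the isometric isomorphism $H^n_b(X,Y)\cong H^n_b(X)$ (Theorem~\ref{reliso:thm}) and on Proposition~\ref{fund:prop}, both of which have already been established. The only minor point that deserves verification is a matter of quantifiers: the infimum defining $\|\alpha\|_1(\theta)$ runs over chains $c\in C_n(X)$ whose boundary lies in $C_{n-1}(Y)$ and whose class in $H_n(X,Y)$ equals $\alpha$, which coincides exactly with the set of representatives described in the statement of the corollary. Hence the representative $c$ produced by the infimum characterization is automatically of the required form.
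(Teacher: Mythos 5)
Your proof is correct and takes essentially the same approach as the paper: fix $\theta=(\|\alpha\|_1+\epsilon)/\epsilon$, invoke Theorem~\ref{thm:equi} to replace $\|\alpha\|_1(\theta)$ by $\|\alpha\|_1$, and extract a near-optimal representative for the $\theta$-seminorm. Your use of $\epsilon/2$ in the intermediate step even yields the strict inequalities claimed in the statement slightly more cleanly than the paper's own write-up, which works with a representative satisfying $\|c\|_1+\theta\|\partial_n c\|_1\leq \|\alpha\|_1+\epsilon$ and concludes with non-strict bounds.
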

\begin{proof}
 Let $\theta=(\|\alpha\|_1+\epsilon)/\epsilon$. By Theorem~\ref{thm:equi} 
 we know that $\|\cdot\|_1(\theta)$ induces the norm $\|\cdot\|_1$ in homology, 
 so we can find a representative $c \in  C_n(X)$ of $\alpha$ 
 with $\|c\|_1(\theta)=\|c\|_1+\theta\|\partial_n c\|_1\leq \|\alpha\|_1+\epsilon$.
 This implies that $\|c\|_1\leq\|\alpha\|_1+\epsilon$ and $\|\partial_n c\|_1\leq (\|\alpha\|_1+\epsilon)/\theta = \epsilon$.
\end{proof}

\section{Further readings}\label{further:duality}

\subsection*{Locally finite chains on non-compact topological spaces}
In the case when $X$ is a \emph{non-compact} topological space, it is often interesting to consider the complex of \emph{locally finite} singular chains
on $X$. For example, if $X$ is a connected non-compact oriented manifold of dimension $n$, then in degree $n$ the singular homology of $M$ vanishes, while
the locally finite singular homology of $M$ (say, with real coefficients) is isomorphic to $\R$ and generated by a preferred element, called the \emph{fundamental class of $M$}. The $\ell^1$-seminorm of this fundamental
class (which can now be infinite) is by definition the simplicial volume of $M$ (see the next chapter, where the case of compact manifolds is treated in detail).
A very natural question is whether duality may be exploited to establish a useful relationship between the $\ell^1$-seminorm in this slightly different context and (a suitable version of) the $\ell^\infty$-seminorm in
bounded cohomology. We refer the reader to~\cite{Loeh, Lothesis} for a detailed discussion of this topic.

\subsection*{Gromov equivalence theorem}
The first proof of Gromov equivalence theorem appeared in~\cite{Gromov}, and was based on the theory of multicomplexes. 
In the particular case when $X$ is the (natural compactification of) a complete finite volume
hyperbolic manifold without boundary and $Y=\partial X$, an explicit construction of the representatives $c_i$ approaching the infimum of
both $\|c_i\|_1$ and $\|\partial c_i\|_1$ within their homology classes is given in~\cite{FM}.

\subsection*{Bounded-cohomological dimension of discrete groups}
As a recent application of duality, we would finally like to mention L\"oh's construction of groups with infinite bounded-cohomological dimension, 
and of (new examples of) groups with bounded-cohomological dimension equal to 0~\cite{Loeh:dim}

\chapter{Simplicial volume}\label{simplicial:chapter}
The simplicial volume
is an invariant of manifolds introduced by Gromov in his seminal paper~\cite{Gromov}. If $M$ is a closed (i.e. connected, compact and without boundary) oriented manifold, then $H_n(M,\matZ)$ is infinite cyclic generated
by the \emph{fundamental class} $[M]_\matZ$ of $M$. If $[M]=[M]_\R\in H_n(M,\R)$ denotes
the image of $[M]_\matZ$ via the change of coefficients map
$H_n(M,\matZ)\to H_n(M,\R)$, then the simplicial volume of $M$ is defined as
$$
\| M\| = \| [M]\|_1\ ,
$$
where $\|\cdot \|_1$ denotes the $\ell^1$-norm on singular homology introduced in Chapter~\ref{duality:chap}. The simplicial volume of $M$ does not depend on the choice
of the orientation, so it is defined for every closed orientable manifold. Moreover, if $M$
is non-orientable and $\widetilde{M}$ is the orientable double covering of $M$,
then the simplicial volume of $M$ is defined by $\|M\|=\|\widetilde{M}\|/2$. Finally, if $M$ is compact and possibly disconnected, then the simplicial
volume of $M$ is just the sum of the simplicial volumes of its components.

\section{The case with non-empty boundary}
Let $M$ be a manifold with boundary.
We identify $C_\bullet(\partial M,\R)$ with its image in $C_\bullet(M,\R)$ via
the morphism induced by the inclusion, and observe that
$C_\bullet(\partial M,\R)$ is closed in $C_\bullet(M,\R)$ with respect to the
$\ell^1$-norm. Therefore, the quotient seminorm $\|\cdot \|_1$ on
$C_n(M,\partial M,\R)=C_n(M,\R)/C_n(\partial M,\R)$ is a norm,
which endows $C_n(M,\partial M,\R)$ with the structure of a normed chain complex.

If $M$ is connected and oriented, then $H_n(M,\partial M,\matZ)$ is infinite cyclic generated by the preferred element $[M,\partial M]_\matZ$.
If $[M,\partial M]=[M,\partial M]_\R$ is the image of $[M,\partial M]_\matZ$ via the change of coefficients morphism, then the simplicial volume of $M$ is defined by
$$
\| M,\partial M\|=\| [M,\partial M]\|_1\ .
$$
Just as in the closed case we may extend the definition of $\|M,\partial M\|$
to the case when $M$ is not orientable and/or disconnected.

The simplicial volume may be defined also for non-compact manifolds. However, we restrict
here to the compact case. Unless otherwise stated, henceforth every manifold will be assumed to be compact and orientable.
Moreover, since we will be mainly dealing with (co)homology with real coefficients, unless otherwise stated henceforth we simply
denote by $H_i(M,\bb M)$, $H^i(M,\bb M)$  and $H^i_b(M,\bb M)$ the modules
$H_i(M,\bb M,\R)$, $H^i(M,\bb M,\R)$ and $H^i_b(M,\bb M,\R)$. We adopt the corresponding notation for group (co)homology.

\bigskip

Even if it depends only on the homotopy type of a manifold, the simplicial volume is deeply related
to the geometric structures that a manifold can carry. For example, closed manifolds which support 
negatively curved Riemannian metrics have non-vanishing simplicial volume, while the simplicial 
volume of closed manifolds with non-negative Ricci tensor is null (see~\cite{Gromov}).
In particular, flat or spherical closed manifolds have vanishing simplicial volume, 
while closed hyperbolic manifolds have positive simplicial volume (see Section~\ref{inoue}).

Several vanishing and non-vanishing results
for the simplicial volume
are available by now,
but the exact value of non-vanishing simplicial volumes is known only in  very few cases. 
If $M$ is (the natural compactification of) a complete finite-volume hyperbolic $n$-manifold
without boundary, then 
a celebrated result by Gromov and 
Thurston implies that the simplicial volume of $M$ is equal to the Riemannian volume of $M$
divided by the volume $v_n$ of the regular ideal geodesic $n$-simplex in hyperbolic space. 
The only other exact computation of non-vanishing simplicial volume of closed manifolds is for the product of two closed hyperbolic surfaces
or more generally manifolds locally isometric to the product of two hyperbolic planes \cite{Bucher3}. The first exact computations of $\|M,\bb M\|$
for classes of $3$-manifolds for which $\| \bb M\|>0$ are given in~\cite{BFP}. Incredibly enough, in dimension bigger than two no exact value is known for the simplicial
volume of any compact hyperbolic manifold with geodesic boundary (see~\cite{FP,BFP2} for some results on the subject).
Building on these examples, more values for the simplicial volume can be obtained by taking connected sums or amalgamated sums over submanifolds with amenable fundamental group
(or, more in general, by performing surgeries, see e.g.~\cite{Sambusan}).

In this monograph we will restrict ourselves to those results about the simplicial 
volume that more heavily depend on the dual theory of bounded cohomology. In doing so,
we will compute the simplicial volume of closed hyperbolic manifolds (Theorem~\ref{hyp:thm}),
prove Gromov proportionality principle for closed Riemannian manifolds (Theorem~\ref{prop:thm}),
and prove Gromov additivity theorem for manifolds obtained by gluings along
boundary components with amenable fundamental group (Theorem~\ref{simpl:thm}).

\section{Elementary properties of the simplicial volume}
Let $M,N$ be $n$-manifolds and let $f\colon (M,\bb M)\to (N,\bb N)$ be a continuous map of pairs of degree $d\neq 0$.
The induced map on singular chains sends every single simplex to a single simplex,
so it induces a norm non-increasing map
$$
H_\bullet(f)\colon H_\bullet(M,\bb M)\to H_\bullet(N,\bb N)\ .
$$
As a consequence we have
\begin{align}\label{degree:eq}
\begin{split}
\|N,\bb N\|& =\|[N,\bb N]\|_1=\frac{\|H_n(f)([M,\bb M])\|_1}{|d|}\\ & \leq
\frac{\|[M,\bb M]\|_1}{|d|}=\frac{\|M,\bb M\|}{|d|}\ .
\end{split}
\end{align}

This shows that, if $\|N,\partial N\|>0$, then the set of possible degrees
of maps from $M$ to $N$ is bounded. If $M=N$, then 
iterating a map of degree $d$, $d\notin \{-1,0,1\}$,  we obtain maps of arbitrarily large
degree, so we get the following:

\begin{prop}\label{degreed:prop}
If the manifold $M$ admits a self-map of degree different from $-1,0,1$, then
$\|M,\partial M\|=0$. In particular, if $n\geq 1$ then 
$\|S^n\|=\|(S^1)^n\|=0$, and if $n\geq 2$ then $\|D^n,\bb D^n\|=0$.
\end{prop}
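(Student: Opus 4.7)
The plan is to apply inequality~\eqref{degree:eq} iteratively in the special case $M=N$. If $f\colon (M,\bb M)\to (M,\bb M)$ is a continuous self-map of degree $d$ with $|d|\geq 2$, then for every $k\in\matN$ the $k$-fold iterate $f^{\circ k}$ is a self-map of $(M,\bb M)$ of degree $d^k$. Applying~\eqref{degree:eq} to $f^{\circ k}$ therefore yields
$$
\|M,\bb M\|\ \leq\ \frac{\|M,\bb M\|}{|d|^k}\qquad \text{for every}\ k\in\matN\ .
$$
Since $\|M,\bb M\|$ is finite (it is the $\ell^1$-seminorm of a single homology class) and $|d|^k\to\infty$, this forces $\|M,\bb M\|=0$, which proves the first assertion.

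To deduce the three explicit vanishing statements, in each case I would exhibit a self-map of absolute degree at least $2$ and invoke the first part. For the sphere $S^n$ with $n\geq 1$, the $(n-1)$-fold (unreduced) suspension of the degree-$2$ self-map $z\mapsto z^2$ of $S^1$ provides a self-map of $S^n$ of degree $2$. For the $n$-torus $(S^1)^n$, the Cartesian product of $n$ copies of $z\mapsto z^2$ is a self-map of degree $2^n\geq 2$. For the disk pair $(D^n,\bb D^n)$ with $n\geq 2$, I would identify $D^n$ with the cone $CS^{n-1}$ on its boundary and take the cone $Cg\colon CS^{n-1}\to CS^{n-1}$ of a degree-$2$ self-map $g\colon S^{n-1}\to S^{n-1}$, which exists because $n-1\geq 1$. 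This $Cg$ is a map of pairs $(D^n,\bb D^n)\to (D^n,\bb D^n)$, and under the canonical identification $D^n/\bb D^n\cong S^n$ it induces the (unreduced) suspension of $g$, a self-map of $S^n$ of degree $2$; equivalently, the map $H_n(Cg)$ on the infinite cyclic module $H_n(D^n,\bb D^n)$ is multiplication by $2$, so $Cg$ has degree $2$ as a map of pairs.

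There is really no hard step here: the content of the proposition is the inequality~\eqref{degree:eq} already established, together with the observation that iteration blows up the degree unboundedly whenever $|d|\geq 2$. The only (minor) point deserving care is the construction of degree-$2$ self-maps in the three listed examples; the mild subtlety is the disk pair, where one must check that the cone construction produces a map of pairs inducing multiplication by $2$ on the relative fundamental class, and this is handled by passing to the quotient $D^n/\bb D^n\cong S^n$ as indicated.
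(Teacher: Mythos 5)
Your proof is correct and follows the same strategy as the paper: iterate a self-map of degree $d$ with $|d|\geq 2$ to obtain self-maps of arbitrarily large degree, then invoke the degree inequality~\eqref{degree:eq} and let $k\to\infty$. The paper leaves the explicit degree-$\geq 2$ self-maps of $S^n$, $(S^1)^n$, and $(D^n,\bb D^n)$ to the reader, and your constructions (suspension of $z\mapsto z^2$, the $n$-fold product of $z\mapsto z^2$, and the cone on a degree-$2$ self-map of $S^{n-1}$ together with the check on $H_n(D^n,\bb D^n)$) are all correct.
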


The following result shows that the simplicial volume is multiplicative with respect
to finite coverings:

\begin{prop}\label{finitecover:prop}
 Let $M\tto{} N$ be a covering of degree $d$ between manifolds (possibly with boundary).
Then
$$
\|M,\bb M\|=d\cdot \|N,\bb N\|\ .
$$
\end{prop}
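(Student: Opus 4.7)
The plan is to establish the equality via two inequalities, reducing first to the case where both $M$ and $N$ are connected and orientable. If $N$ is non-orientable, let $\widetilde{N}\to N$ denote its orientation double cover; if additionally $M$ is non-orientable, pass to the double cover $\widetilde{M}\to M$. In each case the composition with $p$ yields a covering between orientable manifolds (either $\widetilde{M}\to N$ factors through $\widetilde{N}$ because an orientation of $M$ induces a section of $\widetilde{N}\to N$ along $p$, or we work directly with $\widetilde{M}\to \widetilde{N}$), and the stated equality for the original cover then follows from the orientable case together with the very definition $\|M,\partial M\|:=\|\widetilde{M},\partial\widetilde{M}\|/2$ (and similarly for $N$). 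For disconnected $N$ we treat each component and its preimage separately and sum.

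Assume therefore that $M$ and $N$ are connected and orientable. The inequality $d\cdot\|N,\partial N\|\le \|M,\partial M\|$ is the easy direction: choose orientations so that $p_*[M,\partial M]_{\mathbb{Z}}=d\cdot[N,\partial N]_{\mathbb{Z}}$, which means $p\colon (M,\partial M)\to (N,\partial N)$ is a map of pairs of degree $d$, and apply inequality~\eqref{degree:eq}. For the opposite inequality I would construct a \emph{transfer} $\tau_\bullet\colon C_\bullet(N,\mathbb{R})\to C_\bullet(M,\mathbb{R})$. Given a singular simplex $\sigma\colon\Delta^n\to N$, the pullback of $p$ along $\sigma$ is a $d$-sheeted covering of $\Delta^n$; since $\Delta^n$ is simply connected this covering is trivial, so $\sigma$ admits exactly $d$ distinct lifts $\widetilde\sigma_1,\dots,\widetilde\sigma_d\colon\Delta^n\to M$. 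Set
$$
\tau_n(\sigma)=\sum_{i=1}^d \widetilde\sigma_i
$$
and extend by $\mathbb{R}$-linearity. The set of lifts of any face of $\sigma$ is precisely the set of faces of the lifts of $\sigma$, so $\tau_\bullet$ is a chain map; moreover $\tau_\bullet(C_\bullet(\partial N,\mathbb{R}))\subseteq C_\bullet(\partial M,\mathbb{R})$, so $\tau_\bullet$ descends to a chain map between the relative chain complexes, and by construction it satisfies the crucial norm identity $\|\tau_n(c)\|_1=d\cdot\|c\|_1$ for every $c\in C_n(N,\mathbb{R})$ (because distinct lifts of distinct simplices are all distinct).

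It remains to check that $H_n(\tau_\bullet)\bigl([N,\partial N]\bigr)=[M,\partial M]$ in $H_n(M,\partial M)\cong\mathbb{R}$. By construction $p_\bullet\circ\tau_\bullet = d\cdot\mathrm{Id}$ on chains, hence in homology $p_*\bigl(H_n(\tau_\bullet)[N,\partial N]\bigr)=d\cdot [N,\partial N]$. On the other hand $p_*[M,\partial M]=d\cdot[N,\partial N]$ by definition of the degree of $p$, and since $H_n(M,\partial M)\cong\mathbb{R}$ and $p_*$ acts there as multiplication by $d\ne 0$ (hence injectively), we conclude $H_n(\tau_\bullet)[N,\partial N]=[M,\partial M]$. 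Consequently, for any relative fundamental cycle $c$ of $N$, the chain $\tau_n(c)$ is a relative fundamental cycle of $M$ of norm exactly $d\cdot\|c\|_1$, which yields $\|M,\partial M\|\le d\cdot\|N,\partial N\|$ upon taking the infimum over $c$.

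The only step that requires genuine care is the verification that the unordered set of lifts of a simplex is canonical (so that $\tau_\bullet$ is well defined without any choice) and that this construction interacts correctly with the boundary operator; everything else is a direct manipulation of the definitions and the use of~\eqref{degree:eq}. The reduction to the orientable connected case via orientation covers is routine bookkeeping and does not present a real obstacle.
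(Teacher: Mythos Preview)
Your proof is correct and follows essentially the same approach as the paper: one inequality comes from the degree inequality~\eqref{degree:eq}, and the other from lifting a fundamental cycle of $N$ simplex by simplex to obtain a fundamental cycle of $M$ with exactly $d$ times the norm. Your transfer map $\tau_\bullet$ is a clean packaging of this lifting step (the paper just writes down the lifted cycle directly), and your verification that $H_n(\tau_\bullet)[N,\partial N]=[M,\partial M]$ via $p_\bullet\circ\tau_\bullet=d\cdot\mathrm{Id}$ is more careful than the paper, which simply asserts it. Note also that the paper works under the standing convention that all manifolds are orientable, so your reduction to the orientable case is extra bookkeeping beyond what was required here.
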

\begin{proof}
 If $\sum_{i\in I} a_i s_i\in C_n(N,\bb N,\R)$ is a fundamental cycle for $N$ and $\widetilde{s}_i^j$, $j=1,\ldots,d$ are the lifts of $s_i$ to $M$ (these lifts exist since $s_i$ is defined on a simply connected space), then $\sum_{i\in I} \sum_{j=1}^d a_i \widetilde{s}_i^j$ is
a fundamental cycle for $M$, so taking the infimum over the representatives
of $[N,\bb N]$ we get
$$
\| M,\bb M\| \leq d\cdot \| N,\bb N\|\ .
$$ 
Since a degree-$d$ covering is a map of degree $d$, the conclusion follows from~\eqref{degree:eq}.
\end{proof}

\section{The simplicial volume of Riemannian manifolds}
A Riemannian covering between Riemannian manifolds is a locally isometric
topological covering. 
Recall that two Riemannian manifolds $M_1$, $M_2$ are \emph{commensurable} if there exists a Riemannian
manifold which is the total space of a finite Riemannian covering of $M_i$ for $i=1,2$.
Since the Riemannian volume is multiplicative with respect to coverings, Proposition~\ref{finitecover:prop} implies that
$$
\frac{\|M_1,\partial M_1\|}{\vol(M_1)}=\frac{\|M_2,\partial M_2\|}{\vol(M_2)}
$$
for every pair $M_1,M_2$ of commensurable Riemannian manifolds. 
Gromov's proportionality principle extends this property to pairs of manifolds which share the same Riemannian universal covering:

\begin{thm}[proportionality principle~\cite{Gromov}]\label{prop:thm}
 Let $M$ be a Riemannian manifold. Then the ratio
$$
\frac{\|M,\partial M\|}{\vol(M)}
$$
only depends on the isometry type of the Riemannian universal covering of $M$.
\end{thm}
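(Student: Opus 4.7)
The plan is to realize the ratio $\|M,\partial M\|/\vol(M)$ as the $\ell^\infty$-seminorm of a bounded cohomology class canonically associated to the Riemannian universal covering $\widetilde M$ alone. I would set $G=\mathrm{Isom}(\widetilde M)$ and identify $\Gamma=\pi_1(M)$ with the corresponding discrete subgroup of $G$ acting by deck transformations. By duality (Lemma~\ref{lemma:duality}),
\[
\|M,\partial M\|=\sup\{\langle\beta,[M,\partial M]\rangle : \beta\in H^n_b(M,\partial M,\R),\ \|\beta\|_\infty\leq 1\},
\]
so it suffices to exhibit a distinguished class in $H^n_b(M,\partial M,\R)$ whose pairing with $[M,\partial M]$ equals $\vol(M)$ and whose $\ell^\infty$-seminorm is an invariant of $\widetilde M$ only.

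The first ingredient is the \emph{continuous bounded cohomology} $H^\bullet_{c,b}(G,\R)$ of $G$ as a topological group, computed by the complex of continuous bounded $G$-invariant cochains. The second is the \emph{transfer map}: fixing a Borel fundamental domain $F\subset G$ for the right $\Gamma$-action and normalising Haar measure so that $F$ has measure one, the assignment $\mathrm{trans}(f)(\gamma_0,\ldots,\gamma_n)=\int_F f(g\gamma_0,\ldots,g\gamma_n)\,dg$ defines a norm non-increasing chain map from continuous bounded cochains on $G$ to bounded cochains on $\Gamma$. Composing with the isomorphism $H^\bullet_b(\Gamma,\R)\cong H^\bullet_b(M,\R)$ of Theorem~\ref{gro-iva:thm} and with the isometric isomorphism $H^n_b(M,\partial M,\R)\cong H^n_b(M,\R)$ of Theorem~\ref{reliso:thm} (using amenability of the boundary components' fundamental groups, the natural setting for the argument) yields a norm non-increasing map $H^n_{c,b}(G,\R)\to H^n_b(M,\partial M,\R)$.

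The third ingredient is the \emph{volume class}. Fixing $x_0\in\widetilde M$ and $(g_0,\ldots,g_n)\in G^{n+1}$, one defines $c_{\mathrm{vol}}(g_0,\ldots,g_n)$ to be the signed integral of the Riemannian volume form of $\widetilde M$ over a canonical ``straight'' singular $n$-simplex with ordered vertices $g_0 x_0,\ldots,g_n x_0$: in the non-positively curved case treated by Chapter~8, this is the geodesic straightening; in general one uses Gromov's smearing construction to average over a measurable family of fillings. In either case $c_{\mathrm{vol}}$ is $G$-invariant, alternating, and bounded, hence defines a class $\omega_{\widetilde M}\in H^n_{c,b}(G,\R)$ that depends only on the Riemannian isometry type of $\widetilde M$. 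A direct computation, using that a fundamental domain for the action of $\Gamma$ on $\widetilde M$ has Riemannian volume $\vol(M)$, shows that $\langle \mathrm{trans}(\omega_{\widetilde M}),[M,\partial M]\rangle=\vol(M)$. Combined with the duality formula above, this yields
\[
\frac{\|M,\partial M\|}{\vol(M)}=\|\omega_{\widetilde M}\|_\infty,
\]
which is a function of $\widetilde M$ only.

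The main obstacle is the construction of a \emph{bounded} $G$-invariant volume cocycle for an arbitrary Riemannian $\widetilde M$. When $\widetilde M$ is non-positively curved (a fortiori in the hyperbolic case), geodesic straightening makes both the construction and the $\ell^\infty$-bound on $c_{\mathrm{vol}}$ essentially transparent, and this is the route the paper promises to take in Chapter~8 to compute the simplicial volume of hyperbolic manifolds. In general curvature one must replace straightening by Gromov's measure-theoretic smearing and verify both boundedness and cohomological correctness in $H^n_{c,b}(G,\R)$; this is the technical heart of the full theorem. A secondary delicate point is ensuring that the transfer map is well-defined and norm non-increasing also in the relative setting, which is the reason the amenable-boundary hypothesis (entering through Theorem~\ref{reliso:thm}) appears so naturally; dropping it would require an ad hoc relative version of the transfer.
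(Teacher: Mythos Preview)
Your overall strategy---realize the ratio via the seminorm of a volume class that depends only on $\widetilde M$, transported by a transfer/restriction map---matches the paper's approach (carried out in Chapter~\ref{prop:simpl:chap} under a non-positive curvature hypothesis). However, two genuine gaps remain.

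First, the displayed equality $\|M,\partial M\|/\vol(M)=\|\omega_{\widetilde M}\|_\infty$ does not follow from what you set up (and is in any case inverted: the paper obtains $\|M\|=\vol(M)/\|[\vol_{\widetilde M}]^G_c\|_\infty$). A norm non-increasing map together with the pairing identity yields only the inequality $\|M,\partial M\|\geq \vol(M)/\|\omega_{\widetilde M}\|_\infty$. The reverse inequality is the crux. The paper shows that the \emph{restriction} $\res^n\colon H^n(C^\bullet_c(\widetilde M)^G)\to H^n_c(M)$ is an \emph{isometric} embedding by exhibiting the transfer $\tr^n$ (averaging $\Gamma$-invariant continuous cochains over a fundamental domain in $G$) as a norm non-increasing \emph{left inverse} of $\res^n$ (Proposition~\ref{last:prop}). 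Your ``transfer'', which on $G$-invariant cochains reduces to restriction to $\Gamma$, goes in only one direction; you never construct the map the other way and hence never close the loop.

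Second, the assertion that $c_{\mathrm{vol}}$ is bounded ``in either case'' is false: already for $\widetilde M=\R^n$ (non-positively curved!) straight simplices have unbounded volume, so there is no class in $H^n_{c,b}(G,\R)$ to define. The paper sidesteps this by working not in bounded cohomology of $G$ but in the \emph{unbounded} continuous complex $C^\bullet_c(\widetilde M)^G$, whose cohomology carries an $\ell^\infty$-seminorm that may take the value $+\infty$; the formula then correctly returns $\|M\|=0$ when the volume class has infinite seminorm. This is also why the paper needs the separate isometric isomorphism $H^n_c(M)\cong H^n(M)$ obtained via straightening (Proposition~\ref{continuous:isometric}): the argument lives in ordinary (continuous) cohomology throughout, with boundedness entering only through the seminorm rather than through the choice of complex.
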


In the case of hyperbolic manifolds, Gromov and Thurston computed the exact value
of the proportionality constant appearing in Theorem~\ref{prop:thm}:

\begin{thm}[\cite{Thurston, Gromov}]\label{hyp:thm}
 Let $M$ be a closed hyperbolic $n$-manifold. Then
$$
\|M\|=\frac{\vol(M)}{v_n}\ ,
$$
where $v_n$ is the maximal value of the volumes of geodesic simplices in the hyperbolic
space $\matH^n$.
\end{thm}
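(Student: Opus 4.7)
The plan is to establish the two inequalities $\|M\|\geq\vol(M)/v_n$ and $\|M\|\leq\vol(M)/v_n$ separately, using the duality between $\ell^1$-homology and bounded cohomology developed in Chapter~\ref{duality:chap}. Both directions rest on the \emph{straightening} map $\str_\bullet\colon C_\bullet(M)\to C_\bullet(M)$: after identifying the universal cover of $M$ with $\matH^n$, one replaces each singular simplex with the geodesic simplex spanned by the same ordered vertices (well defined via barycentric coordinates in $\matH^n$). A standard affine-like interpolation shows that $\str_\bullet$ is $\pi_1(M)$-equivariant and chain-homotopic to the identity, hence preserves the fundamental class $[M]$, while by construction every simplex in its image has hyperbolic volume at most $v_n$.

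For the lower bound I would introduce the bounded singular $n$-cocycle $V_M\in \cb^n(M)$ defined by $V_M(\sigma)=\int_{\str_n(\sigma)}\omega_M$, where $\omega_M$ is the hyperbolic volume form. Since straight hyperbolic simplices have volume at most $v_n$, we get $\|V_M\|_\infty\leq v_n$; Stokes' theorem shows $V_M$ is a cocycle, and the chain homotopy between $\str_\bullet$ and the identity implies that $V_M$ is cohomologous to the ordinary volume cochain $\sigma\mapsto\int_\sigma\omega_M$, so $\langle [V_M],[M]\rangle =\int_M\omega_M=\vol(M)$. Applying the Kronecker estimate from Lemma~\ref{lemma:duality} yields $\vol(M)\leq\|V_M\|_\infty\cdot \|M\|\leq v_n\cdot \|M\|$.

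For the upper bound, following the strategy of~\cite{Bucher}, I would use the \emph{smearing} construction together with the transfer map $\trb\colon H^n_{c,b}(G)\to H^n_b(\G)$ (with $G=\Isom^+(\matH^n)$ and $\G=\pi_1(M)$) introduced earlier in this chapter. Concretely: fix $\varepsilon>0$, pick a nearly-regular geodesic simplex $\sigma_\varepsilon$ in $\matH^n$ of volume $v_\varepsilon>v_n-\varepsilon$, and average (with respect to the Haar measure on $G$) its $G$-translates over a fundamental domain $F\subset G$ for $\G$, antisymmetrizing by simultaneously subtracting the reflected simplex $\overline{\sigma}_\varepsilon$ so that the boundary cancels after alternation. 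Projecting the resulting chain to $M$ produces a singular cycle $c_\varepsilon\in C_n(M)$ of $\ell^1$-norm equal to $\vol(M)/v_\varepsilon$, which pairs with $V_M$ to give $\vol(M)$; hence $[c_\varepsilon]=[M]$, and $\|M\|\leq \vol(M)/v_\varepsilon\to\vol(M)/v_n$ as $\varepsilon\to 0$.

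The main obstacle will be the upper bound, specifically two subtle points in the smearing step. First, one has to convert a continuous $G$-average into a genuine singular chain with the right norm, and verify (by pairing with $V_M$) that it represents the fundamental class — this is where a clean formulation at the level of continuous bounded cohomology, together with the transfer map, pays off. Second, to close the inequality at the exact value $v_n$ one needs the fact that $v_n$ actually equals the supremum of volumes of straight simplices in $\matH^n$, and is attained in the limit by regular ideal simplices; this is elementary for $n=2,3$ but is the nontrivial theorem of Haagerup--Munkholm in general, which I would import as a black box rather than prove in the text.
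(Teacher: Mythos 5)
Your lower bound $\|M\|\geq\vol(M)/v_n$ --- pairing the fundamental class against the straightened volume cocycle $V_M$ of $\ell^\infty$-norm at most $v_n$ and invoking the Kronecker estimate of Lemma~\ref{lemma:duality} --- is correct and coincides in substance with what the paper does, although the paper packages it into the proportionality formula $\|M\|=\vol(M)/\|[\vol_{\matH^n}]^G_c\|_\infty$ of Theorem~\ref{prop:thm:proof} together with the easy estimate $\|[\vol_{\matH^n}]^G_c\|_\infty\leq\|\vol_{\matH^n}\|_\infty=v_n$. Your proof of the upper bound, however, takes a genuinely different route. You propose Thurston's smearing construction: build fundamental cycles of $\ell^1$-norm close to $\vol(M)/v_n$ by averaging $G$-translates of a nearly-ideal regular simplex over a fundamental domain, antisymmetrizing to kill the boundary. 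The paper instead stays entirely on the cohomology side: after the proportionality formula, the upper bound reduces to the inequality $\|[\vol_{\matH^n}]^G_c\|_\infty\geq v_n$, which is proved by testing $\vol_{\matH^n}+\delta\varphi$ on the barycentric parameterization $s_\ell$ of the regular geodesic simplex $\tau_\ell$ and observing that, since every face of $\tau_\ell$ is a regular $(n-1)$-simplex preserved by an orientation-preserving isometry of $\matH^n$ inducing an \emph{odd} permutation of its vertices, any $G$-invariant alternating $\varphi$ gives $\delta\varphi(s_\ell)=0$, whence $\|\vol_{\matH^n}+\delta\varphi\|_\infty\geq\vol(\tau_\ell)\to v_n$. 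The practical trade-off is exactly the obstacle you flag yourself: the smeared $G$-average is a priori a \emph{measure} chain, not a singular one, so closing the argument requires either an explicit discretization or L\"oh's isometric identification between measure homology and singular homology; the cohomological argument never produces a cycle and so sidesteps this issue entirely, at the price of requiring the continuous-cohomology and transfer-map machinery already built in this chapter. You should also be aware that the smearing is Thurston's original strategy (cf.~\cite{Thurston,BePe}), whereas what~\cite{Bucher} --- and the present chapter --- carry out is the cohomological computation; your write-up conflates the two. Both routes rely on the same external input, namely the Haagerup--Munkholm theorem (Theorem~\ref{maximal:teo}) and continuity of hyperbolic volume to guarantee $\vol(\tau_\ell)\to v_n$, which you correctly note as an imported black box.
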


As mentioned above, Theorem~\ref{hyp:thm} also holds in the case when
$M$ is the natural compactification of a complete finite-volume hyperbolic $n$-manifold~\cite{Gromov, Thurston,stefano,FP,FM,BB}
(such a compactification is homeomorphic to a compact topological manifold with boundary, so it also has a well-defined simplicial volume).
In particular, we have the following:

\begin{cor}\label{punctured:surf:simpl}
 Let $\Sigma_{g,n}$ denote the closed orientable surface of genus $g$ with $n$ disks removed (so $\Sigma_{g,n}$ is closed
 if $n=0$, and it is compact with boundary if $n\neq 0$). Then
 $$
 \|\Sigma_{g,n},\partial\Sigma_{g,n}\|=\max \{0,-2\chi(\Sigma_{g,n})\}\ .
 $$
\end{cor}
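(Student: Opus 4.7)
My plan is to split the problem into the two ranges singled out by the sign of $\chi(\Sigma_{g,n}) = 2 - 2g - n$, and to reduce each case to results already collected in the excerpt.

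First, I would dispose of the case $\chi(\Sigma_{g,n}) \geq 0$, i.e.~$(g,n) \in \{(0,0),(0,1),(0,2),(1,0)\}$, by showing $\|\Sigma_{g,n},\partial\Sigma_{g,n}\| = 0$. For $(0,0)$ and $(0,1)$ this is Proposition~\ref{degreed:prop} applied to $S^2$ and $D^2$. For the torus $(1,0)$ one invokes the same proposition using the degree-$d$ covering map $S^1 \times S^1 \to S^1 \times S^1$, $(z,w)\mapsto (z^d,w^d)$, with $d \geq 2$. For the annulus $(0,2)$, the double cover $S^1\times[0,1] \to S^1\times [0,1]$, $(z,t)\mapsto (z^2,t)$, is a self-map of pairs of degree $2$, so again Proposition~\ref{degreed:prop} applies. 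In all four cases the right-hand side $\max\{0,-2\chi\}$ also vanishes, so the equality holds.

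Next, I would handle the hyperbolic range $\chi(\Sigma_{g,n}) < 0$ by appealing to the extension of Theorem~\ref{hyp:thm} to natural compactifications of complete finite-volume hyperbolic manifolds (explicitly mentioned after the statement of that theorem). When $\chi(\Sigma_{g,n}) < 0$, the interior $\Sigma_{g,n} \setminus \partial\Sigma_{g,n}$, which is homeomorphic to the closed surface of genus $g$ with $n$ punctures, admits a complete finite-volume hyperbolic structure with $n$ cusps, and its natural compactification is precisely $\Sigma_{g,n}$. Therefore
\[
\|\Sigma_{g,n},\partial\Sigma_{g,n}\| \;=\; \frac{\mathrm{Vol}(\Sigma_{g,n}\setminus\partial\Sigma_{g,n})}{v_2}.
\]

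To conclude, I would compute the two constants on the right-hand side. The quantity $v_2$ is the supremum of areas of geodesic triangles in $\mathbb{H}^2$; since every hyperbolic triangle has area $\pi - (\alpha+\beta+\gamma) < \pi$ and this bound is approached by ideal triangles, $v_2 = \pi$. By the Gauss--Bonnet theorem for complete finite-area hyperbolic surfaces, $\mathrm{Vol}(\Sigma_{g,n}\setminus\partial\Sigma_{g,n}) = -2\pi\chi(\Sigma_{g,n})$. Putting these together gives $\|\Sigma_{g,n},\partial\Sigma_{g,n}\| = -2\chi(\Sigma_{g,n}) = \max\{0,-2\chi(\Sigma_{g,n})\}$, as required.

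The main obstacle is not really in the proof itself, which reduces to a clean application of Theorem~\ref{hyp:thm} plus Gauss--Bonnet once the degenerate cases are removed. The only delicate point is invoking the cusped version of the Gromov--Thurston formula, whose full proof in the non-compact case is postponed in this monograph; here I simply cite the extension mentioned in the remark right after Theorem~\ref{hyp:thm}. A minor bookkeeping point is to verify that the natural compactification of the cusped hyperbolic surface indeed coincides (as a topological manifold with boundary) with $\Sigma_{g,n}$, so that the two notions of simplicial volume match.
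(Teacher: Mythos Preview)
Your proposal is correct and follows essentially the same approach as the paper: split on the sign of $\chi(\Sigma_{g,n})$, dispose of the four non-negative cases via self-maps of degree $>1$ (Proposition~\ref{degreed:prop}), and in the hyperbolic range invoke the cusped version of the Gromov--Thurston formula together with Gauss--Bonnet and $v_2=\pi$. The paper's argument is the same, only slightly terser in the degenerate cases.
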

\begin{proof}
 First observe that $\chi(\Sigma_{g,n})\geq 0$ exactly in the following cases:
 if $g=0$, $n=0,1,2$, so that $\Sigma_{g,n}$ is either a sphere, or a disk, or an annulus, and $\|\Sigma_{g,n}\|=0$
 (for example because each of these spaces admits a self-map of degree bigger than one);
 if $g=1$, $n=0$, so that $\Sigma_{g,n}$ is the torus and again $\|\Sigma_{g,n}\|=0$. 
 
 We can therefore suppose that $\chi(\Sigma_{g,n})<0$. In this case, it is well-known that (the interior of) $\Sigma_{g,n}$ supports
 a complete hyperbolic metric with finite area, so we may apply Theorem~\ref{hyp:thm} to deduce that 
 $$
  \|\Sigma_{g,n},\partial\Sigma_{g,n}\|= \frac{{\rm Area}(\Sigma_{g,n})}{v_2}=\frac{-2\pi\chi(\Sigma_{g,n})}{\pi}=-2\chi(\Sigma_{g,n})\ ,
 $$
where we used Gauss-Bonnet Theorem in order to describe the area of $\Sigma_{g,n}$ in terms of its Euler characteristic,
together with the well-known fact that the maximal area of hyperbolic triangles is equal to $\pi$.
 \end{proof}

We will provide proofs of Theorem~\ref{prop:thm} (under some additional hypothesis) and of Theorem~\ref{hyp:thm} in Chapter~\ref{prop:simpl:chap}.
Moreover, in Section~\ref{surface:simplicial:sec} we provide a computation of the simplicial volume of surfaces that does not make use of the full power of Theorem~\ref{hyp:thm}.

\section{Simplicial volume of gluings}\label{constructions}
Let us now describe the behaviour of simplicial volume with respect to some
standard operations, like taking connected sums or products, or performing surgery.
The simplicial volume is additive with respect to connected sums~\cite{Gromov}.
In fact, this is a consequence of a more general result concerning
the simplicial volume of manifolds obtained by gluing manifolds along boundary components
with amenable fundamental group.

Let $M_1,\ldots,M_k$ be oriented $n$-manifolds, and let us fix a pairing $(S_1^+,S_1^-)$, $\ldots$, $(S_h^+,S_h^-)$ 
of some boundary components of $\sqcup_{j=1}^k M_j$, in such a way that every boundary component of
$\sqcup_{j=1}^k M_j$ appears at most once among the $S_i^\pm$.
For every $i=1,\ldots,h$, let also $f_i\colon S_i^+\to S_i^-$ be a fixed orientation-reversing homeomorphism (since every $M_j$ is oriented, 
every $S_i^\pm$ inherits a well-defined orientation).
We denote by $M$ the oriented manifold obtained by gluing $M_1,\ldots,M_k$ along $f_1,\ldots,f_h$, and we suppose that $M$
is connected. 

For every $i=1,\ldots,h$ we denote by $j^{\pm}(i)$ the index such that $S_i^\pm \subseteq M_{j^{\pm}(i)}$,
and by $K_i^\pm$ the kernel of the map $\pi_1(S_i^\pm)\to \pi_1(M_{j^{\pm}(i)})$ induced by the inclusion.
 We say that the gluings $f_1,\ldots,f_h$ are
 \emph{compatible} if the equality
$$
(f_i)_\ast \left(K_i^+\right)=K_i^-
$$
holds 
for every $i=1,\ldots,h$. Then we have the following:

\begin{thm}[Gromov additivity theorem\cite{Gromov, BBFIPP, Kuessner}]\label{simpl:thm}
Let $M_1,\ldots,M_k$ be $n$-dimensional manifolds, $n\geq 2$, 
suppose that the fundamental group of every boundary component of every $M_j$ is amenable, and let $M$
be obtained by gluing $M_1,\ldots,M_k$ along (some of) their boundary components.
Then
$$
\| M,\bb M\|\leq \|M_1,\bb M_1\|+\ldots+\|M_k,\bb M_k\|\ .
$$
In addition, if the gluings defining $M$ are compatible, then
$$
\| M,\bb M\|= \|M_1,\bb M_1\|+\ldots+\|M_k,\bb M_k\|\ .
$$
\end{thm}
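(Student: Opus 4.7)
The plan is to reduce both inequalities to statements in bounded cohomology via duality (Lemma~\ref{lemma:duality} and Corollary~\ref{semipr0}), using Theorem~\ref{reliso:thm} to exploit the amenability hypothesis on the boundary components, and in the equality case using the compatibility condition to control the group-theoretic structure of $\pi_1(M)$.

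\textbf{Setup.} Let $q\colon \sqcup_{j} M_j \to M$ denote the gluing quotient, and let $\Sigma\subset M$ be the union of the glued (now interior) boundary surfaces, so that $q(\partial(\sqcup_j M_j))=\partial M\cup \Sigma$. By hypothesis each component of $\partial M\cup \Sigma$ has amenable fundamental group. By Theorem~\ref{reliso:thm} the inclusion of cochain complexes $C_b^\bullet(M,\partial M\cup\Sigma)\hookrightarrow C_b^\bullet(M,\partial M)$ induces an isometric isomorphism in bounded cohomology for $n\geq 2$ (both sides being isometric to $H^n_b(M)$). Its chain-level predual is the quotient $C_\bullet(M,\partial M)\to C_\bullet(M,\partial M\cup\Sigma)$, so Corollary~\ref{semipr0} implies that the induced map in homology
\[
\iota_*\colon H_n(M,\partial M)\longrightarrow H_n(M,\partial M\cup\Sigma)
\]
is seminorm-preserving. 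Excision identifies the target with $\bigoplus_j H_n(M_j,\partial M_j)$, and under this identification $\iota_*[M,\partial M]=\sum_j [M_j,\partial M_j]$ (with orientations consistent with the gluing).

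\textbf{Inequality.} Fix $\varepsilon>0$. For each $j$, apply Corollary~\ref{Thurston's version} to pick a relative fundamental cycle $c_j\in C_n(M_j)$ with $\|c_j\|_1\leq \|M_j,\partial M_j\|+\varepsilon/k$ and $\|\partial c_j\|_1\leq \varepsilon/k$. The push-forward $c:=\sum_j q_*(c_j)\in C_n(M)$ has $\partial c\in C_{n-1}(\partial M\cup\Sigma)$ (since each $\partial c_j$ lies in $C_{n-1}(\partial M_j)$) and represents $\iota_*[M,\partial M]$ in $H_n(M,\partial M\cup\Sigma)$. Therefore
\[
\|M,\partial M\|=\|\iota_*[M,\partial M]\|_1 \leq \|c\|_1 \leq \sum_{j=1}^k \|M_j,\partial M_j\|+\varepsilon,
\]
and letting $\varepsilon\to 0$ gives the first assertion.

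\textbf{Equality under compatibility.} By Lemma~\ref{lemma:duality}, to prove the reverse inequality it suffices, given $\varepsilon>0$, to exhibit $\alpha\in H^n_b(M,\partial M)$ with $\|\alpha\|_\infty\leq 1$ and $\langle \alpha,[M,\partial M]\rangle\geq \sum_j \|M_j,\partial M_j\|-\varepsilon$. For each $j$ choose $\alpha_j\in H^n_b(M_j,\partial M_j)$ with $\|\alpha_j\|_\infty\leq 1$ and $\langle \alpha_j,[M_j,\partial M_j]\rangle>\|M_j,\partial M_j\|-\varepsilon/k$. Via Theorems~\ref{reliso:thm} and~\ref{gro-iva:thm} we identify $H^n_b(M_j,\partial M_j)$ and $H^n_b(M,\partial M)$ with $H^n_b(\pi_1(M_j))$ and $H^n_b(\pi_1(M))$ respectively. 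By van Kampen, and crucially exploiting the compatibility $(f_i)_*(K_i^+)=K_i^-$, the group $\pi_1(M)$ is an iterated amalgamated product/HNN extension of the $\pi_1(M_j)$ over the common quotients $\pi_1(S_i)/K_i^\pm$, the identification of the two edge groups being well-defined precisely because the kernels match. These edge groups are quotients of the amenable groups $\pi_1(S_i)$, hence amenable, and the amenable kernels $K_i^\pm$ themselves are invisible to bounded cohomology by Corollary~\ref{mapping:cor}. One then splices the $\alpha_j$ into a global class $\alpha\in H^n_b(\pi_1(M))$ at the cochain level by working in the isometric resolution by $\ell^\infty$-cochains on a suitable amenable $\pi_1(M)$-set (Theorem~\ref{Siso}), where the averaging over amenable stabilizers realizes the vanishing of bounded cohomology along the edges and allows the $\varphi_j$ to be glued without increasing the supremum norm. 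The pairing then decomposes, via the homological identification $[M,\partial M]\leftrightarrow \sum_j[M_j,\partial M_j]$ of the first paragraph, as $\langle \alpha,[M,\partial M]\rangle=\sum_j \langle \alpha_j,[M_j,\partial M_j]\rangle$, yielding the desired lower bound.

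\textbf{Main obstacle.} The delicate point is the splicing argument in the equality case: abstractly, the edge-group amenability forces any ``amenable-averaging'' cocycle on $\pi_1(S_i)$ to be a coboundary in the relevant degree, but one must make this explicit enough at the cochain level to (i) produce a single global bounded cocycle on $\pi_1(M)$ whose $\ell^\infty$-norm is bounded by $\max_j \|\alpha_j\|_\infty$ rather than $\sum_j \|\alpha_j\|_\infty$, and (ii) ensure that the resulting pairing with $[M,\partial M]$ genuinely decomposes as the sum of the individual pairings with $[M_j,\partial M_j]$. Compatibility is essential for (i), since without it the two edge-group images are genuinely different quotients and no canonical coboundary exists to perform the splicing; the technical heart of the proof, following~\cite{BBFIPP}, is carrying this splicing out while keeping track of the norms.
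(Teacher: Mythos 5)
Your subadditivity argument is correct, and it is essentially the alternative proof the text itself offers in Remark~\ref{final:rem}: there, the uniform boundary condition on $\Sigma$ is used to explicitly correct $\partial c$ into $\partial M$, whereas you route through the seminorm-preservation of $\iota_*$ via Theorem~\ref{reliso:thm} and Corollary~\ref{semipr0}. Both are the same use of amenability and duality, and your version is complete. (One small caveat: the phrase ``excision identifies the target with $\bigoplus_j H_n(M_j,\partial M_j)$'' is loose when a single $M_j$ has two of its boundary components glued to each other, since $M_j$ then does not embed in $M$; but you only need the pushforward relation $q_*\bigl(\sum_j[M_j,\partial M_j]\bigr)=\iota_*[M,\partial M]$, which holds regardless.)

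For the equality you correctly identify the right dual strategy --- exhibit, for each $\varepsilon$, a class $\alpha\in H^n_b(M,\partial M)$ with $\|\alpha\|_\infty\leq 1$ that restricts to each $\alpha_j$ and hence pairs as the sum --- and you are honest that the splicing is the gap. That gap is genuine: the sentence ``averaging over amenable stabilizers \dots\ allows the $\varphi_j$ to be glued without increasing the supremum norm'' is precisely where one must explain why the glued cochain has norm $\max_j\|\alpha_j\|$ and not $\sum_j\|\alpha_j\|$, and your $\ell^\infty$-resolution heuristic does not do it. The paper proves the needed Theorem~\ref{bounded:graph:thm} geometrically rather than group-theoretically: compatibility implies (by Seifert--Van Kampen) that each $\pi_1(M_j)\hookrightarrow\pi_1(M)$, hence $\widetilde M$ decomposes as a tree of spaces $\widetilde M_v$ with each $\widetilde M_v\to M_{j(v)}$ a universal covering, so the special cocycle $f_j$ (a representative of $\alpha_j$ furnished by Corollary~\ref{special:cor}) pulls back to $\Gamma_v$-invariant cocycles $f_v$. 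The single key combinatorial observation is that each singular $n$-simplex in $\widetilde M$ (with $n\geq 2$) has at most one ``barycenter'' vertex in the tree (Lemma~\ref{onebary}); defining $f(s)=\sum_v\hat f_v(s)$, at most one term of the sum is non-zero, which is exactly what gives $\|f\|_\infty\leq\max_j\|f_j\|_\infty$. Your amalgamated-product picture is the correct dual formulation (and is how BBFIPP state their result), but proving it purely group-theoretically in degree $2$ requires Poisson boundary / pluriharmonic-function technology, which the tree-of-spaces construction avoids entirely. So the missing step is not small: you need either the barycenter construction on $\widetilde M$ or the much heavier continuous-bounded-cohomology input, and your proposal supplies neither.
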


Theorem~\ref{simpl:thm} will be proved in Chapter~\ref{additivity:chap}.

Of course, the gluings defining $M$ are automatically compatible if each $S_i^\pm$ is $\pi_1$-injective in $M_{j^{\pm}(i)}$
(in fact, this 
is the case in the most relevant applications of Theorem~\ref{simpl:thm}). Even in this special case, no inequality between $\|M,\bb M\|$ and $\sum_{j=1}^k \|M_j,\bb M_j\|$ holds in general
if we drop the requirement that the fundamental group of every  $S_i^\pm$ be amenable
(see Remark~\ref{counter}). On the other hand, even if the fundamental group of every 
$S_i^\pm$ is amenable, 
the equality in Theorem~\ref{simpl:thm} does not hold in general for \emph{non}-compatible
gluings (see again Remark~\ref{counter}).

Let us mention two important corollaries of Theorem~\ref{simpl:thm}.

\begin{cor}[Additivity for connected sums]\label{connectedsum}
Let $M_1,M_2$ be closed $n$-dimensional manifolds, $n\geq 3$.
Then
$$
\| M_1\# M_2\|=\|M_1\|+\|M_2\|\ ,
$$
where $M_1\# M_2$ denotes the connected sum of $M_1$ and $M_2$.
\end{cor}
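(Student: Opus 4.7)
The strategy is to express $M_1\# M_2$ as a gluing along amenable boundary components and apply Theorem~\ref{simpl:thm} twice: once to split the connected sum and once to eliminate the contribution of the removed disks. For $i=1,2$, let $D_i\subseteq M_i$ be an embedded closed $n$-disk, and set $M_i'=M_i\setminus\mathrm{int}(D_i)$, so that $M_i'$ is a compact oriented $n$-manifold with $\partial M_i'\cong S^{n-1}$. By the very definition of connected sum, $M_1\# M_2$ is obtained by gluing $M_1'$ and $M_2'$ along an orientation-reversing homeomorphism $f\colon \partial M_1'\to \partial M_2'$. Since $n\geq 3$, the sphere $S^{n-1}$ is simply connected, so its fundamental group is trivial (hence amenable) and the compatibility condition on the gluing is automatic (both kernels $K^\pm$ coincide with the trivial group $\pi_1(S^{n-1})=1$). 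Theorem~\ref{simpl:thm} then yields
\[
\|M_1\# M_2\|=\|M_1',\partial M_1'\|+\|M_2',\partial M_2'\|.
\]

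To conclude, I would show that $\|M_i',\partial M_i'\|=\|M_i\|$ for $i=1,2$. Observe that $M_i$ is itself obtained by gluing $M_i'$ to the closed $n$-disk $D_i\cong D^n$ along the identity of their common boundary $S^{n-1}$. Again, since $n\geq 3$ the fundamental group of $S^{n-1}$ is trivial, and the gluing is compatible because $S^{n-1}$ bounds $D^n$ (so the kernel on the disk side is all of $\pi_1(S^{n-1})=1$, matching the kernel on the $M_i'$ side trivially). Applying the equality case of Theorem~\ref{simpl:thm} a second time gives
\[
\|M_i\|=\|M_i',\partial M_i'\|+\|D^n,\partial D^n\|.
\]
By Proposition~\ref{degreed:prop} we have $\|D^n,\partial D^n\|=0$ (the $n$-disk admits self-maps of arbitrary degree for $n\geq 2$), so $\|M_i\|=\|M_i',\partial M_i'\|$. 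Substituting this back into the first displayed equality gives $\|M_1\# M_2\|=\|M_1\|+\|M_2\|$, as desired.

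There is essentially no obstacle once Theorem~\ref{simpl:thm} is available: the only points that require a moment's thought are the verification that the sphere gluings are compatible (trivial because $\pi_1(S^{n-1})=1$ for $n\geq 3$, which is exactly where the dimension hypothesis enters) and the vanishing $\|D^n,\partial D^n\|=0$, both of which have already been established in the chapter. Note that the hypothesis $n\geq 3$ cannot be weakened in this argument, because for $n=2$ the fundamental group of $S^{n-1}=S^1$ is $\mathbb Z$, which is amenable but not trivial, so the compatibility condition becomes a genuine constraint and in fact this is reflected in the well-known phenomenon that the simplicial volume of surfaces is not additive under connected sum in the same clean way.
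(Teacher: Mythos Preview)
Your proof is correct and follows exactly the same approach as the paper: apply Theorem~\ref{simpl:thm} to the decomposition of $M_1\#M_2$ into $M_1'$ and $M_2'$ glued along $S^{n-1}$, then apply it again to $M_i=M_i'\cup_{S^{n-1}}D^n$ together with $\|D^n,\partial D^n\|=0$ to get $\|M_i',\partial M_i'\|=\|M_i\|$. Your exposition is simply more detailed, spelling out the compatibility check and the role of the hypothesis $n\geq 3$.
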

\begin{proof}
Let $M_i'$ be
$M_i$ with one open ball removed. Since the $n$-dimensional disk has vanishing simplicial volume and $S^{n-1}$ is simply connected, Theorem~\ref{simpl:thm}
implies that $\|M_i',\bb M_i'\|=\|M_i\|$. Moreover, Theorem~\ref{simpl:thm} also implies
that
$\| M_1\# M_2\|=\|M'_1,\partial M'_1\|+\|M'_2,\partial M'_2\|$, and this concludes the proof.
\end{proof}


Let now $M$ be a closed $3$-dimensional manifold.
The prime decomposition Theorem, the JSJ decomposition Theorem and Perelman's proof 
of Thurston's geometrization conjecture imply
that $M$ can be canonically 
cut along spheres and $\pi_1$-injective tori into
the union of hyperbolic and Seifert fibered pieces. Since the simplicial volume of Seifert fibered spaces vanishes,
Theorem~\ref{simpl:thm} and Corollary~\ref{connectedsum}
imply the following:

\begin{cor}[\cite{Gromov}]\label{Soma:cor}
Let  $M$ be a $3$-dimensional manifold, and suppose that either $M$ is closed, 
or it is bounded by $\pi_1$-injective tori. Then the simplicial volume of $M$ is equal to the sum 
of the simplicial volumes of its hyperbolic pieces.
\end{cor}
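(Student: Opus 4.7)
The plan is to combine the topological classification of 3-manifolds with the bounded-cohomological additivity result of Theorem~\ref{simpl:thm}, using Corollary~\ref{connectedsum} to handle the prime decomposition and the fact that Seifert fibered pieces are ``invisible'' to the simplicial volume. First I would reduce to the irreducible case. If $M$ is closed, the prime decomposition theorem writes $M$ as a connected sum $M_1 \# \cdots \# M_k$ of prime closed 3-manifolds; iterating Corollary~\ref{connectedsum} gives $\|M\| = \sum_{i=1}^k \|M_i\|$, so it suffices to verify the claim for each prime summand. Every such summand is either $S^1 \times S^2$, which is Seifert fibered, has no hyperbolic JSJ pieces, and has vanishing simplicial volume because the degree-$n$ self-map of the $S^1$ factor yields a self-map of degree $n$ of the product and Proposition~\ref{degreed:prop} applies, or an irreducible closed 3-manifold. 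This reduces the problem to the case in which $M$ is either irreducible and closed, or bounded by $\pi_1$-injective tori.

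Next I would apply the JSJ decomposition, which together with Perelman's proof of the geometrization conjecture writes $M$ as obtained from a finite collection $N_1, \ldots, N_l$ by gluing along a (possibly empty) family of essential tori $T_1, \ldots, T_h$, in such a way that each $N_j$ is either Seifert fibered or carries a complete finite-volume hyperbolic structure. The original boundary of $M$, if any, consists of $\pi_1$-injective tori by hypothesis, and the internal gluing tori $T_i$ are $\pi_1$-injective in the adjacent pieces by the very definition of the JSJ decomposition. In particular every boundary component of every $N_j$ is a torus, hence has amenable (actually $\matZ^2$) fundamental group, so the first hypothesis of Theorem~\ref{simpl:thm} is satisfied. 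Moreover, for each gluing torus $T_i$ the kernels $K_i^\pm$ of the inclusions $\pi_1(T_i) \hookrightarrow \pi_1(N_{j^\pm(i)})$ are trivial by $\pi_1$-injectivity, so $(f_i)_\ast(K_i^+) = \{1\} = K_i^-$ and the gluing maps automatically satisfy the compatibility condition. Theorem~\ref{simpl:thm} thus yields
\[
\|M, \bb M\| \;=\; \sum_{j=1}^l \|N_j, \bb N_j\|.
\]

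Finally I would argue that every Seifert fibered piece $N_j$ has vanishing simplicial volume, so that only the hyperbolic pieces contribute to the above sum and the corollary follows. A Seifert fibered 3-manifold with (possibly empty) toral boundary admits a finite regular cover $\widetilde{N}_j \to N_j$ which is a genuine circle bundle over a compact surface; fiberwise multiplication by an integer $n$ with $|n| \geq 2$ then defines a self-map of $\widetilde{N}_j$ of degree $n$, so Proposition~\ref{degreed:prop} gives $\|\widetilde{N}_j, \bb \widetilde{N}_j\| = 0$ and Proposition~\ref{finitecover:prop} transfers this vanishing to $N_j$. Overall the plan is essentially routine once the deep inputs (prime decomposition, JSJ, and geometrization) are taken for granted; the only place where a genuine bounded-cohomological issue could arise is the compatibility condition on the JSJ tori required by Theorem~\ref{simpl:thm}, and this is precisely the step that I expect to be the main (though here easily resolved) obstacle: it is what forces the equality, as opposed to just the inequality, in the additivity formula, and it works out for free only because the gluing surfaces are $\pi_1$-injective tori.
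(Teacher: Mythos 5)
Your overall strategy is exactly the one the paper has in mind: the corollary in the text is justified only by a one-paragraph appeal to prime decomposition, JSJ, geometrization, additivity of $\|\cdot\|$ for connected sums and for gluings along amenable tori, together with the (unproved, classical) fact that Seifert fibered pieces have vanishing simplicial volume. You flesh this out in the right order, and your treatment of the compatibility condition in Theorem~\ref{simpl:thm} (the kernels $K_i^\pm$ are trivial by $\pi_1$-injectivity, so compatibility is automatic) is exactly right and is the point that must be checked.

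There is, however, a genuine gap in the last step, where you claim that every circle bundle $\widetilde{N}_j$ over a compact surface admits a self-map of degree $n$ by ``fiberwise multiplication by $n$''. This is correct when the surface has non-empty boundary, because then $H^2(\Sigma;\matZ)=0$ and the bundle is a product $\Sigma\times S^1$, so $(\sigma,z)\mapsto(\sigma,z^n)$ is a genuine self-map. But over a \emph{closed} surface the bundle can have non-zero Euler number (e.g.\ the unit tangent bundle $T^1\Sigma_g$ of a hyperbolic surface, $e=2-2g$), and then fiberwise $z\mapsto z^n$ does not descend to a self-map: intertwining the rotation cocycle, it lands in the circle bundle whose Euler class is $n\cdot e(E)$, not in $E$ itself. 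So Proposition~\ref{degreed:prop} cannot be invoked as you do. The case affected is precisely the one where $M$ itself is a closed Seifert manifold (the JSJ decomposition is vacuous and the single Seifert piece is closed). The clean fix, already in the paper, is Theorem~\ref{amenable:covers:simpl}: a circle bundle over a closed surface is a fiber bundle with fiber $S^1$, whose $\pi_1$ is amenable, so its simplicial volume vanishes; then Proposition~\ref{finitecover:prop} transfers the vanishing to the original Seifert piece. Keep your trivial-bundle argument for the pieces with boundary.

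Two smaller points. First, you should also allow yourself to prime-decompose in the case where $M$ has boundary: a manifold bounded by $\pi_1$-injective tori need not be irreducible, and the JSJ decomposition is usually formulated for irreducible pieces; the paper's phrase ``cut along spheres and $\pi_1$-injective tori'' covers this. Second, when you say $S^1\times S^2$ ``has no hyperbolic JSJ pieces'', you might simply note that its fundamental group $\matZ$ is amenable, so $\|S^1\times S^2\|=0$ by Corollary~\ref{amvan}; this avoids having to discuss JSJ on a reducible piece at all.
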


A proof of Corollary~\ref{Soma:cor} 
(based on results from~\cite{Thurston})
may be found also in~\cite{Soma}.

\begin{rem}\label{counter}
The following examples show that 
the hypotheses of Theorem~\ref{simpl:thm} should not be too far from being the weakest
possible.

Let $M$ be a hyperbolic $3$-manifold with connected geodesic boundary. It is well-known
that $\partial M$ is 
$\pi_1$-injective in $M$. We fix a pseudo-Anosov homeomorphism $f\colon \partial M\to\partial M$, and for every $m\in \mathbb{N}$
we denote by $D_m M$ the twisted double obtained
by gluing two copies of $M$ along the homeomorphism $f^m\colon \partial M\to \partial M$
(so $D_0 M$ is the usual double of $M$).
It is shown in~\cite{Jungreis} that 
$$\| D_0 M\|<2\cdot \|M,\partial M \|\ .$$ 
On the other hand, 
by~\cite{Soma2} we have  $\lim_{m\to \infty} {\rm Vol}\, D_{m} M=\infty$,
so $\lim_{m\to \infty} \| D_{m} M \|=\infty$, and the inequality $$\|D_{m} M\|>2\cdot \|M,\partial M\|$$
holds for infinitely many $m\in\mathbb{N}$.
This shows that, 
even in the case when each $S_i^\pm$ is $\pi_1$-injective in $M_{j^{\pm}(i)}$, 
no inequality between $\|M,\bb M\|$ and $\sum_{j=1}^k \|M_j,\bb M_j\|$ holds
in general
if one drops the requirement that 
the fundamental group of every $S_i^\pm$ be amenable.

On the other hand, if $M_1$ is (the natural compactification of) the once-punctured torus and $M_2$ is the $2$-dimensional disk,
then the manifold $M$ obtained by gluing $M_1$ with $M_2$ along $\bb M_1\cong \bb M_2\cong S^1$ is a torus, 
so
$$
\|M\|=0<2+0=\| M_1,\bb M_1\|+ \|M_2,\bb M_2\|\ .
$$
This shows that, even 
in the case when the fundamental group of every $S_i^\pm$ is amenable, 
the equality $\|M,\bb M\|=\sum_{j=1}^k \|M_j,\bb M_j\|$
does not hold in general if one drops the requirement that the gluings
be compatible.
\end{rem}

\section{Simplicial volume and duality}\label{dual:simpl:sec}
Let $M$ be an $n$-manifold with (possibly empty) boundary
(recall that every manifold is assumed to be orientable and compact).
Then, the topological dual of the relative chain module
$C_i(M,\bb M)$ is the relative cochain module $C^\bullet_b(M,\bb M)$, whose cohomology gives
the relative bounded cohomology module $H^\bullet_b(M,\bb M)$
(see Section~\ref{relbounded:sec}).

As usual, if $\varphi\in H^i(M,\bb M)$ is a singular (unbounded) coclass, then we denote by
$\|\varphi\|_\infty\in [0,+\infty]$ the infimum of the norms of the
representatives of $\varphi$ in $Z^n(M,\bb M)$, i.e.~we set 
$$
\| \varphi\|_\infty =\inf \{\|\varphi_b\|_\infty\, |\, \varphi_b\in H^i_b(M,\bb M)\, ,\
c^i(\varphi_b)=\varphi\}\ ,
$$
where $\inf\emptyset =+\infty$ and $c^i\colon H^i_b(M,\bb M)\to
H^i(M,\bb M)$ is the comparison map induced by the inclusion of bounded relative
cochains into relative cochains.

We denote by $[M,\bb M]^*\in H^n(M,\bb M)$ the \emph{fundamental coclass}
of $M$, i.e.~the element $[M,\bb M]^*\in H^n(M,\bb M)$ such that
$$
\langle [M,\bb M]^*,[M,\bb M]\rangle =1 \ ,
$$ 
where $\langle\cdot , \cdot \rangle$ denotes the Kronecker product
(in Chapter~\ref{duality:chap} we defined the Kronecker product only in the context
of bounded cohomology, but of course the same definition makes sense also for
usual singular cohomology). Lemma~\ref{lemma:duality} implies the following:

\begin{prop}\label{prop:duality}
 We have 
$$
\|M,\bb M\|=\max \{ \langle\varphi_b,[M,\bb M]\rangle\, |\,  \varphi_b\in H_b^n(M,\bb M),\,  \|\varphi_b\|_\infty \leq 1\}\ ,
$$
so
$$
\|M,\bb M\|=\left\{\begin{array}{ll}
0 & \textrm{if}\ \|[M,\bb M]^*\|_\infty =+\infty\\
\|[M,\bb M]^*\|^{-1} & \textrm{otherwise.}
\end{array}\right.
$$
\end{prop}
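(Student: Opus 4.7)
The plan is to derive both statements as consequences of the general duality result Lemma~\ref{lemma:duality}, applied to the appropriate normed chain complex, combined with the elementary observation that $H^n(M,\bb M)$ is one-dimensional. First I would set up the input for Lemma~\ref{lemma:duality}. Consider the normed chain complex $(C_\bullet(M,\bb M),\partial_\bullet)$: as noted in Section~\ref{relbounded:sec}, $C_\bullet(\bb M)$ is closed in $C_\bullet(M)$ for the $\ell^1$-norm, so the quotient $\ell^1$-seminorm on $C_n(M,\bb M)$ is in fact a norm, and its topological dual is canonically $C^n_b(M,\bb M)$ (continuous functionals on $C_n(M)$ vanishing on $C_n(\bb M)$). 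Hence $C^\bullet_b(M,\bb M)$ is the normed dual cochain complex of $C_\bullet(M,\bb M)$, its cohomology is $H^\bullet_b(M,\bb M)$, and the first displayed equality is the content of the second assertion of Lemma~\ref{lemma:duality} applied to $\alpha=[M,\bb M]$ (the sup is attained).

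For the second equality, the key fact I would use is that $H^n(M,\bb M)\cong \R$ is one-dimensional, generated by $[M,\bb M]^*$. Thus for every $\varphi_b\in H^n_b(M,\bb M)$ one has $c^n(\varphi_b)=t\cdot [M,\bb M]^*$ for some $t\in\R$, and unravelling the definitions gives
\[
\langle \varphi_b,[M,\bb M]\rangle=\langle c^n(\varphi_b),[M,\bb M]\rangle=t.
\]
Moreover, if $c^n(\varphi_b)=t[M,\bb M]^*$ then $t[M,\bb M]^*$ admits $\varphi_b$ as a bounded lift, so
\[
\|\varphi_b\|_\infty\ \geq\ \|t[M,\bb M]^*\|_\infty\ =\ |t|\cdot \|[M,\bb M]^*\|_\infty,
\]
with the convention $0\cdot \infty=0$.

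I would then split into two cases. If $\|[M,\bb M]^*\|_\infty=+\infty$, then $[M,\bb M]^*$ has no bounded representative; by one-dimensionality of $H^n(M,\bb M)$ this forces $c^n\equiv 0$, whence $\langle\varphi_b,[M,\bb M]\rangle=0$ for every $\varphi_b$, and the first equality gives $\|M,\bb M\|=0$. If instead $\|[M,\bb M]^*\|_\infty<+\infty$, the displayed inequality above forces $|t|\leq \|[M,\bb M]^*\|_\infty^{-1}$ whenever $\|\varphi_b\|_\infty\leq 1$, so the first equality gives $\|M,\bb M\|\leq \|[M,\bb M]^*\|_\infty^{-1}$. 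For the reverse inequality, for each $\varepsilon>0$ pick a bounded representative $\varphi_b$ of $[M,\bb M]^*$ with $\|\varphi_b\|_\infty\leq \|[M,\bb M]^*\|_\infty+\varepsilon$; rescaling by $1/\|\varphi_b\|_\infty$ produces a bounded coclass of norm $1$ pairing to $1/\|\varphi_b\|_\infty\geq (\|[M,\bb M]^*\|_\infty+\varepsilon)^{-1}$ with $[M,\bb M]$, and $\varepsilon\to 0$ concludes.

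No step here looks genuinely hard: the only delicate point, which is nonetheless routine, is confirming that the topological dual of $(C_n(M,\bb M),\|\cdot\|_1)$ is exactly $(C^n_b(M,\bb M),\|\cdot\|_\infty)$, so that Lemma~\ref{lemma:duality} is literally applicable; after that, the argument is pure bookkeeping using one-dimensionality of $H^n(M,\bb M)$ and the definition of the quotient seminorm on classical cohomology induced from bounded cohomology.
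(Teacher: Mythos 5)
Your proof is correct and follows exactly the route intended by the paper: the first identity is Lemma~\ref{lemma:duality} applied to the normed chain complex $C_\bullet(M,\bb M)$ (with the verification that $C^\bullet_b(M,\bb M)$ is its normed dual), and the second identity is then elementary bookkeeping using $H^n(M,\bb M)\cong\R$. The only micro-gap is that you implicitly use $\|[M,\bb M]^*\|_\infty>0$ when dividing; this is immediate since $\langle[M,\bb M]^*,[M,\bb M]\rangle=1\neq 0$, but it would be worth a half-sentence.
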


\begin{cor}\label{criterion}
 Let $M$ be a closed orientable $n$-manifold, and let $c^n\colon H^n_b(M,\R)\to H^n(M,\R)\cong \R$ be the comparison map. Then $\|M\|=0$ if and only if
 $c^n=0$, and $\|M\|>0$ if and only if $c^n$ is surjective.
\end{cor}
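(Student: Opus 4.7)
The plan is to deduce the corollary directly from Proposition~\ref{prop:duality}, combined with the crucial observation that $H^n(M,\R)$ is one-dimensional for a closed orientable $n$-manifold $M$. Since the target $H^n(M,\R)\cong\R$ is one-dimensional, any $\R$-linear map into it is either the zero map or surjective, so the dichotomy in the statement covers all cases; this reduces the problem to relating the surjectivity of $c^n$ to the finiteness of $\|[M]^*\|_\infty$.

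First, I would recall the definition of $\|\varphi\|_\infty$ for a singular coclass $\varphi\in H^n(M,\R)$, which is the infimum of $\|\varphi_b\|_\infty$ over all $\varphi_b\in H^n_b(M,\R)$ such that $c^n(\varphi_b)=\varphi$, with the convention $\inf\emptyset=+\infty$. In particular, $\|\varphi\|_\infty<+\infty$ if and only if $\varphi$ lies in the image of $c^n$. Applying this to the fundamental coclass $[M]^*$, which generates $H^n(M,\R)\cong\R$, one has: $[M]^*\in\operatorname{im}(c^n)$ if and only if $c^n$ is surjective (equivalently, nonzero), and $[M]^*\notin\operatorname{im}(c^n)$ if and only if $c^n=0$.

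Next, I would invoke Proposition~\ref{prop:duality}, which yields $\|M\|=0$ precisely when $\|[M]^*\|_\infty=+\infty$, and otherwise $\|M\|=\|[M]^*\|_\infty^{-1}>0$. Combining the two observations:
\begin{itemize}
\item If $c^n=0$, then $\|[M]^*\|_\infty=+\infty$, hence $\|M\|=0$. Conversely, if $\|M\|=0$, then $\|[M]^*\|_\infty=+\infty$ (else $\|M\|$ would be strictly positive), so $[M]^*\notin\operatorname{im}(c^n)$, forcing $c^n=0$ by one-dimensionality of the target.
\item If $c^n$ is surjective, then $[M]^*\in\operatorname{im}(c^n)$, so $\|[M]^*\|_\infty<+\infty$, and hence $\|M\|=\|[M]^*\|_\infty^{-1}>0$. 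Conversely, $\|M\|>0$ forces $\|[M]^*\|_\infty<+\infty$, i.e.\ $[M]^*\in\operatorname{im}(c^n)$, so $c^n$ is surjective.
\end{itemize}

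Strictly speaking there is no hard step here: the argument is a one-line translation between Proposition~\ref{prop:duality} and the image of $c^n$, using only that $H^n(M,\R)$ is one-dimensional and generated by $[M]^*$. The only mildly delicate point to highlight is that $\|M\|=0$ implies $\|[M]^*\|_\infty=+\infty$ (not merely $\|[M]^*\|_\infty^{-1}=0$), which is where the convention $\inf\emptyset=+\infty$ in the definition of the seminorm on $H^n(M,\R)$ is genuinely used.
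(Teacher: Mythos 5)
Your proposal is correct and is exactly the intended argument: the corollary is a direct translation of Proposition~\ref{prop:duality} using the definition of $\|\cdot\|_\infty$ on $H^n(M,\R)$ (with the convention $\inf\emptyset=+\infty$) and the one-dimensionality of $H^n(M,\R)$, which the paper leaves implicit since it omits a proof of this corollary altogether.
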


Corollary~\ref{criterion} implies the following non-trivial vanishing result:

\begin{cor}\label{amvan}
 Let $M$ be a closed $n$-manifold with amenable fundamental group. Then
$$
\| M\|=0\ .
$$
In particular, any closed simply connected manifold has vanishing simplicial volume.
\end{cor}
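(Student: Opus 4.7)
The plan is to combine three tools from the previous chapters: the duality criterion in Corollary~\ref{criterion}, the Gromov--Ivanov isometric isomorphism between the bounded cohomology of a space and that of its fundamental group (Theorem~\ref{gro-iva:thm}), and the vanishing of the bounded cohomology of amenable groups with trivial real coefficients (Corollary~\ref{amenable:real:cor}).

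More precisely, I would first reduce the vanishing of $\|M\|$ to a statement about the top-degree bounded cohomology of $M$. By Corollary~\ref{criterion}, it suffices to show that the comparison map $c^n\colon H^n_b(M,\R)\to H^n(M,\R)$ is zero; in fact it suffices to show the stronger statement $H^n_b(M,\R)=0$. Since $M$ is a closed manifold, it has the homotopy type of a finite (hence countable) CW-complex, so Theorem~\ref{gro-iva:thm} applies and gives a (canonical, isometric) isomorphism
\[
H^n_b(M,\R)\ \cong\ H^n_b(\pi_1(M),\R).
\]
Now by hypothesis $\G=\pi_1(M)$ is amenable, and $\R$ is a dual normed $\R[\G]$-module (namely the topological dual of itself), so Corollary~\ref{amenable:real:cor} yields $H^n_b(\G,\R)=0$ for every $n\geq 1$. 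Combining, we conclude $H^n_b(M,\R)=0$, hence the comparison map is trivially zero, hence $\|M\|=0$ by Corollary~\ref{criterion}.

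The simply connected case is an immediate specialization: the trivial group is amenable (the unique mean on $\ell^\infty(\{1\})$ is invariant), so the general statement applies. There is no real obstacle here; the work has all been done in the previous chapters. The one point worth double-checking is the CW-hypothesis in Theorem~\ref{gro-iva:thm}: this is harmless since every closed topological manifold is homotopy equivalent to a finite CW-complex, and bounded cohomology is a homotopy invariant. Alternatively, one can bypass the CW-hypothesis by using Corollary~\ref{classifyingmap}, which provides a norm non-increasing map $H^n_b(\G,\R)\to H^n_b(M,\R)$ factoring the map from the trivial module, in combination with the fact that the fundamental class pairs nontrivially only with classes in the image of this map when $M$ is aspherical — but since in the general case we already have the isomorphism of Theorem~\ref{gro-iva:thm}, no such workaround is needed.
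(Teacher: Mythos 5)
Your argument is correct for closed \emph{orientable} manifolds and is precisely the chain of implications the paper has in mind: the sentence preceding the corollary (``Corollary~\ref{criterion} implies the following non-trivial vanishing result'') signals exactly the reduction you carried out, and the passage through Theorem~\ref{gro-iva:thm} and Corollary~\ref{amenable:real:cor} is the intended one. The CW issue you flagged is indeed harmless for the reason you give.

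One small gap: the statement of Corollary~\ref{amvan} does not assume $M$ orientable, whereas Corollary~\ref{criterion} does. For a non-orientable $M$ the paper's definition sets $\|M\|=\|\widehat{M}\|/2$, where $\widehat{M}$ is the orientable double cover. To cover this case you need one more observation: $\pi_1(\widehat{M})$ is an index-two subgroup of $\pi_1(M)$, hence amenable by Proposition~\ref{amenable:groups}(1), so the orientable case applied to $\widehat{M}$ gives $\|\widehat{M}\|=0$ and therefore $\|M\|=0$. With that sentence added the proof is complete.
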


\section{The simplicial volume of products}\label{prod:sec}
It is readily seen that the product $\Delta^n\times  \Delta^m$
of two standard simplices of dimension $n$ and $m$ can be triangulated by
$\binom{n+m}{n}$ simplices of dimension $n+m$.
Using this fact it is easy to prove that, if $M$, $N$ are closed manifolds of dimension
$m,n$ respectively, then
\begin{equation}\label{binom:eq}
\| M\times N\|\leq 
\binom{n+m}{m}
\|M\| \cdot \| N\|\ .
\end{equation}

In fact, an easy application of duality implies the following stronger result:

\begin{prop}[\cite{Gromov}]\label{product:prop}
 Let $M$, $N$ be closed manifolds of dimension
$m,n$ respectively. Then
$$
\| M\|\cdot \| N\|\leq \| M\times N\|\leq 
\binom{n+m}{m}
\|M\| \cdot \| N\|\ .
$$
\end{prop}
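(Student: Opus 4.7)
My plan is to prove the two inequalities separately: the upper bound directly on chains, and the lower bound via duality using the results of Section~\ref{dual:simpl:sec}.

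For the upper bound, the plan is to use the standard triangulation of the prism $\Delta^m \times \Delta^n$ into $\binom{m+n}{m}$ affine simplices of dimension $m+n$ (the ``staircase'' triangulation indexed by $(m,n)$-shuffles). Given fundamental cycles $c_M = \sum_i a_i s_i \in C_m(M)$ and $c_N = \sum_j b_j t_j \in C_n(N)$ representing $[M]$ and $[N]$, one builds a chain on $M\times N$ by replacing each product $(s_i\times t_j)\colon \Delta^m\times \Delta^n \to M\times N$ with the sum of its $\binom{m+n}{m}$ restrictions to the simplices of the staircase triangulation, weighted by $a_ib_j$ (and a sign that makes this compatible with the cross product of chains). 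Standard acyclic-models arguments show the resulting chain $c_M \times c_N$ represents the cross product class $[M]\times[N] = [M\times N]$, and by construction $\|c_M\times c_N\|_1 \leq \binom{m+n}{m}\|c_M\|_1\|c_N\|_1$; taking the infimum over the two factors gives the upper bound.

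For the lower bound, the plan is to apply the duality Proposition~\ref{prop:duality}. If $\|M\|\cdot\|N\|=0$, the inequality is trivial, so we assume both are positive; then the fundamental coclasses $[M]^*\in H^m(M)$ and $[N]^*\in H^n(N)$ admit bounded representatives with $\|[M]^*\|_\infty=\|M\|^{-1}$ and $\|[N]^*\|_\infty=\|N\|^{-1}$. For $\varphi\in C^m(M)$ and $\psi\in C^n(N)$ define the cross product cochain $(\varphi\times \psi)\in C^{m+n}(M\times N)$ on a singular simplex $s\colon\Delta^{m+n}\to M\times N$ by
\[
(\varphi\times \psi)(s) \;=\; \varphi\bigl(p_M\circ s|_{[e_0,\ldots,e_m]}\bigr)\cdot\psi\bigl(p_N\circ s|_{[e_m,\ldots,e_{m+n}]}\bigr),
\]
where $p_M,p_N$ are the projections. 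This formula sends one simplex to one pair of faces, so it is immediate that $\|\varphi\times\psi\|_\infty \leq \|\varphi\|_\infty\cdot\|\psi\|_\infty$, and a direct computation (or an acyclic-models argument dual to the one used above) shows that $\times$ is a chain map inducing the usual cohomology cross product, which in top degree satisfies $[M]^*\times[N]^*=[M\times N]^*$.

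Combining these two ingredients, for every $\varepsilon>0$ one obtains a bounded representative of $[M\times N]^*$ of norm at most $(\|M\|^{-1}+\varepsilon)(\|N\|^{-1}+\varepsilon)$, hence
\[
\|[M\times N]^*\|_\infty \;\leq\; \|M\|^{-1}\|N\|^{-1},
\]
and Proposition~\ref{prop:duality} yields $\|M\times N\|\geq \|M\|\cdot\|N\|$. I expect the main technical nuisance to be verifying cleanly that the chain-level cross product and cochain-level cross product really do compute the Künneth class $[M]\times [N]$ (respectively $[M]^*\times[N]^*$) and are compatible with the Kronecker pairing $\langle[M]^*\times[N]^*,\,[M]\times[N]\rangle = \langle[M]^*,[M]\rangle\cdot\langle[N]^*,[N]\rangle = 1$; once this is in place, the norm estimates are immediate from the local definition of the two cross products, which send one simplex to one pair of faces and hence are automatically norm-multiplicative.
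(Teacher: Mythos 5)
Your proposal is correct and takes essentially the same route as the paper: the upper bound via the staircase triangulation of $\Delta^m\times\Delta^n$ (this is exactly the inequality the paper records as a preliminary fact), and the lower bound via duality and the norm-submultiplicativity of the cohomology cross product. Your cochain cross product $\varphi\times\psi$ is precisely $(p_M^*\varphi)\cup(p_N^*\psi)$ in the Alexander--Whitney form, which is what the paper denotes (with the pullbacks left implicit) by $[M]^*\cup[N]^*$; the norm estimate is the same one-simplex-to-one-pair-of-faces observation in both.
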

\begin{proof}
Thanks to~\eqref{binom:eq}, we are left to prove the 
inequality
$\| M\|\cdot \| N\|\leq \| M\times N\|$.
Let $[M]^*\in H^m(M)$, $[N]^*\in H^n(N)$ be the fundamental coclasses
of $M, N$ respectively. Then it is readily seen that
$$
[M]^*\, \cup\, [N]^* = [M\times N]^*\in H^n(M\times N)\, ,
$$
$$
\|[M\times N]^*\|_\infty=
\|[M]^*\, \cup\, [N]^* \|_\infty\leq \|[M]^*\|_\infty\cdot \| [N]^*\|_\infty\ ,
$$
where $\cup$ denotes the cup product.
So the inequality 
$$
\|M\times N\|\geq \|M\|\cdot \|N\|
$$
follows from Proposition~\ref{prop:duality}, and we are done.
\end{proof}

\section{Fiber bundles with amenable fibers}
A natural question is whether the estimates on the simplicial volume of a product in terms of the simplicial volumes of the factors carry over
to the context of fiber bundles. We refer the reader to Section~\ref{further:simplicial:readings} for a brief discussion of this topic. Here we just prove the following:

\begin{thm}\label{amenable:covers:simpl}
Let $p\colon E\to M$ be a locally trivial fiber bundle with fiber $F$, where $E,M,F$ are compact connected orientable manifolds. Assume that $\dim F\geq 1$, and that the image
$i_*(\pi_1(F))<\pi_1(E)$ of the fundamental group of $F$ via the map induced by the inclusion $F\hookrightarrow E$ is amenable. Then
$$
\| E\|=0\ .
$$
\end{thm}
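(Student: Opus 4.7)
The plan is to combine Gromov's Mapping Theorem (Corollary~\ref{mapping:cor}) with the duality criterion for vanishing simplicial volume (Corollary~\ref{criterion}). Setting $n = \dim E$ and $m = \dim M$, the hypothesis $\dim F \geq 1$ forces $n > m$. From the long exact sequence in homotopy associated to the bundle $F \hookrightarrow E \xrightarrow{p} M$, together with the connectedness of $F$, the map $p_*\colon \pi_1(E) \to \pi_1(M)$ is surjective with kernel precisely $i_*(\pi_1(F))$, which is amenable by assumption. Since compact manifolds are homotopy equivalent to finite CW-complexes, the hypotheses of Corollary~\ref{mapping:cor} are satisfied, so
$$H^n_b(p)\colon H^n_b(M,\R) \longrightarrow H^n_b(E,\R)$$
is an isometric isomorphism; in particular, every bounded coclass on $E$ in degree $n$ is pulled back from $M$.

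The conclusion follows from naturality of the comparison map. Consider the commutative diagram
$$
\xymatrix{
H^n_b(M,\R) \ar[r]^{H^n_b(p)}_{\cong} \ar[d]_{c^n_M} & H^n_b(E,\R) \ar[d]^{c^n_E}\\
H^n(M,\R) \ar[r]^{H^n(p)} & H^n(E,\R)
}
$$
The lower-left group vanishes because $n > \dim M$ (a compact manifold has trivial real singular cohomology above its dimension), so $c^n_M = 0$. Combined with surjectivity of $H^n_b(p)$, this forces $c^n_E = 0$. By Corollary~\ref{criterion}, the vanishing of the comparison map in top degree is equivalent to $\|E\| = 0$, which is the desired conclusion.

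There is no real obstacle here: the result is a direct consequence of the two main tools already developed in the text, and the only extra observation required is the trivial dimensional fact that $H^n(M,\R) = 0$ for $n > \dim M$. The whole content of the argument is packed into the Mapping Theorem, whose proof rests on the machinery of relatively injective resolutions and amenable actions developed in Chapter~\ref{res:chapter}. One could also phrase the argument dually via $\ell^1$-homology and Theorem~\ref{Loeh}: the map $H_n(p)\colon H_n(E,\R) \to H_n(M,\R) = 0$ is seminorm-preserving once passed to $\hl_\bullet$, forcing the fundamental class of $E$ to have vanishing $\ell^1$-seminorm; but the cohomological formulation above is more streamlined.
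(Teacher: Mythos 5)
Your proof is correct and follows essentially the same route as the paper's: identify $p_*\colon\pi_1(E)\to\pi_1(M)$ as an epimorphism with amenable kernel via the homotopy exact sequence, invoke Gromov's Mapping Theorem to make $H^n_b(p)$ an isometric isomorphism, and then read off $c^n_E=0$ from the commuting square since $c^n_M=0$ for dimensional reasons, concluding with the duality criterion. The only additions beyond the paper's argument are the (correct) remark about compact manifolds having the homotopy type of finite CW-complexes and the sketched $\ell^1$-homology variant, both harmless.
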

\begin{proof}
Let $n=\dim E>\dim M$.
 Since a locally trivial fiber bundle is a fibration, we have an exact sequence
 $$
 \xymatrix{
 \pi_1(F) \ar[r]^{i_*} & \pi_1(E) \ar[r]^{p_*} & \pi_1(M) \ar[r] & \pi_0(F)\ .
 }
 $$
 Since $F$ is connected $\pi_0(F)$ is trivial, so our assumptions imply that $p_*\colon \pi_1(E)\to \pi_1(M)$ is an epimorphism
 with an amenable kernel. Then by Gromov Mapping Theorem (see Corollary~\ref{mapping:cor}), the map $H^n_b(p)\colon H^n_b(M,\R)\to H^n_b(E,\R)$
 is an isometric isomorphism. Let us now consider the commutative diagram
 $$
 \xymatrix{
 H^n_b(M,\R) \ar[d]^{c^n_M}\ar[r]^{H^n_b(p)} & H^n_b(E,\R) \ar[d]^{c_E^n}\\ 
 H^n(M,\R) \ar[r]^{H^n(p)} & H^n(E,\R)\ ,
 }
 $$
 where $c_M^\bullet$ (resp.~$c_M^\bullet$) denotes the comparison map. Since $H^n_b(p)$ is an isomorphism, we have
 $c_E^n=   H^n(p)\circ c_M^n\circ  H^n_b(p)^{-1}$. But $n>\dim M$, so $c_M^n=0$, hence $c_E^n=0$. By Corollary~\ref{criterion}, this implies $\| E\|=0$.
 \end{proof}

\section{Further readings}\label{further:simplicial:readings}

\subsection*{Simplicial volume and geometric invariants}
As already observed by Gromov in his seminal paper~\cite{Gromov}, the simplicial volume of manifolds is strictly related to many invariants of geometric nature. For example
if one defines the minimal volume ${\rm MinVol}(M)$ of a closed manifold $M$ as the infimum of the volumes of the Riemannian metrics supported by $M$ with sectional curvature bounded between $-1$ and $1$,
then the inequality $$\frac{\|M\|}{ (n-1)^n n!}\leq  {\rm MinVol}(M)$$ holds. In particular, non-vanishing of the simplicial volume implies non-vanishing of the minimal volume. Analogous estimates
hold when the minimal volume is replaced by the minimal entropy, an invariant which, roughly speaking, measures the rate of growth of balls in the universal covering. 
We refer the reader to~\cite{Gromov} for an extensive treatment of these topics.

\subsection*{Variations of the simplicial volume}
The notion of simplicial volume admits variations which turned out to be interesting for many applications. 
For example, the \emph{integral} simplicial volume $\|M\|_\mathbb{Z}$ of a manifold $M$ is the infimum (in fact, the minimum) of the $\ell^1$-seminorm of integral fundamental cycles of $M$.  
The integral simplicial volume is only submultiplicative with respect to finite coverings, but one can promote it to a multiplicative invariant by setting
\[ \stisv M := \inf \Bigl\{ \frac1d \cdot \isv {\overline M} 
                    \Bigm| \text{$d \in \matN$ and $\overline M \rightarrow M$ 
                           is a $d$-sheeted covering}
                    \Bigr\} 
   .
\]
It is readily seen that
$$
\|M\|\leq \stisv M \leq \isv{M}\ ,
$$
but integral simplicial volume is often strictly bigger than the simplicial volume (e.g.,  
$\|M\|_\mathbb{Z}>0$ for every closed manifold $M$, while $\|M\|=0$ for many manifolds). 
It is natural to try to understand when stable integral simplicial volume is equal to the simplicial volume. 
Since integral cycles may be interpreted as combinatorial objects, 
when this is the case one can usually approach the study of the simplicial volume in a quite concrete way. 
For example,
the following purely topological problem on simplicial volume was
formulated by Gromov~\cite[p.~232]{gromovasymptotic}\cite[3.1.~(e) on
  p.~769]{gromov-cycles}:

\begin{quest}\label{Gromov-quest}
	Let $M$ be an
	oriented closed connected aspherical manifold. Does $\sv M = 0$ 
	imply $\chi (M) = 0$? 
\end{quest}

 A possible strategy to answer Question~\ref{Gromov-quest} in the affirmative is to replace
simplicial volume by stable integral 
simplicial volume: in fact, an easy application of Poincar\'e duality shows that 
$$
\stisv{M}=0\quad \Longrightarrow |\chi(M)|=0\ ,
$$
and this reduces Gromov's question to the question whether the vanishing of the simplicial volume implies the vanishing of the stable integral
simplicial volume (say, for aspherical manifolds with residually finite fundamental group).

More in general, one could ask which manifolds satisfy the equality $\|M\|=\stisv{M}$. This is the case for oriented closed surfaces of genus $g\geq 2$ (see Section~\ref{surface:simplicial:sec}) and for closed hyperbolic manifolds
of dimension $3$~\cite{FLPS}. On the contrary, $\stisv{M}$ is strictly bigger than $\|M\|$ for every closed hyperbolic $n$-manifold, $n\geq 4$~\cite{FFM}.

Yet another variation of the simplicial volume is the \emph{integral foliated simplicial volume} defined by Gromov in~\cite{Gromovbook}. 
In some sense, integral foliated simplicial volume interpolates the classical simplicial volume and the stable integral simplicial volume.
As suggested by Gromov himself~\cite[p. 305f]{Gromovbook}
and confirmed by Schmidt~\cite{Sthesis}, the vanishing of integral foliated simplicial volume implies the vanishing of $\ell^2$-Betti numbers, whence of the Euler characteristic.
Therefore, the integral foliated simplicial volume could probably be useful to approach Question~\ref{Gromov-quest}. For some recent results on this topic we refer the reader to~\cite{LP,FLPS}.

\subsection*{Simplicial volume of products and fiber bundles}
Let $M,F$ be closed orientable manifolds. We have seen above that the simplicial volume of $M\times F$ satisfies the bounds
$$
\| M\|\cdot \| F\|\leq \| M\times F\|\leq 
\binom{n+m}{m}
\|M\| \cdot \| F\|\ .
$$
A natural question is whether $\|M\times F\|$ should be equal to $c_{n,m} \|M\|\cdot \|F\|$ for a constant $c_{n,m}$ depending
only on $n=\dim M$, $m=\dim F$. The answer to this question is still unknown in general. 

Of course, products are just a special case of fiber bundles, so one may wonder whether the simplicial volume of the total space $E$ of
a fiber bundle with base $M$ and fiber $F$ could be estimated in terms of the simplicial volumes of $M$ and $F$. Since there exist closed hyperbolic $3$-manifolds
that fiber over the circle, no estimate of the form $\| E\|\leq \|M\| \cdot \| F\|$ can hold for any $k>0$ (while the simplicial volume of total spaces of fiber bundles with amenable fibers always vanishes, see Theorem~\ref{amenable:covers:simpl}). 
On the contrary, in the case when $F$ is a surface it was proved
in~\cite{HK} that the inequality $\| M\|\cdot \| F\|\leq \|E\|$ still holds. This estimate was then improved in~\cite{Michelle:fiber}, where it was shown that
$$
\frac{3}{2}\cdot \| M\|\cdot \| F\|\leq \|E\|
$$
for every fiber bundle $E$ with surface fiber $F$ and base $M$. This implies in particular that, if $S$ is a closed orientable surface, then
$$
\|M\times S\|\geq \frac{3}{2} \cdot \|M\|\cdot \|S\|\ .
$$
In fact, this inequality turns out to be an equality when $M$ is also a surface~\cite{Bucher3}. It it still an open question whether
the inequality
$$
\|E\|\geq \|M\|\cdot \|F\|
$$
holds for every fiber bundle $E$ with fiber $F$ and base $M$, without any restriction on the dimensions of $F$ and $M$.

\subsection*{Simplicial volume of non-compact manifolds}
If $M$ is a non-compact orientable $n$-manifold, then the $n$-th homology group of $M$ (with integral or real coefficients) vanishes. Therefore, in order to define the notion of simplicial volume, in this case
it is necessary to deal with \emph{locally finite} chains. With this choice it is still possible to define a fundamental class, and the simplicial volume is again the infimum of the $\ell^1$-norms 
of all the fundamental cycles of $M$. Of course, in this case such an infimum may be equal to $+\infty$: this is the case, for example, when $M=\R$, or when $M$ is the internal part of
a compact manifold with boundary $N$ such that $\|\partial N\|>0$. As mentioned in Section~\ref{further:duality}, suitable variations of the theory of bounded cohomology may be exploited
to establish interesting duality results, thus reducing the computation of the simplicial volume to a cohomological context~\cite{Loeh, Lothesis}. Nevertheless, 
bounded cohomology seems to be more powerful when dealing with compact manifolds, while
in the non-compact case the simplicial volume
seems to be more mysterious than in the compact case. 

For example, it is still not known whether the simplicial volume of the product of two punctured tori vanishes or not. Moreover, the proportionality principle fails in the non-compact case.
As already suggested by Gromov~\cite{Gromov}, probably more understandable objects may be obtained by defining suitable variations of the simplicial volume, such as the \emph{Lipschitz simplicial volume},
which was defined in~\cite{Gromov} and studied e.g.~in~\cite{Loh-Sauer, Loh-Sauer2, Franceschini1}. 

A slightly easier case arise when considering the internal part $M$ of a compact manifold $N$ with amenable boundary: in this case it is known that 
the simplicial volume of $N$, the Lipschitz simplicial volume of $M$ (with respect to any complete Riemannian metric), and the simplicial volume of $M$ all coincide 
(see e.g.~\cite{KimKue}). For other results dealing with non-amenable cusps we refer the reader to~\cite{KimKim,MichelleKim}.

\chapter{The proportionality principle}\label{prop:simpl:chap}
This chapter is devoted to the proofs of Theorems~\ref{prop:thm} and~\ref{hyp:thm}.
The proportionality principle for the simplicial volume of Riemannian manifolds
is due to Gromov~\cite{Gromov}. A detailed proof first appeared in
\cite{Loh}, where L\"oh exploited the approach via ``measure homology'' 
introduced by Thurston~\cite{Thurston}. Another proof is given 
in~\cite{Bucher}, where Bucher follows an approach which is based on the use of
bounded cohomology, and is closer in spirit to
the original argument by Gromov (however, it may be worth mentioning that, in~\cite{Loh}, 
the proof of the fact that measure homology is isometric to
the standard singular homology, which is the key step towards the proportionality 
principle, still relies on Gromov's isometric isomorphism between the bounded cohomology of a space and the one of its fundamental group, as well as on Monod's results about \emph{continuous} bounded cohomology of groups).
An independent proof of the proportionality principle, which does not make use of any sophisticated result from the theory of bounded cohomology, has recently been 
given by Franceschini~\cite{Franceschini1}.

Here we closely follow the account on Gromov's and Bucher's approach
to the proportionality principle described in~\cite{Frigerio}. However, in order to avoid some technical subtleties,
we restrict our attention to the study of non-positively curved compact Riemannian manifolds
(see Section~\ref{further:prop:sec} for a brief discussion of the general case).
Gromov's approach to the proportionality principle exploits
an averaging process which can be defined
only on sufficiently regular cochains. 
As a consequence, we will be lead to study the complex of cochains which are \emph{continuous}
with respect to the compact-open topology on the space of singular simplices.

\section{Continuous cohomology of topological spaces}
Let $M$ be an $n$-dimensional manifold.
For every $i\in\matN$, we endow the space of singular $i$-simplices ${S_i} (M)$ with the compact-open topology (since the standard simplex is compact,
this topology coincides with the topology of the uniform convergence with respect to any metric which induces the topology of $M$).
For later reference we point out the following elementary property of the compact-open topology (see e.g.~\cite[page 259]{Dug}):

\begin{lemma}\label{basi:top:lemma1}
Let $X,Y,Z$ be topological spaces, and $f\colon Y\to Z$, $g\colon X\to Y$ be continuous.
The maps $f_\ast \colon F(X,Y)\to F(X,Z)$, $g^\ast \colon F(Y,Z)\to F(X,Z)$ defined by $f_\ast (h)=f\circ h$,
$g^\ast (h)=h\circ g$ are continuous. 
\end{lemma}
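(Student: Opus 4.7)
The plan is to check both continuity assertions directly against the subbasic open sets of the compact-open topology. Recall that a subbase for the compact-open topology on $F(A,B)$ is given by the sets
$$V(K,U) = \{h\in F(A,B)\, |\, h(K)\subseteq U\}\, ,$$
where $K\subseteq A$ is compact and $U\subseteq B$ is open. Since continuity can be checked on a subbase, it suffices in each case to compute the preimage of $V(K,U)\subseteq F(X,Z)$ and show that it is open.

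First I would treat $f_*$. For a compact $K\subseteq X$ and open $U\subseteq Z$, an element $h\in F(X,Y)$ satisfies $f_*(h)\in V(K,U)$ precisely when $f(h(K))\subseteq U$, i.e.~when $h(K)\subseteq f^{-1}(U)$. By continuity of $f$, the set $f^{-1}(U)$ is open in $Y$, so
$$f_*^{-1}(V(K,U))=V(K,f^{-1}(U))$$
is a subbasic open set of $F(X,Y)$.

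Next I would treat $g^*$. For the same $K$ and $U$, the preimage is
$$(g^*)^{-1}(V(K,U))=\{h\in F(Y,Z)\, |\, h(g(K))\subseteq U\}=V(g(K),U)\, ,$$
and this is a subbasic open set of $F(Y,Z)$ because $g(K)$ is compact (as the continuous image of a compact set).

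There is no real obstacle here: the statement is essentially the definition of the compact-open topology combined with the observations that continuous maps pull back open sets to open sets and push forward compact sets to compact sets. The only mild point worth being careful about is to use the characterization of continuity via a subbase rather than a basis, so that one does not have to unwind finite intersections.
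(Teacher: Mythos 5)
Your argument is correct and complete: both preimage computations are right, $f^{-1}(U)$ is open by continuity of $f$, $g(K)$ is compact as the continuous image of a compact set, and checking continuity on the subbase $\{V(K,U)\}$ is legitimate. The paper itself does not prove this lemma but simply cites Dugundji; your proof is the standard textbook argument one would find there.
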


Throughout this chapter, all the (co)chain and (co)homology modules will be understood with real coefficients.
We say that a cochain $\varphi\in {C^i (M)}$ is 
\emph{continuous} if it restricts
to a continuous map on ${S_i(M)}$, and we denote by ${ \ccst (M)}$
the subcomplex of continuous cochains in ${ \cst} (M)$ (the fact that $\ccst(M)$ is indeed a subcomplex
of $\cst(M)$ is a consequence of Lemma~\ref{basi:top:lemma1}).
We also denote by ${ \cclimst} (M) = { \ccst (M)}\cap { \climst (M)}$
the complex of bounded continuous cochains.
The corresponding cohomology modules will be denoted by
${ \hcst (M)}$ and ${ \hclimst (M)}$.
The natural inclusions of cochains 
$$
i^\bullet\colon C_c^\bullet(M)\to C^\bullet (M),\qquad
i_b^\bullet\colon C_{b,c}^\bullet(M)\to C_b^\bullet (M)
$$
induce maps
$$
H^\bullet({ i^\bullet}) \colon  { \hcst (M)}\to { \hst (M)},
\qquad 
H^\bullet_b({ i_b^\bullet})  \colon  { \hclimst (M)}\to { \hlimst (M)}.
$$

Bott stated in~\cite{Bott} that, at least for ``reasonable spaces'', the map $H^n(i^n)$
is an isomorphism for every $n\in\mathbb{N}$. However, Mostow asserted in~\cite[Remark 2 at p.~27]{Mostow} that the natural proof of this fact
seems to raise some difficulties. 
More precisely, it is quite natural to ask whether continuous cohomology
satisfies Eilenberg-Steenrod axioms for cohomology.
First of all, continuous cohomology is functorial thanks to
Lemma~\ref{basi:top:lemma1}.
Moreover, it is not difficult to show that
continuous cohomology satisfies 
the so-called ``dimension axiom'' and ``homotopy axiom''.
However, if $Y$ is a subspace of $X$ it is in general not possible
to extend cochains in $\ccst (Y)$ to cochains in $\ccst (X)$, so that it is not clear if a natural long
exact sequence for pairs actually exists in the realm of continuous cohomology.
This difficulty can be overcome either by considering only pairs $(X,Y)$ where
$X$ is metrizable and
$Y$ is closed in $X$, or by exploiting a cone construction, 
as described in~\cite{russo}.
A still harder issue
arises about excision: even if the barycentric subdivision operator consists of
a finite sum (with signs) of continuous self-maps of $S_\bullet (X)$,
the number of times a simplex should be subdivided in order to become ``small''
 with respect to a given open cover 
depends in a decisive way on the simplex itself. These difficulties have been overcome independently by 
Mdzinarishvili and the author (see respectively~\cite{russo} and~\cite{Frigerio}). 
In fact, it turns out that the inclusion induces an isomorphism between continuous cohomology and singular cohomology
for every space 
having the homotopy type of a metrizable and locally contractible
topological space.

The question whether the map $H^\bullet_b(i^\bullet_b)$ is an isometric isomorphism is even more difficult. An affirmative answer is provided in~\cite{Frigerio}
in the case of spaces having the homotopy type of an aspherical CW-complex. 

As anticipated above, we will restrict our attention to the case when $M$ is a manifold supporting a non-positively curved Riemannian
structure. In this case  the existence of a \emph{straightening procedure} for simplices
makes things much easier, and allows us to prove that $H^\bullet(i^\bullet)$ and $H^\bullet_b(i^\bullet_b)$ are both isometric isomorphisms
via a rather elementary argument.

\section{Continuous cochains as relatively injective modules}
Until the end of the chapter we  denote by $p\colon\xtil\to M$ the universal covering of the closed
smooth manifold $M$, and we fix an identification
of $\G=\pi_1(M)$ with the group of the covering automorphisms of $p$.

Recall that the complex $C^\bullet(\xtil)$ (resp.~$C_b^\bullet(\xtil)$) is naturally endowed with the structure
of an $\R[\G]$-module (resp.~normed $\R[\G]$-module). For every $g\in\G$, $i\in\mathbb{N}$, the map $S_i(\xtil)\to S_i(\xtil)$
sending the singular simplex $s$ to $g\cdot s$ is continuous with respect to the compact-open topology
(see Lemma~\ref{basi:top:lemma1}). Therefore, the action of $\G$ on (bounded) singular cochains on $\xtil$
preserves continuous cochains, and $C_c^\bullet(\xtil)$ (resp.~$C_{b,c}^\bullet(\xtil)$) inherits the structure
of an $\R[\G]$-module (resp.~normed $\R[\G]$-module).
In this section we show that, for every $i\in\mathbb{N}$, the module $C_c^i(\xtil)$ (resp.~$C_{b,c}^i(\xtil)$)
is relatively injective according to Definition~\ref{inj2:def} (resp.~Definition~\ref{relativelyinjective}).
The same result was proved for ordinary (i.e.~possibly non-continuous) cochains in Chapter~\ref{res:chapter}.
The following lemma ensures that continuous cochains on $M$ canonically correspond to $\G$-invariant
continuous cochains on $\xtil$.

\begin{lemma}\label{sollevo:lemma}
The chain map
$p^\bullet\colon C^\bullet (M)\to C^\bullet (\xtil)$ restricts to the following isometric isomorphisms
of complexes:
$$
p^\bullet\colon C^\bullet (M)\to C^\bullet (\xtil)^\Gamma\, ,\qquad  
p^\bullet|_{\cclimst (M)}\colon \cclimst (X)\to \cclimst (\xtil)^\Gamma,
$$
which, therefore, induce isometric isomorphisms
$$
\hcst (M)
\cong H^\bullet (\ccst (\xtil)^\Gamma)\, ,\qquad
\hclimst (M)
\cong H^\bullet (\cclimst (\xtil)^\Gamma).
$$
\end{lemma}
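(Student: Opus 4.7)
The plan is to reduce the statement to the standard (non-continuous) isometric identification $C^\bullet(M) \cong C^\bullet(\widetilde{M})^\Gamma$ already used throughout the monograph, and then to check that this identification restricts bijectively to the subcomplexes of continuous cochains. Once this is done, the identifications for $C^\bullet_{b,c}$ follow automatically by intersecting with $C^\bullet_b(\widetilde{M})^\Gamma$, and the claims about cohomology are immediate from taking $H^\bullet$ of an isomorphism of complexes (the isometry on cochains descends to an isometry on cohomology).

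First I would check that $p^\bullet$ sends continuous cochains to continuous cochains. The map $p_*\colon S_i(\widetilde{M})\to S_i(M)$ defined by post-composition with $p$ is continuous in the compact-open topology by Lemma~\ref{basi:top:lemma1}. Hence if $\varphi\in C^i_c(M)$, then $p^i(\varphi) = \varphi\circ p_*$ is the composition of two continuous maps and belongs to $C^i_c(\widetilde{M})$; it is manifestly $\Gamma$-invariant. Conversely, given $\psi\in C^i_c(\widetilde{M})^\Gamma$, the $\Gamma$-invariance lets us define $\varphi\in C^i(M)$ by $\varphi(s)=\psi(\widetilde{s})$ for any lift $\widetilde{s}$ of $s$; by construction $p^i(\varphi)=\psi$, and the map $p^i$ is injective because $p_*$ is surjective. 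So the set-theoretic bijection $p^i\colon C^i(M)\to C^i(\widetilde{M})^\Gamma$ restricts to a bijection on continuous cochains provided we show that the $\varphi$ just defined is continuous.

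The main (and only non-routine) step is exactly this continuity check, and it rests on the fact that the covering projection $p$ induces a local section of $p_*$ in the compact-open topology. Fix $s_0\in S_i(M)$, pick any lift $\widetilde{s}_0\in S_i(\widetilde{M})$, and let $v_0$ be a fixed vertex of $\Delta^i$. Since $p$ is a covering and $\Delta^i$ is simply connected, the unique-lifting property gives, for every $s$ in a neighborhood of $s_0$ in the compact-open topology, a unique lift $L(s)\in S_i(\widetilde{M})$ with $L(s)(v_0)$ lying in a fixed evenly covered neighborhood $\widetilde{U}$ of $\widetilde{s}_0(v_0)$; concretely, one trivializes $p$ over a neighborhood of $s_0(v_0)$, uses this trivialization to lift $s(v_0)$ continuously, and then lifts the whole simplex via the homotopy lifting property applied to the straight-line homotopy of $\Delta^i$ to $v_0$. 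The resulting map $L$ is continuous at $s_0$ (checking this amounts to the standard argument that lifts through a fixed point depend continuously on the map being lifted, exploiting the compactness of $\Delta^i$). Therefore $\varphi(s)=\psi(L(s))$ near $s_0$, and as a composition of continuous maps $\varphi$ is continuous at $s_0$.

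Finally, the isometry statement is immediate: $p_*\colon S_i(\widetilde{M})\to S_i(M)$ is surjective, so for every $\varphi\in C^i(M)$ one has $\|p^i(\varphi)\|_\infty=\sup_{\widetilde{s}}|\varphi(p\circ\widetilde{s})|=\sup_{s}|\varphi(s)|=\|\varphi\|_\infty$. Hence $p^\bullet$ restricts to isometric isomorphisms of complexes $C^\bullet_c(M)\to C^\bullet_c(\widetilde{M})^\Gamma$ and (by intersecting with bounded cochains) $C^\bullet_{b,c}(M)\to C^\bullet_{b,c}(\widetilde{M})^\Gamma$; passing to cohomology yields the claimed isometric isomorphisms $H^\bullet_c(M)\cong H^\bullet(C^\bullet_c(\widetilde{M})^\Gamma)$ and $H^\bullet_{b,c}(M)\cong H^\bullet(C^\bullet_{b,c}(\widetilde{M})^\Gamma)$. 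The only step that is not completely formal is the local lifting argument for $L$, and this is where I would be most careful to spell out that continuity in the compact-open topology really is preserved.
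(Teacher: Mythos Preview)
Your proof is correct and follows essentially the same approach as the paper. The paper packages the key step a bit more concisely: it observes that the induced map $p_n\colon S_n(\widetilde{M})\to S_n(M)$ is itself a covering (citing \cite[Lemma~A.4]{Frigerio}), hence continuous, open and surjective, which immediately gives that $\varphi$ is continuous if and only if $\varphi\circ p_n$ is; your local lifting construction $L$ is exactly what one uses to prove that $p_n$ is a covering, so you are unpacking the same argument.
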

\begin{proof}
We have already used the obvious fact that $p^\bullet$ is an isometric embedding on the space of 
$\Gamma$--invariant cochains, thus the only non-trivial issue
is the fact that $p^\bullet (\varphi)$ is continuous if and only if $\varphi$
is continuous. 
However, it is not difficult to show that
the map $p_n\colon S_n(\xtil)\to S_n (M)$ induced by $p$ is a covering
(see~\cite[Lemma A.4]{Frigerio} for the details).
In particular, it is continuous, open and surjective, and this readily implies
that a map $\varphi\colon S_n (M)\to\R$ is continuous if and only
if $\varphi\circ p_n\colon S_n (\xtil)\to\R$ is continuous, whence the conclusion.
\end{proof}

It is a standard fact of algebraic topology that the action of $\Gamma$ on $\xtil$ is \emph{wandering},
\emph{i.e.}~every $x\in\xtil$ admits a neighbourhood $U_x$ such that $g(U_x)\cap U_x=\emptyset$
for every $g\in\Gamma\setminus \{1\}$.
In the following lemma we describe a particular
instance of \emph{generalized Bruhat function} (see~\cite[Lemma 4.5.4]{Monod} for a more general
result based on~\cite[Proposition 8 in VII $\S$2 N$^\circ$ 4]{bou}).

\begin{lemma}\label{tecnico}
There exists a continuous map 
$h_{\widetilde{M}}\colon \xtil\to [0,1]$ with the following properties:
\begin{enumerate}
\item
For every $x\in\xtil$ there exists a neighbourhood $W_x$ of $x$ in $\xtil$
such that the set
$
\left\{g\in\Gamma\, | \,  g(W_x) \cap
{\rm supp}\, h_{\widetilde{M}}
\neq\emptyset\right\}
$
is finite.
\item
For every $x\in\xtil$, we have 
$
\sum_{g\in \Gamma} h_{\widetilde{M}} (g\cdot x)=1
$
(note that the sum on the left-hand side is finite by (1)).
\end{enumerate}
\end{lemma}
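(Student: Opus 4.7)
The plan is to build $h_{\widetilde M}$ by lifting a partition of unity from the compact base $M$. The compactness of $M$ and the fact that the covering action of $\Gamma$ on $\widetilde M$ is free and properly discontinuous will be the two inputs we need; property~(2) will come from the partition-of-unity condition downstairs, while property~(1) will come from proper discontinuity.

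First I would choose a finite open cover $\{U_1,\dots,U_N\}$ of $M$ such that each $U_i$ is \emph{evenly covered} by $p$, and pick a continuous partition of unity $\{\rho_i\}_{i=1}^N$ subordinate to it. Using the compactness of $M$, I may and will arrange that each $\mathrm{supp}\,\rho_i$ is a compact subset of $U_i$ (for instance, by first shrinking the cover). For each $i$, fix one connected component $\widetilde U_i$ of $p^{-1}(U_i)$, so that $p\colon \widetilde U_i\to U_i$ is a homeomorphism with inverse $\sigma_i\colon U_i\to\widetilde U_i$. Define
\[
\widetilde\rho_i(x)=\begin{cases}\rho_i(p(x))&\text{if }x\in\widetilde U_i,\\ 0&\text{otherwise,}\end{cases}
\]
and set $h_{\widetilde M}=\sum_{i=1}^N\widetilde\rho_i$.

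The key point to check is that each $\widetilde\rho_i$ is continuous on all of $\widetilde M$. This is where the condition $\mathrm{supp}\,\rho_i\Subset U_i$ enters: the support of $\widetilde\rho_i$ equals $\sigma_i(\mathrm{supp}\,\rho_i)$, which is compact and contained in the open set $\widetilde U_i$, so $\widetilde\rho_i$ extends continuously by zero outside. Consequently $h_{\widetilde M}$ is continuous, and since pointwise $h_{\widetilde M}(x)\in[0,1]$ after one observes that at most one sheet above any point of $U_i$ is hit by $\widetilde\rho_i$, the function takes values in $[0,1]$. Moreover $K:=\mathrm{supp}\,h_{\widetilde M}\subseteq\bigcup_{i=1}^N\sigma_i(\mathrm{supp}\,\rho_i)$ is compact.

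For property~(2), fix $x\in\widetilde M$. For each $i$ the $\Gamma$-translates of $\widetilde U_i$ partition $p^{-1}(U_i)$, so there is a \emph{unique} $g_i\in\Gamma$ with $g_i\cdot x\in\widetilde U_i$ whenever $p(x)\in U_i$; for that $g_i$ we have $\widetilde\rho_i(g_i\cdot x)=\rho_i(p(x))$, and $\widetilde\rho_i(g\cdot x)=0$ for all other $g$ (and if $p(x)\notin U_i$ every term vanishes, consistently with $\rho_i(p(x))=0$). Summing,
\[
\sum_{g\in\Gamma}h_{\widetilde M}(g\cdot x)=\sum_{i=1}^N\rho_i(p(x))=1.
\]
For property~(1), given $x\in\widetilde M$ choose any relatively compact open neighbourhood $W_x$ of $x$. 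Since the deck transformation action of $\Gamma$ on $\widetilde M$ is free and properly discontinuous and both $\overline{W_x}$ and $K$ are compact, the set $\{g\in\Gamma\mid g(\overline{W_x})\cap K\neq\emptyset\}$ is finite, and a fortiori so is $\{g\in\Gamma\mid g(W_x)\cap\mathrm{supp}\,h_{\widetilde M}\neq\emptyset\}$. The only subtle step, and the one I would treat most carefully, is the continuity of the individual $\widetilde\rho_i$: without the compact containment $\mathrm{supp}\,\rho_i\Subset U_i$ the naive zero-extension could fail to be continuous across the boundary of the chosen sheet $\widetilde U_i$, so that shrinking of the cover (guaranteed by compactness of $M$) is what makes the whole construction go through.
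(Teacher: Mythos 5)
Your proof is correct and takes essentially the same approach as the paper: lift a partition of unity on $M$ subordinate to an evenly-covered open cover to a single sheet over each chart, extend by zero, and sum. The paper works with a locally finite cover and refers to a reference for the verification, whereas you exploit compactness of $M$ to use a finite cover and spell out the continuity of the zero-extension and the properness argument; these are presentational rather than substantive differences.
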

\begin{proof}
Let us take a locally finite open 
cover $\{U_i\}_{i\in I}$ of $X$ such that
for every $i\in I$ 
there exists $V_i\subseteq \xtil$ with $p^{-1} (U_i)=\bigcup_{g\in \Gamma}
g(V_i)$ and $g(V_i)\cap g'(V_i)=\emptyset$ whenever $g\neq g'$.
Let $\{\varphi_i\}_{i\in I}$ be a partition of unity adapted to $\{U_i\}_{i\in I}$. It is easily seen that
the map $\psi_i\colon \widetilde{M}\to\R$ which concides with $\varphi_i\circ p$ on $V_i$
and is null elsewhere is continuous. We can now set $h_{\widetilde{M}}=\sum_{i\in I} \psi_i$.
Since $\{U_i\}_{i\in I}$ is locally finite, also $\{V_i\}_{i\in I}$,
whence $\{ {\rm supp}\, \psi_i\}_{i\in I}$, is locally finite, so $h_{\widetilde{M}}$
is indeed well defined and continuous. 

It is now easy to check that $h_{\widetilde{M}}$ indeed satisfies the required properties
(see~\cite[Lemma 5.1]{Frigerio} for the details).
\end{proof}

\begin{prop}\label{inj1:prop}
For every $n\geq 0$ the modules $\cc^n (\xtil)$ and $\cclim^n (\xtil)$ are relatively injective
(resp.~as an $\R[\G]$-module and as a normed $\R[\G]$-module).
\end{prop}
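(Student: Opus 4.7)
The plan is to mimic the proofs of Lemma~\ref{sing:inj} and Lemma~\ref{relinj:standard}, where the solution to the extension problem relied on writing $\beta(b)(s)=\alpha(g_s\sigma(g_s^{-1}b))(s)$, with $g_s\in\Gamma$ chosen so that $g_s^{-1}\cdot s(e_0)$ lies in a prescribed fundamental domain for the action of $\Gamma$ on $\widetilde M$. Such a discrete choice is incompatible with the compact-open topology: as the first vertex of $s$ crosses the boundary of the fundamental domain, $g_s$ jumps, and $\beta(b)$ fails to be a continuous cochain. To bypass this I would replace the characteristic function of the fundamental domain by the generalized Bruhat function $h_{\widetilde M}\colon\widetilde M\to[0,1]$ provided by Lemma~\ref{tecnico}, using its values as a continuous partition of unity to average over the $\Gamma$-orbit of $s(e_0)$.

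More precisely, given an extension problem as in Definition~\ref{inj2:def} (resp. Definition~\ref{relativelyinjective}) with target $C^n_c(\widetilde M)$ (resp. $C^n_{b,c}(\widetilde M)$), I would set
$$
\beta(b)(s)=\sum_{g\in\Gamma} h_{\widetilde M}\bigl(g^{-1}\cdot s(e_0)\bigr)\cdot \alpha\bigl(g\sigma(g^{-1}b)\bigr)(s),
$$
where $e_0$ is the first vertex of the standard $n$-simplex. The evaluation map $S_n(\widetilde M)\to\widetilde M$, $s\mapsto s(e_0)$, is continuous for the compact-open topology by Lemma~\ref{basi:top:lemma1}, so property (1) of Lemma~\ref{tecnico} guarantees that for every $s_0$ there is a neighbourhood of $s_0$ in $S_n(\widetilde M)$ on which the above sum reduces to finitely many (fixed) nonzero terms. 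Each of these terms is the composition of the continuous cochain $\alpha(g\sigma(g^{-1}b))\in C_c^n(\widetilde M)$ with the continuous map $s\mapsto s$, multiplied by a continuous function of $s(e_0)$; hence $\beta(b)$ is a well-defined continuous cochain on $\widetilde M$.

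The remaining verifications are formally the same as in Lemmas~\ref{sing:inj} and~\ref{relinj:standard}, with the partition of unity playing the role of the characteristic function of the fundamental domain. Property (2) of Lemma~\ref{tecnico} gives $\sum_g h_{\widetilde M}(g^{-1}s(e_0))=1$, which together with $\sigma\circ\iota=\mathrm{Id}_A$ and the $\Gamma$-equivariance of $\alpha$ immediately yields $\beta\circ\iota=\alpha$. A change of summation index $g'=hg$ in $\beta(hb)(s)$, combined with the fact that $\alpha$ is a $\Gamma$-map and that $\Gamma$ acts on cochains by $(h\varphi)(s)=\varphi(h^{-1}s)$, gives $\beta(hb)=h\cdot\beta(b)$, so $\beta$ is a $\Gamma$-map. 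In the normed setting, the estimate
$$
|\beta(b)(s)|\leq\sum_{g}h_{\widetilde M}(g^{-1}s(e_0))\,\|\alpha\|\cdot\|\sigma(g^{-1}b)\|\leq \|\alpha\|\cdot\|b\|,
$$
which uses $\|\sigma\|\leq 1$ together with the $\Gamma$-invariance of the norm on $B$, yields $\|\beta\|\leq\|\alpha\|$.

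The only genuine difficulty, compared with the non-continuous case, is the continuity of $\beta(b)$, and this is precisely the reason why the averaging against the generalized Bruhat function $h_{\widetilde M}$ is needed in place of a characteristic function. Once Lemma~\ref{tecnico} is available, together with the elementary fact that evaluation at a vertex is continuous from $S_n(\widetilde M)$ to $\widetilde M$, the rest of the argument is a routine transcription of the classical proofs of relative injectivity of singular and bounded singular cochains.
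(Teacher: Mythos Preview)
Your proof is correct and follows exactly the same approach as the paper: the extension $\beta(b)(s)=\sum_{g\in\Gamma} h_{\widetilde M}(g^{-1}s(e_0))\,\alpha(g\sigma(g^{-1}b))(s)$ is precisely the formula the author uses, and the verifications (local finiteness via Lemma~\ref{tecnico}(1), continuity, $\Gamma$-equivariance via reindexing, $\beta\circ\iota=\alpha$ via Lemma~\ref{tecnico}(2), and the norm bound in the bounded case) are carried out in the same way.
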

\begin{proof}
Let $\iota\colon A\to B$ be an injective map between $\R[\Gamma]$-modules,
with left inverse $\sigma\colon B\to A$, and suppose we are given a 
$\Gamma$-map $\alpha\colon A\to C_c^n (\xtil)$. 
We denote by $e_0,\ldots,e_n$ the vertices of the standard $n$--simplex, and define
$\beta\colon B\to C_c^n (\xtil)$ as follows: given $b\in B$, the cochain $\beta (b)$
is the unique linear extension of the map that on the singular
simplex $s$ takes the following value:
$$
\beta (b) (s) =\sum_{g\in \Gamma} h_{\widetilde{M}} \left(g^{-1} (s(e_0))\right)
\cdot \left(\alpha (g(\sigma (g^{-1}(b))))(s)\right),
$$
where $h_{\widetilde{M}}$ is the map provided by Lemma~\ref{tecnico}.
By Lemma~\ref{tecnico}--(1), the sum involved is in fact finite, so $\beta$ is well defined.
Moreover, for every $b\in B$, $g_0 \in\Gamma$ and $s\in S_n (\xtil)$ we have
$$
\begin{array}{lll}
\beta (g_0\cdot b)(s) & = & \sum_{g\in \Gamma} \hx \Big(g^{-1}\big(s(e_0)\big)\Big)\cdot \alpha \Big(g\big(\sigma(g^{-1}
g_0 (b))\big)\Big)(s)\\ &=&
\sum_{g\in \Gamma} \hx \Big(g^{-1}g_0 \cdot (g_0^{-1}\cdot s)(e_0)\Big)\cdot \alpha \Big(g_0 g_0^{-1}g \big(\sigma(g^{-1}
g_0 (b))\big)\Big)(s)\\ &=&
\sum_{k\in \Gamma} \hx \Big(k^{-1} (g_0^{-1}\cdot s)(e_0)\Big)\cdot \alpha \Big(g_0 k\big(\sigma(k^{-1}
(b))\big)\Big)(s)\\ &=&
\sum_{k\in \Gamma} \hx \Big(k^{-1} (g_0^{-1}\cdot s)(e_0)\big)\cdot \alpha \Big(k \big(\sigma(k^{-1}
(b))\big)\Big)(g_0^{-1}\cdot s)\\ &=&
\beta (b) (g_0^{-1}\cdot s)= \big(g_0 \cdot \beta (b)\big)(s),
\end{array} 
$$
so $\beta$ is a $\Gamma$--map. Finally, 
$$
\begin{array}{lll}
\beta(\iota(b))(s)&=& \sum_{g\in \Gamma} \hx \Big(g^{-1}\big(s(e_0)\big)\Big)\cdot 
\alpha \Big(g\big(\sigma(g^{-1}(\iota(b)))\big)\Big)(s)\\ &=& 
\sum_{g\in \Gamma} \hx \Big(g^{-1}\big(s(e_0)\big)\Big)\cdot 
\alpha \Big(g\big(\sigma(\iota(g^{-1}\cdot b))\big)\Big)(s)\\ &=&
\sum_{g\in \Gamma} \hx \Big(g^{-1}\big(s(e_0)\big)\Big)\cdot 
\alpha (b)(s)\\ &=&
\Bigg(\sum_{g\in \Gamma} \hx \Big(g^{-1}\big(s(e_0)\big)\Big)\Bigg)\cdot \alpha (b)(s)=\alpha(b)(s),
\end{array}
$$
so $\beta\circ\iota=\alpha$.
In order to conclude that $C^n_c (\xtil)$ is relatively injective we need to show that
$\beta(b)$ is indeed continuous.
However, if $s\in S_n (\xtil)$ is a singular $n$-simplex, then by Lemma~\ref{tecnico}--(1)
there exists a neighbourhood $U$ of $s$ in $S_n (\xtil)$ such that the set
$\{g\in\Gamma\, | \, h_{\widetilde{M}} (g^{-1} (s' (e_0)))\neq 0\ {\rm for\ some}\
s'\in U\}$ is finite. This readily implies that if $\alpha (A)\subseteq C_c^n (\xtil)$, 
then also $\beta(B)\subseteq C_c^n (\xtil)$. 

The same argument applies \emph{verbatim} 
if $C^n (\xtil)$ is replaced by $\clim^n (\xtil)$, and $A,B$ are normed modules: in fact, 
if $\alpha$ is bounded and $\|\sigma\|\leq 1$, then also $\beta$ is bounded, and
$\|\beta\|\leq \|\alpha\|$. Thus 
$\cclim^n (\xtil)$ is a relatively injective normed $\Gamma$--module.
\end{proof}
 
\section{Continuous cochains as strong resolutions of $\R$}\label{conv:sec} 
Recall from Section~\ref{revisited:sec} that, if $\widetilde{M}$ is contractible, then the complex $C^\bullet(\xtil)$, endowed with
the obvious augmentation, provides a strong resolution of the trivial $\G$-module $\R$.
Moreover, thanks to Ivanov's Theorem~\ref{ivanov:thm}, the augmented normed complex
$C_b^\bullet(\xtil)$ is a strong resolution of the normed $\G$-module $\R$ even without the assumption
that $\widetilde{M}$ is contractible. The situation is a bit different when dealing with continuous cochains.
Unfortunately, we are not able to prove that 
Ivanov's contracting homotopy (see Section~\ref{ivanov:sec}) preserves continuous cochains, so 
it is not clear whether 
$C_{b,c}^\bullet(\xtil)$ always provides a strong resolution of $\R$.

In order to prove that the augmented complexes
$C_{c}^\bullet(\xtil)$ and $C_{b,c}^\bullet(\xtil)$ are strong we need to make some further assumption on $M$.
Namely, asking that $\widetilde{M}$ is contractible (i.e.~that $M$ is a $K(\G,1)$) would be sufficient.
However, to our purposes it is sufficient to concentrate our attention on the easier case when
$M$ supports a non-positively curved Riemannian metric.
In that case, Cartan-Hadamard Theorem implies that, for every point $x\in\widetilde{M}$, the exponential
map at $x$ establishes a diffeomorphism between the tangent space at $x$ and the manifold $\widetilde{M}$.
As a consequence, every pair of points in $\widetilde{M}$ are joined by a \emph{unique} geodesic,
and (constant-speed parameterizations of) geodesics continuously depend on their endpoints:
we say that $\widetilde{M}$ is \emph{continuously uniquely geodesic}.

Therefore, until the end of the chapter, we  assume that the closed manifold $M$ is
endowed with a non-positively curved Riemannian metric.

For $i\in\matN$ we denote by $e_i\in \R^{\matN}$ the point
$(0,\ldots,1,\ldots)$,
where the unique non-zero coefficient is at the $i$-th entry (entries are indexed by $\matN$,
so $(1,\!0,\ldots)\!=e_0$). 
We denote by $\Delta^p$ the standard $p$-simplex, 
\emph{i.e.}~the convex hull of $e_0,\ldots,e_p$, and we observe
that with these notations we have $\Delta^p\subseteq \Delta^{p+1}$.

\begin{prop}\label{strong1:prop}
The complexes ${\ccst (\xtil)}$ and ${\cclimst (\xtil)}$ are strong resolutions of
$\R$ (resp.~as an unbounded $\Gamma$-module and as a normed $\R$-module). 
\end{prop}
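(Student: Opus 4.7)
The plan is to construct an explicit contracting homotopy via a geodesic cone construction based at a fixed point, exploiting the fact that, by Cartan--Hadamard, $\widetilde{M}$ is continuously uniquely geodesic: there is a well-defined continuous map $\gamma \colon \widetilde{M} \times \widetilde{M} \times [0,1] \to \widetilde{M}$ sending $(x,y,t)$ to the point at parameter $t$ on the constant-speed geodesic from $x$ to $y$.

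First, fix a basepoint $x_0 \in \widetilde{M}$. For every $n \geq 0$ and every $s \in S_n(\widetilde{M})$, define the cone $c_n(s) \in S_{n+1}(\widetilde{M})$ by
\[
c_n(s)(t_0 e_0 + t_1 e_1 + \cdots + t_{n+1} e_{n+1})
= \gamma\!\left(x_0,\, s\bigl(\tfrac{t_1 e_0 + \cdots + t_{n+1} e_n}{1-t_0}\bigr),\, 1 - t_0\right)
\]
when $t_0 < 1$, and equal to $x_0$ when $t_0 = 1$. Continuity of $\gamma$ ensures that $c_n(s)$ is a genuine continuous simplex. Then set $k^{n+1} \colon C_c^{n+1}(\widetilde{M}) \to C_c^n(\widetilde{M})$ by $k^{n+1}(\varphi)(s) = \varphi(c_n(s))$, and $k^0 \colon C_c^0(\widetilde{M}) \to \mathbb{R}$ by $k^0(\varphi) = \varphi(x_0)$, where $x_0$ is viewed as a $0$-simplex.

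Second, I would verify the three properties required of a contracting homotopy. The norm bound $\|k^{n+1}\| \leq 1$ is immediate, since $k^{n+1}$ sends a single simplex to a single simplex; in particular $k^{n+1}$ restricts to $C_{b,c}^{n+1}(\widetilde{M}) \to C_{b,c}^n(\widetilde{M})$ with norm at most one. To check that $k^{n+1}$ preserves continuous cochains, one must show that the assignment $s \mapsto c_n(s)$ is continuous $S_n(\widetilde{M}) \to S_{n+1}(\widetilde{M})$ in the compact-open topology. This is where continuous unique geodesicness enters: the map $(s, u) \mapsto s(u)$ is jointly continuous on $S_n(\widetilde{M}) \times \Delta^n$ (a standard fact about the compact-open topology and locally compact domains), and composing with $\gamma$ and the affine reparametrization yields joint continuity of $(s, v) \mapsto c_n(s)(v)$ on $S_n(\widetilde{M}) \times \Delta^{n+1}$; by the exponential law this gives continuity of $s \mapsto c_n(s)$. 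Then Lemma~\ref{basi:top:lemma1} ensures $k^{n+1}(\varphi) \in C_c^n(\widetilde{M})$ whenever $\varphi \in C_c^{n+1}(\widetilde{M})$.

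Third, I would verify the homotopy identity $\delta^{n-1} \circ k^n + k^{n+1} \circ \delta^n = \mathrm{Id}_{C_c^n(\widetilde{M})}$ (and $k^0 \circ \varepsilon = \mathrm{Id}_\mathbb{R}$). This is the usual cone computation: the face of $c_n(s)$ opposite $e_0$ is $s$ itself, while the face of $c_n(s)$ opposite $e_i$ for $i \geq 1$ equals $c_{n-1}(\partial_{i-1} s)$, where $\partial_{i-1} s$ is the $(i-1)$-th face of $s$. Unwinding the signs in the alternating sum for $\delta$ yields the claimed identity.

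The same construction, restricted to bounded continuous cochains, yields the contracting homotopy in the bounded setting, proving simultaneously that $C_{b,c}^\bullet(\widetilde{M})$ is a strong resolution of $\mathbb{R}$ as a normed $\mathbb{R}[\Gamma]$-module. The main obstacle is the continuity claim in Step~2: one must be careful with the reparametrization factor $1/(1-t_0)$ near the apex $t_0 = 1$, but this is harmless because $\gamma(x_0, \cdot, 0) \equiv x_0$, so continuity extends across the apex; non-positive curvature is used precisely to guarantee global continuous unique geodesicness, which is the geometric input making the entire construction go through.
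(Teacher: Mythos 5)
Your proposal is correct and follows essentially the same route as the paper: both proofs build a geodesic cone over a fixed basepoint $x_0$, check continuity of the cone operator $s \mapsto c_n(s)$ in the compact-open topology (which is where continuous unique geodesicness from Cartan--Hadamard enters), verify the standard cone identity, and observe that the cone sends single simplices to single simplices so the dual maps are norm non-increasing and restrict to the bounded subcomplex. The only difference is notational (the parametrization of the cone near the apex), which you handle correctly.
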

\begin{proof}
Let us choose a basepoint $x_0\in\xtil$.
For $n\geq 0$, we define an operator $T_n\colon C_n (\xtil)\to C_{n+1}(\xtil)$
which sends any singular $n$-simplex $s$ to the $(n+1)$-simplex
obtained by coning $s$ over $x_0$. In order to properly define the needed coning procedure
we will exploit the fact that $\xtil$ is uniquely continuously geodesic.

For $x\in \xtil$ we denote by $\gamma_x\colon [0,1]\to \xtil$ the constant-speed
parameterization of the geodesic joining $x_0$ with $x$. 
Let $n\geq 0$. We denote by $Q_0$ the face of $\Delta^{n+1}$ opposite to $e_0$, and we consider the identification
$r\colon Q_0\to \Delta^n$ given by
$r(t_1 e_1+\ldots t_{n+1} e_{n+1})=t_1 e_0+\ldots t_{n+1} e_n$.
We now define $T_n\colon C_n (\xtil)\to C_{n+1}(\xtil)$
as the unique linear map such that,
if $s\in S_n(\xtil)$, then the following holds:
if
$p=t e_0+(1-t)q \in\Delta^{n+1}$, where $q\in Q_0$, then $(T_n(s)) (p)=\gamma_{s(q)}(t)$.
In other words,
$T_n (s)$ is just the geodesic cone over $s$ with vertex $x_0$.
Using that $\xtil$ is uniquely continuously geodesic, one may easily check that
$T_n (s)$ is well defined and continuous. Moreover, the restriction of $T_n$ to
$S_n(\xtil)$ defines a map
$$
T_n\colon S_n(\xtil)\to S_{n+1}(\xtil)
$$
which is continuous with respect to the compact-open topology.
We finally define $T_{-1}\colon \R\to C_0 (\xtil)$
by $T_{-1} (t)=t x_0$.
It is readily seen that, if $d_\bullet$ is the usual (augmented)
differential on singular chains, then $d_{0} T_{-1}={\rm Id}_\R$, and
for every $n\geq 0$ we have
$T_{n-1} \circ d_n + d_{n+1} \circ T_n= {\rm Id}_{C_n (\xtil)}$.

For every $n\geq 0$, let now $k^n\colon C^n (\xtil)\to C^{n-1} (\xtil)$ be defined by $k^n (\varphi)(c)=
\varphi (T_{n-1} (c))$. Since $T_n\colon S_n(\xtil)\to S_{n+1}(\xtil)$ is continuous,
the map $k^n$ preserves continuous cochains, so $\{k^n\}_{n\in\matN}$ provides a contracting
homotopy for the complex 
${C_c^\bullet (\xtil)}$, which is therefore a strong resolution of $\R$ as an unbounded $\R[\G]$-module.

Finally, since $T_n$ sends every single simplex to a single simplex, if $\alpha\in \clim^n (\xtil)$ 
then  $\|k^n (\alpha)\|\leq \|\alpha\|$. Thus $k^\bullet$ 
restricts to a contracting homotopy for the complex of normed $\Gamma$--modules 
${\cclimst (\xtil)}$. Therefore, this complex gives a
strong resolution of $\R$ as a normed $\Gamma$-module.
\end{proof}

We have thus proved that, if $M$ supports a non-positively curved metric, then $C_c^\bullet(\xtil)$ and $C_{b,c}^\bullet(\xtil)$ provide
relatively injective strong resolutions of $\R$ (as an unbounded $\G$-module and as a normed $\G$-module, respectively). 
As proved in Lemma~\ref{sing:inj}, Proposition~\ref{sing:res}, Lemma~\ref{sing:bd:inj} 
and Lemma~\ref{cones},
the same is true for the complexes $C^\bullet(\xtil)$ and $C_{b}^\bullet(\xtil)$, respectively.
Since the inclusions $C_c^\bullet(\xtil)\to C^\bullet(\xtil)$, 
$C_{b,c}^\bullet(\xtil)\to C_b^\bullet(\xtil)$ are norm non-increasing chain maps
which extend the identity of $\R$, Theorems~\ref{ext:thm} and \ref{ext:bounded:thm}
and Lemma~\ref{sollevo:lemma} imply the following:

\begin{prop}\label{isononisom}
Let $M$ be a closed manifold supporting a non-positively curved metric. Then the maps
$$
H^\bullet(i^\bullet)\colon H_c^\bullet(M)\to H^\bullet(M)\, ,\qquad 
H^\bullet_b(i_b^\bullet)\colon H_{b,c}^\bullet(M)\to H_b^\bullet(M)
$$
are  norm non-increasing isomorphisms.
\end{prop}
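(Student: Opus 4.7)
The plan is to deduce both isomorphisms as direct consequences of the uniqueness theorems for relatively injective strong resolutions developed in Chapter~\ref{res:chapter}. First, I would record that all four augmented complexes at play are relatively injective strong resolutions of the trivial coefficient module $\R$: the complexes $C_c^\bullet(\widetilde{M})$ and $C_{b,c}^\bullet(\widetilde{M})$ by Propositions~\ref{inj1:prop} and~\ref{strong1:prop}; the complex $C^\bullet(\widetilde{M})$ by Lemma~\ref{sing:inj} and Proposition~\ref{sing:res}; and the complex $C_b^\bullet(\widetilde{M})$ by Lemma~\ref{sing:bd:inj} together with Lemma~\ref{cones} (here the non-positive curvature assumption and Cartan--Hadamard make $\widetilde{M}$ contractible, so the aspherical case of Lemma~\ref{cones} suffices and Ivanov's theorem is not needed).

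Next, I would observe that the inclusions
\[
i^\bullet\colon C_c^\bullet(\widetilde{M})\to C^\bullet(\widetilde{M}),\qquad
i_b^\bullet\colon C_{b,c}^\bullet(\widetilde{M})\to C_b^\bullet(\widetilde{M})
\]
are $\G$-equivariant chain maps extending the identity $\mathrm{Id}_\R$, and that $i_b^\bullet$ is moreover norm preserving in every degree. Applying Theorem~\ref{ext:thm} (respectively Theorem~\ref{ext:bounded:thm}) to the identity $\mathrm{Id}_\R$, viewed as a map to be extended from the strong resolution on the right to the relatively injective resolution on the left, produces chain maps $\alpha^\bullet$ and $\alpha_b^\bullet$ in the opposite direction. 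Both compositions $\alpha^\bullet\circ i^\bullet$, $i^\bullet\circ\alpha^\bullet$ extend $\mathrm{Id}_\R$, as do the identity chain maps of $C_c^\bullet(\widetilde{M})$ and $C^\bullet(\widetilde{M})$; by the uniqueness up to $\G$-homotopy part of the same theorems, the compositions are $\G$-homotopic to the respective identities. The same argument applies verbatim in the normed setting. Restricting to $\G$-invariants and passing to cohomology then shows that $i^\bullet$ and $i_b^\bullet$ induce isomorphisms on the cohomology of the $\G$-invariant subcomplexes. Combining this with the isometric identifications provided by Lemma~\ref{sollevo:lemma} and their classical counterparts for $C^\bullet(\widetilde{M})^\G\cong C^\bullet(M)$ and $C_b^\bullet(\widetilde{M})^\G\cong C_b^\bullet(M)$, one concludes that $H^\bullet(i^\bullet)$ and $H^\bullet_b(i_b^\bullet)$ are isomorphisms.

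Finally, to obtain the norm non-increasing statement, I would argue directly from the definition of the quotient seminorm on cohomology: since $i^\bullet$ and $i_b^\bullet$ are inclusions of subcomplexes preserving the $\ell^\infty$-norm on each cochain, a representative of a class $\alpha\in H_c^n(M)$ (resp.\ $\alpha\in H_{b,c}^n(M)$) is in particular a representative of its image in $H^n(M)$ (resp.\ $H_b^n(M)$) of the same norm. Taking the infimum over the (possibly larger) set of representatives available in the target can only decrease the seminorm, so $H^\bullet(i^\bullet)$ and $H_b^\bullet(i_b^\bullet)$ are norm non-increasing. No serious obstacle is expected: the entire argument is a formal application of the machinery already in place, with the only real input being the identification of $C^\bullet(\widetilde{M})$ and $C_b^\bullet(\widetilde{M})$ as relatively injective strong resolutions, which is easy thanks to the contractibility of $\widetilde{M}$.
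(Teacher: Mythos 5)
Your proof is correct and follows essentially the same route as the paper: identify all four complexes of cochains on $\widetilde{M}$ as relatively injective strong resolutions of $\R$, invoke the extension/uniqueness theorems (Theorems~\ref{ext:thm} and~\ref{ext:bounded:thm}) to conclude that the inclusion induces isomorphisms after restricting to $\G$-invariants, and then pass through the identifications of Lemma~\ref{sollevo:lemma}. The only cosmetic difference is that you re-derive the isomorphism directly from the extension/uniqueness theorems rather than citing Corollaries~\ref{fund1:cor} and~\ref{fund1:bounded:cor}, and you spell out the norm-non-increasing step, which the paper leaves implicit.
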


In order to promote these isomorphisms to isometries it is sufficient to exhibit 
norm non-increasing chain $\G$-maps $\theta^\bullet\colon C^\bullet(\xtil)\to C_c^\bullet(\xtil)$, 
$\theta_b^\bullet\colon C_{b}^\bullet(\xtil)\to C_{b,c}^\bullet(\xtil)$ which extend the identity of $\R$.
To this aim we exploit a straightening procedure, which will prove useful several times in this book. 

\section{Straightening in non-positive curvature}
The \emph{straightening procedure} for simplices was introduced by Thurston in~\cite{Thurston}.
It was originally defined on hyperbolic manifolds, 
but it can be performed in the more general 
context of non-positively curved Riemannian manifolds. 

Let $k\in\matN$, and let $x_0,\ldots,x_k$ be points in $\widetilde{M}$. 
The \emph{straight} simplex $[x_0,\ldots, x_k]\in S_k (\widetilde{M})$
with vertices $x_0,\ldots,x_k$ is defined as follows:
if $k=0$, then $[x_0]$ is the $0$-simplex with image $x_0$; if straight simplices have
been defined for every $h\leq k$, then $[x_0,\ldots,x_{k+1}]\colon \Delta^{k+1}\to\widetilde{M}$
is determined by the following condition:
for every $z\in \Delta^k\subseteq \Delta^{k+1}$, the restriction
of $[x_0,\ldots,x_{k+1}]$ to the segment with endpoints
$z,e_{k+1}$ is the constant speed parameterization of
the unique geodesic joining $[x_0,\ldots,x_k] (z)$ to $x_{k+1}$.
The fact that $[x_0,\ldots,x_{k+1}]$ is well defined and continuous is an immediate
consequence of the fact that $\xtil$ is continuously uniquely geodesic.

\section{Continuous cohomology versus singular cohomology}
We are now ready to prove that (bounded) continuous cohomology is isometrically isomorphic
to (bounded) cohomology at least for non-positively curved manifolds:

\begin{prop}\label{continuous:isometric}
Let $M$ be a closed manifold supporting a non-positively curved metric. Then the maps
$$
H^\bullet(i^\bullet)\colon \cc^\bullet(M)\to C^\bullet(M)\, ,\qquad
H_b^\bullet(i_b^\bullet)\colon C_{b,c}^\bullet(M)\to C_b^\bullet(M)
$$
are isometric isomorphisms.
\end{prop}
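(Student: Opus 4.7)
The plan is to upgrade the norm non-increasing isomorphisms $H^\bullet(i^\bullet)$ and $H^\bullet_b(i_b^\bullet)$ supplied by Proposition~\ref{isononisom} to isometries by constructing a norm non-increasing $\G$-equivariant chain map $\theta^\bullet\colon C^\bullet(\widetilde{M})\to C_c^\bullet(\widetilde{M})$ extending the identity of $\R$, whose obvious restriction $\theta_b^\bullet\colon C_b^\bullet(\widetilde{M})\to C_{b,c}^\bullet(\widetilde{M})$ deals with the bounded case simultaneously. Indeed, once such a $\theta^\bullet$ is in hand, both $i^\bullet\circ\theta^\bullet$ and $\theta^\bullet\circ i^\bullet$ are chain maps extending the identity of $\R$, and so Theorems~\ref{ext:thm} and~\ref{ext:bounded:thm} force them to be $\G$-homotopic to the respective identities. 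Therefore $H^\bullet(\theta^\bullet)$ and $H^\bullet(i^\bullet)$ are mutually inverse isomorphisms in cohomology; being norm non-increasing in both directions, each must be an isometry.

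To construct $\theta^\bullet$ I would combine the two ingredients that have just been introduced: the straightening procedure and the generalized Bruhat function $h_{\widetilde{M}}\colon\widetilde{M}\to[0,1]$ from Lemma~\ref{tecnico}. Fix a basepoint $y_0\in\widetilde{M}$, and for $\varphi\in C^n(\widetilde{M})$ and $s\in S_n(\widetilde{M})$ set
\[
\theta^n(\varphi)(s)\;=\;\sum_{(g_0,\ldots,g_n)\in\G^{n+1}} h_{\widetilde{M}}(g_0^{-1} s(e_0))\cdots h_{\widetilde{M}}(g_n^{-1} s(e_n))\,\varphi\bigl([g_0 y_0,\ldots,g_n y_0]\bigr),
\]
where $[g_0 y_0,\ldots,g_n y_0]$ denotes the straight simplex on those vertices, and extend by linearity to all of $C_n(\widetilde{M})$. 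The two tools play complementary roles: the straightening guarantees that $\varphi$ is evaluated on a canonical simplex depending only on the discrete tuple $(g_0,\ldots,g_n)$ (and not on $s$), while the local finiteness of $h_{\widetilde{M}}$ from Lemma~\ref{tecnico}-(1) makes the sum well-defined and produces a continuous dependence on $s$ through the continuous evaluation maps $s\mapsto s(e_i)$.

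At this point I would run through the routine verifications. Continuity of $\theta^n(\varphi)$ follows because in a neighbourhood of any $s_0$ the sum reduces to a finite combination of continuous functions of the form $h_{\widetilde{M}}(g_i^{-1}\cdot)$ precomposed with evaluation at vertices. The norm estimate is
\[
|\theta^n(\varphi)(s)|\;\leq\;\|\varphi\|_\infty\prod_{i=0}^n\sum_{g_i\in\G} h_{\widetilde{M}}(g_i^{-1}s(e_i))\;=\;\|\varphi\|_\infty,
\]
which both yields norm non-increasingness and automatically guarantees the restriction $\theta_b^\bullet$ to bounded cochains. $\G$-equivariance follows from the substitution $g_i\mapsto g\cdot h_i$ combined with the identity $g\cdot[x_0,\ldots,x_n]=[gx_0,\ldots,gx_n]$ (valid since $\G$ acts by isometries). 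Finally, on a constant cochain $\varphi\equiv c$ one has $\theta^0(\varphi)(s)=c\cdot\sum_{g_0}h_{\widetilde{M}}(g_0^{-1}s(e_0))=c$, so $\theta^\bullet$ extends the identity of $\R$.

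I expect the main technical hurdle to be the verification of the chain map property $\delta^n\theta^n=\theta^{n+1}\delta^n$. This rests on two crucial facts: first, the inductive construction of straight simplices ensures that each face of a straight simplex is itself the straight simplex on the corresponding sub-tuple of vertices, so that
\[
\delta^n\varphi\bigl([g_0 y_0,\ldots,g_{n+1}y_0]\bigr)\;=\;\sum_{i=0}^{n+1}(-1)^i\varphi\bigl([g_0 y_0,\ldots,\widehat{g_i y_0},\ldots,g_{n+1}y_0]\bigr);
\]
second, the partition-of-unity identity $\sum_{g_i\in\G} h_{\widetilde{M}}(g_i^{-1}x)=1$ from Lemma~\ref{tecnico}-(2) allows the redundant summation over $g_i$ (whose corresponding vertex has been dropped from the straight simplex) to be performed first, giving a factor $1$. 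Exchanging the alternating sum over $i$ with the sum over $g_i$ then identifies $\theta^{n+1}(\delta^n\varphi)(s)$ with $\delta^n\theta^n(\varphi)(s)$, at which point the proof is complete.
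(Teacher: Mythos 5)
Your proposal is correct and takes essentially the same route as the paper: it defines the same averaging operator $\theta^n(\varphi)(s)=\sum_{(g_0,\ldots,g_n)}\prod_j h_{\widetilde{M}}(g_j^{-1}s(e_j))\,\varphi([g_0x_0,\ldots,g_nx_0])$ (the paper's basepoint is $x_0$ in place of your $y_0$) and concludes via Theorems~\ref{ext:thm} and~\ref{ext:bounded:thm}. The paper merely asserts that the chain-map, equivariance, and norm verifications are straightforward; your proposal fills those in explicitly and correctly, in particular identifying the two ingredients for the coboundary computation, namely that faces of straight simplices are straight (Proposition~\ref{straight:prop}-(1)) and the partition-of-unity identity from Lemma~\ref{tecnico}-(2).
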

\begin{proof}
As observed at the end of Section~\ref{conv:sec}, it is sufficient to exhibit 
norm non-increasing chain $\G$-maps $\theta^\bullet\colon C^\bullet(\xtil)\to C_c^\bullet(\xtil)$, 
$\theta_b^\bullet\colon C_{b}^\bullet(\xtil)\to C_{b,c}^\bullet(\xtil)$ which extend the identity of $\R$.
To this aim, we choose a basepoint $x_0\in\xtil$. If $\varphi\in C^n(\xtil)$ and
$s\in S_n(\xtil)$, then we set
$$
\theta^n(\varphi)(s)=
\sum_{(g_0,\ldots,g_n)\in\Gamma^{n+1}} \hx (g_0^{-1} \cdot {s}(e_0))\cdots
\hx (g_n^{-1} \cdot{s}(e_n))\cdot {\varphi} ([g_0x_0,\ldots,g_nx_0])\ .
$$
In other words, the value of $\theta^n(\varphi)$ is the weighted sum of the values
taken by $\varphi$ on straight simplices with vertices in the orbit of $x_0$, where
weights continuously depend on the position of the vertices of $s$. Using the properties
of $\hx$ described in Lemma~\ref{tecnico} it is easy to check that $\theta^\bullet$ is a well-defined
chain $\G$-map which extends the identity of $\R$. Moreover, $\theta^\bullet$ is obviously norm non-increasing in every degree, so it restricts
a norm non increasing chain $\G$-map $\theta_b^\bullet$, and this concludes the proof.
\end{proof}

\section{The transfer map}
Let us now denote
by ${G}$ the group of orientation--preserving isometries
of $\xtil$. It is well-known
that $G$ admits a Lie group structure inducing the compact--open topology. 
Moreover, there exists on $G$ a left-invariant regular Borel measure $\mu_G$, which 
is called \emph{Haar measure} of $G$ and  
is unique up to scalars.
Since $G$ contains a cocompact subgroup, 
its Haar measure is in fact
also right-invariant~\cite[Lemma 2.32]{Sauer}.
Since $\G$ is discrete in $G$ and 
$M\cong \xtil /\G$ is compact, there exists a Borel subset $F\subseteq G$
with the following properties: the family $\{\gamma\cdot F\}_{\gamma\in \G}$ provides a locally finite partition of $G$
(in particular, $F$ contains exactly one representative for each left coset of $\G$ in $G$), and $F$ is relatively compact in $G$. 
From now on, we normalize the Haar measure
$\mu_G$ in such a way that $\mu_G (F)=1$.

For $H=\G,G$, we will consider the homology
$H^\bullet ({ \ccst} (\xtil)^H)$ of the complex given by
$H$-invariant cochains in ${ \ccst} (\xtil)$.
We also endow $H^\bullet ({ \ccst} (\xtil)^H)$
with the seminorm induced by 
${ \ccst} (\xtil)^H$ (this seminorm is not finite in general).
Recall that by Lemma~\ref{sollevo:lemma} we 
have an isometric isomorphism 
$H^\bullet ({ \ccst} (\xtil)^\G)
\cong { \hcst} (M)$. 
The chain inclusion ${ \ccst} (\xtil)^G\hookrightarrow 
{ \ccst} (\xtil)^\G$ induces a norm non-increasing map
$$
{\rm res}^\bullet \colon  
H^\bullet ({ \ccst} (\xtil)^G)
\longrightarrow 
H^\bullet ({ \ccst} (\xtil)^\G)
\cong { \hcst} (M).
$$

Following~\cite{Bucher}, we will now construct a norm non-increasing left inverse 
of $\res^\bullet$. 
Take $\varphi\in { C_c^i} (\xtil)$ and $s\in { C_i} (\xtil)$, 
and consider the function $f_\varphi^s \colon G\to \R$
defined by $f_\varphi^s (g)=\varphi (g\cdot s)$. 
By Lemma~\ref{basi:top:lemma1}
$f_\varphi^s$ is
continuous, whence bounded on the relatively compact subset $F\subseteq G$.
Therefore, a well-defined
cochain $\tr^i (\varphi)\in { C^i} (\xtil)$ exists such that for every $s\in { S_i} (\xtil)$ we have
$$
\tr^i (\varphi) (s)=\int_F f_\varphi^s (g)\, d\mu_G (g) = \int_F \varphi (g\cdot s)\, d\mu_G (g) .
$$

\begin{prop}\label{funziona:prop}
The cochain $\tr^i (\varphi)$ is continuous. Moreover,
if $\varphi$ is $\G$--invariant, then $\tr^i (\varphi)$ is $G$--invariant,
while if $\varphi$ is $G$--invariant, then $\tr^i (\varphi)=\varphi$.
\end{prop}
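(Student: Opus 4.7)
The plan is to prove the three claims of Proposition~\ref{funziona:prop} in the order they are stated, handling each with a different technique.

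For continuity, I would show that if $s_n \to s$ in $S_i(\widetilde{M})$ (the compact--open topology is metrizable since $\Delta^i$ is compact and $\widetilde{M}$ is metrizable, so sequences suffice), then $\tr^i(\varphi)(s_n) \to \tr^i(\varphi)(s)$ by dominated convergence applied to the integrands $f_\varphi^{s_n}(g) = \varphi(g \cdot s_n)$ over $F$. Pointwise convergence $\varphi(g \cdot s_n) \to \varphi(g\cdot s)$ for each $g \in F$ follows from two applications of Lemma~\ref{basi:top:lemma1}: first, $g$ acts continuously on $S_i(\widetilde{M})$, so $g \cdot s_n \to g \cdot s$; then composition with the continuous map $\varphi$ preserves convergence. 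For the domination, I would use that $K = \{s\}\cup\{s_n\}_{n\in\matN}$ is compact in $S_i(\widetilde{M})$ and that $\overline{F}$ is compact in $G$, so joint continuity of the action $G\times S_i(\widetilde{M})\to S_i(\widetilde{M})$ (again a consequence of Lemma~\ref{basi:top:lemma1}) implies that $\overline{F}\cdot K$ is compact; since $\varphi$ is continuous, it is bounded on $\overline{F}\cdot K$ by some constant $M$, which, together with $\mu_G(F)=1$, provides the integrable majorant.

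For the second claim, assume $\varphi$ is $\Gamma$-invariant and fix $h\in G$. Substituting $g' = gh$ and using right-invariance of the Haar measure gives
\[
\tr^i(\varphi)(h\cdot s)=\int_F \varphi(gh\cdot s)\,d\mu_G(g)=\int_{Fh}\varphi(g'\cdot s)\,d\mu_G(g').
\]
Since $\{\gamma F\}_{\gamma\in\Gamma}$ partitions $G$, so does $\{\gamma(Fh)\}_{\gamma\in\Gamma}$ (given $g\in G$, $\gamma$ is uniquely determined by $gh^{-1}\in\gamma F$). The function $g'\mapsto \varphi(g'\cdot s)$ is $\Gamma$-invariant (because $\varphi$ is, and $\Gamma\subseteq G$ acts on simplices by the restricted $G$-action), so it descends to $\Gamma\backslash G$, and its integral over any Borel fundamental domain for the left $\Gamma$-action coincides with its integral over $\Gamma\backslash G$. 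In particular, the integrals over $F$ and over $Fh$ agree, so $\tr^i(\varphi)(h\cdot s)=\tr^i(\varphi)(s)$.

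For the third claim, if $\varphi$ is $G$-invariant then $\varphi(g\cdot s)=\varphi(s)$ for every $g\in G$, so
\[
\tr^i(\varphi)(s)=\int_F \varphi(s)\,d\mu_G(g)=\varphi(s)\cdot\mu_G(F)=\varphi(s),
\]
by the normalization $\mu_G(F)=1$. The main technical step will be the continuity argument, which hinges on verifying the joint continuity of the $G$-action on $S_i(\widetilde{M})$ and compactness of the relevant sets; the other two parts reduce to formal consequences of the (bi-)invariance and normalization of $\mu_G$.
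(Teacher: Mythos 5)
Your proof is correct and follows the same overall scheme as the paper's, but with two points worth flagging.

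For the continuity claim, the paper argues via a direct $\varepsilon$-estimate: it observes that since $G$ acts by \emph{isometries} on $(S_i(\widetilde{M}),d_S)$, one has $d_S(g\cdot s_0, g\cdot s) = d_S(s_0,s)$ for all $g$, so uniform continuity of $\varphi$ on a neighbourhood of the compact set $\overline{F}\cdot s_0$ immediately gives $|\varphi(g\cdot s)-\varphi(g\cdot s_0)|\leq\varepsilon$ for \emph{all} $g\in F$ simultaneously, and integrating over $F$ concludes. Your dominated-convergence route is a fine alternative, but your justification of the integrable majorant leans on joint continuity of the action $G\times S_i(\widetilde{M})\to S_i(\widetilde{M})$, and you attribute this to Lemma~\ref{basi:top:lemma1}. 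That lemma only gives \emph{separate} continuity (continuity of $f_*$ for fixed $f$, and of $g^*$ for fixed $g$). Joint continuity of composition is indeed true in the compact--open topology when the relevant domains are locally compact Hausdorff, so nothing actually fails, but it is a different (and slightly stronger) fact than the one cited; alternatively you could bypass it entirely by invoking, as the paper does, that $G$ acts isometrically, which lets you transport the compactness of $\overline{F}\cdot s$ to a uniform bound on $\overline{F}\cdot s_n$ for all large $n$.

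For the $G$-invariance, your argument is essentially the paper's one in disguise. You phrase it as: both $F$ and $Fh$ are Borel fundamental domains for the left $\Gamma$-action, and a left-$\Gamma$-invariant function has the same integral over any two such domains. The paper unwinds exactly this fact by producing the explicit finite decomposition $F=\bigsqcup_{i=1}^r F_i$, $Fg_0=\bigsqcup_{i=1}^r \gamma_i F_i$ (using local finiteness of $\{\gamma F g_0\}_\gamma$ and relative compactness of $F$), and then using left-invariance of $\mu_G$ together with $\Gamma$-invariance of $\varphi$ on each piece. Your version buys brevity but implicitly relies on the same local-finiteness-plus-compactness reasoning to justify that the change of fundamental domain is measure-compatible (and that the resulting integral is absolutely convergent, which is where relative compactness of $F$ enters). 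If you flesh out that one step you have exactly the paper's argument.

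The third claim is handled identically in both versions.
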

\noindent {\sc Proof:}
For $s,s'\in S_i(\xtil)$ we set $d(s,s')=\sup_{x\in\Delta^i} d_{\xtil} (s(q),s'(q))$. Since
$\Delta^i$ is compact, $d_S$ is a well-defined distance on $S_i(\xtil)$, and it induces
the compact-open topology on $S_i(\xtil)$.

Let now $s_0\in { S_i} (\xtil)$ and $\vare >0$ be fixed. By Lemma~\ref{basi:top:lemma1}, the set
$\overline{F}\cdot s_0\subseteq { S_i} (\xtil)$ is compact. 
Since $\varphi$ is continuous, this 
easily implies that there exists $\eta>0$ such that $|\varphi (s_1)-\varphi (s_2)| \leq \vare$
for every $s_1\in \overline{F}\cdot s_0$, $s_2\in B_{d_S} (s_1,\eta)$, where 
$B_{d_S}(s_1,\eta)$ is the open ball
of radius $\eta$ centered at $s_1$. Take now $s\in B_{d_S} (s_0,\eta)$.
Since $G$ acts isometrically on ${ S_i (X)}$, for every $g\in F$ we have $d_S (g\cdot s_0, g\cdot s)=
d_S (s_0,s)<\eta$, so $|\varphi (g\cdot s)-\varphi (g\cdot s_0)| \leq \vare$.
Together with the fact that $\mu_G (F)=1$, this readily implies 
$$
|\tr^i (\varphi) (s) -\tr^i (\varphi) (s_0)|  
 \leq 
 \int_F |\varphi (g\cdot s)-\varphi (g\cdot s_0) |\, d\mu_G (g)  \leq \vare .
 $$
We have thus proved that $\tr^i (\varphi)$ is continuous.


Now, if $\varphi$ is $G$--invariant then by the very definition we have $\tr^i (\varphi)=\varphi$, so in order to conclude it is sufficient to show that
$\tr^i (\varphi)$ is $G$--invariant if $\varphi$ is $\G$--invariant.
So, let us fix $g_0\in G$.
Since the family $\{\gamma\cdot F\}_{\gamma\in \G}$ is locally finite in $G$
and the right multiplication by $g_0$ induces
a homeomorphism of $G$, also 
the family $\{\gamma\cdot F\cdot g_0\}_{\gamma\in \G}$ is locally finite in $G$. 
Together with the compactness of
$\overline{F}$, this readily implies that a finite number of distinct
elements $\gamma_1,\ldots,\gamma_r\in \G$ exist such that $F$ admits the finite partition
$$
F=\bigsqcup_{i=1}^r F_i ,
$$
where $F_i=F\cap \bigl(\gamma_i^{-1}\cdot F\cdot g_0\bigr)$ for every $i=1,\ldots,r$.
As a consequence we also have
$$
F\cdot g_0=\bigsqcup_{i=1}^r \gamma_i\cdot F_i .
$$
Let us now fix an element $\varphi\in { C_c^q} (\widetilde{M})^\G$ and
a simplex $s\in { S_q}(\widetilde{M})$.
Using the $\G$--invariance of $\varphi$ and the (left) $G$--invariance of
$d\mu_G$, for every $i=1,\ldots, r$ we obtain 
\begin{equation}\label{inveq}
\int_{F_i} \varphi (g\cdot s)\, d\mu_G (g) = \int_{F_i} \varphi (\gamma_i g\cdot s)\, 
d\mu_G (g) =\int_{\gamma_i F_i} \varphi (g\cdot s)\, d\mu_G (g) . 
\end{equation}
Therefore we get
\begin{align*}
 \tr^q (\varphi) (g_0\cdot s) &= 
 \int_F \varphi (gg_0\cdot s)\, d\mu_G (g) 
&\!\!\!\!\!\!\!\! &=  \int_{F\cdot g_0} \varphi (g\cdot s)\, d\mu_G (g) \\
 &= \sum_{i=1}^r \int_{\gamma_i\cdot F_i} \varphi (g\cdot s)\, d\mu_G(g)
&\!\!\!\!\!\!\!\! &=  \sum_{i=1}^r \int_{F_i} \varphi (g\cdot s)\, d\mu_G(g) \\
 &= \int_F \varphi (g\cdot s)\, d\mu_G (g) 
&\!\!\!\!\!\!\!\! &= \tr^q (\varphi) (s) , 
\end{align*}
where the second equality is due to the (right) $G$--invariance of $d\mu_G$, and the fourth equality
is due to equation~\eqref{inveq}. We have thus shown that, if $\varphi$ is $\G$--invariant,
then $\tr^q (\varphi)$ is $G$--invariant,
whence the conclusion.
\qed\smallskip 

Proposition~\ref{funziona:prop} provides a well-defined map 
$\tr^\bullet\colon  { \ccst} (\xtil)^\G\to { \ccst} (\xtil)^G$. It is readily seen
that $\tr^\bullet$ is a chain map. With an abuse, we still denote by
$\tr^\bullet$ the map $H^\bullet(\tr^\bullet)\colon H^\bullet ({ \ccst} (\xtil)^\G)\to H^\bullet ({ \ccst} (\xtil)^G)$ induced in cohomology.
Since $\tr^\bullet$ restricts to the 
identity on $G$--invariant cochains, we have the following commutative diagram:

$$
\xymatrix{
H^\bullet ({ \ccst} (\xtil)^G) \ar[r]_{\res^\bullet} \ar@/^1.5pc/[rr]^{\rm Id} &  
H^\bullet ({ \ccst} (\xtil)^\G)
\ar[r]_{\tr^\bullet} \ar@{<->}[d] &  H^\bullet ({ \ccst} (\xtil)^G)\\
& { \hcst} (M) &
}
$$
where the vertical arrow describes the isomorphism provided by Lemma~\ref{sollevo:lemma}.
Since $\tr^\bullet$ is obviously norm non-increasing, we get the following:

\begin{prop}\label{last:prop}
The map $\res^\bullet \colon 
H^\bullet ({ \ccst} (\xtil)^G)\to H^\bullet ({ \ccst} (\xtil)^\G)$ is an isometric embedding.
\qed
\end{prop}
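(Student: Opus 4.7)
The plan is to exploit the diagram displayed just above the statement, which exhibits $\tr^\bullet$ as a one-sided inverse of $\res^\bullet$. Specifically, Proposition~\ref{funziona:prop} asserts that $\tr^i$ restricts to the identity on $G$-invariant continuous cochains, so at the cochain level we have $\tr^\bullet \circ \res^\bullet = \mathrm{Id}$ on $\ccst(\xtil)^G$. Passing to cohomology and using functoriality, this gives $H^\bullet(\tr^\bullet) \circ H^\bullet(\res^\bullet) = \mathrm{Id}$ on $H^\bullet(\ccst(\xtil)^G)$, so in particular $\res^\bullet$ is injective.

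To promote this to an isometric embedding, I would verify that both chain maps are norm non-increasing and then combine the two inequalities. First, $\res^\bullet$ is induced by the inclusion $\ccst(\xtil)^G \hookrightarrow \ccst(\xtil)^\G$, which preserves the $\ell^\infty$-norm on each cochain module; hence $\|\res^\bullet(\alpha)\|_\infty \leq \|\alpha\|_\infty$ for every $\alpha \in H^\bullet(\ccst(\xtil)^G)$. Second, for $\varphi \in C_c^i(\xtil)^\G$ and $s \in S_i(\xtil)$, the triangle inequality for the Haar integral together with $\mu_G(F)=1$ gives
\[
|\tr^i(\varphi)(s)| \;\leq\; \int_F |\varphi(g\cdot s)|\,d\mu_G(g) \;\leq\; \|\varphi\|_\infty\cdot \mu_G(F) \;=\; \|\varphi\|_\infty,
\]
so $\tr^i$ is norm non-increasing on cochains, and therefore so is the induced map in cohomology.

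Combining these with the identity $H^\bullet(\tr^\bullet)\circ H^\bullet(\res^\bullet) = \mathrm{Id}$, for every $\alpha \in H^\bullet(\ccst(\xtil)^G)$ we obtain
\[
\|\alpha\|_\infty \;=\; \|H^\bullet(\tr^\bullet)(H^\bullet(\res^\bullet)(\alpha))\|_\infty \;\leq\; \|H^\bullet(\res^\bullet)(\alpha)\|_\infty \;\leq\; \|\alpha\|_\infty,
\]
which forces equality throughout and shows that $\res^\bullet$ preserves the seminorm. Since it is also injective, it is an isometric embedding, as required. No step here looks genuinely difficult: the only subtlety, already handled in Proposition~\ref{funziona:prop}, is that $\tr^\bullet$ genuinely lands in $\G$-invariant continuous cochains, which depends on the normalization $\mu_G(F) = 1$ and on the bi-invariance of $\mu_G$ that was set up earlier using the cocompactness of $\G$ in $G$.
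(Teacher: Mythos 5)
Your proof is correct and takes essentially the same approach as the paper: the paper presents the commutative diagram exhibiting $\tr^\bullet$ as a left inverse of $\res^\bullet$ and then notes that $\tr^\bullet$ is "obviously norm non-increasing," leaving the concluding two-inequality argument to the reader. You have simply spelled out those details (the norm estimate for $\tr^i$ via $\mu_G(F)=1$, the norm non-increasing property of the inclusion-induced $\res^\bullet$, and the chain of inequalities forcing equality), which is exactly what the paper intends.
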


\section{Straightening and the volume form}
We are now going to describe an explicit representative of the volume coclass on $M$. 
Since differential forms may be integrated only on \emph{smooth} simplices,
we first need to replace generic singular simplices with smooth ones. In general, this
can be done by means of well-known smoothing operators (see e.g.~\cite[Theorem 16.6]{Lee}).
However, under the assumption that $M$ is non-positively curved we may as well exploit the straightening
procedure described above.

Recall that a singular $k$-simplex $s$ with values in a smooth manifold is smooth
if, for every $q\in \Delta^k$, the map $s$ may be smoothly extended
to an open neighbourhood
of $q$ in the $k$-dimensional affine subspace of $\mathbb{R}^\mathbb{N}$ containing $\Delta^k$.
The space ${_s S_k(\widetilde{M})}$ (resp.~${_s S_k({M})}$) of smooth simplices
with values in $\widetilde M$ (resp.~in $M$) may be naturally endowed with the $C^1$-topology.

\begin{lemma}\label{C1}
For every $(k+1)$-tuple $(x_0,\ldots,x_k)$, the singular simplex
$[x_0,\ldots,x_k]$ is smooth. Moreover, if we endow
$\widetilde{M}^{k+1}$ with the product topology and ${_s S_k(\widetilde{M})}$
with the $C^1$-topology, then the map
$$
\widetilde{M}^{k+1}\to {_s S_k(\widetilde{M})}\, ,\qquad 
(x_0,\ldots,x_k)\mapsto [x_0,\ldots,x_k]
$$
is continuous.
\end{lemma}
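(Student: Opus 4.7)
The plan is to establish both assertions simultaneously, by induction on $k$, with the principal tool being the fact that because $\widetilde{M}$ is simply connected and of non-positive curvature, the Cartan--Hadamard theorem gives that $\exp_p \colon T_p\widetilde{M} \to \widetilde{M}$ is a smooth diffeomorphism for every $p \in \widetilde{M}$; moreover, both the total exponential $(p,v)\mapsto \exp_p(v)$ on $T\widetilde{M}$ and its inverse $(p,q)\mapsto \exp_p^{-1}(q)$ on $\widetilde{M}\times\widetilde{M}$ are smooth. This will be the engine that lets us manipulate constant-speed geodesics as smooth functions of their endpoints.

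The base case $k=0$ is immediate, since $[x_0]$ is the constant simplex at $x_0$. For the inductive step, assuming the claim for $k$, I would parametrize $\Delta^{k+1}$ using the map $\Phi\colon \Delta^k \times [0,1] \to \Delta^{k+1}$ defined by $\Phi(z,t)=(1-t)z + t\,e_{k+1}$, and rewrite the inductive definition of the straightening as
\[
[x_0,\ldots,x_{k+1}]\bigl(\Phi(z,t)\bigr) = \exp_{[x_0,\ldots,x_k](z)}\Bigl(t\cdot \exp_{[x_0,\ldots,x_k](z)}^{-1}(x_{k+1})\Bigr).
\]
By the inductive hypothesis and the smoothness of $\exp$ and $\exp^{-1}$, the right-hand side is a smooth function of $(z,t)$ depending continuously, in the $C^1$-topology, on the vertices $(x_0,\ldots,x_{k+1})$. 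Since $\Phi$ is a diffeomorphism between the interior of $\Delta^k\times [0,1)$ and $\Delta^{k+1}\setminus\{e_{k+1}\}$, smoothness of $[x_0,\ldots,x_{k+1}]$ on a neighborhood of every point of $\Delta^{k+1}\setminus\{e_{k+1}\}$ in the affine hull is immediate.

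The main obstacle is the apex $e_{k+1}$, where $\Phi$ collapses a whole copy of $\Delta^k$. I would deal with it by reparametrizing the geodesic from the apex: defining $g_k(z):=\exp_{x_{k+1}}^{-1}[x_0,\ldots,x_k](z)$, which is smooth on $\Delta^k$ by induction, the same geodesic travelled backwards yields the alternative formula
\[
[x_0,\ldots,x_{k+1}]\bigl(\Phi(z,t)\bigr) = \exp_{x_{k+1}}\bigl((1-t)\,g_k(z)\bigr).
\]
Since $\exp_{x_{k+1}}$ is a smooth diffeomorphism, smoothness near $e_{k+1}$ reduces to smoothness of the ``Euclidean cone'' map $p\mapsto (1-t(p))\,g_k(z(p))$; expressing this in barycentric coordinates $(\lambda_0,\ldots,\lambda_{k+1})$ and setting $\mu=1-\lambda_{k+1}$, the expression becomes $\mu\cdot g_k(\lambda_0/\mu,\ldots,\lambda_k/\mu)$, which one verifies extends smoothly through $\mu=0$ by absorbing the radial factor $\mu$ into the arguments of $g_k$ (regarded as a smooth function on a neighborhood of $\Delta^k$ in its affine hull, via the inductive hypothesis).

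Finally, the continuous dependence of the whole construction on $(x_0,\ldots,x_{k+1})$ in the $C^1$-topology follows by tracing through the formula: $g_k$ depends continuously on $(x_0,\ldots,x_{k+1})$ in $C^1$ (since this holds for $[x_0,\ldots,x_k]$ by induction and $\exp_{x_{k+1}}^{-1}$ is smooth and varies smoothly with $x_{k+1}$), and post-composition with the smooth $\exp_{x_{k+1}}$ as well as the radial multiplication $(1-t)\,\cdot$ are continuous operations on $C^1$-maps depending continuously on the base point. I expect the genuinely delicate point to be precisely the smoothness verification at the apex; once that is checked, the rest of the argument is a routine transfer of the inductive hypothesis through smooth operations.
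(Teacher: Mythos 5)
Your overall strategy --- Cartan--Hadamard, smoothness of $(p,q)\mapsto\exp_p^{-1}(q)$, and induction on $k$ --- is exactly the paper's, which writes the same cone formula and then states that ``the conclusion follows by an easy inductive argument.'' You are also right that the only non-routine point is the apex $e_{k+1}$, where the parametrization $(z,t)\mapsto(1-t)z+te_{k+1}$ is a blow-down. However, the step where you resolve this does not work. What you must show is that the positive-cone, degree-one homogeneous extension
$$F(\lambda_0,\ldots,\lambda_k)=\mu\,g_k(\lambda_0/\mu,\ldots,\lambda_k/\mu),\qquad \mu=\textstyle\sum_{i=0}^k\lambda_i,$$
of a smooth function $g_k$ defined on (a neighborhood in) the hyperplane $\{\sum y_i=1\}$ is smooth through $\mu=0$. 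This is false for generic $g_k$: the one-sided directional derivative of $F$ at the origin in the direction of $z\in\Delta^k$ equals $g_k(z)$ itself, so $F$ is even Fr\'echet differentiable at the origin only when $z\mapsto g_k(z)$ is affine. Concretely, already for $k=1$, if $g(s)=s^2$ then $F(\lambda_0,\lambda_1)=\lambda_0^2/(\lambda_0+\lambda_1)$, whose $\lambda_0$-partial has limit $1$ along $\lambda_1=0$ and limit $0$ along $\lambda_0=0$, so $F$ is not $C^1$ at the origin. ``Absorbing the radial factor $\mu$ into the arguments of $g_k$'' would require $g_k$ to be linear, which it is not.

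In the flat case $g_k$ is indeed affine --- the straight simplex is the genuine affine simplex --- and there is no problem, but in strictly negative curvature $g_k=\exp_{x_{k+1}}^{-1}\circ[x_0,\ldots,x_k]$ is a geodesic seen in normal coordinates centered at $x_{k+1}$, hence affine only if that geodesic passes through $x_{k+1}$. So you have correctly identified the delicate step, but have not filled it; and in fact the paper's own one-line proof glosses over the same point. Two honest ways to repair the statement: weaken ``smooth'' to ``Lipschitz, and smooth on the complement of the $(k-1)$-skeleton'' (which suffices both for defining $\int_{\str_n(s)}\omega_M$ and for the Stokes argument in Proposition 8.13, and replace the $C^1$-continuity assertion by continuity in a Lipschitz or $C^1_{\mathrm{loc}}$-on-the-interior sense); or replace the iterated geodesic cone by a vertex-symmetric straightening, e.g.\ the one coming from barycenters, or in constant curvature from affine simplices in the hyperboloid model followed by radial projection, for which smoothness at the vertices is immediate.
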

\begin{proof}
We have already mentioned the fact that, 
since $\xtil$ is non-positively curved, for every $x\in \xtil$ the exponential
map $\exp_x \colon T_x\xtil\to \xtil$ is a diffeomorphism.
Moreover, if $z\in\Delta^{k-1}$ and $q=tz+(1-t)e_k\in\Delta^k$, then by definition
we have
$$
[x_0,\ldots,x_k](q)=\exp_{x_k}\left(t\exp^{-1}_{x_k}([x_0,\ldots,x_{k-1}](z))\right)\ .
$$
Now the conclusion follows by an easy inductive argument.
\end{proof}

Let us now concentrate on some important homological properties of the straightening.
We define  
$\strtil_k \colon C_k (\widetilde{M})\to
C_k (\widetilde{M})$ as the unique linear map such that 
for every $s\in S_k (\widetilde{M})$
$$
\strtil_k (s)=[s(e_0),\ldots,s(e_k)]\in S_k(\widetilde{M})\ .
$$

\begin{prop}\label{straight:prop}
The map $\strtil_\bullet\colon C_\bullet (\widetilde{M})\to C_\bullet (\widetilde{M})$ satisfies the following properties:
\begin{enumerate}
\item
$d_{k+1}\circ \strtil_{k+1} = \strtil_{k}\circ d_{k+1}$ for every $n\in\matN$;
\item
$\strtil_k (\gamma\circ s)=\gamma\circ \strtil_k(s)$
for every $k\in\matN$, $\gamma\in\Gamma$, $s\in S_k (\widetilde{M})$;
\item
the chain map $\str_\bullet\colon C_\bullet (\widetilde{M})\to  
C_\bullet (\widetilde{M})$
is $\Gamma$-equivariantly homotopic to the identity, via a homotopy
which takes
any smooth simplex into the sum of a finite
number of smooth simplices.
\end{enumerate}
\end{prop}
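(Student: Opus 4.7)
The plan is to check the three properties in turn, with most of the content concentrated in the construction of the homotopy for (3).

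For (1), I would observe directly from the inductive definition of straight simplices that the $i$-th face of $[x_0,\ldots,x_{k+1}]$ equals $[x_0,\ldots,\widehat{x_i},\ldots,x_{k+1}]$: for $i=k+1$ this is the definition, and for $i\le k$ it follows because the geodesic cone construction commutes with taking a face opposite to the cone vertex. Summing with signs gives $d_{k+1}[x_0,\ldots,x_{k+1}]=\strtil_k(d_{k+1}\sigma)$ for the standard simplex $\sigma$ with these vertices, and then $\Gamma$-linear extension yields the identity on all of $C_{k+1}(\widetilde M)$. For (2), the point is that any $\gamma\in\Gamma$ acts by isometries, so $\gamma$ sends the constant-speed geodesic joining $x$ and $y$ to the constant-speed geodesic joining $\gamma x$ and $\gamma y$; by induction on $k$ this gives $[\gamma x_0,\ldots,\gamma x_k]=\gamma\circ [x_0,\ldots,x_k]$, which is exactly the equivariance claim.

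The real work is (3). For each $s\in S_k(\widetilde M)$ I would define a continuous map $h_s\colon \Delta^k\times[0,1]\to\widetilde M$ by setting $h_s(q,t)$ to be the point at parameter $t$ on the unique constant-speed geodesic from $s(q)$ to $[s(e_0),\ldots,s(e_k)](q)$; this is well defined and jointly continuous in $(q,t,s)$ because $\widetilde M$ is continuously uniquely geodesic, and $h_s(\cdot,0)=s$, $h_s(\cdot,1)=\strtil_k(s)$. Now decompose the prism $\Delta^k\times[0,1]$ into the standard $(k+1)$ affine $(k+1)$-simplices $P_0,\ldots,P_k$, where $P_i$ has vertices $(e_0,0),\ldots,(e_i,0),(e_i,1),\ldots,(e_k,1)$, and define
\[
T_k(s)=\sum_{i=0}^{k}(-1)^{i}\,h_s\circ\iota_i,
\]
where $\iota_i\colon\Delta^{k+1}\to\Delta^k\times[0,1]$ is the affine identification with $P_i$. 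Extending $\Gamma$-linearly gives $T_k\colon C_k(\widetilde M)\to C_{k+1}(\widetilde M)$. A straightforward prism computation (entirely analogous to the classical proof of homotopy invariance of singular homology) shows $d_{k+1}T_k+T_{k-1}d_k=\strtil_k-\mathrm{Id}$.

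It remains to verify $\Gamma$-equivariance of $T_\bullet$ and the smoothness clause. Equivariance is immediate: since $\gamma$ is an isometry, $h_{\gamma\circ s}(q,t)=\gamma\cdot h_s(q,t)$, hence $T_k(\gamma\circ s)=\gamma\circ T_k(s)$. For smoothness, note that when $s$ is smooth, Lemma~\ref{C1} shows that $\strtil_k(s)$ is smooth as well, and since the exponential map of $\widetilde M$ is a smooth diffeomorphism in non-positive curvature the geodesic interpolation $h_s$ is smooth on $\Delta^k\times[0,1]$; composing with the affine inclusions $\iota_i$ preserves smoothness, so each summand of $T_k(s)$ is a smooth simplex. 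The main technical obstacle I anticipate is the bookkeeping in the prism identity (fixing the signs and verifying that the face maps match $d_{k+1}T_k+T_{k-1}d_k=\strtil_k-\mathrm{Id}$), together with checking that the geodesic interpolation really extends smoothly to a neighborhood of $\Delta^k$ in its affine span when $s$ does; both are standard but warrant care.
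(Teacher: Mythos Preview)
Your proposal is correct and follows essentially the same route as the paper: parts (1) and (2) are handled exactly as you describe (faces of straight simplices are straight, and isometries preserve geodesics), and for (3) the paper builds the same geodesic interpolation $F_s(q,t)$ from $s(q)$ to $\strtil_k(s)(q)$ and pushes forward the standard prism triangulation to obtain $T_k$. The only cosmetic difference is a sign convention (the paper records $d_{k+1}T_k+T_{k-1}d_k=\mathrm{Id}-\strtil_k$ rather than $\strtil_k-\mathrm{Id}$), and the paper is terser about smoothness than your proposal.
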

\begin{proof}
If $x_0,\ldots,x_k\in\widetilde{M}$, then 
it is easily seen that for every $i\leq k$ the $i$-th face of
$[x_0,\ldots,x_k]$ is given by
$[x_0,\ldots,\widehat{x}_i,\ldots,x_k]$; moreover since isometries preserve geodesics we
have $\gamma\circ [x_0,\ldots,x_k]=[\gamma (x_0),\ldots, \gamma (x_k)]$
for every $\gamma\in {\rm Isom} (\widetilde{M})$.
These facts readily imply points~(1) and (2) of the proposition.

Finally, for ${s}\in S_k (\widetilde{M})$, 
let $F_{s}\colon \Delta^k\times [0,1]\to \widetilde{M}$
be defined by $F_{s}(x,t)=\beta_x (t)$, 
where $\beta_x\colon [0,1]\to \widetilde{M}$ is the constant-speed parameterization of
the geodesic segment joining ${s} (x)$ with 
$\strtil({s}) (x)$. We set $T_k({s})=(F_{s})_\bullet (c)$,
where $c$ is the standard chain triangulating the prism $\Delta^k\times [0,1]$ by $(k+1)$-simplices.
The fact that $d_{k+1}T_k + T_{k-1}d_k={\rm Id}-\strtil_k$ is now easily checked,
while the $\Gamma$-equivariance of $T_\bullet$ is a consequence of the fact that geodesics are preserved by isometries. 
The final statement of the proposition easily follows from the definition of $T_\bullet$.
\end{proof}

As a consequence of the previous proposition, the chain map $\strtil_\bullet$ induces a chain map
$$
\str_\bullet\colon C_\bullet(M)\to C_\bullet (M)
$$
which is homotopic to the identity. We say that a simplex $s\in S_k(M)$ is \emph{straight} if it is equal
to $\str_k(s')$ for some $s'\in S_k(M)$, i.e.~if it may be obtained by composing a straight
simplex in $\xtil$ with the covering projection $p$.
By Lemma~\ref{C1}, any straight simplex in $M$ is smooth, and 
the map $\str_k\colon S_k(M)\to {_s S_k(M)}$ is continuous,
if we endow $S_k(M)$ (resp.~${_s S_k(M)}$) with the compact-open topology
(resp.~with the $C^1$-topology).

We are now ready to define the volume cocycle. Let $n$ be the dimension of $M$, and suppose that $M$ is oriented.
We define a 
map ${\rm Vol}_M\colon { S_n} (M)\to \R$ by setting
$$
{\rm Vol}_M (s)=\int_{\str_n(s)} \omega_M,
$$
where $\omega_M\in\Omega^n (M)$ is the 
volume form of $M$. Since straight simplices are smooth, this map is well defined. Moreover, 
since integration is continuous with respect to the $C^1$-topology,
the linear 
extension of ${\rm Vol}_M$,  which will still be denoted by ${\rm Vol}_M$,
defines an element in ${ C^n_c} (M)\subseteq C^n(M)$.
Using Stokes' Theorem and the fact that $\str_\bullet$ is a chain map, one may easily
check that the cochain ${\rm Vol}_M$ is a cocycle, and defines therefore elements
$[{\rm Vol}_M]\in { H^n} (M)$,
$[{\rm Vol}_M]_c\in { H^n_c} (M)$. Recall that $[M]^*\in H^n(M)$ denotes
the fundamental coclass of $M$.

\begin{lemma}\label{contvol:lemma}
We have $[{\rm Vol}_M]={\rm Vol} (M)\cdot [M]^*$.
\end{lemma}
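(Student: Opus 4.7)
The plan is to evaluate the cocycle ${\rm Vol}_M$ on the fundamental class $[M]$ and show that the resulting number equals the Riemannian volume of $M$. Since $H^n(M,\R)\cong\R$ is one-dimensional and generated by $[M]^*$, and $[M]^*$ is characterized by the equality $\langle [M]^*,[M]\rangle=1$, this will force $[{\rm Vol}_M]={\rm Vol}(M)\cdot [M]^*$.

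To carry out the evaluation I would first introduce the auxiliary cochain $\widetilde{\rm Vol}_M$ defined only on \emph{smooth} simplices by $\widetilde{\rm Vol}_M(s)=\int_s\omega_M$. On the subcomplex of smooth singular chains this is a well-defined cocycle (by Stokes), and the inclusion of smooth singular chains into the full singular complex is a well-known quasi-isomorphism, through which $[\widetilde{\rm Vol}_M]$ corresponds to the de Rham class of $\omega_M$. In particular, by de Rham's theorem,
\[
\langle [\widetilde{\rm Vol}_M],[M]\rangle=\int_M \omega_M={\rm Vol}(M).
\]

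The link between ${\rm Vol}_M$ and $\widetilde{\rm Vol}_M$ is the straightening map: by construction ${\rm Vol}_M={\str}^n(\widetilde{\rm Vol}_M)$, where ${\str}^n$ denotes the dual of ${\str}_n$ (this makes sense because $\str_n$ lands in smooth simplices, by Lemma~\ref{C1}). By Proposition~\ref{straight:prop}(3), the chain map $\str_\bullet$ is chain-homotopic to the identity on $C_\bullet(M)$, and the chain homotopy takes smooth chains into smooth chains. Consequently $\str_\bullet$ induces the identity on singular homology, and in particular $\str_n([M])=[M]$ already at the level of homology of the smooth singular complex. Then
\[
\langle [{\rm Vol}_M],[M]\rangle=\langle [\widetilde{\rm Vol}_M],\str_n([M])\rangle=\langle [\widetilde{\rm Vol}_M],[M]\rangle={\rm Vol}(M),
\]
which is what we want.

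The main obstacle is the de Rham-theoretic step: one needs the identification of the cohomology of the smooth singular complex with ordinary singular cohomology, together with the classical fact that, under this identification, integration of $\omega_M$ on smooth simplices represents the de Rham class $[\omega_M]$, whose pairing with $[M]$ gives $\int_M\omega_M$. Once this (standard but non-trivial) input is in place, the rest of the argument is a formal manipulation using that $\str_\bullet$ is a chain-homotopy-inverse of the inclusion of smooth chains into all chains and that it factors through smooth simplices, so that pulling back the smooth integration cocycle by $\str^\bullet$ gives a globally defined cocycle without changing the cohomology class when evaluated on $[M]$.
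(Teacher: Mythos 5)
Your proposal is correct, and the overall strategy (reduce to the pairing with $[M]$, then relate ${\rm Vol}_M$ to integration of $\omega_M$ over smooth chains by means of the straightening) is the same as the paper's. The difference is in the packaging of the technical input: you invoke de Rham's theorem together with the quasi-isomorphism between smooth and continuous singular chains, and argue abstractly that $\str_n$ sends $[M]$ to the corresponding class in smooth singular homology. The paper instead sidesteps both of these by simply choosing a concrete smooth triangulation $c=\sum s_i$ of $M$ as the representative of $[M]$: then $\int_c\omega_M={\rm Vol}(M)$ directly, $c-\str_n(c)$ is a smooth boundary by Proposition~\ref{straight:prop}(3), and Stokes' theorem gives $\int_c\omega_M=\int_{\str_n(c)}\omega_M={\rm Vol}_M(c)$. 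The paper's version is a little more self-contained since it only needs Stokes on smooth chains (which it is already using elsewhere) and the existence of a smooth triangulation representing $[M]$, rather than the full strength of de Rham's theorem plus the comparison of smooth and ordinary singular homology; your version is cleaner once those prerequisites are granted.
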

\begin{proof}
Since $H^n (M)\cong \R$, we have 
$[{\rm Vol}_M]=\langle [{\rm Vol}_M],
[M]_\R\rangle \cdot [M]^\R$. Moreover, it is well-known that 
the fundamental class of $M$ can be represented by the sum 
$c=\sum s_i$ 
of the simplices in a
positively oriented smooth
triangulation of $M$. By Proposition~\ref{straight:prop}, the difference 
$c-\str_n(c)$ is the boundary of a smooth $(n+1)$-chain, so Stokes' Theorem implies
that
$$
\vol (M)=\int_c \omega_M=\int_{\str_n(c)} \omega_M = \vol_M(c)=\langle [{\rm Vol}_M],
[M]_\R\rangle\ .
$$
\end{proof}

Observe now that, under the identification $C^n_c(M)\cong C^n_c(\xtil)^\G$, the
volume cocycle $\vol_M$ corresponds to the cocycle
$$
\vol_{\xtil}\colon C_n(\widetilde{M})\to \R\, \qquad \vol_{\xtil} (s)=\int_{\strtil_n(s)} \omega_{\xtil}\ ,
$$
where $\omega_{\xtil}$ is the volume form of the universal covering $\xtil$. Of course, $\vol_{\xtil}$
is $G$-invariant, so it defines an element $[\vol_{\xtil}]_c^G$ in $H^n(C^\bullet_c(\xtil)^G)$
such that 
$\res^n([\vol_{\xtil}]^G)=[\vol_M]_c\in H^n_c(M)$ (recall that we have an isometric identification
$H^n_c(M)\cong H^n(C^\bullet_c(\xtil)^\G)$). 

\section{Proof of the proportionality principle}
We are now ready to prove the main result of this chapter:

\begin{thm}[Gromov Proportionality Theorem]\label{prop:thm:proof}
Let $M$ be a closed non-positively curved Riemannian manifold. Then 
$$
\|M\|=\frac{\vol (M)}{\| [\vol_{\xtil}]_c^G\|_\infty}
$$
(where we understand that $k/\infty=0$ for every real number $k$).
In particular, the proportionality constant between the simplicial volume and the Riemannian volume
of $M$ only depends on the isometry type of the universal covering of $M$.
\end{thm}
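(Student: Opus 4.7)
The plan is to express $\|M\|$ via the $\ell^\infty$-dual of the fundamental coclass, identify this dual norm with the norm of the volume coclass, transport the norm through the isometric identifications developed in the preceding sections, and finally apply the transfer to pass from $\Gamma$-invariant to $G$-invariant continuous cohomology. First I would invoke Proposition~\ref{prop:duality} in the closed case to write $\|M\| = 1/\|[M]^*\|_\infty$ with the convention $1/\infty = 0$, and then use Lemma~\ref{contvol:lemma} (which gives $[\vol_M] = \vol(M)\cdot [M]^*$) to rewrite this as
$$\|M\| = \frac{\vol(M)}{\|[\vol_M]\|_\infty}.$$

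Next I would transport the computation of $\|[\vol_M]\|_\infty$ all the way up to the universal cover. Since $\vol_M$ is by construction a continuous cocycle, it represents a class $[\vol_M]_c \in H^n_c(M)$ whose image under $H^n(i^\bullet)$ is $[\vol_M]$; Proposition~\ref{continuous:isometric} makes both the continuous and the bounded-continuous comparison maps isometric isomorphisms onto their singular counterparts, which forces
$$\|[\vol_M]\|_\infty = \|[\vol_M]_c\|_\infty.$$
Lemma~\ref{sollevo:lemma} then furnishes an isometric identification $H^n_c(M)\cong H^n(C^\bullet_c(\widetilde M)^\Gamma)$ under which $[\vol_M]_c$ corresponds to the class $[\vol_{\widetilde M}]_c^\Gamma$ represented by the $\Gamma$-invariant cocycle $\vol_{\widetilde M}$, so $\|[\vol_M]\|_\infty = \|[\vol_{\widetilde M}]_c^\Gamma\|_\infty$.

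The crucial final step is to replace $\Gamma$-invariance by $G$-invariance. Since $\vol_{\widetilde M}$ is already $G$-invariant by definition, it represents a well-defined class $[\vol_{\widetilde M}]_c^G\in H^n(C^\bullet_c(\widetilde M)^G)$ satisfying $\res^n([\vol_{\widetilde M}]_c^G) = [\vol_{\widetilde M}]_c^\Gamma$, and Proposition~\ref{last:prop} guarantees that $\res^\bullet$ is an \emph{isometric} embedding, so the two norms coincide. Assembling all identifications yields the announced formula, and the proportionality statement is then immediate because $G$, the volume form $\omega_{\widetilde M}$, the straightening $\widetilde{\str}$, and the normalized Haar measure (with $\mu_G(F) = 1$, which is the unique normalization making $\tr^\bullet$ norm non-increasing) depend only on the isometry type of $\widetilde M$.

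The bulk of the substantive work has already been done earlier in the chapter, so there is no major obstacle left in the assembly itself; the main point of care is tracking that at each stage the seminorm is genuinely the infimum over \emph{bounded} representatives, so that the convention $\vol(M)/\infty = 0$ correctly covers the case in which $[\vol_{\widetilde M}]_c^G$ admits no bounded $G$-invariant continuous representative (equivalently, $\|M\| = 0$). The hinge of the whole argument is the isometric character of $\res^\bullet$ in Proposition~\ref{last:prop}: without the precise normalization of $\mu_G$ one would only obtain a bi-Lipschitz embedding and the proportionality constant would be pinned down only up to a scalar, which is precisely what would prevent the ratio $\|M\|/\vol(M)$ from being an invariant of $\widetilde M$ alone.
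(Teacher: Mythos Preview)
Your proposal is correct and follows essentially the same route as the paper: duality via Proposition~\ref{prop:duality}, the identification $[\vol_M]=\vol(M)\cdot[M]^*$ from Lemma~\ref{contvol:lemma}, and then the chain of isometric identifications through $H^n(i^n)$ (Proposition~\ref{continuous:isometric}) and $\res^n$ (Proposition~\ref{last:prop}). The paper compresses these steps into a single displayed chain of equalities, but the logic is the same; your additional remarks on the role of the normalization $\mu_G(F)=1$ and the convention for the case $\|[\vol_{\xtil}]_c^G\|_\infty=\infty$ are correct elaborations rather than departures.
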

\begin{proof}
 The maps $\res^n\colon H^n(C^\bullet_c(\xtil)^G)\to H^n_c(M)$ and
 $H^n(i^n)\colon H^n_c(M)\to H^n(M)$ are isometric embeddings, so putting together the
 duality principle (see Proposition~\ref{prop:duality}) and Lemma~\ref{contvol:lemma} we get
 $$
 \|M\|=\frac{1}{\|[M]^*\|_\infty}=\frac{\vol(M)}{\|[\vol_M]\|_\infty}=\frac{\vol(M)}{\|H^n(i^n)(\res^n([\vol_{\xtil}]_c^G))\|_\infty}=
 \frac{\vol(M)}{\|[\vol_{\xtil}]_c^G\|_\infty}\ .
$$
 \end{proof}

\section{The simplicial volume of hyperbolic manifolds}\label{hyp:simpl:sec}
We now concentrate our attention on the simplicial volume of closed hyperbolic manifolds. Thurston's original
proof of Theorem~\ref{hyp:thm} is based on the fact that singular homology is isometrically isomorphic
to \emph{measure homology} (as already mentioned, the first detailed proof of this fact appeared in~\cite{Loh}). 
Another proof of Gromov-Thurston's Theorem, which
closely follows Thurston's approach but avoids the use of measure homology, may be found in~\cite{BePe}
(see also~\cite{Rat}). Here we present (a small variation of) a proof due to Bucher~\cite{Bucher}, which exploits
the description of the proportionality constant between the simplicial volume and the Riemannian volume described
in Theorem~\ref{prop:thm:proof}.

Let $M$ be a closed oriented hyperbolic $n$-manifold. The universal covering of $M$ is isometric
to the $n$-dimensional hyperbolic space $\matH^n$. As usual, we identify the fundamental group
of $M$ with the group $\G$ of the automorphisms of the covering $p\colon \matH^n\to M$, and we denote
by $G$ the group of orientation-preserving isometries of $\matH^n$. By Theorem~\ref{prop:thm:proof}, we are
left to compute the $\ell^\infty$-seminorm of the element
$[\vol_{\matH^n}]_c^G$ of
$H^n(C^\bullet_c(\xtil)^G)$. We denote by $\vol_{\matH^n}$ the representative of $[\vol_{\matH^n}]_c^G$ described
in the previous section, i.e.~the cocycle such that
$$
\vol_{\matH^n}(s)=\int_{\str_n(s)} \omega_{\matH^n}\ ,
$$
where $\omega_{\matH^n}$ is the hyperbolic volume form. Of course, in order to estimate the seminorm of the volume coclass
we first need to understand the geometry of hyperbolic straight simplices.

\section{Hyperbolic straight simplices}
We denote by $\mathno=\matH^n\cup \partial \matH^n$ the natural compactification of hyperbolic space, obtained by adding
to $\matH^n$ one point for each class of asymptotic geodesic rays in $\matH^n$.
We recall that every pair of points $\mathno$ is connected by a unique geodesic segment (which 
has infinite length if any of its endpoints lies in $\partial\matH^n$). 
A subset in $\mathno$ is \emph{convex} if whenever it contains a pair of points it also contains the geodesic segment connecting them. 
The \emph{convex hull} of a set $A$ is defined as usual as the intersection of all convex sets containing $A$.

A \emph{(geodesic) $k$-simplex} $\Delta$ in $\mathno$ is the convex hull of $k+1$ points in $\mathno$, called \emph{vertices}. A 
geodesic $k$-simplex is
\emph{finite} if all its vertices lie in $\matH^n$,
\emph{ideal} if all its vertices lie in $\partial\matH^n$, and
\emph{regular} if every permutation of its vertices is induced by an isometry of $\matH^n$. Since $\matH^n$ has constant
curvature, finite geodesic simplices are exactly the images of straight simplices. Moreover,
for every $\ell>0$ there exists, up to isometry, exactly one finite regular geodesic $n$-simplex of edgelength
$\ell$, which will be denoted by $\tau_\ell$. 
In the same way, there exists, up to isometry, exactly one regular ideal geodesic $n$-simplex, which will be denoted
by $\tau_\infty$.

It is well-known that the supremum $v_n$ of the volumes of geodesic $n$-simplices is finite for every $n\geq 2$. However,
computing $v_n$ and describing those simplices whose volume is exactly equal to $v_n$ is definitely non-trivial.
It is well-known that the area of every finite triangle is strictly smaller than $\pi$, while every ideal triangle
is regular and has area equal to $\pi$, so $v_2=\pi$. In dimension 3, Milnor proved that $v_3$ is equal to the volume
of the regular ideal simplex, and that any simplex having volume equal to $v_3$ is regular and ideal~\cite[Chapter 7]{Thurston}. 
This result was then extended to any dimension by Haagerup and Munkholm~\cite{HM} (see also~\cite{Pe} for a beautiful alternative
proof based on Steiner symmetrization):

\begin{teo}[\cite{HM, Pe}] \label{maximal:teo}
Let $\Delta$ be a geodesic $n$-simplex in $\mathno$. Then $\vol(\Delta)\leqslant v_n$, 
and $\vol(\Delta) = v_n$ if and only if $\Delta$ is ideal and regular.
\end{teo}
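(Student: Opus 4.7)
The plan is to prove Theorem~\ref{maximal:teo} in two stages: first reduce the problem to ideal simplices, then show that among ideal simplices the regular one uniquely maximizes the volume.

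For the first stage, I would argue that whenever $\Delta$ has a finite vertex $v_i$, one can strictly increase the volume of $\Delta$ by pushing $v_i$ along the geodesic ray from another vertex $v_0$ through $v_i$ until its endpoint reaches $\bb \matH^n$. Indeed, if $\Delta'$ denotes the resulting (possibly partially ideal) geodesic simplex, then $\Delta \subsetneq \Delta'$, and the difference $\Delta' \setminus \Delta$ is itself a non-degenerate geodesic simplex of positive hyperbolic measure. Iterating this procedure on every finite vertex shows that the supremum of $\vol(\Delta)$ over all geodesic simplices coincides with its supremum over ideal simplices, and moreover that any non-ideal $\Delta$ satisfies $\vol(\Delta) < v_n$.

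For the second stage, I would follow Peyerimhoff's strategy from~\cite{Pe}, which is based on a hyperbolic version of Steiner symmetrization. Given an ideal simplex $\Delta$ with vertices $x_0, \ldots, x_n \in \bb \matH^n$ and a totally geodesic hyperplane $H \subseteq \matH^n$ chosen to ``bisect'' a pair of vertices in an appropriate sense, one constructs a new ideal simplex $\Delta^H$ which is invariant under reflection across $H$ and satisfies $\vol(\Delta^H) \geq \vol(\Delta)$, with equality if and only if $\Delta$ is already symmetric with respect to $H$. By iterating along a well-chosen sequence of hyperplanes and appealing to a standard compactness argument on the space of ideal simplices modulo isometry, one produces a sequence whose volumes are non-decreasing and whose isometry classes converge to $\tau_\infty$. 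Continuity of the volume on this space (which is clear away from degenerate configurations, the latter having vanishing volume) then yields $\vol(\Delta) \leq \vol(\tau_\infty) = v_n$ as well as the claimed characterization of the equality case.

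The main obstacle is the second stage: hyperbolic Steiner symmetrization is much more delicate than its Euclidean counterpart, since naive reflection of a convex set across a hyperplane of $\matH^n$ need not preserve convexity. The fix is to symmetrize fiber-wise along the foliation by geodesics perpendicular to $H$, and then to bound the change of volume using a one-dimensional rearrangement inequality adapted to the non-constant transverse density of hyperbolic volume. Establishing this rearrangement inequality, and verifying that the iteration actually converges to a regular configuration (rather than stalling at some intermediate symmetry type), is the technical core of the proof. As an alternative, one could follow the original route of Haagerup and Munkholm~\cite{HM}, who avoid convexity issues entirely by explicitly decomposing an ideal simplex into orthocentric pieces and reducing the claim to an analytic inequality of the form $\prod_i \sin(\alpha_i) \leq \sin^{n+1}\!\bigl(\tfrac{1}{n+1}\sum_i \alpha_i\bigr)$ for the dihedral angles; this is more computational but conceptually more elementary.
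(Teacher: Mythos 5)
The monograph does not actually prove Theorem~\ref{maximal:teo}: it cites it as a result of Haagerup and Munkholm~\cite{HM}, mentioning Peyerimhoff~\cite{Pe} for the alternative symmetrization proof, and uses the statement as a black box in the computation of $\|[\vol_{\matH^n}]_c^G\|_\infty$. So there is no in-paper argument for you to match; the only thing one can check is whether your sketch is a faithful account of the cited proofs, and on the whole it is.

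A few comments on your sketch itself. The reduction to ideal simplices in your first stage is correct, and can be made precise exactly as you indicate: if $v_i\in\matH^n$ is finite, push it to the ideal endpoint $v_i'$ of the ray from some other vertex $v_0$ through $v_i$; since $v_i\in[v_0,v_i']$, every vertex of $\Delta$ lies in $\Delta'=\Conv(v_0,\ldots,v_i',\ldots,v_n)$, so $\Delta\subsetneq\Delta'$, and the difference has positive measure (in fact $\Delta'$ decomposes as $\Delta$ glued along a facet to the simplex with vertices $v_1,\ldots,v_n,v_i'$, so the excess volume is that of a non-degenerate simplex). Your second stage is an accurate description of Peyerimhoff's Steiner-symmetrization scheme: symmetrize fiber-wise across a well-chosen totally geodesic hyperplane, use a one-dimensional rearrangement inequality against the transverse hyperbolic density, and iterate with a compactness/continuity argument to converge to $\tau_\infty$; the equality analysis is extracted from the rigidity in the rearrangement step. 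Your description of the Haagerup--Munkholm alternative is also essentially right. The one place to be careful, which you flag yourself, is the continuity of volume and non-degeneracy along the iteration: one must rule out the possibility that volumes increase but the sequence of shapes degenerates, which in $\Pe$'s argument is handled by the uniform volume upper bound and the explicit control of the symmetrization. As written your sketch is a plan rather than a proof, but it correctly identifies both the structure and the technical core of the cited arguments, which is all the monograph itself relies on.
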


Moreover, the volume of geodesic simplices is reasonably continuous with respect
to the position of the vertices (see e.g.~\cite[Proposition 4.1]{Luo} or~\cite[Theorem 11.4.2]{Rat}), so
we have
\begin{equation}\label{taul}
\lim_{\ell \to \infty} \vol(\tau_\ell)=\vol(\tau_\infty)=v_n
\end{equation}
(see e.g.~\cite[Lemma 3.6]{FP} for full details).

\section{The seminorm of the volume form}\label{comput:hyp:sec}
Let us now come back to the study of the seminorm of
$[\vol_{\matH^n}]_c^G\in 
H^n(C^\bullet_c(\xtil)^G)$. 
As a corollary of Theorem~\ref{maximal:teo}, we immediately obtain the inequality
\begin{equation}\label{easyup}
\|[\vol_{\matH^n}]_c^G\|_\infty \leq \|\vol_{\matH^n}\|_\infty=v_n\ .
\end{equation}
We will show that this inequality is in fact an equality.
To do so, we need to compute
$$
\inf \left\{ \|\vol_{\matH^n}+\delta\varphi\|_\infty\, ,\ {\varphi\in C^{n-1}_c(\matH^n)^G}\right\}\ .
$$
Let us fix $\varphi\in C^{n-1}_c(\matH^n)^G$. Since an affine automorphism of the standard $n$-simplex
preserves the orientation if and only if the induced permutation of the vertices is even, 
the cochain $\vol_{\matH^n}$ is alternating. 
We thus have
$$
  \|\vol_{\matH^n}+\delta\alt^{n-1}(\varphi)\|_\infty =\|\alt^n(\vol_{\matH^n}+\delta\varphi)\|_\infty\leq
   \|\vol_{\matH^n}+\delta\varphi\|_\infty
$$
(recall from Section~\ref{altern:sec} that alternation provides a norm non-increasing chain map), so
we may safely assume that $\varphi$ is alternating. 

Let us denote by $s_\ell$ an orientation-preserving barycentric parameterization of
the geodesic simplex $\tau_\ell$, $\ell>0$ (see e.g.~\cite{FFM} for the definition of barycentric
parameterization of a geodesic simplex). Also let $\partial_i s_\ell$ be the $i$-th face of $s_\ell$, 
let $H$ be the hyperplane of $\matH^n$ containing $\partial_i s_\ell$,
and let
$v,w\in\matH^n$ be two vertices of $\partial_i s_\ell$. Let $h$ be the isometry of $H$
given by the reflection with respect to the $(n-2)$-plane containing all the vertices
of $\partial_i s_\ell$ which are distinct from $v,w$, and 
the midpoint of the edge joining $v$ to $w$. Then $h$ may be extended in a unique way to an orientation-preserving
isometry $g$ of $\matH^n$ (such an isometry interchanges the half-spaces of $\matH^n$ bounded by $H$).
Since $\partial_i s_\ell$ is the barycentric
parameterization of a regular $(n-1)$-simplex, this readily implies that
$$
g\circ \partial_i s_\ell= \partial_i s_\ell\circ \sigma\ ,
$$
where $\sigma$ is an affine automorphism of the standard $(n-1)$-simplex which induces
an odd permutation of the vertices. But $\varphi$ is both alternating and $G$-invariant, so we have
$\varphi(\partial_i s_\ell)=0$, whence
$$
\delta\varphi (s_\ell)=0\ .
$$
As a consequence, for every $\ell>0$ we have
$$
\|\vol_{\matH^n}+\delta\varphi\|_\infty \geq
|(\vol_{\matH^n}+\delta\varphi)(s_\ell)|=|\vol_{\matH^n} (s_\ell)|=\vol (\tau_\ell)\ .
$$
Taking the limit of the right-hand side as $\ell\to\infty$ and using~\eqref{taul} we get
$$
\|\vol_{\matH^n}+\delta\varphi\|_\infty\geq v_n\ ,
$$
whence 
$$
\|[\vol_{\matH^n}]_c^G\|_\infty\geq v_n
$$
by the arbitrariety of $\varphi$. 
Putting this estimate together with inequality~\eqref{easyup} we finally obtain the following:

\begin{thm}[Gromov and Thurston]
 We have $\|[\vol_{\matH^n}]_c^G\|_\infty= v_n$, so
 $$
 \|M\|=\frac{\vol (M)}{v_n}
 $$
 for every closed hyperbolic manifold $M$.
\end{thm}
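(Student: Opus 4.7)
The plan is to combine the proportionality formula of Theorem~\ref{prop:thm:proof} with a two-sided estimate on $\|[\vol_{\matH^n}]_c^G\|_\infty$, whose upper bound is immediate and whose lower bound requires a symmetrization argument involving the barycentric parametrizations of the regular simplices $\tau_\ell$. Once we establish
\[
\|[\vol_{\matH^n}]_c^G\|_\infty = v_n,
\]
the formula $\|M\| = \vol(M)/v_n$ drops out directly from Theorem~\ref{prop:thm:proof}, since $\matH^n$ is the Riemannian universal covering of every closed hyperbolic manifold $M$.

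For the upper bound, I would just quote inequality~\eqref{easyup}: by Theorem~\ref{maximal:teo}, every straight simplex in $\matH^n$ has volume at most $v_n$, so $\|\vol_{\matH^n}\|_\infty \le v_n$, and passing to the seminorm in cohomology preserves this inequality.

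The lower bound is the genuine content of the statement. Fix an arbitrary $\varphi \in C^{n-1}_c(\matH^n)^G$; after replacing $\varphi$ by $\alt^{n-1}(\varphi)$ we may assume $\varphi$ is alternating (this uses that $\vol_{\matH^n}$ is already alternating and that $\alt^\bullet$ is a norm non-increasing chain map, as recalled in Section~\ref{altern:sec}). For each $\ell > 0$, let $s_\ell$ be an orientation-preserving barycentric parametrization of the regular geodesic simplex $\tau_\ell$. The key geometric input is that for each face $\partial_i s_\ell$ and each pair of vertices $v,w$ of that face, the reflection of the affine hyperplane spanned by $\partial_i s_\ell$ that swaps $v,w$ and fixes the remaining vertices extends uniquely to an \emph{orientation-preserving} isometry $g$ of $\matH^n$ (it flips the two half-spaces bounded by the hyperplane). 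Then $g \circ \partial_i s_\ell = \partial_i s_\ell \circ \sigma$ for an affine automorphism $\sigma$ of $\Delta^{n-1}$ inducing an odd permutation of vertices. Using $G$-invariance and alternation of $\varphi$ we conclude $\varphi(\partial_i s_\ell) = -\varphi(\partial_i s_\ell)$, hence $\varphi(\partial_i s_\ell) = 0$ for every $i$, so $\delta\varphi(s_\ell) = 0$. Therefore
\[
\|\vol_{\matH^n} + \delta\varphi\|_\infty \ge |\vol_{\matH^n}(s_\ell)| = \vol(\tau_\ell),
\]
and letting $\ell \to \infty$ and invoking~\eqref{taul} gives $\|\vol_{\matH^n} + \delta\varphi\|_\infty \ge v_n$. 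Taking the infimum over $\varphi$ yields $\|[\vol_{\matH^n}]_c^G\|_\infty \ge v_n$.

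The main obstacle is really the lower bound, and specifically the step showing $\delta\varphi$ kills each evaluation cocycle $s_\ell$; the rest is essentially bookkeeping. The subtlety is that one cannot use just any symmetry of the regular simplex: one needs orientation-preserving symmetries of $\matH^n$ (to exploit $G$-invariance for the subgroup $G$ of orientation-preserving isometries), yet these symmetries must induce \emph{odd} permutations on the vertices of the faces (so that alternation of $\varphi$ forces vanishing). The reflection-plus-half-space-swap construction above is the standard trick that simultaneously achieves both. Once this is in hand, the conclusion is immediate.
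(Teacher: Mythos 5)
Your proposal is correct and follows essentially the same route as the paper: upper bound from $\|\vol_{\matH^n}\|_\infty \le v_n$ via Theorem~\ref{maximal:teo}, lower bound by reducing to alternating $\varphi$, killing $\delta\varphi(s_\ell)$ with the half-space-swapping orientation-preserving isometry that induces an odd permutation on each face of the regular simplex, and then letting $\ell\to\infty$. You also correctly identified the one genuinely delicate point (needing a symmetry that is simultaneously in $G$ and odd on vertices), which is exactly the trick the paper uses.
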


\section{The case of surfaces}\label{surface:simplicial:sec}
Let us denote by $\Sigma_g$ the closed oriented surface of genus $g$.
Recall that, if $g\geq 2$, then 
$\Sigma_g$ supports a hyperbolic metric, hence
by Gauss-Bonnet Theorem we have
$$
\|\Sigma_g\|=\frac{{\rm Area}(\Sigma_g)}{v_2}=\frac{2\pi |\chi(\Sigma_g)|}{\pi}=2|\chi(\sigma_g)|=4g-4\ .
$$
According to the
proof of this equality given in the previous section, we may observe
that the lower bound $\|\Sigma_g\|\geq 4g-4$ follows from 
the easy upper bound $\|\vol_{\matH^2}\|\leq v_2=\pi$, while the upper bound
$\|\Sigma_g\|\leq 4g-4$ follows from 
the much less immediate lower bound $\|[\vol_{\matH^2}]^G_c\|\geq v_2=\pi$. However, in the case of surfaces the inequality
$\|\Sigma_g\|\leq 4g-4$ may be deduced via a much more direct argument involving triangulations.

In fact, being obtained by gluing the sides of a $4g$-agon, the surface $\Sigma_g$ admits
a triangulation by $4g-2$ triangles
(we employ here the word ``triangulation'' in a loose sense, as is customary in geometric topology: 
a triangulation of a manifold $M$ is the realization of $M$ as the gluing of finitely many simplices via some simplicial pairing of their facets).
Such a triangulation defines an (integral) fundamental cycle whose $\ell^1$-norm is bounded above by $4g-2$, so
$\|\Sigma_g\|\leq 2|\chi(\Sigma_g)|+2$. Indeed, if we denote by $\|\cdot \|_\mathbb{Z}$ the \emph{integral} simplicial volume (see Section~\ref{further:simplicial:readings}),
then 
$$
\|\Sigma_g\|_\mathbb{Z}\leq 2|\chi(\Sigma_g)|+2\ .
$$
In order to get the desired estimate, we now need to slightly improve this inequality. To do so we exploit
a standard trick, which is based on the fact that the simplicial volume is multiplicative with respect to finite coverings. 

Since $\pi_1(\Sigma_g)$ surjects onto $H_1(\Sigma_g,\mathbb{Z})\cong \matZ^{2g}$,
for every $d\in\mathbb{N}$ we may construct a subgroup $\pi_1(\Sigma_g)$ of index $d$ (just take the kernel
of an epimorphism of $\pi_1(\Sigma_g)$ onto $\matZ_d$). Therefore, $\Sigma_g$ admits a $d$-sheeted covering,
whose total space is a surface $\Sigma_{g'}$ such that $|\chi(\Sigma_{g'})|=d|\chi(\Sigma_g)|$ (recall that
the Euler characteristic is multiplicative with respect to finite coverings). 
By the multiplicativity of the simplicial volume we now get 
$$
\|\Sigma_g\|=\frac{\|\Sigma_{g'}\|}{d}\leq \frac{\|\Sigma_{g'}\|_\mathbb{Z}}{d}\leq \frac{ 2|\chi(\Sigma_{g'})|+2}{d}=2|\chi(\Sigma_g)|+\frac{2}{d}\ .
$$
Since $d$ is arbitrary, we may conclude that $\|\Sigma_g\|\leq 2|\chi(\Sigma_g)|$, which gives the desired upper bound. 

In fact, this argument shows that the \emph{stable integral simplicial volume} $\stisv{\Sigma_g}$ of $\Sigma_g$ is equal to the classical simplicial volume $\|\Sigma_g\|$
(see Section~\ref{further:simplicial:readings} for the definition of $\stisv{\cdot}$). 
One may wonder whether this equality holds more in general (e.g. for manifolds admitting many finite coverings, such as aspherical manifolds
with residually finite fundamental groups). However, stable integral simplicial volume is strictly bigger than classical simplicial volume already
for closed hyperbolic manifolds of dimension greater than 3. 
We refer the reader to Section~\ref{further:simplicial:readings} for more details about this topic.

\section{The simplicial volume of negatively curved manifolds}\label{inoue}
Let us now  discuss the more general case when $M$ is a closed $n$-manifold endowed
with a Riemannian metric with sectional curvature $\leq -\varepsilon$, where $\varepsilon$
is a positive constant.
Then it is not difficult to show that there exists a finite constant $\kappa(n,\varepsilon)$
(which depends only on the dimension and on the upper bound on the sectional curvature)
which bounds from above
the volume of any straight simplex in $\widetilde{M}$ (see e.g.~\cite{Inoue}). If $G$ is the group of the orientation-preserving
isometries of $\xtil$, then
$$\|[\vol_{\xtil}]_c^G\|_\infty\leq \| \vol_{\xtil}\|_\infty\leq \kappa (n,\varepsilon)\ ,$$ 
so
$$
\|M\|= \frac {\vol(M)}{\|[\vol_{\xtil}]_c^G\|_\infty}=\frac{\vol (M)}{\kappa(n,\varepsilon)}>0\ .
$$
In particular, any closed manifold supporting a negatively curved Riemannian metric
has non-vanishing simplicial volume.

\section{The simplicial volume of flat manifolds}\label{flat:simplicial}
We have already mentioned the fact that the simplicial volume of
closed manifolds with non-negative Ricci tensor is null~\cite{Gromov}. 
In particular, if $M$ is a closed flat (i.e.~locally isometric to $\mathbb{R}^n$) manifold, then $\|M\|=0$.
Let us give a proof of this fact which is based on the techniques developed above.

Since $M$ is flat, the universal covering of $M$ is isometric to the Euclidean space $\R^n$.
If we denote by $G$ the group of the orientation-preserving isometries of $\R^n$, then
we are left to show that
\begin{equation}\label{infinity}
\|[\vol_{\R^n}]_c^G\|_\infty=\infty\ .
\end{equation}
Let $\tau_\ell$ be the Euclidean geodesic simplex of edgelength $\ell$, and observe
that $\lim_{\ell\to\infty} \vol(\tau_\ell)=\infty$.
The computations carried out in Section~\ref{comput:hyp:sec} in the hyperbolic case
apply \emph{verbatim} to the Euclidean case, thus showing that
$\|[\vol_{\R^n}]_c^G\|_\infty\geq \vol(\tau_\ell)$ for every $\ell>0$.
By taking the limit as $\ell$ tends to infinity we obtain~\eqref{infinity},
which in turn implies $\|M\|=0$.

More directly, after fixing a flat structure on the $n$-dimensional torus $(S^1)^n$, by Gromov proportionality principle 
we have $$\frac{\|M\|}{\vol(M)}=\frac{\|(S^1)^n\|}{\vol((S^1)^n)}=0\ ,$$ whence $\|M\|=0$.

Another proof of the vanishing of the simplicial volume of flat manifolds follows from Bieberbach Theorem:
if $M$ is closed and flat, then it is finitely covered by an $n$-torus, so $\|M\|=0$
by the multiplicativity of the simplicial volume with respect to finite coverings.

Alternatively, one could also observe that Bieberbach Theorem implies that 
$\pi_1(M)$ is amenable, so again $\|M\|=0$ by Corollary~\ref{amvan}.

\section{Further readings}\label{further:prop:sec}

\subsection*{Measure homology and other approaches to the proportionality principle}
The first complete proof of the proportionality principle is due to Clara L\"oh~\cite{Loh}. Her proof is based on the use of \emph{measure homology},
a homology theory introduced by Thurston in~\cite{Thurston} whose flexibility may be exploited to compare fundamental cycles on manifolds that only share the (metric) universal covering. 
A key step in the proof (and the main result of~\cite{Loh}) is the fact that, for reasonable spaces, measure homology is isometrically isomorphic to
singular homology (the isomorphism was previously shown to hold by Zastrow~\cite{Zastrow} and Hansen~\cite{Hansen}). The proof of this fact heavily relies on bounded cohomology, making use of results
by Monod on resolutions by \emph{continuous} cochains~\cite{Monod}. 
The purely cohomological proof described in~\cite{Frigerio}, being based on the fact that continuous cohomology is isometrically isomorphic to singular cohomology,
may be interpreted as the dual argument to L\"oh's. It is maybe worth mentioning that, as a byproduct of his results on the Lispchitz simplicial volume, Franceschini
produced in~\cite{Franceschini1} a new proof of Gromov's proportionality principle for compact Riemannian manifolds; as far as the author knows, Franceschini's argument provides the only proof which
avoids using the deep fact that the singular bounded cohomology of a space is isometrically isomorphic to the bounded cohomology of its fundamental group, even for non-aspherical spaces
(see Theorem~\ref{gro-iva:thm}).

\subsection*{The proportionality principle for symmetric spaces}
Of course, Gromov's proportionality principle is informative only for Riemannian manifolds with a non-discrete isometry group: indeed, if the isometry group of a closed manifold $M$
is discrete, then any closed manifold sharing the same universal covering with $M$ is in fact commensurable to $M$. Therefore, in this context the most interesting class of manifolds is probably given
by locally symmetric spaces. In fact, the study of the volume form on the universal covering has proved to be a very powerful tool to show non-vanishing of the simplicial volume
for many locally symmetric spaces. By exploiting the barycentric construction introduced by Besson, Courtois and Gallot in~\cite{BCG}, Lafont and Schmidt proved in~\cite{Lafont-Schmidt}
that the simplicial volume of a closed, locally symmetric space of non-compact type is positive, thus answering a question by Gromov~\cite{Gromov} (see also~\cite{michelleprimo} for a case not covered by
Lafont and Schmidt's argument).

Let us stress again that the proportionality constant between the Riemannian and the simplicial volume (in the case when the latter does not vanish) is known only
for manifolds covered by the hyperbolic space, or by the product of two hyperbolic planes. Of course, it would be of great interest to compute such constant in other
locally symmetric cases, e.g.~for complex hyperbolic manifolds.

\subsection*{Dupont's conjecture}
We have seen in this chapter that continuous cohomology can play an important role when we one considers non-discrete groups  acting on geometric objects (e.g.~isometry groups of symmetric spaces).
This provides a noticeable instance of the fact that  \emph{continuous} (bounded) cohomology of a Lie group $G$ is very useful when investigating the (bounded) cohomology
of discrete subgroups of $G$ (see~\cite{Monod, BM1,BM2} for several results in this spirit). 

For example, in order to prove the positivity of the simplicial volume of a closed $n$-dimensional locally symmetric space $M$ of non-compact type, 
Lafont and Schmidt 
proved that the comparison map $c^n\colon H^n_{b,c}(G)\to H^n_c(G)$ between the continuous bounded cohomology of $G$ and the continuous cohomology of $G$ is surjective,
where $G$ is the  Lie group associated to $\widetilde{M}$. 
More in general, it is a question of Dupont~\cite{Dupont} whether the comparison map $c^n\colon H^n_{b,c}(G)\to H^n_c(G)$ 
just mentioned is surjective,
whenever $G$ is 
a semisimple Lie group without compact factors, and $n\geq 2$ (see also~\cite[Problem A']{Monod:inv} and~\cite[Conjecture 18.1]{BIMW}). 
Dupont's conjecture has been attacked by a variety of methods, which may enlighten how bounded cohomology is very geometric in nature. 
In degree 2, Dupont's conjecture is known to hold thanks to the work by Domic and Toledo~\cite{DT2} and Clerc and {\O}rsted~\cite{CO}.
The above mentioned result by Lafont and Schmidt implies
surjectivity in top degree. More recently, Hartnick and Ott~\cite{OH1} settled Dupont's conjecture  for several specific classes of Lie groups, including Lie groups of Hermitian type, while Lafont and 
Wang~\cite{LafWang} proved that the comparison map $c^k\colon H^k_{b,c}(G)\to H^k_c(G)$ is surjective provided that $G\neq SL(3,\R),SL(4,\R)$, and that $k\geq n-r+2$. where $n,r$ are the dimension and the rank of the symmetric space
associated to $G$. 

One may even investigate in which cases the comparison map $c^n\colon H^n_{b,c}(G)\to H^n_c(G)$ is an isomorphism. For a recent result related to this sort of questions we refer the reader
to~\cite{OH2}, where it is shown that $H^4_{c,b}(G,\R)=0$ for every Lie group $G$ locally isomorphic to $SL(2,\R)$.

\chapter{Additivity of the simplicial volume}\label{additivity:chap}
This chapter is devoted to the proof of Theorem~\ref{simpl:thm}. Our approach makes an essential use of the duality
between singular homology and bounded cohomology. In fact, the additivity of the simplicial volume for gluings along boundary components with 
amenable fundamental group will be deduced from a suitable ``dual'' statement about bounded cohomology (see Theorem~\ref{bounded:graph:thm}). It is maybe
worth mentioning that, as far as the author knows, there exists no proof of Gromov additivity theorem which avoids the use of bounded cohomology:
namely, no procedure is known which allows to split a fundamental cycle for a manifold into (relative) efficient fundamental cycles for the pieces
obtained by cutting along hypersurfaces with amenable fundamental groups.

Let us fix some notation.
Let $M_1,\ldots,M_k$ be oriented $n$-manifolds, $n\geq 2$, such that the fundamental group of every component
of $\partial M_j$ is amenable.
We fix a pairing $(S_1^+,S_1^-),\ldots, (S_h^+,S_h^-)$ 
of some boundary components of $\sqcup_{j=1}^k M_j$, and 
for every $i=1,\ldots,h$ we fix an orientation-reversing homeomorphism $f_i\colon S_i^+\to S_i^-$.
We denote by $M$ the oriented manifold obtained by gluing $M_1,\ldots,M_k$ along $f_1,\ldots,f_h$, and we suppose that $M$
is connected. We also denote by $i_j\colon M_j\to M$ the obvious quotient map (so $i_j$ is an embedding provided that no boundary component of $M_j$ is
paired with another boundary component of $M_j$). 

For every $i=1,\ldots,h$ we denote by $j^{\pm}(i)$ the index such that $S_i^\pm \subseteq M_{j^{\pm}(i)}$,
and by $K_i^\pm$ the kernel of the map $\pi_1(S_i^\pm)\to \pi_1(M_{j^{\pm}(i)})$ induced by the inclusion.
 We recall that the gluings $f_1,\ldots,f_h$ are
 \emph{compatible} if the equality
$$
(f_i)_\ast \left(K_i^+\right)=K_i^-
$$
holds 
for every $i=1,\ldots,h$. Gromov additivity theorem states that
\begin{equation}\label{subadditivity:eq}
\| M,\bb M\|\leq \|M_1,\bb M_1\|+\ldots+\|M_k,\bb M_k\|\ ,
\end{equation}
and that, if the gluings defining $M$ are compatible, then
\begin{equation}\label{additivity:eq}
\| M,\bb M\|= \|M_1,\bb M_1\|+\ldots+\|M_k,\bb M_k\|\ .
\end{equation}

\section{A cohomological proof of subadditivity}
If $S_i$ is a component of $\partial M_j$, we denote by $\overline{S}_i\subseteq M$ the image of
$S_i$ via $i_j$, and we set
$\mathcal{S}=\bigcup_{i=1}^h \overline{S}_i\subseteq M$. Then the map $i_j$
is also a map of pairs ${i}_j\colon (M_j,\bb M_j)\to (M,\mathcal{S}\cup \bb M)$, and we denote by
$$H_b^n(i_j)\colon H_b^n(M,\mathcal{S}\cup \bb M)\to H_b^n(M_j,\bb M_j)$$ the induced map in bounded cohomology.
For every compact manifold $N$, the pair $(N, \bb N)$ has the homotopy type of a finite CW-complex~\cite{RS}, so
the inclusions of relative cochains into absolute cochains
induce isometric isomorphisms $H_b^n (M,\mathcal{S}\cup \bb M)\cong H_b^n(M)$, $H_b^n (M,\bb M)\cong H_b^n(M)$
(see Theorem~\ref{reliso:thm}).
As a consequence, also the inclusion $C_b^n (M,\mathcal{S}\cup \bb M)\to C_b^n(M,\bb M)$
induces an isometric isomorphism  
$$\zeta^n\colon H_b^n (M,\mathcal{S}\cup \bb M)\to H_b^n(M,\bb M)\ .$$ 
For every $j=1,\ldots,k$, we finally define the map $$\zeta^n_j\ =\ H_b^n(i_j)\circ (\zeta^n)^{-1}\ \colon\ H_b^n(M,\bb M)\to H_b^n(M_j,\bb M_j)\ .$$ 

\begin{lemma}\label{somma}
For every $\varphi\in H_b^n(M,\bb M)$ we have
$$
\langle \varphi,[M,\bb M]\rangle=\sum_{j=1}^k \langle \zeta^n_j(\varphi), [M_j,\bb M_j]\rangle\ .
$$
\end{lemma}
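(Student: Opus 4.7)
The plan is to represent the fundamental class $[M,\partial M]$ by a special cycle obtained by assembling fundamental cycles of the pieces $(M_j,\partial M_j)$ so that the boundary contributions along the gluing surfaces cancel pairwise. Pairing such a cycle with a cocycle representative of $(\zeta^n)^{-1}(\varphi)$ supported away from $\mathcal{S}\cup\partial M$ will then yield the desired splitting.

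Concretely, I would first choose PL triangulations $T_1,\ldots,T_k$ of $M_1,\ldots,M_k$ that are \emph{compatible with the gluings}: for every paired couple $(S_i^+,S_i^-)$, the triangulations induced by $T_{j^+(i)}$ and $T_{j^-(i)}$ on $S_i^+$ and $S_i^-$ correspond under $f_i$. Such triangulations can be built by first fixing a triangulation of each surface $\overline{S}_i\subseteq M$, pulling it back to $S_i^\pm$ via the canonical projections, and finally extending to the interior of every $M_j$. Let $z_j\in C_n(M_j)$ denote the integral fundamental cycle of $(M_j,\partial M_j)$ coming from $T_j$, so that $[z_j]=[M_j,\partial M_j]$ and $\partial z_j\in C_{n-1}(\partial M_j)$. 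Setting $c=\sum_{j=1}^k (i_j)_\bullet(z_j)\in C_n(M)$, the contribution of $\partial c$ supported on each $\overline{S}_i$ is the sum of the pushforwards of the boundary simplices of $z_{j^+(i)}$ and $z_{j^-(i)}$ lying on $S_i^+$ and $S_i^-$, respectively; compatibility with $f_i$ together with the fact that $f_i$ reverses orientations forces these two contributions to cancel. Hence $\partial c\in C_{n-1}(\partial M)$, and a direct inspection of the resulting global triangulation of $M$ shows that $[c]=[M,\partial M]$ in $H_n(M,\partial M)$.

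To conclude, choose a cocycle $\tilde\varphi\in C_b^n(M,\mathcal{S}\cup\partial M)$ representing $(\zeta^n)^{-1}(\varphi)$. Since $\zeta^n$ is induced by the inclusion $C_b^\bullet(M,\mathcal{S}\cup\partial M)\hookrightarrow C_b^\bullet(M,\partial M)$, the same cocycle $\tilde\varphi$ also represents $\varphi$, while $i_j^*(\tilde\varphi)\in C_b^n(M_j,\partial M_j)$ represents $\zeta_j^n(\varphi)$ by the very definition of $\zeta_j^n$. Then
\begin{align*}
\langle\varphi,[M,\partial M]\rangle
&=\tilde\varphi(c)=\sum_{j=1}^k\tilde\varphi\bigl((i_j)_\bullet z_j\bigr)\\
&=\sum_{j=1}^k i_j^*(\tilde\varphi)(z_j)=\sum_{j=1}^k\langle\zeta_j^n(\varphi),[M_j,\partial M_j]\rangle,
\end{align*}
as required. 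The main obstacle is precisely the realization of $[M,\partial M]$ by a cycle of the form $\sum_j(i_j)_\bullet z_j$, with $z_j$ a fundamental cycle of $(M_j,\partial M_j)$: an arbitrary relative fundamental cycle of $M$ does not restrict to cycles on the pieces, and the compatible-triangulation construction is what produces such a decomposition; the cancellation on $\mathcal{S}$ is then forced by the orientation-reversing nature of the gluings, and the special representative $\tilde\varphi$ absorbs any residual combinatorial ambiguity on $\partial M$.
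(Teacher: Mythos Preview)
Your proof is correct and follows essentially the same approach as the paper: sum fundamental cycles of the pieces, choose a representative of $(\zeta^n)^{-1}(\varphi)$ vanishing on $\mathcal{S}\cup\partial M$, and evaluate. The only difference is in how the boundary terms along $\mathcal{S}$ are handled: you arrange exact cancellation by using compatible PL triangulations, whereas the paper takes arbitrary relative fundamental cycles $c_j$, notes that the boundary of $c=\sum_j c_j$ restricted to each $\overline{S}_i$ is null-homologous (being a sum of two fundamental cycles with opposite orientations), writes it as $\partial c'$ for some chain $c'$ supported in $\mathcal{S}$, and then observes that the chosen cocycle kills $c'$ anyway. The paper's route is slightly more economical (no triangulations needed), but yours is equally valid.
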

\begin{proof}
Let $c_j\in C_n(M_j)$ be a real chain representing the fundamental class of $M_j$.
With an abuse, we identify any chain in $M_j$ (resp.~in $S_i^\pm$) with the corresponding chain in
$M$ (resp.~in $\overline{S}_i$), and we 
set
$c=\sum_{j=1}^k c_j \in C_n(M)$.
We now suitably modify $c$ in order to obtain a relative fundamental cycle for
$M$. It is readily seen that $\bb c_j$ is the sum of real fundamental cycles of the boundary components
of $M_j$. Therefore, since the gluing maps defining $M$ are
orientation-reversing, 
we may choose a chain $c'\in \oplus_{i=1}^N C_{n}(\overline{S}_i)$ such that $\bb c-\bb c'\in C_{n-1}(\bb M)$.
We  set $c''=c-c'$. By construction $c''$ is a relative cycle
in $C_n (M,\bb M)$, and it is immediate to check 
that it is in fact a relative fundamental cycle for $M$.
Let now $\psi\in \cb^n(M,\mathcal{S}\cup \bb M)$ be a representative of $(\zeta^n)^{-1}(\varphi)$.
By definition we have
$$
\psi(c)=\sum_{j=1}^k \psi(c_j)=\sum_{j=1}^k \langle \zeta^n_j(\varphi), [M_j,\bb M_j]\rangle \ .
$$
On the other hand, since $\psi$ vanishes on chains supported on $\mathcal{S}$, we also have
$$
\psi(c)=\psi(c''+c')=\psi(c'')=\langle \varphi,[M,\bb M]\rangle\ ,
$$
and this concludes the proof.
\end{proof}

We are now ready to exploit duality to prove subadditivity of the simplicial volume with
respect to gluings along boundary components having an amenable fundamental group.
By Proposition~\ref{prop:duality}
we may choose an element $\varphi\in\ H_b^n(M,\bb M)$
such that 
$$
\|M,\partial M\|=\langle  \varphi,[M,\bb M]\rangle \, ,\qquad 
\|\varphi\|_\infty\leq 1\ .
$$
Observe that $\|\zeta^n_j(\varphi)\|_\infty\leq \|\varphi\|_\infty\leq 1$, so by Lemma~\ref{somma}
$$
\|M,\partial M\| 
=
\langle \varphi,[M,\bb M]\rangle  =\sum_{j=1}^k \langle \zeta^n_j(\varphi),[M_j,\bb M_j]\rangle  \leq 
{\sum_{j=1}^k \|M_j,\bb M_j\|}\ .
$$
This concludes the proof of inequality~\eqref{subadditivity:eq}.

\begin{rem}\label{final:rem}
The inequality
$$
\| M,\bb M\|\leq \| M_1,\bb M_1\|+\ldots+\|M_k,\bb M_k\|
$$
may also be proved by showing that ``small'' fundamental cycles for $M_1,\ldots,M_k$ may be glued together
to construct a ``small'' fundamental cycle for $M$.

In fact, let $\vare>0$ be given.
By Thurston's version of Gromov equivalence theorem (see Corollary~\ref{Thurston's version}),
for every $j=1,\ldots,k$ we may choose a real relative fundamental cycle $c_j\in C_n(M_j,\bb M_j)$
such that 
$$\|c_j\|_1\leq \|M_j,\bb M_j\|+\vare\ ,\qquad \|\bb c_j\|_1\leq \vare\ .$$ 
Let us  set $c=c_1+\ldots+c_k\in C_n(M)$ (as above, we identify any chain in $M_j$  with
its image in $M$). As observed in Lemma~\ref{somma},
the chain $\bb c_j$ is the sum of real fundamental cycles of the boundary components
of $M_j$. Since the gluing maps defining $M$ are
orientation-reversing, this implies that $\partial c=\partial b+ z$ for some $b\in C_{n}(\mathcal{S})$, $z\in C_{n-1}(\partial M)$.

We now use again that the fundamental group of each $\overline{S}_i$ is amenable in order to bound the $\ell^1$-norm of $b$. In fact,
the vanishing of $H^n_b(\mathcal{S})$ implies that the singular chain complex of $\mathcal{S}$ satisfies the $(n-1)$-uniform boundary condition
(see Definition~\ref{UBC} and Theorem~\ref{MM2}). Therefore, 
we may choose the chain $b$ in such a way that the inequalities
$$
\|b\|_1\leq \alpha \|\partial c\|_1\leq \alpha k \varepsilon
$$
hold for some universal constant $\alpha$. The chain $c'=c-b$ is now a relative fundamental cycle for $(M,\partial M)$, and we have
$$
\| M,\bb M\|  \leq \|c'\|_1\leq \|c\|_1+\|b\|_1\leq \sum_{j=1}^k \|M_j,\bb M_j\| + k\vare+\alpha k\vare\ .
$$
Since $\varepsilon$ was arbitrary, this concludes the proof.
\end{rem}

\section{A cohomological proof of Gromov additivity theorem}
The proof that equality~\eqref{additivity:eq}
is based on the following extension property for bounded coclasses:

\begin{thm}\label{bounded:graph:thm}
Suppose that the gluings defining $M$ are compatible, let $\varepsilon>0$ and take an element
$\varphi_j\in H^n_b(M_j,\bb M_j)$ for every $j=1,\ldots,k$. Then there exists a coclass
$\varphi\in H^n_b(M,\bb M)$ such that $\zeta^n_j(\varphi)=\varphi_j$ for every $j=1,\ldots,k$, and
$$
\|\varphi\|_\infty \leq \max \{\|\varphi_j\|,\, j=1,\ldots,k\}\ +\ \varepsilon\ .
$$
\end{thm}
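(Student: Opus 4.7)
The plan is to build a representative of the desired class $\varphi$ by pasting together special cochain representatives for each $\varphi_j$, exploiting the machinery of Section~\ref{relbounded:sec} together with the compatibility of the gluings. Since $\zeta^n$ is an isometric isomorphism, it suffices to produce a class $\tilde\varphi \in H^n_b(M,\mathcal{S}\cup\partial M)$ with $H^n_b(i_j)(\tilde\varphi) = \varphi_j$ for every $j$ and $\|\tilde\varphi\|_\infty$ controlled by $\max_j\|\varphi_j\|_\infty + \varepsilon$. Applying Theorem~\ref{reliso:thm}, I would replace each $\varphi_j$ by its image in $H^n_b(M_j)$ under the corresponding isometric isomorphism, working with absolute (rather than relative) bounded classes from the outset.

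First I would apply Corollary~\ref{special:cor} to each $M_j$: since every component of $\partial M_j$ has amenable fundamental group, every $\varphi_j$ admits a representative $\psi_j \in C^n_{bs}(M_j,\partial M_j)$ special in the sense of Definition~\ref{special:section}, with $\|\psi_j\|_\infty \leq \|\varphi_j\|_\infty + \varepsilon/2$. Thus, lifted to the universal cover $\widetilde{M_j}$, the cocycle $\psi_j$ depends only on an assignment sending each vertex of a simplex either to an element of $\pi_1(M_j)$ (a coset of representatives for free orbits in $\widetilde{M_j}\setminus p_j^{-1}(\partial M_j)$) or to a single symbol for the connected component of $p_j^{-1}(\partial M_j)$ containing the vertex; in particular, $\psi_j$ vanishes on any simplex having two vertices in the same component of $p_j^{-1}(\partial M_j)$.

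The main obstacle is to paste the $\psi_j$ into a single special cocycle on $M$. Setting $\Gamma=\pi_1(M)$ and $\widetilde{\mathcal{S}} = p^{-1}(\mathcal{S})$ in the universal covering $p\colon \widetilde{M}\to M$, I would follow the construction in the proof of Proposition~\ref{special:prop} to form the $\Gamma$-set
\[
S \;=\; \Gamma \;\sqcup\; \bigsqcup_{C} \Gamma/\mathrm{Stab}_\Gamma(C),
\]
where $C$ ranges over components of $\widetilde{\mathcal{S}}$. Each $\mathrm{Stab}_\Gamma(C)$ is the image in $\Gamma$ of some $\pi_1(\overline S_i)$, hence amenable, so $S$ is an amenable $\Gamma$-set and Theorem~\ref{Sisoalt} identifies $H^\bullet_b(M)$ isometrically with the cohomology of $\ell^\infty_{\mathrm{alt}}(S^{\bullet+1})^\Gamma$. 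Analogous amenable sets $S_j$ compute $H^\bullet_b(M_j)$, and each $\psi_j$ corresponds to an element of $\ell^\infty_{\mathrm{alt}}(S_j^{n+1})^{\pi_1(M_j)}$. The compatibility hypothesis $(f_i)_\ast(K_i^+) = K_i^-$ is exactly what is needed to produce, after fixing coherent lifts, $\pi_1(M_j)$-equivariant maps $S_j \to S$ that send each boundary symbol of $S_j$ to the corresponding component of $\widetilde{\mathcal{S}}$; without compatibility, the two sides of a gluing would give incompatible prescriptions for the same symbol of $S$.

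I would then define $\psi\colon S^{n+1}\to \mathbb{R}$ by declaring $\psi(\sigma)$ equal to the pullback of $\psi_j$ whenever the tuple $\sigma$ is supported in the image of $S_j \to S$, and zero otherwise. The speciality of each $\psi_j$ (vanishing on boundary-only tuples of vertices in the same component) is precisely what makes this recipe well-defined on tuples supported in more than one image $S_j\to S$, since such tuples are necessarily made entirely of boundary symbols. By construction $\psi$ is alternating, $\Gamma$-invariant, and satisfies $\|\psi\|_\infty \leq \max_j \|\psi_j\|_\infty \leq \max_j \|\varphi_j\|_\infty + \varepsilon$; a direct check on tuples in $S^{n+2}$ (splitting into the cases where the tuple is supported in one image $S_j\to S$, or consists entirely of boundary symbols) shows $\delta\psi=0$. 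Tracing through Theorem~\ref{Sisoalt} and the definition of $\zeta^n_j$, the resulting class $\varphi\in H^n_b(M,\partial M)$ then satisfies $\zeta^n_j(\varphi) = \varphi_j$ for every $j$, completing the proof.
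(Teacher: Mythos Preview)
Your overall strategy is sound and close in spirit to the paper's, but the construction of $\psi$ has a genuine gap.

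The problem is your well-definedness claim. You assert that a tuple $\sigma\in S^{n+1}$ lying in the image of two different maps $S_{j_1}\to S$ and $S_{j_2}\to S$ must consist entirely of boundary symbols. This is false. Once you fix a lift $v$ of $M_j$ into $\widetilde M$, the image of $S_j\to S$ contains the full subgroup $\Gamma_v\subset\Gamma$ in the group part of $S$. For two distinct lifts $v_1,v_2$ (which you are forced to consider if $\psi$ is to be $\Gamma$-invariant), the overlap on the group part is $\Gamma_{v_1}\cap\Gamma_{v_2}$, which contains the stabilizer of any edge on the path from $v_1$ to $v_2$ in the Bass--Serre tree. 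Since the edge groups here are amenable but typically nontrivial (e.g.\ infinite cyclic when the glued boundary components are tori), you can easily have $(n+1)$-tuples of distinct group elements lying in two images simultaneously. On such tuples the two pullbacks $\psi_{j_1}$ and $\psi_{j_2}$ are values of two unrelated cocycles representing two unrelated classes $\varphi_{j_1},\varphi_{j_2}$, and there is no reason for them to agree. The same issue undermines your cocycle check: the dichotomy ``supported in one image, or all boundary symbols'' is not exhaustive.

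The paper avoids this by working with singular simplices in $\widetilde M$ rather than with the abstract set $S$, and by \emph{summing} rather than choosing. It describes $\widetilde M$ as a tree of spaces over a tree $T$; compatibility ensures each piece $\widetilde M_v$ is the universal cover of $M_{j(v)}$. For each singular simplex $s$ and each vertex $v$, one ``projects'' $s$ to a simplex $s_v$ in $\widetilde M_v$ (pushing each vertex of $s$ not in $\widetilde M_v$ onto the boundary component of $\widetilde M_v$ separating it from $\widetilde M_v$), and sets $f(s)=\sum_{v\in V(T)}f_v(s_v)$. The key geometric lemma is that a simplex has at most one \emph{barycenter} $v$ (a vertex such that every pair of vertices of $s$ is separated by $\widetilde M_v\setminus\partial\widetilde M_v$); if $v$ is not a barycenter then $s_v$ has two vertices on the same boundary component, so $f_v(s_v)=0$ by speciality. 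Hence the sum has at most one nonzero term, giving the norm bound, and the cocycle property follows because each $f_v$ is a cocycle. Your discrete-set approach cannot directly mimic this, because a single element $g\in\Gamma\subset S$ does not sit in a unique piece $\widetilde M_v$: the fundamental domain for $\Gamma$ on $\widetilde M\setminus(\widetilde{\mathcal S}\cup\widetilde{\partial M})$ necessarily meets one piece per $M_j$.
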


Henceforth we assume that the gluings defining $M$ are compatible.
We first show how Theorem~\ref{bounded:graph:thm} may be used to conclude the proof of Gromov additivity theorem.
By Proposition~\ref{prop:duality},
for every $j=1,\ldots,k$, 
we may choose an element $\varphi_j\in H_b^n(M_j,\bb M_j)$
such that 
$$
\|M_j,\partial M_j\|= \langle \varphi_j,[M_j,\bb M_j]\rangle \, ,\qquad 
\|\varphi_j\|_\infty\leq 1\ .
$$
By Theorem~\ref{bounded:graph:thm}, for every $\varepsilon>0$ 
there exists $\varphi\in H_b^n(M,\bb M)$ such that
$$
\|\varphi\|_\infty\leq 1+\varepsilon\, ,\quad \zeta^n_j(\varphi)=\varphi_j\, ,\ j=1,\ldots,k\ .
$$
Using Lemma~\ref{somma} we get
$$
 \sum_{j=1}^k \|M_j,\bb M_j\| =
 \sum_{j=1}^k \langle \varphi_j,[M_j,\bb M_j]\rangle=\langle \varphi, [M,\bb M]\rangle
\leq (1+\varepsilon)\cdot \|M,\bb M\|\ .
$$
Since $\varepsilon$ is arbitrary, this implies that $\|M,\bb M\|$ cannot be strictly smaller
than the sum of the $\|M_j,\bb M_j\|$. Together with inequality~\eqref{subadditivity:eq}, this implies
that
$$
\| M,\bb M\|= \|M_1,\bb M_1\|+\ldots+\|M_k,\bb M_k\|\ .
$$
Therefore, in order to conclude the proof of Gromov additivity theorem we are left to prove
Theorem~\ref{bounded:graph:thm}.


We denote by $p\colon \widetilde{M}\to M$ the universal covering of $M$. We
first describe the structure of $\widetilde{M}$ as a \emph{tree of spaces}. 
We construct a tree $T$ as follows. 
Let us set $N_j=M_j\setminus \bb M_j$ for every $j=1,\ldots,k$.
We pick a vertex for every connected component
of $p^{-1}(N_j)$, $j=1,\ldots,k$, and we 
join two vertices if the closures of the corresponding
components intersect (along a common boundary component). Therefore,
edges of $T$ bijectively correspond to the  connected components
of $p^{-1}(\calS)$. 
It is easy to realize $T$ as a retract of $\widetilde{M}$, so $T$ is simply connected, i.e.~it is a tree.
We denote by $V(T)$ the set of vertices of $T$, and
for every $v\in V(T)$ we denote by $\widetilde{M}_v$ the closure of the component of $\widetilde{M}$
corresponding to $v$.

The following lemma exploits the assumption that the gluings defining $M$ are compatible.

\begin{lemma}
For  every $v\in V(T)$ there exist $j(v)\in\{1,\ldots,k\}$ and a universal covering map
$p_v\colon \widetilde{M}_v\to M_{j(v)}$ such that the following diagram commutes:
$$
\xymatrix{\widetilde{M}_v \ar[r]^{p_v}\ar[dr]_{p|_{\widetilde{M}_v}} & M_{j(v)}\ar[d]^{i_j}\\
& i_j(M_j)
}
$$
\end{lemma}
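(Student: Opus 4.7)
\medskip

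\noindent\emph{Proof plan.} The index $j(v)$ is forced by the construction of the tree $T$: by definition, every vertex $v$ corresponds to a connected component $\widetilde{M}_v^\circ$ of $p^{-1}(N_j)$ for a \emph{unique} $j$, and I set $j(v):= j$.  By continuity $p(\widetilde{M}_v) \subseteq i_{j(v)}(M_{j(v)})$. To define $p_v$, I use that $i_{j(v)}$ restricts to an embedding on the open subset $N_{j(v)}$, and is a local embedding at each boundary point (the only failure of global injectivity occurs when two boundary components of $M_{j(v)}$ are glued to one another, and even then $i_{j(v)}$ is only $2$-to-$1$ on those components).  I first set $p_v := (i_{j(v)}|_{N_{j(v)}})^{-1} \circ p$ on $\widetilde{M}_v^\circ$, then extend continuously to $\partial \widetilde{M}_v$ by choosing, near each boundary point, the unique local section of $i_{j(v)}$ which is compatible with the interior definition.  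Continuity of the extension is a routine check using collar neighbourhoods.

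To verify that $p_v$ is a covering map, I note that it is a local homeomorphism (being the composition of local homeomorphisms), and check local triviality: around any $x \in M_{j(v)}$ I pick a neighbourhood $U$ so small that $i_{j(v)}(U)$ is evenly covered by $p$; the sheets of $p^{-1}(i_{j(v)}(U))$ lying inside $\widetilde{M}_v$ are in bijection with those of $p_v^{-1}(U)$, and each is mapped homeomorphically onto $U$.  The only subtle case is when $x$ lies on a self-glued boundary component, and the point is that, by the edge structure of $T$, exactly one of the two $i_{j(v)}$-preimages of the nearby region in $M$ contributes sheets to $\widetilde{M}_v$, while the other contributes sheets to the adjacent piece $\widetilde{M}_{v'}$.

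The crucial step, and the main obstacle, is showing that $\widetilde{M}_v$ is simply connected.  I reduce this to the injectivity of $(i_{j(v)})_\ast\colon \pi_1(M_{j(v)}) \to \pi_1(M)$.  Indeed, given a loop $\gamma$ in $\widetilde{M}_v$ based at some $\widetilde{x}$, its projection $\overline\gamma := p_v \circ \gamma$ is a loop in $M_{j(v)}$; the composition $i_{j(v)} \circ \overline\gamma$ is a loop in $M$ whose lift under $p$ starting at $\widetilde{x}$ is exactly $\gamma$. Since $\widetilde{M}$ is simply connected, this lift is null-homotopic, whence $(i_{j(v)})_\ast([\overline\gamma]) = 1$; if $(i_{j(v)})_\ast$ is injective, then $[\overline\gamma]=1$ in $\pi_1(M_{j(v)})$ and $\gamma$ is null-homotopic by the covering-space property of $p_v$ just established.

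To establish the injectivity of $(i_{j(v)})_\ast$ I would invoke Bass--Serre theory applied to the natural graph-of-groups decomposition of $\pi_1(M)$ whose underlying graph is $T/\pi_1(M)$.  The compatibility hypothesis $(f_i)_\ast(K_i^+) = K_i^-$ is exactly what is needed in order to factor the boundary inclusions through the quotients $\pi_1(S_i^\pm)/K_i^\pm$, which then embed injectively into the adjacent vertex groups $\pi_1(M_{j^\pm(i)})$ and match up under the $f_i$.  Once the edge-to-vertex inclusions in the graph of groups are injective, a standard Bass--Serre argument shows that each vertex group $\pi_1(M_j)$ embeds into the fundamental group $\pi_1(M)$ of the total graph of groups.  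The hard part of the plan is therefore the translation between the geometric tree-of-spaces picture of $\widetilde{M}$ and the algebraic graph-of-groups picture of $\pi_1(M)$, and verifying that compatibility is precisely the algebraic condition making the Bass--Serre machine applicable.
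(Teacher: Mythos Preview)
Your proposal is correct and follows essentially the same route as the paper.  The paper's proof is much terser: it simply asserts that ``an application of Seifert--Van Kampen Theorem shows that, under the assumption that the gluings defining $M$ are compatible, for every $j$ the map $\pi_1(M_j)\to\pi_1(M)$ induced by $i_j$ is injective,'' then concludes that $p$ restricts to a universal covering $\widetilde{M}_v\setminus\partial\widetilde{M}_v\to N_{j(v)}$ and that the extension to the closure is ``easy to check.''  Your Bass--Serre argument and the paper's Seifert--Van Kampen argument are the same algebra in different clothing; your careful handling of the boundary extension and of self-gluings is exactly what the paper sweeps under the rug.
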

\begin{proof}
An application of Seifert-Van Kampen Theorem shows that, under the assumption that
the gluings defining $M$ are compatible, for every $j$ the map
$\pi_1(M_j)\to \pi_1(M)$ induced by $i_j$ is injective. This implies in turn that
$p$ restricts to a universal covering $\widetilde{M}_v\setminus \bb \widetilde{M}_v\to N_{j(v)}$.
It is now easy to check that this restriction extends to the required map
$p_v\colon\widetilde{M}_v\to M_{v(j)}$.
\end{proof}

Let us now proceed with the proof of Theorem~\ref{bounded:graph:thm}.
For every $j=1,\ldots,k$, we are given a coclass $\varphi_j\in H^n_b(M_j,\bb M_j)$.
Recall that, since the fundamental group of every component of $\bb M_j$ is amenable, the inclusion of relative cochains into absolute ones
induces an isometric isomorphism in bounded cohomology. In particular, 
Corollary~\ref{special:cor} implies that $\varphi_j$ may be represented by a special
cocycle $f_j\in Z^n_{bs}(M_j,\bb M_j)$ such that $\|f_j\|_\infty<\|\varphi\|_\infty+\varepsilon$
(we refer the reader to Definition~\ref{special:section} for the definition of the module $C^n_{bs}(X,Y)$ of special
cochains).

We identify the group of the covering automorphisms of $p\colon\widetilde{M}\to M$ with the fundamental
group $\Gamma=\pi_1(M)$ of $M$. Moreover,
for every vertex $v\in V(T)$ we denote by $\Gamma_v$ the stabilizer of $\widetilde{M}_v$ in $\Gamma$.
Observe that $\Gamma_v$ is canonically isomorphic (up to conjugacy) to $\pi_1(M_{j(v)})$.
For every $v\in V(T)$, we denote by $f_v\in Z^n_{bs}(\widetilde{M}_v,\bb \widetilde{M}_v)^{\Gamma_v}$ the 
pull-back of $f_j$ via the covering $\widetilde{M}_v\to M_{j(v)}$. 
In order to prove Theorem~\ref{bounded:graph:thm} it is sufficient to show that
there exists a bounded cocycle $f\in Z^n_b(\widetilde{M},\bb \widetilde{M})^\Gamma$ which restricts to
$f_v$ on each $C_n(\widetilde{M}_v)$, $v\in V(T)$, and is such that $\|f\|_\infty\leq \max \{\|f_j\|_\infty,\, j=1,\ldots,k\}$.

Let $s\colon \Delta^n\to \widetilde{M}$ be a singular simplex, $n\geq 2$, and 
let $q_0,\ldots,q_n$ be the vertices of $s$ (i.e.~the images via $s$ of the vertices of the standard simplex).
We say that the vertex $v\in V(T)$ is a \emph{barycenter} of $s$ the following condition holds:
\begin{itemize}
\item
if $i,j\in\{0,\ldots,n\}$, $i\neq j$, then every path
in $\widetilde{M}$ joining $q_i$ and  $q_j$ intersects $\widetilde{M}_v\setminus \bb\widetilde{M}_v$.
\end{itemize}

\begin{lemma}\label{onebary}
Let $s\colon \Delta^n\to \widetilde{M}$ be a singular simplex, $n\geq 2$.
Then $s$ has at most one barycenter.
 If $s$ is supported in $\widetilde{M}_v$ for some $v\in V(T)$, then
 $v$ is the barycenter of $s$ if and only if every component of $\bb\widetilde{M}_v$ contains at most one vertex of $s$.
If this is not the case, then
 $s$ does not have any barycenter.
\end{lemma}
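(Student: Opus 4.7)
The plan is to first translate the defining condition of a barycenter into a combinatorial condition about the components of $\widetilde{M} \setminus (\widetilde{M}_v \setminus \partial \widetilde{M}_v)$, and then to exploit the tree structure of $T$. The key reformulation, which I would prove first, is: $v \in V(T)$ is a barycenter of $s$ if and only if each connected component of $\widetilde{M} \setminus (\widetilde{M}_v \setminus \partial \widetilde{M}_v)$ contains at most one vertex of $s$. Indeed, if either $q_i$ or $q_j$ lies in $\widetilde{M}_v \setminus \partial \widetilde{M}_v$, the path condition for the pair $(i,j)$ is automatic (the path starts or ends in the target set); otherwise both endpoints lie in $\widetilde{M} \setminus (\widetilde{M}_v \setminus \partial \widetilde{M}_v)$, and the condition is equivalent to the non-existence of a path joining them in that complement, i.e.\ to $q_i$ and $q_j$ lying in distinct connected components of it.

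With this reformulation, uniqueness of the barycenter follows from a covering argument on $T$. Suppose $v_1 \neq v_2$ were both barycenters of $s$, and let $w_1$ (resp.\ $w_2$) be the neighbor of $v_1$ (resp.\ $v_2$) along the unique $T$-path between $v_1$ and $v_2$. Let $C_1$ denote the component of $\widetilde{M} \setminus (\widetilde{M}_{v_1} \setminus \partial \widetilde{M}_{v_1})$ containing $\widetilde{M}_{w_1}$, and $C_2$ the component of $\widetilde{M} \setminus (\widetilde{M}_{v_2} \setminus \partial \widetilde{M}_{v_2})$ containing $\widetilde{M}_{w_2}$. By checking which component of $T \setminus \{v_1\}$ and of $T \setminus \{v_2\}$ a given vertex $w \in V(T)$ lies in (and treating the interior and boundary of $\widetilde{M}_{v_1}$ and $\widetilde{M}_{v_2}$ separately), one sees that every piece $\widetilde{M}_w$ is contained in $C_1 \cup C_2$, so $\widetilde{M} = C_1 \cup C_2$. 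The reformulation then forces each of $C_1, C_2$ to contain at most one vertex of $s$, whence $s$ has at most two vertices, contradicting $n+1 \geq 3$.

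Finally, assume that $s$ is supported in $\widetilde{M}_v$. Every vertex of $s$ lies either in $\widetilde{M}_v \setminus \partial \widetilde{M}_v$ or on a unique boundary component of $\widetilde{M}_v$, which in turn lies in the component of $\widetilde{M} \setminus (\widetilde{M}_v \setminus \partial \widetilde{M}_v)$ corresponding to the relevant neighbor of $v$ in $T$. Since distinct boundary components of $\widetilde{M}_v$ correspond to distinct complementary components, the reformulation at $v$ says precisely that each component of $\partial \widetilde{M}_v$ carries at most one vertex of $s$, giving the stated biconditional. For any $v' \neq v$, the connected set $\widetilde{M}_v$ is disjoint from $\widetilde{M}_{v'} \setminus \partial \widetilde{M}_{v'}$ and therefore lies in a single component of $\widetilde{M} \setminus (\widetilde{M}_{v'} \setminus \partial \widetilde{M}_{v'})$; this component then contains all $n+1 \geq 3$ vertices of $s$, so the reformulation fails at $v'$ and $v'$ cannot be a barycenter. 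Combined with the biconditional just established, this proves the final assertion: if $s$ is supported in $\widetilde{M}_v$ but the boundary condition at $v$ fails, then no element of $V(T)$ is a barycenter of $s$. The main technical point is the clean dictionary between the graph-theoretic structure of $T$ and the decomposition of $\widetilde{M}$ into the pieces $\widetilde{M}_w$ established by the initial reformulation; once it is in place, every statement of the lemma becomes a counting argument.
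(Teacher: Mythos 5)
Your proof is correct and takes essentially the same approach as the paper's: uniqueness follows by covering $\widetilde{M}$ with two sets (in the paper, a connected component $A$ of $\widetilde{M}\setminus\widetilde{M}_{v_1}$ containing $\widetilde{M}_{v_2}\setminus\partial\widetilde{M}_{v_2}$ and its path-connected complement $\widetilde{M}\setminus A$; in your version, $C_1\cup C_2$), each carrying at most one vertex of $s$ by the barycenter condition at $v_1$ resp.\ $v_2$, contradicting $n+1\geq 3$. You make the reformulation in terms of components of $\widetilde{M}\setminus(\widetilde{M}_v\setminus\partial\widetilde{M}_v)$ explicit and spell out the details of the last two assertions, which the paper declares obvious, but the underlying argument is the same.
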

\begin{proof}
Suppose by contradiction that $v_1$ and $v_2$ 
are distinct barycenters of $s$.
Let $A$ be the connected component of $\widetilde{M}\setminus \widetilde{M}_{v_1}$ that contains
$\widetilde{M}_{v_2}\setminus \bb\widetilde{M}_{v_2}$. Since $v_1$ is a barycenter of $s$,
at most one vertex of $s$ can be contained in $A$.
Moreover, the set $\widetilde{M}\setminus A$
is path-connected and disjoint from $M_{{v}_2}\setminus \bb \widetilde{M}_{v_2}$. Since $v_2$ is a barycenter of $s$,
this implies that at most one vertex of $s$ can belong to $\widetilde{M}\setminus A$. As a consequence,
$s$ cannot have more than two vertices, and this contradicts the assumption $n\geq 2$.

The second and the third statements of the lemma are obvious.
\end{proof}

Let now $v\in V(T)$ be fixed. 
We associate (quite arbitrarily) to $s$ a 
singular simplex $s_v$
with vertices $q_0',\ldots,q_n'$ in such a way that the following conditions hold:
\begin{itemize}
\item
$s_v$ is supported in $\widetilde{M}_v$;
\item
if $q_i\in \widetilde{M}_v$, then 
$q_i'=q_i$;
\item
if $q_i\notin\widetilde{M}_{v}$, then there exists a unique 
component $B$ of $\bb \widetilde{M}_{v}$ such that
every path joining $q_i$ with $\widetilde{M}_{v}$ intersects
$B$; 
in this case, we choose $q'_i$ to be any point of $B$.
\end{itemize}
The simplex $s_v$ may be thought as a ``projection''
of $s$ onto $\widetilde{M}_v$. We may now set, for every $v\in V(T)$, 
$$
\hat{f}_v(s)=f_v(s_v)\ .
$$
Observe that, even if $s_v$ may be chosen somewhat arbitrarily, the vertices of $s_v$ only depend on $s$, up
to identifying points which lie on the same connected component of $\bb\widetilde{M}_v$.
As a consequence, the fact that $f_v$ is special implies that $\hat{f}_v(s)$ is indeed well defined
(i.e.~it does not depend on the choice of $s_v$). 

\begin{lemma}\label{nobary}
 Suppose that $v$ is not a barycenter of $s$. Then $\hat{f}_v(s)=0$.
\end{lemma}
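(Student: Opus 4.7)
The plan is to exploit the specialness of $f_v$: by Remark~\ref{special:rem}, $f_v$ vanishes on any singular simplex in $\widetilde{M}_v$ that has two vertices lying on a single connected component of $\partial\widetilde{M}_v$. Since $\hat{f}_v(s)=f_v(s_v)$, it will suffice to prove that two vertices $q'_i,q'_j$ of $s_v$ lie on a common component of $\partial\widetilde{M}_v$.

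First, I would unpack the hypothesis. Since $v$ is not a barycenter of $s$, there exist indices $i\neq j$ and a path $\gamma$ in $\widetilde{M}$ joining $q_i$ to $q_j$ whose image is contained in $\Omega:=\widetilde{M}\setminus(\widetilde{M}_v\setminus\partial\widetilde{M}_v)$. The heart of the argument is a structural observation about $\Omega$: it decomposes as a disjoint union $\Omega=\bigsqcup_{B}\Omega_B$ indexed by the connected components $B$ of $\partial\widetilde{M}_v$, where $\Omega_B$ is the union of $B$ with the connected component of $\widetilde{M}\setminus\widetilde{M}_v$ whose closure contains $B$. This is where the tree-of-spaces structure plays the decisive role: the edges of $T$ incident to $v$ correspond bijectively to the components of $\partial\widetilde{M}_v$, so each connected component of $\widetilde{M}\setminus\widetilde{M}_v$ meets $\widetilde{M}_v$ along exactly one component of $\partial\widetilde{M}_v$, and distinct exterior components are glued to $\widetilde{M}_v$ along distinct boundary components.

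Given the decomposition above, the path $\gamma$ must be contained in a single piece $\Omega_B$, so both $q_i$ and $q_j$ lie in $\Omega_B$. A short case analysis then identifies the projected vertices. If $q_i\in\widetilde{M}_v$, then $q_i\in\Omega_B\cap\widetilde{M}_v=B$, so $q_i'=q_i\in B$. If instead $q_i\notin\widetilde{M}_v$, then $q_i$ lies in the exterior component abutting $B$, and every path from $q_i$ to $\widetilde{M}_v$ must exit this component through $B$; by the definition of the projection, $q_i'\in B$. The same applies to $q_j'$, so both $q_i'$ and $q_j'$ lie on the single component $B$ of $\partial\widetilde{M}_v$, and Remark~\ref{special:rem} yields $\hat{f}_v(s)=f_v(s_v)=0$.

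The main obstacle will be the rigorous verification of the decomposition $\Omega=\bigsqcup_B \Omega_B$. Geometrically this is evident from the tree-of-spaces picture, but one must rule out pathologies such as a single component of $\widetilde{M}\setminus\widetilde{M}_v$ meeting $\widetilde{M}_v$ along two distinct boundary components, or two such components sharing a common $B$; both are excluded because $T$ is a tree, so no nontrivial loops arise in the nerve of the covering by the $\widetilde{M}_u$. Once this topological fact is secured, the rest of the argument is a direct application of the definition of $s_v$ and of the specialness of $f_v$.
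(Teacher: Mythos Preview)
Your proposal is correct and follows exactly the approach of the paper: show that two vertices of $s_v$ land on a common component of $\partial\widetilde{M}_v$, then invoke Remark~\ref{special:rem}. The paper's proof is a single sentence asserting this fact without justification, whereas you spell out the tree-of-spaces argument (the decomposition of $\Omega$ into the pieces $\Omega_B$ and the case analysis on $q_i$) that makes it rigorous; one minor inaccuracy is that edges of $T$ at $v$ correspond only to those components of $\partial\widetilde{M}_v$ lying over $\mathcal{S}$, not to all of $\partial\widetilde{M}_v$, but for components $B\subseteq\partial\widetilde{M}$ one simply has $\Omega_B=B$ and your case analysis still applies verbatim.
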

\begin{proof}
 If $v$ is not a barycenter of $s$
then there exists a component of $\bb\widetilde{M}_{v}$
containing at least two vertices of $s_v$, so $\hat{f}_v(s)=f_v(s_v)=0$ since $f_v$ is special
(see Remark~\ref{special:rem}).
\end{proof}

We are now ready to define the cochain $f$ as follows: for every singular $n$-simplex $s$ with values in $\widetilde{M}$
we set 
$$
f(s)=\sum_{v\in V(T)} \hat{f}_v(s)\ .
$$
By Lemmas~\ref{onebary} and~\ref{nobary}, the sum on the right-hand side either is empty or consists
of a single term. This already implies that 
$$
\|f\|_\infty\leq \max_{v\in V(T)} \|f_v\|_\infty =\max \{\|f_j\|_\infty,\ j=1,\ldots,k\}\ .
$$
Moreover, it is clear from the construction that $f$ is $\Gamma$-invariant, so
$f\in C^n_b(\widetilde{M},\bb\widetilde{M})^\Gamma$. Let us now suppose that
the singular $n$-simplex $s$ is supported in $\widetilde{M}_v$ for some $v\in V(T)$. 
Then we can set $s_v=s$, and from Lemmas~\ref{onebary} and~\ref{nobary} we deduce that
$f(s)=f_v(s)$. We have thus shown that $f$ indeed coincides
with $f_v$ on simplices supported in $\widetilde{M}_v$, so we are now left to prove that
$f$ is a cocycle.
So, let $s'$ be a singular $(n+1)$-simplex with values in $\widetilde{M}$, and denote by $s'_v$ a projection
of $s'$ on $\widetilde{M}_v$, according to the procedure described above in the case of $n$-simplices.
it readily follows from the definitions that $\partial_i (s'_v)$ is a projection of
$\partial_i s'$ on $\widetilde{M}_v$, so 
$$
\hat{f}_v(\partial s')=\sum_{i=0}^{n+1} (-1)^i f_v((\partial_i s')_v)=
\sum_{i=0}^{n+1} (-1)^i f_v(\partial_i s'_v))=f_v(\partial s'_v)=0\ ,
$$
where the last equality is due to the fact that $f_v$ is a cocycle.
As a consequence we get
$$
f(\bb s')=\sum_{v\in V(T)} \hat{f}_v(\bb s')=0\ ,
$$
so $f$ is also a cocycle. This concludes the proof of  Gromov additivity theorem.

\section{Further readings}

\subsection*{Isometric embeddings in bounded cohomology}
The proof of Gromov additivity theorem presented in this chapter consists of a purely topological description
of (a special case of) the arguments developed in~\cite{BBFIPP}. Indeed, the extension result for bounded cochains stated in Theorem~\ref{bounded:graph:thm}
easily descends from the following more general result:

\begin{thm}[\cite{BBFIPP}]\label{BBFIPP:thm}
 Let $\Gamma$ be the fundamental group of a graph of groups $\mathcal{G}$ based on the 
finite graph $G$.
Suppose that every vertex group of $\mathcal{G}$ is countable, and that every edge group
of $\mathcal{G}$ is amenable.
Then for every $n\in\mathbb{N}\setminus\{0\}$ there exists an isometric embedding 
$$
\Theta\colon
\bigoplus_{v\in V(G)} H_b^n (\Gamma_v) \longrightarrow H_b^n(\Gamma)
$$
which provides a right inverse to the map
$$
 \bigoplus_{v\in V(G)} \h(i_v) \ \colon \ H_b^n(\Gamma)\longrightarrow  \bigoplus_{v\in V(G)} H_b^n (\Gamma_v) \ .
$$
\end{thm}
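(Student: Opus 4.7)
The strategy is to mimic the proof of Gromov additivity theorem (Theorem~\ref{bounded:graph:thm}) in the purely group-theoretic setting of graphs of groups. I will replace the universal covering $\widetilde M$, which played the role of a tree of spaces in the topological setup, by the Bass-Serre tree $T$ of $\mathcal G$, on which $\Gamma$ acts with vertex stabilizers conjugate to the vertex groups $\Gamma_v$ and edge stabilizers conjugate to the (amenable) edge groups. The subcomplex $\calS\subseteq \widetilde M$ along which the pieces are glued corresponds now to the edge set $E(T)$, which therefore carries a $\Gamma$-action with amenable stabilizers.

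Concretely, for each $v\in V(G)$ I first fix a representative $\widetilde v\in V(T)$ with $\Gamma_{\widetilde v}=\Gamma_v$ and denote by $L_v\subseteq E(T)$ the set of edges of $T$ incident to $\widetilde v$; each $L_v$ is an amenable $\Gamma_v$-set, and $E(T)$ is an amenable $\Gamma$-set. By Theorems~\ref{Siso} and~\ref{Sisoalt}, the cohomology of $\Gamma$ (resp.\ of $\Gamma_v$) with trivial real coefficients is isometrically computed by $\ell^\infty_{\alt}(E(T)^{\bullet+1})^\Gamma$ (resp.\ by $\ell^\infty_{\alt}(L_v^{\bullet+1})^{\Gamma_v}$), and the restriction map $H^\bullet_b(i_v)$ is realized simply by pulling back via the $\Gamma_v$-equivariant inclusion $L_v\hookrightarrow E(T)$. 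Thus I may represent each $\varphi_v\in H^n_b(\Gamma_v)$ by an alternating cocycle $f_v\in \ell^\infty_{\alt}(L_v^{n+1})^{\Gamma_v}$ with $\|f_v\|_\infty$ arbitrarily close to $\|\varphi_v\|_\infty$.

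Next I construct $\Theta$ by a barycenter-based extension, exactly parallel to the formula $f=\sum_v\hat f_v$ used in the proof of Theorem~\ref{bounded:graph:thm}. Given a tuple $(e_0,\dots,e_n)\in E(T)^{n+1}$, I say that a vertex $w\in V(T)$ is a \emph{barycenter} of the tuple if every $e_i$ is incident to $w$. Since $T$ is a tree, a straightforward analogue of Lemma~\ref{onebary} shows that, for $n\geq 1$, each tuple has \emph{at most one} barycenter. If a barycenter $w$ exists and lies in the $\Gamma$-orbit of some $\widetilde v$, say $w=\gamma\cdot \widetilde v$, then $(\gamma^{-1}e_0,\dots,\gamma^{-1}e_n)\in L_v^{n+1}$, and I set
\[
\hat f_v(e_0,\dots,e_n)=f_v(\gamma^{-1}e_0,\dots,\gamma^{-1}e_n),
\]
which is well-defined by $\Gamma_v$-invariance of $f_v$; I set $\hat f_v(e_0,\dots,e_n)=0$ otherwise. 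Finally, define $f:=\sum_{v\in V(G)}\hat f_v$. By uniqueness of the barycenter, at most one summand is nonzero at any tuple, so $f$ is well-defined and satisfies $\|f\|_\infty\leq \max_v\|f_v\|_\infty$; by construction $f$ is $\Gamma$-invariant and alternating.

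The remaining verifications are that $f$ is a cocycle and that its class in $H^n_b(\Gamma)$ restricts to $\varphi_v$ on each $\Gamma_v$, which together yield the isometric right inverse $\Theta\colon \bigoplus_v H^n_b(\Gamma_v)\to H^n_b(\Gamma)$ (composing over $v$ is legitimate because the restriction maps $H^n_b(i_v)$ are surjected onto independently). The restriction property is immediate: on tuples in $L_v^{n+1}$, only the barycenter $\widetilde v$ contributes, yielding $f|_{L_v^{n+1}}=f_v$. The main obstacle is the cocycle check: given an $(n+2)$-tuple $(e_0,\dots,e_{n+1})$, the barycenters of its $n+2$ faces may live at different vertices of $T$, so one cannot directly reduce $\delta f(e_0,\dots,e_{n+1})$ to $\delta f_v$ at a single vertex. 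The resolution, exactly as at the end of the proof of Theorem~\ref{bounded:graph:thm}, is a careful case analysis of how the barycenter of a face can differ from the barycenter of the full tuple: in each configuration one shows, using the alternating character of $f_v$ and the incidence structure at $\widetilde v$, that the would-be discrepancies cancel in pairs or vanish because two entries of the face end up on the same edge-orbit at $\widetilde v$. This is the technical heart of the argument, and it is precisely the amenability of the edge stabilizers, bought via the special/alternating representative $f_v$ provided by Theorems~\ref{Siso}-\ref{Sisoalt}, that makes the cancellation go through.
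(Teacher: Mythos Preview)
Your barycenter-based $f=\sum_v \hat f_v$ is not a cocycle, and the cancellation you sketch does not occur. Already for $n=2$ in a free product $\Gamma=\Gamma_1*\Gamma_2$ (with $|\Gamma_1|\ge 3$, $|\Gamma_2|\ge 2$): take three pairwise distinct edges $e_1,e_2,e_3\in L_{v_1}$ and any edge $e_0$ not incident to $\widetilde v_1$. In a tree two distinct edges share at most one endpoint, so $\widetilde v_1$ is the unique common vertex of any pair $e_i,e_j$ with $i,j\ge 1$; hence each of the three faces containing $e_0$ has no barycenter and $f$ vanishes on it, while $f(e_1,e_2,e_3)=f_{v_1}(e_1,e_2,e_3)$. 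Thus $\delta f(e_0,e_1,e_2,e_3)=f_{v_1}(e_1,e_2,e_3)$, which is generically nonzero---no pairing or repeated entry arises. The same configuration breaks your argument for every $n\ge 2$. The topological proof of Theorem~\ref{bounded:graph:thm} avoids this because there one sets $\hat f_v(s)=f_v(s_v)$ with $s\mapsto s_v$ a \emph{projection} defined on all simplices and commuting with faces, so each $\hat f_v$ is itself a cocycle; the barycenter enters only for the norm bound, and it coincides with the projection precisely because a special cochain vanishes whenever two projected vertices land on the same boundary component. Your amenable set $L_v$ records only the boundary pieces, so that alignment is lost.

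You could try to repair this by using the nearest-edge projection $\pi_w\colon E(T)\to L_w$; that does yield a $\Gamma$-invariant cocycle with the correct norm bound for $n\ge 2$. But then $\Theta$ is no longer well defined on cohomology in degree $2$: if $f_v=\delta g_v$ one would need $f=\delta\bigl(\sum_w(\pi_w)^*g_v\bigr)$, yet for a $1$-cochain every vertex $w$ on the geodesic joining the two input edges contributes and the sum diverges. This is exactly the obstacle the paper singles out immediately after stating Theorem~\ref{BBFIPP:thm}: in degree $2$ the argument of~\cite{BBFIPP} must pass to Poisson boundaries and the complex of pluriharmonic functions, which has no $2$-coboundaries by double ergodicity. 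Your proposal makes no provision for this.
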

The isometric embedding $\Theta$ is in general far from being an isomorphism: for example,
we know that the real vector space $H_b^2(\mathbb{Z}*\mathbb{Z})$ is infinite-dimensional (see Corollary~\ref{infinitedim:cor}), 
while $H_b^n(\mathbb{Z})\oplus H_b^n(\mathbb{Z})=0$ for every $n\geq 1$, since $\mathbb{Z}$ is amenable. 
Surprisingly enough, the proof of Theorem~\ref{BBFIPP:thm} runs into additional difficulties in the case of degree 2. 
In that case, even to define the map $\Theta$, 
it is necessary to use the fact that bounded cohomology can be computed via the complex of pluriharmonic functions~\cite{BM1}, 
and that such a realization has no coboundaries in degree 2 
due to the double ergodicity of the action of a group on an appropriate Poisson boundary \cite{Kaim, BM2}.

\subsection*{(Generalized) Dehn fillings}
Let $M$ be a compact orientable $3$-manifold whose boundary consists of tori. One can perform a \emph{Dehn filling} on $M$ by gluing a solid torus
to each of the tori in $\partial M$, thus getting a closed orientable $3$-manifold $N$ (whose homeomorphism type depends on the isotopy classes of the gluing maps).
Gromov's (sub)additivity theorem implies that $\|N\|\leq \|M,\partial M\|$. Indeed, it is a classical result by Thurston that, if $M$ is (the natural compactification of)
a complete finite-volume hyperbolic $3$-manifold, the strict inequality $\|N\|<\|M,\partial M\|$ holds. It is worth noting that Thurston's proof of this strict inequality
is based on the proportionality principle (see Theorems~\ref{prop:thm}, \ref{hyp:thm}), together with an estimate on \emph{Riemannian} volumes. 
A nice and explicit approach to the weaker inequality $\|N\|\leq \|M,\partial M\|$ is described in~\cite{FM}, where the fact that (a suitable generalization of) Dehn filling does not increase the simplicial volume
is extended also to higher dimensions. It would be very interesting to understand whether the strict inequality in the $3$-dimensional case could be understood by means of bounded cohomology.
If this were the case, then one could look for an extension of Thurston's strict inequality also to higher dimensions.

\subsection*{Alternative approaches to Gromov additivity theorem}
Gromov's original argument for the proof of the additivity theorem made an extensive use of the theory of multicomplexes~\cite{Gromov}. 
We refer the reader to~\cite{Kuessner} for a treatment of the matter which closely follows Gromov's original one.

\chapter{Group actions on the circle}\label{actions:chapter}
Bounded cohomology has been successfully exploited in the study of the dynamics of homemorphisms
of the circle. In this chapter we review some fundamental results mainly due to Ghys~\cite{Ghys0,Ghys1,Ghys2}, who proved that semi-conjugacy classes
of representations into the group of homeomorphisms of the circle are completely classified by their \emph{bounded Euler class}.
We also relate the bounded Euler class of a cyclic subgroup of homeomorphisms of the circle to the classical \emph{rotation number} of the generator of the subgroup,
and prove some properties of the rotation number that will be used in the next chapters. Finally, following Matsumoto~\cite{Matsu:numerical} we describe the canonical representative of the \emph{real} bounded Euler class,
also proving a characterization of semi-conjugacy in terms of the real bounded Euler class.

\section{Homeomorphisms of the circle and the Euler class}\label{omeo:sec}
Henceforth we identify $S^1$ with the quotient $\R/\matZ$ of the real line by the subgroup of the integers, and we
fix the corresponding quotient (universal covering) map $p\colon \R\to S^1$.
We denote by $\omeo$ the group of the orientation-preserving homeomorphisms of $S^1$. 
We also denote by $\omeot$ the group of the homeomorphisms of the real line obtained by lifting elements
of $\omeo$. In other words, $\omeot$ is the set of increasing homeomorphisms of $\R$ that commute with every integral translation.
Since $p$ is a universal covering, every $f\in\omeo$ lifts to a map $\widetilde{f}\in\omeot$. On the other hand,
every map $\widetilde{f}\in\omeot$ descends to a map $p_*(\widetilde{f})\in\omeo$ such that $\widetilde{p_*(\widetilde{f})}=\widetilde{f}$, so we have a well-defined
surjective homomorphism $p_*\colon \omeot\to\omeo$, whose kernel consists of the group of integral translations. Therefore,
we get an extension
\begin{equation}\label{Euler:extension}
1\tto{} \matZ\tto{\iota} \omeot\tto{p_*}\omeo\tto{} 1\ ,
\end{equation}
where $\iota(n)=\tau_n$ is the translation $x\mapsto x+n$
(henceforth we will often identify $\matZ$ with its image in $\omeot$ via $\iota$).
By construction, the above extension is central. Recall from Section~\ref{extension:sec} that to every central extension  of a group $\G$ by $\mathbb{Z}$ there is associated a class in
$H^2(\G,\mathbb{Z})$.

\begin{defn}
The coclass
$$
e\in H^2(\omeot,\matZ)
$$ 
associated to the central extension~\eqref{Euler:extension}
is the \emph{Euler class} of $\omeo$.
\end{defn}

The Euler class
 plays an important role in the study of the group of the homeomorphisms of the circle.
Before going on, we point out the following easy result, which will be often exploited 
later.

\begin{lemma}\label{conto}
Let $\widetilde{\psi}_1,\widetilde{\psi}_2\colon \R\to \R$ be (not necessarily continuous) 
maps such that $\widetilde{\psi}_i(x+1)=\widetilde{\psi}_i(x)+1$ for every $x\in\R$, $i=1,2$. 
Also suppose that $\widetilde{\psi}_i$ is strictly increasing on the set
$\{y_0\}\cup (x_0+\matZ)$.
Then 
$$
|\widetilde{\psi}_1(y_0)-\widetilde{\psi}_1(x_0)-(\widetilde{\psi}_2(y_0)-\widetilde{\psi}_2(x_0))|=
|\widetilde{\psi}_1(y_0)-\widetilde{\psi}_2(y_0)-(\widetilde{\psi}_1(x_0)-\widetilde{\psi}_2(x_0))|<1\ .
$$
In particular, if $\widetilde{f}\in\omeot$, then for every $x,y\in\R$ we have
$$
|\widetilde{f}(y)-y-(\widetilde{f}(x)-x)|<1\ .
$$
\end{lemma}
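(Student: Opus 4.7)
The plan is to reduce the inequality to a statement about integer parts, exploiting the $\mathbb{Z}$-equivariance $\widetilde{\psi}_i(x+1)=\widetilde{\psi}_i(x)+1$ together with the strict monotonicity hypothesis to show that both differences $\widetilde{\psi}_1(y_0)-\widetilde{\psi}_1(x_0)$ and $\widetilde{\psi}_2(y_0)-\widetilde{\psi}_2(x_0)$ lie in the same open interval of length $1$.

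First, I would dispose of the degenerate case in which $y_0\in x_0+\mathbb{Z}$: writing $y_0=x_0+n$ for some $n\in\mathbb{Z}$, the hypothesis $\widetilde{\psi}_i(x+1)=\widetilde{\psi}_i(x)+1$ yields $\widetilde{\psi}_i(y_0)-\widetilde{\psi}_i(x_0)=n$ for both $i=1,2$, so the left-hand side vanishes and the inequality is trivial. Assume therefore $y_0\notin x_0+\mathbb{Z}$, and pick the unique integer $n$ with $x_0+n<y_0<x_0+n+1$.

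The key step is then the following chain of inequalities, valid for $i=1,2$ by the strict monotonicity of $\widetilde{\psi}_i$ on $\{y_0\}\cup(x_0+\mathbb{Z})$:
\[
\widetilde{\psi}_i(x_0)+n\;=\;\widetilde{\psi}_i(x_0+n)\;<\;\widetilde{\psi}_i(y_0)\;<\;\widetilde{\psi}_i(x_0+n+1)\;=\;\widetilde{\psi}_i(x_0)+n+1,
\]
where the outer equalities use $\mathbb{Z}$-equivariance. Hence $\widetilde{\psi}_i(y_0)-\widetilde{\psi}_i(x_0)\in(n,n+1)$ for each $i$, and the two quantities
\[
\widetilde{\psi}_1(y_0)-\widetilde{\psi}_1(x_0),\qquad \widetilde{\psi}_2(y_0)-\widetilde{\psi}_2(x_0)
\]
both belong to a common open interval of length one, so their difference has absolute value strictly less than $1$. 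The rewriting of this difference as $|\widetilde{\psi}_1(y_0)-\widetilde{\psi}_2(y_0)-(\widetilde{\psi}_1(x_0)-\widetilde{\psi}_2(x_0))|$ is a purely algebraic rearrangement.

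For the final assertion, I would simply specialize to $\widetilde{\psi}_1=\widetilde{f}$ and $\widetilde{\psi}_2=\mathrm{Id}_{\R}$: both maps commute with integral translations, and both are strictly increasing on all of $\R$ (the identity trivially, and $\widetilde{f}$ because it lies in $\omeot$), so in particular strictly increasing on $\{y\}\cup(x+\mathbb{Z})$ for any $x,y\in\R$. The main inequality then gives $|\widetilde{f}(y)-y-(\widetilde{f}(x)-x)|<1$. I do not anticipate any real obstacle: the lemma is essentially bookkeeping with the defining relation of $\omeot$, and the only mild subtlety is remembering to treat separately the case $y_0\in x_0+\mathbb{Z}$, where strict monotonicity cannot be invoked but the conclusion is automatic.
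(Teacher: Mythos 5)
Your proof is correct and follows essentially the same route as the paper's: use $\mathbb{Z}$-equivariance and strict monotonicity to trap $\widetilde{\psi}_i(y_0)$ between consecutive translates $\widetilde{\psi}_i(x_0)+n$ and $\widetilde{\psi}_i(x_0)+n+1$, so that both differences $\widetilde{\psi}_i(y_0)-\widetilde{\psi}_i(x_0)$ lie in a common interval of length one. The only cosmetic difference is that the paper normalizes so that $x_0\leq y_0<x_0+1$ (which is legitimate because the quantity to be estimated is invariant under translating $y_0$ by an integer) rather than picking the appropriate $n$, and your explicit separation of the degenerate case $y_0\in x_0+\mathbb{Z}$ makes the strict-monotonicity step cleaner.
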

\begin{proof}
Since $\widetilde{\psi}_i$ commutes with integral translations, we may assume that $x_0\leq y_0<x_0+1$. 
Our hypothesis  implies that
$\widetilde{\psi}_i(x_0)\leq \widetilde{\psi}_i(y_0)< \widetilde{\psi}_i(x_0)+1$, 
so $0\leq  \widetilde{\psi}_i(y_0)- \widetilde{\psi}_i(x_0)<1$. This concludes the proof.
\end{proof}

\section{The bounded Euler class}\label{bounded:euler:section}
Let us fix a point $x_0\in \R$. Then we may define a set-theoretic section $s_{x_0}\colon\omeo\to\omeot$ such that $p_*\circ s_{x_0}={\rm Id}_{\omeo}$
by setting $s_{x_0}(f)=\widetilde{f}_{x_0}$, where $\widetilde{f}_{x_0}$ is the unique lift of $f$
such that $\widetilde{f}_{x_0}(x_0)-x_0\in [0,1)$. As described in Section~\ref{extension:sec},
the Euler class $e\in H^2(\omeo,\matZ)$ is represented by the 
cocycle
$$
c_{x_0}\colon \omeo\times\omeo \to \matZ\, ,\qquad (f,g)\mapsto \widetilde{(f\circ g)}_{x_0}^{-1}\widetilde{f}_{x_0}
\widetilde{g}_{x_0}\ \in\ \matZ
$$
(where as mentioned above we identify $\matZ$ with $\iota(\matZ)\in\omeot$). Equivalently, $c_{x_0}$ is defined by the condition
$$
 \widetilde{(f\circ g)}_{x_0}\circ \tau_{c_{x_0}(f,g)}=\widetilde{f}_{x_0}
\widetilde{g}_{x_0}\ .
$$
By Lemma~\ref{conto}, for every $f\in\omeo$, $x\in \R$ we have
$$
|\widetilde{f}_{x_0}(x)-x|<|\widetilde{f}_{x_0}(x)-x-(\widetilde{f}_{x_0}(x_0)-x_0)|+|\widetilde{f}_{x_0}(x_0)-x_0|<2\ ,
$$
whence also
\begin{equation}\label{stimaccia}
|\widetilde{f}_{x_0}^{-1}(x)-x|< 2\ .
\end{equation}

\begin{lemma}\label{only01}
The cocycle $c_{x_0}$ takes values into the set $\{0,1\}$ (in particular, it is bounded). Moreover, 
if $x_0,x_1\in\R$, then $c_{x_0}-c_{x_1}$ is the coboundary of a bounded integral $1$-cochain.
\end{lemma}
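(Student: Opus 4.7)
The plan is to prove both assertions by evaluating the relevant identities at the single point $x_0$ and exploiting the normalization that defines $\widetilde{f}_{x_0}$.

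For the first statement, I would start from the defining equation $\widetilde{(f\circ g)}_{x_0}\circ \tau_{c_{x_0}(f,g)}=\widetilde{f}_{x_0}\widetilde{g}_{x_0}$. Evaluating at $x_0$ and using that elements of $\omeot$ commute with integral translations yields
$$
c_{x_0}(f,g)=\widetilde{f}_{x_0}(\widetilde{g}_{x_0}(x_0))-\widetilde{(f\circ g)}_{x_0}(x_0)\in\matZ.
$$
By the choice of section, $\widetilde{g}_{x_0}(x_0)\in[x_0,x_0+1)$; since $\widetilde{f}_{x_0}$ is increasing and commutes with $\tau_1$, it follows that $\widetilde{f}_{x_0}(\widetilde{g}_{x_0}(x_0))\in[\widetilde{f}_{x_0}(x_0),\widetilde{f}_{x_0}(x_0)+1)\subseteq[x_0,x_0+2)$, whereas $\widetilde{(f\circ g)}_{x_0}(x_0)\in[x_0,x_0+1)$. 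The integer $c_{x_0}(f,g)$ therefore lies in $(-1,2)\cap\matZ=\{0,1\}$.

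For the second statement, fix $x_0,x_1\in\R$ and, for each $f\in\omeo$, observe that the two lifts $\widetilde{f}_{x_0}$ and $\widetilde{f}_{x_1}$ differ by an integral translation, so there exists a unique $b(f)\in\matZ$ with $\widetilde{f}_{x_1}=\widetilde{f}_{x_0}\circ \tau_{-b(f)}=\tau_{-b(f)}\circ \widetilde{f}_{x_0}$ (using centrality of $\iota(\matZ)$ in $\omeot$). Evaluating at $x_1$ gives $b(f)=\widetilde{f}_{x_0}(x_1)-\widetilde{f}_{x_1}(x_1)$. Since $\widetilde{f}_{x_1}(x_1)-x_1\in[0,1)$ and, by Lemma~\ref{conto}, $|\widetilde{f}_{x_0}(x_1)-x_1-(\widetilde{f}_{x_0}(x_0)-x_0)|<1$ which together with $\widetilde{f}_{x_0}(x_0)-x_0\in[0,1)$ gives $\widetilde{f}_{x_0}(x_1)-x_1\in(-1,2)$, I get $b(f)\in\{-1,0,1\}$. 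In particular $b\in\overline{C}^1_b(\omeo,\matZ)$.

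Finally, I would compute the difference of the two cocycles. Substituting $\widetilde{h}_{x_1}=\tau_{-b(h)}\circ\widetilde{h}_{x_0}$ for $h=f,g,f\circ g$ into the defining identity $\widetilde{(f\circ g)}_{x_1}\circ\tau_{c_{x_1}(f,g)}=\widetilde{f}_{x_1}\widetilde{g}_{x_1}$, and using the centrality of integral translations, I obtain
$$
c_{x_1}(f,g)-c_{x_0}(f,g)=b(f)+b(g)-b(fg)=\overline{\delta}^1 b(f,g),
$$
so $c_{x_0}-c_{x_1}=-\overline{\delta}^1 b=\overline{\delta}^1(-b)$, exhibiting the required bounded integral primitive. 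There is no serious obstacle: the argument is entirely a careful bookkeeping of where each lift sends $x_0$ and $x_1$, together with the elementary estimate of Lemma~\ref{conto}.
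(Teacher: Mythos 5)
Your proposal follows the same strategy as the paper: part (1) is the closed-form computation $c_{x_0}(f,g)=\widetilde{f}_{x_0}(\widetilde{g}_{x_0}(x_0))-\widetilde{(f\circ g)}_{x_0}(x_0)$ with the bound $\widetilde{f}_{x_0}(\widetilde{g}_{x_0}(x_0))\in[x_0,x_0+2)$ and $\widetilde{(f\circ g)}_{x_0}(x_0)\in[x_0,x_0+1)$, and for part (2) both you and the paper introduce the integer-valued $1$-cochain measuring the translation by which $\widetilde{f}_{x_0}$ and $\widetilde{f}_{x_1}$ differ (your $b$ is the negative of the paper's $q$), bound it uniformly via Lemma~\ref{conto}/\eqref{stimaccia}, and plug back into the defining relations for the two cocycles.

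One small algebraic slip: with your convention $\widetilde{f}_{x_1}=\tau_{-b(f)}\circ\widetilde{f}_{x_0}$, substitution into $\widetilde{(f\circ g)}_{x_1}\tau_{c_{x_1}(f,g)}=\widetilde{f}_{x_1}\widetilde{g}_{x_1}$ gives $c_{x_1}(f,g)-b(fg)=c_{x_0}(f,g)-b(f)-b(g)$, hence
$$
c_{x_1}(f,g)-c_{x_0}(f,g)=b(fg)-b(f)-b(g)=-\overline{\delta}^1 b(f,g),
$$
with the \emph{opposite} sign from what you wrote. Consequently the primitive of $c_{x_0}-c_{x_1}$ is $b$ itself rather than $-b$. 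The conclusion of the lemma — that $c_{x_0}-c_{x_1}$ is the coboundary of a bounded integral $1$-cochain — is unaffected, so this is a harmless sign flip, but you should fix the intermediate identity.
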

\begin{proof}
By definition, we have 
$$
c_{x_0}(f,g)=\widetilde{f}_{x_0}(\widetilde{g}_{x_0}(x_0)) - \widetilde{(f\circ g)}_{x_0} (x_0)\ .
$$
But $x_0\leq \widetilde{g}_{x_0}(x_0)<x_0+1$ implies 
$$x_0\leq \widetilde{f}_{x_0}(x_0)\leq \widetilde{f}_{x_0}(\widetilde{g}_{x_0}(x_0))<\widetilde{f}_{x_0}(x_0+1)=\widetilde{f}_{x_0}(x_0)+1 < x_0+2\ .$$
Since $x_0\leq \widetilde{(f\circ g)}_{x_0} (x_0)<x_0+1$, this implies that
$c_{x_0}(f,g)\in \{0,1\}$.

Let now $x_0,x_1\in\R$ be fixed and define $q\colon \omeo\to \matZ$ by setting
$q(f)=\widetilde{f}_{x_0}^{-1}\widetilde{f}_{x_1}$ (we identify again $\matZ$ with $\iota(\matZ)\in \omeot$).
In order to conclude, it is sufficient to show that $q$ is bounded and that $c_{x_1}-c_{x_0}=\delta q$.
Using~\eqref{stimaccia} we get
$$
|q(f)|=|\widetilde{f}_{x_0}^{-1}(\widetilde{f}_{x_1}(x_1))-x_1|\leq 
|\widetilde{f}_{x_0}^{-1}(\widetilde{f}_{x_1}(x_1))-\widetilde{f}_{x_1}(x_1)|+
|\widetilde{f}_{x_1}(x_1)-x_1|<3\ ,
$$
so $q$ is bounded. Moreover,
since $\tau_{c_{x_i}}(f,g)$ and $q(f)$, $q(g)$ commute with every element of $\omeot$, we get 
\begin{align*}
q(fg)^{-1}& =
\left(\widetilde{(f\circ g)}_{x_0}^{-1}\widetilde{(f\circ g)}_{x_1}\right)^{-1} \\ &=
\widetilde{(f\circ g)}_{x_1}^{-1}\widetilde{(f\circ g)}_{x_0}\\ &=
\left(\widetilde{g}_{x_1}^{-1}\widetilde{f}_{x_1}^{-1}\tau_{c_{x_1}(f,g)}\right)
\left(\widetilde{f}_{x_0}\widetilde{g}_{x_0}\tau_{-c_{x_0}(f,g)}\right)\\ &=
\tau_{c_{x_1}(f,g)-c_{x_0}(f,g)} \widetilde{g}_{x_1}^{-1}\widetilde{f}_{x_1}^{-1}
\widetilde{f}_{x_0}\widetilde{g}_{x_0}\\
& = \tau_{c_{x_1}(f,g)-c_{x_0}(f,g)}\widetilde{g}_{x_1}^{-1} q(f)^{-1}\widetilde{g}_{x_0}\\
& = \tau_{c_{x_1}(f,g)-c_{x_0}(f,g)} q(g)^{-1} q(f)^{-1}\ ,
\end{align*}
so
$$
\delta q(f,g) = q(fg)^{-1}q(f)q(g) = \tau_{c_{x_1}(f,g)-c_{x_0}(f,g)} \ .
$$
We have thus shown that $c_{x_1}-c_{x_0}=\delta q$, and this concludes the proof.
\end{proof}

\begin{defn}
Let $x_0\in \R$, and let $c_{x_0}$ be the bounded cocycle introduced above. Then, 
the \emph{bounded Euler class} $e_b\in H^2_b(\omeo,\matZ)$ is defined by setting 
$e_b=[c_{x_0}]$ for some $x_0\in\R$. The previous lemma shows that $e_b$ is indeed well defined, and by construction
the comparison map $H^2_b(\omeo,\matZ)\to H^2(\omeo,\matZ)$ sends $e_b$ to $e$.
\end{defn}

Henceforth, for every $f\in \omeo$ we simply denote by $\widetilde{f}$ the lift $\widetilde{f}_0$ of $f$ such that
$\widetilde{f}(0)\in [0,1)$, and 
by $c$ the cocycle $c_0$.  

\section{The (bounded) Euler class of a representation}
Let $\G$ be a group, and consider a representation $\rho\colon \G\to \omeo$. Then the Euler class $e(\rho)$ and the bounded Euler
class of $e_b(\rho)$ of $\rho$ are the elements of $H^2(\G,\matZ)$ and $H^2_b(\G,\matZ)$ defined by
$$
e(\rho)=\rho^*(e)\, ,\qquad 
e_b(\rho)=\rho^*(e_b)\ ,
$$
where, with a slight abuse, we denote by $\rho^*$ both the morphisms $H^2(\rho)$, $H^2_b(\rho)$ induced by $\rho$ in cohomology and in bounded cohomology.

The (bounded) Euler class of a representation captures several interesting features of the dynamics of the corresponding action. It is worth mentioning that the bounded Euler class often carries much more information than the usual Euler class. For example, 
if $\G=\matZ$, then $H^2(\G,\matZ)=0$, so $e(\rho)=0$ for every $\rho\colon \G\to\omeo$. On the other hand, we have seen in 
Proposition~\ref{h2zz} that $H^2_b(\matZ,\matZ)$ is canonically isomorphic to $\R/\matZ$, so one may hope that $e_b(\rho)$ does not vanish at least for some
representation $\rho\colon \matZ\to\omeo$. And this is indeed the case, as we will show in the following section.

\section{The rotation number of a homeomorphism}\label{rotation:sec}
Let $f$ be a fixed element of $\omeo$, and let us consider the representation
$$
\rho_f\colon \matZ\to \omeo\, ,\qquad \rho_f(n)=f^n\ .
$$
Then we define the \emph{rotation number} of $f$ by setting
\begin{equation}\label{rot:def}
\rot (f)=e_b(\rho_f)\in \R/\matZ\ \cong \ H^2_b(\matZ,\matZ)\ ,
\end{equation}
where the isomorphism $\R/\matZ\ \cong \ H^2_b(\matZ,\matZ)$ is described in Proposition~\ref{h2zz}.
The definition of rotation number for a homeomorphism of the circle dates back to
Poincar\'e~\cite{Poincare}. Of course, the traditional definition of $\rot$ did not involve bounded cohomology, so
equation~\eqref{rot:def} is usually introduced as a theorem establishing a relationship between bounded cohomology and 
more traditional dynamical invariants. However, in this monograph we go the other way round, i.e.~
we recover 
the classical definition of the rotation number from the approach via bounded coohmology.

Let $p\colon \omeot\to\omeo$ be the usual covering projection, and denote by $c\colon \overline{C}^2(\omeo,\Z)$
the standard representative of the Euler cocycle, so that, 
if we denote by $\widetilde{f}=\widetilde{f}_0$  the preferred lift of any $f\in\omeo$, 
then
\begin{equation*}
c(f,g)=\widetilde{f}(\widetilde{g}(0))-\widetilde{f\circ g}(0)\ .
\end{equation*}
We first define a cochain $\psi\colon \overline{C}^1(\omeot,\Z)$ by setting  
\begin{equation*}
\psi(h)=h^{-1}\circ \widetilde{p(h)}\in \ker p=\Z\ .
\end{equation*}
for every $h\in\omeot$.
Then for every $h_1,h_2\in\omeot$ we have
\begin{align*}
p^*(c) (h_1,h_2) & =\widetilde{p(h_1)}(\widetilde{p(h_2)}(0))-\widetilde{p(h_1h_2)}(0)\\ &=
h_1(\tau_{\psi(h_1)}(h_2(\tau_{\psi(h_2)}(0))))-h_1(h_2(\tau_{\psi(h_1h_2)}(0))\\
& = \psi(h_1)+\psi(h_2)- \psi(h_1h_2) =
\overline{\delta}\psi (h_1,h_2)\ ,
\end{align*}
i.e.
\begin{equation}\label{psieq}
 p^*(c)=\overline{\delta}\psi\ .
\end{equation}
As a consequence, 
the coboundary of $\psi$ is bounded, i.e.~$\psi$ is a quasimorphism. 
Therefore, by Proposition~\ref{homogeneous:prop}, there exists 
a unique homogeneous quasimorphism $\psi_h\in Q^h(\omeot,\R)$  
at finite distance from $\psi$. Moreover, for every $h\in\omeot$ we have
$$
\psi_h(h)=\lim_{n\to +\infty} \frac{\psi(h^n)}{n}=\lim_{n\to -\infty} \frac{\psi(h^n)}{n}\ \in \R\ .
$$
Observe now that  by construction $\psi(h^n)=h^{-n}(\widetilde{p(h^n)}(0))$. Since
$\widetilde{p(h^n)}(0)\in [0,1)$, this implies that $\psi(h^n)\in h^{-n}([0,1))$, so that
$$
|\psi(h^n)- h^{-n}(0)|\leq 1
$$
and
$$
\psi_h(h)=\lim_{n\to -\infty} \frac{\psi(h^n)}{n}=\lim_{n\to -\infty} \frac{h^{-n}(0)}{n}=-\lim_{n\to +\infty} \frac{h^{n}(0)}{n}\ .
$$
Therefore, if we set 
$$
\rott\colon \omeot\to\R\, ,\qquad \rott(h)=\lim_{n\to +\infty} \frac{h^{n}(0)}{n}\ ,
$$
then we have proved the following:

\begin{prop}\label{rott:prop}
 The map $\rott$ is a well-defined homogeneous quasimorphism. Moreover, $\rott$ stays at bounded distance from
 the quasimorphism $-\psi$, where $\psi=p^*(c)$ is the pull-back of the canonical Euler cocycle.
 \end{prop}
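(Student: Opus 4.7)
The plan is to observe that essentially all of the analytic work has already been carried out in the paragraphs preceding the statement: we have a quasimorphism $\psi\colon\omeot\to\Z$ (by equation~\eqref{psieq}) with canonical homogeneous representative $\psi_h$, and the computation shows $\psi_h(h)=-\lim_{n\to+\infty}h^n(0)/n$. So the whole proof amounts to establishing the identity $\rott=-\psi_h$ and then reading off the three claims (well-definedness, homogeneity, bounded distance from $-\psi$) from the corresponding properties of $\psi_h$ provided by Proposition~\ref{homogeneous:prop}.

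First I would verify that the limit defining $\rott(h)$ exists for every $h\in\omeot$. The point is the elementary estimate
\[
\bigl|\psi(h^n)-h^{-n}(0)\bigr|<1\qquad\text{for every }h\in\omeot,\ n\in\Z,
\]
which follows from the fact that $\psi(h^n)=h^{-n}\bigl(\widetilde{p(h^n)}(0)\bigr)$, that $\widetilde{p(h^n)}(0)\in[0,1)$, and that $h^{-n}$ is increasing and commutes with $\tau_1$, so that $h^{-n}([0,1))\subseteq[h^{-n}(0),h^{-n}(0)+1)$. Dividing by $n$ and letting $n\to+\infty$ (resp.\ $n\to-\infty$), and recalling from Proposition~\ref{homogeneous:prop} that $\psi_h(h)=\lim_{n\to\pm\infty}\psi(h^n)/n$ exists, we conclude that
\[
\lim_{n\to+\infty}\frac{h^n(0)}{n}=-\psi_h(h)
\]
exists in $\R$. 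Hence $\rott(h)$ is well-defined and equals $-\psi_h(h)$.

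From $\rott=-\psi_h$ the remaining assertions are immediate. Since $\psi_h\in Q^h(\omeot,\R)$ is a homogeneous quasimorphism, so is $-\psi_h=\rott$. Moreover, by the second part of Proposition~\ref{homogeneous:prop} we have $\|\psi-\psi_h\|_\infty\leq D(\psi)<\infty$, so
\[
\|(-\psi)-\rott\|_\infty=\|\psi-\psi_h\|_\infty<\infty,
\]
which is precisely the statement that $\rott$ stays at bounded distance from $-\psi=-p^*(c)$. There is no real obstacle here; the only mild subtlety is being careful with the sign (which comes from $\psi(h^n)\sim h^{-n}(0)$ rather than $h^n(0)$) and verifying the uniform one-sided bound above, after which everything reduces to Proposition~\ref{homogeneous:prop}.
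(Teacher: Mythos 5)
Your proposal is correct and follows essentially the same route as the paper: the text preceding the proposition already establishes that $\psi$ is a quasimorphism with $\overline{\delta}\psi=p^*(c)$, derives $|\psi(h^n)-h^{-n}(0)|\le 1$, and identifies $\psi_h(h)=-\lim_{n\to+\infty}h^n(0)/n$, so that $\rott=-\psi_h$ and the three claimed properties all follow from Proposition~\ref{homogeneous:prop}. Your write-up simply packages this reasoning cleanly, so there is nothing substantively different to report.
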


Let now $f$ be a fixed element of $\omeo$, and consider the representation
$$
\rho_f\colon \matZ\to \omeo\, ,\qquad \rho_f(n)=f^n\ .
$$
Let us also fix the standard lift $\widetilde{f}\in\omeo$ of $f$, so that $\widetilde{f}(0)\in [0,1)$.
We define the integral chain $u\in\overline{C}^1(\Z,\Z)$ by setting
$$
u(n)=\psi(\widetilde{f}^n)\ .
$$
We have
$$
\rho^*(c)(n,m)=c(f^n,f^m)=p^*(c)(\widetilde{f}^n,\widetilde{f}^m)=\overline{\delta}\psi (\widetilde{f}^n,\widetilde{f}^m)=
\overline{\delta}u (n,m)\ .
$$
i.e.~$\overline{\delta} u=\rho^*(c)$. Moreover, 
$$
|u(n)+n\rott(\widetilde{f})|=|\psi(\widetilde{f}^n)-\psi_h (\widetilde{f}^n)|
$$
is uniformly bounded, 
so the map $b\colon \matZ\to\matZ$ 
$$
b(n)=u(n)+\lfloor \\rott(\widetilde{f}) n \rfloor
$$
is bounded (henceforth for every $x\in \R$ we denote by $\lfloor x\rfloor$ the largest integer which does not exceed
$x$). As a consequence the bounded Euler class $e(\rho_f)=[\rho_f^*(c)]$
may be represented by the cocycle
$\overline{\delta}^1 (u-b)$, i.e.~by the map
$$
(n,m)\ \mapsto \ \lfloor \rott(\widetilde{f}) (n+m) \rfloor - \lfloor \rott(\widetilde{f}) n \rfloor - \lfloor \rott(\widetilde{f}) m \rfloor\ .
$$
By Proposition~\ref{h2zz}, this implies that the element of $\R/\matZ$ corresponding to  the bounded Euler class of $\rho_f$ is given by $[\rott(\widetilde{f})]$, 
so our definition of rotation number gives $\rot(f)=[\rott(\widetilde{f})]$. 
We have thus recovered  the classical definition of rotation number:

\begin{prop}\label{rot:prop}
Let $f\in\omeo$, let $\widetilde{f}$ be \emph{any} lift of $f$ and $x_0\in \R$ be \emph{any} point. Then 
$$
\rot(f)=\left[\lim_{n\to\infty} \frac{\widetilde{f}^n(x_0)}{n} \right]\ \in\ \R/\matZ
$$
(in particular, the above limit exists).
\end{prop}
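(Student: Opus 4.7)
The plan is to reduce the statement to what has already been established in Proposition~\ref{rott:prop} and in the discussion preceding it, via two independence arguments: independence of the limit on the chosen basepoint $x_0$, and independence modulo $\mathbb{Z}$ on the chosen lift $\widetilde{f}$.

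First I would show that, whenever $h \in \omeot$ is fixed, the behaviour of $h^n(x)/n$ as $n \to \infty$ does not depend on $x$. Applying Lemma~\ref{conto} to $h^n \in \omeot$ with the two points $x_0$ and $0$ yields
\[
\left|h^n(x_0) - h^n(0) - x_0\right| < 1
\]
for every $n$, whence
\[
\left|\tfrac{h^n(x_0)}{n} - \tfrac{h^n(0)}{n}\right| \leq \tfrac{1 + |x_0|}{n} \xrightarrow[n\to\infty]{} 0.
\]
Combined with Proposition~\ref{rott:prop}, this shows that for every $h \in \omeot$ and every $x_0 \in \R$ the limit $\lim_{n\to\infty} h^n(x_0)/n$ exists and equals $\rott(h)$.

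Next I would handle the dependence on the chosen lift. Any two lifts $\widetilde{f}$ and $\widetilde{f}'$ of the same $f \in \omeo$ differ by an element of $\ker p = \mathbb{Z}$, so $\widetilde{f}' = \tau_k \circ \widetilde{f}$ for some $k \in \mathbb{Z}$. Since $\tau_k$ is central in $\omeot$, an easy induction gives $(\widetilde{f}')^n = \tau_{nk} \circ \widetilde{f}^n$, and therefore
\[
\lim_{n\to\infty} \frac{(\widetilde{f}')^n(x_0)}{n} \;=\; \lim_{n\to\infty} \frac{\widetilde{f}^n(x_0)}{n} + k,
\]
so the class in $\R/\mathbb{Z}$ of the limit is independent of the lift. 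Taking $\widetilde{f}$ to be the distinguished lift with $\widetilde{f}(0) \in [0,1)$ and $x_0 = 0$, the computation carried out between Proposition~\ref{rott:prop} and Proposition~\ref{rot:prop} shows that $\rot(f) = [\rott(\widetilde{f})]$ in $\R/\mathbb{Z}$. Combining the three facts --- that the limit exists for the distinguished lift at $0$, that it is independent of $x_0$, and that its class in $\R/\mathbb{Z}$ is independent of the lift --- yields the statement.

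No step here seems to present a serious obstacle: everything rests on Lemma~\ref{conto} and on the identification $\rot(f) = [\rott(\widetilde{f})]$ already obtained in the excerpt. The only mild point to be careful about is the direction of the limit in Proposition~\ref{rott:prop}, which was stated as $n \to +\infty$ of $h^n(0)/n$ (as opposed to the limit from $-\infty$ that arose in the homogenization argument); but these agree by the homogeneity of $\psi_h$, so the formula in the statement is correct as written.
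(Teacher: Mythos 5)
Your proposal is correct and follows essentially the same route as the paper's own proof: reduce to the already-established case $\widetilde{f}=\widetilde{f}_0$, $x_0=0$, then observe via Lemma~\ref{conto} that the limit is independent of $x_0$, and observe that changing the lift by $\tau_k$ shifts the limit by $k$, which is invisible in $\R/\matZ$. The only difference is that you spell out the two independence steps in full, whereas the paper merely asserts them; your detailed verifications (applying Lemma~\ref{conto} to $h^n$ and using centrality of $\tau_k$) are exactly what the paper leaves implicit.
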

\begin{proof}
We have already proved the statement in the case when $\widetilde{f}=\widetilde{f}_0$ and $x_0=0$. However, an easy application of Lemma~\ref{conto}
implies that the limit $\lim_{n\to\infty} {\widetilde{f}^n(x_0)}/{n} $ does not depend on the choice of $x_0$. Moreover, if
$\widetilde{f}'=\tau_k\widetilde{f}$, then $\lim_{n\to\infty} {(\widetilde{f}')^n(x_0)}/{n} =k+\lim_{n\to\infty} {\widetilde{f}^n(x_0)}/{n} $,
and this concludes the proof.
\end{proof}

\begin{cor}\label{rot:hom}
For every $f\in\omeo$, $n\in\matZ$ we have
$$
\rot(f^n)=n\rot(f)\ .
$$
\end{cor}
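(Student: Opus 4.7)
The plan is to deduce this directly from the structural properties of $\rott$ established in Proposition~\ref{rott:prop} together with the concrete description of $\rot$ provided by Proposition~\ref{rot:prop}. The key observation is that the assignment $f\mapsto \rot(f)$ factors through $\rott$: for any $g\in\omeo$ and any lift $\widetilde{g}\in\omeot$ of $g$, Proposition~\ref{rot:prop} gives
\[
\rot(g)=\left[\lim_{k\to\infty}\frac{\widetilde{g}^k(0)}{k}\right]=[\rott(\widetilde{g})]\ \in\ \R/\Z.
\]

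First I would remark that $\widetilde{f}^n\in\omeot$ is a lift of $f^n$, since $p_*\colon\omeot\to\omeo$ is a group homomorphism. Hence, applying the displayed identity with $g=f^n$ and the lift $\widetilde{f}^n$, we obtain $\rot(f^n)=[\rott(\widetilde{f}^n)]$.

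The main (and essentially only) nontrivial ingredient is then the homogeneity of $\rott$. By Proposition~\ref{rott:prop}, $\rott$ is a \emph{homogeneous} quasimorphism on $\omeot$, so by definition $\rott(h^n)=n\,\rott(h)$ for every $h\in\omeot$ and every $n\in\Z$. Applied to $h=\widetilde{f}$, this yields $\rott(\widetilde{f}^n)=n\,\rott(\widetilde{f})$, and therefore
\[
\rot(f^n)=[\rott(\widetilde{f}^n)]=[n\,\rott(\widetilde{f})]=n\,[\rott(\widetilde{f})]=n\,\rot(f)
\]
in $\R/\Z$, which is the desired equality. There is no real obstacle here: the only subtlety worth noting is the passage from $\Z$ to $\R/\Z$, which is harmless because multiplication by an integer is well-defined on $\R/\Z$; all the substantive content has already been packaged into the homogeneity statement of Proposition~\ref{rott:prop}.
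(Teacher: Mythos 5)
Your proof is correct and matches what the paper intends. The corollary appears immediately after Proposition~\ref{rot:prop} with no separate argument given, and the evident route is exactly yours: identify $\rot(g)=[\rott(\widetilde{g})]$ for any lift $\widetilde{g}$, observe that $\widetilde{f}^n$ is a lift of $f^n$ because $p_*$ is a homomorphism, and then invoke the homogeneity of $\rott$ from Proposition~\ref{rott:prop} to get $\rott(\widetilde{f}^n)=n\,\rott(\widetilde{f})$, which passes to $\R/\Z$ since integer multiplication descends to the quotient.
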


Roughly speaking, Proposition~\ref{rot:prop} shows that $\rot(f)$ indeed measures the average rotation angle
of $f$.
For example, if $f$ is a genuine rotation of angle $0\leq \alpha<1$ (recall that $S^1=\R/\matZ$ has length one in our
setting), then $\widetilde{f}(x)=x+\alpha$,
so $\widetilde{f}^n(0)=n\alpha$ and $\rot(f)=[\alpha]$. On the other hand, if $f$ admits a fixed point, then 
we may choose a lift $\widetilde{f}$ of $f$ admitting a fixed point, so Proposition~\ref{rot:prop} implies
that $\rot(f)=0$. We will see in Corollary~\ref{rot:cor} that also the converse implication holds: if $\rot(f)=0$, then
$f$ fixes a point in $S^1$.

Recall that $\omeo$ is endowed with the compact-open topology. In the sequel it will be useful to know that the rotation number is continuous. To this aim, we first establish the following:

\begin{lemma}\label{esti:rot}
Let $\widetilde{f}\in\omeot$, and let $a\in\mathbb{Z}$.
 \begin{enumerate}
  \item If $\rott(\widetilde{f})>a$, then $\widetilde{f}(x)>x+a$ for every $x\in\R$.
  \item If there exists $x\in\R$ such that $\widetilde{f}(x)\geq x+a$, then $\rott(\widetilde{f})\geq a$.
  \item If $\rott(\widetilde{f})<a$, then $\widetilde{f}(x)<x+a$ for every $x\in\R$.
  \item If there exists $x\in\R$ such that $\widetilde{f}(x)\leq x+a$, then $\rott(\widetilde{f})\leq a$.
 \end{enumerate}
\end{lemma}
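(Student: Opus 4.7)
My plan is to reduce the four statements to just two by elementary logical observations, and then establish those two by a straightforward inductive argument based on the characterization of $\rott$ as a limit.

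First I would observe that statements (1) and (4) are logical contrapositives of one another: (1) asserts that $\rott(\widetilde{f})>a$ forces $\widetilde{f}(x)>x+a$ for every $x$, whose contrapositive says exactly that the existence of some $x$ with $\widetilde{f}(x)\leq x+a$ implies $\rott(\widetilde{f})\leq a$, which is (4). In the same way, (3) is the contrapositive of (2). Therefore it suffices to prove (2) and (4).

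For (2), assume there exists $x_0\in\R$ with $\widetilde{f}(x_0)\geq x_0+a$. The crucial point is that $a\in\matZ$, so since $\widetilde{f}\in\omeot$ commutes with integer translations, the identity $\widetilde{f}(y+a)=\widetilde{f}(y)+a$ holds for every $y\in\R$. Combining this with the monotonicity of $\widetilde{f}$, an easy induction on $n$ gives $\widetilde{f}^n(x_0)\geq x_0+na$ for every $n\geq 1$: indeed, assuming the bound at step $n$, one has $\widetilde{f}^{n+1}(x_0)\geq \widetilde{f}(x_0+na)=\widetilde{f}(x_0)+na\geq x_0+(n+1)a$. Now I recall that $\rott(\widetilde{f})=\lim_{n\to\infty}\widetilde{f}^n(x_0)/n$ for any choice of basepoint (this is the content of the proof of Proposition~\ref{rot:prop}, which uses Lemma~\ref{conto} to show independence of the starting point). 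Dividing the inequality by $n$ and passing to the limit yields $\rott(\widetilde{f})\geq a$.

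Statement (4) is proved by exactly the symmetric argument, with $\geq$ replaced by $\leq$ throughout: from $\widetilde{f}^n(x_0)\leq x_0+na$ one obtains $\widetilde{f}^{n+1}(x_0)\leq \widetilde{f}(x_0+na)=\widetilde{f}(x_0)+na\leq x_0+(n+1)a$, and then passing to the limit gives $\rott(\widetilde{f})\leq a$. I do not anticipate any genuine obstacle here; the only subtlety to emphasize is the use of $a\in\matZ$, which is precisely what allows $\widetilde{f}$ to commute past the translation by $a$ and thus makes the inductive step work cleanly.
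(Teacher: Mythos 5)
Your proof is correct and takes essentially the same approach as the paper's: the paper proves (1) by contradiction (which is logically the same as proving (4) directly, as you observe via the contrapositive), proves (2) directly with the same induction using $a\in\matZ$ to commute $\widetilde{f}$ past the translation, and then declares (3) and (4) analogous. Your explicit reduction to two statements via contrapositives is a slightly cleaner organization of the identical argument.
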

\begin{proof}
(1): Suppose by contradiction that $\widetilde{f}(x)\leq x+a$ for some $x\in\R$. Since $a$ is an integer and $\widetilde{f}$ commutes with integral translations, an obvious inductive argument shows that
$\widetilde{f}^n(x)\leq x+na$ for every $n\in\mathbb{N}$, and this implies in turn that $\rott(\widetilde{f})\leq a$.

(2): Since $a$ is an integer, an easy inductive argument shows that $\widetilde{f}^n(x)\geq x+na$ for every $n\in\mathbb{N}$, hence $\rott(\widetilde{f})\geq a$.

Claims (3) and (4) can be proved by analogous arguments.
\end{proof}

\begin{prop}\label{continuous:rot}
 The maps $\rott\colon \omeot\to \R$ and $\rot\colon \omeo\to\R/\Z$ are continuous.
\end{prop}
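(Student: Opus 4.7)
The plan is to first establish a quantitative approximation $|\rott(\widetilde{f}) - \widetilde{f}^n(0)/n| \leq 1/n$ for all $n\geq 1$ and all $\widetilde{f}\in\omeot$, and then upgrade pointwise continuity of the evaluation map $\widetilde{f}\mapsto \widetilde{f}^n(0)$ to continuity of $\rott$.

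For the approximation, fix $\widetilde{f}\in\omeot$ and $n\geq 1$, and set $a=\lfloor \widetilde{f}^n(0)\rfloor$, so that $a\leq \widetilde{f}^n(0)<a+1$. Applying Lemma~\ref{esti:rot} to $\widetilde{f}^n$ (claims (2) and (4)) gives $a\leq \rott(\widetilde{f}^n)\leq a+1$. Since $\rott$ is homogeneous (it is a homogeneous quasimorphism by Proposition~\ref{rott:prop}), $\rott(\widetilde{f}^n)=n\rott(\widetilde{f})$, so $a\leq n\rott(\widetilde{f})\leq a+1$. Combining with $a\leq \widetilde{f}^n(0)<a+1$ yields $|n\rott(\widetilde{f})-\widetilde{f}^n(0)|\leq 1$, i.e.\ $|\rott(\widetilde{f})-\widetilde{f}^n(0)/n|\leq 1/n$.

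To prove continuity of $\rott$, fix $\widetilde{f}_0\in\omeot$ and $\varepsilon>0$, and choose $n$ with $2/n<\varepsilon$. The map $\omeot\to\R$, $\widetilde{f}\mapsto \widetilde{f}^n(0)$, is continuous: composition is continuous in the compact-open topology on the space of self-maps of a locally compact Hausdorff space (see Lemma~\ref{basi:top:lemma1}), and evaluation at $0$ is continuous. Hence there is a neighbourhood $U$ of $\widetilde{f}_0$ such that $|\widetilde{f}^n(0)-\widetilde{f}_0^n(0)|<1$ for every $\widetilde{f}\in U$. Using the triangle inequality together with the approximation established above,
\[
|\rott(\widetilde{f})-\rott(\widetilde{f}_0)|\leq \frac{1}{n}+\frac{|\widetilde{f}^n(0)-\widetilde{f}_0^n(0)|}{n}+\frac{1}{n}<\frac{3}{n}<\varepsilon
\]
whenever $n$ is large enough (and $\widetilde{f}\in U$), proving continuity of $\rott$.

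For $\rot\colon \omeo\to\R/\Z$, recall from Proposition~\ref{rot:prop} that $\rot(f)=[\rott(\widetilde{f})]$ for any lift $\widetilde{f}$ of $f$. Since $p_*\colon \omeot\to\omeo$ is a covering projection with discrete fiber $\Z$, around every $f_0\in\omeo$ there is a continuous local section $\sigma\colon V\to\omeot$ of $p_*$. On such a neighbourhood, $\rot=q\circ\rott\circ\sigma$, where $q\colon \R\to\R/\Z$ is the quotient projection; this is a composition of continuous maps, hence continuous. The main (mild) obstacle is ensuring that composition and evaluation are jointly continuous in the compact-open topology, but this is a standard fact already used earlier in the book.
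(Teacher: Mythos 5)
Your proof is correct, and it takes a genuinely different and in some ways cleaner route than the paper's. The paper fixes a sequence $\widetilde{f}_i\to\widetilde{f}$ and, given $\varepsilon>0$, chooses rationals $p/q<\rott(\widetilde{f})<r/s$ within $\varepsilon$; it then uses Lemma~\ref{esti:rot}(1) (strict inequality) plus periodicity of $x\mapsto\widetilde{f}^q(x)-x-p$ to get a positive uniform gap, so that $\widetilde{f}_i^q(x)>x+p$ eventually, and concludes via Lemma~\ref{esti:rot}(2) and a $\liminf/\limsup$ argument. You instead establish the \emph{quantitative} bound $|\rott(\widetilde{f})-\widetilde{f}^n(0)/n|\leq 1/n$, valid simultaneously for all $\widetilde{f}\in\omeot$: both $n\rott(\widetilde{f})=\rott(\widetilde{f}^n)$ and $\widetilde{f}^n(0)$ lie in the same unit interval $[\lfloor\widetilde{f}^n(0)\rfloor,\lfloor\widetilde{f}^n(0)\rfloor+1]$, by Lemma~\ref{esti:rot}(2) and (4). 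Once this uniform rate of convergence is in hand, continuity of $\rott$ is a direct triangle-inequality consequence of continuity of $\widetilde{f}\mapsto\widetilde{f}^n(0)$, with no need for rational approximations, strict-inequality gaps, or $\liminf/\limsup$. What your approach buys is a stronger statement for free (explicit uniform modulus of approximation), while the paper's sequence-based argument is closer in spirit to the qualitative intermediate steps it has already set up via Lemma~\ref{esti:rot}(1) and (3). Both proofs rely on $\omeot$ being a topological group so that $\widetilde{f}\mapsto\widetilde{f}^n$ is continuous; your citation of Lemma~\ref{basi:top:lemma1} is only about composition with a \emph{fixed} map, so it isn't literally the statement you need, but the joint continuity of composition on a locally compact Hausdorff space is standard and is the fact the paper itself invokes at the same point. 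The reduction from $\rot$ to $\rott$ via local sections of the covering $p_*$ is the same in both arguments.
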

\begin{proof}
 Of course, it is sufficient to show that $\rott$ is continuous. So, suppose that $\widetilde{f}_i$, $i\in\mathbb{N}$ is a sequence of elements
 of $\omeot$ tending to $\widetilde{f}\in\omeot$. 
 Since $\R$ is locally compact, $\omeot$ is a topological group, i.e.~the composition is continuous with respect to the compact-open topology.
 In particular, 
$\widetilde{f}_i^n$ uniformly converges to $\widetilde{f}^n$ for every $n\in\mathbb{N}$.

Let now $\varepsilon>0$ be given. Then, there exist rational numbers $p/q,r/s$ such that
$$
\rott(\widetilde{f})-\varepsilon<p/q<\rott(\widetilde{f})<r/s<\rott(\widetilde{f})+\varepsilon\ .
$$
Since $\rott$ is a homogeneous quasimorphism, we have 
$\rott(\widetilde{f}^q)>p$, and Lemma~\ref{esti:rot} (1) implies that  $\widetilde{f}^q(x)>x+p$ for every $x\in\R$. Being periodic,
the function $x\mapsto \widetilde{f}^q(x)-x-p$ has thus a positive minimum. Since $\widetilde{f}^q_i$ converges to $\widetilde{f}^q$, this implies in turn
that there exist $i_0\in\mathbb{N}$, $x\in\R$ such that $\widetilde{f}_i^q(x)>x+p$ for every $i\geq i_0$. By Lemma~\ref{esti:rot} (2), we then have
$\rott(\widetilde{f}_i^q)\geq p$ for every $i\geq i_0$, hence $\rott(\widetilde{f}_i)\geq p/q$ for every $i\geq i_0$. This shows that
$$
\liminf_{i\to \infty} \rott(\widetilde{f}_i)\geq p/q\geq \rott(\widetilde{f})-\varepsilon\ .
$$

Using items (3) and (4) of Lemma~\ref{esti:rot}, one can analogously prove that
$$
\limsup_{i\to \infty} \rott(\widetilde{f}_i)\leq r/s\leq \rott(\widetilde{f})+\varepsilon\ .
$$

Due to the arbitrariness of $\varepsilon$, this implies that $\lim_{i\to \infty} \rott(\widetilde{f}_i)=\rott(\widetilde{f})$. 
\end{proof}

\section{Increasing degree one map of the circle}
Let $\G$ be a group. Two representations $\rho_1,\rho_2$ of $\G$ into
$\omeo$ are conjugate if there exists $\varphi\in \omeo$ such that
$\rho_1(g)=\varphi\rho_2(g)\varphi^{-1}$ for every $g\in \G$. It is easy to show that the bounded Euler
classes (whence the usual Euler classes) of conjugate representations coincide
(this fact also follows from Theorem~\ref{Euler:thm} below). Unfortunately
it is not true that representations sharing the same bounded Euler class are  conjugate.
However, it turns out that the bounded Euler class completely determines the \emph{semi-conjugacy} class
of a representation. 

The subject we are going to describe was first investigated by Ghys~\cite{Ghys0}, who 
first noticed the relationship between semi-conjugacy and the bounded Euler class (see also~\cite{Ghys1,Ghys2}). 
For a detailed account on the history of the notion of semi-conjugacy, together with a discussion
of the equivalence of several definitions (and of the non-equivalence of some variations) of semi-conjugacy, we refer the reader to~\cite{BFH}. 

\begin{defn}
 Let us consider an ordered $k$-tuple $(x_1,\ldots,x_k)\in (S^1)^k$. We say that such a $k$-tuple is
 \begin{itemize}
  \item \emph{positively oriented} if there exists an orientation-preserving embedding $\gamma\colon [0,1]\to S^1$
such that $x_i=\gamma(t_i)$, where $t_i< t_{i+1}$ for very $i=1,\ldots,k-1$.
\item \emph{weakly positively oriented} if there exists an orientation-preserving immersion $\gamma\colon [0,1]\to S^1$
which restricts to an embedding on $[0,1)$, and is 
such that $x_i=\gamma(t_i)$, where $t_i\leq t_{i+1}$ for very $i=1,\ldots,k-1$.
 \end{itemize}
Observe that cyclic permutations leave 
the property of being (weakly) positively oriented invariant.
\end{defn}

\begin{defn}
 A (not necessarily continuous) map $\varphi\colon S^1\to S^1$ is increasing of degree one if the following condition holds:
 if $(x_1,\ldots,x_k)\in (S^1)^k$ is weakly positively oriented, then 
 $(\varphi(x_1),\dots,\varphi(x_k))$ is weakly positively oriented.
\end{defn}

For example, any constant map $\varphi\colon S^1\to S^1$ is increasing of degree one. The following lemma provides 
an alternative description of increasing maps of degree one. Observe that the composition of increasing maps of degree one is increasing of degree one.
We stress that, in our terminology, a (not necessarily continuous) map $\widetilde{\varphi}\colon \R\to\R$ will be said to be \emph{increasing}
if it is weakly increasing, i.e.~if $\widetilde{\varphi}(x)\leq \widetilde{\varphi}(y)$ whenever $x\leq y$.

\begin{lemma}\label{quadruple:lem}
Let $\varphi\colon S^1\to S^1$ be any map. Then the following conditions are equivalent:
\begin{enumerate}
 \item 
$\varphi$ is increasing of degree one;
\item if $(x_1,\ldots,x_4)\in (S^1)^4$ is positively oriented, then 
 $(\varphi(x_1),\dots,\varphi(x_4))$ is weakly positively oriented;
 \item 
there exists a (set-theoretical) increasing lift $\widetilde{\varphi}\colon \R\to\R$ of $\varphi$
such that $\widetilde{\varphi}(x+1)=\widetilde{\varphi}(x)+1$ for every $x\in\R$.
\end{enumerate}
\end{lemma}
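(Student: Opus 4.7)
The plan is to prove (1) $\Rightarrow$ (2) $\Rightarrow$ (3) $\Rightarrow$ (1). The implication (1) $\Rightarrow$ (2) is immediate, since every positively oriented tuple is in particular weakly positively oriented. For (3) $\Rightarrow$ (1), I will first extract from the definition the reformulation that $(x_1,\ldots,x_k)$ is weakly positively oriented if and only if one can choose lifts $\widetilde{x}_1 \leq \cdots \leq \widetilde{x}_k$ in $\R$ with $\widetilde{x}_k - \widetilde{x}_1 \leq 1$ (via the lifted parametrization of the immersion $\gamma$). Given the increasing lift $\widetilde{\varphi}$ commuting with integral translations, applying it to such a chain preserves the inequalities, and $\widetilde{\varphi}(\widetilde{x}_1+1)=\widetilde{\varphi}(\widetilde{x}_1)+1$ gives the required bound on the range.

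For the substantive implication (2) $\Rightarrow$ (3), I fix $x_0=0\in S^1 = \R/\Z$ and a lift $\widetilde{y}_0 \in p^{-1}(\varphi(0))$, and aim to construct $\widetilde{\varphi}$ on $[0,1]$ with $\widetilde{\varphi}(0)=\widetilde{y}_0$ and $\widetilde{\varphi}(1)=\widetilde{y}_0+1$; periodicity then yields the lift on all of $\R$. The first observation is that (2) implies the analogous statement for positively oriented triples, by inserting a fourth point in the complementary arc and dropping it from the conclusion. Applied to the triple $(0,p(t_1),p(t_2))$ with $0<t_1<t_2<1$ and translated so that the distinguished lift of $\varphi(0)$ equals $\widetilde{y}_0$, this produces lifts $\widetilde{w}_1 \leq \widetilde{w}_2$ of $\varphi(p(t_1)),\varphi(p(t_2))$ in the band $[\widetilde{y}_0,\widetilde{y}_0+1]$.

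Now partition $(0,1)=B\sqcup A_0$ with $B=\{t:\varphi(p(t))\neq\varphi(0)\}$. For $t\in B$ there is a unique lift $L(t)$ of $\varphi(p(t))$ in the open band $(\widetilde{y}_0,\widetilde{y}_0+1)$, and the triple consequence above shows $L$ is weakly increasing on $B$. The key rigidity comes from applying (2) directly to the positively oriented quadruple $(0,t_1,t,t_2)$ with $t_1,t_2\in B$, $t\in A_0$ and $t_1<t<t_2$: the forced chain of lifts in $[\widetilde{y}_0,\widetilde{y}_0+1]$ would require an intermediate lift of $\varphi(0)$ to lie in $[L(t_1),L(t_2)]\subset(\widetilde{y}_0,\widetilde{y}_0+1)$, which is impossible since every lift of $\varphi(0)$ is an integral translate of $\widetilde{y}_0$ and none of those lies in that open subinterval. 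Hence no point of $A_0$ can be sandwiched between two points of $B$: the set $B$ is order-convex in $(0,1)$, so $A_0$ decomposes as a (possibly empty) initial segment and a (possibly empty) terminal segment of $(0,1)$. I then define $\widetilde{\varphi}(t)=L(t)$ on $B$, $\widetilde{\varphi}(t)=\widetilde{y}_0$ on the initial part of $A_0$, and $\widetilde{\varphi}(t)=\widetilde{y}_0+1$ on the terminal part, and verify monotonicity by a short case-by-case check.

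The main obstacle is exactly this sandwich-exclusion step: only the full quadruple hypothesis in (2) is strong enough to rule out the configuration, and it is precisely this rigidity that removes the a priori ambiguity in how to choose a lift of $\varphi(p(t))$ at points where $\varphi(p(t))=\varphi(0)$. A weaker two- or three-point version would leave genuine freedom in the assignment $t\mapsto\widetilde{\varphi}(t)$ and could in fact produce non-monotone configurations; this is the structural reason the lemma is formulated in terms of four-tuples. Once the order-convexity of $B$ is established, the remainder of the argument is combinatorial bookkeeping.
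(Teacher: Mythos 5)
Your proof is correct, and at its core it is the same argument as the paper's, just with the key structural step isolated and stated more cleanly. For (2) $\Rightarrow$ (3), the paper normalizes $\varphi([0])=[0]$ and writes down the lift $\widetilde{\varphi}$ directly on $[0,1)$, assigning $t$ the value $0$, $1$, or the unique representative of $\varphi([t])$ in $(0,1)$ according to whether $\varphi([t])=[0]$ with or without a ``witness'' $s<t$ satisfying $\varphi([s])\neq[0]$; monotonicity is then checked case by case, and the quadruple hypothesis is invoked precisely in the case where $\varphi([t_0])=[0]$ but a witness precedes $t_0$, to force $\varphi([t_1])=[0]$ for all later $t_1$. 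Your order-convexity of $B$ is exactly this case extracted upfront: ``no point of $A_0$ lies between two points of $B$'' is a restatement of ``once a witness precedes $t_0$, no later point can lie in $B$.'' So the two proofs use the four-point hypothesis in the same place for the same reason; your organization merely makes the structural role of the hypothesis (and why three points would not suffice) more explicit. One place where your write-up genuinely improves on the paper's is (3) $\Rightarrow$ (1): you encode ``weakly positively oriented'' as the existence of lifts $\widetilde{x}_1\leq\cdots\leq\widetilde{x}_k$ with $\widetilde{x}_k-\widetilde{x}_1\leq 1$, which permits $\widetilde{x}_k=\widetilde{x}_1+1$, whereas the paper insists on $t_1\leq t_i<t_1+1$ and then asserts $t_i\leq t_{i+1}$; that assertion can fail when a value repeats in a weakly positively oriented tuple (e.g.\ $([0],[1/2],[0])$ forces $t_3=t_1<t_2$ under the paper's normalization). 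Your reformulation handles this edge case correctly. The only cosmetic loose end in your proposal is the degenerate case $B=\emptyset$, where the decomposition of $A_0$ into initial and terminal pieces is not determined by $B$; any weakly increasing assignment works there, so nothing is lost.
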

\begin{proof}
(1) $\Rightarrow$ (2) is obvious, so we begin by showing that (2) $\Rightarrow$ (3).
It is immediate to check that $\varphi$ satisfies (2) (resp.~(3)) if and only if $r\circ \varphi$ does,
where $r\colon S^1\to S^1$ is any rotation. Therefore, we may assume that $\varphi([0])=[0]$. 
For $t\in [0,1)$, 
we define $\widetilde{\varphi}(t)\in [0,1]$
as follows. If $\varphi([t])=[0]$, then we distinguish two cases: if
$\varphi([s])=[0]$ for every $0\leq s\leq t$, then we set 
$\widetilde{\varphi}(t)=0$; otherwise we set $\widetilde{\varphi}(t)=1$.
If $\varphi([t])\neq [0]$, then $\widetilde{\varphi}(t)$ is the unique point in $(0,1)$
such that $[\widetilde{\varphi}(t)]=\varphi([t])$.

We have thus defined a map $\widetilde{\varphi}\colon [0,1)\to [0,1]$ such that $\widetilde{\varphi}(0)=0$. 
This map uniquely extends to a map
(still denoted by $\widetilde{\varphi}$) such that $\widetilde{\varphi}(x+1)=\widetilde{\varphi}(x)+1$.
In order to conclude we need to show that $\widetilde{\varphi}$ is increasing. By construction, this boils down
to showing that 
$\widetilde{\varphi}|_{[0,1)}$ is increasing, i.e.~that
$\widetilde{\varphi}(t_0)\leq \widetilde{\varphi}(t_1)$ whenever
$0\leq t_0<t_1<1$. Since $\widetilde{\varphi}(0)=0$ we may suppose $0<t_0$. 

Assume first that $\varphi([t_0])=[0]$. If $\varphi([s])=[0]$ for every $0\leq s\leq t_0$,
then $\widetilde{\varphi}(t_0)=0\leq \widetilde{\varphi}(t_1)$, and we are done. Otherwise 
$\widetilde{\varphi}(t_0)=1$, and there exists $0<s<t_0$ such that $\varphi([s])\neq [0]$. Observe that the  quadruple
$([0],[s],[t_0],[t_1])$ is positively oriented. As a consequence, the quadruple
$(\varphi([0]), \varphi([s]),\varphi([t_0]),\varphi([t_1]))=([0],\varphi([s]),[0],\varphi([t_1]))$
is weakly positively oriented. Since $\varphi([s])\neq [0]$, this implies in turn that
$\varphi([t_1])=0$, so by definition $\widetilde{\varphi}(t_1)=1$, and again
$\widetilde{\varphi}(t_0)\leq \widetilde{\varphi}(t_1)$.

Assume now that $\varphi([t_0])\neq [0]$. If $\varphi([t_1])=[0]$, then $\widetilde{\varphi}(t_1)=1$, and we are done.
Otherwise, observe that, since $\varphi$ takes positively oriented quadruples into weakly positively oriented
quadruples, $\varphi$ also takes positively oriented triples into weakly positively oriented triples.
Therefore, the triple $([0],\varphi([t_0]),\varphi([t_1]))$ is weakly positively oriented. 
Since $\varphi([t_i])\neq [0]$ for $i=0,1$, this is equivalent to the fact that 
$\widetilde{\varphi}(t_0)\leq \widetilde{\varphi}(t_1)$.

Let us now prove that (3) implies (1). Let $(x_1,\ldots,x_k)\in (S^1)^k$ be weakly positively oriented. We choose
$t_i\in \mathbb{R}$ such that $[t_i]=x_i$  and
$t_1\leq t_i<t_1+1$ for $i=1,\ldots,k$. Since  $(x_1,\ldots,x_k)\in (S^1)^k$ is positively oriented
we have $t_i\leq t_{i+1}$ for every $i=1,\ldots,k-1$, so
$$
\widetilde{\varphi}(t_1)\leq 
\widetilde{\varphi}(t_2)\leq\ldots\leq 
\widetilde{\varphi}(t_k)\leq
\widetilde{\varphi}(t_1)+1\ ,
$$
which readily implies that $(\varphi(x_1),\ldots,\varphi(x_k))$
is weakly positively oriented.
\end{proof}

If $\varphi$ is increasing of degree one, then a lift $\widetilde{\varphi}$ as in claim (3)
of the previous lemma is called a \emph{good lift} of $\varphi$.
For example, the constant map $\varphi\colon S^1\to S^1$ mapping every point to $[0]$ 
admits $\widetilde{\varphi}(x)=\lfloor x\rfloor$ as a good lift (in fact, for every $\alpha\in\R$ the maps
$x\mapsto \lfloor x+\alpha\rfloor$ 
and $x\mapsto \lceil x+\alpha\rceil$
are good lifts of $\varphi$).

\begin{rem}\label{burger:ext:rem}
Condition (3) of the previous lemma (i.e.~the existence of an increasing lift commuting with integral translations)
is usually described as the condition
defining the notion of increasing map of degree one. This condition is not equivalent to
the request that positively oriented triples are taken into weakly positively oriented triples.
For example, if $\varphi([t])=[0]$ for every $t\in\mathbb{Q}$ and
$\varphi([t])=[1/2]$ for every $t\in\mathbb{R}\setminus\mathbb{Q}$, then $\varphi$ takes any triple
into a weakly positively oriented one, but it does not admit \emph{any} increasing lift, so it is not increasing of degree one.
\end{rem}



\section{Semi-conjugacy}

We are now ready to give the definition of semi-conjugacy for group actions on the circle. Here we adopt the definition given in~\cite{BFH}.

\begin{defn}\label{semi:def}
Let $\rho_i\colon \G\to\omeo$ be group representations, $i=1,2$. We say that $\rho_1$ is \emph{semi-conjugate}
to $\rho_2$ if the following conditions hold:
\begin{enumerate}
\item
There exists an increasing map $\varphi$ of degree one such that
$$\rho_1(g)\varphi=\varphi\rho_2(g)$$ for every $g\in \G$.
\item
There exists an increasing map $\varphi^*$ of degree one such that
$$\rho_2(g)\varphi^*=\varphi^*\rho_1(g)$$ for every $g\in \G$.
\end{enumerate}
\end{defn}

The main result of this chapter is the following theorem, which is due to Ghys~\cite{Ghys0}:

\begin{thm}\label{Euler:thm}
Let $\rho_1,\rho_2$ be representations of $\G$ into $\omeo$. Then
$\rho_1$ is semi-conjugate
to $\rho_2$ if and only if $e_b(\rho_1)=e_b(\rho_2)$.
\end{thm}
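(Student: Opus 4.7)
The proof splits into two implications. For the forward direction ($\Rightarrow$), I would assume the existence of an increasing degree-one map $\varphi$ with $\rho_1(g)\varphi = \varphi\rho_2(g)$ for every $g \in \Gamma$ and use Lemma~\ref{quadruple:lem} to pick a good lift $\widetilde{\varphi}\colon \R\to\R$. Working with the standard lifts $\widetilde{\rho_i(g)}$ from Section~\ref{bounded:euler:section}, the semi-conjugacy relation in $\mathrm{Homeo}^+(S^1)$ lifts to the statement that $\widetilde{\varphi}\circ \widetilde{\rho_2(g)}$ and $\widetilde{\rho_1(g)}\circ \widetilde{\varphi}$ differ by an integer translation. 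This yields a well-defined integer-valued cochain $\beta(g) := \widetilde{\varphi}(\widetilde{\rho_2(g)}(0)) - \widetilde{\rho_1(g)}(\widetilde{\varphi}(0))$. A short calculation using Lemma~\ref{conto} (applied to the increasing maps $\widetilde{\varphi}$ and $\widetilde{\rho_1(g)}$) shows that $\beta$ is uniformly bounded and that $\overline{\delta}\beta = \rho_2^*(c) - \rho_1^*(c)$, so the bounded Euler classes coincide in $H^2_b(\Gamma,\matZ)$. The existence of $\varphi^*$ is not needed here: one direction of the semi-conjugacy is already enough.

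For the reverse direction ($\Leftarrow$), I would start from an equality $\rho_1^*(c) - \rho_2^*(c) = \overline{\delta}\beta$ for some bounded $\beta\colon \Gamma \to \matZ$, and use this to build the monotone comparison map. Fix a basepoint $x_0 \in \R$ (say $x_0 = 0$), and consider the two $\Gamma$-orbits $\mathcal{O}_i = \{\widetilde{\rho_i(g)}(x_0)+n : g \in \Gamma,\ n \in \matZ\}$ in $\R$. The cocycle identity $\overline{\delta}\beta = \rho_1^*(c)-\rho_2^*(c)$ is exactly what is needed to define a $\Gamma$-equivariant map $\widetilde{\varphi}_0\colon \mathcal{O}_2 \to \mathcal{O}_1$ on the orbit level by the formula (schematically) $\widetilde{\rho_2(g)}(x_0) + n \longmapsto \widetilde{\rho_1(g)}(x_0) + n + \beta(g)$: the cocycle equation guarantees that this assignment depends only on the value $\widetilde{\rho_2(g)}(x_0)+n$, not on the representative $(g,n)$. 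The boundedness of $\beta$ and Lemma~\ref{conto} together force $\widetilde{\varphi}_0$ to be weakly monotone on $\mathcal{O}_2$ and to commute with integer translations.

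The main obstacle, and the place where most care is needed, is to extend $\widetilde{\varphi}_0$ from the orbit $\mathcal{O}_2$ to a weakly increasing map $\widetilde{\varphi}\colon \R \to \R$ with $\widetilde{\varphi}(x+1)=\widetilde{\varphi}(x)+1$, and then check that the induced map $\varphi\colon S^1 \to S^1$ is an increasing degree-one map conjugating $\rho_2$ to $\rho_1$. The standard device is to take $\widetilde{\varphi}(x) = \sup\{\widetilde{\varphi}_0(y) : y\in \mathcal{O}_2,\ y \leq x\}$ (with a suitable convention if $\mathcal{O}_2$ is bounded above by $x$, handled via periodicity). Weak monotonicity and integer-equivariance are immediate; the $\Gamma$-equivariance $\widetilde{\rho_1(g)}\widetilde{\varphi} = \widetilde{\varphi}\widetilde{\rho_2(g)}\cdot \tau_{\beta(g)}$ follows because both sides agree on $\mathcal{O}_2$ and each is monotone and equivariant under integer translations. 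Lemma~\ref{quadruple:lem} then repackages this into an increasing degree-one semi-conjugacy $\varphi\colon S^1\to S^1$ with $\rho_1(g)\varphi = \varphi \rho_2(g)$. Finally, swapping the roles of $\rho_1$ and $\rho_2$ (and replacing $\beta$ by $-\beta$) produces the reverse semi-conjugacy $\varphi^*$ required by Definition~\ref{semi:def}, completing the proof.
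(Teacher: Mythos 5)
Your overall strategy (produce an integral cochain from a good lift $\widetilde{\varphi}$ in one direction; build a monotone degree-one map from the coboundary data in the other) is the same as the paper's, but both halves contain a genuine gap.

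In the forward direction, the assertion that $\widetilde{\varphi}\circ \widetilde{\rho_2(g)}$ and $\widetilde{\rho_1(g)}\circ \widetilde{\varphi}$ ``differ by an integer translation'' is false in general: both are lifts of the same circle map, so the difference is integer-valued at each $x$, but it need not be \emph{constant} in $x$. Remark~\ref{Elia:rem} exhibits exactly this failure, with $\rho_1$ trivial, $\widetilde{\varphi}(x)=\lfloor x+1/2\rfloor$, and a map $\rho_2(1)$ fixing $[0]$, where the pointwise difference takes both values $0$ and $1$. Your appeal to Lemma~\ref{conto} does not repair this, because that lemma requires $\widetilde{\varphi}$ to be \emph{strictly} increasing on a suitable set $\{y_0\}\cup(x_0+\matZ)$, and a good lift of an increasing degree-one map can be constant on intervals. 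The paper handles this via Lemma~\ref{costante:lem}, which proves constancy of the pointwise difference \emph{only under the hypothesis that $\rho_1(\G)$ does not fix a point}, and then treats the fixed-point case separately via Proposition~\ref{zeroprop}. That separate case is also where your remark that ``the existence of $\varphi^*$ is not needed'' breaks down: a one-sided relation $\rho_1(g)\varphi=\varphi\rho_2(g)$ with $\rho_1$ trivial and $\varphi$ constant holds for \emph{any} $\rho_2$, so it cannot force $e_b(\rho_2)=0$; you need $\varphi^*$ (hence the transitivity of the symmetric relation) to conclude that $\rho_2$ fixes a point.

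In the reverse direction, the formula $\widetilde{\rho_2(g)}(x_0)+n\mapsto \widetilde{\rho_1(g)}(x_0)+n+\beta(g)$ on the orbit $\mathcal{O}_2$ is \emph{not} automatically well defined. If $g'=gh$ with $\rho_2(h)$ fixing $p(x_0)$, then $\widetilde{\rho_2(g)}(x_0)+n=\widetilde{\rho_2(g')}(x_0)+n$, and the cocycle identity $\overline{\delta}\beta=\rho_1^*(c)-\rho_2^*(c)$ reduces the required compatibility to an equation of the shape $\beta(h)=\widetilde{\rho_1(g)}(0)-\widetilde{\rho_1(g)}(\widetilde{\rho_1(h)}(0))+2\,\rho_1^*(c)(g,h)$, whose right-hand side depends on $g$ while the left-hand side does not; there is no reason for this to hold for all $g$, and Lemma~\ref{conto} even obstructs it whenever $\rho_1^*(c)(g,h)=1$. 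The paper avoids this entirely by \emph{not} defining $\widetilde{\varphi}$ on an orbit: it lifts $\rho_1,\rho_2$ to representations $\overline{\rho}_i\colon\overline{\G}\to\omeot$ of the central extension $\overline{\G}$ determined by $e(\rho_1)=e(\rho_2)$, and then sets $\widetilde{\varphi}(x)=\sup_{\overline{g}\in\overline{\G}}\overline{\rho}_1(\overline{g})^{-1}\overline{\rho}_2(\overline{g})(x)$ for every $x\in\R$. This global sup formula is manifestly well defined (one only needs uniform boundedness, which is where $\|\beta\|_\infty<\infty$ enters), is weakly increasing, commutes with integer translations, and intertwines $\overline{\rho}_2$ and $\overline{\rho}_1$ by a one-line computation. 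If you want to salvage your orbit-level approach you would need an additional argument controlling the behavior of $\beta$ on stabilizers, which is precisely the kind of bookkeeping the central-extension sup sidesteps.
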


We begin by observing that
semi-conjugacy is an equivalence relation:

\begin{lemma}
Semi-conjugacy is an equivalence relation.
\end{lemma}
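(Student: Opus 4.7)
The plan is to check the three defining properties of an equivalence relation directly from Definition~\ref{semi:def}, exploiting only the elementary fact (already observed just before Lemma~\ref{quadruple:lem}) that the composition of two increasing maps of degree one is again increasing of degree one.

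For \emph{reflexivity}, I will take $\varphi=\varphi^{*}=\mathrm{Id}_{S^{1}}$: the identity is obviously increasing of degree one (its good lift is $\mathrm{Id}_{\R}$), and $\rho(g)\,\mathrm{Id}=\mathrm{Id}\,\rho(g)$ trivially holds for every $g\in\Gamma$. For \emph{symmetry}, if $\rho_1$ is semi-conjugate to $\rho_2$ with witnesses $\varphi,\varphi^{*}$ as in Definition~\ref{semi:def}, then the very same maps, with their roles exchanged, certify that $\rho_2$ is semi-conjugate to $\rho_1$: condition~(2) for the pair $(\rho_2,\rho_1)$ is condition~(1) for the pair $(\rho_1,\rho_2)$, and vice-versa.

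For \emph{transitivity}, suppose $\rho_1$ is semi-conjugate to $\rho_2$ via $(\varphi_1,\varphi_1^{*})$ and $\rho_2$ is semi-conjugate to $\rho_3$ via $(\varphi_2,\varphi_2^{*})$. Setting $\psi=\varphi_1\circ\varphi_2$ and $\psi^{*}=\varphi_2^{*}\circ\varphi_1^{*}$, I compute
\[
\rho_1(g)\,\psi=\rho_1(g)\,\varphi_1\,\varphi_2=\varphi_1\,\rho_2(g)\,\varphi_2=\varphi_1\,\varphi_2\,\rho_3(g)=\psi\,\rho_3(g),
\]
\[
\rho_3(g)\,\psi^{*}=\rho_3(g)\,\varphi_2^{*}\,\varphi_1^{*}=\varphi_2^{*}\,\rho_2(g)\,\varphi_1^{*}=\varphi_2^{*}\,\varphi_1^{*}\,\rho_1(g)=\psi^{*}\,\rho_1(g)
\]
for every $g\in\Gamma$. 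Since the composition of two increasing maps of degree one is again increasing of degree one, both $\psi$ and $\psi^{*}$ are admissible witnesses, so $\rho_1$ is semi-conjugate to $\rho_3$.

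There is no real obstacle here: the whole argument is a formal verification, and the only non-tautological ingredient is the (already noted) closure of the class of increasing degree-one maps under composition. The genuine work of the theory lies ahead, in Theorem~\ref{Euler:thm}, where one must build the semi-conjugating map from the equality of bounded Euler classes; by contrast, the present lemma is a warm-up that simply legitimizes speaking of \emph{semi-conjugacy classes} of circle actions.
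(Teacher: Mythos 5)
Your proof is correct and follows exactly the same route as the paper: reflexivity via the identity, symmetry by swapping the two witnesses, and transitivity by composing the increasing degree-one maps (the paper's proof is just a terse statement of these observations). Nothing to add.
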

\begin{proof}
Reflexivity and symmetry are obvious, 
while transitivity readily follows from the fact that the composition
of increasing maps of degree one is an increasing map of degree one.
\end{proof}

It readily follows from the definitions that conjugate representations are semi-conjugate.
 However, semi-conjugacy is a much weaker condition than conjugacy:
for example, any representation admitting a fixed point is semi-conjugate to the trivial
representation (see Proposition~\ref{zeroprop}). On the other hand, semi-conjugacy implies conjugacy 
for the important class of \emph{minimal} representations, that we are now going to define.

\begin{defn}
 A representation $\rho\colon\G\to\omeo$ is \emph{minimal} if every $\rho(\G)$-orbit is dense in $S^1$.
\end{defn}

\begin{prop}\label{dense:orbits}
Let $\rho_1,\rho_2\colon \G\to \omeo$ be semi-conjugate representations, and suppose that every orbit
of $\rho_i(\G)$ is dense for $i=1,2$. Then $\rho_1$ is conjugate to $\rho_2$.
\end{prop}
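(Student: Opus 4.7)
The plan is to show that under the minimality hypothesis on both $\rho_1(\G)$ and $\rho_2(\G)$, the intertwining map $\varphi$ provided by semi-conjugacy is automatically forced to be a homeomorphism, rather than merely an increasing map of degree one. Once we know $\varphi\in\omeo$, the relation $\rho_1(g)\varphi = \varphi\rho_2(g)$ becomes a genuine conjugacy $\rho_1(g)=\varphi\rho_2(g)\varphi^{-1}$, so the conclusion will be immediate.

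First I would establish surjectivity. The equivariance $\rho_1(g)\circ\varphi=\varphi\circ\rho_2(g)$ shows that $\varphi(S^1)$ is $\rho_1(\G)$-invariant, so its closure is a non-empty closed $\rho_1(\G)$-invariant subset of $S^1$, which by minimality of $\rho_1(\G)$ must be all of $S^1$. Hence $\varphi(S^1)$ is dense. Fixing a good lift $\widetilde\varphi\colon\R\to\R$ provided by Lemma~\ref{quadruple:lem}, the image $\widetilde\varphi(\R)$ is dense in $\R$; but at any jump discontinuity of a monotone function there would be an open interval in $\R$ disjoint from $\widetilde\varphi(\R)$ (save possibly one point), contradicting density. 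Thus $\widetilde\varphi$ is continuous, and monotonicity plus the intermediate value theorem gives surjectivity of $\widetilde\varphi$, hence of $\varphi$.

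Next I would prove injectivity via a plateau analysis. The maximal intervals on which $\widetilde\varphi$ is constant (the \emph{plateaus}) have length strictly less than $1$, since $\widetilde\varphi(x+1)=\widetilde\varphi(x)+1$ forces a net increase of $1$ over each unit interval; so every plateau projects to a proper closed arc of $S^1$. Lifting the equivariance to a relation of the form $\widetilde{\rho_1(g)}\circ\widetilde\varphi = \widetilde\varphi\circ\widetilde{\rho_2(g)}+k_g$ shows that $\widetilde{\rho_2(g)}$ sends plateaus to plateaus; hence the union $U\subseteq S^1$ of the interiors of all non-trivial plateaus is open and $\rho_2(\G)$-invariant, and it is a disjoint union of proper open arcs. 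Minimality of $\rho_2(\G)$ forces $U$ to be $\emptyset$ or $S^1$, and the connectedness of $S^1$ rules out that a disjoint union of proper open arcs could exhaust $S^1$, so $U=\emptyset$, $\widetilde\varphi$ is strictly increasing, and $\varphi$ is injective.

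Combining these steps, $\varphi$ is a continuous bijection that is increasing of degree one, hence an orientation-preserving homeomorphism, i.e.\ $\varphi\in\omeo$, and the semi-conjugacy promotes to the conjugacy $\rho_1(g)=\varphi\rho_2(g)\varphi^{-1}$. The main technical obstacle is the plateau step: one must choose lifts of the $\rho_2(g)$ consistently and keep careful track of the integer translations $k_g$ in order to verify genuine $\rho_2(\G)$-invariance of $U$, and then invoke the elementary but essential topological observation that $S^1$ is not exhausted by any disjoint family of proper open arcs. It is worth noting that the second map $\varphi^\ast$ provided by Definition~\ref{semi:def} plays no role in the argument: minimality of \emph{both} actions is what is needed to upgrade a one-sided intertwiner to a true conjugacy.
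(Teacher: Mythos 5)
Your argument is correct and its strategy coincides with the paper's: surjectivity is forced by minimality of $\rho_1(\G)$ exactly as you describe, and injectivity follows from a plateau analysis using minimality of $\rho_2(\G)$ together with the connectedness of $S^1$. The two injectivity arguments are minor repackagings of one idea --- you form the open $\rho_2(\G)$-invariant union $U$ of plateau interiors and invoke the minimality dichotomy, while the paper propagates a single plateau around the circle to conclude that $\varphi$ is locally constant, hence constant, contradicting surjectivity. The technical obstacle you flag --- whether the integer $k_g$ appearing in the lifted equivariance $\widetilde{\rho_1(g)}\widetilde\varphi=\widetilde\varphi\widetilde{\rho_2(g)}\circ\tau_{k_g}$ is actually independent of the point --- is a real issue in general (see Remark~\ref{Elia:rem}), but it is already resolved by the time you need it: your surjectivity step has shown that $\widetilde\varphi$ is continuous, so the difference between the two lifts of $\rho_1(g)\varphi=\varphi\rho_2(g)$ is a continuous integer-valued function on $\R$ and therefore constant. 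The paper sidesteps lifts entirely by checking directly on $S^1$ that $\varphi|_{\rho_2(g)(A)}=(\rho_1(g)\varphi)|_A\circ\rho_2(g)^{-1}$ is constant whenever $\varphi|_A$ is, which slightly shortens the write-up; your version costs a little extra bookkeeping but is no less rigorous once the continuity of $\widetilde\varphi$ is in hand.
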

\begin{proof}
Let $\varphi$ be an increasing map of degree one such that
$$
\rho_1(g)\varphi=\varphi\rho_2(g)\qquad {\rm for\ every}\ g\in\G\ ,
$$
and denote by $\widetilde{\varphi}$ a good lift of $\varphi$. The image of $\varphi$ is obviously $\rho_1(\G)$-invariant, so our assumptions
imply that ${\rm Im}\, \varphi$ is dense in $S^1$. This implies in turn that the image of $\widetilde{\varphi}$ is dense in $\R$. So
the map $\widetilde{\varphi}$, being increasing, is continuous and surjective.
Therefore, 
the same is true for $\varphi$, and we are left to show that $\varphi$ is also injective.

Suppose by contradiction that 
 there exist points $x_0,y_0\in S^1$ such that $\varphi(x_0)=\varphi(y_0)$, and choose lifts $\widetilde{x}_0,\widetilde{y}_0$ of $x_0,y_0$ in $\R$
such that $\widetilde{x}_0<\widetilde{y}_0<\widetilde{x}_0+1$. Since $\widetilde{\varphi}$ is increasing and commutes with integral translations,
we have either $\widetilde{\varphi}(\widetilde{y}_0)=\widetilde{\varphi}(\widetilde{x}_0)$ or 
$\widetilde{\varphi}(\widetilde{y}_0)=\widetilde{\varphi}(\widetilde{x}_0+1)$. In any case, $\widetilde{\varphi}$ is constant on a non-trivial interval,
so there exists an open subset $U\subseteq S^1$ such that $\varphi|_U$ is constant. Let now $x$ be any point of $S^1$. Our assumptions imply that
there exists $g\in\G$ such that $\rho_2(g)^{-1}(x)\in U$, so $x\in V=\rho_2(g)(U)$,  where $V\subseteq S^1$ is open. Observe
that 
$$
\varphi|_V=(\varphi\rho_2(g))|_U\circ \rho_2(g)^{-1}|_V=
(\rho_1(g)\varphi)|_U\circ \rho_2(g)^{-1}|_V
$$
is constant. We have thus proved that $\varphi$ is locally constant, whence constant, and this contradicts the fact that $\varphi$ is surjective.
\end{proof}
\smallskip

\section{Ghys' Theorem}
We start by showing that representations sharing the same bounded Euler class are semi-conjugate.


\begin{prop}\label{Euler:char:prop}
Let $\rho_1,\rho_2$ be representations of $\G$ into $\omeo$ such that $e_b(\rho_1)=e_b(\rho_2)$. Then
$\rho_1$ is semi-conjugate
to $\rho_2$.
\end{prop}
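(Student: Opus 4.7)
The plan is to construct an explicit good lift $\widetilde\varphi:\R\to\R$ of the desired semi-conjugacy from orbit data, using the cohomological hypothesis to ``synchronize'' the two representations. Unpacking the hypothesis $e_b(\rho_1)=e_b(\rho_2)$ at the level of the canonical integer cocycle $c$ of Section~\ref{bounded:euler:section}, I pick a bounded $1$-cochain $u:\G\to\matZ$ with $\rho_1^*(c)-\rho_2^*(c)=\overline{\delta}u$. The role of $u$ is to twist the non-homomorphic sections $g\mapsto\widetilde{\rho_i(g)}\in\omeot$ into ``matched'' sections: setting $\sigma_1(g):=\widetilde{\rho_1(g)}$ and $\sigma_2(g):=\tau_{u(g)}\widetilde{\rho_2(g)}$, centrality of $\matZ\subset\omeot$ combined with the coboundary relation yields the crucial identity
$$
\sigma_i(g_1)\sigma_i(g_2)=\tau_{\rho_1^*(c)(g_1,g_2)}\,\sigma_i(g_1g_2),\qquad i=1,2.
$$
Consequently, the orbit maps $T_i(g):=\sigma_i(g)(0)\in\R$ satisfy the \emph{common} equivariance
$\sigma_i(g_1)\bigl(T_i(g_2)\bigr)=T_i(g_1g_2)+\rho_1^*(c)(g_1,g_2)$ for both $i=1,2$.

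Next I would produce $\widetilde\varphi$ as a monotone envelope of the ``graph'' $\Lambda=\{(T_2(g)+n,\,T_1(g)+n):g\in\G,\,n\in\matZ\}\subseteq\R\times\R$, by setting
$$
\widetilde\varphi(x)=\sup\bigl\{T_1(g)+n\,:\,g\in\G,\,n\in\matZ,\,T_2(g)+n\le x\bigr\}.
$$
Since $T_1(g)\in[0,1)$ and $T_2(g)-u(g)\in[0,1)$ with $u$ bounded, the supremum is finite for every $x$ and in fact $\widetilde\varphi(x)-x$ is bounded on $\R$. The defining supremum is tautologically weakly increasing in $x$, and a routine reindexing $n\mapsto n-1$ gives $\widetilde\varphi(x+1)=\widetilde\varphi(x)+1$. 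By Lemma~\ref{quadruple:lem}, $\widetilde\varphi$ is then a good lift of an increasing map of degree one $\varphi:S^1\to S^1$.

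The intertwining is then a direct sup-computation. Using that $\sigma_1(g)\in\omeot$ is continuous and order-preserving, commutes with integer translations, and satisfies $\sigma_1(g)(T_1(g'))=T_1(gg')+\rho_1^*(c)(g,g')$ (and likewise for $\sigma_2,T_2$ with the \emph{same} right-hand side), one reindexes $g'\leftrightarrow g^{-1}g''$ inside the sup to obtain $\sigma_1(g)\circ\widetilde\varphi=\widetilde\varphi\circ\sigma_2(g)$ for every $g\in\G$. Substituting $\sigma_2(g)=\tau_{u(g)}\widetilde{\rho_2(g)}$ and projecting to $S^1$ yields $\rho_1(g)\varphi=\varphi\rho_2(g)$, which is condition (1) of Definition~\ref{semi:def}. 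For condition (2), I would simply run the whole argument with $-u$ in place of $u$: the coboundary relation becomes $\rho_2^*(c)-\rho_1^*(c)=\overline{\delta}(-u)$ and the roles of $\rho_1,\rho_2$ are exchanged, producing the map $\varphi^*$.

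The main obstacle will be verifying that the sup-computation used for the intertwining is tight, i.e.\ that the supremum defining $\widetilde\varphi\circ\sigma_2(g)$ ends up being achieved along the same indexing set (up to reindexing) as the one defining $\sigma_1(g)\circ\widetilde\varphi$ without any loss. This is precisely where the coincidence of the two cocycle identities from the synchronization step is essential, and where Lemma~\ref{conto} is used to bound the oscillations of elements of $\omeot$ so as to ensure no slack appears. Once these ingredients are in place, all remaining checks (finiteness and integer-periodicity of $\widetilde\varphi$, increasing-of-degree-one of $\varphi$, and intertwining on $S^1$) are essentially formal.
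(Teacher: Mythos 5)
Your proof is correct, and it takes a genuinely different route from the paper's, even though both hinge on the same synchronizing idea: choose a bounded integer cochain $u$ so that the (a priori different) integer Euler cocycles of $\rho_1$ and $\rho_2$ become literally equal after twisting one section by $\tau_{u(\cdot)}$. The paper implements this by passing to the central extension $\overline\G$ of $\G$ by $\matZ$ determined by $e(\rho_1)=e(\rho_2)$ (Lemmas~\ref{euler-null}, \ref{Lift}): both twisted sections lift to genuine \emph{homomorphisms} $\overline\rho_1,\overline\rho_2\colon\overline\G\to\omeot$, and the semi-conjugacy is produced as $\widetilde\varphi(x)=\sup_{\overline g}\overline\rho_1(\overline g)^{-1}(\overline\rho_2(\overline g)(x))$, with the intertwining a consequence of the homomorphism property. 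You avoid the extension entirely: you keep the non-homomorphic sections $\sigma_1,\sigma_2\colon\G\to\omeot$, observe that they share the same cocycle defect $\rho_1^*(c)$, and build $\widetilde\varphi$ as a monotone envelope of the orbit graph $\{(T_2(g)+n,T_1(g)+n)\}$. These two constructions do not produce the same function $\widetilde\varphi$ in general -- yours is a ``staircase'' built only from the orbit of $0$, the paper's is a pointwise sup of elements of $\omeot$ -- but both are valid good lifts realizing the semi-conjugacy. What your approach buys is the elimination of the central-extension formalism; what the paper's buys is a cleaner intertwining computation via actual homomorphisms.

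One remark on your stated worry: the concern that ``the sup-computation is tight'' and that Lemma~\ref{conto} is needed to control oscillations is unfounded. Once you have the common identity $\sigma_i(g_1)(T_i(g_2))=T_i(g_1g_2)+\rho_1^*(c)(g_1,g_2)$ for \emph{both} $i=1,2$, the intertwining is an exact reindexing: applying $\sigma_1(g)$ to the defining sup, using $\sigma_1(g)(T_1(g')+n)=T_1(gg')+\rho_1^*(c)(g,g')+n$, substituting $g''=gg'$ and $m=n+\rho_1^*(c)(g,g')$, and then noting that $\sigma_2(g)$ carries the constraint $T_2(g')+n\le x$ bijectively to $T_2(g'')+m\le\sigma_2(g)(x)$ (by the \emph{same} identity for $i=2$), shows $\sigma_1(g)\circ\widetilde\varphi=\widetilde\varphi\circ\sigma_2(g)$ with no slack at all. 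Lemma~\ref{conto} plays no role here; the only place you need boundedness of anything is in the finiteness check $\widetilde\varphi(x)-x\in[-1,\,\sup|u|+1]$, which you already note. You should also record (it is a one-line check from $c(f,1)=c(1,g)=0$) that $u(1)=0$, which is what makes the $g=1$, $n=\lfloor x\rfloor$ choice witness nonemptiness of the index set.
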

\begin{proof}
By symmetry, it is sufficient  to exhibit an increasing map of degree one
$\varphi$ such that
$$
\rho_1(g)\varphi=\varphi\rho_2(g)$$
for every $g\in\G$.

 Since $e_b(\rho_1)=e_b(\rho_2)$ we have \emph{a fortiori} $e(\rho_1)=e(\rho_2)$. 
 By Proposition~\ref{central:extensions}, there exists a central extension
$$
1\tto{} \matZ\tto{\iota}\overline{\G}\tto{\pi} \G\tto{} 1
$$
 associated to $e(\rho_1)=e(\rho_2)$ (see Section~\ref{extension:sec}). 
Let $c_i=\rho_i^*(c)$, $i=1,2$, be the pull-back of the usual
representative of the Euler class of $\omeo$.
 If $s\colon \G\to\overline{\G}$ is any fixed section of $\pi$, then  the cocycle $c\colon \overline{C}^2(\G,\matZ)$ associated to $s$ defines the same cohomology class as $c_1$. Therefore,
there exists $v\colon \G\to\matZ$ such that 
$$
i(c_1(g_1,g_2))=s(g_1g_2)^{-1}s(g_1)s(g_2)i(v(g_1,g_2))^{-1}i(v(g_1))i(v(g_2))\ ,
$$
so by setting $s_1(g)=i(v(g))s(g)$ for every $g\in\G$ we obtain a section
 $s_1\colon \G \to\overline{\G}$  such that
$$
i(c_1(g_1,g_2))=s_1(g_1g_2)^{-1}s_1(g_1)s_1(g_2)\ .
$$
 
 Since $e_b(\rho_1)=e_b(\rho_2)$, there exists a bounded cochain $u\colon \G\to\matZ$ such that 
$\delta u=c_2-c_1$. 
If we set $s_2(g)=s_1(g)\iota(u(g))$, then we get
$$
i(c_2(g_1,g_2))=s_2(g_1g_2)^{-1}s_2(g_1)s_2(g_2)\ .
$$
Of course, once $i\in \{1,2\}$ is fixed, every element of $\overline{g}\in \overline{\G}$ admits a unique expression $\overline{g}=\iota(n)s_i(g)$.
For $i=1,2$ we define the map
$$
\overline{\rho}_i\colon \overline{\G}\to\omeot\, ,\qquad \overline{\rho}_i(\iota(n)s_i(g))=\tau_n \widetilde{{\rho}_i(g)}\ .
$$
Lemmas~\ref{euler-null} and~\ref{Lift} imply that
$\overline{\rho}_i$ is a homomorphism. 
For the sake of clarity, we give here a direct and short
proof of this fact.
Indeed, if we take elements $\overline{g}_j=i(n_j)s_i(g_j)\in\overline{\Gamma}$, $j=1,2$, then 
$$
\overline{g}_1\, \overline{g}_2=\iota(n_1)s_i(g_1)\iota(n_2)s_i(g_2)=\iota(n_1+n_2)\iota(c_i(g_1,g_2))s_i(g_1g_2)
$$
so
$$
\overline{\rho}_i(\overline{g}_1\, \overline{g}_2)=\tau_{n_1+n_2}\tau_{c_i(g_1,g_2)}\widetilde{{\rho}_i(g_1g_2)}
=\tau_{n_1} \widetilde{{\rho}_i(g_1)}\tau_{n_2} \widetilde{{\rho}_i(g_2)}=\overline{\rho}_i(\overline{g}_1)\overline{\rho}_i(\overline{g}_2)\ ,
$$
and this proves that  $\overline{\rho}_i$ is a homomorphism.

Let now $\overline{g}=\iota(n_1)s_1(g)=\iota(n_1-u(g))s_2(g)$. Then
$$
\overline{\rho}_1(\overline{g})^{-1}\overline{\rho}_2(\overline{g})=\left(\tau_{n_1}\widetilde{{\rho}_1(g)}\right)^{-1}\tau_{n_1-u(g)}\widetilde{{\rho}_2(g)}=\tau_{-u(g)}
\left(\widetilde{{\rho}_1(g)}\right)^{-1}\widetilde{{\rho}_2(g)}\ .
$$
Recall now that there exists $M\geq 0$ such that $|u(g)|\leq M$ for every $g\in \G$. Moreover, using that
$\widetilde{{\rho}_i(g)}(0)\in [0,1)$ for every $g\in \G$, $i=1,2$, it is immediate to realize that
$x-2\leq \left(\widetilde{{\rho}_1(g)}\right)^{-1}(\widetilde{{\rho}_2(g)}(x))\leq x+2$ for every $x\in\R$. As a consequence, for every $x\in \R$ we have
$$
x-M-2\leq \overline{\rho}_1(\overline{g})^{-1}(\overline{\rho}_2(\overline{g})(x))\leq x+M+2\ ,
$$
so we may define the value
$$
\widetilde{\varphi}(x)=\sup_{\overline{g}\in \overline{\G}} \overline{\rho}_1(\overline{g})^{-1}(\overline{\rho}_2(\overline{g})(x))\ \in \ [x-M-2,x+M+2]\ .
$$
We will now check that $\widetilde{\varphi}$ realizes the desired semi-conjugacy between $\rho_1$ and $\rho_2$. 

Being the supremum of (strictly) increasing maps that commute with integral translations, the map $\widetilde{\varphi}\colon\R\to\R$
is (possibly non-strictly) increasing and commutes with integral translations, so it is a good lift of an increasing map of degree one $\varphi\colon S^1\to S^1$.

For every $\overline{g}_0\in\overline{\G}$, $x\in\R$, we have 
\begin{align*}
\widetilde{\varphi}(\overline{\rho}_2(\overline{g}_0)(x)) &=
\sup_{\overline{g}\in\overline{\G}} \overline{\rho}_1(\overline{g})^{-1}(\overline{\rho}_2(\overline{g})(\overline{\rho}_2(\overline{g}_0)(x)))\\
& = \sup_{\overline{g}\in\overline{\G}} \overline{\rho}_1(\overline{g})^{-1}(\overline{\rho}_2(\overline{g}\, \overline{g}_0)(x))\\
& = \sup_{\overline{g}\in\overline{\G}} \overline{\rho}_1(\overline{g}\, \overline{g}_0^{-1})^{-1}(\overline{\rho}_2(\overline{g})(x))\\
& = \sup_{\overline{g}\in\overline{\G}} \overline{\rho}_1(\overline{g_0})(\overline{\rho}_1(\overline{g})^{-1}(\overline{\rho}_2(\overline{g})(x)))\\
& = \overline{\rho}_1(\overline{g}_0)\left( \sup_{\overline{g}\in\overline{\G}} \overline{\rho}_1(\overline{g})^{-1}(\overline{\rho}_2(\overline{g})(x))\right)\\
& = \overline{\rho}_1(\overline{g}_0)(\widetilde{\varphi}(x))\ ,
\end{align*}
and this easily implies that
$$
\varphi \rho_2(g_0)=\rho_1(g_0)\varphi
$$
for every $g_0\in \G$.
\end{proof}

We are now ready to characterize the set of representations having vanishing bounded Euler class as the semi-conjugacy class
of the trivial representation:

\begin{prop}\label{zeroprop}
Let $\rho\colon \G\to\omeo$ be a representation. Then the following conditions are equivalent:
\begin{enumerate}
\item
$\rho$ fixes a point, i.e.~there exists a point $x_0\in S^1$ such that $\rho(g)(x_0)=x_0$ for every
$g\in\G$;
\item
$\rho$ is semi-conjugate to the trivial representation;
\item
$e_b(\rho)=0$.
\end{enumerate}
\end{prop}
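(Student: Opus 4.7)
My plan is to establish the cycle of implications $(1)\Rightarrow(3)\Rightarrow(2)\Rightarrow(1)$, exploiting the hard work already done in Proposition~\ref{Euler:char:prop}. The real content is concentrated in the first implication, while the other two are formal consequences of earlier material; I expect no serious obstacle.

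For $(1)\Rightarrow(3)$, I would use the freedom in the choice of basepoint for the representative of $e_b$. By Lemma~\ref{only01}, for every $x_0 \in \R$ the cochain $c_{x_0}$ defined by $\widetilde{(f\circ g)}_{x_0}\circ \tau_{c_{x_0}(f,g)}=\widetilde{f}_{x_0}\widetilde{g}_{x_0}$ represents $e_b$. If $\rho$ fixes $x_0 \in S^1$, pick a lift $\widetilde{x}_0 \in \R$ of $x_0$; then for every $g \in \G$, since $\rho(g)(x_0) = x_0$, the preferred lift satisfies $\widetilde{\rho(g)}_{\widetilde{x}_0}(\widetilde{x}_0) - \widetilde{x}_0 \in [0,1) \cap \Z = \{0\}$, so $\widetilde{\rho(g)}_{\widetilde{x}_0}$ fixes $\widetilde{x}_0$. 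Consequently, for every $g_1,g_2 \in \G$ both $\widetilde{\rho(g_1)}_{\widetilde{x}_0}\widetilde{\rho(g_2)}_{\widetilde{x}_0}$ and $\widetilde{\rho(g_1 g_2)}_{\widetilde{x}_0}$ fix $\widetilde{x}_0$, forcing the integer $\rho^*(c_{\widetilde{x}_0})(g_1,g_2)$ to vanish. Hence $e_b(\rho) = [\rho^*(c_{\widetilde{x}_0})] = 0$.

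For $(3)\Rightarrow(2)$, I would first note that the trivial representation $\mathbf{1}\colon \G \to \omeo$ obviously fixes every point, so by the argument just given $e_b(\mathbf{1}) = 0$. Therefore, if $e_b(\rho) = 0 = e_b(\mathbf{1})$, Proposition~\ref{Euler:char:prop} immediately yields that $\rho$ is semi-conjugate to $\mathbf{1}$. For $(2)\Rightarrow(1)$, observe that semi-conjugacy of $\rho$ to $\mathbf{1}$ in particular provides an increasing map $\varphi\colon S^1 \to S^1$ of degree one satisfying $\rho(g)\circ \varphi = \varphi \circ \mathbf{1}(g) = \varphi$ for every $g \in \G$; evaluating at any point $x \in S^1$ gives $\rho(g)(\varphi(x)) = \varphi(x)$, so $\varphi(x)$ is the desired global fixed point of $\rho$.

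Finally, for completeness I would also verify $(1)\Rightarrow(2)$ directly (so that the equivalence of (1) and (2) is established without invoking bounded cohomology): if $\rho$ fixes $x_0$, the constant map $\varphi \equiv x_0$ is increasing of degree one, and serves simultaneously as both $\varphi$ and $\varphi^*$ in Definition~\ref{semi:def} since $\rho(g)\circ \varphi = \varphi = \varphi \circ \mathbf{1}(g)$ and $\mathbf{1}(g)\circ \varphi = \varphi = \varphi \circ \rho(g)$.
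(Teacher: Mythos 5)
Your proof is correct and follows essentially the same route as the paper: the cycle $(1)\Rightarrow(3)\Rightarrow(2)\Rightarrow(1)$, with $(1)\Rightarrow(3)$ via the vanishing of the pull-back of the basepoint-dependent cocycle $c_{x_0}$, $(3)\Rightarrow(2)$ via Proposition~\ref{Euler:char:prop}, and $(2)\Rightarrow(1)$ by evaluating $\rho(g)\varphi=\varphi$ at a point in the image. Your version is a bit more careful about distinguishing $x_0\in S^1$ from a chosen lift $\widetilde{x}_0\in\R$, and the optional direct check of $(1)\Rightarrow(2)$ via the constant map is a harmless extra; otherwise the two arguments coincide.
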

\begin{proof}
(1) $\Longrightarrow$ (3): Suppose that $\rho$ fixes the point $x_0$, and consider the cocycle $c_{x_0}$ representing
the bounded Euler class $e_b\in H^2(\omeo,\matZ)$. By definition, the pull-back $\rho^*(c_{x_0})$ vanishes identically,
and this implies that $e_b(\rho)=\rho^*(e_b)=0$.

The implication (3) $\Longrightarrow$ (2) is an immediate consequence of Proposition~\ref{Euler:char:prop}.

(2) $\Longrightarrow$ (1): If $\rho$ is semi-conjugate to the trivial representation $\rho_0$, then there exists 
an increasing map  $\varphi\colon S^1\to S^1$  of degree one such that 
$\rho({g})\varphi=\varphi\rho_0({g})=\varphi$ for every $g\in\G$.
Let now $x_0=\varphi(y_0)$ be a point in the image of $\varphi$. By evaluating the above equality at $y_0$, we get
 $$\rho(g)(x_0)=\rho({g})\varphi(y_0)=\varphi(y_0)=x_0$$ for every $g\in\G$. Therefore,
 $\rho$ fixes $x_0$, and we are done.
\end{proof}

We go on in our analysis of semi-conjugacy with the following:

\begin{lemma}\label{costante:lem}
Let $\rho_1,\rho_2\colon\G\to\omeo$ be representations, and let $\varphi\colon S^1\to S^1$ be an increasing map
of degree one such that
$$
\rho_1(g)\varphi=\varphi\rho_2(g)\qquad {\rm
for\ every}\ g\in\G\ .
$$ 
Let also $\widetilde{\varphi}$ be a good lift of $\varphi$, and denote by $\widetilde{\rho}_i(g)$ the preferred
lift of $\rho_i(g)$, $g\in\G$, $i=1,2$ (recall that such a lift is determined by the request that
$\widetilde{\rho}_i(0)\in [0,1)$). 
If
$\rho_1(\G)$ does not fix a point, then there exists a bounded map $u\colon\G\to\matZ$ such that
$$
\widetilde{\rho_1(g)}\widetilde{\varphi}=\widetilde{\varphi}\widetilde{\rho_2(g)}\tau_{u(g)}
$$
for every $g\in\G$.
\end{lemma}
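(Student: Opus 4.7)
The plan is to fix $g$ and show that the integer-valued function $k_g(x) := \widetilde{\rho_1(g)}\widetilde{\varphi}(x) - \widetilde{\varphi}\widetilde{\rho_2(g)}(x)$ is constant in $x\in\R$, then set $u(g)$ equal to this constant and verify boundedness in $g$.

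First I would set $A(x) := \widetilde{\rho_1(g)}(\widetilde{\varphi}(x))$ and $B(x) := \widetilde{\varphi}(\widetilde{\rho_2(g)}(x))$. Both $A$ and $B$ are increasing maps $\R\to\R$ with $f(x+1)=f(x)+1$, and both are lifts of $\rho_1(g)\varphi = \varphi\rho_2(g)$, so $k_g = A - B$ is integer-valued. The identity $\rho_1(g)\varphi = \varphi\rho_2(g)$ forces the discontinuity set $D^*\subset S^1$ of $\varphi$ to be $\rho_2(\G)$-invariant, so $D := p^{-1}(D^*)$ is preserved by every $\widetilde{\rho_2(g)}$ and coincides with the discontinuity sets of both $A$ and $B$. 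Moreover, at each $x\in D$ the two intervals $\widetilde{\rho_1(g)}((\widetilde{\varphi}(x^-),\widetilde{\varphi}(x^+)))$ and $(\widetilde{\varphi}(\widetilde{\rho_2(g)}(x)^-),\widetilde{\varphi}(\widetilde{\rho_2(g)}(x)^+))$ project to the same arc of $S^1$---namely the image under $\rho_1(g)$ of the gap of $\varphi$ at $p(x)$, which by the semi-conjugacy relation equals the gap of $\varphi$ at $\rho_2(g)(p(x))$---and since both intervals have length at most $1$, they must differ by an integer translation. In particular $A$ and $B$ have equal jump sizes at $x$, so the left and right limits of $k_g$ coincide at every point.

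The no-fixed-point hypothesis now enters to pin down $k_g$ at the discontinuities themselves. I would show that under this hypothesis every jump of $\widetilde{\varphi}$ has size strictly less than $1$: a jump of size exactly $1$ at some $x_0$, combined with monotonicity and $\widetilde{\varphi}(y+1)=\widetilde{\varphi}(y)+1$, would make $\widetilde{\varphi}$ constant on each connected component of $\R\setminus(x_0+\Z)$, so $\varphi$ would take only finitely many values; a short case analysis then produces a common fixed point of $\rho_1(\G)$, contradicting the hypothesis. With all jumps of $\widetilde{\varphi}$ strictly less than $1$, monotonicity of $A, B$ and integrality of $k_g$ pin $k_g(x_0)$ down to the unique integer in $[c-j,c+j]$, namely $c$, where $c$ is the common limit of $k_g$ at $x_0$ and $j<1$ is the common jump size. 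Hence $k_g$ is identically equal to an integer $u(g)$, which is precisely the identity $\widetilde{\rho_1(g)}\widetilde{\varphi} = \widetilde{\varphi}\widetilde{\rho_2(g)}\tau_{u(g)}$.

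Finally, boundedness of $u\colon\G\to\Z$ follows from a direct estimate: after replacing $\widetilde{\varphi}$ by an integer translate if necessary, assume $\widetilde{\varphi}(0)\in[0,1)$; combined with $\widetilde{\rho_i(g)}(0)\in[0,1)$ and $f(x+1)=f(x)+1$, this gives $A(0), B(0)\in[0,2)$, so $u(g) = A(0)-B(0)\in(-2,2)\cap\Z\subseteq\{-1,0,1\}$. The main obstacle will be the middle step---the matching of jump sizes at discontinuities---which relies on lifting the semi-conjugacy relation to an identity of gaps in $S^1$ and exploiting that two intervals in $\R$ of length less than $1$ projecting to the same arc of $S^1$ are necessarily integer translates of each other.
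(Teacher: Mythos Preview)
Your argument is correct, but it takes a genuinely different route from the paper. The paper's proof never analyses the discontinuities of $\widetilde{\varphi}$ at all: instead it uses the no-fixed-point hypothesis only to conclude that $\varphi$ is non-constant, then picks two points $x,y\in S^1$ with $\varphi(x)\neq\varphi(y)$, sets $\widetilde{K}=p^{-1}(\{x,y\})$, and observes that both $\widetilde\psi_1=\widetilde{\rho_1(g)}\widetilde{\varphi}$ and $\widetilde\psi_2=\widetilde{\varphi}\widetilde{\rho_2(g)}$ are strictly increasing on $\widetilde{K}$; a direct application of Lemma~\ref{conto} then shows that the integer-valued difference $\tau_g=\widetilde\psi_1-\widetilde\psi_2$ is constant on $\widetilde{K}$, and a second application extends this to arbitrary points. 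Your approach instead shows that $k_g$ is actually \emph{continuous} by matching jump intervals via the semi-conjugacy relation, which is more analytic and self-contained but correspondingly longer. One expository point: your claim that the two jump intervals ``must differ by an integer translation'' really needs length strictly less than $1$ (length equal to $1$ would project onto all of $S^1$), so you should establish the ``all jumps $<1$'' step \emph{before} the integer-translate argument rather than after; once that is in place, the fact that $j_A,j_B\in[0,1)$ and $j_A-j_B\in\Z$ immediately forces $j_A=j_B$ and hence $k_g(x^-)=k_g(x^+)$. The boundedness estimate at the end is essentially the same in both approaches.
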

\begin{proof}
Let us fix $g\in\G$, and set $f_i=\rho_i(g)$ and
$\widetilde{f}_i=\widetilde{\rho_i(g)}$.
Since $\widetilde{f}_1\widetilde{\varphi}$ and $\widetilde{\varphi}\widetilde{f}_2$ are both lifts of the same
map $f_1\varphi=\varphi f_2$, for every $x\in\R$ the value
$\tau_g(x)=\widetilde{f}_1\widetilde{\varphi}(x)-\widetilde{\varphi}\widetilde{f}_2(x)$ is an integer. 
Moreover, of course we have $\tau_g(x+k)=\tau_g(x)$ for every $x\in \R$, $k\in\matZ$.
We need to show that $\tau_g\colon\R\to\matZ$ is constant, and bounded independently of $g$. 

Since
$\varphi$ cannot be constant (because otherwise $\rho_1(\G)$ would fix the point in the image of $\varphi$),
we may choose two points $x,y\in S^1$ such that $\varphi(x)\neq \varphi(y)$, and we set
$\widetilde{K}=p^{-1}(\{x,y\})$. By construction, the restriction of $\widetilde{\varphi}$ to $\widetilde{K}$ is injective.
Since $\widetilde{f}_1$ is injective, this implies at once that  the composition $\widetilde{f}_1\widetilde{\varphi}$
is also injective on $\widetilde{K}$. Moreover, since $\varphi$ is injective on $\{x,y\}$, we have
$f_1(\varphi(x))\neq f_1(\varphi(y))$, so $\varphi(f_2(x))\neq \varphi(f_2(y))$, and  $\widetilde{\varphi}\widetilde{f}_2|_{\widetilde{K}}$ is injective too.

We first show that $\tau_g|_{\widetilde{K}}$ is constant.
Let $\widetilde{k},\widetilde{k}'$ lie in $\widetilde{K}$. 
Since both $\widetilde{\psi}_1=\widetilde{f}_1\widetilde{\varphi}$ and $\widetilde{\psi}_2=\widetilde{\varphi}\widetilde{f}_2$
are strictly increasing on $\widetilde{K}$,
we may apply Lemma~\ref{conto}, thus getting 
$$
|\tau_g(\widetilde{k}')-\tau_g(\widetilde{k})|=|\widetilde{\psi}_2(\widetilde{k}')-\widetilde{\psi}_1(\widetilde{k})-
(\widetilde{\psi}_2(\widetilde{k}')-\widetilde{\psi}_1(\widetilde{k}))|<1\ .
$$
Since $\tau_g$ has integral values, this implies that $\tau_g(\widetilde{k}')=\tau_g(\widetilde{k})$.

Let now $\widetilde{z}\in\R\setminus \widetilde{K}$ be fixed. 
In order to show that $\tau_g$ is constant it is sufficient to show that there exists $\widetilde{k}\in\widetilde{K}$
such that $\tau_g(\widetilde{z})=\tau_g(\widetilde{k})$. Our assumptions imply that 
$\widetilde{K}\cap (\widetilde{z},\widetilde{z}+1)$ contains at least two elements. Since $\widetilde\varphi$ is strictly
monotone on $\widetilde{K}$, this implies that there exists $\widetilde{k}\in\widetilde{K}$ such that $\widetilde\varphi$ 
is strictly monotone on $\{\widetilde{z}\}\cup(\widetilde{k}+\matZ)$. Therefore, we may apply again 
Lemma~\ref{conto} and conclude that $|\tau_g(\widetilde{z})-\tau_g(\widetilde{k})|<1$, so $\tau_g(\widetilde{z})=\tau_g(\widetilde{k})$. We have thus shown that $\tau_g$ is constant.

We are left to show that the value taken by $\tau_g$ is bounded independently of $g$. 
Let $a\in\R$ be such that $\widetilde{\varphi}([0,1])\subseteq [a,a+1]$.
For every $g\in \G$, $i=1,2$ we have that $\widetilde{\rho_i(g)}(0)\in [0,1)$, so
$$
|\widetilde{\varphi}(\widetilde{\rho_2(g)}(0))|\leq |a|+1
$$
and
\begin{align*}
|\widetilde{\rho_1(g)}(\widetilde{\varphi}(0))|& \leq |\widetilde{\varphi}(0)|+ |\widetilde{\rho_1(g)}(\widetilde{\varphi}(0))-\widetilde{\varphi}(0)-(\widetilde{\rho_1(g)}(0)-0)|+ |\widetilde{\rho_1(g)}(0)-0|\\ & <|\widetilde{\varphi}(0)|+1+1\leq |a|+3
\end{align*}
(see Lemma~\ref{conto}).
Therefore, we may conclude that
$$
|u(g)|=|\widetilde{\rho_1(g)}(\widetilde{\varphi}(0))-\widetilde{\varphi}(\widetilde{\rho_2(g)}(0))|<|\widetilde{\rho_1(g)}(\widetilde{\varphi}(0))|+|\widetilde{\varphi}(\widetilde{\rho_2(g)}(0))|<2|a|+4\ ,
$$
and we are done.
\end{proof}

\begin{rem}\label{Elia:rem}
The following example, which was suggested to the author by Elia Fioravanti, shows that, even in the case when $\rho_1$ is semi-conjugate to $\rho_2$,
the previous lemma does not hold if drop the hypothesis that $\rho_1(\G)$ does not have any fixed point. 
With notation as in the proof of Lemma~\ref{costante:lem}, let $f_2\in\omeo$  admit the lift
$\widetilde{f}_2(x+n)=x^2+n$ for every $x\in [0,1)$, $n\in\mathbb{Z}$, 
and let $\widetilde{\varphi}(x)=\lfloor x+1/2\rfloor$ be a good lift of the increasing map
of degree one $\varphi$ which sends every point of $S^1$ to $[0]$. Then we have
$$
\varphi(f_2(x))=[0]=\varphi(x)
$$
for every $x\in S^1$. Therefore, if $\rho_1\colon\matZ\to \omeo$ is the trivial representation and
$\rho_2=\rho_{f_2}\colon\mathbb{Z}\to\omeo$ sends $1$ to $f_2$, then $\varphi$ satisfies the hypotheses of Lemma~\ref{costante:lem}
(and $\rho_1$ is semi-conjugate
to $\rho_2$ by Proposition~\ref{zeroprop}).
But if as a lift of $\rho_1(1)={\rm Id}_{S^1}$ we choose the identity $\widetilde{f}_1$ of $\R$, then 
$$
\widetilde{f}_1(\widetilde{\varphi}(0))-\widetilde{\varphi}(\widetilde{f}_2(0))=\widetilde{f}_1(0)-\widetilde{\varphi}(0)=0\ ,
$$
while
$$
\widetilde{f}_1(\widetilde{\varphi}(1/2))-\widetilde{\varphi}(\widetilde{f}_2(1/2))=\widetilde{f}_1(1)-\widetilde{\varphi}(1/4)=1\ .
$$
\end{rem}

\bigskip

We are now ready to prove the main result of this chapter, that we recall here for the convenience of the reader:

\begin{thm}
Let $\rho_1,\rho_2$ be representations of $\G$ into $\omeo$. Then
$\rho_1$ is semi-conjugate
to $\rho_2$ if and only if $e_b(\rho_1)=e_b(\rho_2)$.
\end{thm}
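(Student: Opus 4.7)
The forward direction is already handled by Proposition~\ref{Euler:char:prop}, so the plan is to prove the converse: if $\rho_1$ is semi-conjugate to $\rho_2$, then $e_b(\rho_1)=e_b(\rho_2)$. I would split into two cases depending on whether $\rho_1(\G)$ has a global fixed point.

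First, the degenerate case. Suppose $\rho_1(\G)$ fixes a point of $S^1$. By Proposition~\ref{zeroprop}, $\rho_1$ is then semi-conjugate to the trivial representation, and since semi-conjugacy is an equivalence relation, so is $\rho_2$; applying Proposition~\ref{zeroprop} again, both representations have vanishing bounded Euler class, and we are done.

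Second, the generic case, where $\rho_1(\G)$ has no global fixed point. Fix an increasing degree-one map $\varphi$ with $\rho_1(g)\varphi=\varphi\rho_2(g)$ for all $g\in\G$, and let $\widetilde{\varphi}$ be a good lift. Lemma~\ref{costante:lem} then produces a bounded map $u\colon\G\to\matZ$ such that
\[
\widetilde{\rho_1(g)}\,\widetilde{\varphi}=\widetilde{\varphi}\,\widetilde{\rho_2(g)}\,\tau_{u(g)}\qquad\text{for every }g\in\G,
\]
where $\widetilde{\rho_i(g)}$ denotes the preferred lift of $\rho_i(g)$. Writing $c_i=\rho_i^*(c)\in \overline{C}^2(\G,\matZ)$ for the pull-back of the standard Euler cocycle, we have $\widetilde{\rho_i(g_1)}\,\widetilde{\rho_i(g_2)}=\widetilde{\rho_i(g_1g_2)}\,\tau_{c_i(g_1,g_2)}$ by the very definition of $c_i$. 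The key step is to compute $\widetilde{\rho_1(g_1)}\,\widetilde{\rho_1(g_2)}\,\widetilde{\varphi}$ in two different ways. On one side,
\[
\widetilde{\rho_1(g_1g_2)}\,\tau_{c_1(g_1,g_2)}\,\widetilde{\varphi}=\widetilde{\varphi}\,\widetilde{\rho_2(g_1g_2)}\,\tau_{u(g_1g_2)+c_1(g_1,g_2)};
\]
on the other, iterating the semi-conjugacy relation and using that integral translations are central in $\omeot$,
\[
\widetilde{\rho_1(g_1)}\,\widetilde{\varphi}\,\widetilde{\rho_2(g_2)}\,\tau_{u(g_2)}
=\widetilde{\varphi}\,\widetilde{\rho_2(g_1)}\,\widetilde{\rho_2(g_2)}\,\tau_{u(g_1)+u(g_2)}
=\widetilde{\varphi}\,\widetilde{\rho_2(g_1g_2)}\,\tau_{c_2(g_1,g_2)+u(g_1)+u(g_2)}.
\]
Since $\widetilde{\varphi}\,\widetilde{\rho_2(g_1g_2)}$ is injective, comparing the translation exponents yields
\[
c_2(g_1,g_2)-c_1(g_1,g_2)=u(g_1g_2)-u(g_1)-u(g_2)=-\overline\delta u(g_1,g_2),
\]
so $c_2-c_1$ is the coboundary of the bounded cochain $-u\in \overline{C}^1_b(\G,\matZ)$.

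Therefore $[c_1]=[c_2]$ in $H^2_b(\G,\matZ)$, i.e.\ $e_b(\rho_1)=e_b(\rho_2)$. The only genuine obstacle is obtaining the uniformly bounded integer function $u$ satisfying the intertwining identity at the level of preferred lifts; but this is precisely what Lemma~\ref{costante:lem} was designed to provide, and the rest of the argument is a direct algebraic manipulation within the central extension $1\to\matZ\to\omeot\to\omeo\to 1$.
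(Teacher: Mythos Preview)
Your proof is correct and follows essentially the same route as the paper: invoke Proposition~\ref{Euler:char:prop} for one direction, then for the converse split into the fixed-point case (handled via Proposition~\ref{zeroprop}) and the generic case, where Lemma~\ref{costante:lem} supplies the bounded cochain $u$ and an algebraic manipulation in the central extension yields $c_1-c_2=\overline\delta u$. The paper organizes the final computation slightly differently (substituting three expressions for $\widetilde\varphi$ into one another rather than expanding $\widetilde{\rho_1(g_1)}\widetilde{\rho_1(g_2)}\widetilde\varphi$ two ways), but the content is identical.

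One small correction: the map $\widetilde\varphi\,\widetilde{\rho_2(g_1g_2)}$ need not be injective, since $\widetilde\varphi$ is only weakly increasing. The correct justification for cancelling is that this composition commutes with integral translations (both factors do), so $\widetilde\varphi\,\widetilde{\rho_2(g_1g_2)}\,\tau_a=\widetilde\varphi\,\widetilde{\rho_2(g_1g_2)}\,\tau_b$ gives, for every $z\in\R$, $\widetilde\varphi\,\widetilde{\rho_2(g_1g_2)}(z)+a=\widetilde\varphi\,\widetilde{\rho_2(g_1g_2)}(z)+b$, hence $a=b$.
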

\begin{proof}
We already know from Proposition~\ref{Euler:char:prop} that representations sharing the same bounded Euler class are semi-conjugate.

Let us suppose that $\rho_1$ is semi-conjugate
to $\rho_2$. If $\rho_1(\G)$ fixes a point of $S^1$, then Proposition~\ref{zeroprop} implies that $e_b(\rho_1)=e_b(\rho_2)=0$, and we are done.
Therefore, we may assume that  $\rho_1(\G)$ does not fix a point in $S^1$.

We denote by $\widetilde{\varphi}$ a good lift of $\varphi$, and by $u\colon \G\to\matZ$ the bounded function provided by Lemma~\ref{costante:lem}. In order to conclude that $e_b(\rho_1)=e_b(\rho_2)$ it is sufficient to show that,
if $c$ is the canonical representative of the bounded Euler class of $\omeo$, then
\begin{equation}\label{boundary:eq}
\rho_1^*(c)-\rho_2^*(c)=\delta u\ .
\end{equation}
Let us fix $g_1,g_2\in \G$. By the definition of $u$ we have
\begin{align*}
\widetilde{\varphi}&=\widetilde{\rho_1(g_1)}\circ \widetilde{\varphi}\circ\left(\widetilde{\rho_2(g_1)}\right)^{-1}\circ \tau_{-u(g_1)}\ ,\\
\widetilde{\varphi}&=\widetilde{\rho_1(g_2)}\circ \widetilde{\varphi}\circ\left(\widetilde{\rho_2(g_2)}\right)^{-1}\circ \tau_{-u(g_2)}\ ,\\
\widetilde{\varphi}&=\left(\widetilde{\rho_1(g_1g_2)}\right)^{-1}\circ \widetilde{\varphi}\circ\widetilde{\rho_2(g_1g_2)}\circ \tau_{u(g_1g_2)}\ .
\end{align*}
Now we substitute the first equation in the right-hand side of the second, and the third equation in the right-hand side of the obtained
equation. Using that
$$
\tau_{c(\rho_i(g_1),\rho_i(g_2))}=\widetilde{\rho_i(g_1)}\widetilde{\rho_i(g_2)}\left(\widetilde{\rho_i(g_1g_2)}\right)^{-1}
$$
we finally get
$$
\widetilde{\varphi}=\widetilde{\varphi}\circ \tau_{\rho_1^*(c)(g_1,g_2)-\rho_2^*(c)(g_1,g_2)-u(g_1)-u(g_2)+u(g_1g_2)}\ ,
$$
which is equivalent to equation~\eqref{boundary:eq}. 
\end{proof}

In the case of a single homeomorphism we obtain the following:

\begin{cor}\label{rot:cor}
Let $f\in\omeo$. Then:
\begin{enumerate}
\item
$\rot(f)=0$ if and only if $f$ has a fixed point.
\item
$\rot(f)\in\mathbb{Q}/\matZ$ if and only if $f$ has a finite orbit.
\item
If $\rot(f)=[\alpha]\notin \mathbb{Q}/\matZ$ and every orbit of $f$ is dense, then $f$ is conjugate
to the rotation of angle $\alpha$ (the converse being obvious).
\end{enumerate}
\end{cor}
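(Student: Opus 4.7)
My plan is to deduce all three statements from results already established in the chapter, with the representation $\rho_f\colon\matZ\to\omeo$, $n\mapsto f^n$, acting as the bridge between $f$ and its bounded Euler class.

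For (1), I would simply observe that by definition $\rot(f)=e_b(\rho_f)$ under the isomorphism $H^2_b(\matZ,\matZ)\cong\R/\matZ$, so $\rot(f)=0$ is equivalent to $e_b(\rho_f)=0$. By Proposition~\ref{zeroprop} this is in turn equivalent to $\rho_f(\matZ)=\langle f\rangle$ having a common fixed point, which is obviously the same as $f$ itself fixing a point of $S^1$.

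For (2), I would leverage (1) together with the homogeneity of the rotation number. If $\rot(f)=[p/q]\in\matQ/\matZ$ with $q>0$, then by Corollary~\ref{rot:hom} one has $\rot(f^q)=q\rot(f)=[p]=0$, so by (1) there exists $x_0\in S^1$ with $f^q(x_0)=x_0$; the $f$-orbit of $x_0$ then has at most $q$ elements. Conversely, if $f$ admits a finite orbit of cardinality $q$, then $f^q$ fixes a point of that orbit, whence $\rot(f^q)=0$ by (1), and therefore $q\cdot\rot(f)=0$ in $\R/\matZ$, i.e.\ $\rot(f)\in\matQ/\matZ$.

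For (3), which is the heart of the statement, I would compare $f$ with the actual rotation $R_\alpha\colon[x]\mapsto[x+\alpha]$. The lift $\widetilde{R}_\alpha(x)=x+\alpha$ gives $\rott(\widetilde{R}_\alpha)=\alpha$, so $\rot(R_\alpha)=[\alpha]=\rot(f)$, and hence $e_b(\rho_f)=e_b(\rho_{R_\alpha})$. Theorem~\ref{Euler:thm} then gives a semi-conjugacy between $\rho_f$ and $\rho_{R_\alpha}$. The orbits of $\rho_f$ are dense by hypothesis, while the orbits of $\rho_{R_\alpha}$ are dense because $\alpha\notin\matQ$ — any closed invariant subset $C\subseteq S^1$ would lift to a closed $\matZ$-translation-invariant proper subset of $\R$ containing a coset of the dense subgroup $\matZ+\alpha\matZ$, which is impossible. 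Therefore Proposition~\ref{dense:orbits} promotes the semi-conjugacy to a true conjugacy, giving $\varphi\in\omeo$ with $\rho_f=\varphi\,\rho_{R_\alpha}\,\varphi^{-1}$; evaluating at $1\in\matZ$ yields $f=\varphi R_\alpha\varphi^{-1}$.

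There is no real obstacle: the three items are successive corollaries of Proposition~\ref{zeroprop}, Corollary~\ref{rot:hom}, Theorem~\ref{Euler:thm}, and Proposition~\ref{dense:orbits}. The only point that requires a (standard) side remark is the minimality of irrational rotations, which I sketched above and which is needed precisely to be able to invoke Proposition~\ref{dense:orbits} on the $R_\alpha$ side.
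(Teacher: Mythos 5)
Your proof is correct and follows the paper's own argument essentially verbatim: part (1) from Proposition~\ref{zeroprop}, part (2) from (1) together with Corollary~\ref{rot:hom}, and part (3) by combining Theorem~\ref{Euler:thm} with Proposition~\ref{dense:orbits}. The only addition is your explicit verification that the orbits of $R_\alpha$ are dense when $\alpha$ is irrational, which the paper tacitly assumes when invoking Proposition~\ref{dense:orbits}; that check is indeed required by the hypotheses of that proposition, so including it makes the argument a touch more careful than the original without changing its structure.
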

\begin{proof}
Claim (1) follows from Proposition~\ref{zeroprop}, and claim~(2) follows from claim (1) and the fact that
$\rot(f^n)=n\rot(f)$ for every $n\in\matZ$ (see Corollary~\ref{rot:hom}).

If $f$ is as in (3), then Theorem~\ref{Euler:thm} implies that $f$ is semi-conjugate to the rotation of angle $\rot(f)$, so the conclusion
follows from Proposition~\ref{dense:orbits}.
\end{proof}

\section{The canonical real bounded Euler cocycle}
As it should be clear from the previous chapters, bounded cohomology with real coefficients enjoys many more properties that bounded cohomology with integral coefficients. 
Therefore, it makes sense to study the image of the integral bounded Euler class via the change of coefficients map
$\Z\to\R$. It turns out that, when working with real cochains rather than with integral ones, a preferred representative of the bounded Euler class exists, which was defined by Matsumoto 
in~\cite{Matsu:numerical}. In fact, this representative coincides with the unique \emph{homogeneous} representative of the real bounded Euler class,
according to the definition of homogeneous cocyles given by Bouarich~\cite{Bua1} and described in Section~\ref{homo:coc}.

\begin{defn}\label{real:euler:defn}
The real Euler class $e^\R\in H^2(\omeo,\R)$ 
(resp.~bounded real Euler class $e_b^\R\in H_b^2(\omeo,\R)$) 
is the image of $e\in H^2(\omeo,\matZ)$ (resp.~of $e_b\in H_b^2(\omeo,\matZ)$) under the  change of coefficients
homomorphism. If $\rho\colon \G\to\omeo$
is any representation, then we also set $e^\R(\rho)=\rho^*(e^\R)$ and $e_b^\R(\rho)=\rho^*(e_b^\R)$.
\end{defn}

Let us recall from Section~\ref{rotation:sec} that, for every $f\in\omeo$, one has 
$$
\rot(f)=[\rott (\widetilde{f})]\ \in \R/\Z\ ,
$$
where $\widetilde{f}$ is any lift of $f$ to $\omeot$, and
$$
\rott(\widetilde{f})=\lim_{n\to+\infty} \frac{\widetilde{f}^n(0)}{n}\ \in \R\ . 
$$

Of course, we can interpret $\rott$ as an element of $\overline{C}^1(\omeot,\R)$, and we now set 
$$
\widetilde{\tau}=-\overline{\delta} \rott \ \in \overline{C}^2(\omeot,\R)\ .
$$
By construction, $\widetilde{\tau}$ is a cocycle, and 
$$
\widetilde{\tau}(\widetilde{f},\widetilde{g})=\rott(\widetilde{f}\widetilde{g})-\rott(\widetilde{f})-\rott(\widetilde{g})\ .
$$
It readily follows from the definition that, if $\tau_n$ is the translation $x\mapsto x+n$, then $\rott(\widetilde{f}\circ \tau_n)=\rott(\widetilde{f})+n$. 
Using this, it is immediate to check that $\widetilde{\tau}$ descends to a cochain $\tau\in \overline{C}^2(\omeo,\R)$ such that 
$$
\tau(f,g)=
\widetilde{\tau}(\widetilde{f},\widetilde{g})=\rott(\widetilde{f}\widetilde{g})-\rott(\widetilde{f})-\rott(\widetilde{g})
$$
for any pair of arbitrary lifts $\widetilde{f},\widetilde{g}$ of $f,g$, respectively. Since $\widetilde{\tau}$ is a cycle, also $\tau$ is so.

\begin{prop}
The cocycle $\tau$ is bounded. Moreover, 
$\tau$ is a representative of the real bounded Euler class $e_b^\R\in H^2_b(\omeo,\R)$.
\end{prop}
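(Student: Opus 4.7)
The plan is to deduce both claims from Proposition~\ref{rott:prop}, which states that $\rott$ is a homogeneous quasimorphism on $\omeot$ lying at bounded distance from $-\psi$, where $\psi$ satisfies $p^*(c) = \overline{\delta}\psi$ (see equation~\eqref{psieq}). Since $\rott$ is a quasimorphism, its coboundary $\widetilde{\tau} = -\overline{\delta}\rott$ satisfies $\|\widetilde{\tau}\|_\infty = D(\rott) < \infty$; as $\tau$ is obtained by descending $\widetilde{\tau}$ from $\omeot$ to $\omeo$, the descent is isometric and $\tau \in \overline{C}^2_b(\omeo,\R)$, which proves boundedness.

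For the second assertion, recall that by Definition~\ref{real:euler:defn} the cocycle $c$, viewed as a real cochain via the change-of-coefficients map $\Z \to \R$, represents $e_b^\R$. It therefore suffices to exhibit a bounded cochain $u \in \overline{C}^1_b(\omeo,\R)$ with $\tau - c = -\overline{\delta}u$. The strategy is to construct $u$ by descending a natural bounded function from $\omeot$. Pulling back to $\omeot$ and using $p^*\tau = \widetilde{\tau} = -\overline{\delta}\rott$ together with $p^*c = \overline{\delta}\psi$, we obtain
\[
p^*(\tau - c) \;=\; -\overline{\delta}(\rott + \psi).
\]
Proposition~\ref{rott:prop} guarantees that $\rott + \psi$ is bounded on $\omeot$, so it remains only to verify that $\rott + \psi$ is invariant under right multiplication by integer translations $\tau_n$. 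If so, it factors as $p^*u$ for some bounded $u \in \overline{C}^1_b(\omeo,\R)$, and the injectivity of $p^*$ on cochains (a consequence of the surjectivity of $p_\ast$) then yields $\tau - c = -\overline{\delta}u$, as required.

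The only computational step is the $\Z$-invariance of $\rott + \psi$. From the formula $\rott(h) = \lim_n h^n(0)/n$ one reads off immediately that $\rott(h\tau_n) = \rott(h) + n$. On the other hand, since $\tau_n \in \ker p_\ast$ yields $\widetilde{p(h\tau_n)} = \widetilde{p(h)}$, the defining relation $\psi(h) = h^{-1}\widetilde{p(h)} \in \Z$ gives $\psi(h\tau_n) = \psi(h) - n$. Adding these, the $\pm n$ terms cancel and $(\rott + \psi)(h\tau_n) = (\rott + \psi)(h)$, so $\rott + \psi$ is $\Z$-invariant. No step seems to present a genuine obstacle: the argument essentially cashes in the content of Proposition~\ref{rott:prop}, with the invariance check being the only slightly new computation.
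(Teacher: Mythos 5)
Your proof is correct and follows essentially the same route as the paper: both show that $\rott+\psi$ is bounded and $\Z$-invariant, hence descends to a bounded cochain $b$ on $\omeo$, and then read off $\tau = c - \overline{\delta}b$ from $\widetilde{\tau} = -\overline{\delta}\rott = \overline{\delta}(\psi - (\rott+\psi))$. (Your signs in the invariance check, $\rott(h\tau_n)=\rott(h)+n$ and $\psi(h\tau_n)=\psi(h)-n$, are the correct ones; also your direct boundedness argument via the defect of $\rott$ is slightly cleaner than extracting it at the end from $\tau = c - \overline{\delta}b$.)
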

\begin{proof}
For every element $f\in\omeo$ we denote by $\widetilde{f}\in\omeot$ the unique lift of $f$ such that $\widetilde{f}(0)\in [0,1)$. 
Let $c\in \overline{Z}^2_b(\omeo,\Z)$ be the usual representative of $e_b$, so that
$$
c(f,g)=\widetilde{f}(\widetilde{g}(x_0))-\widetilde{fg}(x_0)
$$
for every $x_0\in\R$. 

Let $p\colon \omeot\to\omeo$ be the covering map. We have proved in Section~\ref{rotation:sec} that  $p^*(c)=\overline{\delta}\psi$, where $\psi\in \overline{C}^1(\omeot,\R)$ is defined by
$$
\psi(h)=\widetilde{h}^{-1}\circ \widetilde{p(h)}\in \ker p=\Z\ 
$$
for every $h\in\omeot$.
Moreover, the homogeneous quasimorphism $\psi_h$ associated to $\psi$ satisfies $\psi_h=-\rott$ (see again Section~\ref{rotation:sec}). 
Let now $\widetilde{b}\in \overline{C}^1_b(\omeot,\R)$ be the bounded cochain such that $\psi-\psi_h=\widetilde{b}$. 
We have already observed that,
if $\tau_n$ is the translation $x\mapsto x+n$, then 
$\psi_h(\widetilde{f}\circ \tau_n)=\psi_h(\widetilde{f})+n$. The same equality also holds when $\psi_h$ is replaced by $\psi$, so 
$\widetilde{b}(\widetilde{f}\circ\tau_n)=\widetilde{b}(\widetilde{f})$, and
$\widetilde{b}=p^*(b)$ for a bounded cochain $b\in\overline{C}^1_b(\omeo,\R)$. 
Recall now that $\overline{\delta}{\psi}$ is the pull-back via $p$ of the Euler cocycle $c$.
From $\widetilde{\tau}=\overline{\delta}\psi_h=\overline{\delta}(\psi-\widetilde{b})$ we thus deduce
$$
\tau=c-\overline{\delta}b\ .
$$
This shows at once that $\tau$ is bounded, and that $[\tau]=e_b^\R$ in $H^2_b(\omeo,\R)$.
\end{proof}

The previous results allow us to provide the following characterization of semi-conjugacy in terms of the \emph{real} bounded Euler class. We say that a subset $S$ of $\G$ generates
the first homology of $\G$ if the set $\{\pi(s),\, s\in S\}$ generates $\G/[\G,\G]$, where $\pi\colon \G\to \G/[\G,\G]$ is the natural projection.

\begin{thm}[\cite{Matsu:numerical}]\label{Matsu:char}
 Let $\rho_1,\rho_2\colon \G\to\omeo$ be representations, and suppose that $S\subseteq \G$ generates the first homology of $\G$. Then $\rho_1$ and $\rho_2$ are semi-conjugate if and only if the following conditions hold:
 \begin{enumerate}
  \item $\tau(\rho_1(g),\rho_1(g'))=\tau(\rho_2(g),\rho_2(g'))$ for every $g,g'\in \G$;
  \item $\rot(\rho_1(g))=\rot(\rho_2(g))$ for every $g\in S$, where $S$ generates the first homology of $\G$.
 \end{enumerate}
\end{thm}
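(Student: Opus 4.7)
The plan is to reduce the statement to Ghys' Theorem~\ref{Euler:thm}, which classifies semi-conjugacy classes by the \emph{integer} bounded Euler class $e_b(\rho)\in H^2_b(\G,\Z)$. Conditions (1) and (2) will be shown to be equivalent to the equality $e_b(\rho_1)=e_b(\rho_2)$, by decomposing this integer class into its real part (controlled by $\tau$) and its ``torsion part'' (controlled by rotation numbers through the Bockstein connecting map).

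\medskip

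For the forward direction, assume $\rho_1$ and $\rho_2$ are semi-conjugate. By Theorem~\ref{Euler:thm}, $e_b(\rho_1)=e_b(\rho_2)$, and in particular $e_b^\R(\rho_1)=e_b^\R(\rho_2)$ in $H^2_b(\G,\R)$. Using that $\rott$ is homogeneous, a direct computation from the defining formula of $\tau$ shows $\tau(f^n,f^m)=0$ for every $f\in\omeo$ and $n,m\in\Z$, so $\tau$ is homogeneous in the sense of Section~\ref{homo:coc}, and so is $\rho_i^*\tau$. Since $\rho_i^*\tau$ represents $e_b^\R(\rho_i)$, the uniqueness of homogeneous representatives (Proposition~\ref{homo:eu}) forces $\rho_1^*\tau=\rho_2^*\tau$ as cocycles, which is (1). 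For (2), let $\pi_g\colon\Z\to\G$ send $1\mapsto g$: by construction of $\rot$, one has $\pi_g^*(e_b(\rho_i))=e_b(\rho_i\circ\pi_g)=\rot(\rho_i(g))$ under $H^2_b(\Z,\Z)\cong\R/\Z$, so $e_b(\rho_1)=e_b(\rho_2)$ implies $\rot(\rho_1(g))=\rot(\rho_2(g))$ for \emph{every} $g\in\G$, a fortiori on $S$.

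\medskip

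For the reverse direction, the short exact sequence $0\to\Z\to\R\to\R/\Z\to 0$ of coefficients yields a short exact sequence of cochain complexes (any $\R/\Z$-valued cochain is automatically bounded and admits a bounded lift to $\R$ via the section $[0,1)$), and the associated long exact sequence, together with $H^1_b(\G,\R)=0$, gives
\[
0 \to H^1(\G,\R/\Z) \xrightarrow{\delta} H^2_b(\G,\Z) \xrightarrow{j^*} H^2_b(\G,\R).
\]
By (1) we have $\rho_1^*\tau=\rho_2^*\tau$, hence $j^*(e_b(\rho_1)-e_b(\rho_2))=e_b^\R(\rho_1)-e_b^\R(\rho_2)=0$, and by exactness there is a unique $\varphi\in\Hom(\G,\R/\Z)$ with $\delta\varphi=e_b(\rho_1)-e_b(\rho_2)$. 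Since $\R/\Z$ is abelian, $\varphi$ factors through $H_1(\G,\Z)$, so it is determined by its restriction to $S$. Everything therefore reduces to proving the identity
\[
\varphi(g)=\rot(\rho_1(g))-\rot(\rho_2(g))\quad\text{in }\R/\Z\qquad\text{for every }g\in\G,
\]
since then (2) forces $\varphi|_S\equiv 0$, hence $\varphi\equiv 0$, so $e_b(\rho_1)=e_b(\rho_2)$ and Theorem~\ref{Euler:thm} gives the desired semi-conjugacy.

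\medskip

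The main obstacle is thus the explicit identification displayed above, which is a careful compatibility check between two appearances of the same connecting map. The strategy is to apply naturality to the Bockstein sequence along $\pi_g\colon\Z\to\G$, obtaining a commutative square in which $\pi_g^*(\delta\varphi)=\delta_\Z(\pi_g^*\varphi)$ in $H^2_b(\Z,\Z)$. The homomorphism $\pi_g^*\varphi\in\Hom(\Z,\R/\Z)=H^1(\Z,\R/\Z)$ corresponds to $\varphi(g)\in\R/\Z$, and the computation in the proof of Proposition~\ref{h2zz} shows that $\delta_\Z$ is precisely the isomorphism $\R/\Z\cong H^2_b(\Z,\Z)$ used to define $\rot$; combined with $\pi_g^*(e_b(\rho_i))=\rot(\rho_i(g))$ this yields the displayed formula (possibly up to a uniform sign that can be absorbed into the normalisation of $\delta$). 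Once this bookkeeping is carried out, the theorem follows immediately from the two paragraphs above.
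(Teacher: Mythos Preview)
Your proof is correct and follows essentially the same route as the paper's: both directions reduce to Ghys' Theorem~\ref{Euler:thm} via the Bockstein-type exact sequence $0\to H^1(\G,\R/\Z)\to H^2_b(\G,\Z)\to H^2_b(\G,\R)$, with condition~(1) handled by the uniqueness of homogeneous representatives (Proposition~\ref{homo:eu}) and condition~(2) by naturality along $\Z\to\G$, $1\mapsto s$, together with the identification $\rot(\cdot)=\delta^{-1}(e_b(\cdot))$ coming from Proposition~\ref{h2zz}. The only cosmetic differences are notational (the paper writes $\psi$ for your $\varphi$ and $\alpha$ for your $\pi_g$) and that the paper does not comment on the sign, which indeed causes no trouble.
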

\begin{proof}
 Let us first suppose that $\rho_1$ and $\rho_2$ are semi-conjugate. Then Theorem~\ref{Euler:thm} ensures that $e_b(\rho_1)=e_b(\rho_2)$, hence $e^\R_b(\rho_1)=e^\R_b(\rho_2)$.
 Now the cocycle $\tau$ is homogeneous according to the definition given in Section~\ref{homo:coc}. Therefore,
 by Proposition~\ref{homo:eu}, $\rho_i^*(\tau)$ is the unique homogeneous representative of $e_b^\R(\rho_i)$ in $H^2_b(\G,\R)$, $i=1,2$.
 Since $e_b^\R(\rho_i)=e_b^\R(\rho_2)$, we thus have $\rho_1^*(\tau)=\rho_2^*(\tau)$.
 Moreover, for every $g\in\G$ the representations $\alpha_1,\alpha_2\colon \Z\to\omeo$, $\alpha_i(n)=\rho_i(g^n)$ are semi-conjugate, and again by Theorem~\ref{Euler:thm} 
 (together with our definition of rotation number) this implies
 that $\rot(\rho_1(g))=\rot(\rho_2(g))$.
 
 Let us now assume that conditions (1) and (2) hold. The short exact sequence of complexes
 $$
 0\to C^\bullet_b(\G,\Z)\to C_b^\bullet(\G,\R)\to C^\bullet(\G,\R/\Z)\to 0
 $$
 gives rise to the following exact sequence:
 $$
 \xymatrix{
 H^1_b(\G,\R)=0 \ar[r] & H^1(\G,\R/\Z)\ar[r]^{\delta} & H^2_b(\G,\Z)\ar[r] & H^2_b(\G,\R)\ .
 }
 $$
Condition (1) implies that $e_b^\R(\rho_1)=e_b^\R(\rho_2)$, so there exists a unique $\psi\in H^1(\G,\R/\Z)=\Hom(\G,\R/\Z)$ such that
$e_b(\rho_1)-e_b(\rho_2)=\delta\psi$. Let us now fix $s\in S$, and consider the map $\alpha\colon \Z\to \G$ given by $n\mapsto s^n$. Let also
$\rho'_i\colon \Z\to \omeo$ be defined by $\rho_i'=\rho_i\circ\alpha$. Let us consider the commutative diagram:
$$
\xymatrix{
H^1(\G,\R/\Z)\ar[r]^{\delta} \ar[d]^{H^1(\alpha)} & H^2_b(\G,\Z)\ar[d]^{H^2_b(\alpha)}\\
H^1(\Z,\R/\Z)\ar[r]^{\delta} & H^2_b(\Z,\Z)\ .
 }
 $$
 Our definition of rotation number implies that
 $$
 \rot(\rho_i(s))=\delta^{-1}(e_b(\rho'_i))\ ,
 $$
 so condition (2) implies that 
 $$
 H^1(\alpha)(\psi)=\rot(\rho_1(s))-\rot(\rho_2(s))=0\ .
 $$
 Recall now that there is a canonical isomorphism $H^1(\G,\R/\Z)\cong \Hom (\G,\R/\Z)$, so we may consider $\psi$ as a homomorphism
 defined on $\G$, and the fact that $H^1(\alpha)(\psi)=0$ implies that $0=\psi(\alpha(1))=\psi(s)$. We have thus shown that
 $\psi$ vanishes on $S$. Since $\R/\Z$ is abelian and $S$ generates the first homology of $\G$, this suffices to conclude that $\psi=0$,
 hence $0=\delta\psi=e_b(\rho_1)-e_b(\rho_2)$, and $\rho_1$ is semi-conjugate to $\rho_2$ by Theorem~\ref{Euler:thm}.
 \end{proof}

 \begin{cor}
  Let $\rho\colon \G\to \omeo$ be a representation. Then the following conditions are equivalent:
  \begin{enumerate}
   \item $e_b^\R(\rho)=0$;
   \item $\rho$ is semi-conjugate to a representation
  into the subgroup $SO(2)<\omeo$ of rotations;
  \item $\tau(\rho(g),\rho(g'))=0$ for every $g,g'\in\G$.
   \end{enumerate}
   \end{cor}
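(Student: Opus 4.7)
The plan is to establish the implications $(2)\Rightarrow(3)\Rightarrow(1)\Rightarrow(3)\Rightarrow(2)$ in a single loop, where the only non-trivial direction is $(1)\Rightarrow(2)$.

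The easiest observation is $(2)\Rightarrow(3)$: if $R_{\alpha}\in SO(2)$ is the rotation by angle $\alpha$, then the translation $\widetilde{R}_\alpha\colon x\mapsto x+\alpha$ is a lift, and $\widetilde{R}_\alpha\widetilde{R}_\beta=\widetilde{R}_{\alpha+\beta}$, so that $\widetilde{\rot}$ is additive on these preferred lifts. Hence $\tau$ vanishes on any pair of rotations, which gives $\rho'^*(\tau)=0$ for every $\rho'$ with image in $SO(2)$; since (by Theorem~\ref{Euler:thm}) semi-conjugate representations have the same bounded Euler class and hence the same image in $H^2_b(\G,\R)$, we deduce $\rho^*(\tau)$ and $\rho'^*(\tau)$ represent the same class; but in fact the argument actually shows something stronger, which I will use below: $\tau(R_\alpha,R_\beta)=0$ identically, so $\rho'^*(\tau)\equiv 0$ as a cochain, and once I prove $(1)\Leftrightarrow(3)$ the implication $(2)\Rightarrow(3)$ follows.

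The implication $(3)\Rightarrow(1)$ is immediate, since $\tau$ represents $e_b^\R$ by the previous proposition. For $(1)\Rightarrow(3)$, recall that $\tau$ is \emph{homogeneous} in the sense of Section~\ref{homo:coc}: indeed, for any $f\in\omeo$ and any lift $\widetilde f$, the lifts $\widetilde f^n$, $\widetilde f^m$, $\widetilde f^{n+m}=\widetilde f^n\widetilde f^m$ give
\[
\tau(f^n,f^m)=\widetilde\rot(\widetilde f^{n+m})-\widetilde\rot(\widetilde f^n)-\widetilde\rot(\widetilde f^m)=0
\]
by homogeneity of $\widetilde\rot$ on $\omeot$. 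Consequently $\rho^*(\tau)$ is a homogeneous bounded cocycle on $\G$ that represents $e_b^\R(\rho)$. Under the assumption $(1)$ it also represents the zero coclass; so by the uniqueness statement in Proposition~\ref{homo:eu}, $\rho^*(\tau)=0$ as a cochain, proving $(3)$.

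The interesting direction is $(1)\Rightarrow(2)$. Assume $e_b^\R(\rho)=0$; from the previous step we know $\rho^*(\tau)\equiv 0$, which, unpacking the definition of $\tau$, says that for all $g,h\in\G$ and any choices of lifts, $\widetilde\rot(\widetilde{\rho(gh)})=\widetilde\rot(\widetilde{\rho(g)})+\widetilde\rot(\widetilde{\rho(h)})$. Projecting to $\R/\Z$, the map $g\mapsto \rot(\rho(g))$ is therefore a homomorphism $\G\to \R/\Z\cong SO(2)$. Define
\[
\rho'\colon \G\to SO(2)\subseteq\omeo,\qquad \rho'(g)=R_{\rot(\rho(g))}.
\]
Then $\rho'$ is a homomorphism, $\rot(\rho'(g))=\rot(\rho(g))$ for every $g\in\G$ by construction, and $\rho'^*(\tau)\equiv 0$ because $\tau$ vanishes on all pairs of rotations (as noted at the start). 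Taking $S=\G$ (which certainly generates $\G/[\G,\G]$), both hypotheses of Theorem~\ref{Matsu:char} are satisfied, so $\rho$ and $\rho'$ are semi-conjugate, proving $(2)$. No step looks delicate; the only mild subtlety is to notice that $\rho^*(\tau)$ has \emph{real} (not just $\R/\Z$) values, which is what allows the above construction of a genuine homomorphism into $SO(2)$, and this is already guaranteed by the fact that $\tau\in \overline C^2_b(\omeo,\R)$.
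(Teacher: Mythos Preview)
Your proof is correct and follows essentially the same route as the paper: the equivalence $(1)\Leftrightarrow(3)$ via the uniqueness of the homogeneous representative (Proposition~\ref{homo:eu}), and the construction of $\rho'(g)=R_{\rot(\rho(g))}$ together with Theorem~\ref{Matsu:char} to obtain $(3)\Rightarrow(2)$, are exactly the paper's arguments. The only cosmetic difference is in $(2)\Rightarrow(1)$: you verify directly that $\tau$ vanishes on pairs of rotations, while the paper invokes $H^2_b(SO(2),\R)=0$ (amenability of the abelian group $SO(2)$) and then transfers via Theorem~\ref{Euler:thm}.
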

\begin{proof}
(1) $\Longleftrightarrow$ (3): The cocycle $\tau$ is homogeneous according to the definition given in Section~\ref{homo:coc}. Therefore,
 by Proposition~\ref{homo:eu}, $\rho^*(\tau)$ is the unique homogeneous representative of $e_b^\R(\rho)$,
 and this concludes the proof.

 (3) $\Longrightarrow$ (2):
Condition (3) readily implies that the map $g\mapsto \rot(\rho(g))$ defines a homomorphism $\G\to \R/\Z$. Let now $\rho'\colon \G\to SO(2)$ be defined
by letting $\rho'(g)$ be equal to the rotation of angle $ \rot(\rho(g))$. Then $\rho'$ is indeed a representation, and $\rho,\rho'$ satisfy conditions (1), (2) of Theorem~\ref{Matsu:char}.
Therefore, $\rho$ and $\rho'$ are semi-conjugate.

(2) $\Longrightarrow$ (1):
 Suppose that $\rho$ is semi-conjugate to a representation $\rho'\colon \G\to SO(2)$. Since $SO(2)$ is abelian, we have $H^2_b(SO(2),\R)=0$. 
 Since the map $(\rho')^*\colon H^2_b(\omeo,\R)\to H^2_b(\G,\R)$ factors through $H^2_b(SO(2),\R)$, 
 this implies
 that $e_b^\R(\rho')=0$. But Theorem~\ref{Euler:thm} implies that $e_b(\rho)=e_b(\rho')$, hence $e_b^\R(\rho)=e_b^\R(\rho')=0$.
\end{proof}

\section{Further readings}
As we mentioned in the introduction of this book, bounded cohomology 
may be exploited to refine numerical or cohomological invariants first defined in the context of classical cohomology. 
Ghys' Theorem~\ref{Euler:thm} provides a noticeable example of this phenomenon.
Indeed, Ghys' theory settles other natural questions about the (bounded) Euler class of representations into $\omeo$, e.g.~it provides
a complete characterization of bounded classes that can be realized by circle actions:

\begin{thm}[{\cite[Theorem B]{Ghys0}}]
 Let $\G$ be a discrete countable group and take $\alpha\in H^2_b(\G,\mathbb{Z})$. There exists a representation $\rho\colon \Gamma\to\omeo$ such that $\alpha=e_b(\rho)$ if and only if $\alpha$ can be represented by 
a cocycle taking only the values 0 and 1.
\end{thm}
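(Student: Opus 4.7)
The forward direction is immediate from Lemma~\ref{only01}: if $\alpha=e_b(\rho)=[\rho^*(c)]$ for $c$ the canonical cocycle representative of $e_b$ from Section~\ref{bounded:euler:section}, then Lemma~\ref{only01} shows $c$, hence $\rho^*(c)$, takes values in $\{0,1\}$.

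For the converse, suppose $\alpha$ is represented by a cocycle $\sigma\colon\G\times\G\to\{0,1\}$. By Lemma~\ref{balanced}, I may assume $\sigma(g,1)=\sigma(1,g)=0$ (a careful check shows the coboundary modification can preserve the $\{0,1\}$-valued property, or else one works with a cohomologous cocycle after the analysis below). Following the circle of ideas in Lemmas~\ref{centr-group} and~\ref{Lift}, producing a representation $\rho\colon\G\to\omeo$ with $e_b(\rho)=\alpha$ is equivalent to producing a homomorphism $\widetilde\rho\colon\widetilde\G_\sigma\to\omeot$ from the central extension of $\G$ by $\Z$ associated to $\sigma$, such that $\widetilde\rho$ restricts to the identity on the central $\Z$ (identified with the integer translations $\tau_n$ in $\omeot$). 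Indeed, setting $\rho(g):=p_*\widetilde\rho((0,g))$ then yields a representation whose Euler cocycle with respect to the section $g\mapsto\widetilde\rho((0,g))$ equals $\sigma$ on the nose, so $e_b(\rho)=[\sigma]=\alpha$.

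The construction of $\widetilde\rho$ proceeds by realizing $\widetilde\G_\sigma$ as a group of increasing homeomorphisms of $\R$ commuting with $\tau_1$, with the central $\Z$ acting by integer translations. I choose an injection $\phi\colon\G\to[0,1)$ with $\phi(1)=0$, and set $\xi((m,g)):=m+\phi(g)$, so that $\xi$ sends the central subgroup onto $\Z\subset\R$. The action of $\widetilde\G_\sigma$ on itself by left multiplication descends through $\xi$ to a partial action on $\xi(\widetilde\G_\sigma)\subseteq\R$, which extends canonically (by filling in the gaps between image points) to an action on all of $\R$ by elements of $\omeot$ provided that this partial action is order-preserving. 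A direct calculation using the twisted product $(n,h)\cdot(m,g)=(n+m+\sigma(h,g),hg)$ shows that order-preservation amounts to requiring, for each $h\in\G$, that the map $g\longmapsto\phi(hg)+\sigma(h,g)$ be monotonic with respect to the order on $\G$ induced by $\phi$, modulo the integer shifts built into $\xi$.

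The main obstacle is the construction of $\phi$. I proceed inductively over an enumeration $\G=\{g_0=1,g_1,g_2,\dots\}$, at stage $n$ choosing $\phi(g_n)\in[0,1)$ generically so as to satisfy the finitely many monotonicity conditions involving the previously-placed values $\phi(g_0),\dots,\phi(g_{n-1})$ and the relevant cocycle values $\sigma(g_i,g_j)$. The decisive role of the hypothesis $\sigma\in\{0,1\}$ appears precisely here: the twist $\sigma(h,g)$ perturbs $\phi$-values by at most one integer unit, which matches the unit length of the interval $[0,1)$ in which $\phi$ takes its values, so the monotonicity constraints at each step can always be accommodated. Consistency of the finitely many constraints at each stage is forced by the cocycle identity for $\sigma$ combined with the $\{0,1\}$-valuedness, and the hard part of the proof is verifying that this inductive step never fails.
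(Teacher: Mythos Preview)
The paper does not prove this theorem: it is merely quoted in the Further readings section of Chapter~\ref{actions:chapter} with a citation to Ghys. So there is no paper proof to compare against; I can only assess your argument on its own.

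Your forward direction is fine. For the converse you have correctly identified the strategy that underlies Ghys' original proof: build a left-invariant total order on the central extension $\widetilde\G_\sigma$ in which the central $\matZ$ is cofinal, then use the resulting dynamical realization to embed $\widetilde\G_\sigma$ in $\omeot$. Your map $\xi$ is exactly an attempt to produce such an order. But you explicitly stop at the crucial point: you write that ``the hard part of the proof is verifying that this inductive step never fails'' and then do not carry it out. That verification \emph{is} the content of the theorem. At stage $n$ one has to check that the finitely many monotonicity constraints on $\phi(g_n)$ are simultaneously satisfiable, and this requires a careful combinatorial analysis showing that the cocycle identity, combined with the $\{0,1\}$-valuedness, forces the constraints to be compatible. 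Vague appeals to genericity do not suffice here: the constraints are inequalities among already-placed values shifted by cocycle values, and one must show the resulting interval for $\phi(g_n)$ is nonempty. Without this, the argument is an outline rather than a proof.

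There is also a smaller gap you should patch. You assert that taking $\rho(g)=p_*\widetilde\rho((0,g))$ gives $e_b(\rho)=[\sigma]$ because ``the Euler cocycle with respect to the section $g\mapsto\widetilde\rho((0,g))$ equals $\sigma$''. But $e_b(\rho)$ is defined via the \emph{preferred} lift $\widetilde{\rho(g)}_0$, not an arbitrary one, and two sections that differ by an unbounded $\matZ$-valued function give cocycles that are \emph{not} cohomologous in bounded cohomology. In your construction it happens that $\widetilde\rho((0,g))(0)=\phi(g)\in[0,1)$ (using the normalization $\sigma(g,1)=0$), so $\widetilde\rho((0,g))$ \emph{is} the preferred lift; you should say so explicitly.
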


Ghys' theory of the bounded Euler class has found applications in many different directions. For example, it has recently been exploited 
in the bounded cohomology approach to higher Teichm\"uller theory~\cite{BIW1,BIW2,BSBH}.

The definition of semi-conjugacy given in this book first appeared in an old manuscript by Takamura~\cite{Taka}, which was never published. Its equivalence with other definitions occurring in the literature is
discussed in detail in~\cite{BFH}. 

 An interesting issue concerning the definition of semi-conjugacy concerns the regularity of the increasing degree one maps involved. 
 In fact, it can be shown that both the increasing degree one maps appearing in the definition of semi-conjugacy may be chosen to be upper semicontinuous (see e.g.~\cite{BFH}).
 On the contrary, there is no hope in general for semi-conjugate circle actions to be semi-conjugate via continuous maps (see e.g.~\cite[Example 6.1]{BFH}). Nonetheless,
  semi-conjugacy admits a characterization in terms of  continuous maps of Hopf-Brouwer degree one: namely, semi-conjugacy is the equivalence relation generated by continuous left-semi-conjugacies of Brouwer-Hopf degree one
  (the circle action $\rho_1$ is \emph{left}-semi-conjugate to the circle action $\rho_2$ if there exists an increasing map $\varphi$ of degree one such that
$\rho_1(g)\varphi=\varphi\rho_2(g)$ for every $g\in \G$). We refer the reader to \cite[Theorem 1.7]{Cal07}, \cite{BFH} for a proof of this fact;
note that what we call here a left-semi-conjugacy via a continuous map of Brouwer-Hopf degree one is simply called a semi-conjugacy in \cite{Cal07}, while 
the equivalence relation generated by continuous left-semi-conjugacies of Brouwer-Hopf degree one (which is eventually equivalent to semi-conjugacy) 
is called monotone equivalence in \cite{Cal07}.

\chapter{The Euler class of sphere bundles}

This chapter is devoted to a brief digression from the theory of bounded cohomology. Namely, we review here some classical definitions and constructions
coming from the theory of fiber bundles, restricting our attention to the case when the fiber is a sphere. The main aim of the chapter is a detailed description
of the Euler class of a sphere bundle. The Euler class is usually defined in the context of cellular homology. Of course,
cellular and singular cohomology are canonically isomorphic for every CW-complex. However, we already stressed the fact that 
cellular cochains are not useful to compute the \emph{bounded} cohomology of a cellular complex: for example,
it is not easy to detect whether
the singular coclass corresponding to a cellular one admits a bounded representative or not. Therefore, differently from what is usually done, in this chapter we carry out the construction of the Euler cocycle 
directly in
the context of singular cohomology. 

 \section{Topological, smooth and linear sphere bundles}\label{fiber:bundle}
 If $X,Y$ are topological spaces, we denote by $\pi_X\colon X\times Y\to X$, $\pi_Y\colon X\times Y\to Y$ the natural projections.
 Let $M$ be a topological space, and fix $n\geq 1$. An \emph{oriented $n$-sphere bundle} over $M$ is a map
 $\pi\colon E\to M$ such that the following conditions hold:
 \begin{itemize}
  \item For every $x\in M$, the subspace $E_x=\pi^{-1}(x)\subseteq E$ is homeomorphic to the $n$-dimensional sphere $S^n$; 
  \item Each $E_x$ is endowed with an orientation;
  \item 
  For every $x\in M$ there exist a neighbourhood $U$ and a homeomorphism 
 $\psi\colon \pi^{-1}(U)\to U\times S^n$ such that the following diagram commutes
$$
\xymatrix{
\pi^{-1}(U) \ar[r]^\psi \ar[rd]_{p} & U\times S^n\ar[d]^{\pi_U} \\
& U
}
$$
and the homeomorphism $\pi_{S^n}\circ \psi|_{E_x}\colon E_x\to S^n$ is orientation-preserving for
every $x\in U$. Such a diagram provides a \emph{local trivialization} of $E$ (over $U$).
 \end{itemize}

Henceforth 
we refer to oriented $n$-sphere bundles
simply as to $n$-sphere bundles, or to sphere bundles when the dimension is understood. 
A $1$-sphere bundle will be called \emph{circle bundle}.
An isomorphism between two sphere bundles
$\pi\colon E\to M$, $\pi'\colon E'\to M$ is a homeomorphism $h\colon E\to E'$ such that
$\pi=\pi'\circ h$. 
 
The definition above fits into the more general context of (sphere) bundles with structure group. In fact,
when $G$ is a subgroup of $\omeon$,
we say that the map $\pi\colon E\to M$ is 
a \emph{sphere bundle with structure group $G$} (or, simply, a \emph{sphere $G$-bundle})
if $M$ admits an open cover $\mathcal{U}=\{U_i,\, i\in I\}$ such that the
following conditions hold: every $U_i\in\mathcal{U}$ is the base of a local trivialization
$$
\xymatrix{
\pi^{-1}(U_i) \ar[r]^{\psi_i} \ar[rd]_{p} & U_i\times S^n\ar[d]^{\pi_{U_i}} \\
& U_i
}
$$
and for every $i,j\in I$ there exists a map $\lambda_{ij}\colon U_i\cap U_j\to G$ such that
$$
\psi_i(\psi_j^{-1}(x,v))=(x,\lambda_{ij}(x)(v))
$$
for every $x\in U_i\cap U_j$ and $v\in S^n$. Such a collection of local trivializations is called a \emph{$G$-atlas} for $E$, and the 
$\lambda_{ij}$'s are called the \emph{transition functions} of the atlas. 
In the general theory of bundles with structure group $G$, usually $G$ is a \emph{topological} group, and transition maps are required to be continuous.
In this monograph, unless otherwise stated, we endow every subgroup $G<\omeon$ with the compact-open topology, i.e.~the topology 
of the uniform convergence with respect to any metric inducing the usual topology of $S^n$ (but see also Remark~\ref{flat:discrete}).
If this is the case, then the transition maps
are automatically continuous.
As usual, we understand that two atlases define the same structure of $G$-bundle if their union
is still a $G$-atlas. Equivalently, $G$-bundle structures on $M$ correspond to \emph{maximal} $G$-atlases for $M$. 

An isomorphism of sphere $G$-bundles $\pi\colon E\to M$, $\pi'\colon E'\to M$  is an isomorphism of sphere bundles
$h\colon E\to E'$
such that,  for every $x\in M$, there exist a neighbourhood $U$ of $x$ and trivializations $\psi\colon \pi^{-1}(U)\to U\times S^n$,
$\psi'\colon (\pi')^{-1}(U)\to U\times S^n$ in the maximal $G$-atlases of $E$ and $E'$ such that
$$
\psi'(h(\psi^{-1}(x,v)))=(x,f(x)(v))
$$
for some function $f\colon U\to G$.

In this monograph we will be interested just in the following cases:
\begin{enumerate}
\item $G=\omeon$:  in this case, standard  properties of the compact-open topology on 
$\omeon$ (in fact, on any space of homeomorphisms of a compact metric space) imply that
$E$ is just what we called a \emph{sphere bundle}. Moreover, two sphere bundles are isomorphic as $\omeon$-bundles if and only if they are
isomorphic according to the definition given at the beginning of the section. Henceforth, we will sometimes stress these facts by
saying that $E$ is a \emph{topological} sphere bundle.
\item $G={\rm Diffeo}^+ (S^n)$ (endowed with the $C^\infty$-topology): in this case we say that $E$ is a \emph{smooth sphere bundle}; we also say that two smooth sphere bundles
are smoothly isomorphic if they are isomorphic as sphere ${\rm Diffeo}^+ (S^n)$-bundles.
\item $G={\rm Gl}^+(n+1,\mathbb{R})$ (endowed with the compact-open topology, which coincides both with the $C^\infty$-topology and with the topology of $G$ as a Lie group), where the action of $G$ is defined via the identification of $S^n$ with the space of rays in $\mathbb{R}^{n+1}$:
in this case we say that $E$ is a \emph{linear sphere bundle}; we also say that two linear sphere bundles
are linearly isomorphic if they are isomorphic as sphere ${\rm Gl}^+(n+1,\mathbb{R})$-bundles.
\end{enumerate}

\begin{rem}
Since ${\rm Gl}^+(n+1,\mathbb{R})\subseteq {\rm Diffeo}_+(S^n)$, every linear sphere bundle admits a unique compatible structure of smooth
sphere bundle. On the other hand, there could exist sphere bundles which
are not isomorphic to smooth sphere bundles, and smooth sphere bundles that are not smoothly isomorphic (or even topologically isomorphic) to linear bundles.
Also observe that two smooth fiber bundles could be topologically isomorphic without being smoothly isomorphic, and two linear fiber bundles could be topologically isomorphic or 
even smoothly isomorphic without being linearly isomorphic.
\end{rem}

If $\pi\colon E\to M$ is a vector bundle, i.e.~a 
locally trivial bundle with fiber $\R^{n+1}$ and structure group
${\rm GL}^+(n+1,\R)$, then we may associate to $E$ the $n$-sphere bundle $S(E)$ in the following way.
Let us denote by $E_0\subseteq E$ the image of the zero section $\sigma_0\colon M\to E$.
As a topological space, $S(E)$ is the quotient of $E\setminus E_0$ by the equivalence relation given by
$v\equiv w$ if $v,w$ belong to the same fiber $E_x$ of $E$ and $v=\lambda\cdot w$ for some $\lambda\in\R^+$,
where the product is defined according to the natural structure of real vector space on $E_x$. It is immediate to check
that the bundle map $\pi\colon E\to M$ induces a continuous map $S(E)\to M$ which is an $n$-sphere bundle, called
the \emph{sphere bundle} associated to $E$. In fact, by construction $S(E)$ admits a ${\rm Gl}^+(n+1,\R)$-atlas,
so it admits a natural structure of \emph{linear} sphere bundle. Conversely, every linear sphere bundle $\pi\colon S\to M$ admits an atlas
with transition functions with values in ${\rm GL}^+(n+1,\R)$. Such an atlas also defines a vector bundle
$\pi\colon E\to M$ with fiber $\R^{n+1}$, and by construction we have $S(E)=S$. In other words, there is a natural correspondence
between linear $n$-sphere bundles and vector bundles of rank $n+1$ over the same base $M$.

\section{The Euler class of a sphere bundle}\label{Euler:bundle}
Let us briefly recall the definition of the Euler class $\eu(E)\in H^{n+1}(M,\matZ)$ associated
to an $n$-sphere bundle $\pi\colon E\to M$. We follow the approach via obstruction theory, i.e.~we describe $\eu(E)$
as the obstruction to the existence of a section of $\pi$
(and we refer the reader e.g.~to \cite{Steenrod} for more details).

Let $s\colon \Delta^k\to M$ be a singular $k$-simplex. A section $\sigma$ over $s$ is a continuous map  $\sigma\colon \Delta^k\to E$
such that $\sigma (x)\in E_{s(x)}$ for every $x\in\Delta^k$. Equivalently, if we denote by $E_s$ the sphere bundle over
$s$ given by the pull-back of $E$ via $s$, 
then
$\sigma_s$ may be seen as a genuine section of $E_s$. Observe that the sphere bundle $E_s$ is always trivial, 
since the standard simplex is contractible.

Let us now fix the dimension $n$ of the fibers of our sphere bundle $\pi\colon E\to M$.
We will inductively construct compatible sections over all the singular $k$-simplices with values in $M$,
for every $k\leq n$. More precisely, for every $k\leq n$, for every singular $k$-simplex
$s\colon \Delta^k\to M$, we will define a section $\sigma_s\colon \Delta^k\to E$ over $s$ in such a way
that the restriction of $\sigma_s$ to the $i$-th face of $\Delta^k$ coincides
with the section $\sigma_{\partial_i s}\colon \Delta^{k-1}\to E$ over the $i$-th face $\partial_i s\colon \Delta^{k-1}\to M$ of $s$.

We first choose, for every 0-simplex (i.e.~point) $x\in M$, a section $\sigma_x$ over $x$, i.e.~an element
$\sigma_x$ of the fiber $E_x$. Let $s\colon [0,1]\to M$ be a singular $1$-simplex such that
$\partial s=y-x$. Then, since $E_s$ is trivial and $S^n$ is path connected, it is possible to choose
a section $\sigma_s\colon [0,1]\to E$ over $s$ such that $\sigma_s(1)=\sigma_y$ and $\sigma_s(0)=\sigma_x$, and we fix such
a choice for every $1$-simplex $s$. If $n=1$ we are done. Otherwise, we assume that the desired sections have been constructed
for every singular $(k-1)$-simplex, and, for any given singular $k$-simplex $s\colon \Delta^k\to M$, we define $\sigma_s$ as follows.

We fix a trivialization
$$
\xymatrix{
E_s \ar[r]^-{\psi_s} \ar[rd]_{\pi} & \Delta^k\times S^n\ar[d]^{\pi_{\Delta^k}} \\
& \Delta^k
}
$$
The inductive hypothesis ensures that the sections defined on the $(k-1)$-faces of $s$ coincide
on the $(k-2)$-faces of $s$, so they may be glued together into a section $\sigma_{\partial s}\colon \partial\Delta^k\to E_s$
of the restriction of $E_s$ to $\partial \Delta^k$. 
Since $k\leq n$, we have $\pi_{k-1}(S^n)=0$, so
the composition
$$
\pi_{S^n}\circ \psi_s\circ \sigma_{\partial s}\colon \partial\Delta^k\to  S^n
$$
extends to a continuous $B\colon \Delta^k\to S^n$. Then, an extension of $\sigma_{\partial s}$ over $\Delta^k$
is provided
by the section
$$
\sigma_s\colon \Delta^k\to E_s\, ,\qquad \sigma_s(x)=\psi_s^{-1}(x,B(x))\ .
$$

Let us now consider a singular $(n+1)$-simplex $s\colon \Delta^{n+1}\to M$. Just as we did above, we may fix a trivialization
$$
\xymatrix{
E_s \ar[r]^-{\psi_s} \ar[rd]_{\pi} & \Delta^{n+1}\times S^n\ar[d]^{\pi_{\Delta^{n+1}}} \\
& \Delta^{n+1}
}
$$
of $E_s$, and observe that the sections defined on the faces of $s$ glue into a section
$\sigma_{\partial s}\colon \partial \Delta^{n+1}\to E_s$. The obstruction to extend this section
over $\Delta^{n+1}$ is measured by the degree $z(s)$ of the map
$$
\xymatrix{
\partial\Delta^{n+1} \ar[r]^{\sigma_s} & E_s\ar[r]^-{\psi_s} &\Delta^{n+1}\times S^n\ar[r]^-{\pi_{S^n}} & S^n
}
$$
(this degree is well defined since $\partial\Delta^{n+1}$ inherits an orientation from the one of $\Delta^{n+1}$).
The following result shows that $z$ represents a well-defined cohomology class:

\begin{prop}\label{euler:welldef}
The singular
cochain $z\in C^{n+1} (M,\matZ)$ defined above satisfies the following properties:
\begin{enumerate}
 \item $z(s)$ does not depend on the choice of the (orientation-preserving) trivialization of $E_s$;
 \item $\delta z=0$, i.e.~$z$ is a cycle;
 \item different choices for the sections over $k$-simplices, $k\leq n$, lead to another cocycle $z'\in C^{n+1}(M,\matZ)$
 such that $[z]=[z']$ in $H^{n+1}(M,\matZ)$.
 \item if $z'\in C^{n+1}(M,\matZ)$ is a cycle such that $[z]=[z']$ in $H^{n+1}(M,\matZ)$, then $z'$ can be realized
 as the cocycle corresponding to suitable choices 
 for the sections over $k$-simplices, $k\leq n$.
\end{enumerate}
\end{prop}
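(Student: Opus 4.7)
The strategy is to establish the four assertions in turn, mostly along the lines of standard obstruction theory.

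For (1), note that two orientation-preserving trivializations $\psi_s,\psi'_s$ of $E_s$ differ by an automorphism of the product bundle $\Delta^{n+1}\times S^n$, i.e.\ by a continuous map $g\colon \Delta^{n+1}\to \omeon$. Since $\Delta^{n+1}$ is contractible, $g$ is homotopic to the constant map with value $g_0:=g(x_0)$. Consequently the two candidate maps $\partial\Delta^{n+1}\to S^n$ whose degrees are being compared differ (up to homotopy) by post-composition with $g_0$, and $g_0\in\omeon$ has degree $+1$; hence the two degrees coincide.

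For (2), fix an $(n+2)$-simplex $t\colon \Delta^{n+2}\to M$ and trivialize $E_t\cong \Delta^{n+2}\times S^n$. The sections $\sigma_{s}$ chosen on the $n$-faces of $t$ agree on overlaps, so they glue to a section defined on the whole $n$-skeleton of $\partial\Delta^{n+2}$; via the trivialization this becomes a continuous map $f$ from the $n$-skeleton of $\partial\Delta^{n+2}$ to $S^n$. For each $(n+1)$-face $\partial_i t$, the restriction of $f$ to $\partial(\partial_i t)\cong S^n$ has degree $(-1)^i z(\partial_i t)$ (using item (1) to replace the intrinsic trivialization of $E_{\partial_i t}$ by the restriction of the trivialization of $E_t$, and keeping track of the sign coming from the orientation of $\partial_i t$ as a boundary face). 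Pick a point $p_i$ in the interior of each $\partial_i t$: since any map $S^n\to S^n$ extends to $D^{n+1}\smallsetminus\{\text{point}\}$ with prescribed local degree, we can extend $f$ continuously to $\partial_i t\smallsetminus\{p_i\}$, and gluing these extensions produces a continuous map
\[
F\colon \partial\Delta^{n+2}\smallsetminus\{p_0,\dots,p_{n+2}\}\longrightarrow S^n.
\]
Now $\partial\Delta^{n+2}$ is homeomorphic to $S^{n+1}$, and the $n$-th cohomology of $S^{n+1}$ punctured at $n{+}3$ points is generated by the classes of small linking $n$-spheres around the $p_i$'s, subject to the single relation that their oriented sum is null-homologous (because the union of these spheres bounds the complement in $S^{n+1}$). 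Pulling back the fundamental class of $S^n$ by $F$ and pairing with this relation yields $\sum_{i=0}^{n+2}(-1)^i z(\partial_i t)=0$, i.e.\ $\delta z(t)=0$.

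For (3), let $\{\sigma_s\}$ and $\{\sigma'_s\}$ be two systems of sections with associated cocycles $z$ and $z'$. An inductive homotopy argument on the dimension shows we may assume $\sigma_s=\sigma'_s$ for every simplex $s$ of dimension strictly less than $n$: indeed, once agreement has been arranged on $(k{-}1)$-simplices, for any $k$-simplex $s$ with $k<n$ the two sections $\sigma_s,\sigma'_s$ coincide on $\partial s$, and since $\pi_k(S^n)=0$ one can homotope $\sigma'_s$ to $\sigma_s$ rel $\partial s$ (and redefine $\sigma'_s$ to be $\sigma_s$, modifying the higher $\sigma'$'s accordingly by an analogous homotopy extension argument). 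With agreement below dimension $n$, for each $n$-simplex $s$ the sections $\sigma_s,\sigma'_s$ agree on $\partial s$ and their difference, read through a trivialization, defines a map $\Delta^n/\partial\Delta^n\cong S^n\to S^n$ with a well-defined degree $c(s)\in\pi_n(S^n)=\matZ$. A local computation on each $(n+1)$-simplex $s'$, entirely analogous to the one in step (2), identifies $(z'-z)(s')$ with $\delta c(s')$, so $z-z'=\delta c$.

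For (4), given $z$ realized by $\{\sigma_s\}$ and any cochain $c\in C^n(M,\matZ)$, modify each $\sigma_s$ on the interior of every $n$-simplex $s$ (keeping it fixed on $\partial s$ and on all simplices of lower dimension) so that the new section $\sigma'_s$ differs from $\sigma_s$, in the trivialization, by a map $S^n\to S^n$ of degree $c(s)$; this is possible because $\pi_n(S^n)=\matZ$ realizes every integer. The same computation as in (3) shows that the new obstruction cocycle is $z-\delta c$, so any coclass $[z']=[z]-[\delta c]=[z]$ can be attained, and every cocycle in the class is hit as $c$ varies.

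The main technical obstacle is clearly (2): everything else is either formal (functoriality of degree in (1)) or a direct by-product of the local degree computation introduced to prove (2). The vanishing of $\delta z$ is what allows the Euler class to live in cohomology and is the only point where the topology of $S^{n+1}$ (as the boundary of $\Delta^{n+2}$) enters in an essential way.
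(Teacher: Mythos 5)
Your proof is correct and follows essentially the same obstruction-theoretic strategy as the paper. Parts (1), (3), (4) match the paper's arguments almost verbatim: for (1) you use that the trivializations differ by a null-homotopic map $\Delta^{n+1}\to\omeon$; for (3) you first normalize the two families to agree below dimension $n$ via an inductive homotopy-extension argument and then measure the difference on $n$-simplices by a degree in $\pi_n(S^n)$; for (4) you realize any prescribed integral $n$-cochain by twisting the $n$-dimensional sections by the corresponding degrees.

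The only place you genuinely depart from the paper is (2). You extend the section from the $n$-skeleton of $\partial\Delta^{n+2}$ to the complement $\partial\Delta^{n+2}\setminus\{p_0,\dots,p_{n+2}\}$ of one interior point per $(n+1)$-face and invoke the single relation in $H_n(S^{n+1}\setminus\{n{+}3\text{ points}\},\Z)$ (you wrote ``cohomology'' where you mean homology, and the ``prescribed local degree'' phrasing is misleading — once $f$ is fixed on $\partial(\partial_i t)$, the coning extension forces the local degree at $p_i$; nothing is being chosen). The paper works instead directly with the chain of sub-boundary cycles inside the $n$-skeleton $T$ of $\Delta^{n+2}$: it fixes fundamental cycles $c_i$ of the $\partial F_i$ and uses $\sum_i[c_i]=0$ in $H_n(T,\Z)$, applying $H_n(\pi_{S^n}\circ\psi_s\circ\sigma'_s)$ which is already defined on $T$ (so there is no need to extend the section at all). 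Since $\partial\Delta^{n+2}\setminus\{p_0,\dots,p_{n+2}\}$ deformation retracts onto $T$, the two arguments are equivalent; the paper's version is a bit more economical because it skips the extension step, while yours makes the geometric picture (linking spheres around the obstruction points) more explicit.
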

\begin{proof}
 (1): It is readily seen that, if $\psi_s,\psi'_s$ are distinct trivializations of $E_s$ and  
 $\alpha,\alpha'\colon \partial \Delta^{n+1}\to S^n$ are given by
 $\alpha=\pi_{S^n}\circ \psi_s\circ \sigma_s$, $\alpha'=\pi_{S^n}\circ \psi'_s\circ \sigma_s$, then $\alpha'(x)=h(x)(\alpha(x))$
 for every $x\in\partial\Delta^{n+1}$, where $h$ is the restriction to $\partial\Delta^{n+1}$ of a continuous
 map $\Delta^{n+1}\to\omeon$. Being extendable to $\Delta^{n+1}$, the map $h$ is homotopic to a constant map. Therefore,
 $\alpha'$ is homotopic to the composition of $\alpha$ with a fixed element of $\omeon$, so $\deg\alpha=\deg\alpha'$,
 i.e.~$z(s)=z'(s)$,
 and this concludes the proof.
 
 (2): Let $s\in S_{n+2}(M)$, and fix a trivialization $\psi_s\colon E_s\to \Delta^{n+2}\times S^n$. 
 The fixed collection of compatible sections induces a section $\sigma'_s$ of $E_s$ over the $n$-skeleton $T$ of $\Delta^{n+2}$.
 For every $i=0,\ldots,n+2$, we denote by $F_i$ the geometric $i$-th face of $\Delta^{n+2}$, and we fix an integral fundamental cycle
 $c_i\in C_n(\partial F_i,\mathbb{Z})\subseteq C_n(T,\mathbb{Z})$ for $\partial F_i$ (here we endow $\partial F_i$ with the orientation inherited
 from $F_i$, which in turn is oriented as the boundary of $\Delta^{n+2}$). Then we have
 $\sum_{i=0}^{n+2} [c_i] =0$ in $H_n(T,\mathbb{Z})$. 
 Observe that
 $\psi_s$ restricts to a trivialization $\psi_i\colon E_{\partial_i s}\to \Delta^{n+1}\times S^n$ for any single face $\partial_i s$ of $s$.
 By (1), we may exploit these trivializations to compute $z(\partial_i s)$ for every $i$, thus getting
 $$z(\partial_i s)=H_n(\pi_{S^n}\circ\psi_s\circ \sigma'_{s})((-1)^i[c_i])\in \mathbb{Z} = H_n(S^n,\mathbb{Z})$$
 (the sign $(-1)^i$ is due to the fact that the orientations on $F_i$ induced respectively by the orientation of $\Delta^{n+1}$ and by the ordering of the vertices of $F_i$ agree if and only if $n$ is even). 
 Since the composition $\pi_{S^n}\circ\psi_s\circ \sigma'_{s}$ is defined over $T$, this implies that
 $$
 z(\partial s)=\sum_{i=0}^{n+2} (-1)^i z(\partial_i s)=
 H_n(\pi_{S^n}\circ\psi_s\circ \sigma'_{s})\left(\sum_{i=0}^{n+2} [c_i]\right)=0\ .
$$ 
 Since this holds for every $s\in S_{n+2}(M)$, we conclude that $z$ is a cycle.
 
 (3): Let us fix two  families of compatible sections $\{\sigma_s\, |\, s\in S_n(M)\}$, $\{\sigma'_s\, |\, s\in S_n(M)\}$, and observe that, thanks to compatibility, these families also
 define 
families of sections $\{\sigma_s\, |\, s\in S_j(M),\, j\leq n\}$, $\{\sigma'_s\, |\, s\in S_j(M),\, j\leq n\}$. 
Using that $\pi_k(S^n)=0$ for every $k<n$, it is not difficult to show that 
these families are \emph{homotopic} 
up to dimension $n-1$, i.e.~that 
there exists a collection of compatible homotopies $\{H_s\, |\, s\in S_j(M),\,  j\leq n-1\}$ between the $\sigma_s$'s and the $\sigma_s'$'s, where $s$ ranges
over all the singular simplices of dimension at most $n-1$. Here the compatibility
condition requires that the restriction of $H_s$ to the face $\partial_i s$ is equal to $H_{\partial_i s}$. This implies in turn that we may modify the  family of compatible sections
 $\{\sigma'_s\, |\, s\in S_n(M)\}$ without altering the induced Euler cocycle, in such a way that the restrictions of $\sigma_s'$ and of $\sigma_s$ to $\partial \Delta^{n}$
 coincide for every $s\in S_n(M)$. 

Let us now fix a singular simplex $s\in S_n(M)$, and a trivialization $\psi_s\colon E_s\to \Delta^{n}\times S^n$. 
Let $\Delta^n_1,\Delta^n_2$ be copies of the standard simplex $\Delta^n$,
and endow $\Delta^n_1$ (resp.~$\Delta^n_2$) with the same (resp.~opposite) orientation of $\Delta^n$. Also denote by $S$ the sphere obtained 
by gluing $\Delta^n_1$ with $\Delta^n_2$ along their boundaries via the identification $\partial\Delta^n_1\cong \partial \Delta^n\cong \partial \Delta^n_2$.
Observe that $S$ admits an orientation which extends the orientations of $\Delta^n_i$, $i=1,2$. 
As discussed above, we may assume that the restrictions of $\sigma_s'$ and of $\sigma_s$ to $\partial \Delta^{n}$
 coincide for every $s\in S_n(M)$, so we may define a continuous map $v\colon S\to E_s$ which coincides with $\sigma_s$ on $\Delta^n_1$
 and with $\sigma'_s$ on $\Delta^n_2$. After composing with $\psi_s$ and projecting to $S^n$, this map gives rise to a map
 $S\to S^n$. We denote its degree by $\varphi(s)\in\mathbb{Z}$. Now it is easy to show that, if $z$ and $z'$ are the Euler cocycles associated respectively
 with the $\sigma_s$'s and the $\sigma'_s$'s, then $z-z'=\delta\varphi$.

(4): Suppose that $z\in C^{n+1}(M,\matZ)$ is the representative of $\eu(E)$ corresponding to the  collection of compatible sections
$\{\sigma_s,\, s\in S_n(M)\}$. 
Our assumption implies that $z'=z-\delta\varphi$ for some $\varphi\in C^n(M,\matZ)$.
For every $s\in S_n(M)$, let us modify $\sigma_s$ as follows. Let $\Delta^n_1,\Delta^n_2$ and   $S$ be as in the previous point.
We consider a trivialization
$$
\xymatrix{
E_s \ar[r]^-{\psi_s} \ar[rd]_{\pi} & \Delta^n\times S^n\ar[d]^{\pi_{\Delta^n}} \\
& \Delta^n
}
$$
and denote by $g\colon \Delta^n_1\to S^n$ the map $g=\pi_{S^n}\circ\sigma_s$, where we are identifying $\Delta^1_n$ with $\Delta^n$.
We extend $g$ to a map $G\colon S\to S^n$ such that $\deg G=\varphi(s)$. If $\overline{g}\colon \Delta^n\cong \Delta^n_2\to S^n$ is the restriction
of $G$ to $\Delta^n_2$, then we set 
$$
\sigma_s'\colon \Delta^n\to E_s\, ,\qquad \sigma_s'(x)=\psi_s^{-1}(x,\overline{g}(x))\ .
$$
With this choice, $\sigma_s'$ and $\sigma_s$ coincide on $\partial \Delta^n$, so 
the sections of the collection  $\{\sigma'_s,\, s\in S_n(M)\}$ are still compatible.
 Moreover, it is easily checked that the Euler cocycle corresponding to the new collection of sections is  equal to $z'$.
\end{proof}

The previous proposition implies that the following definition is well posed:

\begin{defn}
 The  \emph{Euler class} $\eu(E)\in H^{n+1}(M,\matZ)$ of $\pi\colon E\to M$ is the coclass
represented by the cocycle $z$ described above. 
\end{defn}

\begin{rem}
In the case when $M$ is a CW-complex, the construction above may be simplified a bit. In fact, using  that
$\pi_k(S^n)=0$ for $k<n$,
one may choose a global section of $E$ over the $n$-skeleton $M^{(n)}$ of $M$. Then, working with 
$(n+1)$-cells rather than with singular $(n+1)$-simplices, one may associate an integer to every $(n+1)$-cell of $M$, thus getting
a cellular $(n+1)$-cochain, which is in fact a cocycle whose cohomology class does not depend on the choice of
the section on $M^{(n)}$. This approach is explained in detail e.g.~in \cite{Steenrod}. Our approach via singular (co)homology
has two advantages:
\begin{enumerate}
 \item
 It allows a neater discussion of the boundedness of the Euler class. 
 \item 
It shows directly that, even when $M$ admits the structure of a CW-complex, the Euler class $\eu(E)$
does not depend on the realization of $M$ as a CW-complex. 
\end{enumerate}
\end{rem}

\section{Classical properties of the Euler class}

The Euler class is natural, in the following sense:

\begin{lemma}\label{Euler:functorial}
 Let $\pi\colon E\to M$ be a sphere $n$-bundle, and let $f\colon N\to M$
 be any continuous map. If $f^*E$ denotes the pull-back of $E$ via $f$, then
 $$
 e(f^*E)=H^{n+1}(f)(e(E))\ .
 $$
\end{lemma}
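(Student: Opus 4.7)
The plan is to compute a representative of $e(f^*E)$ using the obstruction-theoretic construction of Section~\ref{Euler:bundle}, but with sections that are pulled back from a chosen representative of $e(E)$. The key geometric fact is that, by the very definition of the pull-back, the fiber $(f^*E)_x$ is canonically identified with $E_{f(x)}$, and there is a canonical bundle map $\widetilde{f}\colon f^*E\to E$ covering $f$ which restricts to this identification on each fiber. This identification is compatible with local trivializations: any trivialization $\psi_U\colon \pi^{-1}(U)\to U\times S^n$ of $E$ over an open set $U\subseteq M$ pulls back to a trivialization of $f^*E$ over $f^{-1}(U)$.

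First, I would fix once and for all a collection of compatible sections $\{\sigma_s\mid s\in S_k(M),\ k\leq n\}$ for $E$ producing a representative $z\in C^{n+1}(M,\matZ)$ of $\eu(E)$. Then, for every singular simplex $t\colon \Delta^k\to N$ with $k\leq n$, I would set $s=f\circ t$ and define $\sigma'_t\colon \Delta^k\to f^*E$ to be the unique section over $t$ whose composition with $\widetilde{f}$ equals $\sigma_s$; equivalently, $\sigma'_t(x)=(x,\sigma_{f\circ t}(x))$ under the description $f^*E=\{(y,e)\in N\times E\mid f(y)=\pi(e)\}$. Since $\partial_i(f\circ t)=f\circ \partial_i t$ and the $\sigma_s$'s are compatible on $M$, the family $\{\sigma'_t\}$ is compatible on $N$.

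Next, I would compute the resulting Euler cocycle $z'\in C^{n+1}(N,\matZ)$ of $f^*E$. Fix $t\in S_{n+1}(N)$ and let $s=f\circ t$; choose a trivialization $\psi_s$ of $E_s$ and let $\psi'_t$ be its pull-back, which is a trivialization of $(f^*E)_t$. By construction the composition $\pi_{S^n}\circ \psi'_t\circ \sigma'_{\partial t}\colon \partial\Delta^{n+1}\to S^n$ equals $\pi_{S^n}\circ \psi_s\circ \sigma_{\partial s}$, so the two maps have the same degree. Consequently $z'(t)=z(s)=z(f\circ t)=(C^{n+1}(f)(z))(t)$, and hence $z'=C^{n+1}(f)(z)$ as cochains.

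Passing to cohomology, this gives $\eu(f^*E)=[z']=H^{n+1}(f)([z])=H^{n+1}(f)(\eu(E))$, as required. The only point that requires a slight comment is independence from the choice of the initial family $\{\sigma_s\}$, but this is exactly part (3) of Proposition~\ref{euler:welldef}. I do not foresee a genuine obstacle: the argument is purely formal once one observes that the degree used to detect the obstruction is preserved under pull-back of bundles and sections along $f$, since fibers, trivializations and boundary sections all transfer verbatim through the canonical bundle map $\widetilde{f}$.
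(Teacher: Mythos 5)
Your proof is correct and takes essentially the same approach as the paper's: the paper's proof simply states that a collection of compatible sections of $E$ pulls back to one for $f^*E$, from which the conclusion follows. You have merely spelled out the details that the paper leaves implicit.
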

\begin{proof}
 Via the map $f$, a collection of compatible sections of $E$ over the simplices in $M$ gives rise to
 a collection of compatible sections of $f^*E$ over the simplices in $N$. The conclusion follows at once.
\end{proof}

By construction, the Euler class provides an obstruction for $E$ to admit a section:

\begin{prop}\label{secteu}
If the sphere bundle $\pi\colon E\to M$ admits a section, then $\eu(E)=0$.
\end{prop}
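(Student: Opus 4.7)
The plan is to exhibit an explicit cocycle representative of $\eu(E)$ that vanishes identically, using the global section to produce compatible section choices in every dimension. By Proposition~\ref{euler:welldef}(3), the Euler class does not depend on the choice of compatible sections over $k$-simplices for $k\leq n$, so any such family yields a cocycle in the class $\eu(E)$; I only need to produce one whose associated $(n+1)$-cochain is zero.

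Suppose $s\colon M\to E$ is a global section, so $\pi\circ s=\mathrm{Id}_M$. For every $k\geq 0$ and every singular $k$-simplex $\sigma\colon \Delta^k\to M$, define
\[
\sigma_s\ =\ s\circ\sigma\ \colon\ \Delta^k\longrightarrow E.
\]
This is continuous and satisfies $\pi(\sigma_s(x))=\pi(s(\sigma(x)))=\sigma(x)$, so it is a section over $\sigma$ in the sense of Section~\ref{Euler:bundle}. Compatibility on faces is automatic: for each $i$,
\[
\sigma_s\big|_{\partial_i\Delta^k}\ =\ s\circ\sigma\big|_{\partial_i\Delta^k}\ =\ s\circ(\partial_i\sigma)\ =\ (\partial_i\sigma)_s.
\]
In particular, the family $\{\sigma_s\mid \sigma\in S_k(M),\ k\leq n\}$ is a compatible system of sections of the sort used in the construction of the Euler cocycle.

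Now I apply this family to compute the associated cocycle $z\in C^{n+1}(M,\mathbb{Z})$. Fix a singular $(n+1)$-simplex $\sigma\colon\Delta^{n+1}\to M$ and a trivialization $\psi_\sigma\colon E_\sigma\to\Delta^{n+1}\times S^n$. By construction, the section $\sigma_{\partial\sigma}$ on $\partial\Delta^{n+1}$ obtained by gluing the $\sigma_{\partial_i\sigma}$ is nothing but the restriction of the globally defined section $\sigma_s=s\circ\sigma\colon \Delta^{n+1}\to E_\sigma$. Thus the map
\[
\pi_{S^n}\circ\psi_\sigma\circ\sigma_{\partial\sigma}\ \colon\ \partial\Delta^{n+1}\longrightarrow S^n
\]
extends continuously over the whole disk $\Delta^{n+1}$ (via $\pi_{S^n}\circ\psi_\sigma\circ\sigma_s$), so it is null-homotopic and therefore has degree zero. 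Hence $z(\sigma)=0$ for every $\sigma\in S_{n+1}(M)$, so $z$ is the zero cocycle and $\eu(E)=[z]=0$.

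There is no real obstacle here: the only subtlety is to justify using the same family of sections in every dimension (which is exactly what Proposition~\ref{euler:welldef}(3) allows) and to observe that the obstruction to extending a section from $\partial\Delta^{n+1}$ to $\Delta^{n+1}$ is by definition the degree of the associated sphere-valued map, which trivially vanishes when the extension is given to us for free by the global section.
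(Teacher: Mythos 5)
Your proof is correct and follows essentially the same route as the paper's: pull back the global section along each singular simplex to obtain a compatible family of sections in all dimensions, then observe that the resulting Euler cocycle vanishes identically because the partial section over $\partial\Delta^{n+1}$ extends over $\Delta^{n+1}$. You have simply spelled out the compatibility check and the degree-zero conclusion in more detail than the paper does.
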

\begin{proof}
Let  $\sigma\colon M\to E$ be a global section. If $s\colon \Delta^k\to M$ is any singular simplex, then
we may define a section $\sigma_{s}$ of $E_s$ simply as the pull-back of $\sigma$ via $s$. With this choice,
for every singular $n$-simplex $s$, the partial section $\sigma_{\partial s}$ of $E_s$ extends from
$\partial \Delta^n$ to $\Delta^n$. As a consequence, the Euler class admits an identically vanishing representative, so $\eu(E)=0$.
\end{proof}

It is well-known that the vanishing of the Euler class does not ensure the existence of a section in general (see e.g.~\cite[Example 23.16]{Bott-Tu}).
However, we have the following positive result in this direction:

\begin{prop}\label{eusect}
Let $\pi\colon E\to M$ be an $n$-sphere bundle over a simplicial complex.
Then $\eu(E)=0$ if and only if the restriction of $E$ to the $(n+1)$-skeleton of $M$ admits a section.
\end{prop}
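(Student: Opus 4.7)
The plan is to prove the two implications separately: the ``if'' direction will rest on Proposition~\ref{secteu} combined with naturality of the Euler class, while the ``only if'' direction will use item~(4) of Proposition~\ref{euler:welldef} to realize the zero cohomology class by an identically vanishing cocycle, and then read off a section of $E|_{M^{(n+1)}}$ from this realization.

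Suppose first that $E|_{M^{(n+1)}}$ admits a section. Then Proposition~\ref{secteu} applied to the restricted bundle gives $\eu(E|_{M^{(n+1)}})=0$. Letting $i\colon M^{(n+1)}\hookrightarrow M$ denote the inclusion, Lemma~\ref{Euler:functorial} yields $H^{n+1}(i)(\eu(E))=\eu(E|_{M^{(n+1)}})=0$, so it will suffice to verify that the restriction map $H^{n+1}(i)\colon H^{n+1}(M,\mathbb{Z})\to H^{n+1}(M^{(n+1)},\mathbb{Z})$ is injective. This is a standard fact from cellular cohomology: the CW-pair $(M,M^{(n+1)})$ has no relative cells in dimensions at most $n+1$, so the relative cellular cochain groups $C^k_{\mathrm{cell}}(M,M^{(n+1)},\mathbb{Z})$ vanish for $k\leq n+1$, and the long exact sequence of the pair immediately gives the desired injectivity.

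Conversely, assume $\eu(E)=0$. By Proposition~\ref{euler:welldef}(4) there exists a compatible family of sections $\{\sigma_s\}$ for singular simplices $s$ of dimension at most $n$ such that the associated Euler cocycle vanishes identically on $C_{n+1}(M,\mathbb{Z})$. Fix a total order on the vertices of $M$; then each simplex $\Delta$ of the triangulation of dimension $k\leq n$ has a canonical affine singular parameterization $\iota_\Delta\colon\Delta^k\to M$, and the face operators of this parameterization coincide with the canonical parameterizations of the codimension-one faces of $\Delta$. The face-compatibility of $\{\sigma_s\}$ thus guarantees that the sections $\sigma_{\iota_\Delta}$ agree on common faces of simplices of $M$, and hence glue to a well-defined section $\sigma\colon M^{(n)}\to E$. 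To extend $\sigma$ across an $(n+1)$-simplex $\Delta$, observe that the vanishing of the Euler cocycle on $s=\iota_\Delta$ means, by the very definition recalled in the excerpt, that the map $\partial\Delta^{n+1}\to S^n$ obtained from $\sigma_{\partial s}$ via a trivialization of $E_s$ has degree zero; it is therefore null-homotopic and extends to a map $\Delta^{n+1}\to S^n$, which translates back through the trivialization into an extension of $\sigma$ over the cell $\Delta\subseteq M$. Doing this simplex by simplex produces the section over $M^{(n+1)}$. The main technical point to watch is the coherence of the canonical singular simplices under face operators, which is exactly why a total ordering on the vertices is fixed at the outset; once this bookkeeping is set up, the rest of the argument is a routine translation between the vanishing of the singular cocycle on $\iota_\Delta$ and the local extension problem for $\sigma$.
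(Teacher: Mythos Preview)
Your proof is correct and follows essentially the same approach as the paper's. The paper handles the ``if'' direction in a single sentence by citing Proposition~\ref{secteu}, whereas you spell out the additional step needed (naturality plus the injectivity of $H^{n+1}(i)$ on the $(n+1)$-skeleton), which is a genuine detail the paper glosses over; for the ``only if'' direction both arguments are identical, with your version making explicit the use of a vertex ordering to pass from the compatible family of sections on singular simplices to a section on the simplicial $n$-skeleton.
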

\begin{proof}
By Proposition~\ref{secteu}, we need to construct a section over $M^{(n+1)}$ under the assumption that $\eu(E)=0$.
Let $S_{n}(M)$ be the set of singular $n$-simplices with values in $M$.
By Proposition~\ref{euler:welldef}--(4), there exists 
 a collection
$\{\sigma_s,\, s\in S_n(M)\}$ of compatible sections over the elements of $S_n(M)$ such that the corresponding Euler cocycle is identically equal to zero.
 These sections may be exploited to define a global
section $\sigma\colon M^{(n)}\to E$ over the $n$-skeleton of $M$.
Using that the corresponding Euler cocycle vanishes, it is easy to show that
this section extends over $M^{(n+1)}$, and we are done.
\end{proof}

Much more can be said in the context of \emph{circle} bundles: indeed, the Euler class completely classifies the isomorphism type of the bundle
(see Theorem~\ref{Euler:classifies} below). In particular, a circle bundle with vanishing Euler class is topologically trivial, so it admits a section. 
In arbitrary dimension, the existence of a section for an  $n$-sphere bundle is guaranteed when the base space is a closed oriented $(n+1)$-manifold:

\begin{prop}\label{euse}
Let $\pi\colon E\to M$ be an $n$-sphere bundle over a smooth closed oriented $(n+1)$-manifold.
Then $\eu(E)=0$ if and only if $E$ admits a section.
\end{prop}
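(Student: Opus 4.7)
My plan is to deduce this statement essentially as a direct consequence of the two preceding propositions, using the geometric hypothesis on $M$ to upgrade the partial section produced by Proposition~\ref{eusect} to a global one.

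One direction is already settled: if $E$ admits a section, then $\eu(E)=0$ by Proposition~\ref{secteu}, and this works for any base space. So all the content is in the converse. For that, I would first invoke the fact that a smooth closed manifold admits a smooth triangulation (by the classical result of Whitehead), hence in particular a structure of finite simplicial complex. I would fix such a triangulation $\mathcal{T}$ of $M$ once and for all, so that Proposition~\ref{eusect} becomes applicable.

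The key (and easy) observation is then dimensional. Since $M$ has dimension $n+1$ and $\mathcal{T}$ is a triangulation of $M$, the $(n+1)$-skeleton $M^{(n+1)}$ of $\mathcal{T}$ coincides with $M$ itself: there are no cells of dimension larger than $n+1$ to be attached. Therefore, assuming $\eu(E)=0$, Proposition~\ref{eusect} produces a continuous section of $E|_{M^{(n+1)}}$, which is by definition a section of $E$ over all of $M$. Combining this with Proposition~\ref{secteu} concludes the proof.

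The only subtlety worth stressing in the write-up is the legitimacy of applying Proposition~\ref{eusect}, which requires a simplicial (or at least CW) structure on the base; this is why the smoothness assumption on $M$ is used (to guarantee triangulability via Whitehead's theorem). No obstacle of real substance arises: once one has chosen a triangulation, the statement is a dimension count. I would not attempt to revisit the obstruction-theoretic construction directly, since all the work has already been carried out in the proofs of Propositions~\ref{secteu} and \ref{eusect}.
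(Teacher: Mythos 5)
Your proposal is correct and follows essentially the same route as the paper: the paper's proof simply observes that $M$ admits a smooth triangulation and then invokes Proposition~\ref{eusect}, since the $(n+1)$-skeleton of an $(n+1)$-dimensional triangulated manifold is the whole manifold. You have just made explicit the dimensional observation and the appeal to Whitehead's triangulation theorem, both of which the paper leaves implicit.
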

\begin{proof}
Since $M$ can be smoothly triangulated, the conclusion follows from Proposition~\ref{eusect}.
\end{proof}

In the case described in the previous proposition, the Euler class may be completely described by an integer:
\begin{defn}
Let $\pi\colon E\to M$ be an $n$-sphere bundle, and suppose that $M$ is a closed oriented $(n+1)$-manifold.
Then $H^n(M,\matZ)$ is canonically isomorphic to $\matZ$, so $\eu(E)$ may be identified with an integer, still denoted
by $\eu(E)$. We call such an integer the \emph{Euler number} of the fiber bundle $E$.
\end{defn}

The problem of which integral cohomology classes may be realized as Euler classes of some sphere bundle is open in general:
for example, a classical result by Milnor, Atiyah and Hirzebruch implies that, if $m\neq 1,2,4$, then the Euler class
of any linear $n$-sphere bundle over $S^{2m}$ is of the form $2\cdot \alpha$ for some $\alpha\in H^{n+1}(S^{2m},\matZ)$
\cite{Mil,AH}, while, if $k$ is even, there is a number $N(k,n)$ such that for every $n$-dimensional CW-complex $X$ and every 
cohomology class $a\in H^k(X,\matZ)$, there exists a linear $(k-1)$-sphere bundle $E$ over $X$ such that $\eu(E)=2N(k,n)a$~\cite{GSW}.
Here we just prove the following:

\begin{prop}\label{euler:torsion}
Let $\pi\colon E\to M$ be a linear $n$-sphere bundle, where $n$ is even. Then $2\eu(E)=0\in H^{n+1}(M,\matZ)$.
\end{prop}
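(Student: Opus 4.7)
\medskip

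\noindent\textbf{Proof plan.}
The strategy is to exploit the fiberwise antipodal map $-\mathrm{id}_E\colon E\to E$, which is a linear bundle automorphism of $E$ but whose determinant on each fiber equals $(-1)^{n+1}=-1$ when $n$ is even. Hence the induced fiberwise map $a\colon S(E)\to S(E)$ is a homeomorphism of the underlying topological sphere bundle that reverses the orientation of every fiber (recall that the antipodal map on $S^n$ has degree $(-1)^{n+1}$, which equals $-1$ precisely when $n$ is even). The idea is then to compare the Euler class of $S(E)$ with its own ``orientation-reversed'' version.

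First, denote by $S(E)^{\mathrm{op}}$ the same \emph{topological} sphere bundle as $S(E)$ but equipped with the opposite orientation on every fiber. I would verify that $\mathrm{eu}(S(E)^{\mathrm{op}})=-\mathrm{eu}(S(E))$ directly from the construction in Section~\ref{Euler:bundle}. Since the existence of compatible sections over singular $k$-simplices, $k\leq n$, depends only on the underlying topology (i.e.\ on $\pi_k(S^n)=0$ for $k<n$), one may use the \emph{same} family $\{\sigma_s\}_{s\in S_k(M),\, k\leq n}$ to build Euler cocycles for both $S(E)$ and $S(E)^{\mathrm{op}}$. Given $s\in S_{n+1}(M)$ and an orientation-preserving trivialization $\psi_s\colon E_s\to \Delta^{n+1}\times S^n$ for $S(E)$, an orientation-preserving trivialization for $S(E)^{\mathrm{op}}$ is obtained by composing $\psi_s$ fiberwise with any fixed orientation-reversing self-homeomorphism $r\colon S^n\to S^n$. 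Then the degree computation yields
\[
z^{\mathrm{op}}(s)=\deg\bigl(r\circ\pi_{S^n}\circ\psi_s\circ\sigma_{\partial s}\bigr)=\deg(r)\cdot z(s)=-z(s),
\]
so $\mathrm{eu}(S(E)^{\mathrm{op}})=-\mathrm{eu}(S(E))$.

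Next, I would check that $a\colon S(E)\to S(E)^{\mathrm{op}}$ is an isomorphism of \emph{oriented} sphere bundles over $M$. As a map between the underlying topological sphere bundles (both equal to $S(E)$), the map $a$ is clearly a homeomorphism covering $\mathrm{id}_M$, because it comes from the continuous linear bundle automorphism $-\mathrm{id}_E$ of $E$ and restricts to the antipodal map on each fiber. Since $n$ is even, this antipodal map reverses the orientation of each fiber of $S(E)$; viewed as a map into $S(E)^{\mathrm{op}}$, whose fibers carry the reversed orientation, $a$ therefore preserves fiber orientations and hence is an orientation-preserving fiberwise homeomorphism, i.e.\ an isomorphism of oriented sphere bundles. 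A small piece of bookkeeping confirms that isomorphic oriented sphere bundles have the same Euler class: one simply transports any family of compatible sections along $a$ and observes that the Euler cocycles are intertwined. (Alternatively, an isomorphism $E_1\cong E_2$ identifies $E_1$ with the pull-back of $E_2$ along $\mathrm{id}_M$, so the equality follows from Lemma~\ref{Euler:functorial}.)

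Combining the two steps, $\mathrm{eu}(S(E))=\mathrm{eu}(S(E)^{\mathrm{op}})=-\mathrm{eu}(S(E))$ in $H^{n+1}(M,\mathbb{Z})$, which gives $2\,\mathrm{eu}(S(E))=0$, as required. The only genuinely delicate point is to keep the bookkeeping of orientations straight in the comparison between $S(E)$ and $S(E)^{\mathrm{op}}$, so I would state once and for all, before doing the sign computation, that flipping the fiber orientation amounts to composing a preferred trivialization with a degree $-1$ map $S^n\to S^n$, and then deduce all the signs from this single observation together with $\deg(\text{antipodal on }S^n)=(-1)^{n+1}$.
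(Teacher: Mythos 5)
Your proof is correct, and it hinges on exactly the same mechanism as the paper's: the fiberwise antipodal map, which is a bundle automorphism of $E$ because the structure group ${\rm GL}^+(n+1,\R)$ commutes with $-\mathrm{id}$, and which has fiber degree $(-1)^{n+1}=-1$ when $n$ is even. The packaging is different, though. The paper works entirely within a single bundle: it fixes a collection of compatible sections $\{\sigma_s\}$ over the singular $n$-simplices, observes that $\{i\circ\sigma_s\}$ is another valid collection of compatible sections for $E$ itself, and that the associated Euler cocycle is $-z$; since both cocycles represent $\eu(E)$ by Proposition~\ref{euler:welldef}(3), the conclusion $\eu(E)=-\eu(E)$ is immediate. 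You instead introduce the auxiliary orientation-reversed bundle $S(E)^{\mathrm{op}}$, prove the general fact $\eu(S(E)^{\mathrm{op}})=-\eu(S(E))$, and then show that the antipodal map furnishes an isomorphism $S(E)\cong S(E)^{\mathrm{op}}$ of \emph{oriented} bundles precisely when $n$ is even. The two arguments are logically equivalent, and your version is somewhat more conceptual (it isolates as a standalone lemma the effect of reversing the fiber orientation, which is a reusable fact), whereas the paper's is shorter because it never needs to compare two distinct bundles, only two families of sections of the same one. Either route is perfectly sound; if one wants the quickest proof, observing directly that $i$ carries one compatible family of sections to another with negated Euler cocycle is the leanest path, and avoids the orientation bookkeeping you rightly flag as the delicate point of your own approach.
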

\begin{proof}
Since $E$ is linear, every element of the structure group of $E$ commutes with the antipodal map of $S^n$. As a consequence,
there exists a well-defined involutive bundle map $i\colon E\to E$ such that the restriction of $i$ to the fiber
$E_x$ has degree $(-1)^{n+1}=-1$ for every $x\in M$. 

Let now $S_n(M)$ be the set of singular $n$-simplices with values in $M$, take a collection of compatible sections
$\{\sigma_s,\, s\in S_n(M)\}$, and let $z$ be the corresponding Euler cocycle.
By composing each $\sigma_s$ with $i$, we obtain a new collection of compatible sections
$\{\sigma'_s,\, s\in S_n(M)\}$, which define the Euler cocycle $z'=-z$. Therefore, we have
$\eu(E)=-\eu(E)$, whence the conclusion.
\end{proof}

It is not known to the author whether Proposition~\ref{euler:torsion} holds for
any \emph{topological} sphere bundle.

\begin{cor}
Let $\pi\colon E\to M$ be a linear $n$-sphere bundle over a closed oriented $(n+1)$-manifold, where $n$ is even.
Then $\eu(E)=0$, so $E$ admits a section.
\end{cor}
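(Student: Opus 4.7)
The plan is to combine Proposition~\ref{euler:torsion} with the torsion-freeness of top cohomology on a closed oriented manifold, and then to appeal to Proposition~\ref{euse} to deduce the existence of a section.

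First, since $E$ is a linear $n$-sphere bundle and $n$ is even, Proposition~\ref{euler:torsion} gives $2\eu(E) = 0$ in $H^{n+1}(M,\matZ)$. Second, since $M$ is a closed oriented $(n+1)$-manifold, Poincar\'e duality (or, more concretely, the identification of $\eu(E)$ with the Euler number discussed after Proposition~\ref{euse}) yields a canonical isomorphism $H^{n+1}(M,\matZ) \cong \matZ$. In particular $H^{n+1}(M,\matZ)$ is torsion-free, so the equation $2\eu(E) = 0$ forces $\eu(E) = 0$.

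Finally, since $\eu(E) = 0$ and $M$ is a closed oriented $(n+1)$-manifold, Proposition~\ref{euse} applies and produces a section of $E$, completing the proof.

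There is really no obstacle here: the corollary is essentially a one-line consequence of combining the parity-torsion statement in Proposition~\ref{euler:torsion} with the fact that $H^{n+1}(M,\matZ)$ has no torsion for $M$ a closed oriented $(n+1)$-manifold, followed by the converse direction of Proposition~\ref{euse}. The only mild subtlety is a bookkeeping one: the statement of Proposition~\ref{euse} is phrased for smooth manifolds (to guarantee triangulability), so one should either tacitly assume $M$ smooth or invoke that closed topological manifolds admit CW structures in the dimensions needed so that the skeletal obstruction argument behind Proposition~\ref{eusect} still applies.
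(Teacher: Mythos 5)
Your proof is correct and follows exactly the same route as the paper's: invoke Proposition~\ref{euler:torsion} to get $2\eu(E)=0$, use that $H^{n+1}(M,\matZ)\cong\matZ$ is torsion-free to conclude $\eu(E)=0$, then apply Proposition~\ref{euse} to obtain a section. Your remark about the tacit smoothness/triangulability hypothesis in Proposition~\ref{euse} is a fair observation, but it does not represent a divergence from the paper's argument.
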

\begin{proof}
Since $H^{n+1}(M,\matZ)\cong \matZ$ is torsion-free, the conclusion follows from Propositions~\ref{euler:torsion} and~\ref{euse}.
\end{proof}

\section{The Euler class of  oriented vector bundles}

The notion of Euler class may be extended also to the context of oriented vector bundles: namely,
if $\pi\colon E\to M$ is such a bundle, then we simply set
$$
\eu(E)=\eu(S(E))\ .
$$

If the rank of $E$ is equal to $2$, then we may associate to $E$ another (smooth) sphere bundle, the 
\emph{projective bundle} $\mathbb{P}(E)\to M$. Such a bundle is defined as follows: the fiber
of $\mathbb{P}(E)$ over $x\in M$ is just the projective space $\mathbb{P}(E_x)$, i.e.~the space of
$1$-dimensional subspaces of $E_x$. 
Using the fact that
the one-dimensional real projective space $\mathbb{P}^1(\R)$ is homeomorphic to the circle, it is not difficult to check
that $\mathbb{P}(E)\to M$ is a smooth circle bundle, according to our definitions (in fact, a ${\rm Gl}^+(2,\mathbb{R})$-atlas for
$E$ induces a $\mathbb{P}{\rm Gl}^+(2,\mathbb{R})$-atlas for
$\mathbb{P}(E)$).
Then, using that the natural projection $\R^2\supseteq S^1\to \mathbb{P}^1(\R)$ has degree 2, it is easy to show that
$$
\eu(\mathbb{P}(E))=2\eu(S(E))=2\eu(E)\ .
$$

The Euler number of the tangent bundle of a closed manifold is just equal to the Euler characteristic of $M$:

\begin{prop}\label{tangentchi}
Let $M$ be a closed oriented smooth manifold, and denote by $TM$ the tangent bundle of $M$. Then
$$
\eu(TM)=\chi(M)\ .
$$
 \end{prop}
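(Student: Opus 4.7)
The plan is to represent the Euler class by a cocycle constructed from the normalization of a vector field with isolated zeros, and then evaluate it on the fundamental class, obtaining the sum of the local indices of the vector field, which equals $\chi(M)$ by Poincar\'e--Hopf.

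First observe that if $n=\dim M$, then $S(TM)$ is an $(n-1)$-sphere bundle, so $\eu(TM)=\eu(S(TM))$ lies in $H^n(M,\matZ)\cong\matZ$, and to prove the proposition it suffices to compute the pairing $\langle\eu(TM),[M]\rangle$, which by definition equals the Euler number. Next, I would fix a smooth triangulation $\mathcal{T}$ of $M$ (such a triangulation exists since $M$ is smooth), so that the fundamental class is represented by the simplicial cycle $c=\sum_\sigma\sigma$ with $\sigma$ ranging over the top-dimensional simplices coherently oriented with $M$. Using a smooth partition of unity subordinate to $\mathcal{T}$, construct a smooth vector field $X$ on $M$ whose zero set $Z$ is finite, is disjoint from the $(n-1)$-skeleton $M^{(n-1)}$, and satisfies $\sum_{p\in Z}\mathrm{ind}_p(X)=\chi(M)$; concretely, one may place one zero of index $(-1)^k$ at the barycenter of each $k$-simplex of $\mathcal{T}$, so that the index sum becomes $\sum_k(-1)^k\cdot\#\{k\text{-simplices}\}=\chi(M)$ by the combinatorial definition of the Euler characteristic.

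Normalizing $X$ yields a section $\bar X=X/\|X\|$ of $S(TM)$ over $M\setminus Z$, and in particular over $M^{(n-1)}$. Using the obstruction-theoretic construction of Section~\ref{Euler:bundle}, I would extend $\bar X|_{M^{(n-1)}}$ to a system $\{\sigma_s\}$ of compatible sections over all singular simplices of dimension at most $n-1$, and let $z\in C^n(M,\matZ)$ be the resulting Euler cocycle representing $\eu(TM)$. For each top simplex $\sigma$ of $\mathcal{T}$, fix a trivialization of $TM|_\sigma$ (possible because $\sigma$ is contractible), and let $Z_\sigma=Z\cap\mathrm{int}(\sigma)$. By cutting out small open balls $B_p$ around each $p\in Z_\sigma$ so that $\bar X$ is defined on $\sigma\setminus\bigsqcup_p B_p$, the additivity of degree on $S^{n-1}$ yields
\begin{equation*}
z(\sigma)=\deg\bigl(\bar X|_{\partial\sigma}\bigr)=\sum_{p\in Z_\sigma}\deg\bigl(\bar X|_{\partial B_p}\bigr)=\sum_{p\in Z_\sigma}\mathrm{ind}_p(X),
\end{equation*}
where the first equality is the definition of $z(\sigma)$ (up to the trivialization) and the last is the definition of the local index. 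Summing over all top simplices gives
\begin{equation*}
\langle\eu(TM),[M]\rangle=z(c)=\sum_\sigma z(\sigma)=\sum_{p\in Z}\mathrm{ind}_p(X)=\chi(M).
\end{equation*}

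The main obstacle will be the identification $z(\sigma)=\sum_{p\in Z_\sigma}\mathrm{ind}_p(X)$: one must ensure that the degree computed in the obstruction-theoretic definition (which uses an abstract trivialization and the \emph{boundary} section $\sigma_{\partial\sigma}$ inherited from $\bar X|_{M^{(n-1)}}$) really matches the Poincar\'e--Hopf index, which is defined intrinsically via $X$. The key point is that the extension of $\bar X|_{\partial\sigma}$ to $\sigma\setminus Z_\sigma$ coming from $\bar X$ itself allows us to split the degree on $\partial\sigma$ into contributions on the small spheres $\partial B_p$ via the standard cobordism argument (a homotopy through $\bar X$ on the pair of pants $\sigma\setminus\bigsqcup_p B_p$). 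Secondary care is needed to ensure that the sections chosen on non-triangulation simplices of dimension $\leq n-1$ are indeed compatible with $\bar X|_{M^{(n-1)}}$, but this follows from the general extension argument in the proof of Proposition~\ref{euler:welldef} since $\pi_k(S^{n-1})=0$ for $k<n-1$.
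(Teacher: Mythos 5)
Your approach is the same as the paper's — a vector field with prescribed isolated zeros, normalized to a section over the $(n-1)$-skeleton, then the Euler cocycle is computed on the top simplices of a triangulation and compared with Poincar\'e--Hopf indices. However, there is a genuine gap: you demand that the zero set $Z$ of $X$ be ``disjoint from the $(n-1)$-skeleton $M^{(n-1)}$'', but your concrete construction places one zero at the barycenter of \emph{every} $k$-simplex of $\mathcal{T}$, for all $0\le k\le n$; the barycenters of the $k$-simplices with $k<n$ all lie on $M^{(n-1)}$, so $\bar X=X/\|X\|$ is simply not defined on $M^{(n-1)}$ and the obstruction-theoretic cocycle cannot be read off from $\bar X$ as you describe. (Indeed, you need all of those zeros: your count $\sum_k(-1)^k\#\{k\text{-simplices}\}=\chi(M)$ runs over every dimension.)

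The paper resolves this by using \emph{two} triangulations: the vector field $X$ is built using a first triangulation $\tau$ (zeros at the barycenters of all $\tau$-simplices, with the explicit sink/source local model), and then the Euler cocycle is computed on a second triangulation $\tau'$ chosen so that no zero of $X$ lies on $(\tau')^{(n-1)}$, each top simplex $\Delta'_i$ of $\tau'$ contains at most one zero, and the zeros are ``centered'' enough that the local model of $X$ persists on $\Delta'_i$. The fundamental cycle is then the sum of the $\tau'$-simplices, and $\bar X$ is a genuine section over $(\tau')^{(n-1)}$. To fix your argument you must either introduce such an auxiliary triangulation, or explicitly perturb the zeros at the barycenters of the lower-dimensional simplices into the interiors of adjacent top simplices before normalizing (justifying that this does not change the local indices). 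As written, the requirement on $Z$ and the ``concretely'' clause directly contradict each other, and the step ``Normalizing $X$ yields a section $\bar X$ of $S(TM)$ over $M^{(n-1)}$'' is false for the $X$ you actually construct.
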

 \begin{proof}
Let $\tau$ be a smooth triangulation of $M$. We first exploit $\tau$ to construct a continuous vector field $X\colon M\to TM$
having only isolated zeroes. Let $n=\dim M$. 
For every $k$-simplex $\Delta$ of $\tau$,  $k\leq n$, we choose a point $p_\Delta$ in the interior of $\Delta$ (where we understand that,
if $\dim \Delta=0$, i.e.~if $\Delta$ is a vertex of $\tau$, then $p_\Delta=\Delta$). Then we build our vector field
$X$ in such a way that the following conditions hold:
\begin{enumerate}
\item 
$X$ is null exactly at the $p_\Delta$'s;
\item 
for every $\Delta$ with $\dim\Delta=k$, there exist local coordinates $(x_1,\ldots,x_n)$ on a neighbourhood $U$ of $p_\Delta$ such that
 $U\cap{\Delta}=\{x_{k+1}=\ldots=x_n=0\}$ and 
 $$
 X|_U=-\left(x_1\frac{\partial}{\partial x_1}+\ldots+x_k\frac{\partial}{\partial x_k}\right) + x_{k+1}\frac{\partial}{\partial x_{k+1}}+\ldots x_n\frac{\partial}{\partial x_n}
 $$
(so $p_\Delta$ is a sink for the restriction of $X$ to $\Delta$ and a source for
the restriction of $X$ to a transversal section of $\Delta$). 
\end{enumerate}

The fact that such a vector field indeed exists is well-known; we refer the reader to Figure~\ref{field:fig} for a picture in the $2$-dimensional case.

\begin{center}
 \begin{figure}
  \includegraphics{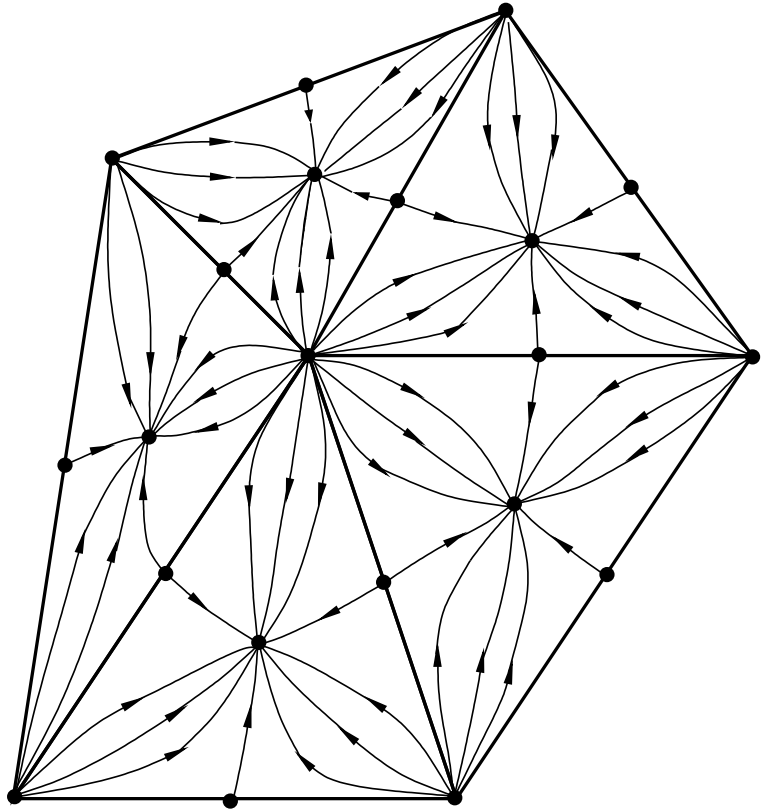}
 \caption{The vector field described in the proof of Proposition~\ref{tangentchi}. Here we describe a portion of a $2$-dimensional triangulation.}\label{field:fig}
 \end{figure}
\end{center}

Let us now choose another triangulation $\tau'=\{\Delta'_1,\ldots,\Delta'_m\}$ of $M$ such that no zero of $X$ lies on the $(n-1)$-skeleton of $\tau'$, 
every $\Delta'_i$ contains at most one zero of $X$ and, if $\Delta'_i$ contains a zero of $X$, then $\Delta'_i$ is so small that
the restriction of $X$ to $\Delta_i$ admits a local description as in item (2) above. We can choose  orientation-preserving 
parameterizations $s_i\colon \Delta^n\to M$ of the $\Delta'_i$
in such a way that the chain $c=s_1+\ldots+s_m$ represents a generator
of $H_n(M,\matZ)$. 

The vector field $X$ defines a nowhere vanishing section on the $(n-1)$-skeleton $(\tau')^{(n-1)}$ of $\tau'$, which induces in turn a section
from  $(\tau')^{(n-1)}$ to $S(TM)$. We may use this section to define $z(s_i)$ for every $i=1,\ldots, m$, where $z$ is the representative of the Euler class constructed above. If $\Delta'_i$ does not contain any singularity of $X$, then $X$ may be extended over $\Delta'_i$, so $z(s_i)=0$. On the other hand,
if $\Delta'_i$ contains the singularity $p_\Delta$, where $\dim\Delta=k$, then $z(s_i)$ is equal to the degree of the map
$$
S^{n-1}\to S^{n-1}\, ,\quad (x_1,\ldots,x_n)\mapsto (-x_1,\ldots,-x_k,x_{k+1},\ldots,x_n) ,
$$
so $z(s_i)=(-1)^k$. As a consequence we finally get 
\begin{align*}
\eu(TM)&=\sum_{i=1}^m z(s_i) =
\sum_{j=1}^n (-1)^k \cdot \#\{\Delta\in \tau\, |\, \dim\Delta=k\}=\chi(M)\ .
\end{align*}
 \end{proof}

\section{The euler class of circle bundles}
As mentioned above, many results about the Euler class of sphere bundles admit stronger versions in the $1$-dimensional case, i.e.~when the fiber is homeomorphic to $S^1$.
This is also due to some topological properties of $\omeo$ (like asphericity) that do not hold for $\omeon$, $n\geq 2$. 
Recall that, unless otherwise stated, we understand that $\omeo$ and $\omeot$  are endowed with the compact-open topology
(we refer the reader to Section~\ref{omeo:sec} for the definition of $\omeot$).
The following result provides an explicit description of the homotopy type of $\omeo$:

\begin{lemma}\label{topological:omeo}
\begin{enumerate}
\item
The topological group $\omeot$ is contractible. 
\item
The map $p_*\colon \omeot\to\omeo$ induced by the projection $p\colon\R\to S^1$ is a universal covering. 
\item
If $x$ is any point in $S^1$, the map 
$$
v_x\colon \omeo\to S^1\, ,\qquad v_x(f)=f(x)
$$
is a homotopy equivalence.
\end{enumerate}
\end{lemma}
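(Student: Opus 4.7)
For part (1) I would construct an explicit contraction to the identity via a straight-line homotopy. Define
\[
H\colon \omeot\times[0,1]\to \omeot,\qquad H(\widetilde f,t)(x)=(1-t)\widetilde f(x)+tx.
\]
Each $H(\widetilde f,t)$ is strictly increasing, being a convex combination of two strictly increasing functions, and commutes with integer translations since both $\widetilde f$ and the identity do; so $H(\widetilde f,t)\in\omeot$. One verifies that $H$ is continuous with respect to the compact-open topology by a routine uniform estimate on $[0,1]$ (which controls the behaviour on any compact set by $\Z$-periodicity). At $t=0$ we recover $\widetilde f$, at $t=1$ we obtain $\mathrm{Id}_\R$, so $\omeot$ deformation retracts to a point.

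For part (2) I would show first that $\ker p_*=\iota(\Z)$ is discrete in $\omeot$: indeed $\sup_{x\in\R}|\tau_n(x)-\tau_m(x)|=|n-m|\ge 1$ when $n\ne m$. To establish local triviality, fix $f_0\in\omeo$ together with a lift $\widetilde f_0\in\omeot$, endow $S^1=\R/\Z$ with the metric of total length one, and set
\[
U=\{f\in\omeo : \sup_{x\in S^1} d_{S^1}(f(x),f_0(x))<1/4\}.
\]
For each $f\in U$ there is a unique lift $\widetilde f\in\omeot$ satisfying $\sup_x|\widetilde f(x)-\widetilde f_0(x)|<1/4$, because any two lifts differ by a non-zero integer translation. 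The map $\sigma\colon f\mapsto \widetilde f$ is a continuous section of $p_*$ over $U$, and
\[
p_*^{-1}(U)=\bigsqcup_{n\in\Z}\tau_n\cdot \sigma(U)
\]
is a disjoint union of open sets each mapped homeomorphically to $U$. Hence $p_*$ is a covering projection, and it is universal because by (1) the total space is simply connected. In particular $\pi_1(\omeo)\cong\Z$ and $\pi_n(\omeo)=0$ for $n\ge 2$.

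For part (3) I would exhibit an explicit deformation retraction. Let $r\colon S^1\to \omeo$ send $y$ to the rigid rotation $R_y$ taking $x$ to $y$; then $v_x\circ r=\mathrm{Id}_{S^1}$, so it suffices to construct a homotopy $F\colon\omeo\times[0,1]\to\omeo$ between $\mathrm{Id}_{\omeo}$ and $r\circ v_x$. Given $f\in\omeo$, the element $g=R_{f(x)}^{-1}\circ f$ fixes $x$; if $\widetilde x\in p^{-1}(x)$ is fixed once and for all and $\widetilde g$ denotes the unique lift of $g$ with $\widetilde g(\widetilde x)=\widetilde x$, then the straight-line homotopy $\widetilde H(\widetilde g,t)(y)=(1-t)\widetilde g(y)+ty$ takes values in $\omeot$ and fixes $\widetilde x$ throughout (by the argument of (1)), hence descends to a path in the stabilizer of $x$ from $g$ to $\mathrm{Id}_{S^1}$. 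Setting
\[
F(f,t)=R_{f(x)}\circ p_*\bigl(\widetilde H(\widetilde g,t)\bigr)
\]
gives $F(f,0)=f$ and $F(f,1)=R_{f(x)}=r(v_x(f))$.

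The main technical point, and the principal obstacle, is the continuity of $F$ in $f$ (with respect to the compact-open topology). This reduces to the continuity of the assignment $f\mapsto \widetilde g$, which is an instance of the local-section argument of part (2): the requirement $\widetilde g(\widetilde x)=\widetilde x$ singles out one lift in each fibre of $p_*$, and this choice depends continuously on $g$ because the condition defining it is open and locally selects a single sheet of the covering. Once continuity is granted, $F$ is the required homotopy, and combined with $v_x\circ r=\mathrm{Id}_{S^1}$ it shows that $v_x$ is a homotopy equivalence.
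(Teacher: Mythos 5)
Your proposal is correct and follows essentially the same line of reasoning as the paper: a straight-line contraction in $\omeot$ for (1), a direct verification that $p_*$ is a covering for (2), and a retraction of $\omeo$ onto the rotations for (3), with the rotation map $r$ as a one-sided inverse of $v_x$.

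The only meaningful difference is in how continuity is handled in (3). You decompose $f=R_{f(x)}\circ g$ with $g$ fixing $x$, lift $g$ to the \emph{canonical} lift $\widetilde g$ fixing $\widetilde x$, run the straight-line homotopy on $\widetilde g$, and then argue that $f\mapsto\widetilde g$ is continuous via local sections of the covering from part (2). The paper instead takes an \emph{arbitrary} lift $\widetilde f$, writes the single formula $\widetilde H(f,t)=(1-t)\widetilde f+t\cdot\tau_{\widetilde f(\widetilde x)-\widetilde x}$, and observes that the projection $p_*\circ\widetilde H$ is independent of which lift was chosen; this independence is what makes continuity routine, without having to fix a canonical choice of lift. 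A short computation shows your $F(f,t)$ and the paper's $H(f,t)$ are literally the same map once you unwind the conjugation by $R_{f(x)}$, so the two presentations are equivalent — yours isolates the stabilizer of $x$ as a contractible subgroup, while the paper's sidesteps the need to select lifts continuously. One small wording caveat: the condition $\widetilde g(\widetilde x)=\widetilde x$ is not itself an open condition; what you really want to say is that the covering $p_*$ restricts to a homeomorphism between the stabilizer of $\widetilde x$ in $\omeot$ and the stabilizer of $x$ in $\omeo$, and this gives the continuity of $g\mapsto\widetilde g$.
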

\begin{proof}
(1): The map $\omeot\times [0,1]\to \omeot$ defined by
$(f,t)\mapsto tf+(1-t){\rm Id}_{\R}$ defines a contraction of $\omeot$, which, therefore, is contractible.

(2): The fact that $p_*$ is a covering is an easy exercise, so $p_*$ is a universal
covering by claim (1).

(3): Let $r_x\colon S^1\to\omeo$ be defined as follows: for every $y\in S^1$, $r_x(y)$ is the unique
rotation of $S^1$ taking $x$ into $y$. We obviously have $v_x\circ r_x={\rm Id}_{S^1}$, so we only have to check
that $r_x\circ v_x$ is homotopic to the identity of $\omeo$. Let us fix a point $\widetilde{x}\in p^{-1}(x)\subseteq \R$. For every $f\in\omeo$ we
fix an arbitrary lift $\widetilde{f}\in \omeo$ of $f$. Then, we define 
$$
\widetilde{H}\colon \omeo\times [0,1]\to\omeot\, ,\quad \widetilde{H}(f,t)=(1-t)\widetilde{f}+t\cdot \tau_{\widetilde{f}(\widetilde{x})-\widetilde{x}}\ ,
$$
and we set
$$
H\colon \omeo\times [0,1]\to \omeo\, ,\quad H=p_*\circ\widetilde{H}\ .
$$
It is immediate to check that the definition of $H(f,t)$ does not depend on the arbitrary choice of the lift $\widetilde{f}$. Using this, one easily checks
that $H$ is indeed continuous. The conclusion follows from the fact that $H(f,0)=f$ and $H(f,1)=r_x(v_x(f))$ for every $f\in\omeo$.
\end{proof}

\begin{cor}\label{quellocheserve}
Let $\sigma_1\colon S^1\to S^1$ be a continuous map,  let $b\colon S^1\to \omeo$ be a loop, and let $\sigma_2\colon S^1\to S^1$
be defined by $\sigma_2(x)=b(x)(\sigma_1(x))$. 
Then $b$ is null-homotopic if and only if $\deg \sigma_1=\deg\sigma_2$.
\end{cor}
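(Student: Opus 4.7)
The plan is to attach to the loop $b$ an integer $d(b)\in\mathbb{Z}$ which vanishes precisely when $b$ is null-homotopic, and then to establish the additivity formula $\deg\sigma_2 = \deg\sigma_1 + d(b)$; the corollary follows at once.

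To define $d(b)$, I would fix a point $x_0\in S^1$ and set $d(b):=\deg(v_{x_0}\circ b)$. By Lemma~\ref{topological:omeo}~(3) the evaluation $v_{x_0}\colon\omeo\to S^1$ is a homotopy equivalence, so $b$ is null-homotopic if and only if $v_{x_0}\circ b$ is, which in turn happens if and only if $d(b)=0$. (In particular the integer $d(b)$ does not depend on $x_0$.)

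For the additivity formula I would lift everything to universal covers. Let $p\colon\R\to S^1$ be the standard covering and lift $\sigma_i\circ p$ to $\widetilde{\sigma}_i\colon\R\to\R$, so that $\widetilde{\sigma}_i(x+1)=\widetilde{\sigma}_i(x)+\deg\sigma_i$. Since $\R$ is simply connected, the composition $b\circ p\colon \R\to\omeo$ admits a lift $\widetilde{b}\colon\R\to\omeot$ along the universal covering $p_*\colon\omeot\to\omeo$ provided by Lemma~\ref{topological:omeo}~(2). Because $(b\circ p)(x+1)=(b\circ p)(x)$, the elements $\widetilde{b}(x+1)$ and $\widetilde{b}(x)$ of $\omeot$ differ by a deck transformation $\tau_n$, and continuity forces $n\in\Z$ to be constant in $x$; evaluating $\widetilde{b}(x+1)=\tau_n\circ\widetilde{b}(x)$ at any lift $\widetilde{x}_0\in p^{-1}(x_0)$ produces a real lift of $v_{x_0}\circ b$ with translation number $n$, hence $n=d(b)$.

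Finally I would set $\widetilde{\sigma}_2'(x):=\widetilde{b}(x)(\widetilde{\sigma}_1(x))$; by the very definition of $\sigma_2$ this is a real lift of $\sigma_2\circ p$. Using that every element of $\omeot$ commutes with integer translations,
\begin{align*}
\widetilde{\sigma}_2'(x+1) &= (\tau_{d(b)}\widetilde{b}(x))(\widetilde{\sigma}_1(x)+\deg\sigma_1) \\
&= \widetilde{b}(x)(\widetilde{\sigma}_1(x)) + \deg\sigma_1 + d(b) = \widetilde{\sigma}_2'(x)+\deg\sigma_1+d(b),
\end{align*}
so $\deg\sigma_2=\deg\sigma_1+d(b)$, which combined with the characterization of null-homotopy above proves the corollary. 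I do not anticipate a serious obstacle; the only point demanding a little care is the identification of the monodromy integer $n$ with the degree $d(b)$, which is immediate upon evaluating the monodromy identity at $\widetilde{x}_0$.
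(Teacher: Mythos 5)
Your argument is correct, but it follows a different path than the paper's. You work with explicit lifts to the universal coverings $p\colon\R\to S^1$ and $p_*\colon\omeot\to\omeo$, deduce that the monodromy of the lift $\widetilde{b}$ of $b\circ p$ is $\tau_{d(b)}$ where $d(b)=\deg(v_{x_0}\circ b)$, and then read off the additivity $\deg\sigma_2=\deg\sigma_1+d(b)$ directly from the periodicity of the lift $\widetilde{\sigma}_2'(x)=\widetilde{b}(x)(\widetilde{\sigma}_1(x))$; since integral translations are central in $\omeot$, the computation is clean and correct. The paper instead packages the same additivity through the homology of the torus: it introduces $F\colon S^1\times S^1\to S^1$, $F(x,y)=b(x)(\sigma_1(y))$, notes that $\sigma_2=F\circ\gamma$ for the diagonal $\gamma$, decomposes $[\gamma]=[\alpha*\beta]$ in $H_1(S^1\times S^1,\Z)$ into the two coordinate circles, and identifies the contribution of each factor with $\deg\sigma_1$ and (essentially) $d(b)$ respectively. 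Both routes lean on Lemma~\ref{topological:omeo}: you use parts (2) and (3), while the paper's visible proof uses only (3). Your version is more computational but avoids any recourse to singular homology of the product; the paper's version is shorter once one is comfortable with the $H_1$ decomposition of the torus, and it sidesteps the small but real care needed in identifying the monodromy integer with $d(b)$. Either proof is perfectly acceptable.
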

\begin{proof}


Let us consider the map
$$
F\colon S^1\times S^1\to S^1\, ,\qquad F(x,y)=b(x)(\sigma_1(y))\ .
$$
If we denote by $\gamma\colon S^1\to S^1\times S^1$ the loop $\gamma(x)=(x,x)$, then $\sigma_2=F\circ\gamma$. Observe now that, if 
$p_0\in S^1$ is any basepoint and
$\alpha\colon S^1\to S^1\times S^1$,
$\beta\colon S^1\to S^1\times S^1$ are the loops defined respectively by $\alpha(x)=(p_0,x)$, $\beta(x)=(x,p_0)$, then
$[\gamma]=[\alpha *\beta]$ in $H_1(S^1\times S^1,\matZ)$. Therefore, 
\begin{equation}\label{degrees:eq}
\deg \sigma_2\cdot [S^1]=(\sigma_2)_*([S^1])\cdot [S^1]=F_*(\gamma_*([S^1]))\cdot [S^1]=F_*((\alpha*\beta)_*([S^1]))\cdot [S^1]\ .
\end{equation}
But $F\circ (\alpha*\beta)=\gamma_1*\gamma_2$, where $\gamma_1(x)=b(p_0)(\sigma_1(x))$ and $\gamma_2(x)=b(x)(\sigma_1(p_0))$. 
Since $b(p_0)$ is an orientation-preserving homeomorphism we have $[\gamma_1]=[\sigma_1]=\deg\sigma_1\cdot [S^1]$ in $H_1(S^1,\matZ)$, so using~\eqref{degrees:eq}
we get
$$
\deg \sigma_2\cdot [S^1]=[\gamma_1]+[\gamma_2]=\deg\sigma_1\cdot [S^1] +[\gamma_2]\ ,
$$
which implies that $\deg \sigma_1=\deg\sigma_2$ if and only if $\gamma_2$ is null-homotopic. By Lemma~\ref{topological:omeo}, the latter condition holds if and only if $b$ is null-homotopic, whence the conclusion. 

\end{proof}

We are now almost ready to prove that the Euler class completely classifies the isomorphism type of circle bundles. 
 
 \begin{lemma}\label{tecnico1}
 Let $\sigma,\sigma'\colon [0,1]\to S^1$ be continuous paths, and let $h_0,h_1\in\omeo$ be such that
 $h_i(\sigma(i))=\sigma'(i)$, $i=0,1$. Then there exists a path $\psi\colon [0,1]\to \omeo$ such that $\psi(i)=h_i$ for $i=0,1$,
 and $\psi(t)(\sigma(t))=\sigma'(t)$ for every $t\in [0,1]$.
 \end{lemma}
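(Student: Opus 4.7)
The plan is to reduce the problem to the path-connectedness of the stabilizer of a point in $\omeo$. First, I would choose lifts $\widetilde{\sigma},\widetilde{\sigma'}\colon [0,1]\to\R$ of $\sigma,\sigma'$ via the covering projection $p\colon \R\to S^1$ and define the continuous path of rotations $r\colon[0,1]\to\omeo$ by letting $r(t)$ be the rotation by the angle $\widetilde{\sigma'}(t)-\widetilde{\sigma}(t)$, so that $r(t)(\sigma(t))=\sigma'(t)$ for every $t$. Setting $\tilde{h}_i:=h_i\circ r(i)^{-1}$ we obtain elements that fix $\sigma'(i)$, and finding the desired $\psi$ becomes equivalent to finding a continuous path $\alpha\colon[0,1]\to\omeo$ with $\alpha(i)=\tilde{h}_i$ for $i=0,1$ and $\alpha(t)(\sigma'(t))=\sigma'(t)$ for every $t$: one then sets $\psi(t)=\alpha(t)\circ r(t)$.

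Next, I would trivialize the ``moving basepoint'' condition $\alpha(t)(\sigma'(t))=\sigma'(t)$ by conjugation. Let $s(t)\in\omeo$ be the rotation by $\widetilde{\sigma'}(t)-\widetilde{\sigma'}(0)$, so that $s(t)(\sigma'(0))=\sigma'(t)$, $s(0)={\rm Id}$, and $s\colon[0,1]\to\omeo$ is continuous. Since $\omeo$ is a topological group under the compact-open topology, the assignment $\beta(t):=s(t)^{-1}\circ\alpha(t)\circ s(t)$ is continuous in $t$, and the condition on $\alpha$ translates into the condition that $\beta(t)$ fixes $\sigma'(0)$ for every $t$. The endpoint values of $\beta$ are $\beta(0)=\tilde{h}_0$ and $\beta(1)=s(1)^{-1}\circ\tilde{h}_1\circ s(1)$, and both lie in the stabilizer $G:=\{f\in\omeo\,|\,f(\sigma'(0))=\sigma'(0)\}$: for the first this is immediate, and for the second note that $s(1)$ sends $\sigma'(0)$ to $\sigma'(1)$, which is fixed by $\tilde{h}_1$.

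The problem is thus reduced to connecting two given elements of $G$ by a continuous path inside $G$. Cutting $S^1$ open at $\sigma'(0)$ identifies $G$ with the space of orientation-preserving homeomorphisms of $[0,1]$ fixing the endpoints, and this space is convex inside $C([0,1],[0,1])$ endowed with the sup norm, since the convex combination of two strictly increasing homeomorphisms fixing the endpoints is still such. Hence $G$ is contractible, and in particular path-connected, so a path joining $\beta(0)$ and $\beta(1)$ in $G$ exists and can be pulled back through the reductions above to produce $\psi$. I do not expect a serious obstacle here: the only points that need care are checking that each reduction is continuous in $t$ and that the endpoint conditions are preserved, both of which follow from $\omeo$ being a topological group under the compact-open topology.
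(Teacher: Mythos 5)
Your proof is correct, but it follows a genuinely different route from the one in the paper. You first normalize by composing with an explicit path of rotations so that the problem becomes one of joining two homeomorphisms that fix a \emph{moving} basepoint, then conjugate by a second path of rotations to pin the basepoint down, and finally invoke contractibility (via convexity) of the stabilizer of a point in $\omeo$. The paper instead works directly in $\omeot$: it lifts $\sigma,\sigma'$ and $h_0,h_1$, takes the affine combination
$\widetilde{v}_t(x)=(1-t)\widetilde{h}_0(x)+t\widetilde{h}_1(x)$
(which lies in $\omeot$ since $\omeot$ is convex), and then corrects by the translation $x\mapsto x+\widetilde{\sigma}'(t)-\widetilde{v}_t(\widetilde{\sigma}(t))$, which simultaneously forces the moving-basepoint condition and preserves the endpoints of the path. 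Both proofs ultimately rest on a convexity argument, but the paper exploits convexity of $\omeot$ itself while you exploit convexity of the stabilizer $\{f\in\omeo\mid f(p)=p\}\cong \mathrm{Homeo}^+([0,1],\partial[0,1])$; the paper's version collapses your several reduction steps into a single formula at the cost of being less modular, whereas your version isolates a reusable structural fact (contractibility of the point-stabilizer, which is Lemma~\ref{topological:omeo}(3) in disguise) and makes the reduction to it transparent. Both approaches are fine; the paper's is shorter, yours is more conceptual.
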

  \begin{proof}
  Let $\widetilde{\sigma},\widetilde{\sigma}'\colon [0,1]\to\R$ be continuous lifts of $\sigma,\sigma'$, and choose
  $\widetilde{h}_i\in\omeot$ which lifts $h_i$ and is such that
  $\widetilde{h}_i(\widetilde{\sigma}(i))=\widetilde{\sigma}'(i)$, $i=0,1$.
For $t\in [0,1]$ we now define 
\begin{align*}
\widetilde{v}_t\in\omeot\, ,\qquad  & \widetilde{v}_t(x)=(1-t)\widetilde{h}_0(x)+t\widetilde{h}_1(x)\ ,\\
\widetilde{\psi}_t\in\omeot\, , \qquad & \widetilde{\psi}_t(x)=\widetilde{v}_t(x)+
\widetilde{\sigma}'(t)-\widetilde{v}_t(\widetilde{\sigma}(t))\ .
\end{align*}  
Then the required path is obtained by projecting the path $t\mapsto\widetilde{\psi}_t$ from $\omeot$ to $\omeo$.
  \end{proof}
  
 \begin{thm}\label{Euler:classifies}
 Let $M$ be a simplicial complex, and let $\pi\colon E\to M$, $\pi'\colon E'\to M$ be circle bundles
 such that $\eu(E)=\eu(E')$ in $H^2(M,\mathbb{Z})$. Then $E$ is (topologically) isomorphic to $E'$.
 \end{thm}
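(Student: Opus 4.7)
The plan is to build the bundle isomorphism $h\colon E\to E'$ skeleton by skeleton, using the Euler classes to control the only obstruction that appears in degree $2$ and exploiting the fact that $\omeo$ is homotopy equivalent to $S^1$ (Lemma~\ref{topological:omeo}) to kill all higher obstructions. First I would fix, via Proposition~\ref{euler:welldef}, compatible families of sections $\{\sigma_s\}$ and $\{\sigma'_s\}$ for $E$ and $E'$ respectively, defined over all singular $1$-simplices (in particular, over the $1$-skeleton of $M$ viewed as a simplicial complex). These sections define Euler cocycles $z,z'\in C^2(M,\mathbb{Z})$ with $[z]=\eu(E)$, $[z']=\eu(E')$; since $\eu(E)=\eu(E')$, we have $z-z'=\delta\varphi$ for some integral $1$-cochain $\varphi$.

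Next I would use the sections to build a bundle isomorphism on the $1$-skeleton. Over each vertex $v$, set $h(\sigma_v)=\sigma'_v$ and extend to the (oriented) fibers $E_v\cong S^1\cong E'_v$ by the unique orientation-preserving homeomorphism fixing the marked points. Over a $1$-simplex $e$, both $E|_e$ and $E'|_e$ are trivializable, and the sections $\sigma_e,\sigma'_e$ allow one to produce trivializations whose marked points agree; Lemma~\ref{tecnico1} (applied along the path $\sigma_e,\sigma'_e$) then provides a bundle isomorphism $h_e$ over $e$ extending what was built over $\partial e$. The result is a bundle isomorphism $h^{(1)}\colon E|_{M^{(1)}}\to E'|_{M^{(1)}}$.

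The crucial step is the extension to the $2$-skeleton, and the main obstacle is identifying the resulting obstruction cocycle with $z-z'$. Over a $2$-simplex $s\colon\Delta^2\to M$ fix trivializations $\psi_s,\psi'_s$ of $E_s$ and $E'_s$; the isomorphism $h^{(1)}$ restricted to the circle bundle over $\partial\Delta^2$ reads, in these trivializations, as a map $\partial\Delta^2\to\omeo$, and the obstruction $\theta(s)\in\pi_1(\omeo)\cong\mathbb{Z}$ to extending $h^{(1)}$ over $s$ is its homotopy class. Using the trivializations $\psi_s,\psi'_s$ one can express the sections $\sigma_{\partial s}$ and $\sigma'_{\partial s}$ as maps $\partial\Delta^2\to S^1$ of degrees $z(s)$ and $z'(s)$ respectively; applying Corollary~\ref{quellocheserve} to the loop $b\colon\partial\Delta^2\to\omeo$ supplied by $h^{(1)}$ with $\sigma_1=\pi_{S^1}\circ\psi_s\circ\sigma_{\partial s}$ and $\sigma_2=\pi_{S^1}\circ\psi'_s\circ\sigma'_{\partial s}$ gives $\theta(s)=z(s)-z'(s)$. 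Thus $\theta=z-z'=\delta\varphi$. I would then modify $h^{(1)}$ on each $1$-simplex $e$ by composing with a self-isomorphism of the trivial circle bundle over $e$ whose underlying loop in $\omeo$ has winding number $\varphi(e)$ (and which is the identity on $\partial e$); a direct check shows that the new obstruction cocycle is $\theta-\delta\varphi=0$, so the modified $h^{(1)}$ extends to a bundle isomorphism $h^{(2)}\colon E|_{M^{(2)}}\to E'|_{M^{(2)}}$.

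Finally, for each $k\geq 3$ and each $k$-simplex $\sigma$ the obstruction to extending $h^{(k-1)}$ from $\partial\sigma$ to $\sigma$ lies in $\pi_{k-1}(\omeo)$; by Lemma~\ref{topological:omeo}(3), $\omeo$ is homotopy equivalent to $S^1$, so $\pi_{k-1}(\omeo)=0$ for $k\geq 3$ and the extension always exists. Inductively one obtains the required bundle isomorphism $h\colon E\to E'$ over all of $M$.
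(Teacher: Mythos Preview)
Your argument is correct and follows the same skeleton-by-skeleton strategy as the paper, using Lemma~\ref{tecnico1} to build the isomorphism over the $1$-skeleton, Corollary~\ref{quellocheserve} to control the obstruction at $2$-cells, and Lemma~\ref{topological:omeo} for the higher cells.

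The one genuine difference is how you handle the degree-$2$ obstruction. You start with arbitrary section families yielding cocycles $z,z'$ with $z-z'=\delta\varphi$, identify the obstruction cocycle with $z-z'$, and then \emph{modify} $h^{(1)}$ by twisting along edges to kill $\delta\varphi$. The paper instead invokes Proposition~\ref{euler:welldef}(4) at the outset to choose the section families so that the Euler cocycles coincide \emph{on the nose}, $z=z'$; then the construction of $h^{(1)}$ (carrying $\sigma$ to $\sigma'$ pointwise) directly forces the loop $b$ over each $2$-simplex to satisfy $\deg\hat\sigma=\deg\hat\sigma'$, so Corollary~\ref{quellocheserve} gives null-homotopy immediately with no further modification. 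This buys a cleaner argument: the coboundary is absorbed into the choice of sections rather than into a twist of $h^{(1)}$.

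One small remark: as stated, Corollary~\ref{quellocheserve} only asserts that $b$ is null-homotopic iff the degrees agree, whereas you need the quantitative statement that the class of $b$ in $\pi_1(\omeo)\cong\mathbb{Z}$ equals $\deg\sigma_2-\deg\sigma_1$. This does follow from the displayed identity in its proof together with Lemma~\ref{topological:omeo}(3), so the ingredient is available, but you should say so rather than citing the corollary as a black box.
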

\begin{proof}
Since $\eu(E)=\eu(E')$, we may choose collections of compatible sections $\{\sigma_s\}_{s\in S_1(M)}$ and
$\{\sigma'_s\}_{s\in S_1(M)}$ respectively for $E$ and $E'$ in such a way that the corresponding Euler cocycles
$z,z'\in C^2(M,\matZ)$ coincide (see Proposition~\ref{euler:welldef}--(4)).
Such sections glue up into partial sections $\sigma\colon M^{(1)}\to E_1$, $\sigma'\colon M^{(1)}\to E'_1$, where
$M^{(1)}$ is the $1$-skeleton of $M$, and $E_1$ (resp.~$E'_1$) is the restriction of $E$ (resp.~$E'$) to
 $M^{(1)}$. 
 
 For every vertex $v$ of $M$ we choose an orientation-preserving homeomorphism $h_v\colon E_v\to E'_v$ such that $h_v(\sigma(v))=\sigma'(v)$.
 The collections of these maps is just a bundle map between the restrictions of $E$ and of $E'$ to the $0$-skeleton of $M$.
 Using Lemma~\ref{tecnico1}, we may extend this map to
 a bundle map $h\colon E_1\to E'_1$ such that $h(\sigma(x))=\sigma'(x)$ for every $x\in M$.
 
 We will show that $h$ can be extended to a bundle map on the whole $2$-skeleton of $M$.
 Let $\Delta$ be a 2-dimensional simplex of $M$, and choose trivializations $\psi_\Delta\colon E_\Delta\to \Delta\times S^1$,
 $\psi'_\Delta\colon E'_\Delta\to \Delta\times S^1$ of the restrictions of $E$ and $E'$ over $\Delta$. 
 For every $x\in\partial\Delta$, the restriction of the composition $\psi'_\Delta\circ h\circ \psi_\Delta^{-1}$ to $\{x\}\times S^1$
 defines an orientation-preserving homeomorphism of $\{x\}\times S^1$, which gives in turn an element $b(x)\in\omeo$. Moreover, the map
 $b\colon \partial \Delta^2\to \omeo$ is continuous. Recall now that we have sections $\sigma|_{\partial\Delta}$ of $E$ and
 $\sigma'|_{\partial\Delta}$ of $E'$, which may be interpreted in the given trivializations as maps
 $\hat{\sigma},\hat{\sigma}'\colon \partial \Delta\to S^1$. Our choices imply that 
 $\deg\hat{\sigma}=\deg\hat{\sigma}'$, and that $b(x)(\hat{\sigma}(x))=\hat{\sigma}'(x)$ for every $x\in \partial \Delta$. By Corollary~\ref{quellocheserve},
 this implies that $b$ 
 may be extended to a continuous map $B\colon \Delta^2\to\omeo$. We
 define $k\colon \Delta^2\times S^1\to\Delta^2\times S^1$
 by setting $k(x,v)=(x,B(x)(v))$. Then, the composition
 $$(\psi')^{-1}_\Delta \circ k\circ \psi_\Delta\colon E_\Delta\to E'_\Delta$$ 
is a bundle map extending $h$ over $\Delta$. We have thus shown that the bundle map $h$ extends to a bundle map
$H$ between the restrictions of $E$ and $E'$ to the $2$-skeleton of $M$.

Suppose now that $H$ has been extended to the $k$-skeleton of $M$, where $k\geq 2$. 
If $\Delta$ is any $(k+1)$-simplex of $M$, 
the obstruction to extending
$H$ to $\Delta$ lies in the homotopy group $\pi_k(\omeo)$, which vanishes by Lemma~\ref{topological:omeo}.
Therefore, we can extend $H$ to a global bundle map between $E$ and $E'$, and this concludes the proof.
\end{proof}

 \section{Circle bundles over surfaces}
 We have seen above that recognizing which integral cohomology classes may be realized as Euler classes of some sphere bundle is highly non-trivial in general.
Nevertheless, in the case of closed surfaces it is easily seen that every integer is the Euler number of some
circle bundle:

\begin{prop}\label{esistono}
For every $n\in\matZ$ there exists a circle bundle $E_n$ over $\Sigma_g$ such that 
 $\eu(E_n)=n$.
 \end{prop}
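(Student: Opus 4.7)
The plan is to construct $E_n$ explicitly by a clutching construction and then verify that its Euler number equals $n$ by unwinding the obstruction-theoretic definition of the Euler cocycle. Let $D \subset \Sigma_g$ be an embedded closed disk, and set $\Sigma_g^* = \overline{\Sigma_g \setminus \mathrm{int}(D)}$, so $\partial D = \partial \Sigma_g^* \cong S^1$. Because $\Sigma_g^*$ deformation retracts onto a wedge of $2g$ circles, any circle bundle over $\Sigma_g^*$ is trivializable (no obstruction to a section or to a trivialization lives there), and a fortiori so is any bundle over $D$. By Lemma~\ref{topological:omeo}, the evaluation map $v_{[0]}\colon \omeo \to S^1$, $f \mapsto f([0])$, is a homotopy equivalence, so for every $n \in \matZ$ there exists a continuous map $\varphi_n \colon \partial D \to \omeo$ such that $v_{[0]} \circ \varphi_n \colon S^1 \to S^1$ has degree $n$. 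I define $E_n$ to be the circle bundle over $\Sigma_g$ obtained by gluing the trivial bundles $E^* = \Sigma_g^* \times S^1$ and $E_D = D \times S^1$ along their common boundary via the homeomorphism $(x,v) \mapsto (x, \varphi_n(x)(v))$.

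To compute $\eu(E_n) \in H^2(\Sigma_g, \matZ) \cong \matZ$, I would choose a smooth triangulation $\tau$ of $\Sigma_g$ whose 2-simplices consist of $D$ itself (viewed as a single positively oriented 2-simplex $\Delta_0$) together with 2-simplices entirely contained in $\Sigma_g^*$. The 1-skeleton of $\tau$ then lies inside $\Sigma_g^*$, so the constant section $x \mapsto (x, [0])$ in the trivialization of $E^*$ produces, by restriction to singular parameterizations of the edges and vertices of $\tau$, a compatible family $\{\sigma_s\}$ of sections over all simplices of dimension $\leq 1$. Because $E_n$ is trivial over $\Sigma_g^*$ and the chosen section extends globally there, the resulting Euler cocycle $z \in C^2(\Sigma_g, \matZ)$ vanishes on every 2-simplex contained in $\Sigma_g^*$. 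For the remaining simplex $\Delta_0 = D$, the boundary section $\sigma|_{\partial D}$, read in the trivialization of $E_D$, is precisely the map $x \mapsto \varphi_n(x)([0])$ from $\partial D$ to $S^1$, whose degree is $n$ by construction; the very definition of the Euler cocycle (Section~\ref{Euler:bundle}) then gives $z(\Delta_0) = n$. Since the sum of all 2-simplices of $\tau$ with coherent orientations represents the fundamental class $[\Sigma_g]$, we conclude $\langle \eu(E_n), [\Sigma_g] \rangle = z(\Delta_0) = n$, which is the desired statement.

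The one point requiring care is the sign bookkeeping that identifies the obstruction $z(\Delta_0)$ with the degree of $v_{[0]} \circ \varphi_n$: this is essentially tautological once both quantities are expressed in the chosen local trivializations of $E^*$ and $E_D$, but one must check that the orientation on $\partial \Delta_0$ induced by the orientation of $\Sigma_g$ matches the one implicit in the degree computation for the clutching. No further difficulty arises, because the contractibility of $\omeot$ and the consequent homotopy equivalence $\omeo \simeq S^1$ (Lemma~\ref{topological:omeo}) make the clutching classification of circle bundles over $\Sigma_g$ particularly transparent, and $\Sigma_g^*$ carries no 2-cells that could contribute additional obstructions. (An alternative but essentially equivalent plan is to note that Theorem~\ref{Euler:classifies} reduces the problem to realizing every integer as the Euler number of some bundle, and then to take iterated connected sums of a single bundle over $S^2$ with non-zero Euler number pulled back to $\Sigma_g$; however, the direct clutching construction above is both more explicit and more self-contained.)
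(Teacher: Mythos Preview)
Your proposal is correct and follows essentially the same approach as the paper: both construct $E_n$ by gluing trivial bundles over $\Sigma_g^* = \Sigma_g \setminus \mathrm{int}(D)$ and over $D$ via a clutching map on $\partial D$, then compute the Euler cocycle with respect to a triangulation having $D$ as a single $2$-simplex, using the constant section over $\Sigma_g^*$. The only cosmetic difference is that the paper writes down the explicit clutching map $(\theta,\varphi)\mapsto(\theta,\varphi-n\theta)$ (rotations), whereas you work with an abstract $\varphi_n\colon\partial D\to\omeo$ chosen so that $v_{[0]}\circ\varphi_n$ has degree $n$; these are the same construction.
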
 
 \begin{proof}
 Let $\overline{D}\subseteq \Sigma_g$ be an embedded closed disk, let $D$ be the internal part of $\overline D$,
  and consider the surface with boundary $\Sigma_g'=\Sigma_g\setminus D$. Take the products 
  $E_1=\Sigma_g'\times S^1$ and $E_2=\overline{D}\times S^1$, and 
  fix an identification between $\partial \Sigma_g'=\partial\overline{D}$ and $S^1$, 
  which is orientation-preserving on $\partial \Sigma_g'$ and orientation-reversing on $\partial \overline{D}$.
Let now $n\in\matZ$ be fixed, and 
  glue $\partial E_1=(\partial \Sigma_g')\times S^1\cong S^1\times S^1$
  to $\partial E_2=(\partial \overline{D})\times S^1\cong S^1\times S^1$ via the attaching map $$ \partial E_1\to \partial E_2\, ,\qquad
  (\theta,\varphi)\mapsto
  (\theta,\varphi-n\theta)\ .$$
    If we denote by $E$ the space obtained via this gluing,
  the projections  $\Sigma_g'\times S^1\to\Sigma_g'$, $\overline{D}\times S^1\to\overline{D}$ glue into a well-defined
  circle bundle  $\pi\colon E\to \Sigma_g$. 
  
  Let us now fix a triangulation $\mathcal{T}$ of $\Sigma_g$ such that $\overline{D}$ coincides with one triangle $T_0$
  of $\mathcal{T}$, and consider the fundamental cycle $c$ for $\Sigma_g$ given by the sum
  of (orientation-preserving parameterizations of) the triangles of $\mathcal{T}$. Our construction provides preferred trivializations of $E_1$ and $E_2$.
  If $\sigma$ is the ``constant'' section of $E_1$ over $\Sigma_g'$, then
  $\sigma$ induces sections over all the $1$-simplices of $\mathcal{T}$. We may exploit such sections to compute
  $z(T)$ for every triangle $T$ of $\mathcal{T}$. By construction, we get $z(T_0)=n$ and $z(T)=0$ for every $T\neq T_0$.
  This implies that
  $\eu(E)=n$. 
 \end{proof}

 Recall now that Theorem~\ref{Euler:classifies} ensures that the Euler number completely classifies the isomorphism type  of topological circle bundles.
Therefore, putting together Proposition~\ref{esistono} and Theorem~\ref{Euler:classifies}
we get the following:

\begin{cor}
Let $\Sigma_g$ be a closed oriented surface. Then,
the Euler number establishes a bijection between the set of isomorphism classes of
topological circle bundles on $\Sigma_g$ and the set of integers.
\end{cor}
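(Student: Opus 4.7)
The plan is to deduce the corollary as a direct combination of the two preceding results, namely Proposition~\ref{esistono} (existence of a circle bundle realizing any prescribed Euler number) and Theorem~\ref{Euler:classifies} (the Euler class classifies circle bundles up to isomorphism on simplicial complexes). First I would fix the canonical identification $H^2(\Sigma_g,\mathbb{Z})\cong \mathbb{Z}$ induced by the chosen orientation of $\Sigma_g$, so that the Euler number is literally the Euler class read through this isomorphism, and then observe that the Euler number descends to a well-defined map $\eu\colon \{\text{iso.~classes of circle bundles on }\Sigma_g\}\to\mathbb{Z}$: indeed, if $h\colon E\to E'$ is an isomorphism of circle bundles over $\Sigma_g$, then a collection of compatible sections for $E$ is transported by $h$ into a collection of compatible sections for $E'$ producing the same Euler cocycle, so $\eu(E)=\eu(E')$.

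Surjectivity is then immediate from Proposition~\ref{esistono}, which exhibits, for every $n\in\mathbb{Z}$, a circle bundle $E_n$ over $\Sigma_g$ with $\eu(E_n)=n$. Injectivity, on the other hand, will follow by endowing $\Sigma_g$ with a simplicial structure (which it admits, being a smooth closed surface) so that Theorem~\ref{Euler:classifies} applies: given bundles $E,E'$ with $\eu(E)=\eu(E')\in\mathbb{Z}$, the associated classes in $H^2(\Sigma_g,\mathbb{Z})$ coincide, and the theorem produces a topological isomorphism $E\cong E'$.

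Since both inputs have already been established in the excerpt, I do not anticipate any substantive obstacle; the only point worth being mildly careful about is verifying that isomorphic bundles have the same Euler class, which as explained above is a direct consequence of the construction of the Euler cocycle via compatible sections. Thus the corollary reduces to a one-line bookkeeping argument assembling Proposition~\ref{esistono} and Theorem~\ref{Euler:classifies}.
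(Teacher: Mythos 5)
Your argument is exactly the paper's: the corollary is deduced by combining Proposition~\ref{esistono} (surjectivity) with Theorem~\ref{Euler:classifies} (injectivity), after smoothly triangulating $\Sigma_g$ so that the latter applies. The extra remark about well-definedness of $\eu$ on isomorphism classes is a harmless elaboration of a point the paper leaves implicit in the construction of the Euler cocycle.
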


\section{Further readings}
There are many books where the interested reader may find more information on fiber bundles and on obstruction theory for fiber bundles. Among them,
without the aim of being in any way exhausting, here we just cite the books of Steenrod~\cite{Steenrod}, Husemoller~\cite{Husemoller}, and Milnor and Stasheff~\cite{MST}.

\chapter{Milnor-Wood inequalities and maximal representations}\label{Milnor-Wood:chapter}
This chapter is devoted to the study of a special class of fiber bundles, called \emph{flat bundles}. As we will see, (isomorphism classes of) flat bundles with fiber $F$ over a base space $M$ are
in bijection with conjugacy classes of representations of  $\pi_1(M)$ into the group of self-homeomorphisms of $F$. Therefore, the theory of flat bundles builds a bridge between the theory of 
fiber bundles
and the theory of representations. Here we will mainly concentrate our attention on circle bundles: in particular, we will show that the Euler class of a flat circle bundle corresponds (under the canonical
morphism between the cohomology of a space and the cohomology of its fundamental group) to the Euler class of the associated representation. This allows us to define the \emph{bounded} Euler class
of a flat circle bundle, which provides a more refined invariant than the classical Euler class. We will describe a phenomenon already anticipated in the introduction of this book: 
using the fact that we have an explicit bound on the norm of the Euler class, we will be able to bound from above the Euler number of flat circle bundles over surfaces, thus obtaining an explicit obstruction
for a circle bundle to admit a flat structure. This result was first proved (with different tools) by Milnor~\cite{Milnor} and Wood~\cite{Wood}, and is usually known under the name of \emph{Milnor-Wood} inequalities.

We will then study \emph{maximal} representations, i.e.~representations of surface groups
which attain the extremal values allowed by Milnor-Wood inequalities. A celebrated result by Goldman~\cite{Goldth} states that maximal representations
of surface groups into the group of orientation-preserving isometries of the hyperbolic plane are exactly the holonomies of hyperbolic structures. 
Following~\cite{BIW1}, we will give a proof of Goldman's theorem based on the use of bounded cohomology. In doing this, we will describe the Euler number of flat bundles over surfaces with boundary,
as defined in~\cite{BIW1}.

\section{Flat sphere bundles}
A \emph{flat} sphere bundle with structure group $G$ (or, simply, a \emph{flat} $G$-bundle) is 
a sphere bundle $\pi\colon E\to M$ endowed with an open cover $\mathcal{U}=\{U_i,\, i\in I\}$
of the base $M$ and a collection of local trivializations
$$
\xymatrix{
\pi^{-1}(U_i) \ar[r]^{\psi_i} \ar[rd]_{p} & U_i\times S^n\ar[d]^{\pi_{U_i}} \\
& U_i
}
$$
such that, for every $i,j\in I$, there exists a \emph{locally constant} map $\lambda_{ij}\colon U_i\cap U_j\to G$ such that
$$
\psi_i(\psi_j^{-1}(x,v))=(x,\lambda_{ij}(x)(v))
$$
for every $x\in U_i\cap U_j$, $v\in S^n$. Such a collection of local trivializations is  a \emph{flat $G$-atlas} for $E$, and the 
$\lambda_{ij}$'s are called the \emph{transition functions} of the atlas. Two flat 
$G$-atlases define the same structure of flat $G$-bundle if their union
is still a flat $G$-atlas.

An isomorphism of flat sphere $G$-bundles $\pi\colon E\to M$, $\pi'\colon E'\to M$ is an isomorphism of sphere bundles
$h\colon E\to E'$
such that,  for every $x\in M$, there exist trivializations $\psi_i\colon \pi^{-1}(U_i)\to U_i\times S^n$,
$\psi_{i'}\colon \pi^{-1}(U_{i'})\to U_{i'}\times S^n$ in the flat $G$-atlases of $E$ and $E'$ such that
$$
\psi_{i'}(h(\psi_i^{-1}(x,v)))=(x,f(x)(v))
$$
for some \emph{locally constant} function $f\colon U_i\cap U'_i\to G$.

\begin{rem}\label{flat:discrete}
If $G<\omeon$ is any group, we denote by $G^\delta$ the topological group
given by the abstract group $G$ endowed with the discrete topology. Then,
a flat $G$-structure on a sphere bundle is just a $G^\delta$-structure, and an isomorphism
between flat $G$-structures corresponds to an isomorphism between $G^\delta$-structures. 
Therefore, flat $G$-bundles may be inscribed in the general theory of $G$-bundles, provided
that the group $G$ be not necessarily endowed with the compact-open topology.
\end{rem}

We will call a flat sphere $G$-bundle \emph{flat topological sphere bundle} (resp.~\emph{flat smooth sphere bundle}, \emph{flat linear sphere bundle})
if $G=\omeon$ (resp.~$G={\rm Diffeo}_+ (S^n)$, $G={\rm Gl}^+(n+1,\mathbb{R})$). Of course, a structure of flat $G$-bundle
determines a unique structure of $G$-bundle. However, 
there exist sphere $G$-bundles which do not admit any flat $G$-structure (in fact, this chapter is mainly devoted to describe an important 
and effective obstruction on topological sphere bundles to be topologically flat). Moreover, the same $G$-structure may be induced by non-isomorphic
flat $G$-structures. Finally, whenever $G'\subseteq G$, a flat $G'$-structure obviously defines a unique
flat $G$-structure, but there could exist flat $G$-structures that cannot be induced by any $G'$-flat structure:
for example, in~Remark~\ref{smoothnolinear} we exhibit a flat circle bundle (i.e.~a flat $\omeo$-bundle) which does not admit any flat ${\rm Gl}^+(2,\R)$-structure.

\begin{rem}\label{flatvector}
The notion of flat bundle makes sense also in the case when the fiber is a vector space rather than a sphere:
namely, a flat vector bundle of rank $n+1$ is just a vector $G$-bundle, where $G={\rm Gl}^+(n+1,\mathbb{R})$ is endowed with the discrete topology, or, equivalently, it is a vector bundle equipped with an atlas with locally constant transition 
functions. Of course, for any base space $M$ there exists a one-to-one correspondence between (isomorphism classes of) flat vector bundles of rank $n+1$ over $M$ and (isomorphism classes of) flat linear sphere bundles
over $M$, given by associating to any flat vector bundle $E$ the corresponding sphere bundle $S(E)$. 
\end{rem}

In order to avoid pathologies, henceforth we assume that the base $M$ of every sphere bundle satisfies the following properties:
\begin{itemize}
 \item $M$ is path connected and locally path connected;
 \item $M$ is semilocally simply connected (i.e.~it admits a universal covering);
 \item $\pi_1(M)$ is countable.
\end{itemize}

Let $\pi\colon E\to M$ be a sphere bundle endowed with the flat $G$-atlas $\{(U_i,\psi_i)\}_{i\in I}$.  
A path $\overline{\alpha}\colon [0,1]\to E$ with values in $\pi^{-1}(U_i)$ for some $i\in I$ is \emph{elementary horizontal}
if the map $\pi_{S^n}\circ\psi_i\circ \overline{\alpha}\colon [0,1]\to S^n$ is constant (observe that this definition is independent of the choice of $i\in I$, 
i.e.~if $\overline{\alpha}$ is supported
in $\pi^{-1}(U_j)$ for some  $j\neq i$, then also the map $\pi_{S^n}\circ\psi_j\circ \overline{\alpha}\colon [0,1]\to S^n$ is constant).
A path $\overline{\alpha}\colon [0,1]\to E$ is \emph{horizontal} if it is the concatenation of elementary horizontal paths.
If $\alpha\colon [0,1]\to M$ is a path and $x$ is a point in the fiber of $\alpha(0)$, then there exists a unique horizontal path
$\overline{\alpha}\colon [0,1]\to E$ such that 
$\overline{\alpha}(0)=x$ and $\pi\circ\overline{\alpha}=\alpha$.

Let us now fix a basepoint $x_0\in M$, 
and denote by
$h\colon S^n\to E_{x_0}$ the homeomorphism induced by a local trivialization of $E$ over a neighbourhood of $x_0$
belonging to the atlas which defines the flat $G$-structure of $E$.
If
$\alpha\colon [0,1]\to M$ is a loop based at $x_0$, then for every $q\in E_{x_0}$ we consider the 
horizontal lift $\overline{\alpha}_q$ of $\alpha$ starting at $q$, and we set $t_\alpha(q)=\overline{\alpha}(1)\in E_{x_0}$.
Then $t_\alpha$ is a homeomorphism which only depends on the homotopy class of $\alpha$ in $\pi_1(M,x_0)=\G$.
Now, for every $g\in \G$ we take a representative $\alpha$ of $g^{-1}$, and we set $\rho(g)=h^{-1}\circ t_\alpha\circ h\colon S^n\to S^n$. 
It is not difficult to check that $\rho(g)$ belongs to $G$, and that
the map $\rho\colon \G\to G$ is a homomorphism (see~\cite[\S 13]{Steenrod} for the details). This homomorphism is usually called the \emph{holonomy} of the flat bundle
$E$. By  choosing a possibly different basepoint $x_0\in M$ and/or a possibly different identification of the corresponding fiber with $S^n$, one is lead to a possibly different
homomorphism, which, however, is conjugate to the original one by an element of $G$. Therefore, to any flat $G$-bundle on $M$ there is associated a holonomy representation
$\rho\colon \G\to G$, which is well defined up to conjugacy by elements of $G$. 

Let us now come back to our fixed flat $G$-bundle $E$. If we define the equivalence relation $\equiv$ on $E$
such that $x\equiv y$ if and only if there exists a horizontal path starting at $x$ and ending at $y$, then each equivalence class
of $\equiv$ is a leaf of a foliation $\mathcal{F}$ of $E$.  Observe that, if $F$ is a leaf of this foliation and $x$ is a point of $M$,
then the set $E_{x}\cap F$ may be identified with an orbit of a holonomy representation for $E$, so it is countable.
It readily follows that the foliation $\mathcal{F}$ is transverse to the fibers, according to the following:

\begin{defn}\label{foliation}
Let $\pi\colon E\to M$ be a (not necessarily flat) sphere $G$-bundle. A foliation
$\mathcal{F}$ of $E$ is \emph{transverse} (to the fibers) if the following condition holds:
there exists a $G$-atlas $\{(U_i,\psi_i)\}_{i\in I}$ for the $G$-structure of $E$ such that, for every 
leaf $F$ of $\mathcal{F}$ and every $i\in I$,
we have
$$\psi_i(F\cap \pi^{-1}(U_i))=U_i\times \Lambda$$ 
for some countable subset $\Lambda$ of $S^n$.
\end{defn}

We have shown that any flat $G$-structure on a sphere bundle  $E$ determines
a foliation transverse to the fibers. In fact, this foliation completely determines the flat structure on $E$:

\begin{thm}
 Let $\pi\colon E\to M$ be a sphere $G$-bundle. Then there exists a bijective correspondence between  flat $G$-structures on $E$
 compatible with the given $G$-structure on $E$, and  transverse foliations of $E$. Moreover, if $\pi\colon E\to M$, $\pi'\colon E'\to M$
 are flat $G$-bundles with corresponding foliations $\mathcal{F},\mathcal{F}'$, 
 then an isomorphism of $G$-bundles $h\colon E\to E'$ is an isomorphism of flat $G$-bundles if and only if it carries any leaf of $\mathcal{F}$ onto
 a leaf of $\mathcal{F}'$.
 \end{thm}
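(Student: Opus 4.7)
The plan is to construct an explicit inverse to the map ``flat $G$-structure $\mapsto$ transverse foliation $\mathcal{F}$'' described in the excerpt via horizontal lifts. Given a transverse foliation $\mathcal{F}$ on $E$, Definition~\ref{foliation} provides a $G$-atlas $\{(U_i,\psi_i)\}_{i\in I}$ such that every leaf intersects each $\pi^{-1}(U_i)$ in a union of horizontal slices $U_i\times\{v\}$. After passing to a refinement, I can assume each $U_i$ is connected. I will show that with this assumption the transition functions $\lambda_{ij}\colon U_i\cap U_j\to G$ are locally constant, so that the atlas is in fact a flat $G$-atlas and hence determines a flat $G$-structure compatible with the original $G$-structure of $E$.

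The key computation is the following total-disconnectedness argument. Fix $v\in S^n$ and a connected component $V$ of $U_i\cap U_j$. For any $x_0\in V$ the point $\psi_j^{-1}(x_0,v)$ lies in some leaf $L$ of $\mathcal{F}$ (foliations cover $E$), and horizontality of the $\psi_j$-chart forces the whole set $V\times\{v\}$ to lie in $L$. Viewed through $\psi_i$, this gives a continuous curve $y\mapsto(y,\lambda_{ij}(y)(v))$ lying inside $\psi_i(L\cap\pi^{-1}(U_i))$, which by Definition~\ref{foliation} has the form $U_i\times\Lambda$ with $\Lambda\subseteq S^n$ countable. Any countable subset of the metric space $S^n$ is totally disconnected (countable metric spaces have small inductive dimension zero), so the continuous map $y\mapsto\lambda_{ij}(y)(v)$ from the connected set $V$ into $\Lambda$ is constant. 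Since this holds for every $v\in S^n$, the map $\lambda_{ij}|_V$ is constant in $G$.

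It remains to verify that the two constructions are mutually inverse. Starting from a flat $G$-atlas, horizontal paths in each chart are precisely the curves with constant $S^n$-coordinate, so the associated transverse foliation is horizontal in every chart of the original atlas; feeding this foliation into the construction above recovers the same atlas, hence the same flat $G$-structure. Conversely, starting from $\mathcal{F}$ and building the flat atlas as above, the horizontal paths for the resulting flat structure are exactly the curves lying in a single slice $U_i\times\{v\}$, i.e.\ inside a leaf of $\mathcal{F}$, so the foliation associated to the new flat structure is $\mathcal{F}$ itself.

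For the characterization of isomorphisms, the ``only if'' direction is immediate: an isomorphism of flat $G$-bundles is locally described by a locally constant function $U\to G$, hence carries horizontal paths to horizontal paths and therefore leaves onto leaves. For the converse, given a $G$-bundle isomorphism $h\colon E\to E'$ sending $\mathcal{F}$-leaves onto $\mathcal{F}'$-leaves, I would choose on a neighborhood of each point in $M$ flat charts $\psi$, $\psi'$ adapted respectively to $\mathcal{F}$ and $\mathcal{F}'$, write $h$ in these charts as $(y,v)\mapsto(y,f(y)(v))$, and observe that for each fixed $v$ the curve $y\mapsto f(y)(v)$ lies in a single countable horizontal fiber of an $\mathcal{F}'$-leaf; the same totally-disconnected-image argument then forces $f$ to be locally constant, so $h$ is an isomorphism of flat $G$-bundles. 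The only real obstacle in the whole proof is the transversality step: once one is willing to shrink charts so as to guarantee connectedness and to invoke the elementary fact that continuous maps from connected sets to countable subsets of $S^n$ are constant, every remaining assertion is bookkeeping.
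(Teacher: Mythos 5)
Your proof is correct and takes essentially the same approach as the paper's, which also derives local constancy of the transition functions from total disconnectedness of countable subsets of $S^n$ combined with local (path) connectedness of $M$; you simply spell out the details that the paper leaves as ``it is easily seen'' and ``the theorem readily follows'', including the isomorphism characterization.
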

\begin{proof}
Suppose that
$E$ admits a transverse foliation $\mathcal{F}$, and let $\mathcal{U}$
be the maximal atlas with the properties described in Definition~\ref{foliation}. Since countable subsets of $S^n$ are totally disconnected and $M$ is locally path connected,
it is easily seen that the transition functions of $\mathcal{U}$ are locally constant, so $\mathcal{U}$ is a maximal flat $G$-atlas, and $E$
may be endowed with a flat $G$-structure canonically associated to $\mathcal{F}$. The 
theorem readily follows.
\end{proof}

 Flat $G$-bundles may be characterized also in terms of their holonomy. Set $\G=\pi_1(M)$ and let us first construct, 
 for any given representation $\rho\colon \G\to G$, a flat $G$-bundle $E_\rho$ with holonomy $\rho$.
We fix a point $\widetilde{x}_0\in\widetilde{M}$, we set $x_0=p(\widetilde{x}_0)\in M$, and we
 fix the corresponding identification of $\G$ with the group of the covering automorphisms of 
 $\widetilde{M}$, so that the automorphism $\psi$ gets identified 
with the element of $\pi_1(M,p(\widetilde{x}_0))$ represented by the projection of a path
joining $\widetilde{x}_0$ with $\psi(\widetilde{x}_0)$.
We
set $\widetilde{E}=\widetilde{M}\times S^n$, and we define $E_\rho$ as the quotient of $\widetilde{E}$ by the diagonal action
of $\G$, where $\G$ is understood to act on $S^n$ via $\rho$. The composition of the projections $\widetilde{E}\to \widetilde{M}\to M$ induces a quotient map
$\pi\colon E_\rho \to M$. If the open subset $U\subseteq M$ is evenly covered and $\widetilde{U}\subseteq \widetilde{M}$ is an open
subset such that $p|_{\widetilde{U}}\colon \widetilde{U}\to U$ is a homeomorphism, then the  composition
$$
\xymatrix{
U\times S^n
\ar[rr]^-{(p^{-1},\mathrm{Id})}& &\widetilde{U}\times S^n\ar@{^{(}->}[r] & \widetilde{E}\ar[r] & E_\rho
}
$$ 
induces a homeomorphism $\psi\colon U\times S^n\to\pi^{-1}(U)$ which provides a local trivialization
of $E_\rho$ over $U$. It is easily checked that, when $U$ runs over the set
of evenly covered open subsets of $M$, the transition functions corresponding to these local trivializations
take values in $G$, and are locally constant. Indeed, 
the subsets of $\widetilde{E}$ of the form $\widetilde{M}\times \{p\}$ project onto the leaves of a transverse foliation
of $E_\rho$. As a consequence, $E_\rho$ is endowed with the structure of a flat $G$-bundle. Moreover, by construction
the action of $\G$ on $E_{x_0}$ via the holonomy representation is given exactly by $\rho$. 
The following proposition
implies that, up to isomorphism, every flat $G$-bundle may be obtained via the construction we have just described.

\begin{prop}\label{flat:rep}
The map
\begin{align*}
 \{\mathrm{Representations\ of}\ \G\ \mathrm{in}\ G\} &\to \{\mathrm{Flat}\ G-\mathrm{bundles}\}\\
 \rho &\mapsto E_\rho
\end{align*}
induces a bijective correspondence between conjugacy classes of representations of $\G$ into $G$
and isomorphism classes of flat $G$-bundles over $M$. In particular, every representation of $\G$ in $G$ arises as the holonomy
of a flat $G$-bundle over $M$, and flat $G$-bundles with conjugate holonomies are isomorphic as flat $G$-bundles.
\end{prop}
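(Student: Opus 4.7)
The plan is to establish the correspondence by verifying three things: well-definedness on conjugacy classes (conjugate representations produce isomorphic flat bundles), surjectivity (every flat $G$-bundle is isomorphic to some $E_\rho$), and injectivity on conjugacy classes (if $E_{\rho_1}$ and $E_{\rho_2}$ are isomorphic as flat $G$-bundles, then $\rho_1$ and $\rho_2$ are conjugate in $G$).

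First I would verify well-definedness. If $\rho_2 = g\rho_1 g^{-1}$ for some $g \in G$, then the $G$-equivariant map $\widetilde{M}\times S^n \to \widetilde{M}\times S^n$ sending $(\widetilde{x},v) \mapsto (\widetilde{x}, gv)$ intertwines the diagonal $\Gamma$-actions defined via $\rho_1$ and $\rho_2$. It therefore descends to a bundle map $E_{\rho_1}\to E_{\rho_2}$ which, by construction, sends leaves of the transverse foliation of $E_{\rho_1}$ onto leaves of the transverse foliation of $E_{\rho_2}$; hence it is an isomorphism of flat $G$-bundles.

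For surjectivity, fix a flat $G$-bundle $\pi\colon E\to M$, a basepoint $\widetilde{x}_0\in \widetilde{M}$ with image $x_0$, and an identification $h\colon S^n\to E_{x_0}$ coming from a local trivialization in the given flat atlas. The key step is to trivialize the pullback flat bundle $p^{\ast}E$ over the simply connected space $\widetilde{M}$: given any point $q$ in the fiber over $\widetilde{x}_0$, horizontal lifting of paths in $\widetilde{M}$ starting at $\widetilde{x}_0$ produces, by simple connectedness of $\widetilde{M}$, a well-defined global horizontal section of $p^{\ast}E$ through $q$. Varying $q$ exhibits $p^{\ast}E$ as $\widetilde{M}\times S^n$ with its trivial foliation, via a flat $G$-atlas isomorphism $\Psi$. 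The deck action of $\Gamma$ on $p^{\ast}E$ preserves fibers and the horizontal foliation, so in the $\Psi$-trivialization it has the form $(\widetilde{x},v)\mapsto (\gamma\cdot\widetilde{x}, A(\gamma)v)$ for some map $A\colon \Gamma\to G$; comparing with the definition of the holonomy at $x_0$ yields $A=\rho$. Taking the $\Gamma$-quotient produces the desired isomorphism $E_\rho \to E$ of flat $G$-bundles.

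For injectivity, suppose $h\colon E_{\rho_1}\to E_{\rho_2}$ is an isomorphism of flat $G$-bundles. Lifting to universal covers gives an isomorphism $\widetilde{h}\colon \widetilde{M}\times S^n\to \widetilde{M}\times S^n$ of the trivial flat $G$-bundles which covers the identity of $\widetilde{M}$, sends horizontal leaves to horizontal leaves, and intertwines the $\Gamma$-actions defined via $\rho_1$ and $\rho_2$. The first two properties force $\widetilde{h}(\widetilde{x},v)=(\widetilde{x},k(\widetilde{x})v)$ for some locally constant map $k\colon \widetilde{M}\to G$; connectedness of $\widetilde{M}$ then gives $k\equiv g$ for some $g\in G$. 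The equivariance condition reads $g\rho_1(\gamma)=\rho_2(\gamma)g$ for every $\gamma\in\Gamma$, so $\rho_1$ and $\rho_2$ are conjugate. The main obstacle will be the surjectivity step, and specifically the careful argument that horizontal lifts assemble into a global flat trivialization of $p^{\ast}E$; this hinges on the fact that $\widetilde{M}$ is not only path connected but genuinely simply connected, so that horizontal lifts along two paths with the same endpoints in $\widetilde{M}$ agree, which is what allows the local leaves furnished by the flat atlas to be patched into global sections.
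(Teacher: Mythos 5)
Your proof is correct and follows essentially the same route as the paper: well-definedness via the intertwining map $(\widetilde{x},v)\mapsto(\widetilde{x},gv)$, surjectivity by using simple connectedness of $\widetilde{M}$ to make horizontal lifting path-independent and thereby produce an equivariant trivialization of the pullback, and injectivity by lifting a flat isomorphism to a (locally constant, hence constant) gauge transformation and extracting the conjugacy from equivariance. The only cosmetic difference is that the paper constructs the comparison map $\widetilde{M}\times S^n\to E$ directly rather than phrasing it as a trivialization of $p^{\ast}E$; the underlying construction is the same.
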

\begin{proof}
Let us first show that every flat sphere $G$-bundle 
is isomorphic to a bundle $E_\rho$ for some $\rho\colon \G\to G$.
To this aim, it is sufficient to prove that the holonomy completely determines the isomorphism type of the bundle.
So, let $E$ be a flat sphere $G$-bundle, fix points $x_0\in M$ and $\widetilde{x}_0\in p^{-1}(x_0)$ and
fix the identification between $\G=\pi_1(M,x_0)$ and the group of the covering automorphisms of
$\widetilde{M}$ such that the projection on $M$ of any path in $\widetilde{M}$ starting at $\widetilde{x}_0$
and ending at $g(\widetilde{x}_0)$ lies in the homotopy class corresponding to $g$. We also denote by
$h\colon S^n\to E_{x_0}$ the homeomorphism induced by a local trivialization of $E$ over a neighbourhood of $x_0$
belonging to the flat $G$-atlas of $E$, and we denote by $\rho\colon \G\to G$ the holonomy representation
associated to these data.

We will now show that $E_\rho$ and $E$ are isomorphic as flat $G$-bundles. To this aim we define a map
$\widetilde{\eta}\colon \widetilde{M}\times S^n\to E$ as follows:  for every $\widetilde{x}\in \widetilde{M}$ we fix a path $\widetilde{\beta}_{\widetilde{x}}$ joining
$\widetilde{x}_0$ to $\widetilde{x}$ in $\widetilde{M}$, we set $\beta_{\widetilde{x}}=p\circ\widetilde{\beta}_{\widetilde{x}}\colon [0,1]\to M$,
and we define $\widetilde{\eta}(\widetilde{x},q)$ as the endpoint of the horizontal lift of $\beta_{\widetilde{x}}$ starting at $q$. The map $\widetilde{\eta}$ is well defined 
(i.e.~it is independent of the choice of $\widetilde{\beta}_{\widetilde{x}}$), and induces the quotient map $\eta\colon E_\rho\to E$, which is easily seen to be an isomorphism
of $G$-bundles. Moreover, by construction $\eta$ carries the transverse foliation of $E_\rho$ into the transverse foliation of $E$, so
$\eta$ is indeed an isomorphism of flat $G$-bundles.

In order to conclude we are left to show that two representations $\rho,\rho'\colon \G\to G$ define isomorphic flat $G$-bundles if and only if they are conjugate. 
Suppose first that there exists $g_0\in G$ such that $\rho'(g)=g_0\rho(g)g_0^{-1}$ for every $g\in \G$.
Then the map $\widetilde{E}_\rho\to \widetilde{E}_{\rho'}$ given by $(x,v)\mapsto (g_0x,g_0v)$ induces an isomorphism of flat $G$-bundles
$E_\rho\to E_{\rho'}$. On the other hand, if $\rho,\rho'\colon \G\to G$ are representations and
$h\colon E_\rho\to E_{\rho'}$ is an isomorphism of flat $G$-bundles, then we may lift $h$ to a map
$\widetilde{h}\colon \widetilde{E}_\rho\to\widetilde{E}_{\rho'}$ such that $\widetilde{h}(\widetilde{x}_0,v)=(\widetilde{x}_0,g_0v)$
for some $g_0\in G$. Since $h$ preserves both the fibers and the transverse foliations, this easily implies that $\widetilde{h}(\widetilde{x},v)=(\widetilde{x},g_0v)$ for every 
$\widetilde{x}\in \widetilde{M}$, $v\in S^n$.
Moreover, for every $g\in \G$ there must exist $g'\in \G$ such that
$$
(g\widetilde{x}_0,g_0\rho(g)v)=\widetilde{h}(g\cdot (\widetilde{x}_0,v))=g'\cdot \widetilde{h}(\widetilde{x}_0,v)=(g'\widetilde{x}_0,\rho'(g')g_0v)\ .
$$
This equality implies $g'=g$, whence $g_0\rho(g)=\rho'(g)g_0$ for every $g\in\G$. We have thus shown that $\rho'$ is conjugate to $\rho$,
and this concludes the proof.
 \end{proof}

\section{The bounded Euler class of a flat circle bundle}
Let $\pi\colon E\to M$ be a flat circle bundle. 
We fix points $x_0\in M$ and $\widetilde{x}_0\in p^{-1}(x_0)$, and
we identify $\G=\pi_1(M,x_0)$ with the group of the covering automorphisms of
$\widetilde{M}$ so that the projection on $M$ of any path in $\widetilde{M}$ starting at $\widetilde{x}_0$
and ending at $g(\widetilde{x}_0)$ lies in the homotopy class corresponding to $g$.
We also choose a set of representatives 
$R$ for the action of $\G=\pi_1(M)$ on $\widetilde{M}$ containing $\widetilde{x}_0$.

By exploiting the flat structure on $E$, we are going to construct a bounded representative $z_b\in C^2_b(M,\matZ)$
for the Euler class $\eu(E)$. In fact, we will see that such a cocycle represents a well-defined bounded class in $H^2_b(M,\matZ)$.
By Proposition~\ref{flat:rep} we may suppose $E=E_\rho$ for some representation $\rho\colon\G\to\omeo$, and we denote
by $j\colon \widetilde{M}\times S^1\to E_\rho$ the quotient map with respect to the diagonal action of $\G$ on
$\widetilde{M}\times S^1$. Finally, 
we set $\theta_0=[0]\in S^1$.

For every $x\in M$, we denote by $\widetilde{x}$ the unique preimage of $x$ in $R\subseteq \widetilde{M}$, and
we choose the section $\sigma_x\in E_x$ defined by $\sigma_x=j(\widetilde{x},\theta_0)$. Let now $s\colon [0,1]\to M$
be a singular 1-simplex. We denote by $\widetilde{s}\colon [0,1]\to \widetilde{M}$ the unique lift of $s$ starting at a point
in $R$, and by $g_s$ the unique element of $\G$ such that $\widetilde{s}(1)\in g_s(R)$. We also choose a path
$\widetilde{h}_s(t)\colon [0,1]\to \omeot$ such that $\widetilde{h}_s(0)={\rm Id}$ and
$\widetilde{h}_s(1)=\widetilde{\rho(g_s)}$, where as usual $\widetilde{\rho(g_s)}$
denotes the lift of $\rho(g_s)$ taking $0\in\R$ into $[0,1)\subseteq \R$, and we denote by $h_s(t)$ the projection
of $\widetilde{h}_s(t)$ in $\omeo$. We finally define the section
$\sigma_s\colon [0,1]\to E$ by setting 
$$\sigma_s(t)=j(\widetilde{s}(t), h_s(t)(\theta_0))\ .$$
Our choices imply that $\sigma_s$ is indeed a section of $E$ over $s$ such that
$\sigma_s(0)=\sigma_{s(0)}$ and $\sigma_s(1)=\sigma_{s(1)}$.

Let us now evaluate the Euler cocycle $z_b$ corresponding to the chosen sections on a singular $2$-simplex
$s\colon \Delta^2\to M$. We denote by $e_0,e_1,e_2$ the vertices of $\Delta^2$, and for every $i$-dimensional simplex $v\colon \Delta^i\to M$ we denote by $\widetilde{v}\colon \Delta^i\to\widetilde{M}$ the unique
lift of $v$ whose first vertex lies in $R$. Let
$g_1,g_2$ be the elements of $\G$ such that
$\widetilde{s}(e_1)\in g_1(R)$, $\widetilde{s}(e_2)\in g_1g_2(R)$. 
Then we have 
$$
\partial_2 \widetilde{s}=\widetilde{\partial_2 s}\, , \quad
\partial_0 \widetilde{s}=g_1\circ \widetilde{\partial_0 s}\, , \quad
\partial_1 \widetilde{s} =\widetilde{\partial_1 s}\, .
$$
The pull-back $E_s$ of $E$ over $s$ admits the trivialization
$$
\psi_s\colon E_s\to \Delta^2\times S^1\, ,\quad \psi_s^{-1}(x,\theta)=j(\widetilde{s}(x),\theta)\ .
$$
Let $\sigma_{\partial s}\colon \partial \Delta^2\to E_s$ be the section obtained by concatenating $\sigma_{\partial_i s}$, $i=1,2,3$.
In order to compute $z_b(s)$ we need to write down an expression for the map $\pi_{S^1}\circ \psi_s\circ \sigma_{\partial s}\colon \partial \Delta^2\to S^1$.

Recall that, to every $1$-simplex $\partial_i s$, there is associated a path $h_i=h_{\partial_i s}\colon [0,1]\to \omeo$ such that
$\sigma_{\partial_i s}(t)=j(\widetilde{\partial_i s}(t),h_i(t)(\theta_0))$. Moreover, we have $h_2(1)={\rho(g_1)}$, $h_0(1)={\rho(g_2)}$ and
$h_1(1)={\rho(g_1g_2)}$. 
Therefore,
if $\alpha_i\colon [0,1]\to S^1$ is defined by 
$$
\alpha_2(t)=h_{2}(t)(\theta_0)\, ,\quad
\alpha_0(t)=\rho(g_1)h_{0}(t)(\theta_0)\, ,\quad
\alpha_1(t)=h_{1}(t)(\theta_0)
$$
and $\beta_i(t)=(\partial_i\widetilde{s}(t),\alpha_i(t))\in \widetilde{M}\times S^1$, 
then
the desired section of $E_s$ over $\partial \Delta^2$ is obtained by projecting 
onto $E$ 
the concatenation $\beta_2*\beta_0*\beta_1^{-1}$
(as usual, for any path $\beta$ defined over $[0,1]$ we denote by $\beta^{-1}$ the inverse path, i.e.~the path such 
that $\beta^{-1}(t)=\alpha(1-t)$).
As a consequence, the value of $z_b(s)$ is equal to
the element of $\pi_1(S^1)\cong \matZ$ defined the map $\gamma=\alpha_2*\alpha_0*\alpha_1^{-1}\colon [0,3]\to S^1$.
In order to compute this element, we observe that the lift $\widetilde{\gamma}\colon [0,3]\to\R$
of $\gamma$ such that $\widetilde{\gamma}(0)=0$ is given by
$\widetilde{\gamma}=\widetilde{\alpha}_2*\widetilde{\alpha}_0*\widetilde{\alpha}_1^{-1}$, where 
$\widetilde{\alpha}_i$ is a suitable lift of $\alpha_i$ for $i=0,1,2$. Namely, since
$\widetilde{\gamma}(0)=0$, we have
$$
\widetilde{\alpha}_2(t)=\widetilde{h_{2}(t)}(0)\, ,
$$
and in order to make the endpoint of each lift coincide with the starting point of the following one we need to set 
$$
\widetilde{\alpha}_0(t)=\widetilde{\rho(g_1)}\widetilde{h_{0}(t)}(0)\, ,\quad
\widetilde{\alpha}_1(t)=\widetilde{\rho(g_1)}\widetilde{\rho(g_2)}\widetilde{\rho(g_1g_2)}^{-1}\widetilde{ h_{1}(t)}(0)\ .
$$
As a consequence, we have
$$
\widetilde{\gamma}(1)=\widetilde{\alpha}_1^{-1}(1)=\widetilde{\alpha}_1(0)=\widetilde{\rho(g_1)}\widetilde{\rho(g_2)}\widetilde{\rho(g_1g_2)}^{-1}(0)=\rho^*(c)(g_1,g_2)\ ,
$$
where $c\in C^2_b(\omeo,\matZ)$ is the canonical representative of the bounded Euler class
$e_b\in H^2_b(\omeo,\matZ)$.

We have thus shown that the representative $z_b\in C^2(M,\matZ)$ of the Euler class of $E$ is bounded. Moreover,
if $s\colon \Delta^2\to M$ is any singular simplex whose vertices lie respectively in
$R, g_1(R)$ and $g_1g_2(R)$, then $z_b(s)=\rho^*(c)(g_1,g_2)$. This fact may be restated by saying that 
$z_b=r^2(\rho^*(c))$, where 
$$
r^2\colon C^2_b(\G,\matZ)\to C^2_b(M,\matZ)
$$ is the map described in Lemma~\ref{normnon}. In other words, the natural map
$$
H^2_b(r^\bullet)\colon H^2_b(\G,\matZ)\to H^2_b(M,\matZ)
$$
described in Corollary~\ref{classifyingmap} takes the bounded Euler class $e_b(\rho)$ into the class
$[z_b]\in H^2_b(M,\R)$. Therefore, it makes sense to define the \emph{bounded Euler class} 
$\eu_b(E)$ 
of the \emph{flat}
circle bundle $E$ by setting 
$$
\eu_b(E)=[z_b]\in H^2_b(M,\R)\ .
$$
Recall that, if $E,E'$ are isomorphic flat circle bundles, then
$E=E_\rho$ and $E'=E_{\rho'}$ for conjugate representations $\rho,\rho'\colon \G\to\omeo$. Since 
conjugate representations share the same bounded Euler class, we have that $\eu_b(E)$ is a well-defined invariant
of the isomorphism type of $E$ as a flat circle bundle. We may summarize this discussion in the following:

\begin{thm}\label{eulerclassbounded}
 To every flat topological circle bundle $\pi\colon E\to M$ there is associated a bounded class
 $\eu_b(E)\in H^2_b(M,\matZ)$ such that:
 \begin{itemize}
 \item $\|\eu_b(E)\|_\infty\leq 1$;
  \item If $E=E_\rho$, then $\eu_b(E)=H^2_b(r^\bullet)(e_b(\rho))$;
  \item If $\pi'\colon E'\to M$ is isomorphic to $\pi\colon E\to M$ as a topological flat
  circle bundle, then $\eu_b(E')=\eu_b(E)$.
  \item The comparison map $H^2_b(M,\matZ)\to H^2(M,\matZ)$ takes $\eu_b(E)$ into $\eu(E)$.
 \end{itemize}
\end{thm}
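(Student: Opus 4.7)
The plan is to verify that the construction preceding the theorem statement indeed produces a well-defined bounded cohomology class satisfying the four listed properties, since most of the computational work has already been sketched. The starting point is the explicit formula $z_b(s) = \rho^*(c)(g_1,g_2)$ derived for any singular 2-simplex $s\colon \Delta^2\to M$ whose lift has vertices in $R$, $g_1(R)$, and $g_1g_2(R)$, where $c\in C^2_b(\omeo,\matZ)$ is the canonical representative of $e_b$. This formula is precisely the statement that $z_b = r^2(\rho^*(c))$, with $r^\bullet$ the chain map from Lemma~\ref{normnon}.

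The first step is to show that the construction of $z_b$ yields an integral cohomology class representing the (unbounded) Euler class $\eu(E)$. By construction, $z_b$ arises as the obstruction cocycle associated to a specific collection of compatible sections on 1-simplices, so Proposition~\ref{euler:welldef} ensures $[z_b]=\eu(E)$ in $H^2(M,\matZ)$, yielding the last bullet for free. To get a well-defined class $\eu_b(E)\in H^2_b(M,\matZ)$, I would observe that $z_b$ is obtained from the $\Gamma$-invariant cocycle $\rho^*(c)\in C^2_b(\Gamma,\matZ)$ via the morphism $r^\bullet$, so $[z_b]=H^2_b(r^\bullet)(e_b(\rho))$ by construction. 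Since $e_b(\rho)=\rho^*(e_b)$ depends only on $\rho$, and by Theorem~\ref{ext:thm} the map $H^2_b(r^\bullet)$ is independent of the chain-level choices (all such choices are $\Gamma$-homotopic, since $C^\bullet_b(\widetilde{M},\matZ)$ is a relatively injective $\matZ[\Gamma]$-module), the class $[z_b]$ does not depend on the auxiliary data (fundamental domain $R$, basepoint, continuous paths $\widetilde{h}_s$ from the identity to $\widetilde{\rho(g_s)}$, etc.). This gives the second bullet.

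For the norm bound, the key observation is that the canonical Euler cocycle $c$ takes values only in $\{0,1\}$ (see Lemma~\ref{only01}), hence $\|c\|_\infty=1$. Since $r^\bullet$ is norm non-increasing and $\rho^*$ clearly is as well, we obtain $\|z_b\|_\infty\leq \|c\|_\infty=1$, whence $\|\eu_b(E)\|_\infty\leq 1$. For isomorphism invariance, I would invoke Proposition~\ref{flat:rep}: if $E\cong E'$ as flat $\omeo$-bundles, then $E=E_\rho$ and $E'=E_{\rho'}$ with $\rho'=g_0\rho(\cdot)g_0^{-1}$ for some $g_0\in\omeo$; the induced map on bounded cohomology by an inner automorphism of $\omeo$ is the identity (a standard fact verified directly by producing an equivariant chain homotopy, or alternatively by noting that inner automorphisms act trivially on group cohomology of any coefficient module), so $e_b(\rho)=e_b(\rho')$ and hence $\eu_b(E)=\eu_b(E')$.

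The only step requiring genuine attention, rather than unpacking definitions, is verifying that different choices in the construction of $z_b$ yield cohomologous bounded cocycles, i.e.~that the difference is the coboundary of a \emph{bounded} cochain (not merely of an unbounded one, which would only give independence in $H^2(M,\matZ)$). The cleanest way to handle this is to avoid checking it directly and instead adopt the identity $[z_b]=H^2_b(r^\bullet)(e_b(\rho))$ as the \emph{definition} of $\eu_b(E)$; then well-definedness reduces to the well-definedness of $e_b(\rho)\in H^2_b(\Gamma,\matZ)$ and of the functorial map induced by $r^\bullet$, both of which are already established. The role of the explicit construction with sections is then only to certify the compatibility with $\eu(E)$ via the comparison map, which is transparent from Proposition~\ref{euler:welldef}.
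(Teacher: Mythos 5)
Your proposal is correct and follows essentially the same route as the paper: define $\eu_b(E)$ as $H^2_b(r^\bullet)(e_b(\rho))$, deduce well-definedness from the $\G$-homotopy uniqueness of the classifying chain map (Theorem~\ref{ext:bounded:thm}/Corollary~\ref{classifyingmap}) together with the conjugation-invariance of $e_b(\rho)$, get the norm bound from $\|c\|_\infty\leq 1$ and the fact that $r^\bullet$ and $\rho^*$ are norm non-increasing, and use the explicit section-theoretic construction only to identify the image under the comparison map with $\eu(E)$. The only slip is a harmless citation: the homotopy-uniqueness statement you need in the normed setting is Theorem~\ref{ext:bounded:thm}, not Theorem~\ref{ext:thm}.
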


Theorem~\ref{eulerclassbounded} already provides an obstruction for a circle bundle to admit
a flat structure:

\begin{cor}
 Let $E\to M$ be a circle bundle. If $E$ admits a flat structure, then $\eu(E)$ lies in the image of the unit
 ball via the comparison
 map $H^2_b(M,\matZ)\to H^2(M,\matZ)$.
\end{cor}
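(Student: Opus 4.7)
The statement is an immediate consequence of Theorem~\ref{eulerclassbounded}, so the plan is essentially to unpack what ``admits a flat structure'' gives us and read off the conclusion from the four properties listed there. The proof should take only a few lines.

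First, I would assume that the circle bundle $\pi\colon E\to M$ comes equipped with some flat structure, i.e.\ a maximal flat $\omeo$-atlas compatible with the given topological bundle structure. This is exactly the hypothesis needed to invoke Theorem~\ref{eulerclassbounded}, whose construction of $\eu_b(E)$ depended on fixing such a flat structure (equivalently, by Proposition~\ref{flat:rep}, on choosing a holonomy representation $\rho\colon\Gamma\to\omeo$ with $E\cong E_\rho$ as flat bundles).

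Next, I would apply the first bullet of Theorem~\ref{eulerclassbounded} to produce a class $\eu_b(E)\in H^2_b(M,\matZ)$ satisfying $\|\eu_b(E)\|_\infty\leq 1$; that is, $\eu_b(E)$ lies in the unit ball of $H^2_b(M,\matZ)$ (with respect to the canonical seminorm). Then I would invoke the fourth bullet of Theorem~\ref{eulerclassbounded}, which says that the comparison map $c^2\colon H^2_b(M,\matZ)\to H^2(M,\matZ)$ sends $\eu_b(E)$ to $\eu(E)$. Combining these two facts, $\eu(E)=c^2(\eu_b(E))$ is the image under $c^2$ of an element in the unit ball, which is precisely the conclusion.

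There is no real obstacle here; all the work was done in building $\eu_b(E)$ and establishing its norm bound and compatibility with the comparison map during the proof of Theorem~\ref{eulerclassbounded}. The corollary is simply the contrapositive reformulation: if $\eu(E)$ fails to admit a bounded lift of norm at most $1$, then $E$ cannot carry any flat structure. This is precisely the cohomological obstruction that will be leveraged in the next sections to derive the Milnor--Wood inequalities for circle bundles over surfaces.
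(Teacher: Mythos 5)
Your proof is correct and matches the paper's implicit argument: the corollary is an immediate consequence of the first bullet (the norm bound $\|\eu_b(E)\|_\infty\leq 1$) and the fourth bullet (the comparison map sends $\eu_b(E)$ to $\eu(E)$) of Theorem~\ref{eulerclassbounded}, which is exactly how you've organized it.
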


By exploiting real rather than integral coefficients, the above estimate on the seminorm of $\eu_b(E)$ may be improved. 
A celebrated result by Milnor and Wood implies that,
in the case of closed
surfaces, this strategy leads to sharp estimates on the Euler number of flat circle bundles.
In the following section we describe a proof of Milnor-Wood inequalities which makes use of the machinery introduced so far.

\section{Milnor-Wood inequalities}

Let us now concentrate our attention on circle bundles on surfaces. We recall that, for every $g\in\mathbb{N}$, the symbol $\Sigma_g$ 
denotes the closed
oriented surface of genus $g$. In  this section we describe classical results by Milnor~\cite{Milnor} and Wood~\cite{Wood}, which provide
sharp estimates on the Euler number of \emph{flat} circle bundles on $\Sigma_g$. To this aim, 
when considering both bundles and representations, it may be useful to consider
 \emph{real} (bounded) Euler classes
rather than integral ones. So, let us first define the real Euler class of a sphere bundle
(we refer the reader to Definition~\ref{real:euler:defn} for the definition of real (bounded) Euler class of a representation).

\begin{defn}
Let $\pi\colon E\to M$ be a topological $n$-sphere bundle. Then the \emph{real} Euler class $e^\R(E)\in H^{n+1}(M,\R)$ of $E$ is the image of
$e(E)\in H^{n+1}(M,\matZ)$ under the change of coefficients homomorphism. If $n=1$ and $E$ is topologically flat, then we denote by 
$e_b^\R(E)\in H_b^{2}(M,\R)$ the image of
$e_b(E)\in H_b^{2}(M,\matZ)$ under the change of coefficients homomorphism.
\end{defn}

Let us summarize what we can already deduce from the previous sections:

\begin{enumerate}
 \item If $g=0$, i.e.~$\Sigma_g=S^2$, then $\pi_1(\Sigma_g)$ is trivial, so for every $G<\omeo$ the space of representations
 of $\pi_1(\Sigma_g)$ into $G$ is trivial. As a consequence of Proposition~\ref{flat:rep}, the unique flat $G$-bundle over $S^2$
 is the trivial one. More precisely, the only circle bundle over $S^2$ supporting a flat $G$-structure is the topologically trivial one,
 and the unique flat $G$-structure supported by this bundle is the trivial one.
 \item If $g=1$, i.e.~$\Sigma_g=S^1\times S^1$, then $\pi_1(\Sigma_g)$ is amenable, so $H^2_b(\Sigma_g,\R)=0$. Therefore, if $\pi\colon E\to \Sigma_g$
 is a flat circle bundle, then
 $e_b^\R(E)=0$, whence $e^\R(E)=0$. But the change of coefficients map $H^2(\Sigma_g,\matZ)\to H^2(\Sigma_g,\R)$ is injective,
 so the Euler number of $E$ vanishes. By Theorem~\ref{Euler:classifies}, this implies that $E$ is topologically trivial. Therefore,
 the unique circle bundle over $S^1\times S^1$ supporting a flat structure is the trivial one. However, there are infinitely many non-conjugate
 representations of $\pi_1(S^1\times S^1)$ into $\omeo$. Therefore, Proposition~\ref{flat:rep} implies that the trivial circle bundle
 over $S^1\times S^1$ admits infinitely many pairwise non-isomorphic flat structures.
\end{enumerate}

The following proposition provides interesting examples of flat circle bundles over $\Sigma_g$, when $g\geq 2$.

\begin{prop}\label{2g-2:exists}
 For every $g\geq 2$, there exists a flat circle bundle 
 $\pi\colon E\to \Sigma_g$ such that 
 $$
 \eu(E)=2-2g\ .
 $$
\end{prop}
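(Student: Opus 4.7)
The plan is to realize the desired bundle as the flat circle bundle associated to the boundary action of a hyperbolic holonomy. Since $g\geq 2$, the surface $\Sigma_g$ admits a hyperbolic structure, i.e., there exists a discrete faithful representation $\rho_0\colon \Gamma_g\to \mathrm{PSL}(2,\R)=\Isom^+(\matH^2)$ realizing an identification $\Sigma_g\cong \Gamma_g\backslash \matH^2$. Since $\mathrm{PSL}(2,\R)$ acts on the circle at infinity $\partial\matH^2\cong S^1$ by orientation-preserving homeomorphisms, composition yields a representation $\rho\colon \Gamma_g\to \omeo$, and by Proposition~\ref{flat:rep} this gives rise to a flat circle bundle $E_\rho\to \Sigma_g$.

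The next step is to identify $E_\rho$, as an oriented topological circle bundle, with the unit tangent bundle $T^1\Sigma_g = S(T\Sigma_g)$ of $\Sigma_g$. The crucial tool is the classical endpoint map
$$
\Phi\colon T^1 \matH^2 \longrightarrow \matH^2\times \partial\matH^2,\qquad v\mapsto (\pi(v),\gamma_v(+\infty)),
$$
where $\pi\colon T^1\matH^2\to\matH^2$ is the footpoint projection and $\gamma_v$ is the geodesic ray with initial tangent vector $v$. I would verify that $\Phi$ is a homeomorphism commuting with the projection to $\matH^2$, that it restricts on each fiber to an orientation-preserving homeomorphism (with respect to the natural orientations both induced by that of $\matH^2$), and that it is equivariant with respect to the natural $\mathrm{PSL}(2,\R)$-actions on source and target. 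Passing to the $\Gamma_g$-quotient then yields an isomorphism of oriented circle bundles $T^1\Sigma_g\cong E_\rho$.

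The conclusion follows from Proposition~\ref{tangentchi}, which gives $\eu(T\Sigma_g)=\chi(\Sigma_g)=2-2g$, combined with the definition of the Euler class of a vector bundle via its sphere bundle: $\eu(E_\rho)=\eu(T^1\Sigma_g)=\eu(S(T\Sigma_g))=\eu(T\Sigma_g)=2-2g$.

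The main obstacle is the verification that the fiber identifications carried out by $\Phi$ respect the chosen orientations: since the sign of the Euler number depends on this, one must check (e.g.\ in the Poincar\'e disk model) that a counterclockwise rotation of a unit tangent vector at $x$ produces a counterclockwise motion of its endpoint on $\partial\matH^2$, so that the orientations on $T^1_x\matH^2$ and $\partial\matH^2$ match under $\Phi$. This geometric check is elementary but essential, as a miscomputation here would produce $2g-2$ instead of the claimed $2-2g$.
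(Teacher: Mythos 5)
Your proposal is correct and follows essentially the same route as the paper: identify $T^1\Sigma_g$ with the flat bundle arising from the boundary action of a Fuchsian holonomy via the $\mathrm{PSL}(2,\R)$-equivariant endpoint map $T^1\matH^2\to\matH^2\times\partial\matH^2$, then invoke $\eu(T\Sigma_g)=\chi(\Sigma_g)$. Your explicit flagging of the orientation check on the fibers is a sound piece of caution that the paper's proof passes over silently.
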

\begin{proof}
Let $\G_g$ be the fundamental group of $\Sigma_g$.
It is well-known that the surface $\Sigma_g$ supports a hyperbolic metric. As a consequence, we may identify
the Riemannian universal covering of $\Sigma_g$ with the hyperbolic plane $\matH^2$, and $\Sigma_g$ is realized
as the quotient of $\matH^2$ by the action of $\rho(\G_g)$, where $\rho\colon \G_g\to {\rm Isom}^+(\matH^2)$ is
a faithful representation with discrete image. The hyperbolic plane admits a natural compactification
$\overline{\matH}^2=\matH^2\cup\partial\matH^2$ which can be roughly described as follows (see e.g.~\cite{BePe} for the details):
points of $\partial\matH^2$ are equivalence classes of geodesic rays of $\matH^2$, where two such rays are equivalent if their images
lie at finite Hausdorff distance one from the other (recall that a geodesic ray in $\matH^2$ is just a unitary speed geodesic $\gamma\colon [0,+\infty)\to \matH^2$).
A metrizable topology on $\partial\matH^2$ may be defined by requiring that a sequence $(p_i)_{i\in \mathbb{N}}$ in $\partial\matH^2$
converges to $p\in \partial\matH^2$ if and only if there exists a choice of representatives $\gamma_i\in p_i$, $\gamma\in p$ such
that $\gamma_i\to\gamma$ with respect to the compact-open topology. Let us  
denote by $\pi\colon T_1\matH^2\to\matH^2$ the unit tangent bundle of $\matH^2$, and by $\gamma_v$, $v\in T_1\matH^2$, the geodesic ray in $\matH^2$ with initial speed $v$. 
Then the map 
$$
\Psi\colon T_1\matH^2\to \matH^2\times \partial\matH^2\, ,\qquad \Psi(v)=(\pi(v),[\gamma_v])
$$
is a homeomorphism. In particular, $\partial\matH^2$ is homeomorphic to $S^1$. Moreover, any isometry of $\matH^2$ extends to a homeomorphism of
$\partial\matH^2$, so $\rho(\G_g)$ acts on $\partial\matH^2$. Of course, $\rho(\G_g)$ also acts on $T_1\matH^2$, and it follows from the definitions
that the map $\Psi$ introduced above is $\rho(\G_g)$-equivariant with respect to these actions. Observe now that the quotient
of $T_1\matH^2$ by the action of $\rho(\G_g)$ may be canonically identified with the unit tangent bundle 
$T_1\Sigma_g$, so $\eu(T_1\Sigma_g)=2-2g$ by Proposition~\ref{tangentchi}.
On the other hand, since $\partial\matH^2$ is homeomorphic to $S^1$, the quotient of $\matH^2\times \partial\matH^2$
by the diagonal action of $\rho(\G_g)$ is a flat circle bundle on $\Sigma_g$. \end{proof}

\begin{rem}\label{smoothnolinear}
The previous result holds also if we require $E$ to be flat as a \emph{smooth} circle bundle.
In fact, $\partial\matH^2$ admits a natural smooth structure. With respect to this structure,
the map $\Psi$ introduced in the proof of Proposition~\ref{2g-2:exists} is a diffeomorphism, and
${\rm Isom}^+(\matH^2)$ acts on $\partial\matH^2$ via diffeomorphisms. As a consequence, $T_1\Sigma_g$
admits a structure of flat \emph{smooth} circle bundle. 

On the other hand, as a consequence of a classical result by Milnor (see Theorem~\ref{Milnor:thm} below), the bundle $T_1\Sigma_g$
does \emph{not} admit any flat \emph{linear} structure for $g\geq 2$. 
\end{rem}

Proposition~\ref{2g-2:exists}
shows that, when $g\geq 2$, in order to get bounds on the Euler number of flat bundles over $\Sigma_g$
a more refined analysis is needed. We first provide an estimate on the norm of $e_b^\R\in H^2_b(\omeo,\R)$.

\begin{lemma}\label{norm:euler}
 We have
 $$
 \|e^\R_b\|_\infty\leq \frac{1}{2}\ .
 $$
\end{lemma}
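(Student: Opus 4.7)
The plan is to upgrade the integral estimate $\|e_b\|_\infty \leq 1$ to the sharper real bound $1/2$ by subtracting a coboundary which, thanks to passing to real coefficients, symmetrizes the values of the canonical cocycle.

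Recall from Lemma~\ref{only01} that the inhomogeneous cocycle $c \in \overline{C}^2_b(\omeo,\Z)$ representing $e_b$ takes only the values $0$ and $1$. Viewed as a real cocycle, $c$ represents $e_b^\R$, so a priori only $\|e_b^\R\|_\infty \leq 1$. To do better, I would consider the constant $1$-cochain $b \in \overline{C}^1_b(\omeo,\R)$ defined by $b(f) = 1/2$ for every $f \in \omeo$. Since $\omeo$ acts trivially on $\R$, the formula for the inhomogeneous differential (see Section~\ref{bar:sec}) gives
\begin{equation*}
\overline{\delta}^1(b)(f,g) = b(g) - b(fg) + b(f) = \tfrac{1}{2} - \tfrac{1}{2} + \tfrac{1}{2} = \tfrac{1}{2}
\end{equation*}
for every $f,g \in \omeo$, that is, $\overline{\delta}^1(b)$ is the constant $2$-cochain equal to $1/2$.

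Now I would set $c' = c - \overline{\delta}^1(b)$. By construction $c'$ is again a real cocycle representing $e_b^\R$, and the value of $c'(f,g)$ lies in $\{0 - 1/2,\, 1 - 1/2\} = \{-1/2, 1/2\}$ for every $(f,g)\in \omeo \times \omeo$. Therefore $\|c'\|_\infty = 1/2$, and consequently
\begin{equation*}
\|e_b^\R\|_\infty \leq \|c'\|_\infty \leq \tfrac{1}{2},
\end{equation*}
which is the desired estimate.

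There is essentially no obstacle here: the content of the argument is the observation that the integral cocycle $c$, which is already rigid at the integral level (it cannot be made to take values in a smaller subset of $\Z$), becomes flexible once real coefficients are allowed, and a single constant coboundary suffices to recenter its values symmetrically around $0$. The real work lies elsewhere, namely in showing that $1/2$ is actually attained in the relevant applications (this will be used to derive the sharp Milnor--Wood inequalities in the following sections).
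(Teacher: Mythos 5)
Your argument is correct and is essentially identical to the paper's proof: the paper subtracts the coboundary of a constant $\G$-invariant $1$-cochain (in the homogeneous complex) with value $-1/2$, while you subtract the coboundary of the constant inhomogeneous $1$-cochain with value $1/2$; the two computations translate into each other under the standard isomorphism between the two complexes and produce the same shifted representative with values in $\{-1/2,1/2\}$.
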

\begin{proof}
Recall from Lemma~\ref{only01} that $e_b$ admits a representative $c_{x_0}$
taking values in the set $\{0,1\}$. If $\varphi\colon C^1(\omeo,\R)^{\omeo}$ is
the constant cochain taking the values $-1/2$ on every pair $(g_0,g_1)\in \omeo^2$, then
the cocycle $c+\delta \varphi$ still represents $e_b$, and takes values
in $\{-1/2,1/2\}$. The conclusion follows.
\end{proof}

\begin{cor}\label{norm12}
 Let $\pi\colon E\to M$ be a flat circle bundle. Then $$\|\eu^\R_b(E)\|_\infty\leq \frac{1}{2}\ .$$
\end{cor}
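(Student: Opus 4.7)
The proof will be a short chain of norm estimates obtained by combining the previous results, so there is no real obstacle here — the main task is just to keep track of which maps are norm non-increasing and to invoke the correct earlier statements.

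First, I would use Proposition~\ref{flat:rep} to identify the flat circle bundle $\pi\colon E\to M$ with $E_\rho$ for some representation $\rho\colon\Gamma\to\omeo$, where $\Gamma=\pi_1(M)$. By Theorem~\ref{eulerclassbounded} the integral bounded Euler class of $E$ satisfies
\[
\eu_b(E)=H^2_b(r^\bullet)\bigl(e_b(\rho)\bigr)=H^2_b(r^\bullet)\bigl(\rho^*(e_b)\bigr),
\]
where $r^\bullet\colon C^\bullet_b(\Gamma,\matZ)\to C^\bullet_b(M,\matZ)$ is the chain map from Lemma~\ref{normnon}. Applying the change of coefficients homomorphism induced by $\matZ\hookrightarrow\R$ (which commutes with both pullbacks) gives
\[
\eu^\R_b(E)=H^2_b(r^\bullet)\bigl(\rho^*(e^\R_b)\bigr).
\]

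Next I would read off the desired estimate from the norms of the three ingredients. The map $r^\bullet$ sends any single simplex to a single tuple, hence is norm non-increasing in every degree, so the induced map on $H^2_b$ is norm non-increasing. The pullback $\rho^*$ in bounded cohomology is likewise norm non-increasing, since it is induced by precomposition on cochains. Finally, Lemma~\ref{norm:euler} gives $\|e^\R_b\|_\infty\leq 1/2$. Chaining these:
\[
\|\eu^\R_b(E)\|_\infty\;\leq\;\|\rho^*(e^\R_b)\|_\infty\;\leq\;\|e^\R_b\|_\infty\;\leq\;\tfrac{1}{2}.
\]

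That completes the argument. The only subtlety worth double-checking is that the bounded Euler class of $E$ defined intrinsically (via the explicit cocycle $z_b$ constructed from sections) really does coincide with $H^2_b(r^\bullet)(e_b(\rho))$ — but this identification is precisely the content of Theorem~\ref{eulerclassbounded}, so there is nothing further to verify.
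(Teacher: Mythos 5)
Your proof is correct and follows essentially the same route as the paper: the paper's one-line proof cites Lemma~\ref{norm:euler} and Theorem~\ref{eulerclassbounded}, and your argument simply unpacks that citation — identifying $\eu^\R_b(E)$ with $H^2_b(r^\bullet)(\rho^*(e^\R_b))$ and chaining the norm non-increasing maps $H^2_b(r^\bullet)$ and $\rho^*$ against the bound $\|e^\R_b\|_\infty\leq 1/2$. No discrepancy to report.
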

\begin{proof}
 The conclusion follows from Lemma~\ref{norm:euler} and Theorem~\ref{eulerclassbounded}.
\end{proof}

We are now ready to prove Wood's estimate of the Euler number of flat circle bundles. For convenience, for
every $g$ we set $\chi_-(\Sigma_g)=\min \{\chi(\Sigma_g),0\}$, and we recall from Section~\ref{surface:simplicial:sec}
that the simplicial volume of $\Sigma_g$ is given by $\|\Sigma_g\|=2|\chi_-(\Sigma_g)|$.

\begin{thm}[\cite{Wood}]\label{Wood:thm}
 Let $\pi\colon E\to \Sigma_g$ be a flat circle bundle. Then
 $$
 |\eu(E)|\leq |\chi_-(\Sigma_g)|\ .
 $$
Moreover, this inequality is sharp, i.e.~for every
 $g\in\mathbb{N}$ there exists a flat circle bundle over $\Sigma_g$ with Euler number equal to $|\chi_-(\Sigma_g)|$. 
\end{thm}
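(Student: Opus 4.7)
The plan is to deduce the inequality from the duality between bounded cohomology and singular homology, using the three ingredients already at hand: the bound $\|e_b^\R\|_\infty \leq 1/2$ on the norm of the universal real bounded Euler class, the fact that the bounded Euler class of a flat bundle maps to its ordinary (real) Euler class under the comparison map, and the value of the simplicial volume of $\Sigma_g$.

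First I would observe that the Euler number of $E$ can be computed as a Kronecker product against the fundamental class. Indeed, writing $[\Sigma_g]\in H_2(\Sigma_g,\R)$ for the real fundamental class, the Euler number is $\eu(E)=\langle \eu^\R(E),[\Sigma_g]\rangle$. Since $E$ is flat, Theorem~\ref{eulerclassbounded} supplies a bounded coclass $\eu_b^\R(E)\in H^2_b(\Sigma_g,\R)$ whose image under the comparison map is $\eu^\R(E)$. Choosing any bounded cocycle $\varphi$ representing $\eu_b^\R(E)$ and any real fundamental cycle $c$ for $\Sigma_g$, one has $\eu(E)=\varphi(c)$, and therefore
\[
|\eu(E)|\ =\ |\varphi(c)|\ \leq\ \|\varphi\|_\infty\cdot \|c\|_1.
\]
Taking the infimum over such $\varphi$ and $c$ gives $|\eu(E)|\leq \|\eu_b^\R(E)\|_\infty\cdot \|\Sigma_g\|$.

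Now Corollary~\ref{norm12} says $\|\eu_b^\R(E)\|_\infty\leq 1/2$, and Corollary~\ref{punctured:surf:simpl} gives $\|\Sigma_g\|=2|\chi_-(\Sigma_g)|$, so the two estimates combine to yield
\[
|\eu(E)|\ \leq\ \tfrac{1}{2}\cdot 2|\chi_-(\Sigma_g)|\ =\ |\chi_-(\Sigma_g)|,
\]
which is exactly Wood's inequality. The only subtle point here is making sure the pairing is defined between $H^2_b(\Sigma_g,\R)$ and $H_2(\Sigma_g,\R)$ in a way that recovers the Euler number; this follows from the naturality of the Kronecker product with respect to the comparison map, together with the fact that $H^2(\Sigma_g,\Z)\to H^2(\Sigma_g,\R)$ is injective and the integer $\eu(E)$ is identified with its real image.

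For sharpness, the three cases $g=0$, $g=1$ and $g\geq 2$ are handled separately, but only the last one requires work. When $g=0$ the group $\pi_1(S^2)$ is trivial and the trivial bundle realizes the bound $|\chi_-(\Sigma_0)|=0$; when $g=1$ the fundamental group is amenable and the bound $|\chi_-(\Sigma_1)|=0$ is realized by any flat structure on the trivial bundle (indeed, as observed just before the statement, every flat circle bundle over the torus has vanishing Euler number). For $g\geq 2$ one invokes Proposition~\ref{2g-2:exists}, which produces via the extension of the $\mathrm{Isom}^+(\matH^2)$-action to $\partial\matH^2$ a flat circle bundle over $\Sigma_g$ whose Euler number equals $2-2g$, so its absolute value is exactly $2g-2=|\chi_-(\Sigma_g)|$. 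No part of the argument looks genuinely hard; the only book-keeping step that needs care is verifying that the extremal bundle of Proposition~\ref{2g-2:exists} is truly flat as a topological circle bundle (not just as a smooth one), but this is already contained in that proposition's statement.
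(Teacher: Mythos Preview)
Your argument is correct and follows essentially the same route as the paper: pair the real bounded Euler class with the fundamental class, use $\|\eu_b^\R(E)\|_\infty\leq 1/2$ together with $\|\Sigma_g\|=2|\chi_-(\Sigma_g)|$, and invoke Proposition~\ref{2g-2:exists} for sharpness. The only difference is cosmetic: you unpack the Kronecker pairing at the cocycle/cycle level and spell out the trivial cases $g=0,1$, whereas the paper compresses these steps into a single displayed inequality.
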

\begin{proof}
 Let us denote by $\langle\cdot,\cdot\rangle\colon H^2_b(\Sigma_g,\R)\times H_2(\Sigma_g,\R)\to \R$
 the Kronecker product (see Chapter~\ref{duality:chap}). The Euler number of $E$ may be computed by evaluating $e(E)\in H^2(\Sigma_g,\matZ)$ on the integral fundamental
 class $[\Sigma_g]$ of $\Sigma_g$, or, equivalently, by evaluating the class $\eu^\R(E)\in H^2(\Sigma_g,\R)$ on the image $[\Sigma_g]_\R\in H_2(\Sigma_g,\R)$
 of the integral fundamental class of $\Sigma_g$
 under the change of coefficients homomorphism. As a consequence, we have
 $$
 |\eu(E)|=|\langle e^\R_b(E), [\Sigma_g]_\R\rangle|\leq \|e^\R_b(E)\|_\infty \|[\Sigma_g]\|\leq
\frac{1}{2} |2\chi_-(\Sigma_g)|\ .
$$
The fact that Wood's inequality is sharp is a direct consequence of Proposition~\ref{2g-2:exists}.
 \end{proof}

 As a consequence, we obtain the following corollary, which was originally proved by  Milnor:

\begin{thm}[\cite{Milnor}]\label{Milnor:thm}
 Let $\pi\colon E\to \Sigma_g$ be a flat linear circle bundle. Then
 $$
 |\eu(E)|\leq \frac{|\chi_-(\Sigma_g)|}{2}\ .
 $$
\end{thm}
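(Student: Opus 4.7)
The plan is to reduce Milnor's inequality to Wood's inequality (Theorem~\ref{Wood:thm}) by passing from the flat linear circle bundle $E$ to its associated projective bundle, which only doubles the Euler number.

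First, I would unpack the linear-flat structure. Since $E$ is a flat linear circle bundle, Proposition~\ref{flat:rep} (applied with $G=\mathrm{Gl}^+(2,\R)$) gives an (up to conjugacy unique) holonomy representation $\rho\colon\Gamma_g\to \mathrm{Gl}^+(2,\R)$ with $E\cong S(V_\rho)$, where $V_\rho$ is the rank-$2$ flat vector bundle corresponding to $\rho$ in the sense of Remark~\ref{flatvector}. Composing $\rho$ with the quotient map $\mathrm{Gl}^+(2,\R)\to \mathrm{PGl}^+(2,\R)$ produces a representation $\overline{\rho}\colon\Gamma_g\to \mathrm{PGl}^+(2,\R)\hookrightarrow \omeo$, where the inclusion comes from the natural action of $\mathrm{PGl}^+(2,\R)$ on $\mathbb{P}^1(\R)\cong S^1$ (which is indeed by orientation-preserving homeomorphisms). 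The associated flat circle bundle $E_{\overline{\rho}}$ is precisely the projective bundle $\mathbb{P}(V_\rho)$, which is therefore a \emph{flat topological} circle bundle over $\Sigma_g$.

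Next, I would identify the relation between the Euler numbers of $E$ and $\mathbb{P}(V_\rho)$. The double cover $S^1\to \mathbb{P}^1(\R)$ sending a unit vector to its line is $(\rho,\overline{\rho})$-equivariant, so it induces a fiberwise degree-$2$ bundle map $S(V_\rho)\to \mathbb{P}(V_\rho)$ covering $\mathrm{Id}_{\Sigma_g}$. The argument already recalled in the excerpt (right before the construction of $\eu(\mathbb{P}(E))$) then yields
\[
\eu(\mathbb{P}(V_\rho))\ =\ 2\,\eu(S(V_\rho))\ =\ 2\,\eu(E).
\]

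Finally, I would apply Wood's theorem (Theorem~\ref{Wood:thm}) to the flat circle bundle $\mathbb{P}(V_\rho)$ to obtain
\[
|2\,\eu(E)|\ =\ |\eu(\mathbb{P}(V_\rho))|\ \leq\ |\chi_-(\Sigma_g)|,
\]
and dividing by $2$ gives the desired bound $|\eu(E)|\leq |\chi_-(\Sigma_g)|/2$. There is no serious obstacle here: the only delicate point is verifying that projectivizing a flat linear bundle produces a topologically flat circle bundle (so that Wood's inequality is applicable), but this is immediate once one has the holonomy representation $\rho$ in hand and observes that $\mathrm{PGl}^+(2,\R)$ really does sit inside $\omeo$.
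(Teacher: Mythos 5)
Your proposal is correct and takes essentially the same route as the paper: projectivize the flat linear circle bundle to obtain a flat topological circle bundle, use the relation $\eu(\mathbb{P}(E))=2\,\eu(E)$, and then apply Wood's inequality (Theorem~\ref{Wood:thm}). You spell out a step the paper leaves implicit, namely the holonomy-representation argument showing that $\mathbb{P}(E)$ is indeed topologically flat, but the underlying argument is identical.
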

\begin{proof}
If $\pi\colon E\to \Sigma_g$ is a flat linear circle bundle, then the associated projective bundle 
$\mathbb{P}(E)$ is a flat topological circle bundle, and
$2|\eu(E)|=|\eu(\mathbb{P}(E))|\leq |\chi_-(\Sigma_g)|$, whence the conclusion.
\end{proof}

In fact, the bounds in the last two theorems are sharp in the following strict sense: every Euler number
which is not forbidden is realized by some flat topological (resp.~linear) circle bundle. 

Theorem~\ref{Wood:thm} allows us to compute the exact norm of the real Euler class:

\begin{thm}\label{finale:mw}
 The norm of the Euler class $e^\R_b\in H_b^2(\omeo,\R)$ is given by
 $$
 \|e_b^\R\|_\infty =\frac{1}{2}\ .
 $$
\end{thm}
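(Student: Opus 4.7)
The upper bound $\|e_b^\R\|_\infty \leq 1/2$ is exactly Lemma~\ref{norm:euler}, so the only thing left is to establish the reverse inequality $\|e_b^\R\|_\infty \geq 1/2$. The plan is to exploit the sharpness of Wood's estimate, which in turn relies on the geometric construction of Proposition~\ref{2g-2:exists}; morally, if $\|e_b^\R\|_\infty$ were strictly smaller than $1/2$, then Wood's bound would also be strict, contradicting the existence of an extremal flat bundle.

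More concretely, I would fix $g \geq 2$ and let $\pi \colon E \to \Sigma_g$ be a flat topological circle bundle with $\eu(E) = 2-2g$, as provided by Proposition~\ref{2g-2:exists}. Write $E = E_\rho$ for some representation $\rho \colon \pi_1(\Sigma_g) \to \omeo$. Using Corollary~\ref{punctured:surf:simpl} (which gives $\|\Sigma_g\| = 4g-4$) together with the duality pairing between bounded cohomology and the fundamental class (Proposition~\ref{prop:duality}), I get
$$
2g - 2 = |\eu(E)| = |\langle \eu_b^\R(E), [\Sigma_g]_\R\rangle| \leq \|\eu_b^\R(E)\|_\infty \cdot \|\Sigma_g\| = \|\eu_b^\R(E)\|_\infty \cdot (4g-4),
$$
so $\|\eu_b^\R(E)\|_\infty \geq 1/2$.

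To transfer this lower bound from $\eu_b^\R(E) \in H^2_b(\Sigma_g,\R)$ back to $e_b^\R \in H^2_b(\omeo,\R)$, I would apply Theorem~\ref{eulerclassbounded}, which identifies $\eu_b^\R(E)$ with the image of $\rho^*(e_b^\R) = e_b^\R(\rho)$ under the map $H^2_b(r^\bullet)\colon H^2_b(\pi_1(\Sigma_g),\R) \to H^2_b(\Sigma_g,\R)$ of Lemma~\ref{normnon}. Since both $\rho^*$ and $H^2_b(r^\bullet)$ are norm non-increasing, this yields
$$
\tfrac{1}{2} \leq \|\eu_b^\R(E)\|_\infty \leq \|\rho^*(e_b^\R)\|_\infty \leq \|e_b^\R\|_\infty,
$$
which, combined with Lemma~\ref{norm:euler}, gives $\|e_b^\R\|_\infty = 1/2$.

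There is no real obstacle here: all the nontrivial work has been done upstream (the upper bound on the norm of the representative $c_{x_0}$, the geometric construction of the flat bundle $T_1\Sigma_g$ with maximal Euler number, the computation of $\|\Sigma_g\|$ via Gauss--Bonnet, and the functoriality of the bounded Euler class). The proof is essentially the observation that the entire chain of inequalities in Theorem~\ref{Wood:thm} must be saturated by the extremal example of Proposition~\ref{2g-2:exists}, forcing the seminorm $\|e_b^\R\|_\infty$ to realize its maximum allowed value.
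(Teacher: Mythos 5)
Your proof is correct and takes essentially the same approach as the paper: the paper fixes $g=2$, invokes Proposition~\ref{2g-2:exists}, and applies the same duality estimate $|\eu(E)|\leq \|e_b^\R\|_\infty\|\Sigma_g\|$ (with the intermediate step through $\|\eu_b^\R(E)\|_\infty$ and the norm non-increasing maps of Theorem~\ref{eulerclassbounded} left implicit). Your version merely spells out the functoriality chain more explicitly; the argument is identical.
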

\begin{proof}
 The inequality $\leq$ was proved in Lemma~\ref{norm:euler}. Now let $g=2$, and consider the flat circle bundle on $\pi\colon E\to\Sigma_2$
 described in Proposition~\ref{2g-2:exists}. We have 
 $$
 2=2g-2=|\eu(E)|\leq \|e_b^\R\|_\infty \|\Sigma_2\|=4\|e_b^\R\|_\infty\ ,
 $$
 so $ \|e_b^\R\|_\infty\geq 1/2$, and we are done.
\end{proof}

\begin{rem}
 One could exploit the estimate described in the proof of Theorem~\ref{finale:mw} also to compute a (sharp) lower bound on the simplicial
 volume of $\Sigma_g$, $g\geq 2$. In fact, once one knows that $\|e_b^\R\|_\infty\leq 1/2$ and that there exists a flat circle bundle $E$ over $\Sigma_g$
 with Euler number $2g-2$, the estimate
 $$
 2g-2=|\eu(E)|\leq \|e_b^\R\|_\infty \|\Sigma_g\|\leq \|\Sigma_g\|/2
 $$ 
 implies that $\|\Sigma_g\|\geq 2|\chi(\Sigma_g)|$.
\end{rem}

\section{Flat circle bundles on surfaces with boundary}
We have seen in the previous chapter that to every (not necessarily flat) circle bundle on an oriented closed surface there is associated an Euler number. When $\Sigma$ is an oriented compact surface with non-empty boundary,
the cohomology group $H^2(\Sigma,\mathbb{Z})$ vanishes, so there is no hope to recover a non-trivial invariant from the classical Euler class. 
Indeed, by Theorem~\ref{Euler:classifies} the fact that $H^2(\Sigma,\mathbb{Z})=0$ implies that every circle bundle on any compact orientable surface with non-empty boundary is topologically trivial. 
One could work with the relative cohomology group
$H^2(\Sigma,\partial \Sigma,\mathbb{Z})\cong \mathbb{Z}$, but in order to make this strategy work it is necessary to fix trivializations (or, at least, sections) of the bundle over $\partial \Sigma$; the resulting invariant is thus
an invariant of the bundle relative to specific boundary sections. 

On the other hand, when working with \emph{flat} circle bundles, in the closed case we were able to define a \emph{bounded} Euler class. The point now is that, even when a surface $S$ has non-empty boundary, the bounded
cohomology of $\Sigma$ in degree $2$ may be highly non-trivial. In our case of interest, i.e.~when $\chi(\Sigma)<0$, the surface $\Sigma$ is aspherical with free fundamental group, so putting together 
Theorem~\ref{aspherical:thm} and Corollary~\ref{infinitedim:cor} we get that $H^2_b(\Sigma,\mathbb{R})\cong H^2_b(\pi_1(\Sigma),\mathbb{R})$ is infinite-dimensional (and, from this, it is not difficult to deduce that
$H^2_b(\Sigma,\mathbb{Z})$ is also infinitely generated as an abelian group). What is more, since every component of $\partial \Sigma$ has an amenable fundamental group, when working with real coefficients
the absolute bounded cohomology of $\Sigma$
is isometrically isomorphic to the bounded cohomology of $\Sigma$ relative to the boundary (see Theorem~\ref{reliso:thm}). 
Putting together these ingredients one can now define a meaningful Euler number for \emph{flat} circle bundles over
compact surfaces with boundary (or, equivalently, for circle actions  of fundamental groups of compact surfaces with boundary). The construction we are going to describe is taken from~\cite{BIW1,BIW2}.

Let us fix an oriented compact surface $\Sigma_{g,n}$ of genus $g$ with $n$ boundary components. Henceforth we assume
$\chi(\Sigma_{g,n})=2-2g-n<0$, and we denote by $\G_{g,n}$ the fundamental group of $\Sigma_{g,n}$. Let also $\rho\colon \G_{g,n}\to \omeo$ be a representation, and let
$e^\mathbb{R}_b(\rho)\in H^2_b(\G_{g,n},\mathbb{R})$ be the real bounded Euler class of $\rho$. 
Under the canonical isometric isomorphism $H^2_b(\Sigma_{g,n},\mathbb{R})\cong H^2_b(\G_{g,n},\mathbb{R})$, the class
$e^\mathbb{R}_b(\rho)$ corresponds to an element in $H^2_b(\Sigma_{g,n},\mathbb{R})$, which we still denote by $e_b^\R(\rho)$. Let us now recall from Theorem~\ref{reliso:thm}
that the inclusion $j^\bullet\colon C^n_b(\Sigma_{g,n},\partial \Sigma_{g,n},\mathbb{R})\to C^n_b(\Sigma_{g,n},\mathbb{R})$ of relative into absolute cochains induces
an isometric isomorphism 
$$
H^2_b(j^2)\colon  H^2_b(\Sigma_{g,n},\partial \Sigma_{g,n},\mathbb{R})\to H^2_b(\Sigma_{g,n},\mathbb{R})
$$	
on bounded cohomology, and let us set 
$$
e^{\R}_b(\rho, \Sigma_{g,n},\bb\sgn)=H^2_b(j^2)^{-1}(e^\mathbb{R}_b(\rho)) \ \in \ H^2_b(\Sigma_{g,n},\partial \Sigma_{g,n},\mathbb{R})\ .
$$
Finally, as usual we denote by $[\Sigma_{g,n},\partial \Sigma_{g,n}]\in H_2(\Sigma_{g,n},\partial \Sigma_{g,n},\R)$ the real fundamental class of the surface $\Sigma_{g,n}$.

\begin{defn}\label{releuler} 
 Let $\rho\colon \G_{g,n}\to \omeo$ be a representation. Then the Euler number $e(\rho, \Sigma_{g,n})$ of $\rho$ is defined by setting
 $$
 e(\rho, \Sigma_{g,n},\bb\sgn)=\langle e^{\R}_b(\rho, \Sigma_{g,n},\bb\sgn), [S_{g,n},\partial S_{g,n}]\rangle \ \in\ \mathbb{R}\ ,
 $$
 where $\langle \cdot,\cdot \rangle$ is the usual Kronecker product between $H^2_b(\Sigma_{g,n},\partial \Sigma_{g,n},\mathbb{R})$
 and $H_2(\Sigma_{g,n},\partial \Sigma_{g,n},\mathbb{R})$.
\end{defn}

 
The Euler number of a representation $\rho\colon \G_{g,n}\to \omeo$
heavily depends on the identification of $\G_{g,n}$ with the fundamental group of $\sgn$.
Indeed, if $2-2g-n=2-2g'-n'$, where $n$ and $n'$ are positive, then
there exists a group isomorphism $\psi\colon \G_{g,n}\to \G_{g',n'}$, since  $\G_{g,n}$ and $\G_{g',n'}$ are both isomorphic to the free group $F_{2g+n-1}$ on $2g+n-1$ generators. However, 
if $\rho'\colon \G_{g',n'}\to\omeo$ is a representation, then it may happen that
$$
e(\rho',\Sigma_{g',n'},\bb\Sigma_{g',n'})\neq e(\rho'\circ\psi,\sgn,\bb\sgn)\ .
$$
 Indeed, even once the surface $\sgn$ is fixed, the Euler number of a representation $\rho\colon \G_{g,n}\to \omeo$ may depend on the choice of the identification
 $\G_{g,n}\cong \pi_1(\sgn)$. Indeed, when $n>1$ there exist group
 automorphisms $\psi\colon \G_{g,n}\to \G_{g,n}$ that are \emph{not} induced by homeomorphisms of $\sgn$ into itself. If $\psi$ is such an automorphism, then
 it may happen that $e(\rho,\sgn,\bb\sgn)\neq e(\rho\circ\psi,\sgn,\bb\sgn)$ (see Remark~\ref{differ:rem} for an explicit example).

 Let now $C_1,\ldots,C_n$ be the connected components of $\bb \Sigma_{g,n}$. After endowing each $C_i$ with the orientation induced by $\sgn$, we can consider an element 
 $g_i\in \G_{g,n}$ lying in the conjugacy class corresponding to the free homotopy class of $C_i$ in $\sgn$. Since the rotation number of an element of $\omeo$ only depends
 on its conjugacy class, the value
 $$
 \rot (\rho(C_i)) =\rot (\rho( g_i))
 $$
 is well defined (i.e., it does not depend on the choice of $g_i$). The following proposition implies in particular that the relative Euler number
$ e(\rho, \Sigma_{g,n},\bb\sgn)$ need not be an integer.

\begin{prop}\label{boundary:euler}
 Let $\rho\colon \G_{g,n}\to \omeo$ be a representation. Then
 $$
 [ e(\rho, \Sigma_{g,n},\bb\sgn)] = - \sum_{i=1}^n \rot(\rho(C_i))\ \in\ \R/\mathbb{Z}\ .
 $$
\end{prop}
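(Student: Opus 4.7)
The strategy is to represent $e_b^{\R}(\rho,\Sigma_{g,n},\partial\Sigma_{g,n})$ by an explicit relative cocycle obtained by modifying an integral representative along the boundary, and to identify the boundary contribution, modulo $\Z$, with the rotation numbers via the connecting homomorphism in the long exact sequence coming from $0\to\Z\to\R\to\R/\Z\to 0$.

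\noindent\textbf{Step 1 (choice of representative).} Write $\Sigma=\Sigma_{g,n}$ for brevity. Let $c\in C_b^2(\omeo,\Z)$ be the canonical integral representative of $e_b$, and pull it back via $\rho$ and the map $r^\bullet$ from Lemma~\ref{normnon} to obtain a cocycle $\varphi\in C_b^2(\Sigma,\Z)$ whose class is $e_b(\rho)$. Its image in $C_b^2(\Sigma,\R)$, still denoted $\varphi$, represents $e_b^{\R}(\rho)$.

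\noindent\textbf{Step 2 (making it relative).} For each boundary component $C_i$, the restriction $\varphi|_{C_i}\in Z_b^2(C_i,\R)$ represents $e_b^{\R}(\rho|_{\pi_1(C_i)})\in H_b^2(C_i,\R)$; but $\pi_1(C_i)\cong\Z$ is amenable, so $H_b^2(C_i,\R)=0$ and $\varphi|_{C_i}=\delta b_i$ for some $b_i\in C_b^1(C_i,\R)$. Choose any extension $b\in C_b^1(\Sigma,\R)$ with $b|_{C_i}=b_i$ for every $i$ (for example, by zero off a collar); then $\varphi':=\varphi-\delta b$ vanishes on every simplex supported in $\partial\Sigma$ and still represents $e_b^{\R}(\rho)$ in $H_b^2(\Sigma,\R)$, hence under the isometric isomorphism of Theorem~\ref{reliso:thm} it represents $e_b^{\R}(\rho,\Sigma,\partial\Sigma)$ in $H_b^2(\Sigma,\partial\Sigma,\R)$.

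\noindent\textbf{Step 3 (pairing with the fundamental class).} Pick an integral relative fundamental cycle $z\in C_2(\Sigma,\Z)$ with $\partial z=\sum_{i=1}^n z_i$, where each $z_i\in C_1(C_i,\Z)$ is an integral fundamental cycle of $C_i$ with the boundary orientation. Then
\[
e(\rho,\Sigma,\partial\Sigma)=\varphi'(z)=\varphi(z)-\sum_{i=1}^n b_i(z_i).
\]
Since $\varphi$ has integer values and $z$ has integer coefficients, $\varphi(z)\in\Z$, and hence
\[
e(\rho,\Sigma,\partial\Sigma)\equiv-\sum_{i=1}^n b_i(z_i)\pmod{\Z}.
\]

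\noindent\textbf{Step 4 (identifying $b_i(z_i)$ with $\rot(\rho(C_i))$).} Since $\varphi|_{C_i}=\delta b_i$ has integer values, the reduction $\overline{b}_i\in C^1(C_i,\R/\Z)$ of $b_i$ modulo $\Z$ is a cocycle, and by the explicit description of the connecting homomorphism in the long exact sequence induced by $0\to\Z\to\R\to\R/\Z\to 0$ we have $\delta[\overline{b}_i]=[\varphi|_{C_i}]$ in $H_b^2(C_i,\Z)$. Via the canonical identifications $H^1(C_i,\R/\Z)=\Hom(H_1(C_i,\Z),\R/\Z)$ and $H_b^2(C_i,\Z)\cong H_b^2(\pi_1(C_i),\Z)\cong \R/\Z$ (this last one by Proposition~\ref{h2zz} together with Theorem~\ref{aspherical:thm}), the coclass $[\varphi|_{C_i}]$ corresponds to $[b_i(z_i)]\in\R/\Z$ (once $z_i$ is identified with the positive generator of $H_1(C_i,\Z)$). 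On the other hand, this very coclass is, by definition, $\rot(\rho(C_i))$, since the restriction of $\rho$ to $\pi_1(C_i)=\Z\langle g_i\rangle$ is the representation defining $\rot(\rho(g_i))=\rot(\rho(C_i))$. Hence $b_i(z_i)\equiv\rot(\rho(C_i))\pmod\Z$, and substituting into the congruence of Step~3 yields the claim.

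\noindent\textbf{Main obstacle.} The computational skeleton above is straightforward; the delicate point is Step~4, namely making precise the commutativity of the diagram that translates the boundary map $H^1(C_i,\R/\Z)\to H_b^2(C_i,\Z)$ into the abstract identification $\R/\Z\cong H_b^2(\pi_1(C_i),\Z)$ of Proposition~\ref{h2zz}. One must carefully check the sign conventions and verify that the orientation of $z_i$ induced from $\partial z$ matches the orientation used in defining $\rot(\rho(C_i))=\rot(\rho(g_i))$, where $g_i$ lies in the conjugacy class determined by the boundary orientation of $C_i$.
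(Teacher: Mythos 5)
Your proposal is correct and follows essentially the same approach as the paper's proof: both start from an integral representative of $e_b(\rho)$, subtract $\delta b$ for a bounded real primitive $b$ of its boundary restriction (using amenability of $\pi_1(C_i)$), pair the resulting relative cocycle with an integral relative fundamental cycle to isolate the boundary contribution modulo $\Z$, and then identify each term $[\langle b_i,z_i\rangle]$ with $\rot(\rho(C_i))$ via the connecting homomorphism of the Bockstein sequence.
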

\begin{proof}
 Let $c\in C^2_b(\G_{g,n},\mathbb{Z})$ be a representative of the bounded Euler class $e_b(\rho)\in H^2_b(\G_{g,n},\mathbb{Z})\cong H^2_b(\Sigma_{g,n},\mathbb{Z})$
 associated to the representation $\rho$. 
 We denote by $c_i\in C^2_b(C_i,\mathbb{Z})$ the restriction of $c$ to the boundary component $C_i$. 
 Since $\pi_1(C_i)\cong \Z$ is amenable, we have $H^2_b(C_i,\R)=0$, so 
 there exists a \emph{real} bounded cochain $b_i\in C^1_b(C_i,\mathbb{R})$ such that $\delta b_i=c_i$.

 Let now $b\in C^1_b(\sgn,\R)$ be a bounded cochain extending the $b_i$ (such a cochain may be defined, for example, by setting $b(s)=0$ for every singular $1$-simplex
 $s\colon \Delta^1\to\sgn$ whose image is not contained in any $C_i$, and $b(s)=b_i(s)$ if the image of $s$ is contained in $C_i$). Of course we have 
 $e_b^\R(\rho)=[c]=[c-\delta b]$ in $H^2_b(\sgn,\mathbb{R})$. Moreover, by construction the cocycle $c-\delta b$ vanishes on $\bb \sgn$. In other words, 
 $c-\delta b$ is a relative cocycle, and by construction its class in the relative bounded cohomology module $H^2_b(\sgn,\bb\sgn,\mathbb{R})$ coincides 
 with $e^{\R}_b(\rho, \Sigma_{g,n},\bb\sgn)$. Therefore, if $z$ is an integral fundamental cycle for $\sgn$, then we have
 $$
 e(\rho, \Sigma_{g,n},\bb\sgn)=\langle [c-\delta b],[\Sigma_{g,n},\bb\sgn]\rangle=\langle c,z\rangle - \langle \delta b,z\rangle\ .
 $$
 But $c$ is an integral cocycle, so $\langle c,z\rangle\in\mathbb{Z}$, and 
 $$
 [e(\rho, \Sigma_{g,n},\bb\sgn)]= - [\langle \delta b,z\rangle]\ \in \ \R/\Z\ .
 $$
 Since $z$ is a relative fundamental cycle for $\sgn$, we now have $\bb z=z_1+\ldots+z_n$, where $z_i$ is a fundamental cycle for $C_i$ for every $i=1,\ldots,n$. Therefore,
 we have 
 $$
 [e(\rho, \Sigma_{g,n},\bb\sgn)]= - [\langle \delta b,z\rangle]=-[\langle b,\bb z\rangle] =-\left[ \left\langle b,\sum_{i=1}^n z_i\right\rangle\right]=- \sum_{i=1}^n [\langle b_i,z_i\rangle]\ ,
 $$
 and in order to conclude we are left to show that 
 \begin{equation}\label{ee:eq}
 [\langle b_i,z_i\rangle]=\rot(\rho(C_i))\ {\rm in}\  \R/\Z 
 \end{equation}
 for every $i=1,\ldots,n$.

 Let $\overline{b}_i\in C^1(C_i,\R/\Z)$ be the cochain associated to $b_i$ via  the change of coefficients map. 
 Since $\delta b_i$ is an integral chain, $\overline{b}_i$ is a cocycle.
If we denote by $\delta'\colon H^1(C_i,\R/\Z)\to H^2_b(C_i,\Z)$ the connecting homomorphism
 associated to the short exact sequence $$0\to C^\bullet_b(C_i,\Z)\to C^\bullet_b(C_i,\R)\to C^\bullet (C_i,\R/\Z)\to 0\ ,$$ 
 since $b_i$ is bounded we have $\delta' [\overline{b}_i]=[\delta b_i]=e_b(\rho|_{H_i})$, 
 where $H_i\subseteq \G_{g,n}$ is the cyclic subgroup generated by a loop freely homotopic to $C_i$. Therefore,
 our definition of rotation number implies that
 $$
 \rot(\rho(C_i))=[\overline{b}_i]\ \in \ H^1(C_i,\R/\Z)\ \cong \R/\Z\ .
 $$
 Finally, since $[z_i]$ is the positive generator
 of $H_1(C_i,\Z)$, it is immediate to check that under the canonical identifications $$H^1(C_i,\R/\Z)={\rm Hom} (H_1(C_i,\mathbb{Z}),\R/\Z)=\R/\Z$$ the 
 class $[\overline{b}_i]$ corresponds to the element $[\langle b_i,z_i\rangle]\in\R/\Z$. This concludes the proof of equality~\eqref{ee:eq}, hence of the proposition.
 \end{proof}
 
 \begin{rem}\label{differ:rem}
  Let us consider the surface $\Sigma_{0,4}$, and let $$\G_{0,4}=\langle c_1,c_2,c_3,c_4| c_1c_2c_3c_4=1\rangle$$ be a presentation of $\pi_1(\Sigma_{0,4})$, where $c_i$ is represented by a loop
  winding around the $i$-th boundary component of $\Sigma_{0,4}$. Observe that $\G_{0,4}$ is freely generated by $c_1,c_2,c_3$.
  Let us fix a representation $\rho\colon \G_{0,4}\to \Isom^+ (\mathbb{H}^2)$ such that $\rho(c_2)=\rho(c_1)^{-1}$ and $\rot(\rho(c_3)\rho(c_2))\neq \rot (\rho(c_3))+\rot(\rho(c_2))$ (it is not difficult to show that
  such a representation indeed exists). 
  Let also $\psi\colon \G_{0,4}\to\G_{0,4}$ be the automorphism defined by $\psi(c_1)=c_1$, $\psi(c_2)=c_2c_1$, $\psi(c_3)=c_3c_2$, and set $\rho'=\rho\circ\psi$. 
  From $\rho(c_2)=\rho(c_1)^{-1}$ we deduce $\rot(\rho(c_1))+\rot(\rho(c_2))=0$ and $\rho(c_4)=\rho(c_3^{-1}c_2^{-1}c_1^{-1})=\rho(c_3)^{-1}$, hence
  $\rot(\rho(c_4))+\rot(\rho(c_3))=0$.
  Thus
  by Proposition~\ref{boundary:euler}
  we have that the class of $e(\rho,\Sigma_{0,4},\bb\Sigma_{0,4})$ in $\R/\Z$ is given by
  $$
  -\rot(\rho(c_1))-\rot(\rho(c_2))-\rot(\rho(c_3))-\rot(\rho(c_4))=0\ .
  $$
  On the other hand,
  \begin{align*}
  \rot(\rho(\psi(c_1)))&=\rot(\rho(c_1))\, ,\\
  \rot(\rho(\psi(c_2)))&=\rot(\rho(c_2)\rho(c_1))=0\, ,\\
  \rot(\rho(\psi(c_3)))&=\rot(\rho(c_3)\rho(c_2))\, ,\\
  \rot(\rho(\psi(c_4)))& =\rot(\rho(c_3^{-1})\rho(c_2^{-1})\rho(c_1^{-1}))=\rot(\rho(c_3^{-1}))=-\rot(\rho(c_3))\ ,
  \end{align*}
so the class of $e(\rho',\Sigma_{0,4},\bb\Sigma_{0,4})$ in $\R/\Z$ is given by
$$
-\rot(\rho(c_1)) -\rot(\rho(c_3)\rho(c_2)) +\rot(\rho(c_3))=\rot(\rho(c_2))-\rot(\rho(c_3)\rho(c_2)) +\rot(\rho(c_3))\ ,
$$
which is not null by our assumptions. This shows that
$$
e(\rho,\Sigma_{0,4},\bb\Sigma_{0,4})\neq e(\rho',\Sigma_{0,4},\bb\Sigma_{0,4})\ .
$$
  \end{rem}

The following proposition shows that the Euler number of a representation is additive with respect to gluings. We recall that, if $\Sigma$ is a compact surface and 
$C\subseteq \Sigma$ is a simple loop, then the (possibly disconnected) surface obtained by cutting $\Sigma$ open along $C$ is the surface $\Sigma_C$ obtained by compactifying
$\Sigma\setminus C$ by adding one copy of $C$ to each topological end of $\Sigma\setminus C$. There is a natural map $\Sigma_C\to \Sigma$, which
describes $\Sigma$ as a quotient of $\Sigma_C$. If $C$ is separating, this map
is an inclusion on each connected component
of $\Sigma_C$.

\begin{prop}\label{additive:euler}
 Let $\Sigma$ be a compact connected oriented surface with fundamental group $\G=\pi_1(\Sigma)$, and let $\rho\colon \G\to\omeo$ be a representation. 
 \begin{enumerate}
  \item Let $C$ be a separating simple  loop in $\Sigma$, denote by $\Sigma_1,\Sigma_2$ the surfaces obtained by
cutting $\Sigma$ open along $C$, and let $\rho_i=\rho\circ (\alpha_i)_*\colon \G_i\to \omeo$, where $\G_i=\pi_1(\Sigma_i)$ and $\alpha_i\colon \Sigma_i\to \Sigma$ is the natural inclusion. Then
$$
e(\rho,\Sigma,\bb\Sigma)=e(\rho_1,\Sigma_1,\bb \Sigma_1)+e(\rho_2,\Sigma_2,\bb\Sigma_2)\ .
$$
\item Let $C$ be a non-separating simple loop in $\Sigma$, denote by $\Sigma'$ the surface obtained by
cutting $\Sigma$ open along $C$, and let $\rho'=\rho\circ (\alpha)_*\colon \G'\to \omeo$, where $\G'=\pi_1(\Sigma')$ and $\alpha\colon \Sigma'\to \Sigma$ is the natural quotient map. Then
$$
e(\rho',\Sigma',\bb\Sigma')=e(\rho,\Sigma,\bb \Sigma)\ .
$$ 
\end{enumerate}
\end{prop}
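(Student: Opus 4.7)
The plan is to reduce both statements to a pairing computation in bounded cohomology, very much in the spirit of Lemma~\ref{somma}. In both cases, the key point is that every component of $C$ and of $\bb\Sigma$ has amenable fundamental group, so all the isometric isomorphisms between absolute and relative bounded cohomology of Theorem~\ref{reliso:thm} are available, and one can moreover choose relative bounded cocycles representing $e_b^\R(\rho,\Sigma,\bb\Sigma)$ that are \emph{special} with respect to the subset $C\cup\bb\Sigma$ in the sense of Definition~\ref{special:section}, via Proposition~\ref{special:prop} and Corollary~\ref{special:cor}.

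For part (1), the surface $\Sigma$ is obtained by gluing $\Sigma_1$ and $\Sigma_2$ along the boundary component $C$, all boundary components involved being circles. Hence the hypotheses of Lemma~\ref{somma} are met, with $\mathcal{S}=C$, and the lemma yields
$$
e(\rho,\Sigma,\bb\Sigma)=\langle e_b^\R(\rho,\Sigma,\bb\Sigma),[\Sigma,\bb\Sigma]\rangle=\sum_{j=1}^2\langle \zeta^2_j(e_b^\R(\rho,\Sigma,\bb\Sigma)),[\Sigma_j,\bb\Sigma_j]\rangle.
$$
It then suffices to identify $\zeta^2_j(e_b^\R(\rho,\Sigma,\bb\Sigma))$ with $e_b^\R(\rho_j,\Sigma_j,\bb\Sigma_j)$. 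This is a purely diagrammatic naturality check: by construction $\zeta^2_j$ is induced at the cochain level by the restriction of cochains along the inclusion of pairs $(\Sigma_j,\bb\Sigma_j)\hookrightarrow(\Sigma,\mathcal{S}\cup\bb\Sigma)$, which, after composing with the natural maps back to absolute bounded cohomology of the respective spaces and then to bounded cohomology of the respective fundamental groups, becomes the pullback $H^2_b((\alpha_j)_*)$. Since $\rho_j=\rho\circ(\alpha_j)_*$, functoriality of the bounded Euler class of representations gives the desired identification, and the claim follows.

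For part (2), the piece $\Sigma'$ is connected so Lemma~\ref{somma} does not apply verbatim; instead one imitates its proof. Choose a bounded relative cocycle $\psi\in C^2_b(\Sigma,\bb\Sigma)$ representing $e_b^\R(\rho,\Sigma,\bb\Sigma)$ that is special with respect to $C\cup\bb\Sigma$; in particular $\psi$ vanishes on every singular simplex supported in $C$. Let $c'\in C_2(\Sigma',\bb\Sigma')$ be a relative fundamental cycle for $\Sigma'$; the two copies of $C$ that appear in $\bb c'$ receive opposite orientations under the quotient map $\alpha$, so they cancel in $\alpha_\ast(\bb c')$, and $\alpha_\ast c'$ is a relative fundamental cycle for $(\Sigma,\bb\Sigma)$. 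Hence
$$
e(\rho,\Sigma,\bb\Sigma)=\langle\psi,\alpha_\ast c'\rangle=\langle\alpha^\ast\psi,c'\rangle,
$$
where $\alpha^\ast\psi$ is a relative bounded cocycle in $C^2_b(\Sigma',\bb\Sigma')$ (it vanishes on simplices supported in $\bb\Sigma$ because $\psi$ does, and on those supported in the two copies of $C$ by the special property of $\psi$). By the same naturality argument as in part (1) applied to $\alpha$, the class $[\alpha^\ast\psi]$ in $H^2_b(\Sigma',\bb\Sigma')$ coincides with $e_b^\R(\rho',\Sigma',\bb\Sigma')$, which concludes the proof.

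The main obstacle I foresee is the naturality identification of $\zeta^2_j$, respectively of $[\alpha^\ast\psi]$, with the relative bounded Euler class of the restricted representation. These are abstract classes obtained by going back and forth through non-canonical isomorphisms at the cochain level, so the argument amounts to carefully chasing a diagram that combines the comparison between absolute and relative bounded cohomology (Theorem~\ref{reliso:thm}) with the comparison between bounded cohomology of a space and of its fundamental group (Theorem~\ref{gro-iva:thm}), together with the functoriality of $e_b^\R$. The orientation bookkeeping for $\alpha_\ast c'$ in part (2) is routine but should be verified.
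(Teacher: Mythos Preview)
Your approach is essentially the same as the paper's: for part (2) the paper also passes through $H^2_b(\Sigma,\bb\Sigma\cup C)$ via Theorem~\ref{reliso:thm}, chooses a representative of $e_b^\R(\rho,\Sigma,\bb\Sigma)$ vanishing on simplices supported in $C$, and then runs the pairing computation together with the naturality diagram you describe; for part (1) the paper simply says the proof is similar (referring to~\cite{BIW1}), and your invocation of Lemma~\ref{somma} is a clean way to package the same argument.

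One minor imprecision: for an arbitrary relative fundamental cycle $c'$ of $\Sigma'$, the two contributions to $\alpha_*(\bb c')$ coming from $C^+$ and $C^-$ cancel only in homology, not necessarily at the chain level, so $\alpha_*c'$ need not literally be a relative cycle for $(\Sigma,\bb\Sigma)$. The paper handles this by writing $C_2(\alpha)(z')=z+c$ with $z$ a relative fundamental cycle for $(\Sigma,\bb\Sigma)$ and $c\in C_2(C)$, and then using that $\psi$ vanishes on $c$; alternatively you can simply choose $c'$ coming from a triangulation of $\Sigma'$ compatible with the gluing, so that the cancellation is exact. Either fix is immediate and your argument goes through.
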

\begin{proof}
 Let us prove (2), the proof of (1) being similar and described in~\cite[Proposition 3.2]{BIW1}. 
 We consider only cochains with real coefficients, and omit this choice from our notation.
 
  Let us consider the inclusions
  \begin{align*}
   \beta^\bullet& \colon C^\bullet_b(\Sigma,\bb\Sigma \cup C)\to C^\bullet_b(\Sigma,\bb\Sigma)\ ,\\ 
   j_1^\bullet &\colon C^\bullet_b(\Sigma,\bb\Sigma)\to C^\bullet_b(\Sigma)\ ,\\
   j_2^\bullet & \colon C^\bullet_b(\Sigma,\bb\Sigma \cup C)\to C^\bullet_b(\Sigma)\ ,
   \end{align*}
   so that $j_2^\bullet=j_1^\bullet\circ \beta^\bullet$. We know from Theorem~\ref{reliso:thm} that
 the maps induced by $j_1$ and $j_2$ in bounded cohomology are isometric isomorphisms (in degree bigger than 1), so the same is true for $H^\bullet_b(\beta^\bullet)$. 
 Now, if we define
 $\overline{\alpha}\colon (\Sigma'\,\bb\Sigma')\to (\Sigma,\bb\Sigma \cup C)$ to be the map of pairs induced by $\alpha$, and we denote by $j_3\colon C^\bullet_b(\Sigma',\bb\Sigma')\to C^\bullet_b(\Sigma')$
 the natural inclusion, then it is easy to check that the following diagram is commutative:
$$
 \xymatrix{
 H^2_b(\Sigma,\bb\Sigma)\ar[rd]^{H^2_b(j_1^2)} \ar[rr]^{H^2_b(\beta^2)^{-1}} & & H^2_b(\Sigma,\bb\Sigma\cup C) \ar[rr]^{H^2_b(\overline{\alpha})} & & H^2_b(\Sigma',\bb\Sigma')\ar[ld]^{H^2_b(j_3^2)}\\
 & H^2_b(\Sigma) \ar[rr]^{H^2_b(\alpha)} & & H^2_b(\Sigma')
 }
$$
Let $\varphi\in C^2_b(\Sigma,\bb\Sigma)$ be a representative of $e_b^\R(\rho,\Sigma,\bb\Sigma)\in H^2_b(\Sigma,\bb\Sigma)$.
 Since $H^2_b(\beta^2)$ is an isomorphism, we may safely suppose that $\varphi$ vanishes on simplices supported in $C$,
 so $\varphi$ is a representative also for $H^2_b(\beta^{2})^{-1}(e_b^\R(\rho,\Sigma,\bb\Sigma))$. Now, if
 $z'$ is a relative fundamental cycle of $\Sigma'$, then $z'$ is taken by $\alpha$ into the sum $C_2(\alpha)(z')$ of a relative fundamental
 cycle $z$ of $\Sigma$ and a chain $c\in C_2(C)$ supported in $C$. Therefore, we have
 \begin{align*}
 & \langle H^2_b(\overline{\alpha})(H^2_b(\beta^{2})^{-1}(e_b^\R(\rho,\Sigma,\bb\Sigma))),[\Sigma',\bb\Sigma'] \rangle=\langle H^2_b(\overline{\alpha})([\varphi]),[\Sigma',\bb\Sigma']\rangle\\ = & \varphi(C_2(\alpha)(z'))=
 \varphi(z+c)=\varphi(z)=\langle e_b^\R(\rho,\Sigma,\bb\Sigma),[\Sigma,\bb\Sigma]\rangle\ .
 \end{align*}
 Now the conclusion follows from the fact that 
 $$
 H^2_b(\overline{\alpha})(H^2_b(\beta^{2})^{-1}(e_b^\R(\rho,\Sigma,\bb\Sigma)))=e_r^\R(\rho',\Sigma',\bb\Sigma')
 $$
 thanks to the commutativity of the diagram above.
\end{proof}

It is useful to put a topology on the space of representations of a group $\G$ into $\omeo$. We have already endowed $\omeo$ with the compact-open topology.
Now, for any group $\G$ the space $\Hom(\G,\omeo)$ may be thought as a subset of the space $(\omeo)^\G$ of functions from $\G$ to $\omeo$.
We endow $(\omeo)^\G$ with the product topology, and $\Hom(\G,\omeo)$ with the induced topology. Then, it is readily seen that a sequence $\rho_n\colon \G\to \omeo$
converges to a representation $\rho\colon \G\to \omeo$ if and only if $\rho_n(g)$ converges to $\rho(g)$ (with respect to the compact-open topology) for every $g\in\G$.
We also recall that, both in $\omeo$ and in $\omeot$, convergence with respect to the compact-open topology coincides with uniform convergence.

We are now going to show that the Euler number defines a continuous map on the space of representations. We begin with the following:

\begin{lemma}\label{continuous:tau}
 The bounded cocycle
 $$
 \tau\colon \omeo\times\omeo \to \R
 $$
 is continuous.
\end{lemma}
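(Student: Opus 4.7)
The idea is to reduce continuity of $\tau$ on $\omeo \times \omeo$ to continuity of $\rott$ on $\omeot$, which was already established in Proposition~\ref{continuous:rot}. The obstacle to doing this naively is that the canonical set-theoretic lift $f \mapsto \widetilde{f}_0$ (with $\widetilde{f}_0(0) \in [0,1)$) is not continuous in $f$: the value jumps whenever $\widetilde{f}(0)$ crosses an integer. However, $\tau$ was defined using \emph{arbitrary} lifts, and this freedom is exactly what is needed to bypass this discontinuity.

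The first step is to observe that $\tau$ is independent of the choice of lifts: if $\widetilde{f}' = \tau_m \widetilde{f}$ and $\widetilde{g}' = \tau_k \widetilde{g}$ with $m,k \in \Z$, then $\widetilde{f}'\widetilde{g}' = \tau_{m+k}\widetilde{f}\widetilde{g}$ (since $\widetilde{g}'$ commutes with $\tau_m$), so that the three integer shifts cancel in the expression $\rott(\widetilde{f}'\widetilde{g}') - \rott(\widetilde{f}') - \rott(\widetilde{g}')$, using the identity $\rott(\tau_n \widetilde{h}) = \rott(\widetilde{h}) + n$ that was established in Section~\ref{rotation:sec}. Consequently, to check continuity at a fixed point $(f_0,g_0)$ it suffices to produce any \emph{continuous} family of lifts on a neighborhood of $(f_0,g_0)$.

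The second step is to construct such continuous lifts. By Lemma~\ref{topological:omeo}, the map $p_*\colon \omeot \to \omeo$ is a covering map, hence in particular a local homeomorphism, so around any point $f_0 \in \omeo$ there exists an open neighborhood $U$ and a continuous section $s\colon U \to \omeot$ of $p_*$. Applied to $f_0$ and $g_0$ separately, this gives continuous local lifts $f \mapsto \widetilde{f} = s_1(f)$ and $g \mapsto \widetilde{g} = s_2(g)$ on neighborhoods $U_1 \times U_2$ of $(f_0,g_0)$. On $U_1 \times U_2$ we may write
\[
\tau(f,g) = \rott(\widetilde{f}\,\widetilde{g}) - \rott(\widetilde{f}) - \rott(\widetilde{g}).
\]

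The third and final step is to verify continuity of each factor. The composition map $\omeot \times \omeot \to \omeot$ is continuous with respect to the compact-open topology: this is a standard fact for groups of homeomorphisms of a locally compact Hausdorff space, proved by combining the facts that the image of a compact set under a continuous map is compact, and that the compact-open topology is generated by subbasic open sets of the form $\{h : h(K) \subseteq V\}$ for $K$ compact and $V$ open. Together with the continuity of $s_1,s_2$ and Proposition~\ref{continuous:rot}, this shows that each of the three terms in the displayed formula is continuous in $(f,g)$, whence $\tau$ is continuous at $(f_0,g_0)$. Since $(f_0,g_0)$ was arbitrary, we conclude. The main (minor) obstacle in this plan is the bookkeeping required to show that $\omeot$ is a topological group under the compact-open topology and that $p_*$ admits continuous local sections; both facts are essentially contained in (or immediate from) Lemma~\ref{topological:omeo}, so the proof ultimately reduces to an almost mechanical assembly of already established ingredients.
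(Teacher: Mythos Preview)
Your proof is correct and follows essentially the same approach as the paper: pick continuous local sections of the covering $p_*\colon \omeot\to\omeo$ near $f_0$ and $g_0$, write $\tau(f,g)=\rott(s_1(f)s_2(g))-\rott(s_1(f))-\rott(s_2(g))$ on a product neighborhood, and invoke Proposition~\ref{continuous:rot}. The paper's version is terser (it leaves the continuity of composition in $\omeot$ implicit), but the argument is the same.
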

\begin{proof}
 Take $f_0,g_0\in\omeo$. We can choose neighbourhoods $U$ of $f_0$ and $V$ of $g_0$ in $\omeo$ on which the covering projection
 $p\colon \omeot\to\omeo$ admits continuous sections $s_U,s_V$. Then, for every $(f,g)\in U\times V$ we have
 $$
 \tau(f,g)=\rott(s(f)s(g))-\rott(s(f))-\rott(s(g))\ ,
 $$
 and the conclusion follows from Proposition~\ref{continuous:rot}.
\end{proof}

It is well-known that the Euler number is a continuous function on the space of representations of \emph{closed} surface groups into $\omeo$. 
The fact that the relative Euler number defines a continuous function on the space of representations of punctured surface groups was proved in~\cite[Theorem 1]{BIW1} (for a suitable definition
of relative Euler number) for representations
with values in Hermitian groups. The following theorem extends this result to the case of representations into $\omeo$.

\begin{thm}\label{euler:bd:continuous}
 The map
\begin{align*}
 \Hom(\G_{g,n},\omeo) & \to \R\ , \\
 \rho & \mapsto e(\rho,\sgn,\bb \sgn)
 \end{align*}
is continuous.
 \end{thm}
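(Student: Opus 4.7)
The plan is to exhibit, for each $\rho$, an explicit relative bounded cocycle $f_\rho \in C^2_b(\sgn, \bb\sgn, \R)$ representing $e_b^\R(\rho, \sgn, \bb\sgn)$, and then to pair it with a once-and-for-all fixed relative fundamental cycle of $\sgn$. As building block I would use the canonical homogeneous representative $\tau \in C^2_b(\omeo, \R)$ of $e_b^\R$ constructed in Section~\ref{homo:coc}: for every $\rho$, the pullback $\rho^*\tau \in C^2_b(\G_{g,n}, \R)$ represents $e_b^\R(\rho)$ in $H^2_b(\G_{g,n}, \R)$, and for every fixed pair $(h_1, h_2) \in \G_{g,n}^2$ the evaluation $\rho \mapsto \tau(\rho(h_1), \rho(h_2))$ is continuous by Lemma~\ref{continuous:tau}. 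The crucial feature of $\tau$ driving the whole argument is that it vanishes identically on every cyclic subgroup of $\omeo$: for $h \in \omeo$ with lift $\widetilde{h} \in \omeot$ and every $n, m \in \Z$, the homogeneity of $\rott$ yields
\begin{equation*}
\tau(h^n, h^m) = \rott(\widetilde{h}^{n+m}) - n\rott(\widetilde{h}) - m\rott(\widetilde{h}) = 0.
\end{equation*}

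Next, I would upgrade $\rho^*\tau$ to a singular cocycle on $\sgn$ through the $\G$-equivariant map $r^\bullet\colon C^\bullet_b(\G_{g,n}, \R) \to C^\bullet_b(\sgn, \R)$ of Lemma~\ref{normnon}, whose definition depends on the choice of a set of representatives $R \subset \widetilde{\sgn}$ for the deck-transformation action. For each boundary circle $C_i$ fix a connected component $\widetilde{C}_i$ of $p^{-1}(C_i)$, denote by $\G_i \leq \G_{g,n}$ its (infinite cyclic) stabilizer, and fix a set of representatives $R_i \subset \widetilde{C}_i$ for the $\G_i$-action on $\widetilde{C}_i$; starting from any $\G_{g,n}$-equivariant triangulation of $\widetilde{\sgn}$ covering a triangulation of $\sgn$ with $\bb\sgn$ as subcomplex, one can select $R$ so that $R \cap p^{-1}(C_i) = R_i$ for every $i$. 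Under this choice, a singular simplex $s\colon \Delta^2 \to C_i$ admits a lift $\widetilde{s}$ entirely contained in some translate $g\widetilde{C}_i$, so the elements $g_0, g_1, g_2 \in \G_{g,n}$ determined by $\widetilde{s}(e_j) \in g_j R$ all belong to the single coset $g \G_i$; consequently $g_0^{-1}g_1$ and $g_1^{-1}g_2$ lie in $\G_i$ and
\begin{equation*}
f_\rho(s) := r^2(\rho^*\tau)(s) = \tau\bigl(\rho(g_0^{-1}g_1), \rho(g_1^{-1}g_2)\bigr) = 0
\end{equation*}
by the vanishing of $\tau$ on the cyclic subgroup $\rho(\G_i) \leq \omeo$. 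Therefore $f_\rho$ already belongs to $C^2_b(\sgn, \bb\sgn, \R)$, is a relative cocycle, and since its image in $H^2_b(\sgn, \R)$ is $e_b^\R(\rho)$, Theorem~\ref{reliso:thm} identifies its relative class with $e_b^\R(\rho, \sgn, \bb\sgn)$.

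Finally, fix once and for all a relative fundamental cycle $z = \sum_{j=1}^N \epsilon_j s_j \in C_2(\sgn, \Z)$ coming from a finite triangulation of $\sgn$ adapted to $\bb\sgn$, together with chosen lifts $\widetilde{s}_j$ of each $s_j$ with associated triples $(g_{j,0}, g_{j,1}, g_{j,2}) \in \G_{g,n}^3$. Then
\begin{equation*}
e(\rho, \sgn, \bb\sgn) = \langle [f_\rho], [\sgn, \bb\sgn] \rangle = \sum_{j=1}^N \epsilon_j\, \tau\bigl(\rho(g_{j,0}^{-1}g_{j,1}),\, \rho(g_{j,1}^{-1}g_{j,2})\bigr),
\end{equation*}
a finite sum of functions of $\rho$ each continuous by Lemma~\ref{continuous:tau}, whence the theorem. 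The main obstacle is the construction of the set of representatives $R$ with $R \cap p^{-1}(C_i) \subset \widetilde{C}_i$ for every $i$; this is a topological bookkeeping step performed on an equivariant cell decomposition, and it is exactly what translates the vanishing of $\tau$ on cyclic subgroups of $\omeo$ into the statement that $f_\rho$ is already a relative cocycle, bypassing the otherwise delicate task of choosing primitives of $\rho^*\tau$ on each boundary component in a manner that depends continuously on $\rho$.
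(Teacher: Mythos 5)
Your proposal is correct and follows essentially the same strategy as the paper's proof: choose the representative set $R$ so that on each boundary component the chain map $r^2$ sends simplices into a single cyclic coset, use the vanishing of the homogeneous cocycle $\tau$ on cyclic subgroups to conclude that $r^2(\rho^*\tau)$ is already a relative cocycle, and then pair with a fixed fundamental cycle so that continuity reduces to Lemma~\ref{continuous:tau}. Your version spells out the construction of $R$ via an equivariant triangulation a bit more explicitly than the paper, but the argument is the same.
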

\begin{proof}
 Let us recall from Lemma~\ref{normnon} the explicit construction of the isomorphism $H^2_b(\G_{g,n},\R)\to H^2_b(\sgn,\R)$. One first chooses a set
 of representatives $R$ for the action of $\G_{g,n}$ on $\widetilde{\sgn}$ via deck automorphisms of the covering $p\colon\widetilde{\sgn}\to\sgn$. We can choose such a set so that the following condition holds:
 for every connected component $C$ of $\bb\sgn$, the set $R\cap p^{-1}(\bb\sgn)$ is contained in a single connected component of $\bb\widetilde{\sgn}$. 
 Then, the isometric isomorphism $H^2_b(\G_{g,n},\R)\to H^2_b(\sgn,\R)$ is induced by the $\G_{g,n}$-chain map $r^\bullet\colon C^\bullet_b(\G_{g,n},\R)\to C^\bullet_b(\sgn,\R)$ obtained by dualizing the following chain map:
 for every $x\in \widetilde{\sgn}$, $r_0(x)$ is equal to 
 the unique
$g\in \G_{g,n}$ such that $x\in g(R)$. For $n\geq 1$, if $s$ is a singular $n$-simplex with values in $\widetilde{\sgn}$, then 
$r_n(s)=(r_0(s (e_0)),\ldots,r_n(s(e_n)))\in \G_{g,n}^{n+1}$, where
$s (e_i)$ is the $i$-th vertex of $s$.

Let now $\rho\colon \G_{g,n}\to\omeo$ be a representation. By definition, the cochain
$\psi=r^2(\rho^*(\tau))$ is a representative of $e_b^\R(\rho)\in H^2_b(\sgn,\R)$, where we are identifying  $H^2_b(\sgn,\R)$
with $H^2_b(\G_{g,n},\R)$ via the map $H^2_b(r^2)$ described above. 
We claim that $\psi$ vanishes on chains supported in $\bb{\sgn}$ (as usual, we are identifying $\G_{g,n}$-invariant cochains on $\widetilde{\sgn}$ with
cochains on $\sgn$). Indeed, if $s\colon \Delta^2\to {\sgn}$ is supported
$\bb{\sgn}$, then it is supported in one connected component $C$ of $\bb\sgn$, and
our choice for the set of representatives $R$ implies that $r^2(s)=(g_0g^{n_0},g_0g^{n_1},g_0g^{n_2})$ for some $g_0\in \G_{g,n}$, $n_i\in\mathbb{Z}$,
where $g$ is the generator of a (suitable conjugate of) $\pi_1(C)$ in $\G_{g,n}=\pi_1(\sgn)$. Therefore,
if we denote by $\widetilde{f}\in\omeo$ a fixed lift of $\rho(g)$, then 
\begin{align*}
\psi(s) & =\tau(\rho(g^{n_1-n_0}),\rho(g^{n_2-n_1})\\ &=\rott\left(\widetilde{f}^{n_1-n_0}\widetilde{f}^{n_2-n_1}\right)-
\rott\left(\widetilde{f}^{n_1-n_0}\right)-\rott\left(\widetilde{f}^{n_2-n_1}\right)=0\ ,
\end {align*}
because $\rott$ is a homogeneous quasimorphism.

We have thus proved  that $\psi$ is a relative cocycle representing the bounded Euler class of $\rho$,
so by definition
$[\psi]=e_b^\R(\rho,\sgn,\bb\sgn)$ in $H^2_b(\sgn,\bb\sgn,\R)$. Let now $c=\sum_{i=1}^k \alpha_i\sigma_i$ be a fundamental cycle for $(\sgn\bb\sgn)$,
and let $r^2(\sigma_i)=(a_i,b_i,c_i)\in \G_{g,n}^3$ for every $i=1,\ldots,k$. We have
$$
e(\rho,\sgn,\bb\sgn)=\sum_{i=1}^k \alpha_i\psi(\sigma_i)=\sum_{i=1}^k \alpha_i \tau(a_i,b_i,c_i)=\sum_{i=1}^k \alpha_i\tau (\rho(a_i^{-1}b_i),\rho(b_i^{-1}c_i))\ .
$$
By Lemma~\ref{continuous:tau}, this formula shows  that $e(\rho,\sgn,\bb\sgn)$ varies continuously with respect to $\rho$. 
\end{proof}

The previous results allow us to exhibit an explicit formula for the Euler number of representations of punctured surfaces.
Indeed, let us fix a presentation of $\G_{g,n}=\pi_1(\sgn)$ of the following form:
$$
\G_{g,n}=\langle a_1,b_1,\ldots,a_g,b_g,c_1,\ldots,c_n\, |\, [a_1,b_1]\cdot \ldots \cdot [a_g,b_g]\cdot c_1\cdot \ldots \cdot c_n = 1\rangle\ .
$$
Here, $a_i$ and $b_i$ correspond to simple non-separating loops that transversely (and positively) intersect at the basepoint of $\sgn$ for every $i=1,\ldots,g$,
while $c_j$ corresponds to a loop going around the $j$-th boundary component of $\sgn$ for every $j=1,\ldots,n$. 
Moreover, up to free homotopy, the orientation of $c_i$ corresponds to the orientation induced by $\sgn$ on its $j$-th boundary component (we refer the reader to Figure~\ref{generators:fig} for an explicit
description of these generators).

\begin{center}
 \begin{figure}
 \input{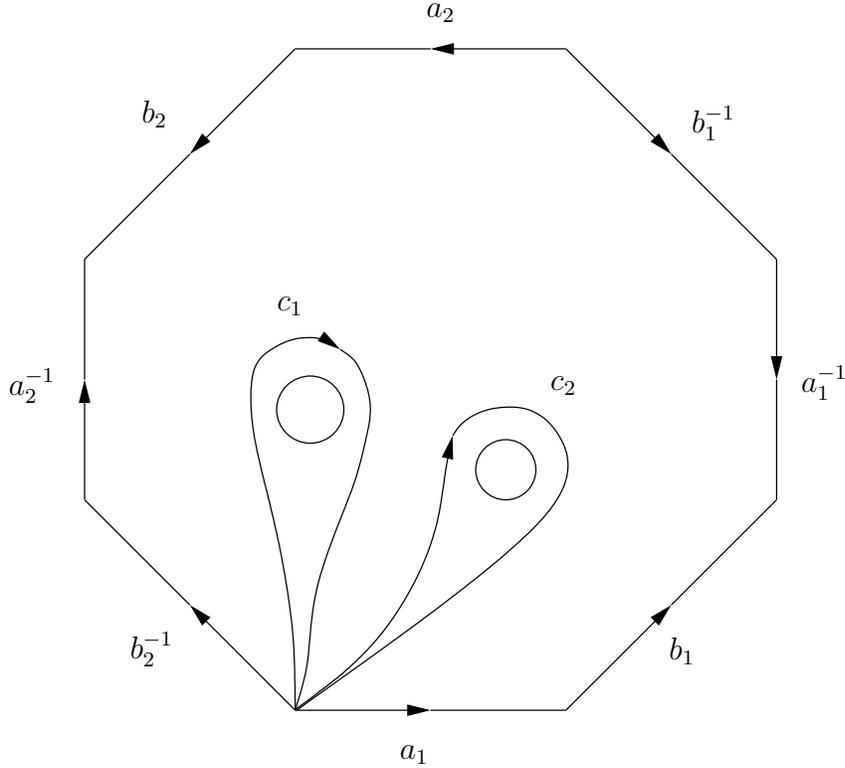}
\caption{The standard generators of $\G_{2,2}$. Here the surface $\Sigma_{2,2}$ is described as a twice-punctured octagon with sides identified in pairs.}\label{generators:fig} 
 \end{figure}
\end{center}

Suppose $n\geq 1$ and
recall that, since $\G_{g,n}$ is free, every representation of $\G_{g,n}$ into $\omeo$ lifts to a representation into $\omeot$.

\begin{thm}\label{euler:bd:formula}
Let $n\geq 1$. Let $\rho\colon \G_{g,n}\to\omeo$ be a representation, and fix a lift $\widetilde{\rho}\colon \G_{g,n}\to \omeot$ of $\rho$. Then
 $$
e(\rho,\sgn,\bb\sgn)=\sum_{j=1}^n -\rott(\widetilde{\rho}(c_j))\ .
 $$
\end{thm}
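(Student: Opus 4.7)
The plan is to set
\[
F(\rho) \;=\; e(\rho,\sgn,\bb\sgn) \;+\; \sum_{j=1}^n \rott(\widetilde{\rho}(c_j))
\]
for any lift $\widetilde{\rho}$ of $\rho$, and then to show that $F$ is a continuous, integer-valued function on the connected space $\Hom(\G_{g,n},\omeo)$ which vanishes on the trivial representation. The desired formula is equivalent to $F\equiv 0$.

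First, I would check that the definition of $F$ does not depend on the choice of lift. Since $n\geq 1$, the group $\G_{g,n}$ is freely generated by $a_1,b_1,\dots,a_g,b_g,c_1,\dots,c_{n-1}$, with $c_n$ expressed via the relation as $c_n = c_{n-1}^{-1}\cdots c_1^{-1}\,[a_g,b_g]^{-1}\cdots [a_1,b_1]^{-1}$. A lift $\widetilde{\rho}$ is determined by arbitrary lifts of these $2g+n-1$ generators, and any two lifts differ by multiplication of each generator by an integer translation $\tau_k$. Because integer translations are central in $\omeot$, replacing $\widetilde{\rho}(a_i)$ or $\widetilde{\rho}(b_i)$ by $\widetilde{\rho}(a_i)\tau_k$ or $\widetilde{\rho}(b_i)\tau_k$ leaves every commutator $[\widetilde{\rho}(a_i),\widetilde{\rho}(b_i)]$ unchanged, hence leaves $\widetilde{\rho}(c_n)$ unchanged; while replacing $\widetilde{\rho}(c_j)$ by $\widetilde{\rho}(c_j)\tau_k$ for $j<n$ forces $\widetilde{\rho}(c_n)$ to become $\widetilde{\rho}(c_n)\tau_{-k}$, so that $\rott(\widetilde{\rho}(c_j))$ increases by $k$ and $\rott(\widetilde{\rho}(c_n))$ decreases by $k$. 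In every case the sum $\sum_{j=1}^n\rott(\widetilde{\rho}(c_j))$ is invariant.

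Next, I would verify continuity of $F$. Around any $\rho_0\in\Hom(\G_{g,n},\omeo)$ the covering $p\colon\omeot\to\omeo$ admits local sections near the images of each free generator, so $\rho\mapsto\widetilde{\rho}$ can be chosen continuously on a neighbourhood of $\rho_0$. By Proposition~\ref{continuous:rot} the maps $\rho\mapsto\rott(\widetilde{\rho}(c_j))$ are then continuous there, and by Theorem~\ref{euler:bd:continuous} so is $\rho\mapsto e(\rho,\sgn,\bb\sgn)$. Hence $F$ is continuous. On the other hand, Proposition~\ref{boundary:euler} gives $[e(\rho,\sgn,\bb\sgn)]=-\sum_j[\rot(\rho(c_j))]$ in $\R/\Z$, and by definition $[\rott(\widetilde{\rho}(c_j))]=\rot(\rho(c_j))\in\R/\Z$, so $F(\rho)\in\Z$ for every $\rho$.

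Finally, since $\omeo$ is path-connected (Lemma~\ref{topological:omeo}) and $\G_{g,n}$ is free of rank $2g+n-1$, the space $\Hom(\G_{g,n},\omeo)\cong (\omeo)^{2g+n-1}$ is path-connected, so the continuous $\Z$-valued map $F$ is constant. Evaluating at the trivial representation $\rho_0$, with the trivial lift $\widetilde{\rho}_0(g)=\mathrm{Id}_\R$ for every $g$, gives $\rott(\widetilde{\rho}_0(c_j))=0$ and $e(\rho_0,\sgn,\bb\sgn)=0$ (the bounded Euler class of a trivial representation vanishes, for instance by Proposition~\ref{zeroprop}). Hence $F\equiv 0$, which is the claimed identity. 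The only delicate point of the argument is the verification in Step 1 that the expression $\sum_j\rott(\widetilde{\rho}(c_j))$ is lift-independent; once this is settled, everything else reduces to a tidy continuity-plus-connectedness argument of the kind already used to prove Ghys' theorem.
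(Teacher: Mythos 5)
Your proof is correct and follows essentially the same strategy as the paper's: establish lift-independence of the right-hand side, observe that both sides are continuous real functions on the connected space $\Hom(\G_{g,n},\omeo)$, use Proposition~\ref{boundary:euler} to conclude that their difference $F$ is integer-valued, and then evaluate at the trivial representation. The only cosmetic difference is that you handle lift-independence by treating $c_n$ as a word in the free generators $a_1,b_1,\dots,c_{n-1}$, whereas the paper lets all the $l_j$ vary and deduces the constraint $\sum_j l_j=0$ directly from the defining relation; these bookkeepings are interchangeable.
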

\begin{proof}
Let us first prove that the right hand side of the equality in the statement does not depend on the choice of $\widetilde{\rho}$. Indeed,
if $\widetilde{\rho}'\colon \G_{g,n}\to\omeot$ also lifts $\rho$, then $\widetilde{\rho}'(a_i)=\widetilde{\rho}(a_i)\circ\tau_{n_i}$,
$\widetilde{\rho}'(b_i)=\widetilde{\rho}(b_i)\circ\tau_{m_i}$, $\widetilde{\rho}'(c_j)=\widetilde{\rho}(c_j)\circ\tau_{l_j}$ for some
$n_i,m_i,l_j\in\mathbb{Z}$. It is readily seen that the condition that both $\widetilde{\rho}$ and $\widetilde{\rho}'$ are representations
implies that $\sum_{j=1}^n l_j=0$. Therefore, 
$$
\sum_{j=1}^n -\rott(\widetilde{\rho}'(c_j))=\sum_{j=1}^n -\rott(\widetilde{\rho}(c_j)\circ \tau_{l_j})=
\sum_{j=1}^n -\rott(\widetilde{\rho}(c_j))-l_j=\sum_{j=1}^n -\rott(\widetilde{\rho}(c_j))\ .
$$

Now let us consider both $e(\cdot,\sgn,\bb\sgn)$ and $\sum_{j=1}^n -\rott(\widetilde{\cdot}(c_j))$ as real functions
on the space $\Hom(\G_{g,n},\omeo)$.
By Theorems~\ref{euler:bd:continuous} the map $e(\cdot,\sgn,\bb\sgn)$ is continuous. Moreover, using that $\omeot\to\omeo$ is a covering, it is  easily checked
that one can choose continuously the lifts to $\omeot$ of the representations lying in a small neighboourhood of a fixed $\rho\colon \G\to \omeo$. Together with Theorem~\ref{continuous:rot}, 
this shows that also the map $\sum_{j=1}^n -\rott(\widetilde{\cdot}(c_j))$ is continuous. Moreover,
when composed with the projection $\R\to\R/\Z$, the functions $e(\cdot,\sgn,\bb\sgn)$ and $\sum_{j=1}^n -\rott(\widetilde{\cdot}(c_j))$ coincide 
by Proposition~\ref{boundary:euler}. Finally, these functions both obviously vanish at the trivial representation,
so they coincide everywhere, since $\R\to \R/\Z$ is a covering and $\Hom(\G_{g,n},\omeo)$ is connected (because $\G_{g,n}$ is free). 
\end{proof}

It is interesting to observe that the formula for the Euler number obtained in Theorem~\ref{euler:bd:formula} may be of use also to compute the Euler number of representations
of closed surface groups:

\begin{thm}\label{closed:formula}
Let $\rho\colon \G_g\to \omeo$ be a representation of
the fundamental group of the closed surface $\Sigma_g$ of genus $g$, let $a_1,b_1,\ldots,a_g,b_g$ be the standard generators of $\G_g$, and denote by
$\widetilde{a}_i$ (resp.~$\widetilde{b}_i$) an arbitrary lift of $\rho(a_i)$ (resp.~$\rho(b_i)$) to $\omeot$. Then
$$
[\widetilde{a}_i,\widetilde{b}_i]\cdot\ldots\cdot [\widetilde{a}_g,\widetilde{b}_g]=\tau_e\ ,
$$
where $\tau_e$ is the translation by 
$$
e=e(\rho)\ .
$$
\end{thm}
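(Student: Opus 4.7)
The plan is to reduce this statement to Theorem~\ref{euler:bd:formula} (the explicit formula for the Euler number of representations of punctured surface groups) by realizing $\Sigma_g$ as the union of the once-punctured surface $\Sigma_{g,1}$ with a disk $D^2$ along their common boundary circle $C$. Orient the decomposition so that the inclusion $\iota\colon \Sigma_{g,1}\hookrightarrow\Sigma_g$ sends the standard generators $a_i,b_i$ of $\G_{g,1}$ to the corresponding generators of $\G_g$, and set $\rho' = \rho\circ \iota_*\colon \G_{g,1}\to\omeo$. The boundary loop $c_1\in\G_{g,1}$ is null-homotopic in $\Sigma_g$, consistently with the defining relations of the two groups, so $\rho'(c_1) = \mathrm{Id}_{S^1}$.

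Since $\G_{g,1}$ is free, I can lift $\rho'$ to a homomorphism $\widetilde{\rho'}\colon \G_{g,1}\to\omeot$ with $\widetilde{\rho'}(a_i)=\widetilde{a}_i$ and $\widetilde{\rho'}(b_i)=\widetilde{b}_i$. The relation $[a_1,b_1]\cdots[a_g,b_g]\cdot c_1 = 1$ in $\G_{g,1}$ then forces
$$
\widetilde{\rho'}(c_1) = \bigl([\widetilde{a}_1,\widetilde{b}_1]\cdots[\widetilde{a}_g,\widetilde{b}_g]\bigr)^{-1},
$$
which, being a lift of the identity of $S^1$, is an integer translation $\tau_{-k}$ for some $k\in\Z$; equivalently, $[\widetilde{a}_1,\widetilde{b}_1]\cdots[\widetilde{a}_g,\widetilde{b}_g] = \tau_k$. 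Theorem~\ref{euler:bd:formula} applied with $n=1$ then yields
$$
e(\rho', \Sigma_{g,1}, \partial\Sigma_{g,1}) = -\rott(\widetilde{\rho'}(c_1)) = -\rott(\tau_{-k}) = k.
$$

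To identify $k$ with $e(\rho)$, I would invoke Proposition~\ref{additive:euler}(1) for the separating loop $C$ of $\Sigma_g$, which cuts it into $\Sigma_{g,1}$ and $D^2$:
$$
e(\rho) = e(\rho', \Sigma_{g,1}, \partial\Sigma_{g,1}) + e(\rho_0, D^2, \partial D^2),
$$
where $\rho_0$ denotes the unique (trivial) representation of $\pi_1(D^2) = \{1\}$. The second summand vanishes since the bounded Euler class of any trivial representation is zero (see Proposition~\ref{zeroprop}), whence $e(\rho) = k$ and the theorem follows.

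The main subtlety I foresee is an orientation bookkeeping one: one needs to verify that, with the orientation induced by $\Sigma_g$ on its pieces, the loop $c_1$ coincides with the positively oriented boundary of $\Sigma_{g,1}$, so that the sign in Theorem~\ref{euler:bd:formula} comes out as stated. A minor related point is that Definition~\ref{releuler} formally restricts to the case $\chi<0$, while $\chi(D^2)>0$; however, the construction of the relative bounded Euler class still makes sense for $D^2$, any trivial representation has vanishing relative Euler number, and the proof of Proposition~\ref{additive:euler}(1) (delegated in the text to~\cite{BIW1}) adapts to this setup with no essential change.
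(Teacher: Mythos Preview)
Your proof is correct and follows essentially the same route as the paper: cut $\Sigma_g$ into $\Sigma_{g,1}$ and a disk, apply additivity (Proposition~\ref{additive:euler}) so that $e(\rho)=e(\rho',\Sigma_{g,1},\partial\Sigma_{g,1})$, then invoke Theorem~\ref{euler:bd:formula} and the fact that $[\widetilde a_1,\widetilde b_1]\cdots[\widetilde a_g,\widetilde b_g]$ is an integer translation. The paper handles the disk piece by simply noting that $D$ is simply connected; you are right that Definition~\ref{releuler} formally assumes $\chi<0$, but this is a cosmetic issue and your remark that the contribution vanishes trivially is the correct way to address it.
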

\begin{proof}
 Let $D\subseteq \Sigma_g$ be a closed embedded disc, and let $C=\partial D$. Let also $\Sigma_{g,1}\subseteq \Sigma_g$ be the surface with boundary obtained by removing the internal part of $D$ from $\Sigma_g$.
 Let $\rho'\colon \pi_1(\Sigma_{g,1})\to\omeo$ be the representation obtained by composing $\rho$ with the map induced by the inclusion $\Sigma_{g,1}\to\Sigma_g$, and let
 $\widetilde{\rho'}\colon \pi_1(\Sigma_{g,1})\to \omeot$ be the lift of $\rho'$ defined by
 $\widetilde{\rho'}(a_i)=\widetilde{a}_i$ and $\widetilde{\rho'}(b_i)=\widetilde{b}_i$ (here, we are identifying the standard generators of $\pi_1(\Sigma_{g,1})$ with their images in $\pi_1(\Sigma_g)$). Since $D$ is simply connected,
 by Proposition~\ref{additive:euler} we have $$ e(\rho)=e(\rho',\Sigma_{g,1},\bb\Sigma_{g,1})\ .$$
 On the other hand, since $\rho([a_1,b_1]\cdot\ldots\cdot [a_g,b_g])=1$, we have 
 $[\widetilde{a}_1,\widetilde{b}_1]\cdot\ldots\cdot [\widetilde{a}_g,\widetilde{b}_g]=\tau_k$ for some $k\in\mathbb{Z}$. Now the conclusion follows from Theorem~\ref{euler:bd:formula}, together with the obvious
 fact that $\rott(\tau_k)=k$.
\end{proof}

Milnor-Wood inequalities also hold for the Euler number of representations of punctured surface groups:

\begin{thm}\label{MW:boundary}
Let $\rho\colon \G_{g,n}\to \omeo$ be a representation, and suppose that $\chi(\sgn)=2-2g-n<0$. Then
$$
|e(\rho,\sgn,\bb\sgn)|\leq {|\chi(\sgn)|}\ .
$$
\end{thm}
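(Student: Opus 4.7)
The plan is to prove this by duality, combining three ingredients already established in the text: (i) Gromov--Thurston's computation of the simplicial volume of $\sgn$ from Corollary~\ref{punctured:surf:simpl}, giving $\|\sgn,\bb\sgn\|=2|\chi(\sgn)|$; (ii) the fact, noted in Theorem~\ref{reliso:thm}, that since every boundary component of $\sgn$ has abelian (hence amenable) fundamental group, the inclusion $j^\bullet\colon C^\bullet_b(\sgn,\bb\sgn,\R)\to C^\bullet_b(\sgn,\R)$ induces an isometric isomorphism in bounded cohomology in degree $\geq 2$; and (iii) the sharp estimate $\|e_b^\R\|_\infty=1/2$ in $H^2_b(\omeo,\R)$ from Theorem~\ref{finale:mw}.

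First I would observe that the relative cochain complex $C^\bullet_b(\sgn,\bb\sgn,\R)$ is the topological dual of the normed chain complex $C_\bullet(\sgn,\bb\sgn,\R)$, and hence, by Lemma~\ref{lemma:duality} applied to this dual pairing (or equivalently by the standard Kronecker product bound), for every $\alpha\in H^2_b(\sgn,\bb\sgn,\R)$ one has
\[
|\langle \alpha,[\sgn,\bb\sgn]\rangle| \leq \|\alpha\|_\infty\cdot \|[\sgn,\bb\sgn]\|_1=\|\alpha\|_\infty\cdot \|\sgn,\bb\sgn\|.
\]
Applying this to $\alpha=e_b^\R(\rho,\sgn,\bb\sgn)$ and using Definition~\ref{releuler} yields
\[
|e(\rho,\sgn,\bb\sgn)|\leq \|e_b^\R(\rho,\sgn,\bb\sgn)\|_\infty\cdot \|\sgn,\bb\sgn\|.
\]

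The next step is to bound the two factors on the right-hand side separately. By Corollary~\ref{punctured:surf:simpl}, since $\chi(\sgn)<0$, we have $\|\sgn,\bb\sgn\|=2|\chi(\sgn)|$. For the first factor, by definition $e_b^\R(\rho,\sgn,\bb\sgn)=H^2_b(j^2)^{-1}(e_b^\R(\rho))$, and by Theorem~\ref{reliso:thm} (applicable because each component of $\bb\sgn$ is a circle, whose fundamental group $\Z$ is amenable) the map $H^2_b(j^2)$ is an isometric isomorphism, so
\[
\|e_b^\R(\rho,\sgn,\bb\sgn)\|_\infty=\|e_b^\R(\rho)\|_\infty=\|\rho^\ast(e_b^\R)\|_\infty\leq \|e_b^\R\|_\infty=\tfrac{1}{2},
\]
where the last equality is Theorem~\ref{finale:mw} and the preceding inequality is the standard fact that pullback of bounded coclasses is norm non-increasing. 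Combining these two estimates,
\[
|e(\rho,\sgn,\bb\sgn)|\leq \tfrac{1}{2}\cdot 2|\chi(\sgn)|=|\chi(\sgn)|,
\]
which is the desired inequality.

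There is really no hard step in this argument: the work has already been done in the closed case (Theorem~\ref{Wood:thm}) and in setting up the relative machinery. The only point requiring attention is making sure that the isometric isomorphism between absolute and relative bounded cohomology of $(\sgn,\bb\sgn)$ identifies $e^\R_b(\rho)$ with $e_b^\R(\rho,\sgn,\bb\sgn)$ --- but this is built into Definition~\ref{releuler}. One could alternatively obtain the same bound from Theorem~\ref{euler:bd:formula} by showing, via elementary properties of the translation number, that $|\rott(\widetilde{\rho}(c_j))|$ contributes at most $1$ per boundary component and using the relation $[\widetilde{a}_1,\widetilde{b}_1]\cdots[\widetilde{a}_g,\widetilde{b}_g]\,\widetilde{\rho}(c_1)\cdots\widetilde{\rho}(c_n)=\mathrm{Id}$ together with the fact that each commutator of elements of $\omeot$ differs from the identity by a bounded translation; but the cohomological argument above is cleaner and directly parallels the proof of Theorem~\ref{Wood:thm} in the closed case.
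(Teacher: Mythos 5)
Your argument is correct and follows essentially the same route as the paper: duality via the Kronecker product, the isometry $H^2_b(\sgn,\bb\sgn,\R)\cong H^2_b(\sgn,\R)$ from Theorem~\ref{reliso:thm} (built into Definition~\ref{releuler}), the bound $\|e_b^\R\|_\infty\leq 1/2$, and the value $\|\sgn,\bb\sgn\|=2|\chi(\sgn)|$ from Corollary~\ref{punctured:surf:simpl}. The only cosmetic difference is that you cite the sharp Theorem~\ref{finale:mw} where the paper cites the one-sided Lemma~\ref{norm:euler}, and you spell out a few steps the paper leaves implicit.
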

\begin{proof}
By Lemma~\ref{norm:euler} and Corollary~\ref{punctured:surf:simpl} we have
\begin{align*}
|e(\rho,\sgn,\bb\sgn)|&=\langle e_b^\R(\rho,\sgn,\bb\sgn),[\sgn,\bb\sgn]\rangle\\ &\leq
\|e_b^\R(\rho,\sgn,\bb\sgn)\|_\infty\cdot \|\sgn,\bb\sgn\|\\ & \leq {|\chi(\sgn)|}\ .
\end{align*}
\end{proof}

It is still true that the inequality appearing in the previous theorem is sharp.
Moreover, just as in the case of closed surfaces (which is treated in detail in the next section)
equality is attained if and only if  $\rho$ is geometric is a suitable sense. 
We refer the reader to Section~\ref{further:maximal} for further details about this issue.

\section{Maximal representations}\label{maximal:section}
Milnor-Wood inequalities provide a sharp bound on the possible Euler numbers of flat circle bundles over closed surfaces, or, equivalently, on the possible Euler numbers
of representations of surface groups into $\omeo$. The study of such space of representations is a very important and active research field, and is related to many areas of mathematics, from $1$-dimensional dynamics to
low-dimensional topology. A particularly interesting (and very well understood) class of representations is the class
of \emph{maximal} ones, according to the following definition (recall that $\G_g$ is the fundamental group of the closed oriented surface $\Sigma_g$ of genus $g$).

\begin{defn}
Let $g\geq 2$. A representation $\rho\colon \G_g\to \omeo$ is \emph{maximal} if $|e(\rho)|=2g-2$.
\end{defn}

Thanks to Milnor-Wood inequalities, maximal representations have the maximal (or minimal) Euler number among all possible group actions of $\G_g$. As anticipated above, the space of maximal representations
into $\omeo$ admits a complete and very neat characterization. Before describing some classical results in this direction we need to recall the definition of \emph{geometric} representation.
Let $\Sigma_g$ be endowed with a hyperbolic metric, and fix an orientation-preserving identification of the metric universal covering $\widetilde{\Sigma}_g$ of $\Sigma_g$ with the hyperbolic plane $\mathbb{H}^2$.
Via this identification, every covering automorphism of $\widetilde{\Sigma}_g$ corresponds to an orientation-preserving isometry of $\mathbb{H}^2$. Hence,
any identification of $\G_g$ with the group of automorphisms of the universal covering $\widetilde{\Sigma}_g$ induces an isomorphism between $\G_g$ and a subgroup of $\Isom^+ (\mathbb{H}^2)$ acting
freely and properly discontinuously on $\mathbb{H}^2$. We say that such an isomorphism $\rho\colon \G_g\to \Isom^+ (\mathbb{H}^2)$ is a \emph{holonomy representation} associated to the hyperbolic metric
on $\Sigma_g$ we started with. It is easy to check that two holonomy representations associated to the same hyperbolic metric may differ one from the other only by the precomposition
with an automorphism of $\G_g$ and by  conjugation by an element in $\Isom^+ (\mathbb{H}^2)$.

\begin{defn}
 Let $\rho\colon \G_g\to \Isom^+ (\mathbb{H}^2)$ be a representation. Then $\rho$ is \emph{geometric} if it is the holonomy of a hyperbolic structure on the closed oriented surface $\Sigma_g$ of genus $g$, or if it is the holonomy
 of a hyperbolic structure on the closed oriented surface $\overline{\Sigma}_g$ obtained by reversing the orientaton of $\Sigma_g$.
 \end{defn}

 We have already seen in the proof of Proposition~\ref{2g-2:exists} that $\Isom^+ (\mathbb{H}^2)$ naturally acts on $\partial \mathbb{H}^2\cong S^1$, so that
 $\Isom^+ (\mathbb{H}^2)$ may canonically identified with a subgroup of $\omeo$ (when realizing $\mathbb{H}^2$ as the half-plane $\mathcal{H}$ of complex numbers with positive imaginary part, isometries of the hyperbolic metric correspond
 to biholomorphisms of $\mathcal{H}$, which in turn may be represented by fractional linear transformations with real coefficients; in this way we get an identification between
 $\Isom^+(\mathbb{H}^2)$ and $PSL(2,\mathbb{R})$, and the action of $\Isom^+(\mathbb{H}^2)$ on $S^1=\partial \mathcal{H}=\mathbb{R}\cup\{\infty\}=\mathbb{P}^1(\mathbb{R})$ corresponds to the natural action
 of $PSL(2,\mathbb{R})$ on the real projective line). Therefore, to every representation $\rho\colon \G_g\to \Isom^+ (\mathbb{H}^2)$ we can associate its Euler number $e(\rho)$, and Proposition~\ref{2g-2:exists}
 may be restated as follows:
 
 \begin{prop}\label{Goldman:easy}
  Let $\rho\colon \G_g\to \Isom^+ (\mathbb{H}^2)$ be a geometric representation. Then $\rho$ is maximal. 
 \end{prop}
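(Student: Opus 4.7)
The plan is to adapt directly the construction carried out in the proof of Proposition~\ref{2g-2:exists}, making explicit how it bears on the Euler number of $\rho$ itself. Suppose first that $\rho$ is the holonomy of a hyperbolic structure on $\Sigma_g$, so that after fixing an orientation-preserving isometric identification $\widetilde{\Sigma}_g \cong \matH^2$, the quotient $\matH^2/\rho(\G_g)$ is isometric to $\Sigma_g$. The key geometric input is the homeomorphism $\Psi\colon T_1\matH^2 \to \matH^2 \times \partial \matH^2$ sending $v$ to $(\pi(v),[\gamma_v])$; this map is $\rho(\G_g)$-equivariant with respect to the diagonal actions, where on the right $\G_g$ acts on $\partial \matH^2\cong S^1$ via $\rho$ composed with the canonical embedding $\Isom^+(\matH^2)\hookrightarrow \omeo$.

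Passing to the quotients, $\Psi$ descends to an orientation-preserving isomorphism of topological circle bundles between $T_1\Sigma_g$ and the flat bundle $E_\rho$ associated to $\rho$. Consequently $\eu(E_\rho) = \eu(T_1\Sigma_g)$ in $H^2(\Sigma_g,\matZ)$. By Proposition~\ref{tangentchi}, evaluating the right-hand side against the fundamental class of $\Sigma_g$ yields $\chi(\Sigma_g) = 2-2g$. On the other hand, since $\Sigma_g$ is aspherical, the canonical map $H^2(\G_g,\matZ)\to H^2(\Sigma_g,\matZ)$ is an isomorphism identifying $e(\rho)$ with $\eu(E_\rho)$ (this follows from Theorem~\ref{eulerclassbounded} together with Theorem~\ref{aspherical:thm}), and sending the generator of $H_2(\G_g,\matZ)$ used to define the Euler number of the representation to the fundamental class $[\Sigma_g]$. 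Combining these facts gives $e(\rho) = 2-2g$, and hence $|e(\rho)| = 2g-2$.

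If instead $\rho$ is the holonomy of a hyperbolic structure on the oppositely oriented surface $\overline{\Sigma}_g$, then the same argument applied to $\overline{\Sigma}_g$ yields $e(\rho)$ equal to $\chi(\overline{\Sigma}_g) = 2-2g$ when measured with the fundamental class of $\overline{\Sigma}_g$; since reversing the orientation of $\Sigma_g$ changes the sign of the fundamental class, this translates to $e(\rho) = -(2-2g) = 2g-2$ as an invariant of $\rho\colon \G_g \to \omeo$. In either case, $|e(\rho)|=2g-2$, so $\rho$ is maximal.

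The main obstacle is essentially bookkeeping of orientations and identifications: one must check that $\Psi$ (together with the chosen identification $\widetilde{\Sigma}_g\cong \matH^2$) is orientation-preserving on fibers, so that the equality $\eu(T_1\Sigma_g) = \eu(E_\rho)$ holds with the correct sign, and that the Euler class of $\rho$ coincides under the asphericity isomorphism with $\eu(E_\rho)$. Both verifications are routine given the explicit construction of $E_\rho$ recalled in the proof of Proposition~\ref{flat:rep} and the definition of the Euler class of a representation.
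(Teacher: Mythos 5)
Your proof is correct and follows the same route as the paper: it invokes the unit tangent bundle identification $\Psi\colon T_1\matH^2\to\matH^2\times\partial\matH^2$ from Proposition~\ref{2g-2:exists} to conclude $e(\rho)=\chi(\Sigma_g)=2-2g$, and handles the orientation-reversed case by noting the Euler number changes sign. The paper states this more tersely — simply citing Proposition~\ref{2g-2:exists} and the sign change — while you spell out the intermediate identifications, but the argument is the same.
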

\begin{proof}
 Proposition~\ref{2g-2:exists} shows that, if $\rho$ is the holonomy of a hyperbolic structure on  $\Sigma_g$, then $e(\rho)=2-2g$. On the other hand,
 by reversing the orientation of the surface, the Euler number just changes its sign, and this concludes the proof.
\end{proof}

A celebrated result by Goldman implies that also the converse of Proposition~\ref{Goldman:easy} holds, i.e.~geometric representations in $PSL(2,\mathbb{R})$ are precisely the maximal ones:

\begin{thm}[\cite{Goldth}]\label{Goldman:thm}
 Let $\rho\colon \G_g\to \Isom^+ (\mathbb{H}^2)$ be a representation. Then $\rho$ is geometric if and only if it is maximal. 
\end{thm}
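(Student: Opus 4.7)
The nontrivial direction is that maximality implies geometricity, since the converse is Proposition~\ref{Goldman:easy}. The plan is as follows. After possibly reversing the orientation on $\Sigma_g$, I may assume $e(\rho) = 2g-2$. I fix once and for all an auxiliary hyperbolic structure on $\Sigma_g$ with associated holonomy representation $\rho_0\colon \G_g \to \Isom^+(\matH^2) \hookrightarrow \omeo$, and aim to show that $\rho$ is conjugate to $\rho_0$ by a homeomorphism that itself comes from $\Isom^+(\matH^2)$.

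The main tool is the canonical homogeneous representative $\tau \in C^2_b(\omeo,\R)$ of the real bounded Euler class $e_b^\R$ (Proposition~\ref{homo:eu} and Section~\ref{real:euler:defn}), whose norm equals $\|\tau\|_\infty = 1/2$ by Theorem~\ref{finale:mw}. Combining $\|\Sigma_g\| = 4g-4$ (Corollary~\ref{punctured:surf:simpl}) with maximality $e(\rho) = 2g-2$ and the duality formula of Proposition~\ref{prop:duality}, the chain of inequalities in the proof of Milnor--Wood (Theorem~\ref{Wood:thm}) must collapse to equalities. Via Corollary~\ref{Thurston's version}, I would choose fundamental cycles of $\Sigma_g$ whose supports consist of straightened simplices obtained from the $\rho_0$-equivariant developing map $\widetilde{\Sigma}_g \to \matH^2$, and push the saturation of Milnor--Wood down to a pointwise rigidity statement for the pullback cocycle $\rho^*(\tau)$. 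Concretely, after fixing a basepoint $\xi \in \partial \matH^2$, I expect to show that $\rho^*(\tau)(g_0,g_1,g_2) = +1/2$ for every triple $(g_0,g_1,g_2)$ for which $(\rho_0(g_0)\xi,\rho_0(g_1)\xi,\rho_0(g_2)\xi)$ is a positively oriented triple on $\partial\matH^2 \cong S^1$.

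This order-preserving property is then used to build a semi-conjugacy between $\rho_0$ and $\rho$. Fixing $\eta \in S^1$, the assignment $\rho_0(g)\xi \mapsto \rho(g)\eta$ is monotone on the $\rho_0$-orbit of $\xi$ by the previous step, and since $\rho_0(\G_g)$ is a uniform Fuchsian lattice its action on $\partial\matH^2$ is minimal, so the $\rho_0$-orbit of $\xi$ is dense. The monotone equivariant assignment therefore extends to an increasing degree-one map $\varphi\colon S^1 \to S^1$ intertwining the two actions. In particular $e_b(\rho) = e_b(\rho_0)$ in $H^2_b(\G_g, \Z)$, and Ghys' Theorem~\ref{Euler:thm} upgrades this into a bidirectional semi-conjugacy between $\rho_0$ and $\rho$.

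The main obstacle is then to upgrade this semi-conjugacy to a genuine conjugacy, and to recognize the conjugating homeomorphism as a hyperbolic isometry. By Proposition~\ref{dense:orbits}, if every $\rho(\G_g)$-orbit on $S^1$ is dense then the semi-conjugacy is a true conjugacy in $\omeo$. Global fixed points are excluded because $e_b(\rho)\neq 0$, see Proposition~\ref{zeroprop}; finite orbits and proper invariant Cantor minimal sets are ruled out by the rigidity of $\rho^*(\tau)$ established above, since either phenomenon would force $\rho^*(\tau)$ to drop below $1/2$ on some positively oriented triple in the $\rho_0$-orbit of $\xi$, contradicting the pointwise equality just proved. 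This yields a conjugating $h \in \omeo$ with $h\,\rho_0(\gamma)\,h^{-1} = \rho(\gamma)$ for all $\gamma$. The hardest step remaining is to show that $h$ may be taken inside $\Isom^+(\matH^2)$, which is the statement that $\rho$ is literally geometric. Here I would exploit once more the equality $\rho^*(\tau) = \rho_0^*(\tau)$ on \emph{all} triples (not just those in one orbit): since $\tau$ encodes the cyclic order on $S^1$, this forces $h$ to preserve the cross-ratio on a dense subset of $S^1$, hence to be a M\"obius transformation of $\partial \matH^2$, that is, an element of $\Isom^+(\matH^2)$.
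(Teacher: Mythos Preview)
Your strategy is genuinely different from the paper's, and it is in the spirit of the boundary-map arguments of Iozzi and Burger--Iozzi--Wienhard; however, as written it has two real gaps.

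The first is the passage from saturation of the Milnor--Wood inequality to the pointwise equality $\rho^*(\tau)(g_0,g_1,g_2)=1/2$ on all positively $\rho_0$-oriented triples. The equality $\langle e_b^\R(\rho),[\Sigma_g]\rangle=\tfrac12\|\Sigma_g\|$ only says that $\rho^*(\tau)$ is extremal ``on average'' against efficient fundamental cycles; it gives no information about a fixed triple in $\G_g^3$ unless you can place that triple, with positive weight, inside an $\ell^1$-optimal fundamental cycle. Your appeal to Corollary~\ref{Thurston's version} and to straightening via $\rho_0$ does not supply this: the former concerns pairs with amenable boundary, and the latter produces specific cycles but not cycles containing a prescribed triple. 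Getting from the global equality to the almost-everywhere (and then everywhere) pointwise statement is precisely the hard analytic step in this approach, and it typically requires an equivariant measurable boundary map together with an ergodicity argument which you have not provided.

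The second gap is fatal as stated: the claim that $\tau$ ``encodes the cross-ratio'' is false. The cocycle $\tau$ only records the cyclic orientation of triples, and every element of $\omeo$ preserves this. So the equality $\rho^*(\tau)=\rho_0^*(\tau)$ cannot force your conjugating homeomorphism $h$ to be M\"obius. A correct endgame is available but different: once $\rho$ is genuinely conjugate in $\omeo$ to the geometric $\rho_0$, it is faithful and each nontrivial $\rho(g)$ has exactly two fixed points on $S^1$ (being topologically conjugate to a hyperbolic isometry), hence is not elliptic; then Proposition~\ref{geometric:prop} applies directly. This replaces your cross-ratio step entirely.

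For comparison, the paper's proof avoids all of this. It argues by contradiction: if some $\rho(\overline g)$ were elliptic, one passes (via Scott's theorem) to a finite cover where $\overline g$ lifts to a simple non-separating curve, realises that curve as a standard generator of an embedded once-punctured torus $\Sigma_{1,1}$, and then combines the additivity of the relative Euler number (Proposition~\ref{additive:euler}) with the strict inequality of Lemma~\ref{punctured:torus} on that punctured torus and Milnor--Wood on the complement to contradict maximality of the lifted representation. No boundary maps or pointwise cocycle identities are needed.
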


It is not difficult to show that a representation $\rho\colon \G_g\to \Isom^+ (\mathbb{H}^2)$ is geometric if and only if it is topologically conjugate to a geometric representation, and this holds in turn
if and only if it is semi-conjugate to a geometric representation.
Indeed, much more is true: even without restricting to actions via isometries of the hyperbolic plane, the whole space of maximal representations consists of a single
semi-conjugacy class (see e.g.~\cite{Matsu, iozzi, calegari:max}; we refer the reader to Section~\ref{further:maximal} for a brief discussion of related topics).

Our proof of Theorem~\ref{Goldman:thm} is based on a slight variation of the argument described in~\cite{BIW1}. In particular, at a critical passage we will make use of Euler numbers
of representations of \emph{punctured} surface groups. The following lemma will prove useful:

\begin{lemma}\label{punctured:torus}
 Let $\rho\colon \G_{1,1}\to \Isom^+ (\mathbb{H}^2)$ be a representation, and let $a,b$ be the standard generators
 of $\G_{1,1}=\pi_1(\Sigma_{1,1})$. If $\rho(a)$ is elliptic, then
 $$
|e(\rho,\Sigma_{1,1},\bb\Sigma_{1,1})|<1 \ .
 $$
\end{lemma}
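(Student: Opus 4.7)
The plan is to apply the explicit formula provided by Theorem~\ref{euler:bd:formula} to the once-punctured torus $\Sigma_{1,1}$. With the standard presentation $\G_{1,1}=\langle a,b,c\mid [a,b]c=1\rangle$, the boundary loop satisfies $c=[a,b]^{-1}=bab^{-1}a^{-1}$, so for any choice of lifts $\widetilde{A},\widetilde{B}\in\omeot$ of $\rho(a),\rho(b)$ we obtain $\widetilde{\rho}(c)=[\widetilde{B},\widetilde{A}]$ and hence
$$e(\rho,\Sigma_{1,1},\partial\Sigma_{1,1})=-\rott\bigl([\widetilde{B},\widetilde{A}]\bigr).$$
The task is therefore to prove the strict inequality $|\rott([\widetilde{B},\widetilde{A}])|<1$ under the hypothesis that $\rho(a)\in\Isom^+(\matH^2)$ is elliptic.

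Since semi-conjugate (in particular conjugate) representations share the same bounded Euler class (Theorem~\ref{Euler:thm}), hence the same Euler number, the plan is to first conjugate $\rho$ by an element of $\Isom^+(\matH^2)\subset\omeo$ so that $\rho(a)$ stabilizes a chosen base point of $\matH^2$. In the identification of $\partial\matH^2$ with $S^1=\R/\matZ$ obtained from the disk model centered at that base point, the stabilizer acts on $S^1$ as the group of rigid rotations. Consequently, after this reduction, $\rho(a)$ is an honest rotation of $S^1$ and its preferred lift $\widetilde{A}$ is a pure translation $\widetilde{A}(x)=x+\alpha$ for some $\alpha\in\R$. A short computation then yields
$$[\widetilde{B},\widetilde{A}](x)-x=\widetilde{B}(y+\alpha)-\widetilde{B}(y)-\alpha,\qquad y:=\widetilde{B}^{-1}(x-\alpha),$$
and Lemma~\ref{conto} applied to $\widetilde{B}\in\omeot$ at the two points $y,\,y+\alpha$ shows that the right hand side has absolute value strictly smaller than $1$ for every $x\in\R$.

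The function $x\mapsto [\widetilde{B},\widetilde{A}](x)-x$ is continuous and invariant under the integer translation $x\mapsto x+1$, so it attains its supremum $M$ and infimum $m$, and the previous step gives $-1<m\le M<1$. The inequality $[\widetilde{B},\widetilde{A}](x)\le x+M$ iterates telescopically to $[\widetilde{B},\widetilde{A}]^n(0)\le nM$, yielding $\rott([\widetilde{B},\widetilde{A}])\le M<1$; the symmetric argument gives $\rott([\widetilde{B},\widetilde{A}])\ge m>-1$, and the conclusion follows. The main point to verify carefully is the opening reduction: that $e(\rho,\Sigma_{1,1},\partial\Sigma_{1,1})$ is a conjugacy invariant of $\rho$ and that the standard identification $\partial\matH^2\cong\R/\matZ$ really turns the elliptic stabilizer of a point into honest rotations of $S^1$; once this is in place, the whole estimate is a one-line application of Lemma~\ref{conto}.
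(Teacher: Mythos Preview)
Your proof is correct and follows essentially the same route as the paper: reduce (by conjugacy) to $\rho(a)$ being a rotation so that $\widetilde{A}$ is a pure translation, bound the displacement $|[\widetilde{B},\widetilde{A}](x)-x|$ strictly below $1$, then use periodicity and iteration to bound $\rott$. Your use of Lemma~\ref{conto} to get the displacement bound is in fact a cleaner packaging of the monotonicity computation the paper carries out by hand, and your explicit justification of the conjugacy step via Theorem~\ref{Euler:thm} is a good touch.
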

\begin{proof}
 The group $\G_{1,1}$ has   the standard presentation $\langle a,b,c\, |\, [a,b]c=1\rangle$. Therefore, if $\widetilde{\rho}\colon \G_{1,1}\to \omeot$ is a lift of $\rho$, then by Theorem~\ref{euler:bd:formula} we have
 $$
e(\rho,\Sigma_{1,1},\bb\Sigma_{1,1})=-\rott(\widetilde{\rho}(c)) =\rott(\widetilde{\rho}([a,b]))\ ,
 $$
 where we used that $\rott(\widetilde{f}^{-1})=-\rott(\widetilde{f})$ for every $\widetilde{f}\in\omeot$. 
 For the sake of simplicity, let us set $\widetilde{a}=\widetilde{\rho}(a)$, $\widetilde{b}=\widetilde{\rho}(b)$.
 Up to conjugacy we may suppose that $\rho(a)$ is a rotation, so up to suitably choosing the lift $\widetilde{\rho}$ of $\rho$ we may assume that there exists $\theta\in [0,1)$ such that 
 $\widetilde{a}(x)=x+\theta$ for every $x\in\R$. We then have $\widetilde{a}^{-1}(x)=x-\theta$, so
 \begin{equation}\label{a1b2:eq}
 \widetilde{a}(\widetilde{b}(\widetilde{a}^{-1}(\widetilde{b}^{-1}(x))))=\widetilde{a}(\widetilde{b}(\widetilde{b}^{-1}(x)-\theta)=\widetilde{b}(\widetilde{b}^{-1}(x)-\theta)+\theta\ .
 \end{equation}
 Recall now that $\widetilde{b}$ is monotone and commutes with integral translations, so from $\widetilde{b}^{-1}(x)-1<\widetilde{b}^{-1}(x)-\theta\leq \widetilde{b}^{-1}(x)$ we obtain
 $$
 x-1=\widetilde{b}(\widetilde{b}^{-1}(x)-1)<\widetilde{b}(\widetilde{b}^{-1}(x)-\theta)\leq \widetilde{b}(\widetilde{b}^{-1}(x))=x .
 $$
 From equation~\eqref{a1b2:eq} we thus get
 $$
x-1\leq x-1+\theta< [\widetilde{a},\widetilde{b}](x)\leq x+\theta<x+1\ .
 $$
 Since the function $x\mapsto [\widetilde{a},\widetilde{b}](x)-x$ is periodic, this implies that there exists $\varepsilon>0$ such that
 $$
 x-1+\varepsilon\leq [\widetilde{a},\widetilde{b}](x)\leq x+1-\varepsilon
 $$
 for every $x\in\R$. An easy induction now implies that
 $$
 x-n+n\varepsilon\leq [\widetilde{a},\widetilde{b}]^n(x)\leq x+n-n\varepsilon\ ,
 $$
 for every $x\in\R$, $n\in\mathbb{N}$,
 so
 $$
 -1+\varepsilon = \lim_{n\to \infty} \frac{n+n\varepsilon}{n}\leq \lim_{n\to\infty} \frac{[\widetilde{a},\widetilde{b}]^n(0)}{n}\leq \lim_{n\to \infty} \frac{n-n\varepsilon}{n}=1-\varepsilon\ .
 $$
 By definition of $\rott$, this implies that 
 $$
 |e(\rho,\Sigma_{1,1},\bb\Sigma_{1,1})|=|\rott(\widetilde{\rho}([a,b]))|\leq 1-\varepsilon\ .
 $$
 \end{proof}

The previous lemma shows that elliptic elements prevent a representation of the punctured torus group from attaining the extremal Euler numbers. On the other hand, it
is a classical result that elliptic elements are in a sense the only obstruction for a subgroup of $\Isom^+ (\mathbb{H}^2)$ to be discrete. More precisely, we have
 the following classical characterization of geometric representations:

\begin{prop}\label{geometric:prop}
 Let $\rho\colon \G_g\to \Isom^+ (\mathbb{H}^2)$ be a representation. Then the following conditions are equivalent:
 \begin{enumerate}
  \item $\rho$ is geometric;
  \item $\rho$ is injective, and $\rho(\G_g)$ acts freely and properly discontinuously on $\mathbb{H}^2$;
  \item For every $g\in \G_g\setminus\{1\}$, the isometry $\rho(g)$ is not elliptic.
 \end{enumerate}
\end{prop}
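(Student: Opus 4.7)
\textbf{Proof plan for Proposition~\ref{geometric:prop}.} The implications (1) $\Rightarrow$ (2) and (2) $\Rightarrow$ (3) are essentially formal. For the first, if $\rho$ is a holonomy representation then $\rho(\G_g)$ is precisely the group of deck transformations of the Riemannian universal covering $\matH^2 \to \Sigma_g$ (or $\overline{\Sigma}_g$), and this action is automatically free, properly discontinuous, and faithful. For the second, I argue by contraposition: every elliptic element of $\Isom^+(\matH^2)$ fixes a point of $\matH^2$, so the existence of $g\in \G_g\setminus\{1\}$ with $\rho(g)$ elliptic would, combined with the injectivity of $\rho$, contradict the freeness of the action.

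The substantial direction is (3) $\Rightarrow$ (1), which I would carry out in three steps. \emph{Step 1: Discreteness.} I would invoke the classical fact that a finitely generated non-elementary subgroup of $\Isom^+(\matH^2)$ all of whose non-trivial elements are non-elliptic must be discrete (this can be obtained either via J{\o}rgensen's inequality or via an elementary commutator argument showing that non-discrete subgroups contain small elliptic elements). Non-elementarity of $\rho(\G_g)$ for $g\geq 2$ can be derived from the structure of surface groups: an elementary subgroup of $\Isom^+(\matH^2)$ is either conjugate into a one-parameter subgroup or stabilizes a point or pair of points of $\partial\matH^2$, and none of these possibilities is consistent with condition (3) together with the defining relation of $\G_g$ without producing either elliptic elements or a large kernel. \emph{Step 2: Free properly discontinuous action.} Since non-elliptic, non-trivial elements of $\Isom^+(\matH^2)$ act without fixed points on $\matH^2$, the action of $\rho(\G_g)$ is free, and together with discreteness this yields a free and properly discontinuous action. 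The quotient $X := \matH^2/\rho(\G_g)$ is therefore a complete hyperbolic surface.

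\emph{Step 3: Identification of the quotient.} This is the step I expect to be the main obstacle. It requires simultaneously that $\rho$ be injective and that $X$ be homeomorphic to $\Sigma_g$ (or $\overline{\Sigma}_g$), which amounts to showing both that $\ker\rho$ is trivial and that $X$ is compact. By construction $X$ is a $K(\rho(\G_g),1)$, while $\Sigma_g$ is a $K(\G_g,1)$; since $\G_g$ is torsion-free and has cohomological dimension $2$ with $H^2(\G_g,\Z)\neq 0$, any proper quotient of $\G_g$ would have strictly smaller cohomological dimension, incompatible with being the fundamental group of a hyperbolic surface whose universal cover is $\matH^2$. For compactness, the obstacle is to rule out cusps and funnel ends: both would force $\pi_1(X)$ to admit a free group as a subgroup or as a proper quotient whose rank can be computed from the Euler characteristic via Gauss--Bonnet, contradicting the fact that $\G_g$ is neither free nor admits such a splitting for $g\geq 2$. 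Concretely, the fundamental group of a finite-type orientable hyperbolic surface of negative Euler characteristic is either a closed surface group or a free group of rank $2g+n-1\geq 1$, and among these possibilities only the closed genus-$g$ case realizes $\G_g$. Once $X$ is identified with $\Sigma_g$ up to orientation, $\rho$ is by definition the holonomy of the hyperbolic structure on $X$, establishing (1).
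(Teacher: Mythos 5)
Your overall decomposition works, but you've overcomplicated two steps and introduced one genuine error that needs a fix.

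\emph{Non-elementarity.} Your plan for Step 1 invokes a classification of elementary subgroups by dynamics on $\partial\matH^2$ and then tries to rule each type out against the relation of $\G_g$. This can be made to work but is unnecessarily awkward; the paper's route is shorter and cleaner: every elementary subgroup of $\Isom^+(\matH^2)$ is solvable, while $\G_g$ is not solvable (it surjects onto a non-abelian free group), so $\rho(\G_g)$ cannot be elementary \emph{once injectivity is in hand}. But note this uses injectivity, so the paper proves (3) $\Rightarrow$ (2), getting injectivity first, \emph{then} discreteness.

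\emph{Injectivity.} This is where you have a real gap. You argue that a proper quotient of $\G_g$ ``would have strictly smaller cohomological dimension.'' That claim is false: for $g \geq 2$ there are surjections $\G_g \to \G_h$ with $h < g$ and non-trivial kernel (pinch a handle), and $\mathrm{cd}(\G_h)=2$ as well; the abelianization $\Z^{2g}$ is an even more dramatic counterexample, with $\mathrm{cd}=2g>2$. So your cohomological-dimension argument does not rule out a proper quotient, and the possibility $\rho(\G_g)\cong\G_h$ survives your Step~3 as written. The fix is both necessary and trivial, and it is the very first sentence of the paper's (3)$\Rightarrow$(2): the identity is an elliptic isometry, so if $\rho(g)=1$ for some $g\neq 1$ then $\rho(g)$ is elliptic, contradicting (3). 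Injectivity is immediate from (3) alone; there is nothing homological to do.

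\emph{Compactness / identification of the quotient.} Here you actually add content the paper leaves implicit. The paper compresses (2)$\Rightarrow$(1) into ``$\pi_1(\Sigma)\cong\G_g$, and it is well-known that this suffices,'' without saying why the quotient is closed. Your reasoning fills the gap correctly: the quotient of $\matH^2$ by a finitely generated Fuchsian group is a finite-type surface, whose fundamental group is either free or a closed surface group, and $\G_g$ is not free; hence the quotient is closed, and the classification of closed surfaces finishes. This is a legitimate improvement in explicitness over the paper's proof.

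So: fix the injectivity argument (replace the cohomological-dimension claim by the one-line observation that the identity is elliptic), consider swapping your elementary-subgroup case analysis for the solvability argument, and the rest stands.
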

\begin{proof}
 (1) $\Longrightarrow$ (3): 
 We know that, if $\rho$ is geometric, then $\rho(g)$ acts freely on $\mathbb{H}^2$ for every $g\in \G_g\setminus\{1\}$. But an element of $\Isom^+ (\mathbb{H}^2)$
 has fixed points if and only if it is elliptic, and this concludes the proof.

(3) $\Longrightarrow$ (2): 
Since the identity is an elliptic isometry, injectivity of $\rho$ is clear. Moreover, an isometry of the hyperbolic plane has fixed points if and only if it is elliptic,
so $\rho(\G_g)$ acts freely  on $\mathbb{H}^2$. It is readily seen that a subgroup of $\Isom^+ (\mathbb{H}^2)$ acts  properly discontinuously on $\mathbb{H}^2$ if and only if it is discrete,
so we are left to show that $\rho(\G_g)$ is indeed discrete. To this aim we exploit a very classical result on Fuchsian groups, which states that non-elementary subgroups
of $\Isom^+ (\mathbb{H}^2)$ that do not contain elliptic elements are discrete (see e.g.~\cite[Theorem 8.3.1]{Beardon} for a nice geometric proof of this fact, and for the definition
of elementary subgroup of $\Isom^+ (\mathbb{H}^2)$). Therefore, in order to conclude we just need to check that $\rho(\G_g)$ cannot be elementary.
It is known  that every elementary subgroup of $\Isom^+ (\mathbb{H}^2)$ is solvable. But $\rho(\G_g)\cong \G_g$ is not solvable
(for example, because it admits an epimorphism on the free group on two generators), and this concludes the proof.

(2) $\Longrightarrow$ (1): Our hypothesis implies that $\mathbb{H}^2/\rho(\G_g)$ is a hyperbolic surface $\Sigma$, and in order to conclude we just need to prove that
$\Sigma$ is homeomorphic to $\Sigma_g$. However, by construction $\pi_1(\Sigma)=\pi_1(\Sigma_	g)$, and it is well-known that this suffices to conclude
that $\Sigma$ and $\Sigma_g$ are homeomorphic.
\end{proof}

Henceforth and until the end of the chapter, every (co)homology module will be understood to be with \emph{real} coefficients.

\begin{lemma}\label{covering:euler}
 Let $p\colon \Sigma'\to\Sigma$ be a finite covering of degree $d\in\mathbb{N}$ between closed oriented surfaces, let $\rho\colon \G\to\omeo$ be a representation, and let 
 $\rho'=\rho\circ p_*\colon \G'\to\omeo$, where $\G=\pi_1(\Sigma)$, $\G'=\pi_1(\Sigma')$, and $p_*\colon \G'\to \G$ is the map induced by $p$. then
 $$
 e(\rho')=d\cdot e(\rho)\ .
 $$
\end{lemma}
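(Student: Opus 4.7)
The plan is to view the Euler number as a Kronecker pairing and use naturality with respect to the covering map $p$. First, recall that for a closed oriented surface $\Sigma$ with fundamental group $\G$, the Euler number $e(\rho)$ of a representation $\rho\colon \G\to\omeo$ can be computed as
\[
e(\rho)=\langle e_b^\R(\rho),[\Sigma]\rangle,
\]
where $e_b^\R(\rho)\in H^2_b(\Sigma,\R)$ is the image of the bounded Euler class $e_b(\rho)\in H^2_b(\G,\mathbb{Z})$ under the canonical isometric isomorphism $H^2_b(\G,\mathbb{Z})\cong H^2_b(\Sigma,\mathbb{Z})$ of Theorem~\ref{gro-iva:thm} composed with the change of coefficients map, and $[\Sigma]\in H_2(\Sigma,\R)$ is the real fundamental class. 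The analogous identity holds for $\rho'$ on $\Sigma'$.

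Next, by the definition $\rho'=\rho\circ p_\ast$, we have $e_b(\rho')=H^2_b(p_\ast)(e_b(\rho))$ at the level of group bounded cohomology, directly from the fact that the (bounded) Euler class of a representation is defined by pullback of the universal (bounded) Euler class in $H^2_b(\omeo,\Z)$. Translating this across the natural isomorphisms $H^2_b(\G,\R)\cong H^2_b(\Sigma,\R)$ and $H^2_b(\G',\R)\cong H^2_b(\Sigma',\R)$ (which are functorial under continuous maps inducing the given maps on fundamental groups, cf.~the naturality underlying Corollary~\ref{mapping:cor}), the same identity becomes
\[
e_b^\R(\rho')=H^2_b(p)(e_b^\R(\rho))\ \in\ H^2_b(\Sigma',\R).
\]

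Now I would invoke naturality of the Kronecker product: for any $\alpha\in H^2_b(\Sigma,\R)$ and $\beta\in H_2(\Sigma',\R)$,
\[
\langle H^2_b(p)(\alpha),\beta\rangle=\langle \alpha, H_2(p)(\beta)\rangle.
\]
Since $p\colon \Sigma'\to\Sigma$ has degree $d$, we have $H_2(p)([\Sigma'])=d\cdot [\Sigma]$ in $H_2(\Sigma,\R)$. Combining these facts,
\[
e(\rho')=\langle e_b^\R(\rho'),[\Sigma']\rangle=\langle H^2_b(p)(e_b^\R(\rho)),[\Sigma']\rangle=\langle e_b^\R(\rho),d\cdot [\Sigma]\rangle=d\cdot e(\rho).
\]

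The argument is essentially routine once the identifications are in place; the only potentially delicate step is checking the naturality of the isomorphism between the (bounded) cohomology of a space and that of its fundamental group under the map $p_\ast$ induced by $p$, but this follows directly from the explicit construction of this isomorphism via the chain map $r^\bullet$ of Lemma~\ref{normnon}, which is visibly functorial with respect to continuous maps.
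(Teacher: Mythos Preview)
Your proof is correct and follows essentially the same approach as the paper: express the Euler number as a Kronecker pairing, use naturality of the bounded Euler class under pullback along $p_*$, invoke naturality of the Kronecker product, and apply $H_2(p)([\Sigma'])=d\cdot[\Sigma]$. The paper's proof is just a more compressed version of the same argument.
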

\begin{proof}
 It readily follows from the definitions that $e_b^\R(\rho')=H^2_b(p_*)(e_b^\R(\rho))$, so
 (the identifications $H^2_b(\Sigma)=H^2_b(\G)$, $H^2_b(\Sigma')=H^2_b(\G')$ being understood):
\begin{align*}
 e(\rho')&=\langle e_b^\R(\rho'),[\Sigma']\rangle=\langle H^2_b(p_*)(e_b^\R(\rho)),[\Sigma']\rangle\\ &=
 \langle e_b^\R(\rho), H_2(p)([\Sigma'])\rangle=d\langle e_b^\R(\rho),[\Sigma]\rangle =d\cdot e(\rho)\ .
 \end{align*}
\end{proof}

We are now ready to prove Theorem~\ref{Goldman:thm}. So, let us suppose that $\rho\colon \G_{g}\to \Isom^+ (\mathbb{H}^2)$ is a maximal representation. Thanks to Proposition~\ref{geometric:prop},
it is sufficient to show that $\rho(g)$ is not elliptic for every $g\in\G_{g}\setminus\{1\}$. Assume by contradiction that there exists $\overline{g}\in \G_{g}\setminus\{1\}$ with $\rho(\overline{g})$ elliptic.
A result by Scott~\cite{Scott} ensures that we can find a finite covering $p\colon \Sigma'\to \Sigma_g$ and an element $g'\in\G'=\pi_1(\Sigma')$ such that $p_*(g')=\overline{g}$,
and $g'$ is represented by a simple loop $A$ in $\Sigma'$, where $p_*$ denotes the map induced by $p$ at the level of fundamental groups. Up to passing to a double covering of $\Sigma'$, we
can also assume that $A$ is separating.
Let $\rho'=\rho\circ p_*\colon \G'\to \Isom^+ (\mathbb{H}^2)$ be the representation induced by $\rho$.
If we denote by $d$ the degree of the covering $p$, then $\chi(\Sigma')=d\chi(\Sigma)$, while $e(\rho')=d\cdot e(\rho)$ by the previous lemma, so $\rho'$ is still maximal. 

Since $A$ is non-separating, there exists an embedded punctured torus $\Sigma_{1,1}\subseteq \Sigma'$ such that $A\subseteq \Sigma_{1,1}$ is the support of a standard generator of $\pi_1(\Sigma_{1,1})$. 
Let now $C\subseteq \Sigma'$ be the boundary of $\Sigma_{1,1}$, and denote by $\Sigma_1=\Sigma_{1,1}$ and $\Sigma_2$ the surfaces obtained by cutting $\Sigma'$ open along $C$.
Let also $\rho_i\colon \pi_1(\Sigma_i)\to \Isom^+ (\mathbb{H}^2)$ be the representations induced by $\rho'$, $i=1,2$, and observe that, if $a\in\pi_1(\Sigma_{1,1})$ is the standard generator corresponding
to the loop $A$, then $\rho_1(a)=\rho'(g')=\rho(\overline{g})$ is elliptic, so Lemma~\ref{punctured:torus} implies that 
$$
|e(\rho_1,\Sigma_1,\bb\Sigma_1)|<1=|\chi(\Sigma_1)|\ .
$$
On the other hand, Milnor-Wood inequalities for punctured surfaces (see Theorem~\ref{MW:boundary}) imply that 
$$
|e(\rho_2,\Sigma_2,\bb\Sigma_2)|\leq |\chi(\Sigma_2)|\ .
$$
By Proposition~\ref{additive:euler} we can conclude that
$$
|e(\rho',\Sigma',\bb\Sigma')|= |e(\rho_1,\Sigma_1,\bb\Sigma_1)| + |e(\rho_2,\Sigma_2,\bb\Sigma_2)|<|\chi(\Sigma_1)|+|\chi(\Sigma_2)|=|\chi(\Sigma')|\ ,
$$
i.e.~$\rho'$ is not maximal. This gives the desired contradiction, and concludes the proof of Theorem~\ref{Goldman:thm}.

\section{Further readings}\label{further:maximal}

\subsection*{Milnor-Wood inequalities}
Milnor-Wood inequalities provided the first effective tool to construct obstructions for a circle bundle to be flat, or 
for a rank-2 vector bundle to support a flat connection. The original proofs of Theorems~\ref{Milnor:thm} and~\ref{Wood:thm}, which are due respectively to Milnor and Wood, were based on 
the computation of Euler numbers described in Theorem~\ref{closed:formula}, together with an accurate analysis of the geometry of commutators in $\omeot$. Indeed, Milnor's and Wood's arguments contained \emph{in nuce}
the notion of quasimorphism, and exploited the fact that effective estimates may be provided for the evaluation of quasimorphisms on products of commutators. We refer the reader
to~\cite{Gold:survey} for a very nice survey about the ideas that originated from Milnor's and Wood's seminal work. 
 For an interpretation
of Milnor-Wood inequalities in terms of hyperbolic geometry see also~\cite{Mathews}.

\subsection*{Milnor-Wood inequalities and rotation numbers}
Theorems~\ref{euler:bd:formula} and~\ref{closed:formula} provide explicit formulas relating the Euler number of a representation to the rotation numbers
of (the images of) suitably chosen words in the generators of the fundamental group of a surface.
For example, as a consequence of Theorems~\ref{euler:bd:formula} and of Milnor-Wood inequalities for surfaces with boundary (see Theorem~\ref{MW:boundary}), we get that, if $F=\G_{g,1}$ is a free group
of rank $2g$ generated by $a_1,b_1,\ldots,a_g,b_g$ and $\rho\colon F\to\omeot$ is a representation,
then the maximal value that the quantity
$$
\rott(\rho([a_1,b_1]\cdot\ldots\cdot [a_g,b_g])
$$
can attain is precisely $|\chi(\Sigma_{g,1})|$. This result inscribes into a wider research area, which deals with the study of
which inequalities are satisfied by the rotation numbers of (specific) elements in $\rho(\G)$, where $\rho\colon \G\to \omeot$ is a fixed representation.
Several questions of this type are addressed in~\cite{calegari-walker}, where a particular attention is payed to the case when $\G$ is free.

\subsection*{Maximal representations}
The strategy adopted by Goldman in his proof of Theorem~\ref{Goldman:thm} is quite different from the one described here. Indeed, Goldman's approach to the study of the Euler number of representations
in $PSL(2,\mathbb{R})$ makes an intensive use of hyperbolic geometry, and is based on the study of sections of flat bundles associated to representations: in fact, it is not difficult to see
that a representation is the holonomy of a geometric structure if and only if the associated bundle admits a section which is transverse to the leaves of the canonical foliation. 

There exist by now several quite different
proofs of Theorem~\ref{Goldman:thm}. Two different  proofs of geometricity of maximal representations entirely based on techniques coming from bounded cohomology are described in~\cite{iozzi, BIW1}, while
a proof based on a complete different method is due to Calegari~\cite{calegari:max}.
In fact, the results proved in~\cite{iozzi, calegari:max} imply the following stronger result, which was first obtained by Matsumoto~\cite{Matsu}:

\begin{thm}\label{matsuiozzi}
Let $\rho\colon \G_g\to\omeo$ be a maximal representation. Then $\rho$ is semi-conjugate to a geometric representation into $\Isom^+ (\mathbb{H}^2)$.
\end{thm}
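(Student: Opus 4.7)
The plan is to combine Ghys' classification of circle actions up to semi-conjugacy (Theorem~\ref{Euler:thm}) with Goldman's Theorem~\ref{Goldman:thm} and Matsumoto's characterization via the canonical cocycle (Theorem~\ref{Matsu:char}). Given a maximal representation $\rho\colon \G_g\to\omeo$, I would first invoke Goldman's theorem to produce a geometric representation $\rho_0\colon \G_g\to \Isom^+(\mathbb{H}^2)\subset\omeo$ with $e(\rho_0)=e(\rho)=\pm(2g-2)$. Composing $\rho_0$ with the standard embedding $\Isom^+(\mathbb{H}^2)\hookrightarrow\omeo$ (the boundary action), I would then show that $\rho$ and $\rho_0$ are semi-conjugate in $\omeo$, and conclude. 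In view of Theorem~\ref{Euler:thm}, this reduces to showing $e_b(\rho)=e_b(\rho_0)$ in $H^2_b(\G_g,\Z)$, which I would attack via its real image $e_b^\R(\rho)=e_b^\R(\rho_0)$ together with an analysis of rotation numbers.

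For the real bounded Euler class, my approach is to compare the canonical homogeneous representatives provided by Proposition~\ref{homo:eu}. Since $\tau\in\overline{C}^2_b(\omeo,\R)$ is homogeneous, the pullback $\rho^*\tau$ (respectively $\rho_0^*\tau$) is the unique homogeneous representative of $e_b^\R(\rho)$ (respectively $e_b^\R(\rho_0)$). From $\|\tau\|_\infty\leq\tfrac12$ (Theorem~\ref{finale:mw}), $|e(\rho)|=2g-2$ and $\|\Sigma_g\|=4g-4$, the pairing $\langle e_b^\R(\rho),[\Sigma_g]\rangle=\pm(2g-2)$ is extremal against the seminorm, which by the duality argument of Proposition~\ref{prop:duality} forces $\rho^*\tau$ to attain its essential supremum on ``almost every'' ordered triple coming from a maximizing sequence of fundamental cycles. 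The same is true for $\rho_0^*\tau$, which is explicitly the orientation cocycle of the $\G_g$-action on $\partial\mathbb{H}^2\cong S^1$. Matching these two extremal, homogeneous, equivariant cocycles pointwise is the heart of the matter.

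Next I would verify the rotation-number hypothesis of Theorem~\ref{Matsu:char}. Since $H_1(\G_g,\Z)$ is generated by the standard loops $a_1,b_1,\ldots,a_g,b_g$, I only need $\rot(\rho(a_i))=\rot(\rho_0(a_i))=0$ and similarly for $b_i$; all non-identity elements of $\rho_0(\G_g)$ are hyperbolic isometries of $\mathbb{H}^2$ and therefore have vanishing rotation number (they fix points of $S^1$). To show that every $\rho(g)$ has rotation number $0$, I would adapt the cut-and-paste argument used to prove Goldman's Theorem~\ref{Goldman:thm} in the excerpt: if $\rot(\rho(g))\neq 0$ for some $g$, pass to a finite cover $p\colon \Sigma'\to\Sigma_g$ (using Scott's theorem) so that $g$ lifts to a simple separating loop $C$ bounding subsurfaces $\Sigma_1,\Sigma_2$, apply Proposition~\ref{additive:euler} and Theorem~\ref{MW:boundary} to obtain $|e(\rho',\Sigma',\partial\Sigma')|\leq|\chi(\Sigma_1)|+|\chi(\Sigma_2)|=|\chi(\Sigma')|$, and then use Proposition~\ref{boundary:euler} together with the fact that $\rot(\rho(g))\notin\Z$ (or the analogue of Lemma~\ref{punctured:torus} in the elliptic case) to force strict inequality in at least one summand, contradicting maximality of $\rho'$ (which holds by Lemma~\ref{covering:euler}).

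The main obstacle will be the pointwise equality $\rho^*\tau=\rho_0^*\tau$ of cocycles. The rotation-number step and Ghys' reduction are both quite clean given the machinery in the excerpt, but extracting pointwise information from the extremality of the real bounded Euler class requires controlling the structure of an extremal homogeneous cocycle. The natural strategy is to construct a measurable $\rho$-equivariant boundary map $\varphi\colon\partial\mathbb{H}^2\to S^1$ of degree one, and to show that $\rho^*\tau(g,g')=\rho_0^*\tau(g,g')$ because both count (with sign $\pm\tfrac12$) the cyclic order of the ordered triple $(\varphi(\xi),\rho(g)\varphi(\xi),\rho(g)\rho(g')\varphi(\xi))$ for almost every $\xi\in\partial\mathbb{H}^2$. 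Producing this boundary map from the extremality condition is the technical core, mirroring the role of the Furstenberg/Poisson boundary in the bounded-cohomological approaches of~\cite{iozzi,BIW1}; once it is in hand, pointwise equality of $\rho^*\tau$ and $\rho_0^*\tau$ follows, and Theorem~\ref{Matsu:char} delivers the desired semi-conjugacy.
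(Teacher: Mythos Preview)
The paper does not prove this theorem: it is stated in Section~\ref{further:maximal} (``Further readings'') with attribution to Matsumoto, Iozzi, and Calegari. The only argument sketched there follows Calegari~\cite{calegari:max}: there is a \emph{unique} class $\varphi\in H^2_b(\G_g,\R)$ with $\|\varphi\|_\infty=1$ and $\langle\varphi,[\Sigma_g]\rangle=2\chi(\Sigma_g)$, so all maximal representations share the same real bounded Euler class, and one then invokes Ghys. (That sketch has its own gap---Theorem~\ref{Euler:thm} requires the \emph{integral} bounded class to coincide---which is precisely the rotation-number issue your appeal to Theorem~\ref{Matsu:char} is designed to handle.) Your route via a boundary map is that of~\cite{iozzi,BIW1} rather than Calegari's, and you correctly locate the technical core.

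Your separate rotation-number argument, however, does not work as written. Lemma~\ref{punctured:torus} relies on $\rho(a)$ being conjugate in $\omeo$ to an actual rotation, so that a lift is the translation $x\mapsto x+\theta$; a general $f\in\omeo$ with $\rot(f)\neq 0$ admits no such conjugacy, and the key estimate $|\rott([\widetilde a,\widetilde b])|<1$ has no evident analogue. Your alternative via Proposition~\ref{boundary:euler} needs the curve to be \emph{separating} so that its rotation number does not cancel against that of the matching boundary component, but the standard generators $a_i,b_i$ are non-separating. Fortunately this whole step is redundant: once you have a degree-one increasing $\varphi$ with $\rho(g)\varphi=\varphi\rho_0(g)$, lifting (via Lemma~\ref{costante:lem}) and using that $\widetilde\varphi$ stays at bounded distance from the identity gives $\rot(\rho(g))=\rot(\rho_0(g))$ at once. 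The boundary map thus supplies both hypotheses of Theorem~\ref{Matsu:char} simultaneously.
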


Indeed, it is shown in~\cite{calegari:max}  that 
 there is a \emph{unique} element $\varphi\in H^2_b(\G_g,\R)$ 
 such that $\|\varphi\|_\infty=1$ and $\langle \varphi, [\Sigma_g]\rangle=2\chi(\Sigma_g)$. Therefore,
 if $\rho\colon \G_g\to\omeo$ is maximal, then necessarily $e_b(\rho)=\varphi/2$. Thanks to Ghys' Theorem, this implies in turn 
 that maximal representations lie in a unique semi-conjugacy class.

\subsection*{The real bounded Euler class}
Let us recall that the Euler number of a representation depends only on its \emph{real} bounded Euler class. It is interesting to investigate how much information is lost in passing from integral to real coefficients.
We have already described in Theorem~\ref{Matsu:char} a characterization of semi-conjugacy in terms of the real Euler bounded class (and, even more interestingly, in terms of its canonical representative $\tau$).
When $\G$ is a lattice in a locally compact second countable group $G$, 
in~\cite{Burger:ext} Burger described a complete characterization of extendability of representations $\rho\colon \G\to\omeo$ to the ambient group $G$
in terms of the real bounded Euler class of $\rho$, thus  providing a unified treatment of rigidity results of Ghys, Witte-Zimmer, Navas and Bader-Furman-Shaker. Among the results proved in~\cite{Burger:ext} there is also a
complete characterization of representations $\rho\colon \G\to\omeo$ for which $\|e_b^\R(\rho)\|_\infty=\|e_b^\R\|_\infty=1/2$.

\subsection*{Components of spaces of representations}
The Euler number provides a continuous map from $\Hom(\G_g,\omeo)$ to $\Z$. As a consequence, representations with distinct Euler number
necessarily lie in distinct connected components of $\Hom(\G_g,\omeo)$. If one restricts to considering representations with values in
$\Isom^+ (\mathbb{H}^2)$, then a celebrated result by Goldman says that if two elements in  $\Hom(\G_g,\Isom^+ (\mathbb{H}^2))$
have the same Euler number, then they can be connected by a path in $\Hom(\G_g,\Isom^+ (\mathbb{H}^2))$: therefore, 
the connected components of $\Hom(\G_g,\Isom^+ (\mathbb{H}^2))$ are \emph{precisely}
the preimages under the Euler number of the integers in $[\chi(\Sigma_g),-\chi(\Sigma_g)]$ (see~\cite{Gold:top}). Things get much more complicated when studying the connected components
of $\Hom(\G_g,\omeo)$. Theorem~\ref{matsuiozzi} implies that maximal representations define a connected component of 
$\Hom(\G_g,\omeo)$. Nevertheless, it is possible to construct (necessarily non-maximal) elements of $\Hom(\G_g,\omeo)$ which share the same Euler number, and which lie in distinct
connected components of $\Hom(\G_g,\omeo)$. Indeed, it is still unknown whether the number of the connected components of
$\Hom(\G_g,\omeo)$ is finite or not. We refer the reader to~\cite{Mann, Matsu:new} for recent results about this topic.

\subsection*{Surfaces with boundary}
In developing his theory, Goldman
also defined an Euler number for representations of fundamental groups of surfaces with boundary, and our definition of Euler number for such representations (which is taken from ~\cite{BIW1}) coincides in fact with Goldman's one.
However, Goldman's definition only works for representations that send peripheral loops into parabolic or hyperbolic elements (coherently with this fact and with Theorem~\ref{MW:boundary}, Goldman's Euler number
is always an integer), so it seems less suited to the study of deformations of geometric structures with cone angles at punctures (such structures necessarily have elliptic peripheral holonomy).

We have defined geometric representations only for fundamental groups of closed surfaces. Indeed, one can easily extend this notion to cover also the case of compact surfaces with boundary: 
a representation $\rho\colon \G_{g,n}\to \Isom^+ (\mathbb{H}^2)$ is \emph{geometric} if it is the holonomy representation of a complete finite area
hyperbolic structure with geodesic boundary on $\sgn'$, where $\sgn'$ is obtained from $\sgn$ by removing some of its boundary components (so that one cusp is appearing at each topological end of $\sgn'$). 
It is still easy to see that geometric representations realize the maximal Euler number. Moreover, just as in the closed case, also the converse is true: maximal representations are geometric also for surfaces
with boundary:

\begin{thm}[\cite{BIW1, calegari:max}]
 Let $\rho\colon \G_{g,n}\to \Isom^+ (\mathbb{H}^2)$ be a representation. Then $|e(\rho,\sgn,\bb\sgn)|=|\chi(\sgn)|$ if and only if $\rho$ is geometric.
\end{thm}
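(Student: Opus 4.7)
The strategy will parallel the proof of Theorem~\ref{Goldman:thm} for closed surfaces, with the Milnor--Wood inequality for surfaces with boundary (Theorem~\ref{MW:boundary}) playing the role of the closed Milnor--Wood inequality and Proposition~\ref{additive:euler} supplying the additivity needed to split off pieces. The easy direction, that a geometric representation $\rho$ is maximal, I would establish by reducing to the closed case via a doubling argument. Namely, let $\sgn'$ be the (possibly non-compact) surface on which $\rho$ is a holonomy; for each boundary component $C_i$ of $\sgn$ that survives in $\sgn'$, glue along $C_i$ a copy of $\sgn$ with reversed orientation, and fill in the remaining (cuspidal) ends by capping off with punctured disks to obtain a closed surface $\Sigma$ together with a representation $\rho_D\colon \pi_1(\Sigma)\to\Isom^+(\mathbb{H}^2)$ that restricts to $\rho$ on the original piece. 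By construction $\rho_D$ is geometric, so Proposition~\ref{Goldman:easy} gives $|e(\rho_D)|=|\chi(\Sigma)|=2|\chi(\sgn)|$, and Proposition~\ref{additive:euler} together with the observation that capped-off disk representations contribute zero Euler number (since they factor through the trivial group) then yields $|e(\rho,\sgn,\bb\sgn)|=|\chi(\sgn)|$.

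For the hard direction, I would assume $\rho$ is maximal and show first that no non-trivial element $\overline{g}\in\G_{g,n}$ has $\rho(\overline{g})$ elliptic. Suppose by contradiction that such $\overline{g}$ exists. By Scott's LERF theorem for surface groups (applicable equally to surfaces with boundary), there is a finite cover $p\colon \sgn^\prime\to\sgn$ and an element $g'\in\pi_1(\sgn^\prime)$ with $p_*(g')=\overline{g}^k$ for some $k\ge 1$ such that $g'$ is represented by a simple loop $A$ in $\sgn^\prime$; after passing to a further double cover I may assume $A$ is non-separating and lies in the interior. The pull-back representation $\rho'=\rho\circ p_*$ is maximal as well, by the boundary analogue of Lemma~\ref{covering:euler} (whose proof is the same: naturality of the real bounded Euler class together with the multiplicativity $H_2(p)([\sgn^\prime,\bb\sgn^\prime])=d\cdot [\sgn,\bb\sgn]$). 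I then carve out an embedded punctured torus $\Sigma_{1,1}\subseteq\sgn^\prime$ in which $A$ represents a standard generator; cutting $\sgn^\prime$ along $\partial\Sigma_{1,1}$ produces a surface $\Sigma_2$ (with boundary including the new curve). Proposition~\ref{additive:euler} (gluing along a separating curve) yields
\[
|e(\rho',\sgn^\prime,\bb\sgn^\prime)|\ \le\ |e(\rho'|_{\Sigma_{1,1}},\Sigma_{1,1},\bb\Sigma_{1,1})|+|e(\rho'|_{\Sigma_2},\Sigma_2,\bb\Sigma_2)|.
\]
Since $\rho'(g')=\rho(\overline{g})^k$ is a non-trivial power of an elliptic, it is itself elliptic (or the identity, which is excluded since $\rho$ is injective on the cyclic group generated by $\overline{g}$ in the argument), so Lemma~\ref{punctured:torus} gives $|e(\rho'|_{\Sigma_{1,1}},\Sigma_{1,1},\bb\Sigma_{1,1})|<1=|\chi(\Sigma_{1,1})|$, while Theorem~\ref{MW:boundary} gives $|e(\rho'|_{\Sigma_2},\Sigma_2,\bb\Sigma_2)|\le|\chi(\Sigma_2)|$. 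Since $|\chi(\Sigma_{1,1})|+|\chi(\Sigma_2)|=|\chi(\sgn^\prime)|$, this contradicts maximality of $\rho'$.

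Having shown that $\rho$ sends every non-trivial element to a non-elliptic isometry, the argument of Proposition~\ref{geometric:prop} adapts: the image $\rho(\G_{g,n})$ is torsion-free and non-elementary (as $\G_{g,n}$ surjects onto a non-abelian free group), hence discrete by the Fuchsian-group criterion, and acts freely and properly discontinuously on $\mathbb{H}^2$. The quotient $\mathbb{H}^2/\rho(\G_{g,n})$ is then a complete finite-area hyperbolic surface with geodesic boundary whose fundamental group is isomorphic to $\G_{g,n}$; comparing with the classification of surfaces this quotient must be obtained from $\sgn$ by removing some (possibly empty) subset of boundary components, exhibiting $\rho$ as the holonomy of the corresponding geometric structure. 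The main obstacle will be pinning down exactly which boundary components become cusps versus geodesic boundary in the resulting structure and matching this with the abstract isomorphism type: one has to use Theorem~\ref{euler:bd:formula} (applied to both $\rho$ and to the known geometric holonomy on the quotient) to verify that the two representations are actually equivalent as representations of $\G_{g,n}$ and not merely that they have isomorphic images, and the finite-area hypothesis has to be extracted from the non-ellipticity of peripheral elements together with a maximality-controlled argument ruling out the possibility that $\rho$ send a peripheral generator to the identity.
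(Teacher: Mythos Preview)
The paper does not actually prove this theorem: it appears in the Further Readings section with citations to \cite{BIW1} and \cite{calegari:max}, but no argument is given.  Only the closed case (Theorem~\ref{Goldman:thm}) is proved in detail, and the paper merely asserts that ``it is still easy to see that geometric representations realize the maximal Euler number'' and that ``maximal representations are geometric also for surfaces with boundary''.

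Your hard direction is the natural extension of the paper's proof of Theorem~\ref{Goldman:thm}, and the main steps (Scott, carving out a punctured torus containing the offending simple curve, Lemma~\ref{punctured:torus}, additivity via Proposition~\ref{additive:euler}, Theorem~\ref{MW:boundary} on the complementary piece) do go through to show that no non-trivial element has elliptic image.  You have correctly located the real obstacle after that point: since $\G_{g,n}$ is free, knowing that $\rho$ is injective with discrete, torsion-free, non-elementary image only tells you that $\mathbb{H}^2/\rho(\G_{g,n})$ is a hyperbolic surface with free fundamental group of the right rank --- it does \emph{not} pin down the topological type, nor does it show finite area, nor does it identify the peripheral conjugacy classes of the quotient with those of $\sgn$.  (Remark~\ref{differ:rem} already warns that the Euler number is sensitive to exactly this identification.)  In \cite{BIW1} this is handled via a pair-of-pants decomposition of $\sgn$: maximality together with additivity forces each pair of pants to be maximal, and for a pair of pants one checks directly that maximality forces the peripheral elements to be hyperbolic or parabolic with the correct configuration.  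Your final paragraph acknowledges this gap without closing it.

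Your easy direction is flawed as written.  You cannot ``cap off'' cuspidal ends with punctured disks to get a closed surface: a punctured disk has the homotopy type of a circle, so the glued-up space is still open, and a parabolic peripheral element cannot be killed by attaching any disk.  Doubling along the geodesic boundary components alone produces a representation of a closed surface group only when \emph{all} ends are geodesic boundary; when cusps are present, no such doubling is available.  A clean route is to bypass doubling entirely: decompose $\sgn$ into pairs of pants, use additivity (Proposition~\ref{additive:euler}), and compute directly via Theorem~\ref{euler:bd:formula} that a geometric pair-of-pants holonomy has Euler number $\pm 1$.
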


In the case with boundary, Calegari's characterization of bounded classes of unitary norm taking 
maximal value on the (relative) fundamental class of the
surface may be restated as follows:
there is a unique
homogeneous
quasimorphism on $\G_{g,n}$ of defect 1 which takes the maximal value on the
boundary
conjugacy class (or classes if $n\geq 2$).
Indeed, the case of
closed surfaces easily follows from that of bounded surfaces, which is the main topic of study in~\cite{calegari:max}.

\subsection*{Milnor-Wood inequalities in higher dimensions}
Milnor-Wood inequalities have been generalized in several directions by now. 
Namely, Goldman's Theorem implies that, at least in the case of fundamental groups of closed surfaces, maximal representations in $\Isom^+ (\mathbb{H}^2)$ define a connected component of the whole space of representations, and 
their conjugacy classes are in natural bijection with Teichm\"uller space. When replacing $\Isom^+ (\mathbb{H}^2)$ with a semisimple Lie group $G$, one may wonder
whether some components (or specific subsets, when $\Sigma$ is non-compact) of $\Hom(\pi_1(\Sigma),G)$ can play the role of (a suitable analogue of) Teichm\"uller space. This circle of ideas has lead
to what is currently known as \emph{Higher Teichm\"uller theory}. It is not possible to list here even a small number of papers that could introduce the reader to the subject.
For an approach which is very closely related to the topics developed in this book, we just refer to the beautiful survey~\cite{BIW2}.

\chapter{The bounded Euler class in higher dimensions \\and the Chern conjecture}
In the previous chapter we have discussed some applications 
of the boundedness of the Euler class
to the study of flat topological circle bundles.
We now analyze the case
of flat $n$-sphere bundles for $n\geq 2$. We concentrate here on the case of \emph{linear} sphere bundles,
which is better understood.

The first description of a uniformly bounded representative for the (simplicial) Euler class of a flat linear sphere bundle
$E$ is due to Sullivan~\cite{Sullivan}. Since $E$ is linear, we have that $E=S(V)$ for some
flat rank-$(n+1)$ vector bundle (see Remark~\ref{flatvector} for the definition of flat vector bundle). Sullivan observed that,
in order to compute the Euler class of $E$, one may analyze just \emph{affine} sections of $V$ 
over singular simplices, where the word affine makes sense exactly because $V$ is a flat vector bundle.
Using this, he proved that the Euler class of a flat linear bundle over a simplicial complex is represented by a 
\emph{simplicial} cocycle of norm at most one. 
Of course, any simplicial cochain on a compact simplicial complex is bounded.
However, the fact that the norm of Sullivan's cocycle is bounded by $1$ already implies
that, if $M$ is a triangulated oriented manifold of dimension $(n+1)$, then the Euler number of $E$ is bounded by the number of top-dimensional
simplices in a triangulation of $M$. 
In order to get better estimates, one
 would like to promote the bounded simplicial Euler cocycle
to a bounded singular cocycle. This would also allow to replace the number of top-dimensional
simplices in a triangulation of $M$ with the simplicial volume of $M$ in the above upper bound for the Euler number of any flat linear sphere bundle on $M$.
This can be done essentially in two ways: one could either invoke a quite technical result by Gromov~\cite[\S 3.2]{Gromov}, which ensures that the bounded cohomology (with real coefficients)
of $M$ is isometrically isomorphic to the simplicial bounded cohomology of a suitable multicomplex $K(M)$, thus reducing computations in singular cohomology
to computations in simplicial cohomology, or explicitly describe a bounded singular cocycle representing the Euler class. 
Here we describe the approach to
the second strategy developed by Ivanov and Turaev in~\cite{IvTu} (we refer the reader also to \cite{BuMo} for stronger results in this direction).

Finally, we show how the study of the bounded Euler class of flat linear bundles may be exploited to get partial results towards the Chern conjecture, which predicts that the Euler characteristic
of a closed affine manifold should vanish.

\section{Ivanov-Turaev cocycle}
Let $\pi\colon E\to M$ be a flat linear $n$-sphere bundle, and recall that $E=S(V)$ for some
flat rank-$(n+1)$ vector bundle. We will prove that, just as in the case of flat topological circle bundles, the Euler class of $E$ admits a bounded representative, which is equal to the pull-back of a 
``universal'' bounded Euler class 
in the bounded cohomology of the structure group of the bundle. This section is devoted to the construction of such an element
$[\E]_b\in H^{n+1}_b(\GL^+(n+1,\R),\R)$, as described by Ivanov and Turaev in~\cite{IvTu}.

Recall that a subset $\Omega\subseteq \R^{n+1}$ is \emph{in general position} if, for every subset $\Omega_k$ of $\Omega$ with
exactly $k$ elements, $k\leq n+2$, the dimension of the smallest affine subspace
of $\R^{n+1}$ containing  $\Omega_k$ is equal to $k-1$.

\begin{defn}
Let $N$ be an integer (in fact, we will be interested only in the case $N\leq n+1$, and mainly in the case $N=n+1$).
 An $(N+1)$-tuple $(v_0,\ldots,v_{N})\in (\R^{n+1})^{N+1}$ is \emph{generic} if the following condition holds: for every
 $(\varepsilon_0,\ldots,\varepsilon_{N})\in \{\pm 1\}^{N+1}$,
the set $\{0,\varepsilon_0 v_0,\ldots,\varepsilon_{N}v_{N}\}$ 
is in general position
(in particular, the set $\{0,\varepsilon_0 v_0,\ldots,\varepsilon_{N}v_{N}\}$  consists of $N+2$ distinct points).
\end{defn}

Point (6) of the following lemma will prove useful to get representatives of the Euler class of small norm, as suggested
in~\cite{Smillie}.

\begin{lemma}\label{Smillie:lemma}
Let $(v_0,\ldots,v_{n+1})$ be a generic $(n+2)$-tuple in $\R^{n+1}$.
For every $I=(\varepsilon_0,\ldots,\varepsilon_{n+1})\in \{\pm 1\}^{n+2}$,
let $\sigma_I\colon \Delta^{n+1}\to \R^{n+1}$
be the affine map defined by $$\sigma_I(t_0,\ldots,t_{n+1})=\sum_{i=0}^{n+1} t_i\varepsilon_iv_i\ ,
$$
and let $S\cong S^n$ be the sphere
of rays of $\R^{n+1}$. Then:
\begin{enumerate}
 \item The restriction of $\sigma_I$ to $\partial\Delta^{n+1}$ takes values in $\R^{n+1}\setminus \{0\}$, thus defining
 a map $\hat{\sigma}_I\colon \partial\Delta^{n+1}\to S$.
 \item The map $\sigma_I$ is a smooth embedding.
 \item If the image of $\sigma_I$ does not contain $0$, then $\deg \hat{\sigma}_I=0$.
 \item If the image of $\sigma_I$ contains $0$ and $\sigma_I$ is orientation-preserving, then
 $\deg \hat{\sigma}_I=1$.
 \item If the image of $\sigma_I$ contains $0$ and $\sigma_I$ is orientation-reversing, then
 $\deg \hat{\sigma}_I=-1$.
 \item 
 There exist exactly two elements $I_1,I_2\in \{\pm 1\}^{n+2}$ such that the image of $\sigma_{I_j}$
contains $0$ for $j=1,2$. For such elements we have $\sigma_{I_1}(x)=-\sigma_{I_2}(x)$ for every $x\in\partial\Delta^{n+1}$.
 \end{enumerate}
\end{lemma}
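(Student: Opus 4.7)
My plan is to handle claims (1) and (2) by direct appeal to the general position hypothesis, obtain (3) by an elementary extension argument, handle (4) and (5) via a standard local-degree computation, and settle (6) by extracting the admissible sign patterns from the unique linear relation among the $v_i$'s. The only step requiring real care will be (6); the rest should be essentially mechanical.

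For (1), I will consider a point of $\partial\Delta^{n+1}$ with barycentric coordinates $(t_0,\ldots,t_{n+1})$ having some $t_i=0$ and $\sum_{j\neq i} t_j = 1$; if $\sigma_I$ vanished there, then $0$ would be an affine combination of the $n+1$ vectors $\{\varepsilon_j v_j\}_{j\neq i}$, so the $(n+2)$-element set $\{0\}\cup\{\varepsilon_j v_j\}_{j\neq i}$ would have affine span of dimension at most $n$, contradicting general position. For (2), applying general position to the $(n+2)$-subset $\{\varepsilon_0v_0,\ldots,\varepsilon_{n+1}v_{n+1}\}$ yields affine independence of these vectors, so $\sigma_I$ is an injective affine map from a compact source, hence a smooth embedding. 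Claim (3) is then the observation that, in the absence of $0$ from the image, the radial projection $x\mapsto\sigma_I(x)/|\sigma_I(x)|$ extends $\hat{\sigma}_I$ over the contractible disk $\Delta^{n+1}$, killing its degree.

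For (4) and (5), I plan to invoke a standard local-degree argument. By (2), $\sigma_I$ is an embedding onto the closed $(n+1)$-simplex spanned by the $\varepsilon_iv_i$, and by (1) combined with the hypothesis $0\in\sigma_I(\Delta^{n+1})$, the origin lies in the interior of this image. Choosing a small ball $B$ around the unique preimage $\sigma_I^{-1}(0)$, I can homotope $\partial\Delta^{n+1}$ to $\partial B$ inside $\sigma_I(\Delta^{n+1})\setminus\{0\}$ and identify $\deg\hat{\sigma}_I$ with the local degree of $\sigma_I$ at $\sigma_I^{-1}(0)$. Since $\sigma_I$ is affine and its linear part is an isomorphism, this local degree is $+1$ if $\sigma_I$ preserves orientation and $-1$ otherwise.

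The main substantive point is (6). Because any $n+1$ of the vectors $v_0,\ldots,v_{n+1}$ span $\R^{n+1}$ — a fact extracted from general position applied to $\{0\}$ together with any $n+1$ of the $v_i$'s — there is a linear relation $\sum_i\mu_iv_i=0$, unique up to scalar, with all $\mu_i\neq 0$. The condition $0\in\sigma_I(\Delta^{n+1})$ says that there exist $\lambda_i\geq 0$ with $\sum_i\lambda_i=1$ satisfying $\sum_i\lambda_i\varepsilon_iv_i=0$; by uniqueness of the linear relation, this forces $\lambda_i\varepsilon_i=c\mu_i$ for some $c\neq 0$, i.e.~$\varepsilon_i=\operatorname{sign}(c)\operatorname{sign}(\mu_i)$ for every $i$. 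Thus exactly two sign patterns are admissible, namely $I_1=(\operatorname{sign}(\mu_i))_i$ (with $c=1/\sum|\mu_i|>0$) and $I_2=-I_1$ (with $c=-1/\sum|\mu_i|<0$); since $I_2=-I_1$, the equality $\sigma_{I_2}(x)=-\sigma_{I_1}(x)$ then holds on all of $\Delta^{n+1}$, and in particular on $\partial\Delta^{n+1}$.
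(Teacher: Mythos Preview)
Your proof is correct and follows essentially the same route as the paper's. The only minor difference is in (4) and (5): you invoke a local-degree computation at the single preimage of $0$, whereas the paper argues directly that the image of $\sigma_I$ is convex (hence star-shaped with respect to $0$), so $\hat{\sigma}_I\colon\partial\Delta^{n+1}\to S$ is a bijection, hence a homeomorphism whose degree is determined by the orientation behaviour of $\sigma_I$. Both arguments are standard and of comparable length; your treatment of (6) is identical to the paper's.
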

\begin{proof}
 (1) and (2) follow from the fact that 
$\{0,\varepsilon_0 v_0,\ldots,\varepsilon_{n+1} v_{n+1}\}$ is in general position.
If the image of $\sigma_I$ does not contain
$0$, then $\hat{\sigma}_I$ continuously extends to a map from $\Delta^{n+1}$ to $S$, and this implies (3). Suppose now that the image of $\sigma_I$ contains $0$.
Since $\{0,\varepsilon_0 v_0,\ldots,\varepsilon_{n+1} v_{n+1}\}$ is in general position, $0$ cannot belong to the image of any face of $\Delta^{n+1}$, so the image
of $\sigma_I$ contains a neighbourhood of $0$ in $\R^n$, and $\hat{\sigma}_I\colon \partial \Delta^{n+1}\to S$
is surjective. Since $\sigma_I$ is an embedding and the image
of $\sigma_I$
is star-shaped with respect to $0$, the map $\hat{\sigma}_I\colon \partial \Delta^{n+1}\to S$
is also injective, so it is a homeomorphism. Moreover, $\hat{\sigma}_I$ is orientation-preserving if and only if $\sigma_I$ is, and this concludes the proof of (4) and (5). 

Let us prove (6). Since $v_0,\ldots,v_{n+1}$ are linearly dependent in $\R^{n+1}$, we have
$\sum_{i=0}^{n+1} \alpha_iv_i=0$ for some $(\alpha_0,\ldots,\alpha_{n+1})\in \R^{n+2}\setminus \{0\}$. 
Since the set $\{0,v_0,\ldots, v_{n+1}\}$ is in general position,
we have $\alpha_i\neq 0$ for every $i$, and, if $\sum_{i=0}^{n+1}\lambda_iv_i=0$, then
$(\lambda_0,\ldots,\lambda_{n+1})=\mu(\alpha_0,\ldots,\alpha_{n+1})$ for some $\mu\in \R$.
We set
$$
\overline{t}_i=\frac{|\alpha_i|}{\sum_{i=0}^{n+1} |\alpha_i|}\, \in\, [0,1]\, ,\qquad \varepsilon^1_i={\rm  sign}(\alpha_i)\ .
$$
Since $\sum_{i=0}^{n+1} \overline{t}_i \varepsilon^1_i v_i=0$ and $\sum_{i=0}^{n+1} \overline{t}_i=1$, 
if we set $I_1=(\varepsilon^1_0,\ldots,\varepsilon^1_{n+1})$, $I_2=(-\varepsilon^1_0,\ldots,-\varepsilon^1_{n+1})$, then
$0$ belongs to the image of $\sigma_{I_1}$ and $\sigma_{I_2}$, and $\sigma_{I_1}(x)=-\sigma_{I_2}(x)$ for every $x\in\partial\Delta^{n+1}$.

Suppose now that $\sum_{i=0}^{n+1} t_i\varepsilon_i v_i=0$, where $t_i\in [0,1]$, $\sum_{i=0}^{n+1} t_i=1$. Then there exists $\mu\in\R$
such that $t_i\varepsilon_i=\mu \overline{t}_i \varepsilon^1_i$ for every $i$. 
This readily implies that $\varepsilon_i=\varepsilon_i^1$ for every $i$ 
(if $\mu>0$), or  $\varepsilon_i=-\varepsilon_i^1$ for every $i$  (if $\mu<0$; observe that $\mu=0$ is not possible since $t_i\neq 0$ for some $i$).
In other words, $(\varepsilon_0,\ldots,\varepsilon_{n+1})$ is equal either to $I_1$ or to $I_2$, and this concludes the proof.
\end{proof}


Until the end of the section we simply denote by $G$ the group $\GL^+(n+1,\R)$, and
by $D$ the closed unit ball in $\R^{n+1}$. We will understand that $D$ is endowed with the standard
Lebesgue measure (and any product $D^k$ is endowed with the product of the Lebesgue measures of the factors).

\begin{defn}
Take $\overline{g}=(g_0,\ldots,g_{n+1})\in G^{n+2}$. We say that an $(n+2)$-tuple  $(v_0,\ldots,v_{n+1})\in D^{n+2}$
is $\overline{g}$-\emph{generic} if $(g_0v_0,\ldots,g_{n+1}v_{n+1})$ is generic.
\end{defn}

\begin{lemma}\label{full:measure:lemma}
For every $\overline{g}\in G^{n+2}$,
the set of $\overline{g}$-generic $(n+2)$-tuples 
has full measure in $D^{n+2}$.
\end{lemma}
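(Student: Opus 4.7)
The plan is to unfold the definition of genericity into a finite collection of polynomial inequalities on the coordinates of $(v_0,\ldots,v_{n+1})\in (\R^{n+1})^{n+2}$, and then to argue that each of the corresponding bad sets is the zero locus of a nontrivial polynomial, hence of Lebesgue measure zero in $D^{n+2}$.

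First I would fix an arbitrary sign vector $\varepsilon=(\varepsilon_0,\ldots,\varepsilon_{n+1})\in\{\pm 1\}^{n+2}$ and a subset $S\subseteq\{0,\ldots,n+1\}$, and look separately at the two shapes of subsets $\Omega_k$ appearing in the definition: those of the form $\{\varepsilon_i g_i v_i:i\in S\}$ with $|S|=k$, and those of the form $\{0\}\cup\{\varepsilon_i g_i v_i:i\in S\}$ with $|S|=k-1$. In each case, $\Omega_k$ failing to be in general position is the condition that the affine span of $\Omega_k$ has dimension at most $k-2$, which is equivalent to the vanishing of all $(k-1)\times(k-1)$ minors of a suitable matrix whose entries are linear in the entries of the $v_i$, $i\in S$ (with coefficients depending only on $\overline g$ and the signs). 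Thus the set of $(v_0,\ldots,v_{n+1})$ failing this particular condition is the common zero locus of finitely many polynomials in the coordinates; in particular it is a closed algebraic subset of $(\R^{n+1})^{n+2}$.

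Next I would show that each such algebraic subset is proper, i.e.\ that at least one of the defining polynomials is not identically zero. Here I exploit the invertibility of the $g_i$: it is a classical (and elementary) fact that the set of $(u_0,\ldots,u_{n+1})\in(\R^{n+1})^{n+2}$ for which every subset of $\{0,\varepsilon_0 u_0,\ldots,\varepsilon_{n+1}u_{n+1}\}$ of size at most $n+2$ is in general position is nonempty (e.g.\ pick the $u_i$ to lie on the moment curve). Setting $v_i=g_i^{-1}u_i$ yields a point in $(\R^{n+1})^{n+2}$ that is $\overline g$-generic, so none of the finitely many bad algebraic conditions holds identically. Consequently each bad subset is contained in the zero locus of a nontrivial polynomial on $(\R^{n+1})^{n+2}$, hence has Lebesgue measure zero there, and therefore also in $D^{n+2}$.

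Finally I would take the union of the bad sets over the finitely many choices of $\varepsilon$, of $S$, and of the shape of $\Omega_k$; this union is still of Lebesgue measure zero, so its complement — the set of $\overline g$-generic tuples — has full measure in $D^{n+2}$. I do not anticipate any serious obstacle: the only point that requires a moment of care is checking that the nontriviality of the defining polynomials on all of $(\R^{n+1})^{n+2}$ transfers to nontriviality on $D^{n+2}$, which is immediate because $D^{n+2}$ has nonempty interior and nontrivial polynomials cannot vanish on an open set.
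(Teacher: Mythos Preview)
Your proposal is correct and follows essentially the same approach as the paper: for each sign vector $\varepsilon$, the failure of $\{0,\varepsilon_0 g_0 v_0,\ldots,\varepsilon_{n+1} g_{n+1} v_{n+1}\}$ to be in general position is a proper algebraic condition, hence a measure-zero set, and one intersects over the finitely many $\varepsilon$. The paper's proof is a two-line sketch of exactly this; you simply spell out more of the details (the minor decomposition, the moment-curve witness, the passage from $(\R^{n+1})^{n+2}$ to $D^{n+2}$). One small wording point: the moment-curve choice you describe produces, for each fixed $\varepsilon$, a tuple avoiding the bad set for \emph{that} $\varepsilon$, not a single tuple that is $\overline g$-generic for all $\varepsilon$ at once --- but since you only need each bad set to be proper, this is already enough and the rest of your argument goes through unchanged.
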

\begin{proof}
Let $\overline{g}=(g_0,\ldots,g_{n+1})$.
For every fixed $(\varepsilon_0,\ldots,\varepsilon_{n+1})\in \{\pm 1\}^{n+2}$, the subset of $(\R^{n+1})^{n+2}$ of elements $(v_0,\ldots,v_{n+1})$
such that $$\{0,\varepsilon_0g_0(v_0),\ldots,\varepsilon_{n+1}g_{n+1}(v_{n+1})\}$$ consists of $n+3$ distinct points in general position
is the non-empty complement of a real algebraic subvariety of
$(\R^{n+1})^{n+2}$, so it has full measure in $(\R^{n+1})^{n+2}$. Therefore, being the intersection of a finite number of full measure sets,  
the set of $\overline{g}$-generic $(n+2)$-tuples has itself full measure.
\end{proof}

For convenience, in this section we work mainly with cohomology with \emph{real} coefficients (but see Corollary~\ref{Zbounded}).

We are now ready to define the Euler cochain $\E\in C^{n+1}_b(G,\R)$. First of all, for every $(n+2)$-tuple
$\overline{v}=(v_0,\ldots,v_{n+1})\in (\R^{n+1})^{n+2}$ we define a value $t(\overline{v})\in \{-1,0,1\}$ as follows.
If the set $\{0,v_0,\ldots,v_{n+1}\}$ is not in general position, then $t(\overline{v})=0$.
Otherwise,
if $\sigma_{\overline{v}}\colon \Delta^{n+1}\to \R^{n+1}$ is the affine embedding with vertices $v_0,\ldots,v_{n+1}$, then:
$$
t(\overline{v})=\left\{\begin{array}{cl}
                       1 & {\rm if}\ 0\in {\rm Im}\, \sigma_{\overline{v}}\quad {\rm and}\  \sigma_{\overline{v}}\ \ {\rm is\ positively\ oriented}\\
                       -1 & {\rm if}\ 0\in {\rm Im}\, \sigma_{\overline{v}}\quad {\rm and}\  \sigma_{\overline{v}}\ \ {\rm is\ negatively\ oriented}\\
                       0 & {\rm otherwise}\ .
                      \end{array}\right.
                      $$
Then, for every $(g_0,\ldots,g_{n+1})\in G^{n+2}$ we set
$$
\E(g_0,\ldots,g_n)=\int_{D^{n+2}} t(g_0v_0,\ldots,g_{n+1}v_{n+1})\, dv_0\ldots dv_{n+1}\ .
$$
                      
\begin{lemma}\label{Ecycle}
 The element $\E\in C^{n+1}_b(G,\R)$ is a $G$-invariant alternating cocycle. Moreover:
 \begin{enumerate}
  \item $\|\E\|_\infty\leq 2^{-n-1}$, and $\E=0$ if $n$ is even.
  \item $\E(g_0,\ldots,g_{n+1})=0$ if 
  there exist $i\neq j$ such that 
  $g_i,g_j\in SO(n+1)$.
 \end{enumerate}
\end{lemma}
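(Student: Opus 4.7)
My plan is to verify each of the four claims by reducing an assertion about the integral $\E(\overline g)$ to a pointwise statement about $t$ on almost every $(n+2)$-tuple, and then applying Lemma~\ref{full:measure:lemma} together with standard changes of variables on $D^{n+2}$. For the stated bound to come out as $2^{-n-1}$ I normalize $D$ so that $\vol(D)=1$. Well-definedness of $\E$ and the crude estimate $|\E(\overline g)|\leq 1$ are immediate from Lemma~\ref{full:measure:lemma}. For alternation, given $\tau\in\mathfrak{S}_{n+2}$ the substitution $v_i\mapsto v_{\tau^{-1}(i)}$ preserves the product Lebesgue measure, and reordering the vertices of the generic affine simplex $[g_0v_0,\ldots,g_{n+1}v_{n+1}]$ multiplies $t$ by $\mathrm{sgn}(\tau)$; this gives $\E(g_{\tau(0)},\ldots,g_{\tau(n+1)})=\mathrm{sgn}(\tau)\E(\overline g)$. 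For $G$-invariance, any $g\in G$ fixes $0$ and preserves both general position and orientation, so $t(gw_0,\ldots,gw_{n+1})=t(w_0,\ldots,w_{n+1})$ almost everywhere, hence $\E(gg_0,\ldots,gg_{n+1})=\E(g_0,\ldots,g_{n+1})$.

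The cocycle relation rests on the pointwise identity
\[
\sum_{i=0}^{n+2}(-1)^i\, t(w_0,\ldots,\widehat{w_i},\ldots,w_{n+2})=0
\]
for almost every $(w_0,\ldots,w_{n+2})\in(\R^{n+1})^{n+3}$. I would prove this by Stokes' theorem applied to the closed pullback $\sigma^*\omega$ on $\Delta^{n+2}$, where $\omega$ is a unit-mass volume form on $S$ pulled back to $\R^{n+1}\setminus\{0\}$ and $\sigma=[w_0,\ldots,w_{n+2}]\colon\Delta^{n+2}\to\R^{n+1}$ is the affine singular simplex spanned by the $w_i$. By genericity $\sigma^{-1}(0)$ is a smooth $1$-submanifold of $\mathrm{int}(\Delta^{n+2})$ without boundary (the general position hypothesis gives $0\notin\sigma(\partial\Delta^{n+2})$). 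Removing a small tubular neighbourhood of $\sigma^{-1}(0)$ and applying Stokes identifies $\int_{\partial\Delta^{n+2}}\sigma^*\omega$ with the integral over the inner tubular boundary, which tends to zero with the radius of the tube because $\sigma^*\omega$ is bounded on a compact collar of $\sigma^{-1}(0)$ and the boundary's $n$-dimensional volume shrinks to zero; Lemma~\ref{Smillie:lemma}(3)--(5) then identifies $\int_{\partial_i\Delta^{n+2}}\sigma^*\omega$ with $\pm t(w_0,\ldots,\widehat{w_i},\ldots,w_{n+2})$, giving the desired pointwise identity. Once it is in hand, writing each summand $\E(g_0,\ldots,\widehat{g_i},\ldots,g_{n+2})$ as an integral over $D^{n+3}$ (with the missing $v_i$ as a dummy of unit mass) and applying Fubini produces $\delta^{n+1}\E=0$.

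For the norm bound and the even-dimensional vanishing I apply Smillie's symmetrization: the substitutions $v_i\mapsto\varepsilon_iv_i$, $\varepsilon_i\in\{\pm 1\}$, preserve Lebesgue measure on $D$, so averaging over all $I=(\varepsilon_0,\ldots,\varepsilon_{n+1})\in\{\pm 1\}^{n+2}$ yields
\[
\E(g_0,\ldots,g_{n+1})=\frac{1}{2^{n+2}}\int_{D^{n+2}}\sum_{I\in\{\pm 1\}^{n+2}}t(g_0\varepsilon_0v_0,\ldots,g_{n+1}\varepsilon_{n+1}v_{n+1})\,dv\ .
\]
Lemma~\ref{Smillie:lemma}(6) guarantees that for almost every $(v_0,\ldots,v_{n+1})$ only two summands, indexed by an antipodal pair $\{I_1,-I_1\}$, are nonzero; since the associated affine simplices differ by post-composition with $-\mathrm{Id}$, whose determinant is $(-1)^{n+1}$, their $t$-values cancel when $n$ is even and coincide when $n$ is odd. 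This yields $\E\equiv 0$ for $n$ even and $\|\E\|_\infty\leq 2^{-(n+1)}$ for $n$ odd.

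Finally, for property (2), alternation lets me assume $(i,j)=(0,1)$, and $G$-invariance applied with $g=g_0^{-1}\in SO(n+1)\subset G$ reduces the task to showing $\E(1,h,g_2',\ldots,g_{n+1}')=0$ when $h=g_0^{-1}g_1\in SO(n+1)$. Since $h\in O(n+1)$ the substitution $v_1\mapsto h^{-1}v_1$ preserves both $D$ and its Lebesgue measure, transforming the integral into $\E(1,1,g_2',\ldots,g_{n+1}')$, which vanishes by alternation. The only non-routine step in the whole argument is the pointwise cocycle identity for $t$, where careful sign accounting around the $1$-manifold $\sigma^{-1}(0)$ and the boundary orientation of $\Delta^{n+2}$ is needed; everything else is a formal consequence of Fubini, the alternation of oriented volume, and the $O(n+1)$-symmetry of Lebesgue measure on $D$.
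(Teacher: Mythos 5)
The parts of your argument dealing with $G$-invariance, alternation, the Smillie symmetrization that yields the bound $2^{-n-1}$ and the vanishing for even $n$, and claim (2) via an orthogonal (measure- and $D$-preserving) substitution all agree with the paper's proof, modulo small reorganizations (the paper proves claim (2) with a single substitution acting as $g_i^{-1}$ and $g_j^{-1}$ on the $i$-th and $j$-th factors, whereas you first invoke $G$-invariance; both are fine). Where the two proofs genuinely differ is the pointwise cocycle identity $T(w_0,\ldots,w_{n+2})=\sum_i(-1)^i t(w_0,\ldots,\widehat{w_i},\ldots,w_{n+2})=0$. The paper argues degree-theoretically: take $\lambda\colon\partial\Delta^{n+2}\to\R^{n+1}$ to be affine on each face with $\lambda(e_i)=w_i$, observe that $T$ equals the sum of local degrees of $\lambda$ at the (finitely many, interior-to-a-face) preimages of $0$, and conclude this sum vanishes because $\lambda_*[\partial\Delta^{n+2}]=0$ in $H_{n+1}(\R^{n+1})=0$.

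Your Stokes argument for this step has several problems that make it break down as written. First, genericity does \emph{not} give $0\notin\sigma(\partial\Delta^{n+2})$; on the contrary, $0$ generically lies in the interior of exactly two faces $F_{i}$, and that is precisely when the corresponding $t$-values are $\pm 1$. Correspondingly $\sigma^{-1}(0)$ is not a compact $1$-submanifold of $\mathrm{int}(\Delta^{n+2})$ without boundary: it is the intersection of $\Delta^{n+2}$ with a line, hence a closed segment whose two endpoints lie on $\partial\Delta^{n+2}$. Second, the angular form $\omega$ on $\R^{n+1}\setminus\{0\}$ has degree $n$, so $\sigma^*\omega$ is an $n$-form; the integrals $\int_{\partial\Delta^{n+2}}\sigma^*\omega$ and $\int_{\partial_i\Delta^{n+2}}\sigma^*\omega$ you invoke are over $(n+1)$-dimensional chains and are not defined. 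Third, $\sigma^*\omega$ is \emph{not} bounded near $\sigma^{-1}(0)$ — the angular form blows up at the origin — so the integral over a small tube boundary does not tend to zero; that integral is exactly what produces the local degree contributions. A Stokes-type argument can be made to work, but it must be done face by face: apply Stokes to the closed $n$-form $\sigma^*\omega$ on each punctured face $F_i\setminus\{p_i\}$ (whose boundary $\partial F_i$ is $n$-dimensional and avoids $\sigma^{-1}(0)$ by genericity) to obtain $\int_{\partial F_i}\sigma^*\omega = t(w_0,\ldots,\widehat{w_i},\ldots,w_{n+2})$ with the face orientation, and then observe that $\sum_i(-1)^i\int_{\partial F_i}\sigma^*\omega$ is the integral over $\partial(\partial\Delta^{n+2})=0$. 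As stated, though, your version of the cocycle verification is a gap.
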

\begin{proof}
 The fact that $\E$ is $G$-invariant follows from the fact that, if $g\cdot\overline{v}$ is the $(n+2)$-tuple
 obtained by translating every component of $\overline{v}\in (\R^{n+1})^{n+2}$ by $g\in G$, then 
 $\sigma_{g\cdot\overline{v}}=g\circ \sigma_{\overline{v}}$, so $t(g\cdot\overline{v})=t(\overline{v})$. Moreover,
 $\E$ is alternating, since $t$ is. Let us prove that $\E$ is a cocycle. So, let $\overline{g}=(g_0,\ldots,g_{n+2})\in G^{n+3}$,
 and set $\partial_i \overline{g}=(g_0,\ldots,\widehat{g}_i,\ldots,g_{n+2})$. We need to show that
 $$
 \delta \E(\overline{g})=\sum_{i=0}^{n+2} (-1)^i \E(\partial_i \overline{g})=0\ .
 $$
 Let us denote by $\Omega\in D^{n+2}$ the set of $(n+2)$-tuples which are generic for every $\partial_i\overline{g}$.
Then, $\Omega$ has full measure in $D^{n+2}$, so 
$$
\E(\partial_i\overline{g})=\int_{\Omega} t(g_0v_0,\ldots,\widehat{g_iv_i},\ldots, g_{n+2}v_{n+2})\, dv_0\ldots\widehat{dv_i}\ldots dv_{n+2}\ .
 $$
 Therefore, in order to conclude it is sufficient to show that 
 $$
T(w_0,\ldots,w_{n+2})= \sum_{i=0}^{n+2} (-1)^i\, t(w_0,\ldots,\hat{w_i},\ldots,w_{n+2})=0
 $$
 for every $(n+3)$-tuple $(w_0,\ldots,w_{n+2})$ such that
 $(w_0,\ldots,\hat{w_i},\ldots,w_{n+2})$ is generic for every $i$. Let $\lambda\colon \partial\Delta^{n+2}\to \R^{n+1}$ be the 
 map which sends the $i$-th vertex of $\Delta^{n+2}$ to $w_i$, and is affine on each face of $\Delta^{n+2}$. Then, 
 by computing the degree of $\lambda$ as the sum of the local degrees at the preimages of $0$,
 one sees 
 that $T(w_0,\ldots,w_{n+2})$ is equal to the degree of $\lambda$, which is null since $\R^{n+1}$ is non-compact. This concludes the proof that 
 $\E$ is a $G$-invariant alternating cocycle.
 
 Let us now prove (1). We fix $\overline{g}=(g_0,\ldots,g_{n+1})\in G^{n+2}$, and denote by $\Omega\subset D^{n+2}$
 the set of $\overline{g}$-generic $(n+2)$-tuples, so that
 $$
\E(g_0,\ldots,g_{n+1})=\int_{\Omega} t(g_0v_0,\ldots,g_{n+1}v_{n+1})\, dv_0\ldots dv_{n+1}\ .
$$
For every $I=(\varepsilon_0,\ldots,\varepsilon_{n+1})\in \{\pm 1\}^{n+2}$
we set 
$$
t_I(w_0,\ldots,w_{n+1})=t(\varepsilon_0w_0,\ldots,\varepsilon_{n+1}w_{n+1})\ ,
$$
$$
\E_I(g_0,\ldots,g_{n+1})=\int_{\Omega} t_I(g_0v_0,\ldots,g_{n+1}v_{n+1})\, dv_0\ldots dv_{n+1}\ .
$$
Since the map
$$
(v_0,\ldots,v_{n+1})\mapsto (\varepsilon_0v_0,\ldots,\varepsilon_{n+1}v_{n+1})
$$
is a measure-preserving automorphism of $\Omega$ and each $g_i$ is linear, we have $\E=\E_I$ for every $I$, 
so
\begin{align*}
& \E(g_0,\ldots,g_{n+1})=2^{-n-2}\sum_I \E_I(g_0,\ldots,g_{n+1})=\\ & 2^{-n-2} \int_\Omega \left(
\sum_I t_I(g_0v_0,\ldots,g_{n+1}v_{n+1})\right) \, dv_0\ldots dv_{n+1}\ .
\end{align*}
But claim (6) of Lemma~\ref{Smillie:lemma} implies that, for every $(v_0,\ldots,v_{n+1})\in\Omega$, 
there exist exactly two multiindices $I_1,I_2$ such that $t_I(g_0v_0,\ldots,g_{n+1}v_{n+1})$ does not vanish.
Moreover, since $v\mapsto -v$ is orientation-preserving (resp.~reversing) if $n$ is odd (resp.~even),
we have 
\begin{align*}
t_{I_1}(g_0v_0,\ldots,g_{n+1}v_{n+1})=t_{I_2}(g_0v_0,\ldots,g_{n+1}v_{n+1})=\pm 1 \qquad &
\textrm{if}\ n\ \textrm{is\ odd}, \\
t_{I_1}(g_0v_0,\ldots,g_{n+1}v_{n+1})=-t_{I_2}(g_0v_0,\ldots,g_{n+1}v_{n+1})\qquad & \textrm{if}\ n\ \textrm{is\ even}\ .
\end{align*}
This concludes the proof of (1).

Suppose now that there exist $g_i,g_j$ such that $g_i,g_j\in SO(n+1)$, $i\neq j$. The map $\psi\colon D^{n+2}\to D^{n+2}$
which acts on $D^{n+2}$ as $g_i^{-1}$ (resp.~$g_j^{-1}$) on the $i$-th (resp.~$j$-th) factor of $D^{n+2}$, and as the identity
on the other factors, is a measure-preserving automorphism of $D^{n+2}$. Therefore, if $g'_i=g'_j=1$ and $g'_k=g_k$ for every $k\notin \{i,j\}$, then
$$
\E(g_0,\ldots,g_{n+1})=\E(g_0',\ldots,g_{n+1}')=0\ ,
$$
where the last equality is due to the fact that $\E$ is alternating, and $g'_i=g'_j$.
\end{proof}

\begin{defn}
 The $(n+1)$-dimensional \emph{bounded Euler class} is the element
 $$
 [\E]_b\in H^{n+1}_b(\GL(n+1,\R),\R)\ .
 $$
\end{defn}

By construction, thanks to Lemma~\ref{Ecycle} the $(n+1)$-dimensional bounded Euler class satisfies
$$
\|[\E]_b\|_\infty\leq 2^{-n-1}\ .
$$

\section{Representing cycles via simplicial cycles}
As mentioned above, in order to prove that the Euler class of a linear sphere bundle
is bounded, it is convenient to make simplicial chains come into play. To this aim we introduce
a machinery which is very well-suited to describe singular cycles as push-forwards of
simplicial cycles. We refer the reader to~\cite[Section 5.1]{Loeh:IMRN} for an alternative description of this construction.

Let $M$ be a topological space, and let $\overline{z}=\sum_{i=1}^k a_i \overline{\sigma}_i$ be an $n$-dimensional cycle in
$C_n(M,\R)$. 
Also assume that $\overline{\sigma}_i\neq \overline{\sigma}_j$ for  $i\neq j$.
We now construct a $\Delta$-complex $\overline{P}$ associated to $\overline{z}$ ($\Delta$-complexes slightly generalize simplicial complexes; the interested reader can find the definition and the basic properties of $\Delta$-complexes
in~\cite[Chapter 2]{Hatcher}; however, no prerequisite on $\Delta$-complexes is needed in order to understand what follows). 

Let us consider $k$ distinct copies
$\Delta^n_1,\ldots,\Delta^n_k$ of the standard $n$-simplex $\Delta^n$.
For every $i$ we fix an identification between $\Delta^n_i$ and $\Delta^n$, so that we may consider
$\sigma_i$ as defined on $\Delta^n_i$.
For every $i=1,\ldots,k$, $j=0,\ldots, n$, we denote by $F^i_j$ the $j$-th face of $\Delta^n_i$,
and by $\partial^i_j\colon \Delta^{n-1}\to F^i_j\subseteq \Delta_i^n$ the usual face inclusion.
We say that the faces $F^i_j$ and $F^{i'}_{j'}$ are \emph{equivalent} if  
$\overline{\sigma}_i|_{F^i_j}=\overline{\sigma}_{i'}|_{F^{i'}_{j'}}$, or, more formally, if $\bb^i_j\circ \overline{\sigma}_i=\bb^{i'}_{j'}\circ\overline{\sigma}_{i'}$. 
We now define a $\Delta$-complex $P$ as follows.
The simplices of $\overline P$ are $\Delta^n_1,\ldots,\Delta^n_k$, and, 
if $F_i^j$, $F_{i'}^{j'}$ are equivalent, then we identify them via the affine diffeomorphism
$\partial_{i'}^{j'}\circ (\partial_i^j)^{-1}\colon F_i^j\to F_{i'}^{j'}$. 
The only phenomenon that could prevent $\overline P$ to be a simplicial complex is the fact that different simplices may share more than one face, and that distinct faces of the same simplex may be identified to each other.
In order to exploit the properties of genuine simplicial complexes, we turn $\overline P$ into a simplicial complex just by taking the second barycentric subdivision $P$ of $\overline{P}$.

By construction, the maps $\overline{\sigma}_1,\ldots,\overline{\sigma}_k$ glue up to a well-defined continuous map
$f\colon |\overline{P}|\to M$, where $|\overline{P}|$ is the topological realization of $\overline{P}$. Moreover, 
for every $i=1,\ldots, k$, let  $\hat{\sigma}_i\colon \Delta^n\to \overline{P}$ be the simplicial simplex obtained by composing
the identification $\Delta^n\cong \Delta^n_i$ with the quotient map with values in $|\overline{P}|$,
and let us consider the simplicial chain $z_{\overline{P}}=\sum_{i=1}^k a_i \hat{\sigma}_i$. By construction, $z_{\overline{P}}$ is a cycle, and
the push-forward of $z_{\overline{P}}$ via $f$ is equal to $\overline{z}$. If we denote by $z_P$ the second barycentric subdivision of $z_{\overline{P}}$, then $z_P$ is a real simplicial
cycle on the simplicial complex $P$. Moreover, $f$ can also be considered as a map from $|P|$ to $M$, and the push-forward of $z_P$ via $f$ coincides with the second barycentric subdivision
of our initial cycle $\overline{z}$, thus lying in the same homology class as $\overline{z}$.
We have thus proved the following:

\begin{lemma}\label{basic:lemma}
Take an element $\alpha\in H_n(M,\R)$. Then, there exist a finite simplicial complex $P$, a real simplicial $n$-cycle $z_P$ on $P$ and a continuous
map $f\colon |P|\to M$ such that $H_n(f)([z_P])=\alpha$ in $H_n(M,\R)$.
\end{lemma}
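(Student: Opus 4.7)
The plan is to follow essentially verbatim the construction sketched in the paragraphs immediately preceding the statement, so my job is mainly to organize those ingredients into a proof and to check the two points that are easy to overlook: that the chain on $\overline{P}$ is genuinely a cycle, and that passing to barycentric subdivisions does not alter the homology class represented.

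First I would fix a representative $\overline{z}=\sum_{i=1}^{k} a_i \overline{\sigma}_i\in C_n(M,\R)$ of $\alpha$, where we may arrange (by collecting terms) that the singular simplices $\overline{\sigma}_1,\dots,\overline{\sigma}_k$ are pairwise distinct. I then carry out the gluing construction described above: let $\Delta^n_1,\dots,\Delta^n_k$ be $k$ labeled copies of the standard $n$-simplex, declare the face $F^i_j$ equivalent to $F^{i'}_{j'}$ precisely when $\overline{\sigma}_i\circ\partial^i_j=\overline{\sigma}_{i'}\circ\partial^{i'}_{j'}$, and use the affine identifications $\partial^{i'}_{j'}\circ(\partial^i_j)^{-1}$ to glue equivalent faces. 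This produces the $\Delta$-complex $\overline{P}$, and by construction the maps $\overline{\sigma}_i$ are compatible with all the identifications, so they assemble into a continuous map $f\colon|\overline{P}|\to M$.

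Next I would define $z_{\overline{P}}=\sum_{i=1}^{k} a_i\widehat{\sigma}_i$, where $\widehat{\sigma}_i\colon\Delta^n\to\overline{P}$ is the characteristic map of the $i$-th top simplex. The key computation is that $\partial z_{\overline{P}}=0$: in the expansion $\sum_i\sum_j(-1)^j a_i\,\widehat{\sigma}_i\circ\partial^i_j$, two terms corresponding to faces $F^i_j$ and $F^{i'}_{j'}$ get identified in $\overline{P}$ precisely when $\overline{\sigma}_i\circ\partial^i_j=\overline{\sigma}_{i'}\circ\partial^{i'}_{j'}$; hence the image under $f_\ast$ of $\partial z_{\overline{P}}$ equals $\partial\overline{z}=0$, and moreover the coefficient with which any face class of $\overline{P}$ appears in $\partial z_{\overline{P}}$ is exactly the coefficient with which the corresponding singular simplex appears in $\partial\overline{z}$. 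Since the $\overline{\sigma}_i$ are distinct (so different equivalence classes push forward to different singular simplices), $\partial\overline{z}=0$ forces $\partial z_{\overline{P}}=0$.

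To finish I would pass to the second barycentric subdivision $P$ of $\overline{P}$, which is a genuine finite simplicial complex with $|P|=|\overline{P}|$, and let $z_P\in C_n(P,\R)$ be the simplicial second barycentric subdivision of $z_{\overline{P}}$; this is again a cycle, and $f\colon|P|\to M$ is the same continuous map. The push-forward $f_\ast(z_P)$ is then the second barycentric subdivision of $\overline{z}$ as a singular chain. Since barycentric subdivision of singular chains is chain homotopic to the identity, $[f_\ast(z_P)]=[\overline{z}]=\alpha$ in $H_n(M,\R)$, which is exactly $H_n(f)([z_P])=\alpha$.

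The only genuinely delicate point is the cycle check for $z_{\overline{P}}$, because one has to be careful about how faces are identified (and in particular about the rare case in which two distinct faces of the same $\Delta^n_i$ become equivalent); passing to the second barycentric subdivision is precisely what makes such pathologies harmless from the simplicial viewpoint, while the distinctness of the $\overline{\sigma}_i$ ensures the boundary bookkeeping upstairs agrees with that of $\overline{z}$.
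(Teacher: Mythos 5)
Your proposal is correct and follows the same route as the paper: you reproduce the gluing construction into the $\Delta$-complex $\overline{P}$, pass to the second barycentric subdivision $P$, and compare homology classes via $f$, exactly as the paper does in the paragraph preceding the lemma. The only difference is that you spell out the cycle verification for $z_{\overline{P}}$ (using that distinct face equivalence classes push forward to distinct singular simplices) and the chain homotopy for barycentric subdivision, both of which the paper leaves implicit.
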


\section{The bounded Euler class of a flat linear sphere bundle}
Let now $\pi\colon E\to M$ be a flat linear $n$-sphere bundle, and set $\G=\pi_1(M)$.
We denote by 
$$
r^M_\bullet\colon C_\bullet(\widetilde{M},\R)\to C_\bullet(\G,\R)\ ,
$$
$$r_M^\bullet \colon C^\bullet_b(\G,\R)^\G\to C^\bullet_b(\widetilde{M},\R)^\G=C^\bullet_b(M,\R)$$
the classifying maps defined in Lemma~\ref{normnon}.
We know from Proposition~\ref{flat:rep} (and Remark~\ref{flatvector})
that $E$ is (linearly) isomorphic to $E_\rho$ for a representation $\rho\colon \pi_1(M)\to \GL^+(n+1,\R)$.
We define the real bounded Euler class $e_b^\R(E)\in H^{n+1}_b(M,\R)$ of $E$ as the pull-back of $[\E]_b$ via $\rho$
(and the classifying map), i.e.~we set
$$
e_b^\R(E)=H^{n+1}_b(r_M^\bullet)\circ H^{n+1}_b(\rho^\bullet)([\E]_b)\ \in \ H^{n+1}_b(M,\R)\ .
$$
We are now ready to show that the real bounded Euler class of $E$ is mapped by the comparison map
onto the Euler class of $E$. We first deal with the simplicial case, and then we reduce the general case to the simplicial one.

\begin{prop}\label{simplicialcase}
 Suppose that $M$ is a simplicial complex, and let $z$ be an $(n+1)$-dimensional simplicial cycle on $M$. 
Then
$$r_M^{n+1}(\rho^{n+1}(\eul))(z)=\langle e^\R(E),[z]\rangle\ .$$
 \end{prop}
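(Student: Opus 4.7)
The plan is to realize both sides of the asserted equality as the outputs of the Sullivan--Smillie construction of the Euler cocycle via affine sections, with the Ivanov--Turaev cocycle arising as an average of integer-valued obstruction cocycles. The key point is that flatness lets one construct, on the universal cover, genuinely \emph{affine} sections of the vector bundle $V$ (where $E = S(V)$); the IT cocycle is then recovered by averaging a random choice of such sections.

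To set things up, I would use flatness to identify the pull-back of $V$ to $\widetilde{M}$ with $\widetilde{M}\times\R^{n+1}$, on which $\Gamma=\pi_1(M)$ acts by $g\cdot (x,w)=(g x,\rho(g)w)$. Fix a set of representatives $R\subseteq\widetilde{M}$ for the $\Gamma$-action containing exactly one preimage of each vertex of $M$, so that the map $r_0$ of Lemma~\ref{normnon} sends $g\cdot r$ to $g$ for every $r\in R$, $g\in\Gamma$. With this choice, for any simplex $s\colon\Delta^{n+1}\to M$, lifting $s$ to $\widetilde{s}$ and writing the $i$-th vertex of $\widetilde{s}$ as $g_i\widetilde{v}_i$ with $\widetilde{v}_i\in R$, we obtain
\[
r_M^{n+1}\bigl(\rho^{n+1}(\E)\bigr)(s)=\E(\rho(g_0),\ldots,\rho(g_{n+1})),
\]
so the left-hand side of the claim is a sum over the simplices of $z$ of integrals of the form $\int_{D^{n+2}}t(\rho(g_0^s)v_0,\ldots,\rho(g_{n+1}^s)v_{n+1})\,dv_0\cdots dv_{n+1}$.

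Next, for each vertex $v$ of $M$ I would independently pick a random point $w_v\in D$, interpreted as an element of the fiber of $V$ at the representative $\widetilde{v}\in R$. By $\Gamma$-equivariance these choices determine a point in the fiber over every vertex of $\widetilde{M}$; for a simplex $s$ of $M$ with lift $\widetilde{s}$ as above, the affine interpolation between $\rho(g_i^s)w_{v_i^s}$ for $i=0,\dots,n+1$ defines an affine simplex in $\R^{n+1}$. By Lemma~\ref{full:measure:lemma} together with Lemma~\ref{Smillie:lemma}, for almost every choice of $w=(w_v)_v$ the resulting affine simplices (in every dimension $\leq n$) miss $0\in\R^{n+1}$ and therefore descend to a coherent family of sections of $E=S(V)$ over the $n$-skeleton of $M$, whose associated Euler obstruction cocycle $z_w\in C^{n+1}(M,\Z)$ takes on any $(n+1)$-simplex $s$ the value $t(\rho(g_0^s)w_{v_0^s},\ldots,\rho(g_{n+1}^s)w_{v_{n+1}^s})$. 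By Proposition~\ref{euler:welldef}, $z_w$ is an integral representative of $\eu(E)$, whence $z_w(z)=\langle e^\R(E),[z]\rangle$ for almost every $w$.

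The last step is an application of Fubini. Integrating $z_w(z)=\sum_s a_s z_w(s)$ over $w\in D^{V(M)}$ (with respect to the appropriately normalized Lebesgue measure on $D$), each summand depends on $w$ only through the $n+2$ coordinates at the vertices of $s$; the remaining coordinates integrate out trivially, and the integral over the relevant coordinates is precisely $\E(\rho(g_0^s),\ldots,\rho(g_{n+1}^s))$. Summing yields $r_M^{n+1}(\rho^{n+1}(\E))(z)$ on the one hand, while almost-everywhere constancy of $z_w(z)$ gives $\langle e^\R(E),[z]\rangle$ on the other. I expect the main obstacle to be the bookkeeping in this final step: tracking normalizations of the measure on $D$, verifying via Lemma~\ref{full:measure:lemma} that the affine sections are coherent (and that the lower-dimensional obstructions vanish) for a.e.\ $w$ simultaneously across all simplices of $M$, and matching the orientation conventions of Lemma~\ref{Smillie:lemma} with those implicit in the definition of the classical Euler cocycle. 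Once these compatibility issues are settled, the combination of the Fubini calculation and the a.e.\ equality $z_w(z)=\langle e^\R(E),[z]\rangle$ delivers the desired identity.
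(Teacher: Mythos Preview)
Your proposal is correct and follows essentially the same approach as the paper: choose, for each vertex of $M$, a random point in $D$, use flatness to build affine sections over the $n$-skeleton, observe that for almost every choice the resulting obstruction cocycle represents $\eu(E)$ and takes the value $t(\rho(g_0)w_{v_0},\ldots,\rho(g_{n+1})w_{v_{n+1}})$ on each simplex, then integrate and apply Fubini. The only point the paper handles with extra care is your ``bookkeeping'' step: since the genericity condition is not a product condition on $D^{V(M)}$, the paper constructs explicit full-measure subsets $\Omega_1,\ldots,\Omega_N\subseteq D$ with $\Omega_1\times\cdots\times\Omega_N$ contained in the generic set, so that the unused coordinates really can be integrated out separately; you should do the same.
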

\begin{proof}
Let $q_1,\ldots,q_N$ be the vertices of $M$, and for every $i=1,\ldots,N$
let us take an element $v_i\in S^n$. After choosing a suitable trivialization of $E$
over the $q_i$'s, we will exploit the $v_i$ to define a section over the $q_i$'s. Then, 
we will affinely extend such a section over the $n$-skeleton of $M$, and use the resulting
section to compute the Euler class of $E$. In order to do so, we need to be sure
that the resulting section does not vanish on the $n$-skeleton of $M$, and to this aim we have to carefully
choose the $v_i$'s we start with. 

We fix points $x_0\in M$ and $\widetilde{x}_0\in p^{-1}(x_0)\in\widetilde{M}$, and
we identify $\G=\pi_1(M,x_0)$ with the group of the covering automorphisms of
$\widetilde{M}$ so that the projection on $M$ of any path in $\widetilde{M}$ starting at $\widetilde{x}_0$
and ending at $g(\widetilde{x}_0)$ lies in the homotopy class corresponding to $g$.
We also choose a set of representatives 
$R$ for the action of $\G=\pi_1(M)$ on $\widetilde{M}$ containing $\widetilde{x}_0$, and we denote by
 $j\colon \widetilde{M}\times S^n\to E_\rho=E$ the quotient map with respect to the diagonal action of $\G$ on
$\widetilde{M}\times S^n$.

Let us now fix 
an $N$-tuple $(v_1,\ldots,v_N)\in D^N$. If $v_i\neq 0$ for every $i$, such an $N$-tuple gives rise to a section $s^{(0)}$ of $E$ over the $0$-skeleton
$M^{(0)}$ which is defined as follows: if $\widetilde{q}_i$ is the unique lift of $q_i$ in $R$,
then $s^{(0)}(q_i)=j(\widetilde{q}_i,v_i/\|v_i\|)$. We are now going to affinely extend such a section over the $n$-skeleton
of $M$. However, as mentioned above, this can be done only under some additional hypothesis on 
$(v_1,\ldots,v_N)$, which we are now going to describe.

For every $k$-simplex $\tau$ of $M$, $k\leq n+1$, we denote by $\widetilde{\tau}\subseteq \widetilde{M}$ the 
lift of $\tau$ having the first vertex in $R$ (recall that a total order on the set of vertices of $M$ has been fixed from the very beginning). Let now $q_{j_i}$ be the $i$-th vertex of $\tau$. Then there exists $g_i\in\G$
such that the $i$-th vertex $\widetilde{\tau}_i$ of $\widetilde{\tau}$ is equal to $g_i(\widetilde{q}_{j_i})$. 
Moreover, we may choose the classifying map $r_\bullet^M$ in such a way that
$$
r^{k}_M(\widetilde{\tau})=(g_0,\ldots,g_{k})
$$
(see Lemma~\ref{normnon}).
We now say that
$(v_1,\ldots,v_N)$ is \emph{$\tau$-generic} if the $(k+1)$-tuple 
$$
(\rho(g_0)(v_{j_0})=v_{j_0},\rho(g_1)(v_{j_1}),\ldots,\rho(g_n)(v_{j_k}))\in D^{k+1}
$$
is generic.
If this is the case and $k\leq n$, then the composition 
$$
\xymatrix{
\tau \ar[rr]^-{l} & & \widetilde{\tau} \ar[rr]^-{{\rm Id}\times a} & & \widetilde{M}\times S^n \ar[rr]^-j & & E\ ,
}
$$
where $l$ is just the lifting map, and
$$
a(t_0\widetilde{\tau}_0+\ldots+t_k\widetilde{\tau}_k)=
\frac{t_0 \rho(g_0)(v_{j_0})+\ldots
+t_k\rho(g_k)(v_{j_k})}{\|t_0 \rho(g_0)(v_{j_0})+\ldots
+t_k\rho(g_k)(v_{j_k})\|}
$$
is a well-defined section of $E$ over $\tau$, which extends $s^{(0)}$. 

We say that the $N$-tuple $(v_1,\ldots,v_N)$ is \emph{generic} if it is $\tau$-generic for
every $k$-simplex $\tau$ of $M$, $k\leq n+1$, and we denote by $\Omega\subseteq D^{N}$ the subset of generic
$N$-tuples. A similar argument to the proof of Lemma~\ref{full:measure:lemma}
implies that there exist Borel subsets $\Omega_i\subseteq D$, $i=1,\ldots,N$, such that each $\Omega_i$ has full measure
in $D$ and
$$
\Omega_1\times\ldots\times \Omega_N\subseteq \Omega
$$
(in particular, $\Omega$ has full measure in $D^N$). In fact, one can set $\Omega_1=D$, and define inductively
$\Omega_{i+1}$ by imposing that $v\in \Omega_{i+1}$ if and only if the following condition holds: for every 
$(v_1,\ldots,v_i)\in \Omega_1\times\ldots\times \Omega_i$, 
the $(i+1)$-tuple $(v_1,\ldots,v_i,v)$ is $\tau$-generic for every $k$-simplex $\tau$ of $M$, $k\leq n+1$, with vertices in $\{q_1,\ldots,q_{i+1}\}$ (observe that
it makes sense to require that an $(i+1)$-tuple is $\tau$-generic, 
provided that the vertices of $\tau$ are contained in $\{q_1,\ldots,q_{i+1}\}$). It is not difficult to check that indeed $\Omega_i$
has full measure in $D$ for every $i$.

Let us now pick an element $\overline{v}=(v_1,\ldots,v_N)\in \Omega$. Such an element defines a 
family of compatible sections over all the simplicial $k$-simplices of $M$, $k\leq n$,
which can be extended in turn to a
family of compatible sections over all the singular $n$-simplices in $M$. 
If we denote by $\varphi_{\overline{v}}$ the  Euler cocycle associated to this  family of compatible sections, then 
Lemma~\ref{Smillie:lemma} implies that, for every $(n+1)$-dimensional simplex $\tau$ of $M$, we have
$$
\varphi_{\overline{v}}(\tau)=t(\rho(g_0)v_{j_0},\ldots,\rho(g_{n+1})v_{j_{n+1}})\ ,
$$
where  $q_{j_i}$ is the $i$-th vertex of $\tau$, and 
$$
(g_0,\ldots,g_{n+1})=r_{n+1}^M(\widetilde{\tau})\ .
$$
Therefore, if $z$ is a fixed real simplicial $(n+1)$-cycle, then
$$
\langle e^\R(E),[z]\rangle=\varphi_{\overline{v}}(z)\ ,
$$
so
\begin{equation}\label{z1eq}
\langle e^\R(E),[z]\rangle=\int_{\overline{v}\in \Omega_1\times\ldots\times \Omega_N} \varphi_{\overline{v}}(z)\ ,
\end{equation}
(where we used that the product of the $\Omega_i$'s has full, i.e.~unitary, measure in $D^N$).

On the other hand, for every $(n+1)$-simplex $\tau$ we have
\begin{align*}
\int_{\overline{v}\in \Omega_1\times\ldots\times \Omega_N}\varphi_{\overline{v}}(\tau)&=\int_{\overline{v}\in \Omega_1\times\ldots\times \Omega_N}
t(\rho(g_0)v_{j_0},\ldots,\rho(g_{n+1})v_{j_{n+1}})\, d\overline{v}\\ &=
\int_{v_{j_i}\in \Omega_{j_i}} t(\rho(g_0)v_{j_0},\ldots,\rho_(g_{n+1})v_{j_{n+1}})\, dv_{j_0}\ldots dv_{j_{n+1}}\\ &=
\int_{D^{n+2}} t(\rho(g_0)v_{0},\ldots,\rho(g_{n+1})v_{{n+1}})\, dv_0\ldots dv_{n+1}\\ &=r^{n+1}_M(\rho^{n+1}(\eul))(\tau)\ ,
\end{align*}
(where we used again that $\Omega_i$ has full (whence unitary) measure in $D$), so by linearity
\begin{equation}\label{z2eq}
\int_{\overline{v}\in \Omega_1\times\ldots\times \Omega_N}\varphi_{\overline{v}}(z)=
r^{n+1}_M(\rho^{n+1}(\eul))(z)\ .
\end{equation}
Putting together equations~\eqref{z1eq} and~\eqref{z2eq} we finally get that
$\langle e^\R(E),[z]\rangle=\varphi_{\overline{v}}(z)=r^{n+1}_M(\rho^{n+1}(\eul))(z)$, whence the conclusion.
\end{proof}

We are now ready to prove that, via the classifying map, the group cochain $\eul$ indeed provides a representative
of the Euler class of $E$, even in the case when $M$ is not assumed to be a simplicial complex:

\begin{prop}
  The cochain $r_M^{n+1}(\rho^{n+1}(\eul))\in C^{n+1}_b(M,\R)$ 
 is a representative of the real Euler class of $E$.
\end{prop}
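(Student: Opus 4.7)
The strategy is to reduce the general case to the simplicial case already settled by Proposition~\ref{simplicialcase}, exploiting Lemma~\ref{basic:lemma}. Since we work with real coefficients, by the Universal Coefficient Theorem the cohomology module $H^{n+1}(M,\R)$ is canonically the algebraic dual of $H_{n+1}(M,\R)$. Consequently, two classes in $H^{n+1}(M,\R)$ agree if and only if they take the same values on every homology class $\alpha\in H_{n+1}(M,\R)$ under the Kronecker pairing. It therefore suffices to prove that, for every such $\alpha$, one has
$$
\langle [r_M^{n+1}(\rho^{n+1}(\E))],\alpha\rangle \,=\, \langle e^\R(E),\alpha\rangle \ .
$$

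Given $\alpha\in H_{n+1}(M,\R)$, I would invoke Lemma~\ref{basic:lemma} to produce a finite simplicial complex $P$, a real simplicial $(n+1)$-cycle $z_P$ on $P$, and a continuous map $f\colon |P|\to M$ such that $H_{n+1}(f)([z_P])=\alpha$. Let $f_\ast\colon \pi_1(|P|)\to \G$ denote the induced homomorphism, set $\rho'=\rho\circ f_\ast$, and consider the pull-back bundle $f^\ast E\to |P|$. By Proposition~\ref{flat:rep} (in its linear version), $f^\ast E$ is isomorphic to $E_{\rho'}$ as a flat linear sphere bundle, and by the naturality of the (classical) Euler class, Lemma~\ref{Euler:functorial}, we have $H^{n+1}(f)(e^\R(E))=e^\R(f^\ast E)$. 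Hence
$$
\langle e^\R(E),\alpha\rangle \,=\, \langle e^\R(E),H_{n+1}(f)([z_P])\rangle \,=\, \langle e^\R(f^\ast E),[z_P]\rangle\ .
$$

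The crucial naturality statement I need on the other side is that
$$
H^{n+1}(f)\bigl([r_M^{n+1}(\rho^{n+1}(\E))]\bigr) \,=\, [r_{|P|}^{n+1}((\rho')^{n+1}(\E))]\qquad\text{in}\ H^{n+1}(|P|,\R)\ .
$$
This does not hold at the cochain level, but it does hold in cohomology: both sides arise as extensions of the identity $\R\to\R$ to chain maps between strong $\pi_1(|P|)$-resolutions of $\R$ (the standard resolution on one side, and the singular cochain complex on the universal cover of $|P|$ on the other), so Theorem~\ref{ext:thm} guarantees they are $\pi_1(|P|)$-homotopic and thus induce the same map in cohomology. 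Granting this, Proposition~\ref{simplicialcase} applied to the simplicial complex $|P|$, the cycle $z_P$, the bundle $f^\ast E\cong E_{\rho'}$, and the representation $\rho'$ yields
$$
\langle [r_{|P|}^{n+1}((\rho')^{n+1}(\E))],[z_P]\rangle \,=\, \langle e^\R(f^\ast E),[z_P]\rangle\ ,
$$
which, chained with the naturality identity above, gives $\langle [r_M^{n+1}(\rho^{n+1}(\E))],\alpha\rangle = \langle e^\R(E),\alpha\rangle$.

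The main obstacle I anticipate is precisely the verification of the naturality statement for the classifying cochain, since the explicit formula for $r_M^\bullet$ depends on arbitrary choices (the set of representatives $R$ and the lift of each vertex of a simplex to $R$) that do not behave well under $f$. The clean way around this is conceptual rather than computational: invoke the uniqueness up to $\G$-homotopy of chain maps between relatively injective strong resolutions over $\pi_1(|P|)=f_\ast^{-1}(\G)$ (regarding singular cochains on the universal cover via the map induced by $f$), so that any two such extensions induce the same map in bounded — and hence in ordinary — cohomology. Once this is in place, the argument assembles straightforwardly from the pieces above.
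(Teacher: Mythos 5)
Your reduction---via the Universal Coefficient Theorem, Lemma~\ref{basic:lemma}, Proposition~\ref{simplicialcase}, and the functoriality Lemma~\ref{Euler:functorial}---is exactly the paper's skeleton, and the argument is correct. The one place you diverge is the naturality of the classifying map $r^\bullet$ under $f$. You assert it \emph{fails} at the cochain level and therefore invoke Theorem~\ref{ext:thm} (uniqueness up to $\pi_1(|P|)$-homotopy of chain maps between a strong resolution and a relatively injective complex) to get equality in cohomology. The paper instead observes that one may simply \emph{choose} the classifying map $r_P^\bullet$ for $|P|$ compatibly with $f$ (picking the set of representatives in $\widetilde{|P|}$ as the preimage of the chosen representatives in $\widetilde M$ under a lift $\widetilde f$), so that the square $C^\bullet(f)\circ r_M^\bullet = r_P^\bullet\circ f_*^\bullet$ commutes strictly at the cochain level. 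Your abstract route is perfectly valid and arguably more robust (it doesn't depend on remembering how $r^\bullet$ was built), but your claim that cochain-level commutativity is unattainable is not right---it is attainable after making compatible choices, which is all the paper needs. For your argument to go through cleanly, you should also note explicitly that $C^\bullet(\G,\R)$ (with the standard contracting homotopy, which need not be equivariant) is a strong resolution of $\R$ over $\pi_1(|P|)$ when the latter acts through $f_*$, and that $C^\bullet(\widetilde{|P|},\R)$ is relatively injective over $\pi_1(|P|)$ by Lemma~\ref{sing:inj}; both are true, but they are the hypotheses under which Theorem~\ref{ext:thm} delivers the homotopy you are using.
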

\begin{proof}
 By the Universal Coefficient Theorem, it is sufficient to show that 
 \begin{align}\label{quellacheserve}
 r_M^{n+1}(\rho^{n+1}(\eul))(z)=\langle e^\R(E),[z]\rangle
 \end{align}
 for every singular cycle $z\in C_{n+1}(M,\R)$. So, let us fix such a cycle, and take a finite simplicial complex $P$, a real simplicial $(n+1)$-cycle $z_P$ on $P$ and a continuous
map $f\colon |P|\to M$ such that $H_{n+1}(f)([z_P])=[z]$ in $H_{n+1}(M,\R)$ (see Lemma~\ref{basic:lemma}). 
Since $z$ is homologous to $C_{n+1}(f)(z_P)$, we have 
\begin{equation}\label{zP}
r_M^{n+1}(\rho^{n+1}(\eul))(z)=C^{n+1}(f)(r_M^{n+1}(\rho^{n+1}(\eul))) (z_P)\ .
\end{equation}
Moreover, one may choose a classifying map
$$
r_P^\bullet\colon C^\bullet(\pi_1(|P|),\R)^{\pi_1(|P|)}\to C^\bullet(\widetilde{|P|},\R)^{\pi_1(|P|)}=C^\bullet(|P|,\R)
$$
in such a way that the diagram
$$
\xymatrix{
C^\bullet(\G,\R)^\G \ar[d]^{r^\bullet_{M}} \ar[rr]^-{f_*^{\bullet}} & &  C^\bullet(\pi_1(|P|),\R)^{\pi_1(|P|)}  \ar[d]^{r^\bullet_P}\\
C^\bullet(|M|,\R)  \ar[rr]^-{C^{\bullet}(f)} & & C^\bullet(|P|,\R)
}
$$
commutes, where we denote by $f_*$ the map induced by $f$ on fundamental groups.
Therefore, if we denote by $\rho'\colon \pi_1(|P|)\to G$ the composition
$\rho'=\rho\circ f_*$, then 
$$
C^{n+1}(f)(r_M^{n+1}(\rho^{n+1}(\eul)))=r_P^{n+1}(f^{n+1}_*(\rho^{n+1}(\eul)))=
r_P^{n+1}((\rho')^{n+1}(\eul))\ .
$$
Putting this equality together with \eqref{zP} we obtain that
\begin{equation}\label{zP2}
 r_M^{n+1}(\rho^{n+1}(\eul))(z)=r_P^{n+1}((\rho')^{n+1}(\eul))(z_P)\ .
\end{equation}

On the other hand, the bundle $E$ pulls back to a flat linear $S^n$-bundle
$f^*E$ on $|P|$, and it readily follows from the definitions that 
$f^*E$ is isomorphic to the sphere bundle associated to the representation
$\rho'\colon \pi_1(|P|)\to G$ just introduced.
Therefore, Proposition~\ref{simplicialcase} (applied to the case $M=|P|$) implies that
\begin{equation}\label{quella3}
 r_P^{n+1}((\rho')^{n+1}(\eul))(z_P)=\langle e^\R(f^*E),[z_P]\rangle\ ,
\end{equation}
while 
Lemma~\ref{Euler:functorial} (the statement with integral coefficients implies the one with real coefficients)
gives
\begin{equation}\label{quella2}
\langle e^\R(f^*E),[z_P]\rangle=
\langle H^{n+1}(f)(e^\R(E)),[z_P]\rangle= 
\langle e^\R(E),[z]\rangle\ .
\end{equation}
Puttin together~\eqref{zP2}, \eqref{quella3} and \eqref{quella2} we finally obtain 
the desired equality~\eqref{quellacheserve}, whence the conclusion.
\end{proof}

Putting together the previous proposition and Lemma~\ref{Ecycle} 
we obtain the following result, which generalizes  Corollary~\ref{norm12} to higher dimensions:

\begin{thm}\label{eullin}
 Let $\pi\colon E\to M$ be a flat linear $n$-sphere bundle, and let $\rho\colon \G\to \GL^+(n+1,\R)$ be the 
 associated representation, where $\G=\pi_1(M)$. Then the real Euler class of $E$ is given by 
 $$
e_\R(E)=c(e^\R_b(E))\ ,
 $$
 where $c\colon H^{n+1}_b(\G,\R)\to H^{n+1}(\G,\R)$ is the comparison map. Therefore, 
 $$
 \|e^\R(E)\|_\infty \leq 2^{-n-1}\ .
 $$
\end{thm}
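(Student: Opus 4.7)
The plan is to deduce both assertions directly from what has already been set up, with essentially no new computation required. The bulk of the work (the definition of $[\E]_b$, the bounded representative $\eul$ with $\|\eul\|_\infty\leq 2^{-n-1}$, and the identification of $r_M^{n+1}(\rho^{n+1}(\eul))$ with a representative of $e^\R(E)$ in the previous proposition) is already in place; what remains is just to trace definitions and exploit that all the maps involved are norm non-increasing.

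First I would establish the equality $e^\R(E)=c(e_b^\R(E))$. By definition,
\[
e_b^\R(E)=H^{n+1}_b(r_M^\bullet)\circ H^{n+1}_b(\rho^\bullet)([\E]_b) \ \in\  H^{n+1}_b(M,\R),
\]
so $e_b^\R(E)$ is represented by the bounded cocycle $r_M^{n+1}(\rho^{n+1}(\eul))$ (the cochain is bounded because $\eul$ is and both $\rho^{n+1}$ and $r_M^{n+1}$ are norm non-increasing). The comparison map $c\colon H^{n+1}_b(M,\R)\to H^{n+1}(M,\R)$ is induced by the inclusion of bounded cochains into all cochains, so $c(e_b^\R(E))$ is represented by the same cocycle $r_M^{n+1}(\rho^{n+1}(\eul))$ viewed in $C^{n+1}(M,\R)$. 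By the previous proposition this cocycle represents $e^\R(E)$, and the first claim follows.

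For the seminorm estimate, I would chain together three norm non-increasing steps. Lemma \ref{Ecycle} gives $\|[\E]_b\|_\infty\leq 2^{-n-1}$. The map $H^{n+1}_b(\rho^\bullet)$, induced by precomposing cochains with $\rho$, takes single $(n+2)$-tuples to single $(n+2)$-tuples and is therefore norm non-increasing; likewise $H^{n+1}_b(r_M^\bullet)$ is norm non-increasing by Lemma \ref{normnon}. Hence
\[
\|e_b^\R(E)\|_\infty \ \leq\ \|[\E]_b\|_\infty \ \leq\ 2^{-n-1}.
\]
Finally, recall that for any $\alpha\in H^{n+1}(M,\R)$ the seminorm $\|\alpha\|_\infty$ is defined as the infimum of $\|\alpha_b\|_\infty$ over all $\alpha_b\in H^{n+1}_b(M,\R)$ with $c(\alpha_b)=\alpha$. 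Since $e_b^\R(E)$ is such a lift for $\alpha=e^\R(E)$, we obtain $\|e^\R(E)\|_\infty\leq \|e_b^\R(E)\|_\infty\leq 2^{-n-1}$, as required.

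There is no real obstacle here: the proof is a bookkeeping exercise once the previous proposition (the hard content) is granted. The only thing to be slightly careful about is the verification that $H^{n+1}_b(\rho^\bullet)$ and $H^{n+1}_b(r_M^\bullet)$ are norm non-increasing at the cochain level, which is immediate from the explicit formulas defining them.
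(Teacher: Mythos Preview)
Your proposal is correct and matches the paper's approach exactly: the paper's proof consists of the single sentence ``Putting together the previous proposition and Lemma~\ref{Ecycle}'', and you have spelled out precisely what that means. Your version is in fact more explicit about why the maps $H^{n+1}_b(\rho^\bullet)$ and $H^{n+1}_b(r_M^\bullet)$ are norm non-increasing and how the seminorm inequality passes from the bounded class to the ordinary one, which is a harmless expansion of the same bookkeeping.
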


Theorem~\ref{eullin} has the following immediate corollary, which provides a higher dimensional analogue of Milnor-Wood inequalities:

\begin{thm}\label{mw:higher}
Let $E$ be a flat vector bundle of rank $n$ over 
a closed oriented $n$-manifold $M$. Then
$$
|e(E)|\leq \frac{\|M\|} {2^{n}} \ .
$$
\end{thm}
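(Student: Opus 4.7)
The plan is short: Theorem~\ref{mw:higher} is essentially a direct consequence of Theorem~\ref{eullin} combined with the duality principle relating the simplicial volume to bounded cohomology (Proposition~\ref{prop:duality}).

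First I would set up the translation between vector bundles and sphere bundles. By definition, the Euler class of the flat rank-$n$ vector bundle $E$ is the Euler class of the associated flat linear $(n-1)$-sphere bundle $S(E)\to M$. Thus $e(E)\in H^n(M,\mathbb{Z})$, and its image $e^\R(E)\in H^n(M,\R)$ under the change of coefficients map equals $e^\R(S(E))$. Since $S(E)$ is a flat linear $(n-1)$-sphere bundle on $M$, Theorem~\ref{eullin} (applied with the sphere dimension equal to $n-1$) yields the key estimate
\[
\|e^\R(E)\|_\infty \;=\; \|e^\R(S(E))\|_\infty \;\leq\; 2^{-n}.
\]

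Next, since $M$ is a closed oriented $n$-manifold, $H^n(M,\mathbb{Z})\cong\mathbb{Z}$ is torsion-free, so the Euler number $e(E)\in\mathbb{Z}$ may equivalently be computed by evaluating the real class $e^\R(E)$ on the real fundamental class $[M]\in H_n(M,\R)$. Using the Kronecker pairing between bounded cohomology and singular homology (cf.~Lemma~\ref{lemma:duality} and Proposition~\ref{prop:duality}), together with the fact that $e^\R(E)$ lies in the image of the comparison map (the bounded representative is provided by $e^\R_b(E)$ from Theorem~\ref{eullin}), we estimate
\[
|e(E)| \;=\; |\langle e^\R(E),[M]\rangle| \;\leq\; \|e^\R(E)\|_\infty \cdot \|M\| \;\leq\; \frac{\|M\|}{2^n}.
\]

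There is no real obstacle in this argument — it is a routine application of the machinery set up in the chapter, exactly parallel to the proof of Wood's inequality (Theorem~\ref{Wood:thm}) in the circle-bundle case, with the improved constant $2^{-n}$ coming from Smillie's trick built into Lemma~\ref{Ecycle}. The only point worth being careful about is to make sure that the bounded representative given by the Ivanov–Turaev cocycle really maps to the integral Euler class under the comparison map, which is the content of Theorem~\ref{eullin} itself.
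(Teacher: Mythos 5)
Your argument is exactly the one in the paper: translate the rank-$n$ flat vector bundle into the associated flat linear $(n-1)$-sphere bundle, apply Theorem~\ref{eullin} (with sphere dimension $n-1$) to get $\|e^\R(E)\|_\infty\leq 2^{-n}$, and conclude via the Kronecker-pairing estimate $|e(E)|=|\langle e^\R(E),[M]\rangle|\leq\|e^\R(E)\|_\infty\,\|M\|$. The paper's proof is terser but uses precisely this chain of inequalities, so there is nothing to add.
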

\begin{proof}
 By definition, we have
 $$
 |e(E)|=|\langle e^\R(E),[M]\rangle| \leq \|e^\R(E)\|_\infty \|[M]\|_1 \leq 2^{-n}\|M\|\ .
 $$
\end{proof}

We have shown that the Euler class of a flat linear sphere bundle is bounded. By Proposition~\ref{surjiffsurj},
this implies that such a class is bounded even as an \emph{integral} class:

\begin{cor}\label{Zbounded}
Let $\pi\colon E\to M$ be a flat linear $n$-sphere bundle. Then the integral Euler class
$e(E)$ admits a bounded representative.
\end{cor}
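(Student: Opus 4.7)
The plan is to combine Theorem~\ref{eullin} with (the proof of) Proposition~\ref{surjiffsurj}. First, I would invoke Theorem~\ref{eullin} to assert that the real Euler class $e^\R(E)\in H^{n+1}(M,\R)$ satisfies $\|e^\R(E)\|_\infty\leq 2^{-n-1}<+\infty$, since it lies in the image of the comparison map from $H^{n+1}_b(M,\R)$. Second, I would recall that $e^\R(E)$ is by construction the image of the integral Euler class $e(E)\in H^{n+1}(M,\matZ)$ under the change of coefficients homomorphism induced by $\matZ\hookrightarrow\R$. So the situation is exactly the hypothesis of Proposition~\ref{surjiffsurj}, except that the latter is phrased for group cohomology while we are dealing with singular cohomology of $M$.

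The next step is therefore to observe that the proof of Proposition~\ref{surjiffsurj} carries over verbatim to singular cochains on a topological space: one picks an integer cocycle $\varphi\in C^{n+1}(M,\matZ)$ representing $e(E)$, views it as a real cocycle, and uses the boundedness of $e^\R(E)$ to write $\varphi=\varphi'+\delta\psi$ with $\varphi'\in C^{n+1}_b(M,\matZ)$ and $\psi\in C^n(M,\R)$. Replacing $\psi$ by the cochain $\overline{\psi}$ defined by $\overline{\psi}(s)=\lfloor\psi(s)\rfloor$ for every singular simplex $s$ (which is still a well-defined integer-valued cochain because the floor operation is applied simplexwise), one obtains that $\psi-\overline{\psi}\in C^n_b(M,\R)$ while $\varphi-\delta\overline{\psi}=\varphi'+\delta(\psi-\overline{\psi})$ is an integer cocycle which is bounded, and which still represents $e(E)$.

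The only genuinely delicate point is this transcription from the group-cohomological setting of Proposition~\ref{surjiffsurj} to the singular setting on $M$, but no technical difficulty arises: the floor-function trick is purely cochain-level and does not use the $\G$-action in any essential way beyond the fact that it is preserved by the floor operation, and in the singular version no equivariance has to be preserved at all. Alternatively, one could altogether avoid repeating the argument by pushing through the classifying maps: the bounded real cocycle $r_M^{n+1}(\rho^{n+1}(\E))$ already exhibits $e^\R(E)$ as the image of a bounded integral class $r_M^{n+1}(\rho^{n+1}(\E_\matZ))$ coming from an integer-valued refinement of $\E$ on $\mathrm{GL}^+(n+1,\R)$, but this route is less direct than the one above. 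The straightforward conclusion is that $e(E)$ admits a bounded representative in $C^{n+1}_b(M,\matZ)$, proving the corollary.
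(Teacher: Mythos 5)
Your proposal is correct and follows the paper's approach exactly: the paper derives this corollary from Theorem~\ref{eullin} by citing Proposition~\ref{surjiffsurj}, and you have correctly identified that the floor-function argument of that proposition transfers verbatim to singular cochains (where no equivariance needs preserving), which is precisely the mild adaptation the paper leaves implicit.
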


We conclude the section with the following:

\begin{conj}\label{flattop}
 Let $\pi\colon E\to M$ be a \emph{topologically} flat sphere bundle. Then $\eu(E)$ admits a bounded representative.
\end{conj}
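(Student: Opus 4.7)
The plan is to reduce the conjecture to the construction of a bounded representative for a ``universal'' Euler class on the group $\omeon$, viewed as a discrete group, and then to pull this representative back via the holonomy, mimicking the strategy used in the linear case. More precisely, by Proposition~\ref{flat:rep} every topologically flat $n$-sphere bundle $E$ is isomorphic to $E_\rho$ for some representation $\rho\colon \G\to \omeon$, where $\G=\pi_1(M)$, so it would suffice to produce a bounded cocycle $\E^{\mathrm{top}}\in Z^{n+1}_b(\omeon,\R)$ whose cohomology class $[\E^{\mathrm{top}}]_b\in H^{n+1}_b(\omeon,\R)$ maps to the universal real Euler class under the comparison map. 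Pulling back along $\rho$ and then applying the classifying map $r^\bullet_M$ as in Theorem~\ref{eullin} would then give a bounded representative of $e^\R(E)$, and a bounded integral representative of $\eu(E)$ would follow from Proposition~\ref{surjiffsurj}.

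First I would try to imitate the Ivanov--Turaev construction of $\E$ with $\GL^+(n+1,\R)$ replaced by $\omeon$, and the affine simplices $\sigma_I$ replaced by some canonical ``topological'' singular simplices in $S^n$ with prescribed vertices. Concretely, one would fix a reference point $p_0\in S^n$ and, for a generic $(n+2)$-tuple $(v_0,\dots,v_{n+1})\in (S^n)^{n+2}$, assign an integer $t(v_0,\dots,v_{n+1})\in\{-1,0,1\}$ that records whether a canonically chosen singular $(n+1)$-simplex with these vertices ``encloses'' $p_0$, in the sense that the induced map $\partial\Delta^{n+1}\to S^n$ has degree $\pm 1$ around $p_0$. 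Having fixed such a $t$, one would set
\[
\E^{\mathrm{top}}(g_0,\dots,g_{n+1})
=\int_{(S^n)^{n+2}} t(g_0 v_0,\dots,g_{n+1}v_{n+1})\,d\mu(v_0)\cdots d\mu(v_{n+1}),
\]
for a suitable probability measure $\mu$ on $S^n$, and verify the cocycle identity via a degree argument on $\partial\Delta^{n+2}$ analogous to the one in the proof of Lemma~\ref{Ecycle}. The identification of $[\E^{\mathrm{top}}]_b$ with the Euler class would then proceed via an analogue of Proposition~\ref{simplicialcase}: on a simplicial base $M$, generic choices of points $v_i$ at each vertex of $M$ produce compatible sections over the $n$-skeleton whose associated Euler cocycle agrees, after averaging, with the pull-back of $\E^{\mathrm{top}}$.

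The hard part, and the reason this statement is a conjecture rather than a theorem, is the construction of $\E^{\mathrm{top}}$ in Step~2. In the linear case, three structures were used in an essential way: the affine simplices $\sigma_I$ exist canonically, Lebesgue measure on the ball $D$ is invariant under $\GL^+(n+1,\R)$, and the function $t$ is manifestly $\GL^+(n+1,\R)$-invariant and bounded. None of these features survives for $\omeon$: there is no $\omeon$-equivariant way to cone an $(n+1)$-tuple of points of $S^n$ to a singular simplex, there is no $\omeon$-invariant measure on $S^n$ (a single homeomorphism can distort measure arbitrarily), and consequently no obvious reason why an integral of the form above should be uniformly bounded in $(g_0,\dots,g_{n+1})$. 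Any successful construction would therefore have to define $t$ in a way that is insensitive to the auxiliary choices (basepoint, metric, coning procedure) up to a uniformly bounded error, so that boundedness of $\E^{\mathrm{top}}$ reduces to a combinatorial count of degrees. Absent such an ingredient, an alternative route would be to bypass explicit cocycles entirely and prove surjectivity of the comparison map onto the subspace generated by $\eu$ directly, perhaps by exhibiting $\R$ as a relatively injective quotient in a resolution adapted to $\omeon$, in the spirit of the machinery of Chapter~\ref{res:chapter}; but identifying such a resolution seems no easier than the original problem.
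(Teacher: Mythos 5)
This statement is labeled as a Conjecture in the paper, and the paper offers no proof of it; so there is nothing to compare your argument against. You have correctly recognized this, and your discussion is a sound analysis of why the known techniques do not settle it rather than an actual proof.

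Your identification of the obstructions is accurate and matches the reasons the problem remains open. The Ivanov--Turaev argument in the linear case rests on three pillars that you correctly enumerate: canonical affine simplices $\sigma_I$ with prescribed vertices, the $\GL^+(n+1,\R)$-invariance of Lebesgue measure on the ball $D$ (which makes the averaged cochain $\E$ manifestly $G$-invariant), and the manifest boundedness of the degree count $t$. For $\omeon$ with $n\geq 2$, the absence of any $\omeon$-quasi-invariant measure class with uniformly controlled Radon--Nikodym derivatives is the central difficulty, and it is precisely what blocks the averaging strategy: one can always write down a degree function $t$ once auxiliary choices (basepoint, metric, coning procedure) are fixed, but the resulting integrand fails to be $\omeon$-invariant, and there is no reason for the integral to be uniformly bounded in the group elements. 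Your observation that a successful construction would need $t$ to be insensitive to the auxiliary data up to a \emph{uniformly} bounded error is exactly the right reformulation. One small point worth adding is that the case $n=1$ is genuinely different and is handled in Chapter~\ref{Milnor-Wood:chapter}: there the bounded Euler cocycle for $\omeo$ exists because $\omeot$ is contractible and the translation subgroup $\matZ$ is central, giving a canonical bounded $2$-cocycle taking only the values $0$ and $1$. For $n\geq 2$ the group $\omeon$ is not aspherical, there is no analogue of the universal cover $\omeot$, and no such central extension is available, which is another way of seeing why the circle case does not generalize. Your concluding remark about bypassing explicit cocycles via resolutions is reasonable but, as you say, does not obviously help; the vanishing theorems that make amenable actions useful in Chapter~\ref{res:chapter} have no counterpart here since $\omeon$ acting on $S^n$ is far from amenable.
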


\section{The Chern conjecture}
By Bieberbach Theorem, closed flat manifolds are finitely covered by tori, so they have vanishing Euler characteristic.
Also the simplicial volume of closed flat manifolds vanishes (see Section~\ref{flat:simplicial}), and the vanishing of the Euler characteristic may be  interpreted also as a consequence
of this fact.
Indeed, a flat $n$-manifold $M$ admits an atlas whose transition maps
are (restrictions of) Euclidean isometries. Such an atlas induces a flat structure on the tangent bundle $TM$ of $M$, so by Theorem~\ref{mw:higher} we have
$$
|\chi(M)|=|e(TM)|\leq \frac{\|M\|} {2^{n}}=0\ .
$$
It is very natural to look for an  extension of this result to  closed \emph{affine} manifolds, that are manifolds which admit an atlas whose transition maps are (restrictions of) affine isomorphisms.
In fact, the tangent bundle of affine manifolds is obviously flat, and it is reasonable to expect that affine manifolds could share a lot of properties with flat ones. Surprisingly enough,
the attempt to generalize the
above result to affine manifolds resulted in the formulation of a long-standing open conjecture:

\begin{conj}[Chern conjecture]\label{Chern:conj}
 Let $M$ be a closed affine manifold. Then $\chi(M)=0$.
\end{conj}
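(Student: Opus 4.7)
The plan is to exploit the explicit bound on the Euler class of flat vector bundles proved in Theorem~\ref{mw:higher}. The key starting observation is that, since $M$ is affine, the transition maps of a defining atlas are (restrictions of) affine isomorphisms of $\mathbb{R}^n$, and therefore their differentials are locally constant elements of $\GL(n,\mathbb{R})$. This endows the tangent bundle $TM$ with a natural structure of flat linear vector bundle, and by Proposition~\ref{tangentchi} we have $\chi(M)=e(TM)$.

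First I would apply Theorem~\ref{mw:higher} directly to the flat vector bundle $TM$ of rank $n$ over the closed oriented $n$-manifold $M$, which yields
\[
|\chi(M)|\ =\ |e(TM)|\ \leq\ \frac{\|M\|}{2^n}\ .
\]
Consequently, the Chern conjecture would follow at once from the vanishing of the simplicial volume of every closed affine manifold. I would then specialize this observation to extract concrete partial results: for every closed affine manifold $M$ whose fundamental group is amenable, Corollary~\ref{amvan} gives $\|M\|=0$, hence $\chi(M)=0$. More generally, via Gromov's vanishing theorem for bounded cohomology of spaces admitting an amenable cover of multiplicity at most $n$, one obtains the same conclusion under this weaker topological hypothesis, and one could in this way list a number of classes of affine manifolds (for instance those with virtually solvable fundamental group) for which the conjecture holds.

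The hard part, and in fact the main obstacle that keeps the Chern conjecture open, is to establish the vanishing of $\|M\|$ (or to otherwise extract additional cohomological information from the flat structure on $TM$) without any extra hypothesis on the fundamental group or on the topology of $M$. It is itself a well-known open problem whether every closed aspherical manifold admitting an affine structure has vanishing simplicial volume, and the reduction above provides no handle on those affine manifolds whose simplicial volume one has no independent reason to compute. An alternative strategy, which I would flag but not pursue, is to try to strengthen Theorem~\ref{eullin}, showing that the bounded Euler class $e_b^{\mathbb{R}}(TM)$ of the tangent bundle of an affine manifold is actually zero in $H^n_b(M,\mathbb{R})$, rather than merely having norm at most $2^{-n}$: this would yield $\chi(M)=0$ unconditionally, but it appears to require genuinely new geometric input about affine holonomies that goes beyond the machinery developed in this chapter.
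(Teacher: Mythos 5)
Your write-up correctly recognizes that the statement is an \emph{open conjecture}, and what you give is not a proof but exactly the reduction and partial results that the paper itself presents in the surrounding sections: the tangent bundle of an affine manifold is flat, Theorem~\ref{mw:higher} gives $|\chi(M)|=|e(TM)|\leq \|M\|/2^n$, and the problem is thus reduced to showing $\|M\|=0$ (which is the paper's Conjecture~\ref{BDL:conj}, due to Bucher, Connell and Lafont); the amenable case follows from Corollary~\ref{amvan}, as you note. Your analysis of where the argument stops is accurate and matches the paper's discussion; the only thing worth flagging is a small slip in one of your auxiliary claims: the statement about amenable covers of multiplicity at most $n$ gives vanishing of the \emph{comparison map}, hence of $\|M\|$, only when the cover has multiplicity at most $n$ so that the top-degree comparison map vanishes --- this is indeed sufficient, but you should be explicit that it is the cover's multiplicity relative to $\dim M = n$ that matters, not a general amenability hypothesis on $\pi_1(M)$. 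Otherwise you have faithfully reproduced the paper's reduction and its assessment of the obstacle.
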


Several particular cases of the Chern conjecture have been settled by now (see Section~\ref{further:chern} for a brief discussion of this topic). For example,
thanks to Kostant and Sullivan~\cite{KS}, the Chern conjecture is known to hold for closed \emph{complete} affine manifolds (also known as \emph{affine space-forms}), i.e.~for 
compact quotients of the Euclidean space $\R^n$ with respect to the action of free and proper discontinuous groups of affine isomorphisms of $\R^n$. 

One may even wonder whether the hypothesis of being affine could be relaxed to the weaker condition of being  \emph{tangentially flat}, i.e.~of having a flat tangent bundle. 
We have  observed above that any affine manifold is tangentially flat, but the converse
is not quite true: indeed, affine manifolds can be characterized as those manifolds whose tangent bundle admits a linear flat and \emph{symmetric} connection.
The question whether the Euler characteristic of a closed tangentially flat manifold is necessarily zero was explicitly raised by Milnor~\cite{Milnor} and Kamber and Tondeur~\cite[p. 47]{KT}, and
according to Hirsch and Thurston~\cite{HT} the answer was commonly expected to be positive. Nevertheless, examples of tangentially flat manifolds with non-vanishing Euler characteristic
were constructed by Smillie in~\cite{Smillie:counter} in every even dimension bigger than $2$. None of Smillie's manifolds is aspherical, and indeed
another open conjecture is the following:

\begin{conj}\label{Chern2:conj}
The Euler characteristic of a closed aspherical tangentially flat manifold vanishes.
\end{conj}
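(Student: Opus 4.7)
The plan is to combine the higher-dimensional Milnor--Wood inequality of Theorem~\ref{mw:higher} with the analysis of the simplicial volume of aspherical manifolds. First I would pass to the orientable double cover if necessary: asphericity, tangential flatness, and the vanishing of $\chi$ are all invariant under (or easily recovered from) this operation, so we may freely assume that $M$ is oriented of dimension $n$. Since $TM$ is assumed to be flat as a vector bundle, Proposition~\ref{tangentchi} and the duality between $\ell^1$-seminorm and $\ell^\infty$-seminorm give
\[
|\chi(M)|=|\langle e^\R(TM),[M]\rangle|\leq \|e^\R_b(TM)\|_\infty\cdot \|M\|\leq \frac{\|M\|}{2^{n}},
\]
where the last inequality is exactly Theorem~\ref{eullin}. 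Thus the conjecture is reduced to the statement that every closed aspherical tangentially flat manifold has vanishing simplicial volume.

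The natural attempt to carry out this reduction would be to exploit the holonomy representation $\rho\colon \pi_1(M)\to \mathrm{GL}^+(n,\R)$ of $TM$ together with a transfer argument of the kind sketched in Chapter~\ref{prop:simpl:chap}: since $\rho$ takes values in an ambient Lie group with a rich continuous bounded cohomology, one would try to factor the fundamental class of $M$ through an invariant cocycle on $\mathrm{GL}^+(n,\R)$ whose norm can be made arbitrarily small, thereby forcing $\|M\|=0$. An alternative, and in view of Section~\ref{further:simplicial:readings} perhaps more tractable, route is to strengthen the conclusion and prove the vanishing of the \emph{integral foliated} simplicial volume of $M$; as recalled there, this already implies the vanishing of all $\ell^2$-Betti numbers of $M$, hence \emph{a fortiori} of $\chi(M)$ by Atiyah's $\ell^2$-index theorem.

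The hard part, in either formulation, is precisely this last step. Showing that aspherical tangentially flat closed manifolds have vanishing simplicial volume would in particular answer (for this specific class of spaces) Gromov's Question~\ref{Gromov-quest}, and no general technique is currently known to produce, from a flat structure on the tangent bundle alone, fundamental cycles whose $\ell^1$-norm can be made arbitrarily small. For closed flat manifolds one has the additional rigidity provided by Bieberbach's theorem (so that $\pi_1(M)$ is virtually abelian, hence amenable, and $\|M\|=0$ by Corollary~\ref{amvan}); for closed complete affine manifolds Kostant and Sullivan exploited the ambient action on $\R^n$; but the symmetric connection present in the affine case is exactly what is missing under the weaker assumption of tangential flatness, and this is the gap that, in the absence of new input from geometric group theory or $L^2$-methods, keeps Conjecture~\ref{Chern2:conj} open.
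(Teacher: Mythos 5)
This statement is a \emph{conjecture}, not a theorem: the paper does not prove it, and explicitly presents it as an open problem in the discussion surrounding Conjectures~\ref{Chern:conj}, \ref{Chern2:conj} and~\ref{BDL:conj} and in the ``Further readings'' section. So there is no proof in the paper to compare yours against.

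That said, your write-up is an accurate account of the state of the art and matches the reduction that the paper itself sketches. The chain $|\chi(M)|=|e(TM)|\leq \|e^\R(TM)\|_\infty\cdot\|M\|\leq 2^{-n}\|M\|$, obtained from Proposition~\ref{tangentchi}, duality (Proposition~\ref{prop:duality}) and Theorem~\ref{mw:higher}/\ref{eullin}, is exactly the inequality the paper records just before stating Conjecture~\ref{BDL:conj}, and it does reduce Conjecture~\ref{Chern2:conj} to the vanishing of $\|M\|$ for closed aspherical tangentially flat $M$. You are also right to flag that this last step is precisely where all known approaches stop: it would settle Gromov's Question~\ref{Gromov-quest} for this class of manifolds, and neither the transfer machinery of Chapter~\ref{prop:simpl:chap} (which needs a rich geometric action, not just a linear holonomy of the tangent bundle) nor the integral-foliated/$\ell^2$ route of Section~\ref{further:simplicial:readings} currently gives this in the tangentially flat generality. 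Your remark that Klingler's and Kostant--Sullivan's arguments lean on the symmetric connection or the completeness, which are unavailable under mere tangential flatness, correctly identifies the specific obstruction. In short: you have not proved the conjecture, but you were not expected to; you have correctly reproduced the reduction and located the genuine open gap.

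One small precision: your displayed inequality silently uses that the comparison map sends $e_b^\R(TM)$ to $e^\R(TM)$ (Theorem~\ref{eullin}) so that $\|e^\R(TM)\|_\infty\leq\|e_b^\R(TM)\|_\infty\leq 2^{-n}$; it would be cleaner to write the middle term as $\|e^\R(TM)\|_\infty$ rather than $\|e_b^\R(TM)\|_\infty$, since the Kronecker pairing in the first step is with the unbounded class. This is cosmetic and does not affect the substance.
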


Milnor-Wood's inequalities may be exploited to prove the Chern conjecture in dimension two:

\begin{prop}
 The only closed orientable surface admitting an affine structure is the torus. In particular, 
the Chern conjecture holds in dimension 2.
 \end{prop}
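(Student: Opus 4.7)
The plan is to reduce the question to a direct application of Milnor's inequality (Theorem~\ref{Milnor:thm}) combined with the identification of the Euler number of the tangent bundle with the Euler characteristic (Proposition~\ref{tangentchi}). The key observation is that an affine structure on a closed orientable surface $\Sigma$ furnishes an atlas whose transition functions are restrictions of affine isomorphisms of $\R^2$; taking differentials, the induced atlas on $T\Sigma$ has locally constant transition functions with values in $\GL^+(2,\R)$. Hence $T\Sigma$ is a flat rank-$2$ oriented vector bundle, and the associated circle bundle $S(T\Sigma)$ is a flat \emph{linear} circle bundle over $\Sigma$.

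Next I would invoke Milnor's inequality (Theorem~\ref{Milnor:thm}), which gives
\[
|e(S(T\Sigma))| \leq \frac{|\chi_-(\Sigma)|}{2}.
\]
Since $e(T\Sigma) = e(S(T\Sigma))$ by our definition of the Euler class of a vector bundle, and $e(T\Sigma) = \chi(\Sigma)$ by Proposition~\ref{tangentchi}, this rewrites as
\[
|\chi(\Sigma)| \leq \frac{|\chi_-(\Sigma)|}{2}.
\]

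Now I would split into cases according to the sign of $\chi(\Sigma)$. If $\chi(\Sigma) < 0$, then $\chi_-(\Sigma) = \chi(\Sigma)$, so the inequality becomes $|\chi(\Sigma)| \leq |\chi(\Sigma)|/2$, forcing $\chi(\Sigma) = 0$, a contradiction. If $\chi(\Sigma) \geq 0$, then $\chi_-(\Sigma) = 0$, so the inequality forces $\chi(\Sigma) = 0$; in particular, the sphere ($\chi = 2$) cannot support an affine structure, while the torus ($\chi = 0$) is not excluded. To conclude, I would remark that the torus does admit an affine (even flat) structure, as the quotient of $\R^2$ by a lattice of translations. This proves the first assertion, and the Chern conjecture in dimension $2$ follows immediately, since the only closed orientable affine surface has $\chi = 0$ (the non-orientable case reduces to the orientable one by passing to the orientation double cover, which inherits an affine structure and has twice the Euler characteristic).

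There is no real obstacle here: all the hard work has already been absorbed into Theorem~\ref{Milnor:thm} (sharpness of the Milnor--Wood estimate in the linear case) and Proposition~\ref{tangentchi} (identification of $e(TM)$ with $\chi(M)$). The only point that deserves a line of justification is the passage from an affine atlas on $\Sigma$ to a flat \emph{linear} structure on $T\Sigma$, which is immediate because the derivative of an affine map is a linear isomorphism and because differentiation preserves the locally constant nature of the cocycle of transition maps.
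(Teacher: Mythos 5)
Your proof is correct and follows essentially the same route as the paper: combine Proposition~\ref{tangentchi} (which identifies $e(T\Sigma)$ with $\chi(\Sigma)$) with Milnor's inequality (Theorem~\ref{Milnor:thm}), applied to the flat linear structure that the affine atlas induces on the tangent bundle, to force $\chi(\Sigma)=0$. You spell out the case analysis and the fact that the torus actually admits an affine structure in slightly more detail than the paper does, but the argument is the same.
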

\begin{proof}
Let $S$ be a closed affine surface. Since the tangent bundle of an affine manifold is flat, putting together Proposition~\ref{tangentchi}
 and Theorem~\ref{Milnor:thm} we get
 $$
 |\chi(S)|=|e(TS)|\leq \frac{|\chi_-(S)|}{2}\ ,
 $$
 which readily implies $\chi(S)=0$.
\end{proof}

The boundedness of the Euler class in every dimension is the key ingredient of a simple proof of the fact
that the Chern conjecture holds for manifolds with an amenable fundamental group. The following straightforward argument first appeared in~\cite{BePe}
(see also~\cite{HT} for a different proof).

\begin{thm}
 Let $M$ be an affine manifold with an amenable fundamental group. Then $\chi(M)=0$.
\end{thm}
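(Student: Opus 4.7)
The plan is to combine three results already established in the excerpt: the identification of the Euler characteristic with the Euler number of the tangent bundle (Proposition~\ref{tangentchi}), the higher-dimensional Milnor--Wood inequality for flat vector bundles (Theorem~\ref{mw:higher}), and the vanishing of the simplicial volume for manifolds with amenable fundamental group (Corollary~\ref{amvan}).

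First, I would observe that, since $M$ is affine, it admits an atlas whose transition functions are restrictions of affine isomorphisms of $\mathbb{R}^n$. The differentials of such transition functions are locally constant maps with values in $\mathrm{GL}(n,\mathbb{R})$, so the tangent bundle $TM$ inherits the structure of a flat vector bundle of rank $n=\dim M$. This is the key geometric input that distinguishes the affine case from the general smooth one.

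Next, I would apply Proposition~\ref{tangentchi} to identify $\chi(M)$ with the Euler number $e(TM)$, and then invoke Theorem~\ref{mw:higher} applied to the flat vector bundle $TM$. This yields
\[
|\chi(M)| \;=\; |e(TM)| \;\leq\; \frac{\|M\|}{2^n}.
\]
Since $\pi_1(M)$ is amenable, Corollary~\ref{amvan} gives $\|M\|=0$, and the conclusion $\chi(M)=0$ follows immediately.

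There is essentially no obstacle here: all the heavy lifting has been done in previous chapters, and the argument is a clean three-line application of the tools developed. If anything, the only subtle point to verify is that the flat structure on $TM$ coming from an affine atlas is indeed a flat structure in the sense of Section~\ref{fiber:bundle} (i.e.\ with locally constant transition functions in $\mathrm{GL}^+(n,\mathbb{R})$, after restricting to an orientable cover if necessary), so that Theorem~\ref{mw:higher} applies. Alternatively, one could bypass Theorem~\ref{mw:higher} and argue directly via Theorem~\ref{eullin} together with Gromov's isomorphism (Theorem~\ref{gro-iva:thm}) and Corollary~\ref{amenable:real:cor}: amenability of $\pi_1(M)$ forces $H^n_b(M,\mathbb{R})\cong H^n_b(\pi_1(M),\mathbb{R})=0$, so the bounded Euler class $e_b^{\mathbb{R}}(TM)$ vanishes, whence $e^{\mathbb{R}}(TM)=0$ and $\chi(M)=\langle e^{\mathbb{R}}(TM),[M]_{\mathbb{R}}\rangle =0$. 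Both routes are essentially the same, differing only in whether the amenability hypothesis is fed into duality on the homological side or directly on the cohomological side.
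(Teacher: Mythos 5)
Your proof is correct and follows the same three-step argument as the paper: identify $\chi(M)=e(TM)$ via Proposition~\ref{tangentchi}, bound $|e(TM)|$ by $\|M\|/2^n$ via Theorem~\ref{mw:higher} using flatness of $TM$, and conclude from $\|M\|=0$ via Corollary~\ref{amvan}. The alternative route you sketch at the end is just a cohomological restatement of the same duality argument, so there is no substantive difference.
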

\begin{proof}
Recall that the simplicial volume of $M$ vanishes by Corollary~\ref{amvan}.
Therefore, if $n=\dim M$, then by Theorem~\ref{mw:higher} we have
$$
|e(E)|\leq \frac{\|M\|} {2^{n}}=0 
$$
for any flat vector bundle $E$ of rank $(n+1)$. In particular, we obtain
$$
\chi(M)=e(TM)=0\ ,
$$
where the first equality is due to
Proposition~\ref{tangentchi}, while the second one to the fact that the tangent bundle of any affine manifold is flat.
\end{proof}

In a recent preprint~\cite{BCL}, Bucher, Connell and Lafont formulated the following conjecture:

\begin{conj}\label{BDL:conj}
 If $M$ is a closed manifold supporting an affine structure, then $\|M\|=0$.
\end{conj}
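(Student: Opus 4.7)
The plan is to deduce Conjecture~\ref{BDL:conj} from Gromov's vanishing theorem for amenable covers (discussed in Chapter~5): if a closed $n$-manifold $M$ admits an open cover $\{U_1,\dots,U_k\}$ whose pieces have amenable image under $\pi_1(U_i)\to\pi_1(M)$ and whose multiplicity is at most $n=\dim M$, then the comparison map $H^n_b(M,\mathbb{R})\to H^n(M,\mathbb{R})$ vanishes, and Corollary~\ref{criterion} forces $\|M\|=0$. The problem thus reduces to producing such an amenable cover of small multiplicity from the affine data on $M$.

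The starting point is the affine atlas, which yields a developing map $D\colon\widetilde{M}\to\mathbb{R}^n$ and a holonomy representation $\rho\colon\pi_1(M)\to\mathrm{Aff}(\mathbb{R}^n)$ such that $D$ is a $\rho$-equivariant local diffeomorphism. My approach would be to start with a locally finite cover $\mathcal{V}=\{V_\alpha\}$ of $\mathbb{R}^n$ by open convex sets of controlled multiplicity, and transfer it to $M$ via $D$. Every connected component $\widetilde{U}$ of $D^{-1}(V_\alpha)$ is mapped homeomorphically by $D$ onto a convex subset of $\mathbb{R}^n$, hence is contractible; its $\pi_1(M)$-stabilizer $\Gamma_{\widetilde{U}}$ acts freely and properly discontinuously on $\widetilde{U}$, so the image of $\widetilde{U}$ in $M$ is a $K(\Gamma_{\widetilde{U}},1)$ with fundamental group $\Gamma_{\widetilde{U}}$ itself mapping injectively into $\pi_1(M)$. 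One therefore needs to assemble a $\pi_1(M)$-equivariant family of such components whose quotients cover $M$ with multiplicity at most $n$ and with every $\Gamma_{\widetilde{U}}$ amenable.

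The hard part will be exactly this assembly step in the incomplete case. When $M$ is \emph{complete}, $D$ is a diffeomorphism, $\pi_1(M)$ acts properly discontinuously on $\mathbb{R}^n$, and Fried--Goldman's theorem forces $\pi_1(M)$ to be virtually polycyclic, hence amenable, so $\|M\|=0$ already follows from Corollary~\ref{amvan} without the cover construction. For incomplete manifolds with large holonomy, a single convex $V_\alpha$ can meet the $\rho(\pi_1(M))$-orbit of a point infinitely often, so taking all components of $D^{-1}(V_\alpha)$ either destroys multiplicity control (an orbit dense in $\mathbb{R}^n$ forces most pieces to meet every point of $M$) or leaves one with components whose stabilizers in a Zariski-dense holonomy group need not be amenable. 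My expectation is that the decisive ingredient must come from affine geometry rather than bounded cohomology proper: either a Markus-type dichotomy relating completeness to the existence of a parallel volume form, so the incomplete case can be handled separately by a convex-projective argument, or a structural result on holonomy groups of closed affine manifolds producing amenable convex cover pieces intrinsically.
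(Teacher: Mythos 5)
This statement is not a theorem in the paper but a \emph{conjecture}, attributed to Bucher, Connell and Lafont in~\cite{BCL}; the paper offers no proof, and to the best of current knowledge none exists. So there is no ``paper's own proof'' to compare against, and any complete-looking argument for it would have to be wrong. To your credit you do not claim to have one: you lay out a strategy via Gromov's amenable-cover vanishing theorem and then honestly flag where it breaks down.

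Your diagnosis of the obstruction is essentially correct. In the complete case the developing map is a diffeomorphism, Fried--Goldman gives that the holonomy is virtually polycyclic, hence $\pi_1(M)$ is amenable and $\|M\|=0$ already follows from Corollary~\ref{amvan} with no cover needed. The genuine difficulty is the incomplete case, where $D$ is only a local diffeomorphism, possibly neither injective nor surjective, with image a proper open subset of $\mathbb{R}^n$, and the holonomy action on that image can be wild (dense orbits, nonamenable stabilizers of convex pieces). There is no known way to extract a $\pi_1(M)$-equivariant family of components of $D^{-1}(V_\alpha)$ whose stabilizers are all amenable and whose quotient cover of $M$ has multiplicity $\le n$. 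Worse, the escape routes you name do not currently exist: the Markus conjecture (parallel volume implies complete) is itself open, and no general structural result forces amenability of the relevant stabilizers. The partial result the paper does cite from~\cite{BCL} --- vanishing of $\|M\|$ for aspherical affine manifolds with injective holonomy containing a nontrivial translation --- proceeds along roughly the lines you sketch, but requires those extra hypotheses precisely to get the amenable pieces to assemble; dropping them is where the problem remains open. In short, your plan is a reasonable reconstruction of the known approach, you correctly locate the gap, and the gap cannot be closed with what is currently available --- which is why the statement is a conjecture, not a theorem.
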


Recall that the tangent bundle of any affine manifold admits a flat structure.
Therefore, if $n=\dim M$, then by Theorem~\ref{mw:higher} we have
$$
|\chi(M)|=|e(TM)|\leq \frac{\|M\|} {2^{n}}\ .
$$
As a consequence, Conjecture~\ref{BDL:conj} would imply the Chern conjecture.

\section{Further readings}\label{further:chern}

\subsection*{The norm of the Euler class}
We have seen that Ivanov-Turaev's $n$-dimensional Euler cocycle satisfies the inequality
$$
\|\E\| \leq 2^{-n}\ .
$$
Of course, it is of interest to compute the exact seminorm of the bounded cohomology class defined by $\eul$, or even of the induced ordinary cohomology class.
Both tasks were achieved by Bucher and Monod in~\cite{BuMo}: after constructing a new bounded representative of the Euler class having the same norm 
as Ivanov-Turaev's one, the authors showed
that, in even dimension $n$, both the bounded and the unbounded Euler class have norm equal to $2^{-n}$. In particular,
Bucher-Monod's and Ivanov-Turaev's cocycles both have the smallest possible norm.

\subsection*{A canonical bounded Euler class}
Another natural question is whether the bounded cohomology class defined by Ivanov-Turaev's cocycle (or by the cocycle constructed by Bucher and Monod) is in any sense canonical.
It is worth mentioning that the space $H^{n}_b(\GL^+(n,\R),\R)$ has not been computed yet: in particular, it is not known whether the classical Euler class is represented by a unique bounded cohomology
class (i.e., whether the comparison map $H^{n}_b(\GL^+(n,\R),\R)\to H^{n}(\GL^+(n,\R),\R)$ is injective or not). Nevertheless, Bucher and Monod proved in~\cite{BuMo}
that the subspace of \emph{antisymmetric} classes in $H^{n}_b(\GL^+(n,\R),\R)$ is one-dimensional, where a class is said to be antisymmetric if conjugacy by an orientation-reversing
automorphism of $\R^n$ changes its sign. 
Since both Bucher-Monod's and Ivanov-Turaev's cocycles are antysimmetric, this implies that
the classical Euler class admits a unique bounded antisymmetric representative, whose norm coincides with the norm of the Euler class. As usual,
the existence of such a \emph{canonical} bounded Euler class may lead to a refinement of the invariants determined by the classical Euler class.

\subsection*{The Chern conjecture}
As observed above, the boundedness of the Euler class of flat bundles seems to provide a promising ingredient for an approach to the Chern conjecture. 
Nonetheless, Conjectures~\ref{Chern:conj} and~\ref{Chern2:conj} seem still quite far from being settled.  Many
attemps have been made to generalize Milnor-Wood inequality to other dimensions, but very
little progress has been made until very recently. In~\cite{BuGe} Bucher and Gelander
proved that the Euler class of flat bundles over closed oriented manifolds whose universal cover
is isometric to the product of two hyperbolic planes  satisfies an inequality of Milnor-Wood type, thus confirming both
conjectures for all manifolds which are locally isometric to a product of surfaces of constant
curvature. We refer the reader to~\cite{BuGe2} for other results in this direction.

With somewhat different methods, Bucher, Connell and Lafont recently proved that the Chern conjecture holds for any aspherical affine manifold for which the holonomy 
is injective, and contains at least one non-trivial translation. Indeed, they proved that the simplicial volume of any such manifold vanishes, and this suffices to get that also the Euler characteristic does.

Finally, we would like to mention that, following a strategy which is closer to the original circle of ideas that lead to the formulation of the Chern conjecture, Klingler has recently  proved that
the Chern conjecture holds for every special affine manifold~\cite{Kli} (an affine manifold is special if its holonomy representation has values in the subgroup of affinities with linear part
of determinant $1$). 

\bibliographystyle{amsalpha}
\bibliography{biblionote}

\end{document}